\documentclass{amsart}
\usepackage{quiver}
\usepackage[lofdepth, lotdepth]{subfig}
\usepackage{adjustbox}
\usepackage{enumerate}
\usepackage{etex}
\usepackage{pstricks,amssymb}
\usepackage{pict2e}
\usepackage[utf8]{inputenc}
\usepackage{tikz}
\usepackage{tikz-cd}
\usetikzlibrary{arrows}
\usepackage{geometry}
\usepackage{rotating}
\usepackage{graphicx}
\usepackage{pst-node}
\usepackage{amsmath}
\usepackage{kbordermatrix}
\usepackage{hyperref}
\hypersetup{colorlinks,
	  citecolor=black,
	  filecolor=black,
	  linkcolor=black,
	  urlcolor=black}
\usepackage[capitalize,nameinlink,noabbrev,nosort]{cleveref}
\usepackage{amsmath,amscd}
\usepackage{youngtab}
\usepackage[boxsize=.3 em]{ytableau}
\usepackage{verbatim}

\psset{unit=1pt, arrowsize=4pt, linewidth=.7pt}
\psset{linecolor=blue}
\newgray{grayish}{.90}
\newrgbcolor{embgreen}{0 .5 0}
\def\Le{\hbox{\rotatebox[origin=c]{180}{$\Gamma$}}}
\def\vblack(#1, #2)#3{\cnode*[linecolor=black](#1, #2){3}{#3}}
\def\vwhite(#1,#2)#3{\cnode[linecolor=black,fillcolor=white,fillstyle=solid](#1,#2){3}{#3}}
\countdef\x=23
\countdef\y=24
\countdef\z=25
\countdef\t=26

\font\co=lcircle10

\def\jr{\rotatedown{\smash{\raise2pt\hbox{\co \rlap{\rlap{\char'005} \char'007}}
               \raise6pt\hbox{\rlap{\vrule height6.5pt}}
                \raise2pt\hbox{\rlap{\hskip4pt \vrule
          height0.4pt depth0pt
                width7.7pt}}}}}

\def\textcross{\ \smash{\lower4pt\hbox{\rlap{\hskip4.15pt\vrule height14pt}}
                \raise2.8pt\hbox{\rlap{\hskip-3pt \vrule height.4pt depth0pt
                                width14.7pt}}}\hskip12.7pt}

\def\textelbow{\ \hskip.1pt\smash{\raise2.75pt%
                \hbox{\co \hskip 4.15pt\rlap{\rlap{\char'004} \char'006}
                \lower6.8pt\rlap{\vrule height3.5pt}
                \raise3.6pt\rlap{\vrule height3.5pt}}
                \raise2.8pt\hbox{%
                  \rlap{\hskip-7.15pt \vrule height.4pt depth0pt
width3.5pt}%
                  \rlap{\hskip4.05pt \vrule height.4pt depth0pt
width3.5pt}}}
                \hskip8.7pt}

\def\tbox(#1,#2)#3{
\x=#1 \y=#2
\multiply\x by 12
\multiply\y by 12
\z=\x \t=\y
\advance\z by 12
\advance\t by 12
\psline(\x,\y)(\x,\t)(\z,\t)(\z,\y)(\x,\y)
\advance\x by 6
\advance\y by 6
\rput(\x,\y){{\bf #3}}}

 \textheight=8.5in
 \oddsidemargin=+0.0in
 \textwidth=6.4in
 \evensidemargin=+0.0in

\vfuzz2pt 
\hfuzz2pt 

\newtheorem{mainthm}{Theorem}

\newtheorem{thm}{Theorem}[section]
\newtheorem{theorem}[thm]{Theorem}
\newtheorem{cor}[thm]{Corollary}
\newtheorem{corollary}[thm]{Corollary}

\newtheorem{lem}[thm]{Lemma}
\newtheorem{lemma}[thm]{Lemma}

\newtheorem{prop}[thm]{Proposition}
\newtheorem{proposition}[thm]{Proposition}
\theoremstyle{definition}
\newtheorem{defn}[thm]{Definition}
\newtheorem{notation}[thm]{Notation}
\newtheorem{example}[thm]{Example}
\newtheorem{definition}[thm]{Definition}
\newtheorem{rem}[thm]{Remark}  
\newtheorem{remark}[thm]{Remark}

\numberwithin{equation}{section}

\newcommand{\nV}{\mathcal{V}_{\bullet}}
\newcommand{\vv}{\upsilon}
\newcommand{\V}{\mathcal{V}}
\newcommand{\rk}{\operatorname{rk}}
\newcommand{\Comment}[1]{{\color{blue} \sf ($\clubsuit$ #1 $\clubsuit$)}}

\newcommand{\Pic}{\operatorname{Pic}}

\newcommand{\aff}{\operatorname{Aff}}
\newcommand{\add}{\operatorname{add}}
\newcommand{\shift}{\operatorname{shift}}

\newcommand{\muminus}{\mu_{-}}
\newcommand{\mutilde}{\tilde{\mu}}
\newcommand{\muright}{\mu^{\square}}
\newcommand{\mudown}{_{\square}\mu}
\newcommand{\hh}{\mathbf h}
\newcommand{\rr}{\mathbf r}
\newcommand{\nn}{\mathbf n}
\newcommand{\uu}{\mathbf u}

\newcommand{\Mat}{\operatorname{Mat}}
\newcommand{\Gl}{\operatorname{GL}}
\newcommand{\Fl}{\operatorname{Fl}}
\newcommand{\pathne}[1]{L^\nearrow_{#1}} 
\newcommand{\pathsw}[1]{L^\swarrow_{#1}}
\newcommand{\vertne}[1]{V^\nearrow(#1)} 
\newcommand{\vertsw}[1]{V^\swarrow(#1)}
\newcommand{\partne}[1]{\lambda^\nearrow(#1)} 
\newcommand{\partsw}[1]{\lambda^\swarrow(#1)}

\newcommand{\conv}{\operatorname{Conv}}
\newcommand{\Arr}{\operatorname{Arr}}
\newcommand{\Conv}{\operatorname{Conv}}
\newcommand{\Mut}{\operatorname{Mut}}
\newcommand{\MutVar}{\operatorname{MutVar}}
\newcommand{\NO}{\Delta}

\newcommand{\R}{\mathbb R}

\newcommand{\Ppoly}{\mathbf P}

\newcommand{\RG}{r_G}
\newcommand{\Vol}{\operatorname{Volume}}

\newcommand{\Q}{\Gamma}

\newcommand{\pyo}{p_{\ydiagram{1}}}
\newcommand{\pyz}{p_{\ydiagram{2}}}
\newcommand{\pyoo}{p_{\ydiagram{1,1}}}
\newcommand{\pyzz}{p_{\ydiagram{2,2}}}
\newcommand{\pytz}{p_{\ydiagram{4}}}
\newcommand{\pyt}{p_{\ydiagram{3}}}
\newcommand{\pytt}{p_{\ydiagram{3,3}}}
\newcommand{\pttt}{p_{\ydiagram{2,2,2}}}
\newcommand{\pooo}{p_{\ydiagram{1,1,1}}}

\newcommand{\FF}{\mathcal{F}}
\newcommand{\NN}{\mathcal{N}}
\newcommand{\CC}{\mathcal{N}}

\newcommand{\Shkn}{{\mathcal P_{k,n}}}
\newcommand{\rect}{\mathrm{rec}}

\newcommand{\wt}{\mathrm{wt}}

\newcommand{\Spec}{\mathrm{Spec}}

\newcommand{\Cl}{\operatorname{Cl}}

\newcommand{\OO}{\mathbb{O}}

\newcommand{\N}{\mathcal{N}}
\newcommand{\NP}{\operatorname{NP}}
\newcommand{\Root}{\operatorname{Root}}
\newcommand{\rW}{\overline{W}}
\newcommand{\rGamma}{\overline{\Gamma}}
\newcommand{\Poly}{\mathbf{P}}

\def\source{\operatorname{source}}
\def\target{\operatorname{target}}
\def\Rect{\operatorname{Rect}}
\def\sh{\operatorname{sh}}
\newcommand{\Rout}{\mathcal R_{\rm{out}}}
\newcommand{\Rin}{\mathcal R_{\rm{in}}}

\def\NW{\operatorname{NW}}
\def\SE{\operatorname{SE}}
\def\out{\operatorname{out}}
\def\O{\mathcal{O}}

\newcommand{\C}{\mathbb{C}}
\newcommand{\Z}{\mathbb{Z}}
\newcommand{\inv}{^{-1}}

\newcommand{\Dac}{D_{\operatorname{ac}}}

\newcommand{\XX}{{\mathcal X}}

\newcommand{\BB}{{\mathbb B}}

\newcommand{\val}{{\operatorname{val}}}

\renewcommand{\min}{{\operatorname{min}}}

\newcommand{\maxdiag}{{\operatorname{MaxDiag}}}

\newcommand{\A}{\mathcal{A}}

\newcommand{\Fr}{\operatorname{Fr}}

\newcommand{\minimal}{\operatorname{min}}
\newcommand{\PCG}{{\mathcal A\hskip -.05cm\operatorname{Coord}(G)}}

\newcommand{\frozen}{\operatorname{fr}}

\newcommand{\Trop}{\operatorname{Trop}}

\newcommand{\TBG}{{\mathcal X\hskip -.05cm\operatorname{Coord}(G)}}
\newcommand{\TB}{{\mathcal X\hskip -.05cm\operatorname{Coord}}}

\newcommand{\mathbbX}{\mathbb X}
\newcommand{\checkOmega}{\check{\Omega}}
\newcommand{\checkX}{\check{X}}
\newcommand{\X}{\mathbb X}
\newcommand{\opencheckX}{X_G^\circ}
\newcommand{\openX}{{X}^\circ}

\newcommand{\dualSchub}{\check{X}_{\lambda}}
\newcommand{\openSchub}{{X_{\lambda}^{\circ}}}
\newcommand{\opencheckSchub}{{\check{X}_{\lambda}^{\circ}}}
\newcommand{\checkD}{\check{D}}

\newcommand{\ac}{\operatorname{ac}}
\newcommand{\p}{{p}}

\makeatletter
\newcommand*\rel@kern[1]{\kern#1\dimexpr\macc@kerna}
\newcommand*\widebar[1]{%
  \begingroup
  \def\mathaccent##1##2{%
    \rel@kern{0.8}%
    \overline{\rel@kern{-0.8}\macc@nucleus\rel@kern{0.2}}%
    \rel@kern{-0.2}%
  }%
  \macc@depth\@ne
  \let\math@bgroup\@empty \let\math@egroup\macc@set@skewchar
  \mathsurround\z@ \frozen@everymath{\mathgroup\macc@group\relax}%
  \macc@set@skewchar\relax
  \let\mathaccentV\macc@nested@a
  \macc@nested@a\relax111{#1}%
  \endgroup
}
\makeatletter

\begin{document}

\title{A superpotential for Grassmannian Schubert varieties}

\author{K. Rietsch}
\address{Department of Mathematics,
            King's College London,
            Strand, London
            WC2R 2LS
            UK
}
\email{konstanze.rietsch@kcl.ac.uk}%
\author{L. Williams}%
\address{Department of Mathematics,
            Harvard University,
            Cambridge, MA
	    USA
}
\email{williams@math.harvard.edu}

\subjclass[2010]{14M15, 14J33, 52B20, 13F60}

\thanks{K.R. was supported by EPSRC grant EP/V002546/1.
L.W. was supported by 
 the NSF award
 DMS-2152991.}
\subjclass{}

\begin{abstract} 
While mirror symmetry for flag varieties and Grassmannians 
has been extensively studied, Schubert varieties in the Grassmannian
are singular, and hence standard mirror symmetry statements are not 
well-defined.  Nevertheless, in this article we introduce a 
``superpotential'' $W^{\lambda}$
for each Grassmannian Schubert variety $X_{\lambda}$, generalizing the Marsh-Rietsch superpotential for Grassmannians,
and we show that $W^{\lambda}$ governs many toric degenerations
of $X_{\lambda}$.  We also generalize the ``polytopal mirror theorem''
for Grassmannians from our previous work: namely,  
for any cluster seed $G$ for $X_{\lambda}$, we construct a  corresponding  
Newton-Okounkov convex body $\Delta_G^{\lambda}$, and show that it coincides
with the superpotential polytope $\Gamma_G^{\lambda}$, that is,
it is cut out by the inequalities obtained by tropicalizing 
an associated Laurent expansion of $W^{\lambda}$.
This gives us a toric degeneration
of the Schubert variety $X_{\lambda}$ to the (singular)
toric variety $Y(\NN_{\lambda})$
 of the Newton-Okounkov body.  Finally, for a particular cluster seed  $G=G^\lambda_\rect$ we show that the toric variety
$Y(\NN_{\lambda})$
has a small toric desingularisation, and we describe an intermediate partial desingularisation $Y(\FF_\lambda)$  that is Gorenstein Fano. 
Many of our results extend to more general varieties in the Grassmannian.
 \end{abstract}

\maketitle
\setcounter{tocdepth}{1}
\tableofcontents

\section{Introduction}

A Landau-Ginzburg mirror $(\check X^\circ, W)$ of a smooth Fano variety $X$ can be thought 
of as giving a dual description of a decomposition of $X=X^\circ\sqcup D$ where $D$ is an anti-canonical divisor of $X$. 
In this paper we initiate the study of mirror symmetry for general Schubert varieties $X_\lambda$ in the Grassmannian, and some generalisations thereof (e.g. skew shaped positroid varieties), in terms of a remarkable function $W^\lambda$. 
While the Grassmannian is a smooth Fano variety, note that its Schubert varieties $X_\lambda$ are 
never smooth, apart from the trivial cases where  $X_\lambda$ is isomorphic 
to a (possibly lower-dimensional) Grassmannian. 
Most Schubert varieties $X_\lambda$ are not even Gorenstein, and therefore cannot be considered Fano.  
Nevertheless, in this paper we introduce a conjectural 
``Landau-Ginzburg mirror" for a Grassmannian Schubert
variety $X_{\lambda}$,
 associated to a Young diagram  $\lambda$.  
Our Landau-Ginzburg mirror for $X_\lambda$ is an affine subvariety of a Langlands dual Schubert variety, with a function  
$W^\lambda:\check X_{\lambda}^\circ\to\C$ on it called the \emph{superpotential}. 
We think of it as associated to the pair $(X_{\lambda},\Dac^{\lambda})$, where 
$\Dac^{\lambda}$ is a distinguished anticanonical divisor. Let us suppose a minimal Grassmannian containing  $X_\lambda$ is a
Grassmannian of subspaces of $\C^n$, and suppose that $d$ denotes the number of removable boxes in the Young diagram $\lambda$. (Note that $d$ is also  the dimension of the homology $d=\dim(H_{2|\lambda|-2}(X_\lambda,\C))$). Then our special anticanonical divisor
$\Dac^\lambda$ consists of $d+n-1$ irreducible components. 
Our associated LG-model $(\check X_\lambda^\circ, W^\lambda)$ defined in \cref{s:superpotential} has superpotential $W^\lambda=W^\lambda_q$ that is given as a sum of $d+n-1$ (Laurent) monomials in Pl\"ucker coordinates, and additionally depends on $d$ `quantum parameters' $q_1,\dotsc, q_d$.

Recall that in the original framework of  mirror symmetry for smooth Fano varieties  $X$ 
going back to \cite{Batyrev0,  Givental:ICM, Givental:toricCI, Givental:equivariant, OT, HoriVafa2000}, a mirror dual LG model $(\check X^\circ, W)$ consisting of an affine Calabi-Yau $\check X^\circ$ and a regular function $W$ on it, encodes Gromov-Witten invariants of $X$ in a variety of ways, e.g. via period integrals or the Jacobi ring of $W$.  
Our LG model $(\check X_{\lambda}^\circ,W^\lambda)$ introduced in this paper is formally of this type, with a base $\check X_{\lambda}^\circ$ which is a cluster variety and has a `standard' holomorphic volume form\footnote{The volume form is analogous to the form introduced for $G/P$ in \cite{rietsch}, see for example \cite{LamSp:ClusterCoh}.}, so that  one can in principle construct all the analogous generating functions. 
However, the Schubert variety $X_\lambda$ is singular and has no  Gromov-Witten theory. 
Thus this conjectural picture where, roughly speaking, 
 on the LG model side we are dealing with functions $\check X^\circ\to\C$ such as the 
superpotential  $W$ and its derivatives, and we are trying to reinterpret them on the $X$ side via Gromov-Witten theory  (that is, via moduli of maps 
$\mathbb P^1\to X$
in the other direction), is not applicable on this level.\footnote{However, in \cite{Miura:minuscule} conjectural mirror partners of this type are constructed for $3$-dimensional smooth complete intersection Calabi-Yau submanifolds in Gorenstein Schubert varieties, relating to our work via a particular coordinate chart, see \cref{s:toric}.} 

Our approach is instead to switch the roles of the two sides and study a different variant of mirror duality. Namely, let us now consider maps {\it from} $X=X_\lambda$ (or more generally sections of line bundles) on the compact side, and maps {\it into}  $\check X^\circ=\check X^\circ_\lambda$, (and their composition with the superpotential) on  
the LG model side.
Then we can study another form of mirror symmetry which relates these two, and where the above problem does not arise.  Namely on the compact side we can construct Newton-Okounkov bodies associated to ample divisors of $X_\lambda$ supported on an anticanonical `boundary divisor'  $D_{\ac}$ of $X_\lambda$. Meanwhile on the mirror LG model side we construct `superpotential polytopes' whose lattice points are in effect (equivalence classes of) maps $\phi:\Spec\, \C[t,t\inv]\to \check X^\circ$ such that $W^\lambda\circ\phi$ extends across $0$. On both sides it is necessary to pick an open torus $T_G\subset X_\lambda$ and $T^\vee_G\subset \check X_\lambda^\circ$ with a basis of characters, i.e. `coordinates', in order to set up the comparison. These tori are precisely what are given to us by an $\mathcal A$-cluster structure on $\C[\check X^\circ_\lambda]$ on the one hand, and its dual $\mathcal X$-cluster structure on the homogeneous coordinate ring  $\C[\widehat{X_\lambda}]$ on the other. Our first
main result is a `polytopal mirror duality'
statement, which says that the Newton-Okounkov convex bodies of $X_\lambda$ associated to $T_G$ coincide precisely with the superpotential polytopes associated to the restriction of $(\check X_{\lambda}^\circ,W_q^\lambda)$ to the torus chart $T^\vee_G\subset \check X_\lambda^\circ$.

This polytopal mirror theorem  generalizes our previous result for Grassmannians in \cite{RW}, 
and is related to the duality of cluster varieties of Fock and Goncharov~\cite{FG1} and 
Gross, Hacking, Keel and Kontsevich~\cite{GHKK}.  
See also 
the related work on cluster duality by Shen and Weng \cite{ShenWeng}, 
Genz, Koshevoy and Schumann \cite{Schumann}, 
 Bossinger, Cheung, Magee, and N\'ajera Ch\'avez
\cite{bossinger2023newtonokounkov}, and Spacek and Wang \cite{SW1, Wang}.

Our explicit description of the Newton-Okounkov bodies of $X_{\lambda}$
gives rise to many toric degenerations of $X_{\lambda}$, all governed
by the superpotential $W^{\lambda}$.  
In the special case where our cluster seed $G$ is the `rectangles seed,'
we get the well-known toric degeneration of $X_{\lambda}$ to the projective
toric variety $Y(\N_{\lambda})$ 
of the Newton-Okounkov body
$\Delta_{\rect}^{\lambda}$, which is a \emph{Gelfand-Tsetlin polytope},
and is unimodularly equivalent to 
the \emph{order polytope} $\OO(\lambda)$ associated to the poset
of rectangles contained in $\lambda$.\footnote{
This toric degeneration, to the \emph{Hibi toric variety} of 
$\OO(\lambda)$,
was first studied by Gonciulea and Lakshmibai
\cite{GL}; see also 
 \cite{BiswalFourier} and references therein.}
The toric variety $Y(\N_{\lambda})$ 
is singular,
but we show that it admits a small partial desingularization
to a Gorenstein toric  Fano variety $Y(\FF_{\lambda})$ with at most terminal singularities,
the toric variety of the face fan of the Newton polytope of 
$W_{\rect}^{\lambda}$, see \cref{fig:degeneration}.
Moreover, via $Y(\FF_{\lambda})$,  
we have a
small desingularisation of the  toric variety $Y(\CC_{\lambda})$, 
$$Y(\widehat{\FF}_{\lambda}) \to 
        Y(\FF_{\lambda}) \to Y(\CC_{\lambda}).$$

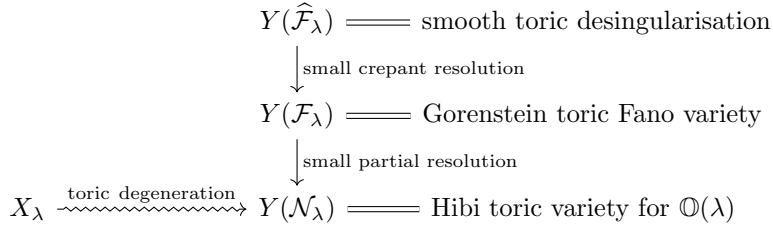
\begin{figure}
\[\begin{tikzcd}
	&&& {Y(\widehat{\mathcal F}_\lambda)} & {\text{smooth toric desingularisation}} \\
	&&& {Y({\mathcal F}_{\lambda})} & {\text{Gorenstein toric Fano variety\ }} \\
	{X_\lambda} &&& {Y(\mathcal N_{\lambda})} & {\text{Hibi toric  variety for } \mathbb O(\lambda)}\quad
	\arrow[Rightarrow, no head, from=1-4, to=1-5]
	\arrow["{\text{small crepant resolution}}", from=1-4, to=2-4]
	\arrow[Rightarrow, no head, from=2-4, to=2-5]
	\arrow["{\text{small partial resolution} }", from=2-4, to=3-4]
	\arrow["{\rm{toric\ degeneration}}", squiggly, from=3-1, to=3-4]
	\arrow[Rightarrow, no head, from=3-4, to=3-5]
\end{tikzcd}\]       
	\caption{ 
\label{fig:degeneration} 
	The Schubert variety $X_{\lambda}$
	degenerates to the projective toric variety 
	$Y(\mathcal{N}_{\lambda})$
	of the Newton-Okounkov body $\Delta^{\lambda}_{\rect}$, which in turn
has a small partial desingularization given by the Fano Gorenstein
toric variety 
	$Y(\mathcal{F}_{\lambda})$. Moreover $Y(\mathcal{F}_{\lambda})$ has a small
	crepant resolution.
	Via a coordinate change,
	$Y(\mathcal{N}_{\lambda})$ and 
	$Y(\mathcal{F}_{\lambda})$ 
	are isomorphic to the Hibi toric variety
	associated to the
	order polytope $\OO(\lambda)$ and to the toric variety
	for the face fan of the
	root polytope $\Root(Q_{\lambda})$, respectively.
	}
\end{figure}

While Schubert varieties are not in general smooth (unless $d=1$),
they are Cohen-Macaulay \cite{Hochster:Grass,Laksov:Grass,Musili:Grass} and normal \cite{RamRam:Schubert,DeCLak:Schubert}. Since $X_\lambda$ is normal, an anti-canonical divisor for $X_\lambda$ is any divisor $D$  whose restriction to the smooth part $U$ of $X_\lambda$ is anticanonical for $U$.
We moreover have a natural choice of an anti-canonical divisor for $X_{\lambda}$, namely the `boundary' anti-canonical divisor, described 
explicitly in \cref{c:ac}.
 This distinguished anticanonical divisor is 
  made up of $d+n-1$ irreducible components, 
which
   are precisely the codimension $1$ positroid strata in $X_\lambda$ (consisting of $d$ Schubert divisors and $n-1$ other positroid divisors). We denote this divisor by $\Dac^\lambda = D_1 + \dots + D_d+D'_1+\dotsc+ D'_{n-1}$, and we denote its complement in $X_\lambda$
 by $\openSchub$.

 We now introduce a conjectural 
``mirror Landau-Ginzburg model" 
$(\check{X_{\lambda}^{\circ}}, 
W^{\lambda}_{\mathbf q})$, where  
 $\dualSchub^\circ $ is the analogue of $X_{\lambda}^\circ=X_{\lambda} \setminus \Dac^{\lambda}$, 
but inside a Langlands dual Schubert variety $\check{X}_{\lambda}$, and 
$W^{\lambda}_{\mathbf q}:\dualSchub^\circ \to\C$ is a regular function that we call the {\it superpotential}. The superpotential is given by an explicit formula in terms of Pl\"ucker coordinates as a sum of $d+n-1$ terms, 
and it depends on several parameters $\mathbf{q} = (q_1,\dots, q_d)$.  

For example, if $\lambda = (4,4,2)$, then $X_{\lambda} \subset Gr_3(\C^7)$ 
and $\dim H^2(X_\lambda,\C) = 2$, the number of removable boxes of $\lambda$.  
The superpotential on $\dualSchub^\circ$ has $8$ summands, with the first two below associated to the two removable boxes, and it is explicitly given by the formula
\begin{equation}\label{e:IntroExample}
	W^{\lambda}=
	W^{\lambda}_{\mathbf q}=
	q_1 \frac{p_{\ydiagram{3}}}{p_{\ydiagram{4,4}}} +
	q_2 \frac{p_{\ydiagram{1,1}}}{p_{\ydiagram{2,2,2}}} +
	\frac{p_{\ydiagram{1}}}{p_{\emptyset}}+
	\frac{p_{\ydiagram{4,1}}}{\p_{\ydiagram{4}}} +
	\frac{p_{\ydiagram{4,3}}}{\p_{\ydiagram{3,3}}} +
	\frac{\left(p_{\ydiagram{2,2,1}}+p_{\ydiagram{3,2}}\right)}{\p_{\ydiagram{2,2}}}+
	\frac{ p_{\ydiagram{2,1,1}}}{p_{\ydiagram{1,1,1}}}.
\end{equation}
Here the $p_\lambda$ are Pl\"ucker coordinates for $Gr_4(\C^7)$; see \cref{s:notation}
for an explanation of the notation.

The $d+n-1$ summands of the superpotential individually give rise to functions, the first $d$ of which 
correspond to Schubert divisors and which  we denote by
\begin{equation*}\label{eq:Wi}
	 W_1 = 
	  \frac{p_{\ydiagram{3}}}{p_{\ydiagram{4,4}}}, 
	\quad 
	 W_2 =   \frac{ p_{\ydiagram{1,1}}}{p_{\ydiagram{2,2,2}}},
	\end{equation*}
so that summands of $W_{\mathbf q}^\lambda$ involving the $q_i$ are $q_1W_1$
and $q_2W_2$. 
The remaining  $n-1=6$ summands are denoted by
\begin{equation*}\label{eq:W'i}
	W'_1 = \frac{p_{\ydiagram{2,2,1}}}{\p_{\ydiagram{2,2}}},\quad 
	W'_2 = 
	\frac{p_{\ydiagram{4,1}}}{\p_{\ydiagram{4}}}, \quad 
	W'_3 = \frac{p_{\ydiagram{1}}}{p_{\emptyset}}, \quad
	W'_4 = \frac{ p_{\ydiagram{2,1,1}}}{p_{\ydiagram{1,1,1}}}, \quad
		W'_5=\frac {p_{\ydiagram{3,2}}}{\p_{\ydiagram{2,2}}}, \quad
				W'_6=
	\frac{p_{\ydiagram{4,3}}}{\p_{\ydiagram{3,3}}}. 
\end{equation*}
Note that each of the nonempty Young diagrams $\mu$ appearing in the denominator of some $W_i$ or $W_i'$ is a rectangle
contained in $\lambda$ whose lower-right box lies on the southeast \emph{rim} of $\lambda$. Note that the rim is made up of $n-1$ boxes. The corresponding Pl\"ucker coordinates together with $p_\emptyset$, that is the $n$ Pl\"ucker coordinates appearing in the denominator of $W^\lambda$, are precisely the frozen variables for an $\mathcal A$-cluster structure on $\C[\check X^{\circ}_\lambda]$, see \cref{sec:Acluster}.  We remark that 
this `canonical' expression for the superpotential
makes it clear that it is a regular function on $\check X^{\circ}_{\lambda}$,
and hence we can express it as a Laurent polynomial on any cluster torus.

If $\mu$ is such a rectangular Young diagram, and its lower-right box is the $j$-th  removable box of $\lambda$ (counting from northeast to southwest, see $W_1,W_2$  above), then there is only one term in $W^\lambda$ with denominator $p_\mu$, and the associated numerator is the product $q_j p_{\muminus}$, where $\muminus$ is obtained from $\mu$ by removing a rim hook. If on the other hand the lower-right box of $\mu$ is not a removable box of $\lambda$, then there are 
one or two terms in the 
numerator above $p_\mu$, each  
 obtained by adding a box to $\mu$ while remaining inside $\lambda$ 
(see $W'_1,\dotsc, W'_6$). The precise rules of the  construction of these summands for general $\lambda$ are given in \cref{s:superpotential}.

We will often use the normalisation $p_{\emptyset}=1$ so that the Pl\"ucker coordinates are
 actual coordinates on~$\dualSchub$.

\subsection{Main results}
We now give an overview of the main results of this paper.

Our first main theorem is the polytopal mirror theorem for Schubert varieties.
Given an
$\mathcal{X}$-cluster seed 
 $\Sigma_G^{\mathcal{X}}$ for
the open Schubert variety $X^{\circ}_{\lambda}$, we define 
an associated valuation $\val_G$ and use this to define a \emph{Newton-Okounkov body} 
$\Delta_G^{\lambda}$, see \cref{sec:NO}.
On the other side, we use the dual $\mathcal{A}$-cluster seed
 $\Sigma_G^{\mathcal{A}}$ for
the dual Schubert variety ${\check X}^{\circ}_{\lambda}$, and express the superpotential
$W^{\lambda}$ as a Laurent polynomial $W^{\lambda}_G$ in the variables of 
 $\Sigma_G^{\mathcal{A}}$.  By tropicalizing this Laurent polynomial we obtain 
 a set of inequalities which define the \emph{superpotential polytope}
$\Gamma^{\lambda}_{G},$ see \cref{sec:trop}.
The following theorem says that these two polytopes coincide.

\begin{mainthm}\label{t:mainIntro}
Let $\Sigma_G^{\mathcal{X}}$ be an arbitrary
$\mathcal{X}$-cluster seed for 
the open Schubert variety $X^{\circ}_{\lambda}$.
Then
 the Newton-Okounkov body
$\Delta^{\lambda}_{G}$
is a rational polytope
with lattice points
$\{\val_G(P_{\mu}) \ \vert \  \mu \subseteq \lambda\}$,
       and it coincides with
the superpotential polytope
$\Gamma^{\lambda}_{G}.$
We get a flat degeneration of $X_{\lambda}$ to the toric variety associated to the normal fan of the superpotential polytope.
\end{mainthm}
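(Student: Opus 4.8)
The plan is to prove \cref{t:mainIntro} by establishing the string of coincidences
\[
\Delta_G^\lambda \;=\; \conv\{\val_G(P_\mu)\mid \mu\subseteq\lambda\} \;=\; \Gamma_G^\lambda,
\]
and then to invoke a standard Newton--Okounkov degeneration argument for the final sentence. The first step is to understand the valuation $\val_G$ coming from the $\mathcal X$-cluster seed $\Sigma_G^{\mathcal X}$: I would show that for any Pl\"ucker coordinate $P_\mu$ on $X_\lambda$ its image $\val_G(P_\mu)$ is an explicit lattice point, computable from the combinatorics of the seed $G$ (for instance from the plabic graph or the corresponding network), and that these values are distinct for distinct $\mu\subseteq\lambda$. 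Since the $P_\mu$ with $\mu\subseteq\lambda$ span the degree-one piece of $\C[\widehat{X_\lambda}]$, one-dimensionality of the leading-term spaces (the ``full-rank'' property of the valuation on each graded piece, proved via the cluster exchange relations) forces $\Delta_G^\lambda$ to be the convex hull of exactly these points. This is the part where I expect the cluster structure to do real work: one needs that $\val_G$ separates Pl\"uckers and that higher-degree pieces do not produce new vertices, which typically follows from a Khovanskii-basis / Newton--Okounkov argument once one knows the $\mathcal X$-cluster variables (equivalently, the frozen and mutable Pl\"uckers) form a well-behaved generating set.

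The second step is the identification $\conv\{\val_G(P_\mu)\} = \Gamma_G^\lambda$. Here I would use the dual $\mathcal A$-cluster seed $\Sigma_G^{\mathcal A}$ on $\check X_\lambda^\circ$ and the canonical Pl\"ucker expression for $W^\lambda$ recalled in the introduction. Expanding each summand $W_i, W'_i$ as a Laurent monomial (or sum of two Laurent monomials) in the $\mathcal A$-cluster variables of $\Sigma_G^{\mathcal A}$ and tropicalizing, the defining inequalities of $\Gamma_G^\lambda$ become a system indexed by the $d+n-1$ summands. The key input is the compatibility of the $\mathcal X$- and $\mathcal A$-cluster structures: the tropicalized $\mathcal A$-cluster variable attached to a Young diagram $\mu$ should equal, under the identification of weight lattices, the functional $\langle \val_G(P_\mu), \cdot\rangle$ on the $\mathcal X$-side. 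Granting this dictionary, tropicalizing the term of $W^\lambda$ with denominator $p_\mu$ yields precisely the inequality $\val_G(P_{\mu'})\ge \val_G(P_{\mu''})$ (suitably shifted by $q$-degrees) comparing neighbouring Pl\"uckers, and the collection of these is exactly the inequality description of $\conv\{\val_G(P_\mu)\}$ by the order-polytope-type combinatorics of Young diagrams contained in $\lambda$. I would first check this in the rectangles seed $G^\lambda_\rect$, where $\Gamma_G^\lambda$ is the Gelfand--Tsetlin / order polytope $\OO(\lambda)$ and the statement is essentially classical, and then propagate to an arbitrary seed using that both $\Delta_G^\lambda$ and $\Gamma_G^\lambda$ transform by the tropicalized cluster mutation (the piecewise-linear maps of \cite{FG1, GHKK}) under a mutation $G\to G'$. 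The main obstacle is precisely this mutation-compatibility: one must show that the $\mathcal X$-valuation transforms under seed mutation by the same tropical map that governs the Laurent expansion of $W^\lambda$, which requires care with frozen variables, with the $q$-parameters, and with the fact that $X_\lambda$ is only a Schubert variety (so some Pl\"uckers vanish and the positroid combinatorics is constrained).

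For the final sentence, once $\Delta_G^\lambda$ is known to be a rational polytope whose lattice points are exactly $\{\val_G(P_\mu)\}$ and these form a Khovanskii basis for the section ring $\bigoplus_m H^0(X_\lambda,\mathcal O(m))$ with respect to $\val_G$, the flat degeneration is the standard theorem of Anderson (building on Kaveh--Khovanskii and Okounkov): the Rees-type construction $\bigoplus_m \bigoplus_{v} \big(F_{\ge v}/F_{>v}\big)\, t^m$ gives a flat family over $\Spec\C[t]$ whose special fibre is the projective toric variety $\Proj$ of the semigroup algebra generated by the lattice points of the $\Delta_G^\lambda$ at level $m$, i.e.\ the toric variety of the normal fan of $\Delta_G^\lambda$. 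Thus I would phrase the proof of the last statement as: (i) the valuation has one-dimensional leaves on each graded piece and finitely generated value semigroup (already established while proving the polytope statement), (ii) apply \cite{Anderson} to obtain flatness and identify the special fibre with $Y(\NN_\lambda)$, the toric variety of the normal fan of the superpotential polytope $\Gamma_G^\lambda = \Delta_G^\lambda$. No genuinely new difficulty arises here beyond bookkeeping, provided the Khovanskii-basis property from the first step is stated in the form required by Anderson's theorem.
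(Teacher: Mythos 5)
Your plan has the same overall architecture as the paper's proof (establish the identity for the rectangles seed, then propagate to arbitrary seeds by tropicalized mutation, then cite Anderson for the degeneration), but there are two places where you correctly flag a difficulty and then gesture past it without supplying the argument; in both cases the paper needs a genuinely nontrivial additional ingredient.

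First, in the rectangles seed you observe that distinctness of the $\val_G(P_\mu)$ gives the inclusion $\conv\{\val_G(P_\mu)\}\subseteq\Delta^\lambda_G$, and you say equality should follow from a ``Khovanskii-basis / Newton--Okounkov argument'' once the cluster structure is understood. But the cluster exchange relations do not by themselves show that the value semigroup is generated in degree one. The paper closes this gap with a \emph{volume comparison}: by Kaveh--Khovanskii, $\Vol(\Delta^\lambda_G)$ equals $\frac{1}{|\lambda|!}\cdot\deg(X_\lambda)$, which is $\frac{1}{|\lambda|!}$ times the number of standard Young tableaux of shape $\lambda$; on the other hand the superpotential polytope $\Gamma^\lambda_\rect$ is unimodularly equivalent to the order polytope $\OO(\lambda)$, whose volume is $\frac{e(P(\lambda))}{|\lambda|!}$ by Stanley's formula, and $e(P(\lambda))$ is again the number of SYT of shape $\lambda$. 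An inclusion of a polytope in a convex body of equal (finite, positive) volume is an equality, and only then does the Khovanskii-basis property (your Remark) follow as a \emph{consequence}. As written, your argument is circular: you need the Khovanskii-basis property to conclude the polytope identity, but that property is what the volume argument proves.

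Second, for arbitrary $G$ the decisive step is that the tropicalized $\mathcal A$-mutation $\Psi_{G,G'}$ restricts to a bijection $\val_G(L_r)\to\val_{G'}(L_r)$ for every $r$, not just $r=1$. You correctly identify mutation-compatibility as ``the main obstacle,'' but you do not say how to establish it. The paper's mechanism is the theta function basis of Gross--Hacking--Keel--Kontsevich: one needs (i) a green-to-red sequence for open positroid varieties to know that the theta basis exists on $\C[\widehat{X}^\circ_\lambda]$, and (ii) the existence of an \emph{optimized seed for every frozen variable} (proved by an explicit mutation sequence in the rectangles quiver) so that, via \cite[Corollary 9.17]{GHKK} and the Muller--Speyer twist, the theta basis descends to a basis of $\C[\widehat{X}_\lambda]$ compatible with the grading. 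It is then the pointedness of theta functions in every $\mathcal X$-chart that gives the mutation-equivariance of the whole $\val_G(L_r)$, not only of the Pl\"ucker valuations (the latter follows already from the plabic-graph move compatibility of flow polynomials, but that is insufficient for $r>1$). Without these inputs your propagation step does not close. The final Anderson step is fine once the polytope is known to be rational and the valuation has one-dimensional leaves on each graded piece; one should also note the choice $D=\{P_\lambda=0\}$ is ample, which Anderson's theorem requires.
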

The constructions underlying the above theorem  implicitly use the divisor 
$D=D_1 + \dots + D_d$ associated to the Pl\"ucker embedding of $X_\lambda$. 
However, the above result can be generalized (see
\cref{t:maingen} and \cref{cor:degenerationgen}) to 
Newton-Okounkov bodies and superpotential polytopes
defined using arbitrary ample  divisors supported on the boundary.  (We 
 describe explicitly which divisors $r_1D_1+\dotsc+ r_{d} D_{d}+ r'_{1} D'_{1}+\dotsc + r'_{n-1} D'_{n-1}$ are Cartier and ample in \cref{s:GeometrySchubert}.) 
The most important example for us is however the one which we highlight in 
\cref{t:mainIntro}.

Our next main result, which appears as \cref{p:latticepoints-comp}, gives
an explicit ``maximal diagonal'' formula for the lattice points in the Newton-Okounkov body
$\Delta^{\lambda}_G$, when the cluster seed
 $\Sigma_G^{\mathcal{X}}$ comes from a \emph{reduced plabic graph}  $G$
 (see \cref{sec:plabic}).
\begin{mainthm}\label{t:mainmaxdiag}
Let $G$ be any reduced plabic graph for $X_{\lambda}^{\circ}$.
Then the lattice points 
$\{\val_G(P_{\mu}) \ \vert \  \mu \subseteq \lambda\}$ of 
the Newton-Okounkov body
$\Delta^{\lambda}_{G}$ have coordinates 
	$\val_G(P_\mu)_\eta$ (where 
	$\eta\in\mathcal P_G(\lambda)$)
	given by 
\begin{equation}\label{e2:GenMaxDiag}
\val_G(P_\mu)_\eta=\maxdiag(\eta\setminus\mu),
\end{equation}
	where $\maxdiag(\eta\setminus\mu)$ is the maximum number of boxes of 
	$\eta \setminus \mu$ that lie along any diagonal of slope $-1$ of the rectangle, 
	see \cref{d:MaxDiagVectors}.
\end{mainthm}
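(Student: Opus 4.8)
The plan is to reduce Theorem~\ref{t:mainmaxdiag} to the special case of Theorem~\ref{t:mainIntro}, namely to an explicit computation of the valuation $\val_G$ on Pl\"ucker coordinates, and then to identify that computation with the combinatorial "maximal diagonal" statistic. First I would recall from \cref{sec:NO} how $\val_G$ is defined from the $\mathcal X$-cluster seed $\Sigma_G^{\mathcal X}$: the seed $G$ is a reduced plabic graph, its faces $\eta \in \mathcal P_G(\lambda)$ are indexed by Young diagrams $\mu \subseteq \lambda$ (via the target/trip-permutation labelling of faces, cf.\ \cref{sec:plabic}), and the $\mathcal X$-cluster variables are ratios of Pl\"ucker coordinates read off from $G$. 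The valuation $\val_G$ on a Pl\"ucker coordinate $P_\mu$ is computed by iterated "lowest-term" extraction in the exchange graph, or equivalently by a one-step mutation formula; the key input is that each face $\eta$ contributes the exponent $\val_G(P_\mu)_\eta$, and we want to show this equals $\maxdiag(\eta\setminus\mu)$ in the sense of \cref{d:MaxDiagVectors}.

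The natural approach is to induct on $|\lambda|$ (or on the number of boxes of $\lambda$ not in $\mu$, i.e.\ on $|\lambda\setminus\mu|$), using the compatibility of $\val_G$ with a single mutation of the seed $G$ at a face, together with a base case where $G$ is a particularly simple graph — most likely the plabic graph $G^\lambda_{\rect}$ associated to the rectangles seed, for which the lattice points of $\Delta^\lambda_{\rect}$ are already known (from the discussion preceding \cref{t:mainIntro}) to give the Gelfand--Tsetlin / order polytope $\OO(\lambda)$. For the rectangles seed one checks directly that the face labelled by a rectangle $R$ contributes exponent $\maxdiag(R\setminus\mu)$, which for a rectangle is just $\min$ of the side-lengths of the overlap — a routine verification. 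Then, using that any two reduced plabic graphs for $X_\lambda^\circ$ are connected by square moves (and contraction/expansion moves, which don't change the face labels), I would show that the right-hand side of \eqref{e2:GenMaxDiag} transforms under a square move exactly the way $\val_G(P_\mu)_\eta$ transforms under the corresponding cluster mutation. Concretely, a square move at a face $\eta$ replaces the exponent at $\eta$ by $\bigl(\sum\text{neighbours}\bigr) - (\text{old exponent})$ with a tropical-max correction; the combinatorial claim is that $\maxdiag$ satisfies the same tropical exchange relation, which can be checked by a finite case analysis on how a slope-$-1$ diagonal meets the four neighbouring regions of the mutated face.

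An alternative, cleaner route — which I would actually prefer to pursue in parallel — is to bypass the inductive mutation argument entirely and instead exploit Theorem~\ref{t:mainIntro} directly: by that theorem $\Delta^\lambda_G = \Gamma^\lambda_G$, the superpotential polytope, whose defining inequalities come from tropicalizing $W^\lambda_G$. One knows (this is the Grassmannian case proved in \cite{RW}, and the construction of $W^\lambda$ in \cref{s:superpotential} is engineered to extend it) that for a plabic-graph seed the tropicalized superpotential inequalities are precisely the "flow/diagonal" inequalities, and the vertices $\val_G(P_\mu)$ of the associated polytope are the tropicalizations of the Pl\"ucker coordinates, which under the network parametrization attached to $G$ are computed by the Lindstr\"om--Gessel--Viennot lemma as minima over families of non-crossing paths; tropicalizing an LGV determinant for $P_\mu$ and reading off the exponent at face $\eta$ yields exactly a max over diagonals of $\eta\setminus\mu$. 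So the proof structure would be: (i) set up the network parametrization from $G$ and the tropical LGV formula for $\val_G(P_\mu)$; (ii) perform the tropical minimization, showing the minimizing path family is "greedy" and its face-$\eta$ crossing number is $\maxdiag(\eta\setminus\mu)$; (iii) invoke Theorem~\ref{t:mainIntro} to know these are genuinely all the lattice points.

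The main obstacle, in either route, is controlling the optimal object in the tropical optimization — the minimizing non-crossing path family (network route) or the sequence of mutations realizing the lowest term (mutation route) — uniformly over \emph{all} reduced plabic graphs $G$, not just the rectangles one. In the network route this amounts to an LGV-type argument showing that the tropical minimum is attained by a family of lattice paths whose overlap with any region is governed solely by anti-diagonal chains, i.e.\ a discrete concavity/exchange property of the path-weight function; proving that the greedy family is optimal, and that its crossing statistic is genuinely the \emph{maximum} over slope-$-1$ diagonals (and not merely an upper or lower bound), is the technical heart of the matter. I expect this to require a careful induction "peeling off" the outermost anti-diagonal of $\eta\setminus\mu$, matching it against one unit of the tropical path, together with the observation that square moves permute the faces of $G$ in a way compatible with shifting diagonals — so the two routes ultimately share the same combinatorial core.
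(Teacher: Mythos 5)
Your two proposed routes are both plausible, and Route 1 (mutation induction from the rectangles base case) is exactly the approach the paper mentions in \cref{r:MaxDiagMutationProof} and then sketches --- but it is \emph{not} the proof the paper actually gives. After sketching the mutation argument, the paper explicitly says ``We now give a different proof \dots that makes use of the compatibility of the cluster structures of the different Schubert varieties under inclusion,'' and that second proof is what is written out in full.

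The paper's actual argument runs by \emph{adding a box} to $\nu$ to get $\lambda$, not by moving between two plabic graphs for the same $\lambda$. Concretely: given a reduced plabic graph $G$ for $X^\circ_\nu$ with acyclic perfect orientation $\O$ and source set $J_\nu$, the paper constructs a plabic graph $G'$ for $X^\circ_\lambda$ (where $\lambda = \nu\sqcup b$) by extending two wires to create one new bounded region (\cref{f:networkextension}); each flow in $\O$ extends uniquely to a flow in $\O'$, and the key \emph{Claim} is that the extension of the minimal flow is again the minimal flow. This gives $\iota(\val_G(P_\kappa)) = \val_{G'}(P_\kappa)$ and dually $\pi(\val_{G'}(P_\kappa)) = \val_G(P_\kappa)$, and iterating the box-addition recursion reduces the whole statement to the full-Grassmannian case $\lambda = (n-k)\times k$, for which the max-diagonal formula is already \cite[Theorem 15.1]{RW}. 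In parallel, \cref{p:FaceProp} identifies $\Delta^\nu_G$ with a face of $\Delta^\lambda_{G'}$ cut out by the hyperplane $v_{\mu_\rho} - v_{\mu_\rho^-} = 1$, and the compatibility statement for lattice points is inherited from that embedding.

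The crucial difference is where the ``heavy lifting'' happens. You correctly identified the obstacle in both of your routes: one needs to control the optimal object (the minimizing flow, or the lowest term under iterated mutation) uniformly over \emph{all} reduced plabic graphs, and your peel-off-the-outermost-antidiagonal strategy for Route 2 is essentially a re-derivation of the Grassmannian LGV argument from \cite{RW} carried out directly on a Schubert plabic graph. The paper avoids re-doing that analysis entirely: the only new flow-optimization fact it needs to prove is the local extension Claim above, which compares flows in $\O$ to flows in $\O'$ differing by one bounded region, and that comparison only requires knowing that the exponent at the new region $\mu_\rho$ is forced (it counts paths that enter before $m_r - 1$ and exit after $m_r - 1$, hence is flow-independent). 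Everything else is imported from the Grassmannian case as a black box. So the paper buys brevity and modularity by leaning on \cite{RW}, whereas your routes would require either verifying the tropical exchange relation for $\maxdiag$ under every square move (Route 1, which the paper acknowledges works but does not write out) or re-proving the greedy-flow optimization from scratch (Route 2). Both of your routes are defensible, but neither is complete as written --- the step you flag as ``the technical heart of the matter'' is precisely the step left as an expectation rather than an argument.
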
 
We note that $\maxdiag(\eta\setminus\mu)$ is an interesting quantity that
has appeared in a variety of other contexts.  
By work of Fulton and Woodward \cite{FW}, 
$\maxdiag (\eta \setminus \mu)$ is equal to the
smallest degree $d$ such that $q^d$ appears in the Schubert expansion of 
the quantum product of two Schubert
classes $\sigma^{\eta}\star \sigma^{\mu^c}
$ in the quantum cohomology ring  $QH^*(Gr_k(\C^n))$, 
where $\sigma^{\mu^c}$ is the Poincar\'e dual Schubert class to $\sigma^\mu$.
See also \cite{Yong, PostnikovDuke}. 
Moreover, in the quantum cluster algebra $\C_q[Gr_{k,n}]$, if $I,J\in {[n] \choose k}$ are
\emph{non-crossing}, so that the quantum minors $\Delta_I$ and $\Delta_J$ quasi-commute,
then by a result of Jenson, King and Su \cite[Lemma 7.1 and Theorem 6.5]{JKS1},
\begin{equation}
	q^{\maxdiag(\lambda_I\setminus \lambda_J)} \Delta_I \Delta_J = 
	q^{\maxdiag(\lambda_J\setminus \lambda_I)} \Delta_J \Delta_I,
\end{equation}
where $\lambda_I$ is the partition associated to the subset $I$.

Our third set of results concerns the special case where $G$ is the \emph{rectangles seed}
(see \cref{def:rectseed}).  
The various statements in the following theorem appear as 
\cref{prop:super2}, \cref{p:unimodular},
\cref{prop:refine}, and \cref{cor:small}.

\begin{mainthm}\label{t:mainrectangles}
Suppose that  $G=G_{\rect}$ is the rectangles seed.  Then the following statements hold.
\begin{enumerate}
\item	 We have an explicit ``head over tails''
expression for the Laurent expansion $W^{\lambda}_{\rect}$ of the superpotential $W^{\lambda}$
in terms of $G$, which 
can be read off of an associated quiver $Q_\lambda$,
see \cref{fig:superpotential}. Moreover, in this case the Newton-Okounkov body $\Delta_G^\lambda=\Gamma^\lambda_G$  
is unimodularly equivalent to the \emph{order polytope $\OO(\lambda)$} of the poset $P(\lambda)$
of rectangles contained in $\lambda$.
\item 
Let $\CC_\lambda$ denote the normal fan of $\Delta^\lambda_G$.  We have a toric degeneration of $X_{\lambda}$ to the associated toric variety $Y(\CC_\lambda)$, which by (1) has an interpretation as the \emph{Hibi toric variety} associated to the poset $P(\lambda)$. 
This recovers Gonciulea and Lakshmibai's result that a minuscule Schubert variety degenerates to the Hibi toric variety for the order polytope of the associated minuscule poset \cite[Theorem 7.34]{GL}, in our setting.
\item 
The Newton polytope of $W_{\rect}^{\lambda}$ is unimodularly equivalent to the \emph{root polytope} of the quiver $Q_\lambda$, and is reflexive and terminal. The face fan $\FF_{\lambda}$ of the Newton polytope of $W_{\rect}^{\lambda}$
refines the normal fan $\N_{\lambda}$
of $\Delta_G^{\lambda}$, and both fans have the same set of rays.
\item Hence 
we obtain a small partial desingularization of
the Hibi toric variety $Y(\N_{\lambda})$ 
to the toric variety $Y(\FF_{\lambda})$, and  
$Y(\FF_{\lambda})$ 
is  Gorenstein Fano  
with at most
terminal singularities. The Picard group of $Y(\FF_\lambda)$ and the ample cone are combinatorially derived from the poset $P(\lambda)$.
\item By combining above results with \cite[Theorem E]{RWReflexive}, 
we have a 
small desingularisation of the Hibi toric variety
$Y(\widehat{\FF}_{\lambda}) 
		\to Y(\CC_{\lambda})$,
see \cref{fig:degeneration}. 
\end{enumerate}
\end{mainthm}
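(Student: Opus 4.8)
The plan is to reduce the entire theorem to one explicit computation — the Laurent expansion of $W^\lambda$ in the $\mathcal{A}$-cluster torus of the rectangles seed — after which parts (2)--(5) are consequences of the toric dictionary together with \cite{RWReflexive}. First I would make the rectangles seed $G_{\rect}$ of \cref{def:rectseed} completely explicit: its cluster variables (mutable and frozen) are the Pl\"ucker coordinates $p_\mu$ with $\mu\subseteq\lambda$ a rectangle, and every Pl\"ucker coordinate $p_\nu$ for $\nu\subseteq\lambda$ is a Laurent monomial — in fact an alternating product running along the staircase boundary of $\nu$ — in these rectangle Pl\"uckers. Substituting these monomial expressions into the canonical $(d+n-1)$-term formula for $W^\lambda$ of \cref{s:superpotential}, I expect each summand to telescope to a single ``head over tails'' monomial $p_h/p_t$, except that the $n-1$ non-quantum summands with a two-box numerator split as a sum of two such monomials; recording the pairs $(t,h)$ as arrows gives the quiver $Q_\lambda$ and the formula of \cref{fig:superpotential}. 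The delicate bookkeeping is in the $d$ quantum summands $q_j W_j$: here the numerator $p_{\muminus}$ comes from a frozen rectangle $p_\mu$ by removing a rim hook, and I would need to check that this rim-hook move becomes, after the monomial substitution, a single $q_j$-decorated arrow into $Q_\lambda$ — the step with no analogue in the Grassmannian case of \cite{RW}.

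Given the explicit monomials, part (1)'s polytope claim follows from \cref{t:mainIntro}: the Newton--Okounkov body $\Delta_G^\lambda$ equals the superpotential polytope $\Gamma_G^\lambda$, which by construction is cut out by tropicalizing $W^\lambda_{\rect}$. Each ``head over tails'' term $p_h/p_t$ tropicalizes to an inequality $x_h - x_t \ge 0$, and each $q_j$-term contributes the remaining, bounding inequalities; I would then exhibit a unimodular change of coordinates carrying this system to the defining inequalities of the order polytope $\OO(\lambda)$ of the poset $P(\lambda)$ of rectangles in $\lambda$ under containment, checking compatibility with the lattice-point formula $\val_G(P_\mu)_\eta=\maxdiag(\eta\setminus\mu)$ of \cref{t:mainmaxdiag}. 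For part (2), \cref{t:mainIntro} already provides a flat degeneration of $X_\lambda$ to $Y(\CC_\lambda)$; the unimodular equivalence from part (1) identifies $Y(\CC_\lambda)$ with the toric variety of $\OO(P(\lambda))$, i.e.\ the Hibi toric variety of $P(\lambda)$, and I would invoke the classical poset isomorphism sending a rectangle contained in $\lambda$ to its southeast corner box — identifying $P(\lambda)$ with the sub-poset of boxes of $\lambda$ inside the type-$A$ minuscule poset — to recover \cite[Theorem 7.34]{GL} in this setting.

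For parts (3)--(5) the remaining work is toric and combinatorial. Reading the exponent vectors off the monomials of $W^\lambda_{\rect}$ computed in part (1), the Newton polytope $\NP(W^\lambda_{\rect})$ is, up to unimodular equivalence, $\Conv\bigl(\{0\}\cup\{e_h-e_t\}\bigr)$ over the arrows of $Q_\lambda$ — the root polytope $\Root(Q_\lambda)$; that this polytope is reflexive and terminal, and that it admits a smooth crepant toric resolution, I would import from \cite{RWReflexive} (in particular its Theorem E), after verifying that the quiver produced here is the one treated there. That $\FF_\lambda:=\FaceFan(\NP(W^\lambda_{\rect}))$ refines the normal fan $\N_\lambda$ of $\Gamma_G^\lambda$ with the same set of rays follows by matching the facet normals of $\Gamma_G^\lambda$ (the rays of $\N_\lambda$) with the vertices of $\NP(W^\lambda_{\rect})$ and observing that the facets of $\NP(W^\lambda_{\rect})$ subdivide the (generally non-simplicial) normal cones of $\Gamma_G^\lambda$ using only these rays, so the induced toric map $Y(\FF_\lambda)\to Y(\N_\lambda)=Y(\CC_\lambda)$ is proper birational with no exceptional divisor, hence small; and $Y(\FF_\lambda)$ is Gorenstein Fano with at most terminal singularities because $\NP(W^\lambda_{\rect})$ is reflexive and terminal. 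The Picard group and ample cone of $Y(\FF_\lambda)$ are then extracted from the ray--facet incidences of $\NP(W^\lambda_{\rect})$, equivalently from $P(\lambda)$, by the standard dictionary. Finally, composing the crepant resolution $Y(\widehat\FF_\lambda)\to Y(\FF_\lambda)$ from \cite[Theorem E]{RWReflexive} with $Y(\FF_\lambda)\to Y(\CC_\lambda)$ gives the small desingularization of part (5) — small because it is a composition of small maps, the resolution adding no rays since terminality forces its unimodular triangulation to use only vertices of $\NP(W^\lambda_{\rect})$.

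The main obstacle I anticipate is part (1) itself: carrying out the monomial substitution cleanly for a general partition $\lambda$ — in particular reconciling the rim-hook moves in the $d$ quantum terms with the combinatorics of the rectangles seed, which has no Grassmannian precedent — and then pinning down the precise unimodular coordinate change that identifies the tropicalization of $W^\lambda_{\rect}$ with the order polytope $\OO(\lambda)$. Everything downstream is then a translation into the toric dictionary together with an appeal to the reflexive-polytope analysis of \cite{RWReflexive}.
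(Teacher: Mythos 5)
Your high-level plan is the right one and matches the paper's: everything in (2)--(5) is a translation of part (1) through the toric dictionary plus the reflexive-polytope analysis of \cite{RWReflexive}. Parts (2) and (5) are handled essentially as you describe via \cref{cor:degeneration} and \cite[Theorem~E]{RWReflexive}, part (3) is \cref{c:reflexiveterminal} and \cref{prop:refine} (with $\NP(\rW^\lambda_\rect)=\Root(Q_\lambda)$ exactly by reading off exponent vectors, as you say), and part (4) is \cref{thm:Gorenstein}, \cref{cor:small}, \cref{t:toricCartier}, \cref{p:toricCartierAmple}, and \cref{c:toricPicard}.

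However, your route into part (1) rests on a false premise, and the bookkeeping built on it does not close. You assert that in the rectangles seed $G_\rect$, ``every Pl\"ucker coordinate $p_\nu$ for $\nu\subseteq\lambda$ is a Laurent \emph{monomial} --- in fact an alternating product running along the staircase boundary of $\nu$ --- in these rectangle Pl\"uckers.'' This is not true: the Laurent phenomenon guarantees $p_\nu$ is a Laurent \emph{polynomial} in the cluster variables of $G_\rect$, but in general it has several terms. Already $p_{(2,1)}$ in $\check{X}_{(2,1)}$ is the two-term expression obtained by one mutation at $p_{(1)}$ (a three-term Pl\"ucker/exchange relation), not a monomial. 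The ``alternating product along the staircase'' formula you have in mind is a statement about the \emph{leading term} of $p_\nu$, i.e.\ what survives in the associated graded (toric) degeneration, or equivalently the valuation $\val_G(P_\nu)$ of \cref{t:mainmaxdiag} --- not about $p_\nu$ itself. Consequently your quantitative prediction, that each of the $d+n-1$ canonical summands ``telescopes to a single head-over-tails monomial'' except that inner-corner summands split into two, cannot be right: for $\lambda=(4,4,2)$ the canonical $W^\lambda$ has $8$ summands while $W^\lambda_\rect$ has $16$ head-over-tails monomials, and in general the expansion factor grows with the row/column length, not the corner type.

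The paper's proof of the Laurent expansion (\cref{prop:super2}) runs the telescope in the opposite direction, and this is the missing idea you need. One shows (\cref{lem:Plucker}) that summing the head-over-tails monomials along an entire row (resp.\ column) of $Q_\lambda$ yields, via iterated three-term Pl\"ucker relations, the single canonical term $p_{{}_\square(i\times m)}/p_{i\times m}$ (resp.\ $p_{(h\times j)^\square}/p_{h\times j}$); the induction is precisely a chain of cluster exchange relations obtained by mutating at $p_{i\times 1}, p_{i\times 2},\dots$. This also shows the non-rectangle numerators appearing in $W^\lambda$ are cluster variables and hence that $W^\lambda$ is universally positive (\cref{prop:univpos}), a point you implicitly need for the superpotential polytope to transform by tropical mutation. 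Once $W^\lambda_\rect$ is in hand, the passage to the order polytope $\OO(\lambda)$ is exactly the unimodular shear $f_{i\times j}=v_{i\times j}-v_{(i-1)\times(j-1)}$ of \cref{p:unimodular} followed by \cref{p:unimodular2}, and the identification $\Delta^\lambda_\rect=\Gamma^\lambda_\rect$ uses the lattice-point description of \cref{c:ConvEqualsGamma} together with a volume count against the degree of $X_\lambda$ (\cref{c:VolumeOfNOBody}, \cref{thm:rectanglesproof}); you would need one of these equality mechanisms in place rather than appealing to \cref{t:mainIntro} as given, since that theorem is itself bootstrapped from the rectangles-seed case.
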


One key idea of this work is that one can 
use the paradigm ``Newton-Okounkov body equals  superpotential polytope'' to 
arrive at a conjectural definition of  superpotential, by computing the facet inequalities
of the Newton-Okounkov body then ``detropicalizing''.
Indeed, this is how we 
arrived at our notion of superpotential for Schubert varieties.  We plan to extend
our results to more general varieties such as positroid varieties
in a subsequent
work \cite{RW3}.

\vskip .2cm

The structure of this paper is as follows.
In \cref{s:notation} we start by setting up our notation 
for Grassmannians and Schubert varieties.  We then 
give a quick overview of the $\mathcal{A}$
and $\mathcal{X}$-cluster structures for (open) Schubert varieties 
in \cref{sec:A}.
\cref{s:superpotential} gives the key definition of this paper, 
the definition of the superpotential $W^{\lambda}$ 
for Schubert varieties; 
we give several expressions for the superpotential,
including a ``canonical formula'' and a ``head over tails'' Laurent
polynomial from a quiver.
Then in \cref{s:GeometrySchubert} we describe the geometry
of the Schubert variety $X_{\lambda}$ and its boundary divisor;
we observe that the summands of the superpotential 
$W^{\lambda}$ are in natural bijection with the positroid divisors in 
$X_{\lambda}$.
In \cref{sec:NO} we define the \emph{Newton-Okounkov body}
$\Delta_G^{\lambda}(D)$ associated to an ample divisor $D$ in $X_{\lambda}$
and  a transcendence basis coming from a choice of 
$\mathcal{X}$-cluster for $X^{\circ}_{\lambda}$.
In \cref{sec:trop} we define the \emph{superpotential polytope}
$\Gamma_G^{\lambda}(D)$, which is also associated to a divisor
and a choice of $\mathcal{A}$-cluster for $\check{X}^{\circ}_{\lambda}$.
There is a particularly nice \emph{rectangles seed} 
for the cluster structure on a Schubert variety, 
and in \cref{sec:rectangles}, we explain how 
when we use this seed, the superpotential polytope becomes an \emph{order
polytope} (up to a unimodular change of variables).
This observation is the starting point for 
our proof, given in \cref{sec:polytopescoincide}, that 
the Newton-Okounkov body $\Delta_G^{\lambda}$ coincides with 
the superpotential polytope $\Gamma_G^{\lambda}$ when the divisor $D$
corresponds to the Pl\"ucker embedding.  (Our proof also uses
ingredients such as the \emph{theta function basis}, and the fact
that every frozen variable for a Schubert variety has an 
\emph{optimized seed}.)
In \cref{s:MaxDiag} we present our ``max-diagonal'' formula for 
valuations of Pl\"ucker coordinates in Schubert varieties, i.e. 
for the lattice points of our Newton-Okounkov bodies.
We then generalize our ``Newton-Okounkov body equals superpotential
polytope'' theorem to arbitrary Cartier boundary divisors in \cref{s:GenD}.
In \cref{s:toric} we give further context for referring to our 
function $W^{\lambda}$ as a superpotential: in particular, it governs
many toric degenerations of $X_{\lambda}$, including
a degeneration to the Hibi toric variety of an order polytope, which in turn
admits a small toric resolution.
\cref{sec:skew} explains how to generalize our superpotential
and our results to the 
setting of 
\emph{skew shaped positroid varieties}.
The paper ends with two appendices:
\cref{sec:appendix} gives a quick overview of positroids cells
and positroid varieties, while 
\cref{a:positroidhomology} proves an expression for the homology
class of a positroid divisor in terms of the Schubert classes.

\vskip .2cm

\noindent{\bf Acknowledgements:~}
The authors would like to thank Lara Bossinger, Eugene Gorsky,
Mark Gross, Elana Kalashnikov, Soyeon Kim, Alastair King, 
Timothy Magee, 
 Matthew Pressland, and Richard Stanley for helpful discussions.
K.R. is supported by EPSRC grant EP/V002546/1.
L.W. was supported by the National Science Foundation under Award No.
 DMS-2152991 until May 12, 2025, when this grant was terminated.

\section{Notation for Grassmannians and Schubert varieties}\label{s:notation}

Our conventions regarding Schubert varieties generalise those of Grassmannians used in \cite{RW} and \cite{MR-Adv}, which we begin by recalling. We use the shorthand notation $[n]:=\{1,\dots,n\}$, and let $\binom{[n]}{m}$ denote the set of all $m$-element subsets of $[n]$.

\subsection{Dual Grassmannians 
and their Pl\"ucker coordinates}

Let $\mathbbX = Gr_{n-k}(\C^n)$ be the Grassmannian of $(n-k)$-planes in $\C^n$ and let 
$\mathbb\checkX=Gr_{k}((\C^n)^*)$ be the Grassmannian of $k$-planes in the vector space $(\C^n)^*$ of row vectors.  We think of $\mathbbX$ as a homogeneous space for the group $GL_n(\C)$, acting on the left, and $\mathbb\checkX$ as a homogeneous space for the Langlands dual general linear group $GL_n^\vee(\C)$ acting on the right.

An element of $\mathbb X=Gr_{n-k}(\C^n)$
can be represented as the column-span of a full-rank $n\times (n-k)$ matrix $A$.
For any $J\in \binom{[n]}{n-k}$ let $P_J(A)$
denote the maximal minor of the $n\times (n-k)$ matrix~$A$ with row set $J$.
The map $A\mapsto (P_J(A))$, where $J$ ranges over $\binom{[n]}{n-k}$,
induces the {\it Pl\"ucker embedding\/} $\mathbbX\hookrightarrow \mathbb{P}^{\binom{n}{n-k}-1}$, and the $P_J$, interpreted as homogeneous coordinates on $\X$, are called the \emph{Pl\"ucker coordinates}.

For $\mathbb\checkX$ on the other hand we represent an element as row span of a $k\times n$ matrix $M$, and the Pl\"ucker coordinates  are naturally parameterized by $\binom{[n]}{k}$; for every $k$-subset $I$ in $[n]$ the Pl\"ucker coordinate $p_I$ is associated to the $k\times k$ minor of $M$ with column set given by~$I$.

\color{black}
\subsection{Young diagrams}\label{s:Young}
 It is convenient to index Pl\"ucker coordinates of both 
$\mathbbX$ and $\mathbb\checkX$ using Young diagrams. 
Let $\Shkn$ denote the set of Young diagrams fitting in an 
$(n-k)\times k $ rectangle. We identify a Young diagram with its corresponding partition, so that $\mu=(\mu_1\ge \dotsc\ge\mu_m)$ lies in $\Shkn$ if and only if $\mu_1\le k$ and $m\le n-k$.  There is a natural bijection between 
$\Shkn$ and 
${[n] \choose n-k}$, defined as follows.  Let $\mu$ be an element of 
$\Shkn$, justified so that its top-left corner coincides with 
the top-left corner of the 
$(n-k) \times k$ rectangle.  The south-east border of $\mu$ is then
cut out by a path 
 $\pathsw{\mu}$.
from the 
northeast to southwest corner of the rectangle, which consists of $k$
west steps and $(n-k)$ south steps.  After labeling the $n$ steps by 
the numbers $\{1,\dots,n\}$, we map $\mu$ to the labels of the 
south steps.  This gives a bijection 
from $\Shkn$ to
${[n] \choose n-k}$.  If we use the labels of the west steps instead,
we get a bijection
from $\Shkn$ to
${[n] \choose k}$.

\subsection{Schubert varieties and open Schubert varieties}\label{s:SchubandOpenSchub}

Let us consider a Young diagram $\lambda\in\Shkn$ with corresponding partition denoted $(\lambda_1\ge\dotsc\ge\lambda_m)$. 

\begin{definition}\label{d:SchubertCell}
	The \emph{Schubert cell} $\Omega_{\lambda}$  is defined to be the subvariety in $ \X = Gr_{n-k}(\C^n)$ given by
	$$\Omega_{\lambda} := \{A \in Gr_{n-k}(\C^n) \ \vert \ 
	P_{\lambda}(A) \neq 0 \text{ and }P_{\mu}(A)=0 \text{ unless }
	\mu \subseteq \lambda \}.$$
	The \emph{Schubert variety} $X_{\lambda}$ is defined to be the closure
	$\overline{\Omega}_\lambda$ of $\Omega_\lambda$.

	If $J$ is the $(n-k)$-element subset of $[n]$ 
	corresponding to the south
	steps of $\lambda$, as in \cref{s:Young}, then 
	we also denote the above Schubert cell and Schubert variety by 
	$\Omega_{J}$
	and $X_{J}$, respectively. Note that 
	$$\Omega_{J} = \{A \in Gr_{n-k}(\C^n) \ \vert \ 
	\text{the lexicographically minimal nonvanishing
Pl\"ucker coordinate of }A\text{ is }P_{J}(A) \}.$$
\end{definition}

We also have Schubert cells and varieties in the Langlands dual Grassmannian $\mathbb\checkX= Gr_{k}((\C^n)^*)$.
\begin{definition}
The \emph{dual Schubert cell} 
	$\check{\Omega}_{\lambda} \subset \mathbb\checkX $ is defined  as
	$$\check{\Omega}_{\lambda} = \{M \in Gr_{k}((\C^n)^*) \ \vert \ 
	p_{\lambda}(M) \neq 0 \text{ and }p_{\mu}(M)=0 \text{ unless }
	\mu \subseteq \lambda \}.$$
	The \emph{dual Schubert variety} 
	$\check{X}_{\lambda}$ is defined to be the closure
	of $\check{\Omega}_\lambda$.

	If $I$ is the $k$-element subset of $[n]$ 
	corresponding to the horizontal
	steps of $\lambda$, then 
	we also denote the above dual Schubert cell and variety by 
	$\check{\Omega}_{I}$
	and $\check{X}_{I}$, respectively. Note that 
	$$\checkOmega_{I} = \{M \in Gr_{k}((\C^n)^*) \ \vert \ 
	\text{the lexicographically maximal nonvanishing
Pl\"ucker coordinate of }M\text{ is }p_{I}(M) \}.$$
\end{definition}
	
The dimensions of $\Omega_{\lambda}$, $X_{\lambda}$,
$\checkOmega_{\lambda}$, and $\check{X}_{\lambda}$ are all $|\lambda|$, the number of boxes of 
$\lambda$.

We now fix a partition $\lambda=(\lambda_1\ge \dotsc \ge\lambda_m)$ as our starting point, and focus on the Schubert variety $X_\lambda$. We choose the ambient Grassmannian to be minimal for $\lambda$ and adopt the following conventions.

\begin{notation}\label{n:knd} 
Associated to our fixed partition $(\lambda_1\ge\lambda_2\ge\cdots\ge\lambda_{m})$ we set $k:=\lambda_1$ and $n:=\lambda_1+m$, so that the $(n-k)\times k$ rectangular Young diagram is the minimal rectangle containing the Young diagram $\lambda$; we call it the `bounding rectangle' of $\lambda$. We also let $d$ denote the number of removable boxes in $\lambda$. Clearly, $\lambda\in\Shkn$.
\end{notation}

\begin{defn}
We let $\mathcal{P}_{\lambda}\subseteq\mathcal P_{k,n}$ denote the set of all Young diagrams contained in $\lambda$.
Therefore the elements of $\mathcal{P}_{\lambda}$
index those Pl\"ucker coordinates $P_\nu$ of $\mathbb X$ whose
	restriction to the Schubert variety $X_{\lambda}$ is not constant equal to zero,
and simultaneously those Pl\"ucker coordinates  $p_\nu$ of $\mathbb\checkX$  whose restriction to  the dual Schubert variety $\check{X}_{\lambda}$
 is not constant equal to zero. The Schubert variety is the disjoint union of Schubert cells $X_\lambda=\bigsqcup_{\nu\in\mathcal {P}_\lambda}\Omega_\nu$,  and similarly for $\check{X}_\lambda$.
\end{defn}

\begin{definition}\label{def:BB} 
Let $\lambda$ be a Young diagram in a $(n-k) \times k$
bounding rectangle, so $\lambda\in\Shkn$.

We let 
$\BB^{\SE}(\lambda)$ be the set of boxes of $\lambda$
on the {\it southeast} border of $\lambda$; in other words, these
are the boxes which touch the southeast border of $\lambda$
either along a side or sides, or just at their  southeast corner. We also 
refer to this set of boxes as the \emph{rim} of $\lambda$.
The rim of $\lambda$  consists of $n-1$ boxes which we number
from northeast to southwest, writing 
$b_i$ for the $i$-th box in the rim of $\lambda$. 

We also let 
$\BB^{\NW}(\lambda)$ be the set of boxes of $\lambda$ on the {\it northwest}
border of $\lambda$; in other words, these are the boxes in the leftmost column or topmost row of $\lambda$.  There are $n-1$ boxes in $\BB^{\NW}(\lambda)$, 
which we label $b'_1,\dots,b'_{n-1}$,
starting from the bottom left and counting up and then to the right. 
\end{definition}
\begin{definition}[Frozen rectangles for $\lambda$]\label{d:frozens}
Consider our fixed partition $\lambda$ with its bounding $(n-k)\times k$ rectangle. 
Given any box $b$ contained in 
 $\lambda$ 
we write $\Rect(b)$ for the maximal rectangle contained in $\lambda$ 
whose lower right hand corner is the box $b$.  We also refer to $\Rect(b)$ 
as the \emph{shape} $\sh(b)$ associated to $b$.
We define 
	\[\mu_i:=\sh(b_i) \text{ for }1 \leq i \leq n-1, \text{ and }
\mu_n:=\emptyset
	.\]
Thus for $1\leq i \leq n-1$, $\mu_i$ is the 
maximal rectangle contained in $\lambda$ whose lower right corner is the box $b_i$, see \cref{Rim}.
	We let $\Fr(\lambda) := \{\mu_1,\dots, \mu_{n-1}, \mu_n\} \subseteq \mathcal{P_\lambda}$, and call 
	the elements of $\Fr(\lambda)$ the \emph{frozen rectangles} for $\lambda$. We treat the indices modulo $n$ so that $\mu_{n+1}=\mu_1$.
\end{definition}

\begin{figure}
	\begin{tikzpicture}[scale=0.5]
\draw (0,1.6) rectangle (1,2.6);
\draw (1,1.6) rectangle (2,2.6);
\draw (2,1.6) rectangle (3,2.6); 
\draw (3,1.6) rectangle (4,2.6); \node at (3.5,2.1) {\Large 1};
\draw (0,0.6) rectangle (1,1.6);
\draw (1,0.6) rectangle (2,1.6); \node at (1.5,1.1) {\Large 4};
\draw (2,0.6) rectangle (3,1.6); \node at (2.5,1.1) {\Large 3};
\draw (3,0.6) rectangle (4,1.6); \node at (3.5,1.1) {\Large 2};
\draw (0,-0.4) rectangle (1,0.6); \node at (0.5,0.1) {\Large 6};
\draw (1,-0.4) rectangle (2,0.6); \node at (1.5,0.1) {\Large 5};

\node at (6.5,2.5) {\Large $\mu_{1} = $};
\draw (7.5,2.3) rectangle (7.9,2.7);
\draw (7.9,2.3) rectangle (8.3,2.7);
\draw (8.3,2.3) rectangle (8.7,2.7);
\draw (8.7,2.3) rectangle (9.1,2.7);

\node at (11.5,2.5) {\Large $\mu_{2} = $};
\draw (12.6,2.7) rectangle (13,3.1);
\draw (13, 2.7) rectangle (13.4,3.1);
\draw (13.4,2.7) rectangle (13.8,3.1);
\draw (13.8,2.7) rectangle (14.2,3.1);
\draw (12.6,2.3) rectangle (13,2.7);
\draw (13, 2.3) rectangle (13.4,2.7);
\draw (13.4,2.3) rectangle (13.8,2.7);
\draw (13.8,2.3) rectangle (14.2,2.7);

\node at (16.5,2.5) {\Large $\mu_{7} = \varnothing$};

\node at (6.5,1.5) {\Large $\mu_{3} = $};
\draw (7.5,1.5) rectangle (7.9,1.9);
\draw (7.9,1.5) rectangle (8.3,1.9);
\draw (8.3,1.5) rectangle (8.7,1.9);
\draw (7.5,1.1) rectangle (7.9,1.5);
\draw (7.9,1.1) rectangle (8.3,1.5);
\draw (8.3,1.1) rectangle (8.7,1.5);

\node at (11.5,1.5) {\Large $\mu_{4} = $};
\draw (12.6,1.5) rectangle (13,1.9);
\draw (13,1.5) rectangle (13.4,1.9);
\draw (12.6,1.1) rectangle (13,1.5);
\draw (13,1.1) rectangle (13.4,1.5);

\node at (6.5,0) {\Large $\mu_{5} = $};
\draw (7.5,-0.5) rectangle (7.9,-0.1);
\draw (7.9,-0.5) rectangle (8.3,-0.1);
\draw (7.5,-0.1) rectangle (7.9,0.3);
\draw (7.9,-0.1) rectangle (8.3,0.3);
\draw (7.5,0.3) rectangle (7.9,0.7);
\draw (7.9,0.3) rectangle (8.3,0.7);

\node at (11.5,0) {\Large $\mu_{6} = $};
\draw (12.6,-0.5) rectangle (13,-0.1);
\draw (12.6,-0.1) rectangle (13,0.3);
\draw (12.6,0.3) rectangle (13,0.7);
\end{tikzpicture}
\caption{The rim of a Young diagram $\lambda$, together with the rectangles $\mu_1,\dots, \mu_7$.}  
\label{Rim}
\end{figure}
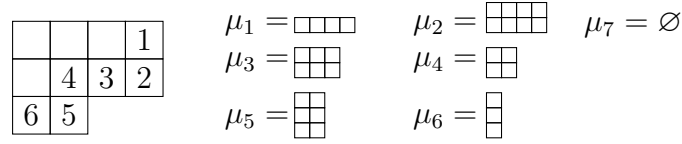

We use these special Young diagrams $\mu_i$ 
from \cref{d:frozens} to  define a distinguished 
 divisor 
in $X_{\lambda}$, 
 \begin{equation}\label{e:boundarydivisor}
 D^{\lambda}_{\ac}:=\bigcup_{i=1}^n \{P_{\mu_i}=0\}.
 \end{equation}
 \begin{rem} Unlike in the case of the full Grassmannian, in $X_\lambda$ the individual divisors $\{P_{\mu_i}=0\}$ will not necessarily be irreducible. We will describe $D^{\lambda}_{\ac}$ in terms of its irreducible components in \cref{c:ac} using the positroid stratification of \cite{KLS}. This description then implies that $D_{\ac}^{\lambda}$ is an anti-canonical divisor in $X_{\lambda}$, see~\cref{c:ac}.
\end{rem}

\begin{definition}[The open Schubert variety]\label{d:openX}
Let $\lambda, k, n$ be as in \cref{n:knd}.
We define $\openSchub$ the \textit{open Schubert variety} to be the complement of the 
divisor 
	$D_{\ac}^{\lambda}=\bigcup_{i=1}^n \{P_{\mu_i}=0\}$, 
\begin{equation*}
	\openSchub:= X_{\lambda} \setminus  D_{\ac}^{\lambda} =\{x\in X_{\lambda}  \ | \ P_{\mu_i}(x)\ne 0 \ \forall i\in[n] \}.
\end{equation*}
It is not hard to see that we have the inclusions $\openSchub\subset\Omega_\lambda\subset X_\lambda$.  
Similarly, we define $\opencheckSchub$ to be the complement of the analogous divisor, namely
	$\checkD_{\ac}^{\lambda}=\bigcup_{i=1}^n \{p_{\mu_i}=0\}$, in $\check{X}_{\lambda}$,
\begin{equation*}
	\opencheckSchub:= \check{X}_{\lambda} \setminus  \checkD^{\lambda}_{\ac} =\{x\in \check{X}_{\lambda}  \ | \ p_{\mu_i}(x)\ne 0 \ \forall i\in[n] \}.
\end{equation*}
  \end{definition}
\begin{rem}\label{r:openSchub}
The open Schubert variety $\openSchub$ is an \emph{open positroid variety} 
as defined in \cref{def:posvariety}, 
	and can be described as the projection of an \emph{open Richardson variety},
	see \cref{d:projRich}.
The subsets $I_{\mu_i}$ corresponding to the Pl\"ucker coordinates $P_{\mu_i}=P_{I_{\mu_i}}$ are the components of the 
\emph{reverse Grassmann necklace} of the positroid, which we can obtain from a corresponding \emph{plabic graph} for $\lambda$ if we use the \emph{source labeling} for faces. 
See \cref{sec:appendix} for background on these objects;
in particular, \cref{fig:plabic} shows a plabic graph for $\lambda=(4,4,2)$, whose 
reverse Grassmann necklace is $(567, 167, 127, 237, 347, 345, 456)$ and corresponds
to the rectangles $\mu_7, \mu_1,\dots, \mu_6$ from 
\cref{Rim}.
\end{rem}

\begin{remark}
Throughout this paper we will primarily be working with open Schubert 
varieties.  The reader should be cautioned that 
we will mostly drop the adjective ``open'' from now on but will consistently use the notation 
$X_{\lambda}^\circ$ 
or $\opencheckSchub$ 
	for clarity.
\end{remark}

\begin{remark}\label{r:R(lambda)} We may fix $n$ and consider all of the Schubert varieties $X_\lambda$ such that the minimal Grassmannian containing $\lambda$ is a $Gr_{\ell}(\C^n)$ for some dimension $\ell$. This is equivalent to $n$ being the length of the bounding path of the Young diagram $\lambda$. We observe that such Young diagrams $\lambda$ are in bijection with  subsets of $[n-1]$ of odd cardinality.  Namely
associate to $\lambda$ the set
\[
\mathcal R(\lambda):=\{i\in [n-1]\mid \text{the rim box $b_i$ in $\lambda$ is an outer or an inner corner}\}.
\] 
	Then $\mathcal R(\lambda)$ is the union, $\mathcal R(\lambda)=\Rout(\lambda)\sqcup \Rin(\lambda)$, of sets of indices labelling outer corner boxes $b_i$ and inner corner boxes $b_i$, respectively. Moreover 
	if 
	$\mathcal R(\lambda)=\{\rho_1<\dotsc < \rho_{2d-1}\}$ then $\Rout(\lambda)=\{\rho_{\ell}\mid \text {$\ell$ is odd}\}$ and $\Rin(\lambda)=\{\rho_{\ell}\mid \text {$\ell$ is even}\}$, since outer and inner corners alternate, see \cref{fig:Rho}.
 
Note that $\mathcal R(\lambda)$ is indeed an odd cardinality subset of  $[n-1]$, since the first and last elements must label outer corners. Conversely any subset of $[n-1]$ of odd cardinality, together with the fixed $n$, determines a Young diagram $\lambda$ with the given inner and outer corners.  
\end{remark}

\begin{figure}[h]
	\begin{tikzpicture}[scale=0.4]
\draw (0,5) rectangle (1,6);
\draw (1,5) rectangle (2,6);
\draw (2,5) rectangle (3,6);
\draw (3,5) rectangle (4,6);
\draw (4,5) rectangle (5,6);
\draw (5,5) rectangle (6,6); \node at (5.5,5.5) {1};
\draw (0,4) rectangle (1,5);
\draw (1,4) rectangle (2,5);
\draw (2,4) rectangle (3,5);
\draw (3,4) rectangle (4,5);
\draw (4,4) rectangle (5,5);
\draw (5,4) rectangle (6,5); \node at (5.5,4.5) {2};
\draw (0,3) rectangle (1,4);
\draw (1,3) rectangle (2,4);
\draw (2,3) rectangle (3,4);
\draw (3,3) rectangle (4,4); \filldraw[fill=yellow] (3,3) rectangle (4,4); \node at (3.5,3.5) {5};
\draw (4,3) rectangle (5,4); \node at (4.5,3.5) { 4};
\draw (5,3) rectangle (6,4); \filldraw[fill=yellow] (5,3) rectangle (6,4); \node at (5.5,3.5) { 3};
\draw (0,2) rectangle (1,3);
\draw (1,2) rectangle (2,3);
\draw (2,2) rectangle (3,3);
\draw (3,2) rectangle (4,3); \node at (3.5,2.5) {6};
\draw (0,1) rectangle (1,2);
\draw (1,1) rectangle (2,2); \filldraw[fill=yellow] (1,1) rectangle (2,2); \node at (1.5,1.5) {9};
\draw (2,1) rectangle (3,2); \node at (2.5,1.5) {8};
\draw (3,1) rectangle (4,2); \filldraw[fill=yellow] (3,1) rectangle (4,2); \node at (3.5,1.5) {7};
\draw (0,0) rectangle (1,1); \node at (0.5,0.5) {11};
\draw (1,0) rectangle (2,1); \filldraw[fill=yellow] (1,0) rectangle (2,1); \node at (1.5,0.5) {10};
\end{tikzpicture}
	\caption{When $\lambda=(6,6,6,4,4,2)$, we have
	$\mathcal{R}(\lambda) =\{\rho_1< \dots < \rho_5\} =  \{3,5,7,9,10\}$, $\Rout(\lambda) = \{3,7,10\}$,
	and $\Rin(\lambda) = \{5,9\}$.}
\label{fig:Rho}
\end{figure}
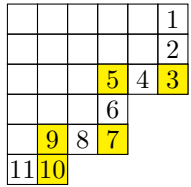

\section{Cluster structures for Schubert varieties}\label{sec:A}

In this section we explain how (open) Schubert varieties naturally 
have a cluster structure; in particular, we will concretely describe the 
\emph{rectangles seed} for each Schubert variety.  (A larger class of seeds associated to 
\emph{plabic graphs} is described in 
\cref{sec:cluster}.)
We will also explain the notion of \emph{restricted seed},
and observe that the rectangles seed for a Schubert variety
is a restricted seed obtained from the rectangles seed for the Grassmannian.

\subsection{The $\A$-cluster structure for a Schubert variety $\dualSchub$}
\label{sec:Acluster}
Fix a Young diagram $\lambda\in \Shkn$.
 Without loss of generality,
we assume again that $k$ and $n$ are minimal for $\lambda$, i.e. the first row of $\lambda$ has length $k$, and $\lambda$ has $(n-k)$ rows.
We let $\Rect(\lambda)$ denote the set of all rectangular Young diagrams in $\mathcal P_\lambda$, that is all rectangles (including $\emptyset$) which fit inside 
$\lambda$.  Recall the rectangle
  $\Rect(b)$ associated to a box $b$ of $\lambda$ in \cref{d:frozens} whose lower right-hand corner is $b$.

\begin{definition}[\emph{The rectangles seed $G_{\rect}^{\lambda}$}]\label{def:rectseed}
Fix $\lambda$ as above.
	We obtain a quiver $Q_{\lambda}$ as follows: place one vertex in each box of $\lambda$, plus one
more vertex labeled $\emptyset$. A vertex is mutable whenever it lies in a box $b$ of the Young diagram
and the box immediately southeast of $b$ is also in $\lambda$. 
We add one arrow from the vertex in the northwest corner of $\lambda$ to the vertex labeled $\emptyset$.
We also add arrows between vertices in adjacent boxes, with all arrows pointing either up or to the left. Finally, in every $2 \times 2$ rectangle in $\lambda$, we add an arrow from the upper left box to the lower right box. Equivalently, we add an arrow from the vertex in box $a$ to the vertex in box $b$ if  
\begin{itemize}
\item $\Rect(b)$ is obtained from $\Rect(a)$ by removing a row or column.
\item $\Rect(b)$ is obtained from $\Rect(a)$ by adding a hook shape.
\end{itemize}
(We then remove any arrow between two frozen vertices.)

For each box $b$ of the Young diagram, we label the 
corresponding vertex by $\Rect(b)$, which we identify with the 
corresponding Pl\"ucker coordinate for $Gr_k(\C^n)$.  We denote
	the resulting seed by $G_{\rect}^{\lambda}$.
\end{definition}  

\begin{remark}
Note that the frozen variables are labeled precisely
	by the rectangles from $\Fr(\lambda)$.
\end{remark}

The following result was shown in \cite{SSW}.

\begin{theorem}\cite{SSW}\label{thm:rectangles}
The seed $G^{\lambda}_{\rect}$ is
a seed for a cluster algebra which equals the coordinate ring of the 
(affine cone over the) open Schubert variety $\check \openSchub$, i.e. 
$\C[\widehat{\check \openSchub}]= 
\mathcal{A}(G^{\lambda}_{\rect}).$ 
\end{theorem}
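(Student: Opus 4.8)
The plan is to derive this by \emph{restriction} from Scott's cluster structure on the Grassmannian $Gr_k(\C^n)$ --- the route of \cite{SSW}, using the restricted-seed formalism (see \cref{sec:cluster}). First I would recall that $\C[\widehat{Gr_k(\C^n)}]$ is a cluster algebra in which the rectangles seed for the full $(n-k)\times k$ bounding rectangle is a valid seed: its vertices are $\emptyset$ together with all rectangles fitting in the box, its arrows follow the rules of \cref{def:rectseed} applied to the whole box, and its frozen variables are the $n$ Plücker coordinates $p_\emptyset$ and $p_\mu$ for $\mu$ a rectangle whose southeast corner lies on the rim of the box. Next I would identify $\opencheckSchub$, via \cref{r:openSchub}, with an open positroid variety --- a projected open Richardson variety --- so that in particular $\opencheckSchub$ is normal (being open in the normal variety $\check X_\lambda$, cf.\ \cite{RamRam:Schubert,DeCLak:Schubert}), and $\C[\widehat{\opencheckSchub}]$ is obtained from $\C[\widehat{Gr_k(\C^n)}]$ by imposing $p_\nu = 0$ for all $\nu\not\subseteq\lambda$ and inverting the Plücker coordinates $p_{\mu_i}$ with $\mu_i\in\Fr(\lambda)$.

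The core step is to show that this passage to $\opencheckSchub$ is compatible with the cluster structure. Concretely, I would produce a seed of (the above localization of) $\C[\widehat{\opencheckSchub}]$ whose cluster and frozen variables are exactly the Plücker coordinates $p_\mu$ with $\mu\in\Rect(\lambda)$, with frozen set $\Fr(\lambda)$, by deleting from the Grassmannian rectangles seed every vertex indexed by a rectangle not contained in $\lambda$, together with its incident arrows --- first passing, where necessary, to a mutation-equivalent seed in which those vertices have been frozen, so that setting the corresponding Plücker coordinates to zero does not destroy any surviving exchange relation. A direct comparison of the arrow rules should then show that the resulting quiver, on the vertex set consisting of the boxes of $\lambda$ together with one extra vertex $\emptyset$, is precisely the quiver $Q_\lambda$ of \cref{def:rectseed}; thus the restricted seed is $G^\lambda_{\rect}$, giving the inclusion $\mathcal{A}(G^\lambda_{\rect})\subseteq \C[\widehat{\opencheckSchub}]$. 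To upgrade this to an equality I would run a standard codimension-two argument --- the ``starfish'' criterion of \cite{GHKK}, or the comparison of a cluster algebra with its upper cluster algebra: since $\widehat{\opencheckSchub}$ is normal, it suffices that the two rings agree outside a subset of codimension at least $2$, and the divisorial part of the relevant complement consists of positroid divisors (via \cite{KLS}), each cut out by the vanishing of a cluster variable of $G^\lambda_{\rect}$.

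I expect the main obstacle to be the restriction-compatibility step. Deleting vertices from a seed --- especially mutable vertices of the Grassmannian seed that are simply absent for $\lambda$ --- is not mutation-compatible in general, since a deleted Plücker coordinate may appear as a term of an exchange relation among variables one wishes to keep, and setting it to zero can collapse a cluster variable. One must therefore be careful about which seed one restricts from, and must check that the deleted Plücker coordinates interact with the surviving ones only through exchange monomials that are not ``load-bearing'' --- precisely what the restricted-seed machinery of \cite{SSW} (recalled in \cref{sec:cluster}) is designed to make precise. A secondary technical point is verifying that $G^\lambda_{\rect}$ is an admissible seed for the positroid variety $\opencheckSchub$ --- e.g.\ that it arises from a reduced plabic graph --- and that its quiver coincides with $Q_\lambda$ exactly.
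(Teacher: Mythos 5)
The paper itself does not prove this statement --- it is cited from \cite{SSW}, and the arguments there go through Leclerc's cluster-category construction of a cluster subalgebra of the coordinate ring of an open Richardson variety, an explicit identification of Leclerc's seed with a (source-labeled) plabic-graph seed, and the Muller--Speyer twist to match the $\mathcal A$-cluster structure with the target variety, followed by a starfish/upper-cluster-algebra argument to get equality. That is a genuinely different route from the one you propose, which is ``restrict Scott's Grassmannian cluster structure.''

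Your route has a real gap, and it is exactly at the point you flag. The restricted-seed formalism of \cite[\S 4.2]{FWZ2} (recalled in \cref{def:restricted} and \cref{l:freezingcommutesmutation}, and used in \cref{lem:restrictedseed}) is a purely \emph{combinatorial} statement: it tells you that freezing and deleting vertices commutes with mutation \emph{at the level of quivers and seeds}. It does \emph{not} say that $\mathcal A(G|_I)$ is the coordinate ring of any subvariety of $\operatorname{Spec}\mathcal A(G)$. In particular, it provides no mechanism for justifying the geometric step of passing from $\C[\widehat{Gr_k}]$ to $\C[\widehat{\opencheckSchub}]$ by setting the Pl\"ucker coordinates $p_\nu$, $\nu\not\subseteq\lambda$, to zero. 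Setting a frozen (let alone a mutable) cluster variable to zero is not a cluster-algebra-preserving operation: the resulting quotient ring need not be a cluster algebra, because the exchange relations that involved that variable degenerate to monomial relations, and cluster variables that were Laurent polynomials in the original seed may fail to descend. Your phrase ``does not destroy any surviving exchange relation'' is precisely the point that needs a proof, and no general principle supplies it; the cited sources do not go this way for that reason. Moreover, even granting that the combinatorial restriction gives you the quiver $Q_\lambda$ on the nose, you have not yet established the inclusion $\mathcal A(G^\lambda_{\rect})\subseteq\C[\widehat{\opencheckSchub}]$: the cluster variables of $G^\lambda_{\rect}$ are a priori only Laurent polynomials in the rectangles $\subseteq\lambda$, and one still has to prove they are regular on all of $\widehat{\opencheckSchub}$ (for instance by identifying the variables in adjacent clusters with Pl\"ucker coordinates $p_\nu$, $\nu\subseteq\lambda$). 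That is an independent geometric input, not a consequence of restriction. Your final starfish/codimension-two step is the right tool once both inclusions of rings are available, and the identification of $G^\lambda_{\rect}$ with a reduced plabic graph seed (your secondary point) is straightforward (it is the $\Le$-diagram graph, cf.\ \cref{rem:rectangles}), but neither repairs the quotient-compatibility gap. To prove the theorem you would need to replace the restriction step with an independent construction of the cluster structure on $\opencheckSchub$, as \cite{SSW} do.
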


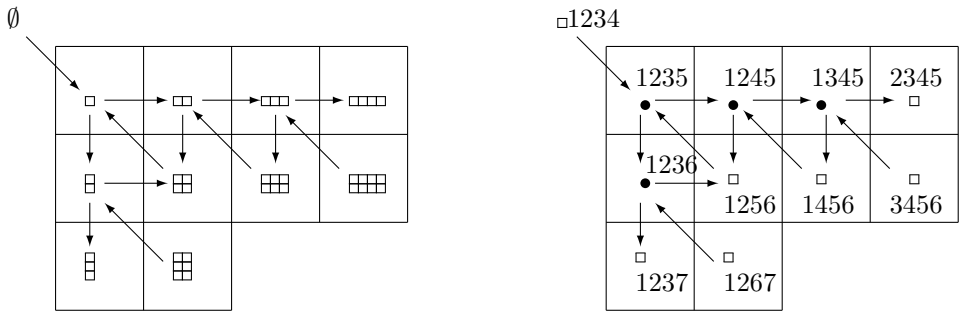
\begin{figure}[h]
\setlength{\unitlength}{1.3mm}
\begin{center}
 \begin{picture}(50,35)
  \put(5,32){\line(1,0){36}}
  \put(5,23){\line(1,0){36}}
  \put(5,14){\line(1,0){36}}
  \put(5,5){\line(1,0){18}}
  \put(5,5){\line(0,1){27}}
  \put(14,5){\line(0,1){27}}
  \put(23,5){\line(0,1){27}}
  \put(32,14){\line(0,1){18}}
  \put(41,14){\line(0,1){18}}
	 \put(0,34){$\emptyset$}
	 \put(8,26){$\ydiagram{1}$}
	 \put(17,26){$\ydiagram{2}$}
	 \put(26,26){$\ydiagram{3}$}
	 \put(35,26){$\ydiagram{4}$}
	 \put(8,18){$\ydiagram{1,1}$}
	 \put(17,18){$\ydiagram{2,2}$}
	 \put(26,18){$\ydiagram{3,3}$}
	 \put(35,18){$\ydiagram{4,4}$}
	 \put(8,10){$\ydiagram{1,1,1}$}
	 \put(17,10){$\ydiagram{2,2,2}$}
 \put(2,33){{\vector(1,-1){5.5}}}
         \put(10,26.5){{\vector(1,0){6.5}}}
         \put(20, 26.5){{\vector(1,0){6}}}
         \put(10,18){{\vector(1,0){6.5}}}
         \put(8.5,16){{\vector(0,-1){4.5}}}
         \put(8.5,25){{\vector(0,-1){5}}}
         \put(18,25){{\vector(0,-1){5}}}
         \put(16,19.5){{\vector(-1,1){6}}}
         \put(25,19.5){{\vector(-1,1){6}}}
         \put(16,10){{\vector(-1,1){6}}}
         \put(29.5, 26.5){{\vector(1,0){5}}}
         \put(27.5,25){{\vector(0,-1){5}}}
         \put(34,19.5){{\vector(-1,1){5.5}}}

 \end{picture}
    \qquad
        \begin{picture}(50,35)
  \put(5,32){\line(1,0){36}}
  \put(5,23){\line(1,0){36}}
  \put(5,14){\line(1,0){36}}
  \put(5,5){\line(1,0){18}}
  \put(5,5){\line(0,1){27}}
  \put(14,5){\line(0,1){27}}
  \put(23,5){\line(0,1){27}}
  \put(32,14){\line(0,1){18}}
  \put(41,14){\line(0,1){18}}
	    \put(0,34){$\ydiagram{1}$}
		\put(1,34){$1234$}
		\put(9,26){\circle*{1}}
		\put(8,28){$1235$}
		\put(18,26){\circle*{1}}
		\put(17,28){$1245$}
		\put(27,26){\circle*{1}}
		\put(26,28){$1345$}	
	 \put(36,26){$\ydiagram{1}$}
		\put(34,28){$2345$}
		\put(9,18){\circle*{1}}
		\put(9,19){$1236$}
	 \put(17.5,18){$\ydiagram{1}$}
		\put(17,15){$1256$}
	 \put(26.5,18){$\ydiagram{1}$}
		\put(25,15){$1456$}
	 \put(36,18){$\ydiagram{1}$}
		\put(34,15){$3456$}
	 \put(8,10){$\ydiagram{1}$}
		\put(8,7){$1237$}
	 \put(17,10){$\ydiagram{1}$}
		\put(17,7){$1267$}
 \put(2,33){{\vector(1,-1){5.5}}}
         \put(10,26.5){{\vector(1,0){6.5}}}
         \put(20, 26.5){{\vector(1,0){6}}}
         \put(10,18){{\vector(1,0){6.5}}}
         \put(8.5,16){{\vector(0,-1){4.5}}}
         \put(8.5,25){{\vector(0,-1){5}}}
         \put(18,25){{\vector(0,-1){5}}}
         \put(16,19.5){{\vector(-1,1){6}}}
         \put(25,19.5){{\vector(-1,1){6}}}
         \put(16,10){{\vector(-1,1){6}}}
         \put(29.5, 26.5){{\vector(1,0){5}}}
         \put(27.5,25){{\vector(0,-1){5}}}
         \put(34,19.5){{\vector(-1,1){5.5}}}

	\end{picture}
\end{center}
\caption{	\label{fig:combconstruct}  
	An example of $G^{\lambda}_{\rect}$ for
	$k=4$, $n=7$, and $\lambda = (4,4,2)$.
	At the left, the frozen rectangles are 
	$\ydiagram{4}$, $\ydiagram{4,4}$, $\ydiagram{3,3}$, $\ydiagram{2,2}$,
	$\ydiagram{2,2,2}$, $\ydiagram{1,1,1}$.  On the right, the same quiver is shown but rectangles have been replaced by the corresponding 
	$4$-element subsets of $[7]$, which should be interpreted as  Pl\"ucker coordinates.} 
\end{figure}

\subsection{The rectangles seed for a Schubert variety as a restricted
seed for the Grassmannian}

\begin{figure}[h]
\setlength{\unitlength}{1.3mm}
\begin{center}
 \begin{picture}(50,35)
  \put(5,32){\line(1,0){36}}
  \put(5,23){\line(1,0){36}}
  \put(5,14){\line(1,0){36}}
  \put(5,5){\line(1,0){18}}
  \put(5,5){\line(0,1){27}}
  \put(14,5){\line(0,1){27}}
  \put(23,5){\line(0,1){27}}
  \put(32,14){\line(0,1){18}}
  \put(41,14){\line(0,1){18}}
	 \put(0,34){$\color{blue}\emptyset$}
	 \put(8,26){$\ydiagram{1}$}
	 \put(17,26){$\ydiagram{2}$}
	 \put(26,26){$\ydiagram{3}$}
	 \put(35,26){$\color{blue}\ydiagram{4}$}
	 \put(8,18){$\ydiagram{1,1}$}
	 \put(17,18){$\color{blue}\ydiagram{2,2}$}
	 \put(26,18){$\color{blue}\ydiagram{3,3}$}
	 \put(35,18){$\color{blue}\ydiagram{4,4}$}
	 \put(8,10){$\color{blue}\ydiagram{1,1,1}$}
	 \put(17,10){$\color{blue}\ydiagram{2,2,2}$}
 \put(2,33){{\vector(1,-1){5.5}}}
         \put(10,26.5){{\vector(1,0){6.5}}}
         \put(20, 26.5){{\vector(1,0){6}}}
         \put(10,18){{\vector(1,0){6.5}}}
         \put(8.5,16){{\vector(0,-1){4.5}}}
         \put(8.5,25){{\vector(0,-1){5}}}
         \put(18,25){{\vector(0,-1){5}}}
         \put(16,19.5){{\vector(-1,1){6}}}
         \put(25,19.5){{\vector(-1,1){6}}}
         \put(16,10){{\vector(-1,1){6}}}
         \put(29.5, 26.5){{\vector(1,0){5}}}
         \put(27.5,25){{\vector(0,-1){5}}}
         \put(34,19.5){{\vector(-1,1){5.5}}}

 \end{picture}
 \begin{picture}(50,35)
  \put(5,32){\line(1,0){36}}
  \put(5,23){\line(1,0){36}}
  \put(5,14){\line(1,0){36}}
  \put(5,5){\line(1,0){36}}
  \put(5,5){\line(0,1){27}}
  \put(14,5){\line(0,1){27}}
  \put(23,5){\line(0,1){27}}
  \put(32,32){\line(0,-1){27}}
  \put(41,32){\line(0,-1){27}}
	    \put(0,34){$\color{blue}\emptyset$}
	 \put(8,26){$\ydiagram{1}$}
	 \put(17,26){$\ydiagram{2}$}
	 \put(26,26){$\color{black}\ydiagram{3}$}
	 \put(35,26){$\color{blue}\ydiagram{4}$}
	 \put(8,18){$\ydiagram{1,1}$}
	 \put(17,18){$\color{blue}\ydiagram{2,2}$}
	 \put(26,18){$\color{blue}\ydiagram{3,3}$}
	 \put(35,18){$\color{blue}\ydiagram{4,4}$}
	 \put(8,10){$\color{blue}\ydiagram{1,1,1}$}
	 \put(17,10){$\color{blue}\ydiagram{2,2,2}$}
	 \put(26,10){$\color{red}\ydiagram{3,3,3}$}
	 \put(35,10){$\color{red}\ydiagram{4,4,4}$}
 \put(2,33){{\vector(1,-1){5.5}}}
         \put(10,26.5){{\vector(1,0){6.5}}}
         \put(20, 26.5){{\vector(1,0){6}}}
         \put(10,18){{\vector(1,0){6.5}}}
         \put(8.5,16){{\vector(0,-1){4.5}}}
         \put(8.5,25){{\vector(0,-1){5}}}
         \put(18,25){{\vector(0,-1){5}}}
         \put(18,16.5){{\vector(0,-1){5}}}
         \put(27.5,16.5){{\vector(0,-1){5}}}
         \put(16,19.5){{\vector(-1,1){6}}}
         \put(25,19.5){{\vector(-1,1){6}}}
         \put(16,10){{\vector(-1,1){6}}}
         \put(25,10){{\vector(-1,1){6}}}
         \put(34,10){{\vector(-1,1){6}}}
         \put(19.5,18){{\vector(11,0){6}}}
         \put(29.5, 26.5){{\vector(1,0){5}}}
         \put(29.5,18){{\vector(1,0){5}}}
         \put(27.5,25){{\vector(0,-1){5}}}
         \put(34,19.5){{\vector(-1,1){5.5}}}

 \end{picture}
 \end{center}
 \caption{The rectangles seed for a Schubert variety is a restricted seed obtained from 
 the rectangles seed for the Grassmannian.}
 \label{fig:restrictedseed}
  \end{figure}
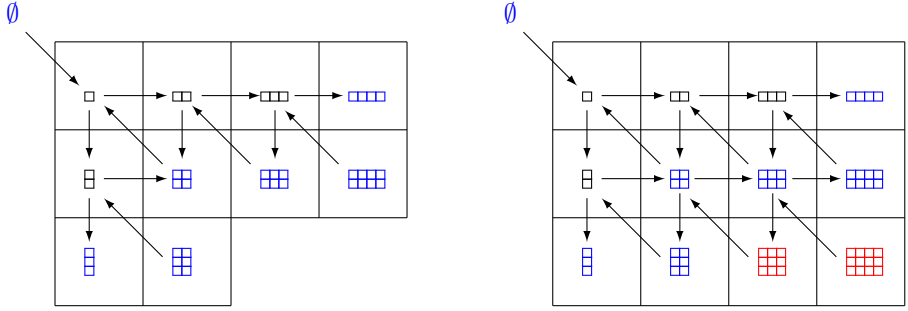  

\begin{definition}[Restricted seeds]\cite[Definition 4.2.6]{FWZ2}\label{def:restricted}
Let $G$ be a seed whose quiver has its
vertices labeled by 
$[m]=\{1,\dots,m\}$.
Choose a subset $I \subset [m]$; some elements of $I$ may be frozen,
in which case they will remain frozen, but we now
freeze some (possibly empty) subset of the mutable
vertices in $I$, so as to ensure that there are no arrows between
unfrozen vertices in $I$ and vertices in $[m]\setminus I$.
We define the \emph{restricted seed}
$G|_I$ to be the seed obtained from $G$ by restricting
to the induced quiver on $I$ (and removing any arrow
between two frozen vertices).
\end{definition}

The following lemma shows that the above operations are well-behaved.
\begin{lemma}\cite[Lemma 4.2.2 and Lemma 4.2.5]{FWZ2} \label{l:freezingcommutesmutation}
Freezing commutes with mutation.  
Passing to a restricted seed commutes with seed mutation.
\end{lemma}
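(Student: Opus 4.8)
The plan is to reduce both assertions to the single observation that mutation at a mutable vertex $w$ is a \emph{local} operation: it changes the cluster variable only at $w$ (leaving $x_u$ unchanged for every $u \ne w$), and it modifies the quiver only by reversing arrows incident to $w$, adding an arrow $a \to b$ for each path $a \to w \to b$, and cancelling the resulting $2$-cycles.

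First I would treat \emph{freezing commutes with mutation}. Fix a seed $G$ and two distinct mutable vertices $v$ and $w$; let $\mathrm{frz}_v$ be the operation that reclassifies $v$ as frozen and deletes every arrow joining $v$ to another frozen vertex, and let $\mu_w$ be mutation at $w$. Since $v \ne w$, the operation $\mu_w$ leaves $x_v$ unchanged and $\mathrm{frz}_v$ changes no cluster variable at all, so $\mu_w \circ \mathrm{frz}_v$ and $\mathrm{frz}_v \circ \mu_w$ produce the same tuple of cluster variables. For the quivers, $\mu_w$ and $\mathrm{frz}_v$ touch disjoint pieces of data except possibly for an arrow joining $v$ to a frozen vertex $u$ which $\mu_w$ might create or destroy via a path through $w$; in either order such an arrow is deleted by $\mathrm{frz}_v$, so the two resulting quivers agree. (Under the convention that arrows between frozen vertices are not recorded at all, this case does not even arise.)

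Next I would prove that \emph{passing to a restricted seed commutes with seed mutation}. Let $I \subseteq [m]$ and suppose we have frozen vertices so that no arrow of $G$ joins an unfrozen vertex of $I$ to a vertex of $[m]\setminus I$; write $G|_I$ for the restricted seed. Let $v \in I$ be mutable in $G|_I$. Then every arrow of $G$ incident to $v$ has its other endpoint in $I$ (by the defining property of $G|_I$), and since $v$ is mutable in $G|_I$ no such arrow is deleted when forming $G|_I$; hence the arrows at $v$, with multiplicities, are identical in $G$ and in $G|_I$. Consequently the exchange relation at $v$ involves only variables $x_a$ with $a \in I$ and is the same in both seeds, so the new variable $x_v'$ agrees. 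Moreover, by locality, the quiver modification performed by $\mu_v$ takes place entirely among $v$ and its neighbours, all of which lie in $I$, so $\mu_v$ alters no arrow having an endpoint in $[m]\setminus I$; therefore the induced subquiver on $I$ of $\mu_v(G)$ coincides with $\mu_v$ applied to the induced subquiver on $I$ of $G$. Finally I would check that the extra freezing used to build $G|_I$ is respected on both sides: any vertex of $I$ frozen in the passage to $G|_I$ stays frozen after $\mu_v$, and the deletion of arrows between frozen pairs is applied consistently, invoking the first part of the lemma to slide these freezings past $\mu_v$.

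The main obstacle I anticipate is purely bookkeeping: the ``freeze some mutable vertices'' clause in the definition of restricted seed forces one to track carefully which vertices of $I$ are frozen and to apply the arrow-deletion conventions for frozen pairs in exactly the same way in both composites — and the first assertion of the lemma is precisely the device that licenses commuting those freezings past the mutation. Once the conventions are pinned down there is no further content beyond the locality of mutation.
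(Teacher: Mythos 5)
The paper cites this lemma to Fomin, Williams and Zelevinsky (Lemmas 4.2.2 and 4.2.5 of their book) and does not reproduce a proof, so there is no in-paper argument to compare against; your direct verification via locality of mutation is correct and in the same bookkeeping spirit as the cited reference. The one thing I would tighten is the phrase ``touch disjoint pieces of data'' in the first part, since $\mu_w$ can in fact reverse or create arrows incident to $v$ (by reversing $v\leftrightarrow w$ arrows or composing paths through $w$); but your immediately following ``except possibly'' clause correctly isolates the only overlapping case that could matter --- arrows between $v$ and frozen vertices --- and the observation that $\mathrm{frz}_v$ deletes any such arrows in the end, in either order, closes the gap, so the substance of the argument is sound.
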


\begin{lemma}\label{lem:restrictedseed}
Let $\nu\subseteq \lambda$.
Then the rectangles seed $G^\nu_\rect$ of $X_\nu$ is a restricted seed obtained 
from the rectangles seed $G^{\lambda}_{\rect}$ of $X_\lambda$.
It follows that any seed $G$ for $X_\nu$ is a restricted seed  obtained from a seed
$G'$ for $X_\lambda$; to get to the seed $G'$ for $X_{\lambda}$, we perform
on $G^\lambda_\rect$ the same sequence of mutations that were used on $G^\nu_\rect$ to obtain the seed $G$ for $X_{\nu}$.  
\end{lemma}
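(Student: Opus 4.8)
The plan is to prove the two assertions of \cref{lem:restrictedseed} in turn, the second following formally from the first together with \cref{l:freezingcommutesmutation}. For the first assertion, I would start from the explicit combinatorial description of the rectangles seed in \cref{def:rectseed}. Recall that the quiver $Q_\lambda$ has a vertex for every box $b\subseteq\lambda$ (labelled by $\Rect(b)$) plus the vertex $\emptyset$, with arrows governed by the two local rules: an arrow $a\to b$ whenever $\Rect(b)$ is obtained from $\Rect(a)$ by deleting a row or column, or by adding a hook; a vertex is mutable iff the box immediately southeast of it is also in $\lambda$; and arrows between two frozen vertices are deleted. The first observation is purely set-theoretic: since $\nu\subseteq\lambda$, every box of $\nu$ is a box of $\lambda$, and for a box $b\subseteq\nu$ the rectangle $\Rect_\nu(b)$ (maximal rectangle inside $\nu$ with corner $b$) coincides with $\Rect_\lambda(b)$ provided the full $\Rect_\lambda(b)$ already lies inside $\nu$; the key point (which I would state as a small lemma) is that the rectangle with lower-right corner $b$ is determined by $b$ together with how far north and west one can travel staying inside the shape, and when $b$ is already a box of $\nu$ this rectangle is the same whether one requires it to fit in $\nu$ or merely in $\lambda$ — here one uses that $\nu$ and $\lambda$ are both Young diagrams, so containment is "staircase-closed". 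Hence the vertex set of $Q_\nu$ is identified with the set $I:=\{\Rect(b)\mid b\subseteq\nu\}\cup\{\emptyset\}$ of vertices of $Q_\lambda$, and the labels (Plücker coordinates) match on the nose.

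Next I would check that the induced quiver of $Q_\lambda$ on the subset $I$, after freezing the appropriate vertices and deleting arrows between frozen vertices, is exactly $Q_\nu$. The arrow rules are local (they only involve $\Rect(a)$ and $\Rect(b)$ and whether certain boxes lie in the ambient shape), so an arrow $a\to b$ with both $a,b\in I$ is present in $Q_\lambda$ iff the rectangle condition holds, which — by the previous paragraph — is the same condition that puts the arrow into $Q_\nu$; the only subtlety is mutability, since a box $b\subseteq\nu$ may have its southeast neighbour in $\lambda\setminus\nu$, making the corresponding vertex mutable in $Q_\lambda$ but frozen in $Q_\nu$. This is precisely the extra freezing allowed in \cref{def:restricted}, and I would verify the compatibility requirement of that definition: after freezing in $I$ every vertex that is frozen in $Q_\nu$, there are no arrows from an unfrozen vertex of $I$ to a vertex of $[m]\setminus I$. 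An unfrozen vertex of $I$ corresponds to a box $b\subseteq\nu$ whose southeast neighbour is in $\nu$; any arrow out of it in $Q_\lambda$ goes to $\Rect(b)$ with a row/column removed or a hook added, and one checks these target rectangles again have lower-right corner inside $\nu$ (removing a row/column keeps the corner a box of $\nu$; adding a hook to a rectangle whose extended corner is interior to $\nu$ keeps the new corner in $\nu$). So all arrows out of unfrozen vertices of $I$ stay within $I$, and likewise arrows into unfrozen vertices of $I$ originate in $I$. This establishes $G^\nu_\rect = (G^\lambda_\rect)|_I$, i.e. the first claim.

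For the second assertion, let $G$ be any seed for $X_\nu$; by definition of a cluster structure it is obtained from $G^\nu_\rect$ by a finite mutation sequence $\mathbf{m}=\mu_{i_\ell}\cdots\mu_{i_1}$ at mutable vertices. Define $G':=\mathbf{m}(G^\lambda_\rect)$, applying the same sequence of mutations to $G^\lambda_\rect$ — this makes sense because each $i_j$, being mutable in the seed reached from $G^\nu_\rect$, corresponds via the vertex identification to a vertex that is also mutable at the corresponding stage in $G^\lambda_\rect$ (mutability can only increase when passing from the restricted to the ambient seed, so no mutation in the sequence becomes illegal). Then \cref{l:freezingcommutesmutation} says passing to a restricted seed commutes with mutation, so $G'|_I = \mathbf{m}\big((G^\lambda_\rect)|_I\big) = \mathbf{m}(G^\nu_\rect) = G$, exhibiting $G$ as a restricted seed of $G'$, and $G'$ is a seed for $X_\lambda$ since it is mutation-equivalent to $G^\lambda_\rect$. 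I expect the main obstacle to be the second paragraph: carefully formulating and verifying the "no arrows escape $I$ from unfrozen vertices" condition, i.e. that the extra freezing forced by the southeast-neighbour rule is exactly enough to make $I$ a legitimate restriction set, and correctly handling the degenerate cases (the $\emptyset$ vertex and its unique arrow, boxes on the rim of $\nu$, and the clause removing arrows between frozen vertices, which must be applied consistently on both sides).
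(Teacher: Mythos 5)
Your proposal is correct and takes essentially the same approach as the paper, which disposes of the first claim with ``clear from the definition of the rectangles seed'' (pointing to \cref{fig:restrictedseed}) and derives the second from \cref{l:freezingcommutesmutation}; you have simply spelled out in full the combinatorial verification that the restriction-set $I$ satisfies the compatibility condition of \cref{def:restricted}.
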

\begin{proof}
The first statement is clear from the definition of the rectangles seed; see \cref{fig:restrictedseed}.
The second statement follows from \cref{l:freezingcommutesmutation}.
\end{proof}

\subsection{The $\XX$-cluster structure on a Schubert variety $\openSchub$}\label{sec:Xcluster}

In this section we give a concrete description of the
$\XX$-cluster structure on a Schubert variety.
(More details on the network charts associated to plabic graphs can be found in
\cref{sec:poschart}.)
To do so, 
we associate to each Schubert variety a corresponding directed network 
$G_{\lambda}^{\rect}$ as in 
\cref{SchubertNetwork}.
This will give rise to a map of a torus into $\openSchub$, as in 
\cref{1network_param}.
We will then obtain the other $\XX$-charts
from this one by mutation.

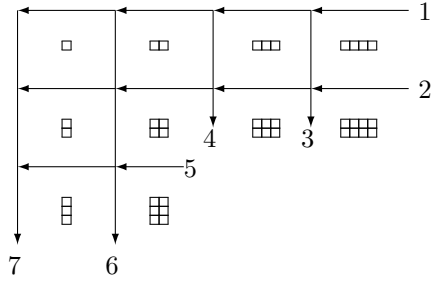
\begin{figure}[h]
\setlength{\unitlength}{1.3mm}
\begin{center}
        \begin{picture}(50,35)
 \put(43,29){$1$}
 \put(43,21){$2$}
 \put(31,16){$3$}
 \put(21,16){$4$}
 \put(19,13){$5$}
 \put(11,3){$6$}
 \put(1,3){$7$}
         \put(6.5,26){$\ydiagram{1}$}
         \put(15.5,26){$\ydiagram{2}$}
         \put(26,26){$\ydiagram{3}$}
         \put(35,26){$\ydiagram{4}$}
         \put(6.5,18){$\ydiagram{1,1}$}
         \put(15.5,18){$\ydiagram{2,2}$}
         \put(26,18){$\ydiagram{3,3}$}
         \put(35,18){$\ydiagram{4,4}$}
         \put(6.5,10){$\ydiagram{1,1,1}$}
         \put(15.5,10){$\ydiagram{2,2,2}$}
         \put(42, 30){{\vector(-1,0){10}}}
         \put(32, 30){{\vector(-1,0){10}}}
         \put(22, 30){{\vector(-1,0){10}}}
         \put(12, 30){{\vector(-1,0){10}}}
         \put(42, 22){{\vector(-1,0){10}}}
         \put(32, 22){{\vector(-1,0){10}}}
         \put(22, 22){{\vector(-1,0){10}}}
         \put(12, 22){{\vector(-1,0){10}}}
         \put(19, 14){{\vector(-1,0){7}}}
         \put(12, 14){{\vector(-1,0){10}}}

         \put(2,30){{\vector(0,-1){24}}}
         \put(12,30){{\vector(0,-1){24}}}
         \put(22,30){{\vector(0,-1){12}}}
         \put(32,30){{\vector(0,-1){12}}}
 \end{picture}
\end{center}
\caption{The ``rectangles'' network 
$G^{\rect}_{\lambda}$
	for the
Schubert variety $X_{\lambda}^{\circ}$ with $\lambda=(4,4,2)$.}
\label{SchubertNetwork}
\end{figure}

\begin{definition}
Let $I$ denote the boundary vertices which are sources; in 
\cref{SchubertNetwork}, $I = \{1,2,5\}$.
A \emph{flow} $F$ from $I$ to a set $J$ of boundary vertices
with
$|J|=|I|$
is a collection of paths
in the network, all pairwise vertex-disjoint,
such that the sources of these paths are $I - (I \cap J)$
and the destinations are $J - (I \cap J)$.

Note that each path %
$w$ in the network partitions the faces of the network into those
which are on the left and those which are on the right of the
walk. %
We define the \emph{weight} $\wt(w)$ of
each such path %
to be the product of
parameters $x_{\mu}$, where $\mu$ ranges over all face labels
to the left of the path.  And we define the
\emph{weight} $\wt(F)$ of a flow $F$ to be the product of the weights of
all paths in the flow.

Given $J \in {[n] \choose n-k}$,
we define the {\it flow polynomial}
\begin{equation}\label{eq:Plucker}
P_J^\rect = \sum_F \wt(F),
\end{equation}
where $F$ ranges over all flows from $I_{\O}$ to $J$.
\end{definition}

\begin{example}
Consider the network 
from \cref{SchubertNetwork}.
     There are two flows $F$
      from $I=\{1,2,5\}$ to $J=\{1,5,7\}$ (corresponding to the two
     paths from vertex $2$ to vertex $7$) and
$P^\rect_{\{1,5,7\}} = 
        x_{\ydiagram{2,2}} x_{\ydiagram{3,3}} x_{\ydiagram{4,4}} x_{\ydiagram{1,1,1}}
        x_{\ydiagram{2,2,2}} (1+x_{\ydiagram{1,1}})$.

Using the terminology of \cref{sec:appendix},
this network is actually the network $N(D)$
associated to the \emph{$\Le$-diagram}
in the middle of \cref{fig:Le}.
If we label the faces of the plabic graph $G(D)$
by source labels, then map the source labels to partitions,
then each face is labeled by a rectangular partition
as  in \cref{SchubertNetwork}.
\end{example}

We now describe the network chart for $\openSchub$ 
associated to the network 
$G^{\rect}_{\lambda}$.
Initially \cref{1network_param} was proved for the 
totally nonnegative part of $\openSchub$
 (see \cite[Section 6]{Postnikov} and \cite{Talaska}),
 while the extension to $\openSchub$ comes from
\cite{TalaskaWilliams} (see also \cite{MullerSpeyer}).

\begin{theorem}\label{1network_param}
Consider the map $\Phi^\rect_\lambda$ sending
$(x_{\mu})_{\mu \in \Rect(\lambda)}
\in (\C^*)^{|\lambda|}$ to  projective space
of dimension ${n \choose n-k}-1$ with nonvanishing Pl\"ucker coordinates 
given by the flow polynomials $P_J^\rect$.
Then this map is well-defined, and is 
	an injective map onto a dense open subset of $\openSchub$.    
We call the map $\Phi_\lambda^{\rect}$ 
a \emph{network chart} for $\openSchub$.
\end{theorem}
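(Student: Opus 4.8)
The plan is to reduce the statement to the known parametrization theorems for reduced plabic networks, once all the objects have been correctly identified. By \cref{r:openSchub}, $\openSchub$ is an open positroid variety -- the projection of an open Richardson variety -- and its reverse Grassmann necklace consists of the subsets $I_{\mu_1},\dots,I_{\mu_n}$ attached to the frozen rectangles $\Fr(\lambda)$; in particular $\openSchub = \{x\in X_{\lambda}: P_{\mu_i}(x)\neq 0 \text{ for all } i\}$. So it suffices to (i) verify that $G^{\rect}_{\lambda}$ is a reduced plabic network for precisely this positroid, with the face labels of \cref{SchubertNetwork} being the \emph{source} labels and the boundary faces carrying exactly the necklace labels $I_{\mu_i}$; (ii) check that the flow-polynomial map is well-defined with image inside $\openSchub$; and (iii) invoke the statement that for a reduced plabic network the flow-polynomial (boundary-measurement) map is injective onto a dense open subset of the associated open positroid variety. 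For (i) I would cite that the network is $N(D)$ for the $\Le$-diagram $D$ of the Example preceding the theorem; since $\Le$-diagrams produce reduced plabic graphs, and the source-labelling of the faces of such a graph recovers (via Postnikov's dictionary) both the list of rectangles drawn in \cref{SchubertNetwork} and, on the boundary faces, the reverse Grassmann necklace of the positroid, this pins down the identification. For (iii) the totally nonnegative version is \cite[Section 6]{Postnikov} and \cite{Talaska}, and the statement over $\C$ is \cite{TalaskaWilliams} (see also \cite{MullerSpeyer}).

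For well-definedness (step (ii)) I would recall that the flow polynomials $P_J^{\rect}$ of any planar directed network satisfy the Pl\"ucker relations -- this is the Lindstr\"om--Gessel--Viennot lemma together with Postnikov's identification of the $P_J$ as the maximal minors of the boundary-measurement matrix -- so the tuple $(P_J^{\rect})_{J\in\binom{[n]}{n-k}}$ defines a point of $Gr_{n-k}(\C^n)\subset\PP^{\binom{n}{n-k}-1}$, provided it is not identically zero. It is not: $P_I^{\rect}$ (with $I$ the source set) is the weight of the empty flow, hence a nonzero monomial in the $x_\mu$, so $\Phi^{\rect}_{\lambda}$ is everywhere defined. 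Next, $P_J^{\rect}\equiv 0$ exactly when there is no flow from $I$ to $J$, which happens precisely for $J\not\subseteq\lambda$; hence the image lies in $X_{\lambda}$. Finally, for each frozen rectangle $\mu_i$ the coordinate $P_{\mu_i}^{\rect}$ is a (Laurent) monomial in the $x_\mu$ -- a general feature of the Pl\"ucker coordinates indexed by reverse-Grassmann-necklace elements of a reduced network -- hence nonvanishing on $(\C^*)^{|\lambda|}$, so the image lies in $\openSchub$.

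Injectivity and density are then the content of \cite{TalaskaWilliams, MullerSpeyer} applied to the reduced network $G^{\rect}_{\lambda}$: reducedness lets one reconstruct the face weights from ratios of the $P_J^{\rect}$ (for instance via the twist map, which furnishes an inverse morphism on a dense open locus), and, since $\Phi^{\rect}_{\lambda}$ is then an injective morphism from an irreducible $|\lambda|$-dimensional variety into the irreducible $|\lambda|$-dimensional variety $\openSchub$, its image is irreducible of dimension $|\lambda|$ and therefore dense. An alternative, more self-contained route would be to deduce the claim from the corresponding statement for the big cell of $Gr_{n-k}(\C^n)$: just as $G^{\lambda}_{\rect}$ is a restricted seed of the Grassmannian rectangles seed (\cref{lem:restrictedseed}, \cref{fig:restrictedseed}), the network $G^{\rect}_{\lambda}$ is obtained from the Grassmannian rectangles network by deleting the rows and columns outside $\lambda$, and the flow-polynomial formulas are literally the same; one then checks that specializing the deleted face variables cuts the Grassmannian network chart down to a chart of $\openSchub$.

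The hard part is step (i): confirming that $G^{\rect}_{\lambda}$ really is a reduced plabic network whose positroid is that of $\openSchub$, with source-labels equal to the rectangles of \cref{SchubertNetwork} and boundary-labels equal to the reverse Grassmann necklace $I_{\mu_1},\dots,I_{\mu_n}$. Everything else is either a black-box citation or a short dimension count, so this combinatorial bookkeeping -- tracking $\Le$-diagrams, plabic graphs, source-versus-target labellings, and the bijection between rectangles in $\lambda$ and $(n-k)$-subsets of $[n]$ -- carries essentially all of the new content. A minor additional point is that $G^{\rect}_{\lambda}$ has $|\lambda|+1$ faces while the stated domain is $(\C^*)^{|\lambda|}$: one normalizes the outer face weight $x_{\emptyset}=1$ (legitimate since the $P_J^{\rect}$ are only defined up to simultaneous rescaling), which is what turns $\Phi^{\rect}_{\lambda}$ into an honest map to projective space from $(\C^*)^{|\lambda|}$.
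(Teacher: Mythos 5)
Your proposal is correct and takes essentially the same route as the paper, which provides no proof of its own for this theorem but instead cites \cite{Postnikov}, \cite{Talaska} for the totally nonnegative part and \cite{TalaskaWilliams, MullerSpeyer} for the extension to $\openSchub$ over $\C$ — exactly the reduction you spell out. The one place where you add genuine content beyond what the paper flags is step (i), the identification of $G^{\rect}_{\lambda}$ with $N(D)$ for the all-$+$'s $\Le$-diagram of shape $\lambda$ and the matching of source face labels with the reverse Grassmann necklace $(I_{\mu_1},\dots,I_{\mu_n})$; the paper does record this identification, but only in the example preceding the theorem and in \cref{r:openSchub} and \cref{rem:defopenSchubert}, so it is appropriate that you treat it as the load-bearing check. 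Your remark about the normalization $x_{\emptyset}=1$ reconciling the $|\lambda|+1$ faces of $G^{\rect}_{\lambda}$ with the stated domain $(\C^*)^{|\lambda|}$ is also correct and matches the convention recorded in \cref{def:PG} of the appendix, where the variable attached to the minimal face label never appears in a flow polynomial.
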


\begin{definition}[Network torus $\mathbb T_\lambda^{\rect}$]  \label{d:networktorus}
	Define the open dense torus $\mathbb T_\lambda^{\rect} $ in $\openSchub$ to be the image  of the network chart $\Phi_\lambda^{\rect}$, namely
	$\mathbb T_\lambda^{\rect}
	:=\Phi_\lambda^{\rect}((\C^*)^{|\lambda|})$.
	We call $\mathbb T_\lambda^{\rect}$ 
	the {\it network torus} associated to the rectangles cluster 
	for $\lambda$.
\end{definition}

While this paper will mostly be concerned with 
the network chart coming from $G^{\rect}_{\lambda}$, 
one can get many other $\mathcal{X}$-cluster charts 
coming from 
cluster $\mathcal{X}$-mutation, see \cref{Xseed} for 
more details.

\section{The definition of the superpotential for Schubert varieties}
\label{s:superpotential}

In this section we define our 
conjectural 
``mirror Landau-Ginzburg model" 
$(\check{X_{\lambda}^{\circ}}, 
W^{\lambda}_{\mathbf q})$  for the Schubert variety $X_\lambda$, where  
$W^{\lambda}_{\mathbf q}:
\dualSchub^{\circ} 
\to\C$ is a regular function 
we call the {\it superpotential} of $X_\lambda$.  
This superpotential generalizes the 
Marsh-Rietsch superpotential for Grassmannians from \cite{MR-Adv}. 
In \cref{d:LG1}, we give the \emph{canonical formula} for the 
superpotential, defining it using cluster variables such that 
only frozen variables appear in the denominator; thus, the 
superpotential is manifestly
 a regular function on $\dualSchub^{\circ}$.  In particular, 
if $\mathcal{A}(\Sigma_{\lambda}^{\rect})$ 
denotes the 
cluster algebra associated to the open Schubert variety
$\dualSchub^{\circ}$ (see \cref{thm:rectangles}), then 
\cref{d:LG1}
 expresses the superpotential as an element
of $\mathcal{A}(\Sigma_{\lambda}^{\rect})[q_1,\dots,q_d]$.
In \cref{e:Wq2} and 
 \cref{eq:3rdpotential} we give two equivalent ways to 
 express $W^{\lambda}$, using different combinatorial ways to index
 the summands (boxes in the rim of $\lambda$ versus boxes
 in the northwest border of $\lambda$).
Finally in \cref{prop:super2} we express the superpotential
as a Laurent polynomial in the cluster variables of the rectangles 
cluster.

Let $\lambda$ be a Young diagram 
corresponding to a partition $(\lambda_1\ge\lambda_2\ge\cdots\ge\lambda_{m})$. As in \cref{n:knd}, $\lambda$ has an $(n-k)\times k$ bounding rectangle and $d$ denotes the number of outer corners, or 
removable boxes, of $\lambda$.
So for example, 
if $\lambda = (4,4,2)$, then $k=4, n=7$, and $d=2$, while 
if $\lambda = (4,3,2)$, then $k=4$, $n=7$, and $d=3$.
In the notation from \cref{r:R(lambda)} the removable boxes are those boxes $b_i$ from the rim, see~\cref{d:frozens}, for which 
$i\in\Rout(\lambda)=\{\rho_1,\rho_3,\dotsc,\rho_{2d-1}\}$. 
We label the removable boxes of $\lambda$  
by the `quantum parameters' $q_1,\dotsc, q_d$, counting from top to bottom.
Thus the box $b_{\rho_{2\ell-1}}$ is labelled by the parameter $q_\ell$. 

Recall the set of frozen rectangles $\Fr(\lambda)$ from \cref{d:frozens}.
Namely $\Fr(\lambda)$ consists of $\mu_n=\emptyset$  together with the rectangles $\mu_1,\dotsc, \mu_{n-1}$ such that the southeast corner box of $\mu_i$ is the $i$-th box $b_i$ of the rim of $\lambda$. 

\begin{defn}\label{d:addable}
Consider a pair of Young diagrams $\mu$ and $\lambda$. 
We say a box $b$ is an \emph{addable box} for $\mu$ if $b$ does not lie in $\mu$, and the union $\mu\cup b$ is a Young diagram. We use the notation $\mu\sqcup b$ for the union of $\mu$ and $b$ when $b$ is such an addable box. 
We say the box $b$ is an \emph{addable box} for $\mu$ in $\lambda$ if it is an addable box for $\mu$ and additionally lies in $\lambda$.

Given a rectangular Young diagram $\mu \in \Rect(\lambda)$, we also let 
$\mu^-$ denote the rectangle obtained from $\mu$ by removing the rim.
\end{defn}

Our first version of the definition of the superpotential is as follows.

\begin{defn}[Canonical formula for the superpotential]\label{d:LG1}
Let $\lambda$ be a Young diagram with set $\Rout(\lambda)=\{\rho_1<\rho_3<\dotsc<\rho_{2d-1}\} \subset [n]$ of outer corner labels, compare \cref{r:R(lambda)}. 
We define the \emph{superpotential} of 
	$\check{X}_\lambda$ 
	to be the  regular function on 
$\check{X_{\lambda}^{\circ}}$, 
	depending on parameters $q_1,\dotsc, q_d$, which is given by 
\begin{equation}
	\label{e:Wq1}
	W^{\lambda} 
	= 
	\sum_{i=\rho_{2\ell-1} \in \Rout(\lambda)}	q_\ell \frac{p_{\mu_i^-}}{p_{\mu_i}}+
	\sum_{i\in [n]\setminus \Rout(\lambda)}
	\left(\sum_{b\in\mu_{i-1}\cup\mu_{i+1}} \frac{p_{\mu_i\sqcup b}}{p_{\mu_i}}\right).
\end{equation}
Here the first sum is over all $i\in\Rout(\lambda)$, so that $\ell$ ranges from $1$ to $d$.
	The sum inside the brackets on the right hand side is over all boxes $b$  which  lie in the union $\mu_{i-1}\cup\mu_{i+1}$ and are addable to $\mu_i$. 
	Here we think of $p_\mu$ as having degree $|\mu|$ and  
	$q_\ell$ as having degree $|\mu_{i}| - |\mu_{i}^-|+1$ so that the formula for $W^{\lambda}$ is homogeneous of degree $1$.

\end{defn}

\begin{remark}
The only elements which appear in the denominators in $W^{\lambda}$ are 
	the $p_{\mu_i}$ defining the divisor $\checkD_{\ac}^{\lambda}$, and this is precisely the
divisor	which we removed when defining 
$\check{X_{\lambda}^{\circ}}$
	(see \cref{d:openX}). 
Therefore $W^{\lambda}$ is indeed a regular function on 
$\check{X_{\lambda}^{\circ}}$.
Moreover, as we will show in \cref{prop:univpos},
	$W^{\lambda}$ is a \emph{universally positive} element of 
the $\mathcal{A}$-cluster structure on the open Schubert variety 
described in \cite{SSW}, see \cref{thm:rectangles}.
	This means that when we restrict $W^{\lambda}$
	to any $\mathcal{A}$-cluster torus, we will obtain
	a Laurent polynomial with positive coefficients.
\end{remark}

We now rewrite the superpotential in a slightly different way. 
Recall the notation $\sh(b)$ from \cref{d:frozens}.
The advantage of this next formula is that it generalizes
in a straightforward manner to skew shaped positroid varieties, see 
\cref{sec:skew}.

\begin{definition}\label{d:2recs}
Given a partition $\lambda$ in a $(n-k) \times k$ bounding rectangle,
we number its rows from $1$ to $n-k$ from top to bottom,
and its columns from $1$ to $k$ from left to right, as in the indexing
	of a matrix, see \cref{fig:SchubExample}.
For $1\leq i \leq n-k-1$, find the maximal-width rectangle of height $2$
that is contained in rows $i, i+1$ of $\lambda$.  
Let $d_i$ and $c_i$ denote its northeast and southwest corner, respectively.
Similarly, for $1\leq j \leq k-1$, find the maximal-height rectangle
of width $2$ contained in columns $j, j+1$ of $\lambda$.
Let $d^j$ and $c^j$ denote its southwest and northeast corner, respectively.
\end{definition}

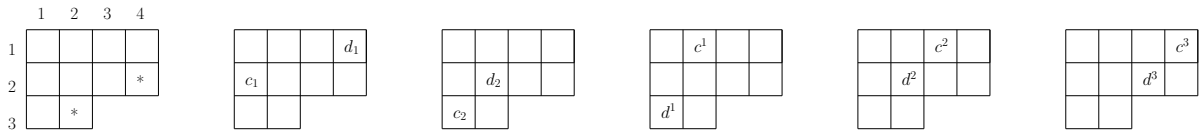
\begin{figure}[h]
\setlength{\unitlength}{1.3mm}
\begin{center}
\resizebox{0.15\textwidth}{!}{\begin{picture}(50,35)
  \put(5,32){\line(1,0){36}}
  \put(5,23){\line(1,0){36}}
  \put(5,14){\line(1,0){36}}
  \put(5,5){\line(1,0){18}}
  \put(5,5){\line(0,1){27}}
  \put(14,5){\line(0,1){27}}
  \put(23,5){\line(0,1){27}}
  \put(32,14){\line(0,1){18}}
  \put(41,14){\line(0,1){18}}
	\put(8,35){\huge{$1$}}
	\put(17,35){\huge{$2$}}
	\put(26,35){\huge{$3$}}
	\put(35,35){\huge{$4$}}
	\put(0,25){\huge{$1$}}
	\put(0,15){\huge{$2$}}
	\put(0,5){\huge{$3$}}
	 \put(35,16){\huge{*}}
	 \put(17,7){\huge{*}}
	\end{picture}}
	\hspace{0.1cm}
\resizebox{0.15\textwidth}{!}{\begin{picture}(50,35)
  \put(5,32){\line(1,0){36}}
  \put(5,23){\line(1,0){36}}
  \put(5,14){\line(1,0){36}}
  \put(5,5){\line(1,0){18}}
  \put(5,5){\line(0,1){27}}
  \put(14,5){\line(0,1){27}}
  \put(23,5){\line(0,1){27}}
  \put(32,14){\line(0,1){18}}
  \put(41,23){\line(0,1){9}}
  \put(41,14){\line(0,1){18}}
	 \put(35,26){\huge{$d_1$}}
	 \put(8,17){\huge{$c_1$}}
	\end{picture}}
	\hspace{0.1cm}
\resizebox{0.15\textwidth}{!}{\begin{picture}(50,35)
  \put(5,32){\line(1,0){36}}
  \put(5,23){\line(1,0){36}}
  \put(5,14){\line(1,0){36}}
  \put(5,5){\line(1,0){18}}
  \put(5,5){\line(0,1){27}}
  \put(14,5){\line(0,1){27}}
  \put(23,5){\line(0,1){27}}
  \put(32,14){\line(0,1){18}}
  \put(41,23){\line(0,1){9}}
  \put(41,14){\line(0,1){18}}
	 \put(17,17){\huge{$d_2$}}
	 \put(8,8){\huge{$c_2$}}
	\end{picture}}
	\hspace{0.1cm}
\resizebox{0.15\textwidth}{!}{\begin{picture}(50,35)
  \put(5,32){\line(1,0){36}}
  \put(5,23){\line(1,0){36}}
  \put(5,14){\line(1,0){36}}
  \put(5,5){\line(1,0){18}}
  \put(5,5){\line(0,1){27}}
  \put(14,5){\line(0,1){27}}
  \put(23,5){\line(0,1){27}}
  \put(32,14){\line(0,1){18}}
  \put(41,23){\line(0,1){9}}
  \put(41,14){\line(0,1){18}}
	 \put(17,26){\huge{$c^1$}}
	 \put(8,8){\huge{$d^1$}}
	\end{picture}}
	\hspace{0.1cm}
\resizebox{0.15\textwidth}{!}{\begin{picture}(50,35)
  \put(5,32){\line(1,0){36}}
  \put(5,23){\line(1,0){36}}
  \put(5,14){\line(1,0){36}}
  \put(5,5){\line(1,0){18}}
  \put(5,5){\line(0,1){27}}
  \put(14,5){\line(0,1){27}}
  \put(23,5){\line(0,1){27}}
  \put(32,14){\line(0,1){18}}
  \put(41,23){\line(0,1){9}}
  \put(41,14){\line(0,1){18}}
	 \put(26,26){\huge{$c^2$}}
	 \put(17,17){\huge{$d^2$}}
	\end{picture}}
	\hspace{0.1cm}
\resizebox{0.15\textwidth}{!}{\begin{picture}(50,35)
  \put(5,32){\line(1,0){36}}
  \put(5,23){\line(1,0){36}}
  \put(5,14){\line(1,0){36}}
  \put(5,5){\line(1,0){18}}
  \put(5,5){\line(0,1){27}}
  \put(14,5){\line(0,1){27}}
  \put(23,5){\line(0,1){27}}
  \put(32,14){\line(0,1){18}}
  \put(41,23){\line(0,1){9}}
  \put(41,14){\line(0,1){18}}
	 \put(26,17){\huge{$d^3$}}
	 \put(35,26){\huge{$c^3$}}
	\end{picture}}
\end{center}
     \caption{We can compute the superpotential using width $2$ and height $2$ rectangles. Here $\lambda=(4,4,2)$ as in \cref{ex:2recs}. The leftmost diagram indicates the 
      numbering of rows and columns, and each 
      $\star$ denotes an outer corner.
      The other diagrams indicate 
      the corners $c_i, d_i$,
      and $c^i, d^i$ of the rectangles from \cref{d:2recs}.}
\label{fig:SchubExample}
\end{figure}

\begin{proposition}\label{p:superpotential}
We have that 
\begin{equation}\label{e:another}
W^{\lambda} 
	= 
	\sum_{b=b_{\rho_{2\ell-1}} \in \Rout(\lambda)}	
	q_\ell \frac{p_{\sh(b)^-}}{p_{\sh(b)}}+
	\frac{p_{\ydiagram{1}}}{p_{\emptyset}}+
	\sum_{i=1}^{n-k-1} \frac{p_{\sh(d_i) \cup \sh(c_i)}}{p_{\sh(d_i)}}
	+ \sum_{j=1}^{k-1} \frac{p_{\sh(d^j) \cup \sh(c^j)}}{p_{\sh(d^j)}}.
\end{equation}
\end{proposition}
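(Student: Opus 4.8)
The plan is to show that the right-hand sides of \eqref{e:Wq1} and \eqref{e:another} have the same set of summands. The quantum sums agree term by term: for $i=\rho_{2\ell-1}\in\Rout(\lambda)$ one has $\mu_i=\sh(b_i)$ and $\mu_i^{-}=\sh(b_i)^{-}$ directly from \cref{d:frozens} and \cref{d:addable}. Likewise the $i=n$ summand of \eqref{e:Wq1} is precisely the isolated term $\frac{p_{\ydiagram{1}}}{p_\emptyset}$ of \eqref{e:another}: since $\mu_n=\emptyset$ its only addable box is $(1,1)$, which lies in $\mu_1=\sh(b_1)$, the first row of $\lambda$. So it remains to match the summands of \eqref{e:Wq1} with $i\in[n-1]\setminus\Rout(\lambda)$ — indexed by an addable box $b$ of the rectangle $\mu_i$ lying in $\mu_{i-1}\cup\mu_{i+1}$ — with the $n-k-1$ row-pair summands and the $k-1$ column-pair summands of \eqref{e:another}.

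To do this I would carry out a local analysis of the rim near $b_i$. Writing $b_i=(\alpha,\beta)$, the rectangle $\mu_i=\sh(b_i)$ has $\alpha$ rows and $\beta$ columns, and its only addable boxes are the \emph{column-extending} box $(\alpha+1,1)$ and the \emph{row-extending} box $(1,\beta+1)$. From the definitions one records: a rim box $(\alpha,\beta)$ always has $\lambda_{\alpha+1}\le\beta$; the rim leaves it downward, to $b_{i+1}=(\alpha+1,\beta)$, exactly when $\lambda_{\alpha+1}=\beta$, and leftward, to $b_{i+1}=(\alpha,\beta-1)$, otherwise; dually it enters from the right, from $b_{i-1}=(\alpha,\beta+1)$, exactly when $\lambda_\alpha>\beta$, and from above otherwise; and $i\in\Rout(\lambda)$ iff $\beta=\lambda_\alpha>\lambda_{\alpha+1}$. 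Comparing $\mu_{i\pm1}=\sh(b_{i\pm1})$ with the two addable boxes then gives: $(\alpha+1,1)\in\mu_{i-1}\cup\mu_{i+1}$ iff $\lambda_{\alpha+1}=\beta$, and $(1,\beta+1)\in\mu_{i-1}\cup\mu_{i+1}$ iff $\lambda_\alpha>\beta$. In the first case $\beta=\lambda_{\alpha+1}$ forces $b_i$ to be the northeast corner $d_\alpha$ of the maximal height-$2$ rectangle in rows $\alpha,\alpha+1$ (\cref{d:2recs}), with $1\le\alpha\le n-k-1$; since $\sh((r,c))$ is the rectangle with $r$ rows and $c$ columns, one checks $\mu_i\sqcup(\alpha+1,1)=\sh(d_\alpha)\cup\sh(c_\alpha)$, so that $\frac{p_{\mu_i\sqcup(\alpha+1,1)}}{p_{\mu_i}}$ is the row-pair summand for $\alpha$. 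Symmetrically, in the second case $b_i=d^\beta$ with $1\le\beta\le k-1$, and $\frac{p_{\mu_i\sqcup(1,\beta+1)}}{p_{\mu_i}}$ is the column-pair summand for $\beta$. (A box with $\lambda_\alpha>\beta=\lambda_{\alpha+1}$ — an inner corner — satisfies both conditions and contributes one summand of each kind; every other rim box outside $\Rout(\lambda)$ contributes exactly one.)

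For the converse, I would verify surjectivity. Given $1\le\alpha\le n-k-1$, the box $d_\alpha=(\alpha,\lambda_{\alpha+1})$ is a rim box, is not removable (the box directly below it lies in $\lambda$), and the rim leaves it downward; hence $d_\alpha=b_i$ for some $i\in[n-1]\setminus\Rout(\lambda)$, and by the previous paragraph the row-pair summand for $\alpha$ equals the $\bigl(i,(\alpha+1,1)\bigr)$-summand of \eqref{e:Wq1}; the column-pair case is the transpose. Because $\alpha$ (resp.\ $\beta$) recovers the box $d_\alpha$ (resp.\ $d^\beta$), hence the index $i$, these assignments are injective, and distinct row- or column-pairs yield distinct summands. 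A final count, $d+1+(n-k-1)+(k-1)=d+n-1$ summands on each side, shows the matching is exhaustive, and the two formulas for $W^{\lambda}$ coincide.

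The main obstacle is the rim bookkeeping underlying the second paragraph: verifying the rim-step rules in all boundary cases (a box in the first row or column, indices at the ends of the rim, rows or columns of length $1$), handling inner corners correctly (they must contribute exactly two summands, one matching a row-pair and one a column-pair term), and noting that although the coincidence $d_\alpha=d^\beta$ does occur for some shapes, it still produces two distinct summands — the same denominator $p_{\mu_i}$ but numerators $p_{\mu_i\sqcup(\alpha+1,1)}\neq p_{\mu_i\sqcup(1,\beta+1)}$ — so no summand is counted twice.
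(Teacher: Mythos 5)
Your proof is correct and takes essentially the same approach as the paper's: matching the summands of \eqref{e:Wq1} against those of \eqref{e:another} term by term, with the quantum sum and the $i=n$ term peeled off first and the remaining terms identified via the rim/addable-box structure. The paper's proof is only a few lines and essentially asserts the correspondence, while you carry out the rim bookkeeping explicitly — verifying, for $b_i = (\alpha,\beta)$ on the rim, exactly when the addable boxes $(\alpha+1,1)$ and $(1,\beta+1)$ lie in $\mu_{i-1}\cup\mu_{i+1}$, and identifying these cases with the corners $d_\alpha$ and $d^\beta$ of \cref{d:2recs}. Your local analysis, the treatment of inner corners as contributing one summand of each kind, and the final counting argument ensuring the bijection is exhaustive are all sound; this is a fully detailed version of the paper's terse proof rather than a genuinely different route.
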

\begin{proof}
To see that \eqref{e:another} agrees with \eqref{e:Wq1},
first note that the first sums of both are identical.
Meanwhile 
	the term 
	from the second sum 
	of \eqref{e:Wq1} 
	in the case that $i=n$ (and hence
	$\mu_n = \emptyset$)
	is exactly 
	the term 
	$\frac{p_{\ydiagram{1}}}{p_{\emptyset}}$ from \eqref{e:another}.
Finally, the remaining terms 
	from the second sum 
	of \eqref{e:Wq1}  correspond to the sums over $i$ and $j$ in \eqref{e:another},
	where we note that each $\sh(d_i)$ and $\sh(d^j)$ correspond to 
	a $\mu_i$ where 
	${i\in [n]\setminus \Rout(\lambda)}$. 
\end{proof}

\begin{example}\label{ex:2recs}
	For the case $\lambda = (4,4,2)$ (shown in \cref{fig:SchubExample}),
	\cref{p:superpotential} tells us that 
	$$W^{\lambda}=
        W^{\lambda}_{\mathbf q}=
 \frac{q_1 p_{\ydiagram{3}}}{p_{\ydiagram{4,4}}} +
        \frac{q_2 p_{\ydiagram{1,1}}}{p_{\ydiagram{2,2,2}}} + 
        \frac{p_{\ydiagram{1}}}{p_{\emptyset}}+ 
        \frac{p_{\ydiagram{4,1}}}{\p_{\ydiagram{4}}} +
        \frac{p_{\ydiagram{2,2,1}}}{\p_{\ydiagram{2,2}}}+
        \frac{ p_{\ydiagram{2,1,1}}}{p_{\ydiagram{1,1,1}}} + 
        \frac{p_{\ydiagram{3,2}}}{\p_{\ydiagram{2,2}}}+
        \frac{p_{\ydiagram{4,3}}}{\p_{\ydiagram{3,3}}}.
	$$

\end{example}
For another example, see \cref{sec:example}.

We next  analyse the different types of summands that occur in $W^\lambda$. 

\begin{defn}\label{d:frozendec} Note that the rim of $\lambda$ consists of outer corner boxes, indexed by $\Rout(\lambda)$, inner corner boxes, indexed by $\Rin(\lambda)$, and two other kinds of boxes which we may think of as belonging to vertical, respectively horizontal, segments of the rim. We define four disjoint subsets of $\Fr(\lambda)$ using this division  of the rim: 

\begin{itemize}
	\item $\Fr_0(\lambda)$ consists of the rectangles $\mu_i \in \Fr(\lambda)$
such that $b_i$ is a removable box of $\lambda$. Equivalently $\Fr_0(\lambda)=\{\mu_\rho\mid\rho\in\Rout(\lambda)\}$.
	\item $\Fr_{1,E}(\lambda)$ consists of the rectangles $\mu_i \in \Fr(\lambda)$ such that $b_i$ has a box to the east of it in the rim of $\lambda$ but not to the south. That is, $b_i$ belongs to a {\it horizontal segment} of the rim. 	
	\item $\Fr_{1,S}(\lambda)$ consists of the rectangles $\mu_i \in \Fr(\lambda)$ such that $b_i$ has a box to the south of it in the rim of $\lambda$, but no box to the east of it. That is $b_i$ belongs to a {\it vertical segment} of the rim.
	\item $\Fr_2(\lambda)$ consists of the rectangles $\mu_i \in \Fr(\lambda)$
		such that $b_i$ is an inner corner box of the rim, so that $b_i$ has both a box to the east of it and a box to the south in the rim of $\lambda$. In other words $\Fr_2(\lambda)=\{\mu_\rho\mid\rho\in\Rin(\lambda)\}$.
\end{itemize}
Clearly we have $\Fr(\lambda) = \{\emptyset\}\cup
\Fr_0(\lambda) \cup \Fr_{1,E}(\lambda) \cup \Fr_{1,S}(\lambda) \cup \Fr_2(\lambda) $. The numerical index ($0$, $1$ or $2$) of each subset of frozen rectangles $\mu$ indicates the number of addable boxes to $\mu=\mu_i$ in $\mu_{i-1}\cup\mu_{i+1}$ for $\mu_i$ in this subset. 
Note that these addable boxes to $\mu_i$ are in bijection with 
the boxes
in the \textit{rim} of $\lambda$ that touch $\mu_i$ and lie directly to the south or east of $b_i$.
See \cref{fig:4cases}.  
\end{defn}

\begin{figure}[h]
\centering
\includegraphics[height=.8in]{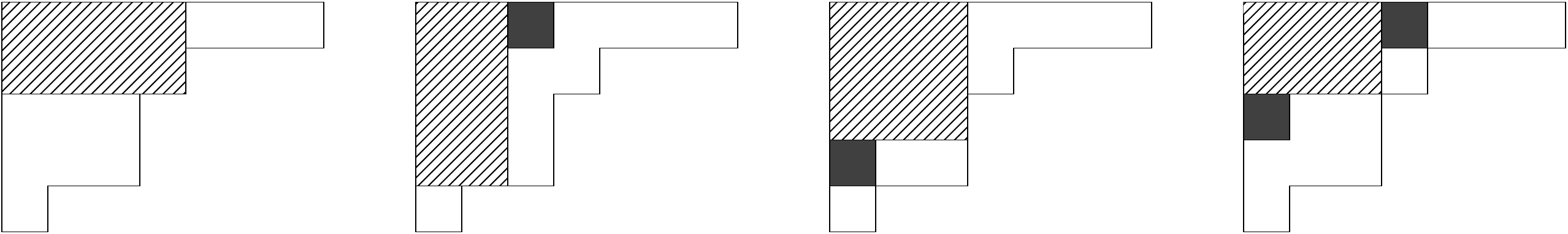}
	\caption{The shaded rectangles provide 
	examples of rectangles from the sets  
	 $\Fr_0(\lambda)$,
	$\Fr_{1,E}(\lambda)$, 
	 $\Fr_{1,S}(\lambda)$, 
	 $\Fr_2(\lambda)$, while the solid black boxes 
	 depict addable boxes.}
\label{fig:4cases}
\end{figure}

If $\mu$ is a rectangle, we let $\mu^{\square}$ denote the Young diagram obtained
from $\mu$ by adding a new box to the right of the first row of $\mu$.  Similarly, let 
$_{\square}\mu$ denote the Young diagram obtained from $\mu$ by adding a new box
at the bottom of the leftmost column of $\mu$. 
Note that if $\mu\in \Fr_0(\lambda)$ then $\mu=\mu_\rho$ for some $\rho=\rho_{2\ell-1}$ in $\Rout(\lambda)$. In this case we define $q(\mu):=q_\ell$. 
Then we have the following reformulation of \eqref{e:Wq1}. 

\begin{proposition}[Rim-indexed formula for the superpotential]
\begin{equation}
\label{e:Wq2}
W^{\lambda} = 
	\frac{p_{\Box}}{p_{\emptyset}} 
+
	\sum_{\mu\in \Fr_{1,E}(\lambda)} \frac{p_{\muright}}{p_{\mu}}
	+ \sum_{\mu\in \Fr_{1,S}(\lambda)} \frac{p_{\mudown}}{p_{\mu}}
	+ \sum_{\mu\in \Fr_2(\lambda)} \frac{p_{\muright}+p_{\mudown}}{p_{\mu}}
	+	\sum_{\mu\in \Fr_0(\lambda)} q(\mu) \frac{p_{\mu^{-}}}{p_{\mu}}.
\end{equation}
\end{proposition}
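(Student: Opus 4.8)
The plan is to derive \eqref{e:Wq2} as a direct re-indexing of the canonical formula \eqref{e:Wq1}. The quantum part needs no work: by \cref{d:frozendec} we have $\Fr_0(\lambda)=\{\mu_\rho\mid\rho\in\Rout(\lambda)\}$, and for $\mu=\mu_{\rho_{2\ell-1}}$ the prescription $q(\mu)=q_\ell$ makes the last sum of \eqref{e:Wq2} literally equal to the first sum of \eqref{e:Wq1}. It remains to match the second sum of \eqref{e:Wq1}, i.e.\ to show that for each $i\in[n]\setminus\Rout(\lambda)$ the inner sum $\sum_{b}p_{\mu_i\sqcup b}/p_{\mu_i}$, taken over boxes $b\in\mu_{i-1}\cup\mu_{i+1}$ that are addable to $\mu_i$, equals the term of \eqref{e:Wq2} attached to $\mu_i$. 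By \cref{d:frozendec} and \cref{r:R(lambda)}, the index set $[n]\setminus\Rout(\lambda)$ is the disjoint union of $\{n\}$ (with $\mu_n=\emptyset$) and the index sets of $\Fr_{1,E}(\lambda)$, $\Fr_{1,S}(\lambda)$, $\Fr_2(\lambda)$, so this amounts to a four-case check.

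The key point is that a rectangle $\mu$ has at most two addable boxes: the one in row $1$ just past the last column, producing $\muright$, and the one in column $1$ just past the last row, producing $\mudown$; so in each case one only has to decide which of these two lie in $\mu_{i-1}\cup\mu_{i+1}$. Writing $b_i=(r_i,c_i)$, so that $\mu_i=\Rect(b_i)$ is the $r_i\times c_i$ rectangle, I will use that the rim is a connected strip, whence consecutive rim boxes are edge-adjacent, so $b_{i-1}$ is the north or east neighbour of $b_i$ and $b_{i+1}$, when it exists, is the south or west neighbour. If $\mu_i\in\Fr_{1,E}(\lambda)$ then $b_i$ has a rim box to its east, which is then $b_{i-1}=(r_i,c_i+1)$, so $\mu_{i-1}=\Rect(b_{i-1})$ is the $r_i\times(c_i+1)$ rectangle and contains the box $(1,c_i+1)$ producing $\muright$; and $b_{i+1}$ is not the south neighbour of $b_i$ (there is no rim box south of $b_i$), so $b_{i+1}=(r_i,c_i-1)$ or does not exist, hence $\mu_{i+1}$ has at most $r_i$ rows, and since $\mu_{i-1}$ has $r_i$ rows the union $\mu_{i-1}\cup\mu_{i+1}$ omits the box $(r_i+1,1)$ producing $\mudown$. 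Thus the $i$-th contribution is $p_{\muright}/p_{\mu_i}$, as required. The case $\mu_i\in\Fr_{1,S}(\lambda)$ is the transpose of this and gives $p_{\mudown}/p_{\mu_i}$. If $\mu_i\in\Fr_2(\lambda)$ then $b_i$ has an east neighbour $b_{i-1}=(r_i,c_i+1)$ and a south neighbour $b_{i+1}=(r_i+1,c_i)$, and the rectangles $\mu_{i-1}=\Rect(b_{i-1})$ and $\mu_{i+1}=\Rect(b_{i+1})$ contain $(1,c_i+1)$ and $(r_i+1,1)$ respectively, so both addable boxes occur and the contribution is $(p_{\muright}+p_{\mudown})/p_{\mu_i}$. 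Finally, for $i=n$ the only box addable to $\mu_n=\emptyset$ is $\Box$, and it lies in $\mu_1=\Rect(b_1)$, the full first row of $\lambda$ (as $b_1=(1,k)$), giving $p_{\Box}/p_{\emptyset}$. Summing the four contributions yields \eqref{e:Wq2}.

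The one slightly delicate step, which I expect to be the main obstacle, is the elementary neighbour bookkeeping inside the four cases: one must make sure the ``unwanted'' addable box genuinely fails to appear, for instance that for $\mu_i\in\Fr_{1,E}(\lambda)$ neither $\mu_{i-1}$ nor $\mu_{i+1}$ reaches row $r_i+1$, and the transpose statement for $\Fr_{1,S}(\lambda)$. This is precisely where the defining conditions of $\Fr_{1,E}$, $\Fr_{1,S}$, $\Fr_2$ enter, and it is essentially the content of the parenthetical observations in \cref{d:frozendec} identifying the addable boxes of $\mu_i$ that lie in $\mu_{i-1}\cup\mu_{i+1}$ with the rim boxes touching $\mu_i$ directly south or east of $b_i$. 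In a complete write-up I would either cite those observations or spell out the short neighbour analysis above; the rest is a routine substitution.
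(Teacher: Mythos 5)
The paper gives no proof of this Proposition, treating the passage from \eqref{e:Wq1} to \eqref{e:Wq2} as a self-evident re-indexing; your argument correctly supplies what was left implicit, and it is sound. Your key reductions are the intended ones: the quantum terms match via $\Fr_0(\lambda)=\{\mu_\rho\mid\rho\in\Rout(\lambda)\}$ together with $q(\mu_{\rho_{2\ell-1}})=q_\ell$, and the index set $[n]\setminus\Rout(\lambda)$ decomposes into $\{n\}$ (for $\mu_n=\emptyset$) plus the index sets of $\Fr_{1,E}(\lambda)$, $\Fr_{1,S}(\lambda)$, $\Fr_2(\lambda)$, after which the claim reduces to deciding, in each case, which of the two addable boxes $(1,c_i+1)$ and $(r_i+1,1)$ of the rectangle $\mu_i=r_i\times c_i$ lie in $\mu_{i-1}\cup\mu_{i+1}$. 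The neighbour bookkeeping in each case is correct: using that consecutive rim boxes are edge-adjacent in the NE-to-SW numbering, you correctly identify $b_{i\mp 1}$ as the east (respectively south) rim neighbour of $b_i$ when present, compute $\mu_{i\mp 1}$ as the corresponding rectangle, and verify that the unwanted addable box falls outside $\mu_{i-1}\cup\mu_{i+1}$. This is essentially a made-precise version of the parenthetical observation at the end of \cref{d:frozendec} (that the addable boxes of $\mu_i$ counted in \eqref{e:Wq1} correspond to rim boxes touching $\mu_i$ directly south or east of $b_i$). No gaps.
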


\begin{example}
Suppose $\lambda = (4,4,2)$. Then $k=4$ and  $n=7$, and we have $\checkX_{\lambda} \subset Gr_4((\C^7)^*)$. In this case
\begin{align*}
\Fr_0(\lambda)=
	\big\{\ydiagram{4,4}, \ydiagram{2,2,2}\,\big\}, \quad
\Fr_{1,E}(\lambda)=
	\big\{\ydiagram{1,1,1}, \ydiagram{3,3}\big\}, \quad
	\Fr_{1,S}(\lambda)=\big\{\ydiagram{4}\big\},\quad
\Fr_{2}(\lambda)=\big\{\ydiagram{2,2} \big\},
\end{align*} 
and the superpotential on $\dualSchub$ is given by the expression
\[
	W^{\lambda}=
        \frac{p_{\ydiagram{1}}}{p_{\emptyset}} + 
        \frac{ p_{\ydiagram{2,1,1}}}{p_{\ydiagram{1,1,1}}} + 
        \frac{p_{\ydiagram{4,3}}}{\p_{\ydiagram{3,3}}} + 
        \frac{p_{\ydiagram{4,1}}}{\p_{\ydiagram{4}}} +
	\frac{\left(
	p_{\ydiagram{3,2}}+
	p_{\ydiagram{2,2,1}}
	\right)}{\p_{\ydiagram{2,2}}}+
 \frac{q_1 p_{\ydiagram{3}}}{p_{\ydiagram{4,4}}} +
        \frac{q_2 p_{\ydiagram{1,1}}}{p_{\ydiagram{2,2,2}}}.
\]
\end{example}

We now give one more equivalent way of expressing the superpotential on $\dualSchub$.

\begin{definition}\label{def:NW}
Let $\lambda, k, n$ be as in  \cref{n:knd}, so that 
 $\lambda$ has $n-k$ rows and $\lambda_1=k$.  Recall the notation
	$\BB^{\NW}(\lambda)$ for the northwest boundary of $\lambda$, see  \cref{def:BB}.
	Let us define a map, 
\[
\frozen: \BB^{\NW}(\lambda)\to \Fr(\lambda),
\]
by setting $\frozen(b)$ to be the minimal element $\mu$ of $\Fr(\lambda)$ such that 
	$b$ is an addable box for $\mu$ in $\lambda$, see 
\cref{d:addable}.
 In particular if $b$ is the top left hand corner box then $\frozen(b)=\emptyset$. Note that the map $\frozen$ is clearly not surjective nor is it in general injective. 
\end{definition}

Using the map $\frozen$ we can give the following equivalent description of the superpotential.

\begin{proposition}[Northwest-border-indexed formula for the superpotential]
\label{p:3rdpotential}
Let $\lambda$ be a Young diagram  as in 
\cref{def:NW}.
We have that 
	\begin{equation}\label{eq:3rdpotential}
		W^{\lambda} = \sum_{b\in \BB^{\NW}(\lambda)}  \frac{p_{\frozen(b) \sqcup b}}{p_{\frozen(b)}} +  \sum_{\mu\in \Fr_0(\lambda)} q(\mu)\frac{p_{\mu^-}}{p_{\mu}}.
	\end{equation}
\end{proposition}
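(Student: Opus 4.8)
The plan is to deduce \eqref{eq:3rdpotential} from the canonical formula \eqref{e:Wq1}, using the rim-indexed rewriting \eqref{e:Wq2} as an intermediate. The $q$-parameter summands $\sum_{\mu\in\Fr_0(\lambda)}q(\mu)\,p_{\mu^-}/p_{\mu}$ occur verbatim in both \eqref{eq:3rdpotential} and \eqref{e:Wq2}, so it suffices to match the remaining summands, i.e.\ to prove the identity
\[
\begin{aligned}
\sum_{b\in\BB^{\NW}(\lambda)}\frac{p_{\frozen(b)\sqcup b}}{p_{\frozen(b)}}
\;&=\;\frac{p_{\Box}}{p_{\emptyset}}
+\sum_{\mu\in\Fr_{1,E}(\lambda)}\frac{p_{\muright}}{p_{\mu}}\\
&\quad+\sum_{\mu\in\Fr_{1,S}(\lambda)}\frac{p_{\mudown}}{p_{\mu}}
+\sum_{\mu\in\Fr_2(\lambda)}\frac{p_{\muright}+p_{\mudown}}{p_{\mu}}.
\end{aligned}
\]
Since the decomposition $\Fr(\lambda)=\{\emptyset\}\cup\Fr_0(\lambda)\cup\Fr_{1,E}(\lambda)\cup\Fr_{1,S}(\lambda)\cup\Fr_2(\lambda)$ is into disjoint sets, proving this amounts to showing that $b\mapsto\bigl(\frozen(b),\,\frozen(b)\sqcup b\bigr)$ sets up, term by term, a bijection from $\BB^{\NW}(\lambda)$ onto the index set of the non-$q$ summands of \eqref{e:Wq2}.

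First I would split $\BB^{\NW}(\lambda)$ into the corner box $(1,1)$, the top-row boxes $(1,c+1)$ for $1\le c\le k-1$, and the leftmost-column boxes $(i,1)$ for $2\le i\le n-k$. The corner box is immediate from the definition of $\frozen$: $\frozen((1,1))=\emptyset$ and $\emptyset\sqcup(1,1)=\Box$, producing $p_{\Box}/p_{\emptyset}$. For a top-row box $b=(1,c+1)$, the box $b$ is addable to a rectangle $\mu\subseteq\lambda$ exactly when $\mu$ has width precisely $c$; the width-$c$ rectangles in $\Fr(\lambda)$ are the $\Rect(b_j)$ for rim boxes $b_j$ in column $c$, and the $\subseteq$-minimal one among them is $\Rect(b_j)$ for $b_j$ the \emph{topmost} rim box in column $c$. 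Hence $\frozen((1,c+1))=\Rect(b_j)$ for that $b_j$. A short inspection of the rim shows that, for $c<k$, the topmost rim box in column $c$ is exactly the unique rim box of that column having a rim box immediately to its east; therefore $\Rect(b_j)\in\Fr_{1,E}(\lambda)\cup\Fr_2(\lambda)$, and conversely every element of $\Fr_{1,E}(\lambda)\cup\Fr_2(\lambda)$ arises this way for a unique such $c$. Moreover $\frozen((1,c+1))\sqcup(1,c+1)=\frozen((1,c+1))^{\square}$, so the top-row boxes contribute precisely $\sum_{\mu\in\Fr_{1,E}(\lambda)\cup\Fr_2(\lambda)}p_{\muright}/p_{\mu}$.

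The leftmost-column boxes are treated by the verbatim transpose of this argument (conjugation $\lambda\mapsto\lambda^{t}$ exchanges the top row of $\BB^{\NW}$ with its leftmost column, exchanges $\Fr_{1,E}$ with $\Fr_{1,S}$, fixes $\Fr_2$ and $\{\emptyset\}$, turns $\mu\mapsto\muright$ into $\mu\mapsto\mudown$, and is compatible with the purely combinatorial map $\frozen$): a box $(i,1)$ is addable to a rectangle of height exactly $i-1$, its $\frozen$-image is $\Rect$ of the leftmost rim box in row $i-1$, which for $i\le n-k$ is the unique rim box of that row having a rim box immediately to its south, so $\frozen((i,1))$ runs bijectively over $\Fr_{1,S}(\lambda)\cup\Fr_2(\lambda)$, and $\frozen((i,1))\sqcup(i,1)=\mudown$ with $\mu=\frozen((i,1))$. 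Thus the leftmost-column boxes contribute $\sum_{\mu\in\Fr_{1,S}(\lambda)\cup\Fr_2(\lambda)}p_{\mudown}/p_{\mu}$. Adding the three groups and regrouping the $\Fr_2$-contributions into their $\muright$- and $\mudown$-parts gives the displayed identity, hence \eqref{eq:3rdpotential}. A cardinality check is reassuring: $1+(k-1)+(n-k-1)=n-1=|\BB^{\NW}(\lambda)|=1+|\Fr_{1,E}(\lambda)|+|\Fr_{1,S}(\lambda)|+2\,|\Fr_2(\lambda)|$.

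The only genuine work is the middle step: verifying that $\frozen$ picks out exactly the topmost rim box of each column (equivalently, after transposing, the leftmost rim box of each row) and that this yields a bijection onto $\Fr_{1,E}(\lambda)\cup\Fr_2(\lambda)$. This is routine Young-diagram combinatorics with no conceptual obstacle; the only point that demands care is the translation between the definitions of $\Fr_{1,E}$, $\Fr_{1,S}$, $\Fr_2$ (phrased via rim boxes, numbered northeast-to-southwest, having neighbours to the east or south) and explicit inequalities on the parts of $\lambda$, so that the minimality of $\frozen(b)$ inside $\Fr(\lambda)$ is correctly identified.
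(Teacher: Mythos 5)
Your proof is correct and is essentially the paper's argument viewed from the opposite direction: the paper iterates over the five types of frozen rectangle $\mu$ and identifies the box(es) $b$ with $\frozen(b)=\mu$, whereas you iterate over the northwest-border boxes $b$ (corner, top row, left column) and identify $\frozen(b)$. Both proofs establish the same term-by-term bijection between $\BB^{\NW}(\lambda)$ and the non-$q$ summands of \eqref{e:Wq2}; your version spells out the combinatorics of $\frozen$ (topmost rim box of a column, leftmost rim box of a row) more explicitly and adds the transpose symmetry and the cardinality check, but the underlying argument is the same.
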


\begin{proof} It suffices to show that this function 
	\eqref{eq:3rdpotential} 
	is made up of the same terms as the one given in \eqref{e:Wq2}. We consider the terms according to their denominators $p_\mu$, for which there are five cases. If $\mu=\mu_i$ lies in $\Fr_{1,S}(\lambda)$ then $\mu_{i-1}\subset\mu_i\subset\mu_{i+1}$ are rectangles of the same width but differing height. In this case there is a unique box $b$ for which $\mu$ is the  minimal rectangle in $\Fr(\lambda)$ such which $b$ is an addable box for $\mu$, and this $b$ necessarily lies in the first column of $\lambda$. The term associated to the box $b$ in \eqref{eq:3rdpotential} agrees with the term associated to $\mu$ in \eqref{e:Wq2}. Similarly, if $\mu=\mu_i$ lies in $\Fr_{1,E}(\lambda)$ then $\mu_{i-1}\supset\mu_i\supset\mu_{i+1}$ have the same height but differing width. In this case again $\mu=\frozen(b)$ only for a single box $b$, and now this box lies in the first row of $\lambda$. The term associated to this $b$ in \eqref{eq:3rdpotential} agrees with the term associated to $\mu$ in \eqref{e:Wq2}. If $\mu=\mu_i\in\Fr_{2}(\lambda)$ then there are two boxes, $b_S$ and $b_E$, one in the first column, and one in the first row of $\lambda$, which are addable to $\mu$ and for which $\mu$ is minimal. The sum of the terms associated to $b_S$ and $b_E$ in  \eqref{eq:3rdpotential} agree with the term associated to $\mu$ in  \eqref{e:Wq2}.
The last `non-quantum' term in \eqref{e:Wq2} is $\frac{p_{\ydiagram{1}}}{p_{\emptyset}}$ and this corresponds to the term in \eqref{eq:3rdpotential}, which is associated to top left hand corner box $b=(1,1)$.  
Finally, if $\mu\in\Fr_{0}(\lambda)$ then it is not in the image of the map $\frozen$ and only contributes terms involving the quantum parameters. These terms agree in \eqref{eq:3rdpotential} and  \eqref{e:Wq2}. 
\end{proof}

\begin{definition}\label{d:Wis}
From \cref{p:3rdpotential} we  see that there are precisely $(n-1)+d$ 
terms in the superpotential: $(n-1)$ terms from the boxes 
$b'_1,\dots,b'_{n-1}$ in the northwest boundary
	$\BB^{\NW}(\lambda)$
of $\lambda$, and $d$ terms corresponding to the outer corners of $\lambda$.
	Recall that the outer corner boxes of $\lambda$ are labeled by $\Rout(\lambda)=\{\rho_1, \rho_3,\dotsc,\rho_{2d-1}\}$, see \cref{r:R(lambda)}, and accordingly the $\ell$-th element of $\Fr_0(\lambda)$ is $\mu_i$ for $i=\rho_{2\ell-1}$, compare~\cref{d:frozendec}. 
We set
\begin{equation}
	W'_i:=\frac{p_{\frozen(b'_i)\sqcup b'_i}}{p_{\frozen(b'_i)}}\quad\text{for $i=1,\dotsc, n-1$} \qquad \text{ and }\qquad 
W_\ell:=\frac{p_{\mu_{\rho_{2\ell-1}}^-}}{p_{\mu_{\rho_{2\ell-1}}}}\quad\text{for $\ell=1,\dotsc, d$.}
\end{equation}
Note that we have
\begin{equation}
\label{e:Wq3} 
		W^{\lambda} = \sum_{i=1}^{n-1}  W'_i +  \sum_{\ell=1}^{d} q_\ell W_\ell.
	\end{equation}
An example specifying the $W_i'$ and the $W_\ell$ was given in \eqref{e:IntroExample}\end{definition}

\begin{remark}\label{rem:GHKK}
We remark that \cite[Corollary 9.17]{GHKK} has a very general construction for a
superpotential associated to a cluster variety,
and in the case of the Grassmannian
$Gr_{2,5}$,
\cite[Section 7]{BFMN} shows that
the Marsh-Rietsch superpotential at $q=1$ agrees with the superpotential of \cite{GHKK}.
It would be interesting to extend this comparison in the case of open Schubert varieties.
\end{remark}

\subsection{The superpotential in terms  of the rectangles cluster}\label{s:rectW}

When restricted to a particular torus, 
the superpotential can also be expressed as a Laurent polynomial 
which is encoded by a diagram, generalising the early Laurent polynomial mirror constructions from \cite{Givental:fullflag,BC-FKvSGrass,BC-FKvS} as well as \cite{EHX}. Our 
\cref{fig:superpotential} shows this diagram in an example.  The general formula is given in 
\cref{prop:super2}.

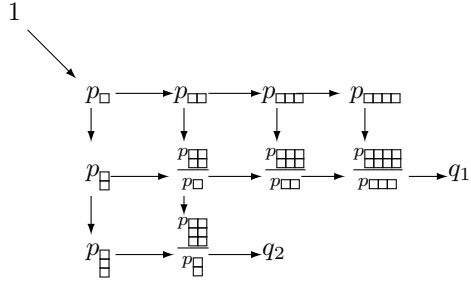
\begin{figure}[h]
\setlength{\unitlength}{1.3mm}
\begin{center}
	\begin{picture}(50,28)(0,8)
         \put(0,34){$\color{black}1$}
	 \put(8,26){$p_{\ydiagram{1}}$}
	 \put(17,26){$p_{\ydiagram{2}}$}
		   \put(26,26){$p_{\ydiagram{3}}$}
	 \put(35,26){$\color{black}p_{\ydiagram{4}}$}
	 \put(8,18){$p_{\ydiagram{1,1}}$}
	 \put(17,18){$\frac{p_{\ydiagram{2,2}}}{p_{\ydiagram{1}}}$}
	 \put(26,18){$\frac{p_{\ydiagram{3,3}}}{p_{\ydiagram{2}}}$}
	 \put(35,18){$\frac{p_{\ydiagram{4,4}}}{p_{\ydiagram{3}}}$}
	 \put(45,18){$q_1$}
	 \put(8,10){$\color{black}p_{\ydiagram{1,1,1}}$}
	 \put(17,10){$\color{black}\frac{p_{\ydiagram{2,2,2}}}{p_{\ydiagram{1,1}}}$}
	 \put(26,10){$q_2$}
         \put(2,33){{\vector(1,-1){5}}}
         \put(11,26.5){{\vector(1,0){6}}}
         \put(11,10){{\vector(1,0){6}}}
         \put(20.5, 26.5){{\vector(1,0){5.5}}}
         \put(10.5,18){{\vector(1,0){6}}}
         \put(8.5,16){{\vector(0,-1){4}}}
         \put(18,16){{\vector(0,-1){2}}}
         \put(8.5,25){{\vector(0,-1){3.5}}}
         \put(18,25){{\vector(0,-1){3.5}}}
         \put(27.5,25){{\vector(0,-1){3.5}}}
         \put(36.5,25){{\vector(0,-1){3.5}}}
         \put(20.5,18){{\vector(1,0){5.5}}}
         \put(20.5,10){{\vector(1,0){5.5}}}
         \put(29.5, 26.5){{\vector(1,0){4.5}}}
         \put(30,18){{\vector(1,0){4.5}}}
	 \put(41,18){{\vector(1,0){4}}}
 \end{picture}
\end{center}
	\caption{A quiver encoding the superpotential for $X_{\lambda}$ with $\lambda = (4,4,2)$.}
\label{fig:superpotential}
  \end{figure}

\begin{notation}\label{emptyis1}
Let $p_{i \times j}$ denote the Pl\"ucker coordinate indexed by the Young diagram
which is an $i \times j$ rectangle.  If $i=0$ or $j=0$ then we 
set $p_{i \times j} = p_{\emptyset} = 1$.
\end{notation}

\begin{definition}\label{def:superquiver}
	Let $\lambda, k, n, d$ be as in \cref{n:knd}.
We label rows of $\lambda$ from top to bottom, and columns from left to right.
We refer to the box in row $i$ and column $j$ as $(i,j)$.
Let $i_1<\dots< i_d$ denote the rows containing the outer corners of $\lambda$.
We define a labeled quiver $Q_{\lambda}$ as follows:
\begin{itemize}
	\item If $(i,j)$ is a box of $\lambda$, we associate a 
		vertex $v(i,j)$ of $Q_{\lambda}$ and label it by 
		 $p_{i \times j}/ p_{(i-1) \times (j-1)}$.
	 \item If $(i,j)$ and $(i,j+1)$ are boxes of $\lambda$,
		 we add an arrow $v(i,j)\to v(i,j+1)$.
	 \item If $(i,j)$ and $(i+1,j)$ are boxes of $\lambda$,
		 we add an arrow $v(i,j)\to v(i+1,j)$.
	 \item We add one extra vertex $v_0$ of $Q_{\lambda}$, labeled $1$,
		 together with an arrow $v_0 \to v(1,1)$.
	 \item For each outer corner in row $i_\ell$, we add an extra vertex $v_{\ell}$ 
		 labeled $q_{\ell}$, 
		 together with an arrow $v(i_{\ell}, \lambda_{i_{\ell}}) \to v_{\ell}$.
\end{itemize}
	Let $A(Q_{\lambda})$ denote the set of arrows of $Q_{\lambda}$, and for each 
	arrow $a:v\to v'$ in $A(Q_{\lambda})$, let $p(a)$ denote the Laurent monomial 
	in Pl\"ucker coordinates obtained
	by dividing the label of $v'$ by the label of $v$.

\end{definition}

See  \cref{fig:superpotential} for an example.
If $a$ is the arrow from $\frac{p_{\ydiagram{4,4}}}{p_{\ydiagram{3}}}$ 
to $q_1$, then 
$p(a) = \frac{q_1 p_{\ydiagram{3}}}{p_{\ydiagram{4,4}}}$.

\begin{proposition}[Expansion of the superpotential in the rectangles cluster]\label{prop:super2}
	Let $\lambda, k, n, d$ be as in \cref{n:knd}.
	Let $\mathbb T^{\lambda}_{\rect}$ be the subset of $\dualSchub$ where
	 $p_{i \times j} \neq 0$ for all $i \times j \subseteq \lambda$.
	When we restrict $W^{\lambda}$ to
	$\mathbb T^{\lambda}_{\rect}$
	(a cluster torus for the $\mathcal{A}$-cluster
	structure for the open Schubert variety, see
	\cref{sec:Acluster})
	we obtain $$W^{\lambda}_\rect = \sum_{a\in A(Q_{\lambda})} p(a).$$
\end{proposition}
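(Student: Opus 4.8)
The plan is to match terms on both sides of the asserted identity by first understanding the right-hand side $\sum_{a\in A(Q_\lambda)} p(a)$ combinatorially, and then identifying each summand with one of the $(n-1)+d$ summands of $W^\lambda$ as listed in \eqref{e:Wq3} of \cref{d:Wis}. The key observation is that the quiver $Q_\lambda$ in \cref{def:superquiver} has vertices labeled by Laurent monomials $p_{i\times j}/p_{(i-1)\times(j-1)}$, plus the special vertices $v_0$ (labeled $1$) and $v_\ell$ (labeled $q_\ell$), and that for each arrow $a:v\to v'$ we have $p(a) = (\text{label of }v')/(\text{label of }v)$. So the first step is to enumerate the arrows of $Q_\lambda$: the arrow $v_0\to v(1,1)$, the arrows $v(i_\ell,\lambda_{i_\ell})\to v_\ell$ at each outer corner, the horizontal arrows $v(i,j)\to v(i,j+1)$, and the vertical arrows $v(i,j)\to v(i+1,j)$.

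Next I would compute $p(a)$ for each type of arrow. For the arrow $v_0\to v(1,1)$, the label of $v(1,1)$ is $p_{1\times 1}/p_{0\times 0} = p_{\ydiagram{1}}/p_\emptyset$, so $p(a) = p_{\ydiagram{1}}/p_\emptyset$, matching the $W'$-term attached to the box $(1,1)$ (which maps to $\frozen(b)=\emptyset$). For the outer-corner arrow $v(i_\ell,\lambda_{i_\ell})\to v_\ell$, we get $p(a) = q_\ell \cdot p_{(i_\ell-1)\times(\lambda_{i_\ell}-1)}/p_{i_\ell\times\lambda_{i_\ell}}$; since $\mu_{\rho_{2\ell-1}}$ is the $i_\ell\times\lambda_{i_\ell}$ rectangle and $\mu_{\rho_{2\ell-1}}^-$ (removing the rim of a rectangle) is the $(i_\ell-1)\times(\lambda_{i_\ell}-1)$ rectangle, this is exactly $q_\ell W_\ell$. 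For a horizontal arrow $v(i,j)\to v(i,j+1)$ we get
\[
p(a) = \frac{p_{i\times(j+1)}/p_{(i-1)\times j}}{p_{i\times j}/p_{(i-1)\times(j-1)}} = \frac{p_{i\times(j+1)}\, p_{(i-1)\times(j-1)}}{p_{i\times j}\, p_{(i-1)\times j}};
\]
one then invokes a short Pl\"ucker/frozen-rectangle identity (or a direct comparison using \cref{p:superpotential}) to see this equals $p_{\sh(d)\cup\sh(c)}/p_{\sh(d)}$ for the appropriate width-$2$ or, after transposing, height-$2$ rectangle — equivalently one of the $W'_i$ associated to a box in the top row or left column of $\lambda$ via the map $\frozen$. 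The vertical arrows are handled symmetrically (transpose the picture), contributing the remaining $W'_i$.

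The main obstacle is the bookkeeping that the arrow-by-arrow contributions $p(a)$ are in \emph{bijection} with the $(n-1)+d$ summands of $W^\lambda$ rather than merely producing terms of the same shape: one must check that the two or three arrows pointing out of, or into, a given vertex in an interior $2\times 2$ block do not produce spurious or duplicated summands. The cleanest way to organize this is to argue via \cref{p:3rdpotential} (the northwest-border formula \eqref{eq:3rdpotential}): boxes $b'_i\in\BB^{\NW}(\lambda)$ in the top row of $\lambda$ correspond to the horizontal arrows emanating from vertices in the top row together with the unique "long vertical drop" encoded combinatorially in the denominator $p_{\frozen(b'_i)}$, and similarly for the left column; then one checks that the quiver $Q_\lambda$ has exactly one arrow "responsible" for each such $b'_i$, namely the last arrow along the relevant row or column path before it exits $\lambda$, with the telescoping of the monomials along that path producing precisely $p_{\frozen(b'_i)\sqcup b'_i}/p_{\frozen(b'_i)}$. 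Once the bijection of arrows with the list $\{W'_1,\dots,W'_{n-1},q_1W_1,\dots,q_dW_d\}$ is established and each matching verified by the elementary monomial computations above, the identity $W^\lambda_\rect = \sum_{a\in A(Q_\lambda)} p(a)$ follows. (That $W^\lambda_\rect$ is indeed the restriction of $W^\lambda$ to $\mathbb T^\lambda_\rect$ is immediate since $\mathbb T^\lambda_\rect$ is a cluster torus for the $\mathcal A$-structure of \cref{thm:rectangles} and every summand of $W^\lambda$ is a Laurent monomial in the $p_{i\times j}$ after this substitution.)
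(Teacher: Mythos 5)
Your computation of $p(a)$ for each type of arrow is correct, and you handle the $v_0\to v(1,1)$ arrow and the outer-corner arrows $v(i_\ell,\lambda_{i_\ell})\to v_\ell$ exactly as the paper does --- each of these single arrows gives, respectively, the term $p_{\ydiagram{1}}/p_\emptyset$ and the term $q_\ell W_\ell$. However, the way you handle the internal (horizontal and vertical) arrows has a genuine gap. You are trying to put the arrows of $Q_\lambda$ in \emph{bijection} with the $(n-1)+d$ summands of $W^\lambda$. This cannot work, because a count shows there are far more arrows than summands: for $\lambda=(4,4,2)$ there are $16$ arrows but only $8$ summands. More concretely, an internal horizontal arrow $v(i,j)\to v(i,j+1)$ with $i\ge 2$ has $p(a)=\frac{p_{i\times(j+1)}\,p_{(i-1)\times(j-1)}}{p_{(i-1)\times j}\,p_{i\times j}}$, which involves four distinct Pl\"ucker coordinates, while every $W'_i$ is a ratio of just two; so these individual $p(a)$ are not, and cannot be, individually equal to any $W'_i$.

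The missing idea is that the internal arrows must be \emph{grouped} --- all vertical arrows crossing from row $i$ to row $i+1$ form one group, and all horizontal arrows crossing from column $j$ to column $j+1$ form another --- and the \emph{sum} of the $p(a)$ over each group collapses, via an inductive application of the three-term Pl\"ucker relation, to a single summand of $W^\lambda$. This is precisely what \cref{lem:Plucker} provides: equation~\eqref{eq:first} shows that the vertical arrows from row $i$ to row $i+1$ sum to $\frac{p_{_\square(i\times m)}}{p_{i\times m}} = W'_{n-k-i}$, and equation~\eqref{eq:second} shows the horizontal arrows from column $j$ to column $j+1$ sum to $\frac{p_{(h\times j)^\square}}{p_{h\times j}} = W'_{n-k+j}$. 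Your final paragraph gestures towards ``telescoping of the monomials along that path,'' but the required collapse is not a cancellation of monomial factors (which would come from multiplying arrow contributions along a path); it is a collapse of a \emph{sum} of Laurent monomials via nontrivial Pl\"ucker relations, and it assigns a whole row or column of arrows, not one ``last arrow,'' to each $W'_i$. Without this grouping-and-Pl\"ucker-telescoping step, the claimed identity $W^\lambda_\rect = \sum_{a\in A(Q_\lambda)} p(a)$ remains unjustified.
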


For example, when $\lambda = (4,4,2)$, we obtain
\begin{align*}\label{super}
	W^{\lambda} &= 
	\frac{\pyo}{p_{\emptyset}}+ 
\frac{\pyz}{\pyo} + 
\frac{\pyt}{\pyz}+
\frac{\pytz}{\pyt}+
\frac{\pyzz }{\pyo \ \pyoo}+
\frac{\pytt \ \pyo}{\pyz \ \pyzz}+
	\frac{p_{\ydiagram{4,4}} p_{\ydiagram{2}}}
	{\pyt \ \pytt}+
		\frac{q_1 \pyt}{p_{\ydiagram{4,4}}}+
		\frac{\pttt}{\pyoo \pooo} + 
		\frac{q_2 \pyoo}{\pttt}\\
		&+	\frac{\pyoo}{\pyo}+
\frac{\pyzz  }{\pyo \ \pyz}+
\frac{\pytt }{\pyz \ \pyt}+
	\frac{p_{\ydiagram{4,4}} }{\pyt \ p_{\ydiagram{4}}}+
 \frac{\pooo}{\pyoo}+
		\frac{\pttt \pyo}{\pyoo \pyzz}.
\end{align*}

To prove \cref{prop:super2} we first verify the following lemma.

\begin{lemma}\label{lem:Plucker}
Recall that  
	$\mathbb\checkX 
	=Gr_{k}((\C^n)^*)$, 
	 with Pl\"ucker coordinates
indexed by partitions contained in a $(n-k)\times k$ rectangle,
and  
let $i$ and $m$ be positive integers such that $i<n-k$ and $m \leq k$.
Then  
	\begin{equation}\label{eq:first}
		\sum_{j=1}^m \frac{p_{(i+1) \times j} \ p_{(i-1) \times (j-1)}}{p_{i \times (j-1)}\ p_{i \times j}} 
		= \frac{p_{_{\square}(i \times m)}}{p_{i \times m}}
	\end{equation}
	where $_{\square}(i \times m)$ is the 
	 Young diagram $(m,m,\dots, m, 1)$, i.e.
	 an $i \times m$ rectangle with a box appended at the bottom of the 
	 leftmost column.

Let $j$ and $h$ be positive integers such that $h \leq n-k$ and $j<k$.
Then we have that 
	\begin{equation}\label{eq:second}
		\sum_{i=1}^{h} \frac{p_{i \times (j+1)}\ p_{(i-1)\times (j-1)}}{p_{i \times j}\ p_{(i-1) \times j}} 
		= \frac{p_{(h \times j)^{\square}}}{p_{h \times j}} 
	\end{equation}	
	where ${(h \times j)^{\square}}$ is the 
	 Young diagram $(j+1,j,j,\dots,j)$, i.e. 
	 an $h \times j$ rectangle with a box appended at the right of the topmost row.
\end{lemma}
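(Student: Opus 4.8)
The plan is to prove \eqref{eq:first} by induction on $m$, and to deduce \eqref{eq:second} from it by the symmetry that conjugates all partitions: transposition exchanges rows with columns, swaps $(i,m)\leftrightarrow(j,h)$ and the two Grassmannians $Gr_k\leftrightarrow Gr_{n-k}$, sends each $p$-coordinate to a $p$-coordinate of the transpose Grassmannian, and carries three-term Plücker relations to three-term Plücker relations; since the argument below works verbatim in any Grassmannian large enough to contain the shapes involved, it proves \eqref{eq:second} as well (with the induction now on $h$). For the base case $m=1$ of \eqref{eq:first} the left-hand side is the single term $\frac{p_{(i+1)\times 1}\,p_{(i-1)\times 0}}{p_{i\times 0}\,p_{i\times 1}}$, which, using $p_{i\times 0}=p_{(i-1)\times 0}=p_{\emptyset}$ (see \cref{emptyis1}), equals $\frac{p_{(i+1)\times 1}}{p_{i\times 1}}$; and since ${}_{\square}(i\times 1)$ is the column of length $i+1$, the right-hand side is also $\frac{p_{(i+1)\times 1}}{p_{i\times 1}}$.

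For the inductive step I would peel off the $j=m$ summand, apply the inductive hypothesis to $\sum_{j=1}^{m-1}$, and clear the denominators $p_{i\times(m-1)}\,p_{i\times m}$; the statement then reduces to the single quadratic identity
\begin{equation}\label{e:plan3term}
	p_{{}_{\square}(i\times m)}\; p_{i\times(m-1)}
	\;=\;
	p_{{}_{\square}(i\times(m-1))}\; p_{i\times m}
	\;+\;
	p_{(i+1)\times m}\; p_{(i-1)\times(m-1)}.
\end{equation}
To recognize \eqref{e:plan3term} as a genuine three-term Plücker relation, I would translate each of the six rectangular (or near-rectangular) shapes into its $k$-element subset of $[n]$ via the boundary-path bijection of \cref{s:Young}. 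Writing $t:=k-m+i$, a short computation of the west steps shows that all six subsets contain the common $(k-2)$-element core $C=\{1,\dots,k-m\}\cup\{t+2,\dots,t+m-1\}$, and that, with $\alpha:=k-m+1<\beta:=t+1<\gamma:=t+m<\delta:=t+m+1$ (here $i<n-k$ ensures $\delta=k+i+1\le n$, and $i\ge 1$, $m\ge 2$ give the strict inequalities), the remaining two elements of the subset of each coordinate are: $\{\beta,\delta\}$ for $p_{{}_{\square}(i\times m)}$, $\{\alpha,\gamma\}$ for $p_{i\times(m-1)}$, $\{\alpha,\delta\}$ for $p_{{}_{\square}(i\times(m-1))}$, $\{\beta,\gamma\}$ for $p_{i\times m}$, $\{\gamma,\delta\}$ for $p_{(i+1)\times m}$, and $\{\alpha,\beta\}$ for $p_{(i-1)\times(m-1)}$. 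Hence \eqref{e:plan3term} is exactly the classical three-term Plücker relation $p_{C\cup\{\alpha,\gamma\}}\,p_{C\cup\{\beta,\delta\}}=p_{C\cup\{\alpha,\beta\}}\,p_{C\cup\{\gamma,\delta\}}+p_{C\cup\{\alpha,\delta\}}\,p_{C\cup\{\beta,\gamma\}}$, and the induction closes.

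The only genuinely fiddly part is this last bookkeeping: tracking which $k$-subset is attached to each of the six Plücker coordinates and checking that they align into a short Plücker relation. Everything else is routine, and the degenerate cases are harmless — when $j=1$ several coordinates collapse to $p_{\emptyset}$, and when $i=1$ one has $p_{(i-1)\times(j-1)}=p_{\emptyset}$, all absorbed by the conventions of \cref{emptyis1}. A sanity check in a small Grassmannian is reassuring: for $i=1$, $m=2$ the identity \eqref{e:plan3term} becomes $p_{135}\,p_{124}=p_{125}\,p_{134}+p_{145}\,p_{123}$ in $Gr_3(\mathbb{C}^5)$, a genuine three-term Plücker relation. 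One could alternatively read \eqref{e:plan3term} directly as the exchange relation for the mutation of the rectangles quiver $Q_\lambda$ at the vertex labelled $p_{i\times m}$, but the explicit Plücker-coordinate verification is cleaner and self-contained.
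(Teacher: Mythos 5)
Your proof is correct and follows essentially the same route as the paper: induction on $m$ (resp.~$h$) with the inductive step collapsing to a single quadratic identity, identified as a three-term Plücker relation, and \eqref{eq:second} deduced from \eqref{eq:first} by transposition/duality. Where the paper simply asserts that \eqref{eq:Plucker1} ``is precisely a three-term Pl\"ucker relation,'' you make this explicit by tracking the six $k$-element subsets and exhibiting the common core $C$ and indices $\alpha<\beta<\gamma<\delta$ — a welcome verification of the step the paper leaves to the reader, and your bookkeeping checks out.
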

\begin{proof}
We will see that \eqref{eq:first} follows easily by induction on $m$, using 
 the three-term Pl\"ucker relation.   When $m=1$ there is nothing to prove.
Now suppose \eqref{eq:first} is true for a fixed $m$.  Then we want to show that 
it is true for $m+1$.  Using induction, it is enough to show that 
	\begin{equation}\label{eq:Plucker1}
		  \frac{p_{_{\square}(i \times m)}}{p_{i \times m}} + 
		 \frac{p_{(i+1) \times (m+1)} \ p_{(i-1) \times m}}{p_{i \times m}\ p_{i \times (m+1)}}  = 
		  \frac{p_{_{\square}(i \times (m+1))}}{p_{i \times (m+1)}}.
	\end{equation}
	But this is precisely a three-term Pl\"ucker relation.

The proof of \eqref{eq:second} can be obtained from \eqref{eq:first}
	by working in the dual Grassmannian.
\end{proof}

The proof of \cref{prop:super2} 
follows from \cref{lem:Plucker}: we simply sum the contributions of all arrows in 
a given row and all arrows in a given column of $Q_{\lambda}$.  
That produces the formula 
\eqref{e:Wq3} 
for $W^{\lambda}$: in particular, for $0<i<n-k$,
$W'_{n-k-i}$ has the form 
$\frac{p_{_{\square}(i \times m)}}{p_{i \times m}},$
while for $0<j<k$,
$W'_{n-k+j}$ 
has the form $\frac{p_{(h \times j)^{\square}}}{p_{h \times j}}.$

\section{Geometry of the Schubert variety $X_\lambda$ and its boundary divisor}\label{s:GeometrySchubert}

In this section we recall the positroid stratification of the Schubert variety and use it to describe the irreducible components of the boundary divisor $D^\lambda_{\ac}$ of $X_{\lambda}$ defined in \eqref{e:boundarydivisor}. We will also express the homology classes of the irreducible components of the boundary divisor in terms of the Schubert basis, see \cref{p:PositroidHomology}.
We will furthermore describe which  divisors supported on the boundary are Cartier. 
These results will be used later in the proof of \cref{t:maingen}.

We start by collecting together some facts about the geometry of Schubert varieties $X_\lambda$, see \cite{Manivel,LakBrown15, Humphreys, Springer} for reference. We freely use notations from Section~\ref{s:SchubandOpenSchub}. 
\begin{enumerate}
\item The Schubert variety $X_\lambda$ has an algebraic cell decomposition into Schubert cells given by $X_\lambda=\bigsqcup_{\mu\subseteq\lambda}\Omega_{\mu}$. Their closures $X_\mu=\overline{\Omega}_\mu$ are the Schubert varieties contained in $X_\lambda$. 
The associated fundamental homology classes $[X_{\mu}]$ form a $\Z$-basis of  $H_*(X_{\lambda},\Z)$ with $[X_{\mu}]$ having degree $2|\mu|$. The homology group $H_{2k}(X_{\lambda},\Z)$ is isomorphic to the Chow group $A_k(X_\lambda)$ of $X_\lambda$. 
\item The cap product defines a perfect pairing between homology and cohomology and we denote by $\sigma^{\mu}$ the cohomology class in $H^*(X_{\lambda},\Z)$ dual to $[X_{\mu}]$. 
\item There are $d$ Schubert divisors in $X_\lambda$, where $d$  is the number of outer corners in $\lambda$. We denote these by $D_1,\dotsc, D_d$, where $D_i$ is the Schubert divisor $X_{\mu}$ with $\mu$ obtained by removing the $i$-th outer corner from $\lambda$ (counting from the NE corner to the SW corner). These are generally only Weil divisors. Their linear equivalence classes form a basis of the divisor class group $\Cl(X_\lambda)$, which is isomorphic to $H_{2|\lambda|-2}(X_\lambda)$. 
\item  
The divisor $\sum_{i=1}^d D_i$ is Cartier, and is an ample divisor corresponding to the Pl\"ucker embedding of $X_{\lambda}$. Namely, it is precisely equal to $\{P_{\lambda}=0\}$. 
\item 
The map $\Pic(Gr_{n-k}(\C^n))\to \Pic(X_\lambda)$ defined by restriction of line bundles is an isomorphism. 
Therefore, $\Pic(X_\lambda)\cong \Z$. Its generator is the line bundle $\mathcal O(\sum_{i=1}^d D_i)$ corresponding to the Pl\"ucker embedding, and the Schubert class $\sigma^{\square}\in H^2(X_\lambda,\Z)$ is the first Chern class of $\mathcal O(\sum_{i=1}^d D_i)$. The first Chern class map gives an isomorphism between $\Pic(X_{\lambda})$ and $H^2(X_\lambda,\Z)$.
\end{enumerate}

\subsection{Positroids and $\Le$-diagrams}\label{s:LeDivisors}
The Schubert variety $X_\lambda$ has a natural stratification that is finer than the Schubert cell decomposition called the positroid stratification.
Open positroid varieties are examples of projected open Richardson varieties, which 
were studied by Lusztig \cite{Lusztig3} and Rietsch \cite{RietschThesis} in the 
context of total positivity. Independently, Postnikov introduced the positroid decomposition of the totally nonnegative Grassmannian \cite{Postnikov}
and gave many combinatorially explicit ways to describe the strata. Knutson-Lam-Speyer studied the corresponding stratification in the complex 
Grassmannian \cite{KLS}.

Consider $GL_n(\C)$ with its upper- and lower-triangular Borel subgroups $B_+$ and $B_-$,respectively. Let $P_{n-k}$ be $(n-k)$-th the maximal parabolic subgroup of $GL_n$, so that we have the homogeneous space description $\mathbb X=GL_n(\C)/P_{n-k}$ of the Grassmannian  containing $X_\lambda$. Let 
\[
\pi:GL_{n}(\C)/B_+\to  GL_n(\C)/P_{n-k}=\mathbb X
\]
be the projection map. Identify the Weyl group $W=S_n$ as the group of permutation matrices in $GL_n(\C)$ and write $W_{P_{n-k}}$ for its associated parabolic subgroup, that is, the subgroup generated by the simple reflections $s_i=(i,i+1)$ where $i\ne n-k$.   
The set $W^{P_{n-k}}$ of minimal coset representatives consists of all Grassmannian permutations with unique descent in position $n-k$. Equivalently, $w\in W^{P_{n-k}}$ if every reduced expression for $w$ in terms of simple reflections $s_i$ ends in $s_{n-k}$.

\begin{remark}\label{r:Young2} There is a standard bijection between $W^{P_{n-k}}$ and the set of Young diagrams that fit into an $(n-k)\times k$ rectangle. Namely fill the boxes of the $(n-k)\times k$ rectangle by simple reflections where the box in row $i$ and column $j$ is filled with $s_{n-k+j-i}$. For the Young diagram $\mu$ we then associate the Weyl group element $w_\mu$ obtained by reading the entries of $\mu$ row by row from right to left bottom to top. The resulting product of simple reflections is $w_\mu$, and moreover it forms a reduced expression, so that we also see that the length $\ell(w_\mu)$ of $w_\mu$ is given by~$|\mu|$. We set $w_\emptyset=e$, the identity element of $W$. 
\end{remark}

\begin{defn} \label{d:projRich} 
	For any pair of permutations $v,w$ in $W=S_n$ with $v\le w$ for the Bruhat order, define the associated \emph{open Richardson variety} $\mathcal R_{v,w}$ in the full flag variety to be the intersection of opposite Bruhat cells, 
$$
\mathcal R_{v,w}=B_-vB_+\cap B_+wB_+/B_+.
$$
We have that 
$\mathcal R_{v,w}\subset \overline{\mathcal R_{v',w'}}$ 
whenever $v'\le v\le w\le w'$.
If $w\in W^{P_{n-k}}$ then 
\[X^\circ_{(v,w)}:=\pi(w_0 \mathcal  R_{v,w})
\] 
is an isomorphic image of $\mathcal R_{v,w}$ and we call it the 
	\emph{projected open Richardson variety}   or \emph{open positroid variety}
	in $\mathbb X$ associated to $(v,w)$.  
	(The terminology is justified by the fact that this variety is an open positroid variety
	in the sense of \cref{def:posvariety}, as shown in 
	 \cite[Theorem 5.9]{KLS}.)
	We call its closure $X_{(v,w)}$ the \textit{positroid variety} associated to $(v,w)$.  
\end{defn}

Open positroid varieties are smooth and irreducible, because this holds for the open Richardson
varieties by Kleiman transversality. Moreover the dimension of $\mathcal R_{v,w}$ and hence of $X^\circ_{(v,w)}$ and $X_{(v,w)}$ is given by $\ell(w)-\ell(v)$. See \cite{KL} and also \cite{KLS}. The closed positroid varieties are unions of open positroid strata; see \cite{RietschClosure}
for the precise description of which open positroid strata
comprise a given closed positroid variety.
The positroid stratification of the Schubert variety $X_\lambda$ is given by
\begin{equation}\label{e:PositroidStrat}
X_\lambda=\bigsqcup_{\mu\subseteq \lambda}\left(\bigsqcup_{v\in W, v\le w_{\mu}} X^\circ_{(v,w_\mu)}\right).
\end{equation}

\begin{remark}\label{r:maximalpositroid}
Note that the pair $(e,w_\lambda)$ gives rise to the unique full-dimensional open  positroid variety $X^\circ_{(e,w_\lambda)}$ in $X_\lambda$, and this positroid stratum coincides with the  \emph{open Schubert variety} $\openSchub = 
X^\circ_{(e,w_\lambda)}$ 
from \cref{d:openX}, see \cref{rem:defopenSchubert}.
	The positroid variety $X_{(e,w_\lambda)}$ defined as its closure is just  $X_{\lambda}$.
\end{remark}

\subsection{Positroid divisors} 
In order to describe the boundary divisor $D^{\lambda}_{\ac}$ and the individual divisors $\{P_{\mu_i}=0\}$ contained in the boundary, we now focus on the codimension $1$ positroid strata.   These \textit{positroid divisors} come in two types. The Schubert varieties $D_1,\dots, D_d$ are the first immediate examples of positroid divisors for $X_\lambda$, and then we have positroid divisors of the form $X_{(s_i,w_\lambda)}$. We now use the fact that positroid strata are in bijection with $\Le$-diagrams to label the positroid divisors of $X_\lambda$ combinatorially. 

\begin{lemma}\label{lem:codim1}
The positroid divisors which are contained in $X_\lambda$ are 
precisely the positroid varieties whose $\Le$-diagrams are the following:
\begin{itemize}
\item The filling by all $+$'s of 
a Young diagram obtained from $\lambda$ by removing an outer corner;
\item A filling of the Young diagram $\lambda$ 
	in which each box contains a $+$
	except for one box; necessarily 
		that box must be in the leftmost column
		of $\lambda$ or the topmost row of $\lambda$.
	\end{itemize}
In particular, if $k$ and $n$ are minimal such that $\lambda \subseteq (n-k)\times k$,
	and $\lambda$ has $d$ outer corners, then there are $d+(n-1)$
	positroid divisors contained in $X_{\lambda}$.
\end{lemma}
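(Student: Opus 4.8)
The plan is to use the known dictionary between positroid strata in the Grassmannian and $\Le$-diagrams (recalled in \cref{sec:appendix}, following \cite{Postnikov,KLS}), together with the description of the positroid stratification of $X_\lambda$ given in \eqref{e:PositroidStrat}. First I would recall that a positroid variety $X_{(v,w_\mu)}$ is contained in $X_\lambda$ precisely when $\mu \subseteq \lambda$, and that the $\Le$-diagram attached to $X^\circ_{(v,w_\mu)}$ has underlying Young diagram shape $\mu$; the number of $+$'s in the filling equals the dimension $\ell(w_\mu) - \ell(v) = |\mu| - \ell(v)$, while the number of $0$'s equals $\ell(v)$. Hence the codimension of $X_{(v,w_\mu)}$ inside $X_\lambda$ is $|\lambda| - (|\mu| - \ell(v)) = (|\lambda| - |\mu|) + \ell(v)$, which equals $1$ in exactly two cases: either $|\mu| = |\lambda| - 1$ and $\ell(v) = 0$ (so $v = e$ and the $\Le$-diagram is the all-$+$ filling of a shape $\mu$ obtained from $\lambda$ by deleting one outer corner box), or $\mu = \lambda$ and $\ell(v) = 1$ (so $v = s_i$ is a simple reflection and the $\Le$-diagram is a filling of $\lambda$ with exactly one $0$).

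The one point that needs a genuine (if short) argument is \emph{where} the single $0$ can sit in the second case. Here I would invoke the defining property of $\Le$-diagrams: a filling of a Young diagram by $+$'s and $0$'s is a $\Le$-diagram if and only if it contains no $0$ that has a $+$ somewhere to its left in the same row \emph{and} a $+$ somewhere above it in the same column (the ``$\Le$-property''). If the unique $0$ were in a box $(i,j)$ with $i \geq 2$ and $j \geq 2$, then since every other box of $\lambda$ is filled with $+$, the boxes $(i,1)$ and $(1,j)$ are both present (as $\lambda$ is a Young diagram) and both carry $+$'s, violating the $\Le$-property. Conversely, any filling of $\lambda$ with a single $0$ in the leftmost column or the topmost row trivially satisfies the $\Le$-property, so it is a valid $\Le$-diagram and corresponds to a genuine codimension-$1$ positroid stratum $X^\circ_{(s_i, w_\lambda)}$ of $X_\lambda$.

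Finally, to get the count: there are $d$ outer corners of $\lambda$, giving $d$ diagrams of the first type (these are the Schubert divisors $D_1,\dots,D_d$, by \cref{r:Young2} and item (3) of the geometry list, since deleting an outer corner corresponds to removing the last letter of a reduced word for $w_\lambda$). For the second type, the leftmost column of $\lambda$ has $n-k$ boxes and the topmost row has $k$ boxes, overlapping in the single box $(1,1)$, so there are $(n-k) + k - 1 = n-1$ choices for the location of the $0$. Hence there are $d + (n-1)$ positroid divisors contained in $X_\lambda$, as claimed.

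I expect the only subtle step to be the second paragraph — pinning down the admissible positions of the $0$ via the $\Le$-property — but this is routine once the $\Le$-diagram dictionary from \cref{sec:appendix} is in hand; the rest is bookkeeping with \eqref{e:PositroidStrat} and the length function on Grassmannian permutations.
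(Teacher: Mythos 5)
Your proposal is correct and follows essentially the same route as the paper's proof: both use the Postnikov bijection between pairs $(v,w_\mu)$ in the stratification \eqref{e:PositroidStrat} and $\Le$-diagrams. The paper's own proof is quite terse — it cites the bijection and the closure relations from \cite{RietschClosure}, declares the deduction "straightforward," and then just records the outcome (first kind of $\Le$-diagram $\leftrightarrow$ $X_{(e,w_\mu)}$, second kind $\leftrightarrow$ $X_{(s_i,w_\lambda)}$). What you add is precisely the bookkeeping the paper leaves implicit: the codimension formula $(|\lambda|-|\mu|)+\ell(v)$, the case analysis showing it equals $1$ exactly when $(\mu,v) = (\lambda\setminus\text{box},e)$ or $(\mu,v) = (\lambda,s_i)$, the $\Le$-property argument pinning the unique $0$ to the leftmost column or top row, and the final count $(n-k)+k-1 = n-1$. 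One minor point you could make explicit: the bijection you invoke already encodes the constraint $v\le w_\mu$ on the pair, so every admissible $\Le$-diagram filling automatically produces a genuine positroid stratum — you lean on this implicitly when passing from "valid $\Le$-diagram" to "genuine codimension-$1$ stratum," and it would be worth a clause to say so.
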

See \cref{fig:codim} for an example 
illustrating \cref{lem:codim1}.

\begin{proof} 
Recall from \cref{def:BB}
that $b'_i$ denotes 
the $i$-th box in the northwest border of the Young diagram $\lambda$,
counting from the bottom upwards.
The bijection between the pairs $(v,w)$ from \eqref{e:PositroidStrat} and $\Le$-diagrams  is given in \cite[Section 19]{Postnikov}. It is straightforward to deduce \cref{lem:codim1} using this bijection and \cite{RietschClosure}. Explicitly, the first kind of $\Le$-diagram, which involves removing an outer corner of $\lambda$ to obtain a smaller Young diagram $\mu$,  corresponds to the positroid variety $X_{(e,w_\mu)}$, that is, to the Schubert divisor $X_\mu$. For 
	the second kind of $\Le$-diagram, if we let $b'_i$ denote
	the box containing the unique $0$, then this $\Le$-diagram
	corresponds to the codimension $1$ positroid variety $X_{(s_i,w_\lambda)}$.
\end{proof}

Following 
\cref{lem:codim1}
we may index positroid divisors either by pairs of Weyl group elements or by $\Le$-diagrams. For convenience, we will also denote the $n-1+d$ positroid divisors in $X_{\lambda}$ as follows. Recall that each Schubert divisor in $X_\lambda$ relates to removing a single box (outer corner) of $\lambda$. Let us write $\lambda^-_\ell:=\lambda \setminus b_{\rho_{2\ell+1}}$ for the Young diagram with $\ell$-th outer corner removed (using notation from  \cref{r:R(lambda)}).

\begin{definition}\label{def:positroiddivisors}
Let 
	\[\begin{array}{ccll}\label{e:Dis}
D_\ell&:=&X_{\lambda^-_\ell},& \ell=1,\dotsc, d,\\
D'_i&:=& X_{(s_i,w_\lambda)}, & i=1,\dotsc, n-1.
\end{array}
	\]
Equivalently,
$D_{\ell}$ is 
associated to the $\Le$-diagram whose
 Young diagram $\lambda_\ell^-$ is 
	obtained by removing the outer corner box $b_{\rho_{2\ell-1}}$ 
	from $\lambda$, and whose boxes are all filled with $+$'s.
And $D_i'$ is associated to the $\Le$-diagram whose 
Young diagram $\lambda$ contains a $0$ in  box $b'_i$ 
	and a $+$ in every other box. 
\end{definition}

\begin{figure}[h]
\centering
\setlength{\unitlength}{1.3mm}
\begin{center}
	\resizebox{0.11\textwidth}{!}{\begin{picture}(42,35)
\put(5,32){\line(1,0){36}}
  \put(5,23){\line(1,0){36}}
  \put(5,14){\line(1,0){36}}
  \put(5,5){\line(1,0){18}}
  \put(5,5){\line(0,1){27}}
  \put(14,5){\line(0,1){27}}
  \put(23,5){\line(0,1){27}}
  \put(32,14){\line(0,1){18}}
  \put(41,23){\line(0,1){9}}
  \put(41,14){\line(0,1){18}}
   \put(8,26){\Huge{$0$}}
   \put(17,26){\Huge{$+$}}
         \put(26,26){\Huge{$+$}}
         \put(35,26){\Huge{$+$}}
         \put(8,17){\Huge{$+$}}
         \put(17,17){\Huge{$+$}}
         \put(26,17){\Huge{$+$}}
         \put(35,17){\Huge{$+$}}
         \put(8,8){\Huge{$+$}}
         \put(17,8){\Huge{$+$}}
        \put(17,-4){\Huge{$D'_3$}}
        \end{picture}}
        \hspace{0.06cm}
\resizebox{0.11\textwidth}{!}{\begin{picture}(42,35)
\put(5,32){\line(1,0){36}}
  \put(5,23){\line(1,0){36}}
  \put(5,14){\line(1,0){27}}
  \put(5,14){\line(1,0){36}}
  \put(5,5){\line(1,0){18}}
  \put(5,5){\line(0,1){27}}
  \put(14,5){\line(0,1){27}}
  \put(23,5){\line(0,1){27}}
  \put(32,14){\line(0,1){18}}
  \put(41,23){\line(0,1){9}}
  \put(41,14){\line(0,1){18}}
   \put(8,26){\Huge{$+$}}
   \put(17,26){\Huge{$0$}}
         \put(26,26){\Huge{$+$}}
         \put(35,26){\Huge{$+$}}
         \put(8,17){\Huge{$+$}}
         \put(17,17){\Huge{$+$}}
         \put(26,17){\Huge{$+$}}
         \put(35,17){\Huge{$+$}}
         \put(8,8){\Huge{$+$}}
         \put(17,8){\Huge{$+$}}
        \put(17,-4){\Huge{$D'_4$}}
        \end{picture}}
        \hspace{0.06cm}
\resizebox{0.11\textwidth}{!}{\begin{picture}(42,35)
\put(5,32){\line(1,0){36}}
  \put(5,23){\line(1,0){36}}
  \put(5,14){\line(1,0){27}}
  \put(5,14){\line(1,0){36}}
  \put(5,5){\line(1,0){18}}
  \put(5,5){\line(0,1){27}}
  \put(14,5){\line(0,1){27}}
  \put(23,5){\line(0,1){27}}
  \put(32,14){\line(0,1){18}}
  \put(41,23){\line(0,1){9}}
  \put(41,14){\line(0,1){18}}
   \put(8,26){\Huge{$+$}}
   \put(17,26){\Huge{$+$}}
         \put(26,26){\Huge{$0$}}
         \put(35,26){\Huge{$+$}}
         \put(8,17){\Huge{$+$}}
         \put(17,17){\Huge{$+$}}
         \put(26,17){\Huge{$+$}}
         \put(35,17){\Huge{$+$}}
         \put(8,8){\Huge{$+$}}
         \put(17,8){\Huge{$+$}}
        \put(17,-4){\Huge{$D'_5$}}
        \end{picture}}
        \hspace{0.06cm}
\resizebox{0.11\textwidth}{!}{\begin{picture}(42,35)
\put(5,32){\line(1,0){36}}
  \put(5,23){\line(1,0){36}}
  \put(5,14){\line(1,0){27}}
  \put(5,14){\line(1,0){36}}
  \put(5,5){\line(1,0){18}}
  \put(5,5){\line(0,1){27}}
  \put(14,5){\line(0,1){27}}
  \put(23,5){\line(0,1){27}}
  \put(32,14){\line(0,1){18}}
  \put(41,23){\line(0,1){9}}
  \put(41,14){\line(0,1){18}}
   \put(8,26){\Huge{$+$}}
   \put(17,26){\Huge{$+$}}
         \put(26,26){\Huge{$+$}}
         \put(35,26){\Huge{$0$}}
         \put(8,17){\Huge{$+$}}
         \put(17,17){\Huge{$+$}}
         \put(26,17){\Huge{$+$}}
         \put(35,17){\Huge{$+$}}
         \put(8,8){\Huge{$+$}}
         \put(17,8){\Huge{$+$}}
        \put(17,-4){\Huge{$D'_6$}}
        \end{picture}}
        \hspace{0.06cm}
\resizebox{0.11\textwidth}{!}{\begin{picture}(42,35)
\put(5,32){\line(1,0){36}}
  \put(5,23){\line(1,0){36}}
  \put(5,14){\line(1,0){27}}
  \put(5,14){\line(1,0){36}}
  \put(5,5){\line(1,0){18}}
  \put(5,5){\line(0,1){27}}
  \put(14,5){\line(0,1){27}}
  \put(23,5){\line(0,1){27}}
  \put(32,14){\line(0,1){18}}
  \put(41,23){\line(0,1){9}}
  \put(41,14){\line(0,1){18}}
   \put(8,26){\Huge{$+$}}
   \put(17,26){\Huge{$+$}}
         \put(26,26){\Huge{$+$}}
         \put(35,26){\Huge{$+$}}
         \put(8,17){\Huge{$0$}}
         \put(17,17){\Huge{$+$}}
         \put(26,17){\Huge{$+$}}
         \put(35,17){\Huge{$+$}}
         \put(8,8){\Huge{$+$}}
         \put(17,8){\Huge{$+$}}
        \put(17,-4){\Huge{$D'_2$}}
        \end{picture}}
        \hspace{0.06cm}
\resizebox{0.11\textwidth}{!}{\begin{picture}(42,35)
\put(5,32){\line(1,0){36}}
  \put(5,23){\line(1,0){36}}
  \put(5,14){\line(1,0){27}}
  \put(5,14){\line(1,0){36}}
  \put(5,5){\line(1,0){18}}
  \put(5,5){\line(0,1){27}}
  \put(14,5){\line(0,1){27}}
  \put(23,5){\line(0,1){27}}
  \put(32,14){\line(0,1){18}}
  \put(41,23){\line(0,1){9}}
  \put(41,14){\line(0,1){18}}
   \put(8,26){\Huge{$+$}}
   \put(17,26){\Huge{$+$}}
         \put(26,26){\Huge{$+$}}
         \put(35,26){\Huge{$+$}}
         \put(8,17){\Huge{$+$}}
         \put(17,17){\Huge{$+$}}
         \put(26,17){\Huge{$+$}}
         \put(35,17){\Huge{$+$}}
         \put(8,8){\Huge{$0$}}
         \put(17,8){\Huge{$+$}}
        \put(17,-4){\Huge{$D'_1$}}
        \end{picture}}
        \hspace{0.06cm}
\resizebox{0.11\textwidth}{!}{\begin{picture}(42,35)
\put(5,32){\line(1,0){36}}
  \put(5,23){\line(1,0){36}}
  \put(5,14){\line(1,0){27}}
  \put(5,5){\line(1,0){18}}
  \put(5,5){\line(0,1){27}}
  \put(14,5){\line(0,1){27}}
  \put(23,5){\line(0,1){27}}
  \put(32,14){\line(0,1){18}}
  \put(41,23){\line(0,1){9}}
   \put(8,26){\Huge{$+$}}
   \put(17,26){\Huge{$+$}}
         \put(26,26){\Huge{$+$}}
         \put(35,26){\Huge{$+$}}
         \put(8,17){\Huge{$+$}}
         \put(17,17){\Huge{$+$}}
         \put(26,17){\Huge{$+$}}
         \put(8,8){\Huge{$+$}}
         \put(17,8){\Huge{$+$}}
        \put(17,-4){\Huge{$D_1$}}
        \end{picture}}
        \hspace{0.06cm}
\resizebox{0.11\textwidth}{!}{\begin{picture}(42,35)
\put(5,32){\line(1,0){36}}
  \put(5,23){\line(1,0){36}}
  \put(5,14){\line(1,0){36}}
  \put(5,5){\line(1,0){9}}
  \put(5,5){\line(0,1){27}}
  \put(14,5){\line(0,1){27}}
  \put(23,14){\line(0,1){18}}
  \put(32,14){\line(0,1){18}}
  \put(41,23){\line(0,1){9}}
  \put(41,14){\line(0,1){18}}
   \put(8,26){\Huge{$+$}}
   \put(17,26){\Huge{$+$}}
         \put(26,26){\Huge{$+$}}
         \put(35,26){\Huge{$+$}}
         \put(8,17){\Huge{$+$}}
         \put(17,17){\Huge{$+$}}
         \put(26,17){\Huge{$+$}}
         \put(35,17){\Huge{$+$}}
         \put(8,8){\Huge{$+$}}
        \put(17,-4){\Huge{$D_2$}}
        \end{picture}}
\end{center}
	\caption{The codimension 1 positroids
	contained in $\dualSchub$ for $\lambda = (4,4,2)$.
	These correspond (in order) to the summands of 
	the superpotential in 
	\eqref{ex:8superpotential}.
	} 
\label{fig:codim}
\end{figure}
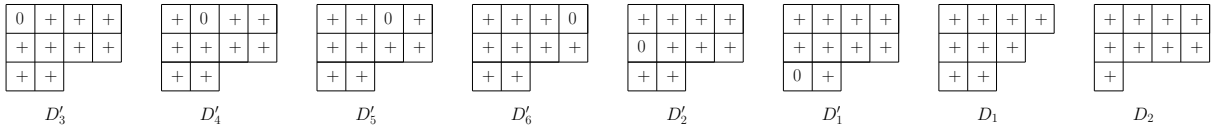

\begin{prop}\label{p:PmuVSPositroids}
Let $\mu$ be a frozen rectangle for $X_{\lambda}$. 
If the SE corner of $\mu$ is a removable box $b_{\rho_{2\ell-1}}$ in $\lambda$, then
\[
(P_\mu)=D_{\ell}+\sum_{b'_i\in \operatorname{add}(\mu)} D'_i
\] 
where $\operatorname{add}(\mu)$ denotes the set of boxes $b'_i$ from the NW border of $\lambda$ that can be added to $\mu$.

If the SE corner of $\mu$ is not removable, or if $\mu=\emptyset$, then
\[
(P_\mu)=\sum_{b'_i\in \operatorname{add}(\mu)} D'_i.
\] 
\end{prop}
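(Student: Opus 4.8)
The plan is to compute the effective Weil divisor $(P_\mu)$ of the section $P_\mu$ of $\mathcal O(1)$ one prime divisor at a time. Since $X_\lambda$ is normal it is regular in codimension one, so for each prime divisor $D\subseteq X_\lambda$ the local ring $\mathcal O_{X_\lambda,\eta_D}$ is a DVR, $\operatorname{ord}_D(P_\mu)$ is well defined, and $\operatorname{ord}_D(P_\mu)>0$ exactly when $D\subseteq\{P_\mu=0\}$. First I would note that, as $\mu=\mu_j$ is a frozen rectangle, $P_\mu$ is nowhere zero on $\openSchub$, so $\{P_\mu=0\}\subseteq X_\lambda\setminus\openSchub=\bigcup_i\{P_{\mu_i}=0\}$. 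Next, using that on each open positroid stratum the matroid is constant — equivalently, that the Plücker coordinates not vanishing identically on an open positroid variety are exactly the bases of its positroid (a standard feature of the positroid stratification, cf.\ \cite{KLS}) — the set $\{P_\mu=0\}$ is a finite union of closed positroid varieties, hence each of its codimension-one components is a codimension-one positroid variety, and by \cref{lem:codim1} these are precisely the $D_\ell$ and the $D'_i$. Therefore $(P_\mu)=\sum_\ell a_\ell D_\ell+\sum_i a'_i D'_i$ with each coefficient $\ge 1$ exactly when the corresponding divisor lies in $\{P_\mu=0\}$, and the task reduces to (i) deciding which of the $D_\ell$, $D'_i$ are contained in $\{P_\mu=0\}$, and (ii) computing the multiplicities.

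For the Schubert divisor $D_\ell=X_{\lambda^-_\ell}$ this is immediate from the Schubert cell description: $P_\mu$ vanishes identically on $X_{\lambda^-_\ell}$ iff $\mu\not\subseteq\lambda^-_\ell$, i.e.\ iff the removed outer corner $b_{\rho_{2\ell-1}}$ lies in $\mu$. Because $\mu$ is a rectangle, the only box of $\mu$ that can be an outer corner of $\lambda$ is its own southeast corner: for any other box $b=(a,c)$ of a $p\times q$ rectangle one has $a<p$ or $c<q$, so the box immediately south or immediately east of $b$ still lies in the rectangle, hence in $\lambda$, and $b$ is not an outer corner of $\lambda$. Thus $D_\ell\subseteq\{P_\mu=0\}$ iff the southeast corner of $\mu$ is the $\ell$-th removable box of $\lambda$, as in the statement, and for $\mu=\emptyset$ no $D_\ell$ occurs.

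For $D'_i=X_{(s_i,w_\lambda)}$ I would use the $\Le$-diagram description from \cref{lem:codim1}: $D'_i$ is the positroid variety whose $\Le$-diagram is the all-$+$ filling of $\lambda$ with the single box $b'_i\in\BB^{\NW}(\lambda)$ changed to $0$. Passing from the all-$+$ diagram (the network $N(\Le_\lambda)$, which is the rectangles network $G^{\rect}_\lambda$ of \cref{SchubertNetwork}) to this one deletes one edge, and a Plücker coordinate becomes identically zero on $D'_i$ exactly when every flow realising it used that edge. For the rectangle $\mu=\mu_j$ the flow polynomial $P_\mu^{\rect}$ is a single monomial (\cref{1network_param} and the computation around it), so there is a unique such flow $F_\mu$, and $P_\mu$ vanishes on $D'_i$ iff the deleted edge lies on $F_\mu$; the combinatorial content is that this occurs iff $b'_i$ is an addable box for $\mu$ in $\lambda$, i.e.\ $b'_i\in\operatorname{add}(\mu)$. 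This is consistent in size: for a $p\times q$ rectangle $\operatorname{add}(\mu)$ is the box directly below $\mu$ in the first column (present iff $p<n-k$) together with the box directly to the right of $\mu$ in the first row (present iff $q<k$), for $\mu=\emptyset$ it is the single box $(1,1)$, and these are precisely the two places where the boundary path of $\mu$ can be pushed outward — exactly what the relevant edge of $G^{\rect}_\lambda$ controls. Carrying out this identification explicitly on the network is, I expect, the main obstacle of the proof.

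Finally, all multiplicities equal $1$. Since by normality $X_\lambda$ is smooth at the generic point $\eta_D$ of each $D\in\{D_\ell,D'_i\}$, with $\mathcal O_{X_\lambda,\eta_D}$ a DVR, it suffices to check that $P_\mu$ is a uniformiser there whenever it vanishes; I would do this by a local analysis near $\eta_D$, exhibiting $D$ as a coordinate hypersurface on which $P_\mu$ restricts to a unit times a local parameter (for $D'_i$ this matches the squarefreeness of the monomial $P_\mu^{\rect}$ in the face variables of $G^{\rect}_\lambda$, the deleted variable $x_{b'_i}$ occurring to degree $0$ or $1$). Alternatively, once the supports and the linear equivalence $(P_\mu)\sim D_1+\dots+D_d$ in $\Cl(X_\lambda)$ are known, one can fix the multiplicities using the expressions for the classes $[D'_i]$ in the Schubert basis provided by \cref{p:PositroidHomology}. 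Combining the three steps gives the two displayed formulas; I expect the genuine work to be the network-combinatorics identification of $\operatorname{add}(\mu)$ in the third paragraph, together with the care needed to read off $\operatorname{ord}_{D'_i}(P_\mu)$ from the monomial $P_\mu^{\rect}$.
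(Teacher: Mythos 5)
Your proposal tracks the paper's own argument quite closely on the key point: both identify the support of $(P_\mu)$ by noting that in the rectangles network there is a unique ``packed'' flow ending at $\mu$, and that this flow is destroyed exactly when one removes the SE corner of $\mu$ (when it is an outer corner of $\lambda$) or places a $0$ in a box $b'_i\in\operatorname{add}(\mu)$, while it survives every other deletion. Within that shared framework you do two things differently, both to the good. First, you handle the Schubert divisors $D_\ell$ more elementarily, directly from the cell description of $X_{\lambda_\ell^-}$ ($P_\mu$ vanishes identically iff $\mu\not\subseteq\lambda_\ell^-$, iff the removed box is a box of $\mu$, iff it is the SE corner since $\mu$ is a rectangle), whereas the paper runs this case through the network as well. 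Second, and more substantively, you explicitly raise and settle the question of multiplicities, which the paper's proof does not address: it determines the support and then asserts the stated equality. Your second route to multiplicity one --- pairing the linear equivalence $(P_\mu)\sim D_1+\dots+D_d$ against the expansion of $[D'_i]$ from \cref{p:PositroidHomology}, which the paper proves independently in Appendix~\ref{a:positroidhomology}, and observing that the constraint equations plus effectivity force every coefficient to be $1$ --- is clean and closes this gap. Your first route, via ``squarefreeness of the monomial in the deleted variable $x_{b'_i}$,'' is looser than it needs to be: the box $b'_i$ labels a \emph{vertex} of the rectangles network, not a face, and passing from the $\mathcal X$-chart coordinates on $X^\circ_\lambda$ to a uniformizer at the generic point of $D'_i$ is not immediate. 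I would drop that heuristic in favor of the homology argument, or flesh it out as a genuine computation of $\operatorname{ord}_{D'_i}$ in a chart adapted to $D'_i$. With that one clarification, the proposal is a correct and in fact slightly more complete version of the paper's proof.
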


\begin{proof}
Recall the two equivalent descriptions of the open Schubert variety $\openSchub$, \cref{d:openX} and 
\cref{d:projRich}, compare \cref{r:maximalpositroid}. By the first, the frozen Pl\"ucker coordinate $P_\mu$ does not vanish on $\openSchub$. By the second, $\openSchub=X^\circ_{(e,w_{\lambda})}$ and its complement is the union of the positroid divisors.
We therefore have that the divisor $(P_\mu)$ must be a linear combination of the boundary divisors $D_\ell$ and $D'_i$. 

Consider a partition $\lambda$, and a frozen rectangle
$\mu$.  Note that 
there will be at most two addable boxes for $\mu$ in $\lambda$.
 The left of \cref{fig:DivisorFlows} shows an example. 
\begin{figure}[ht]
\begin{center}
 \includegraphics[height=1in]{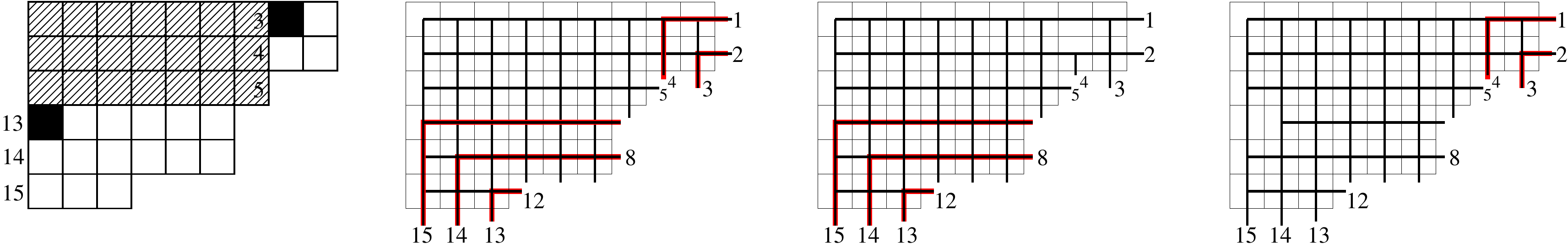}
	\caption{
	From left to right, we have:
	 the partition $\lambda
	 = (9,9,7,6,6,3)$, 
	  with the frozen rectangle 
	  $\mu = (7,7,7)$ (associated to the subset
	  $\{3,4,5,13,14,15\}$)
	 highlighted inside it, together with the two
	addable boxes for $\mu$;
	 the rectangles network associated to 
	$X_{\lambda}$ (all edges are oriented left or down), 
	together with the unique flow for $P_{\mu}$;
	 the networks associated to the two $\Le$-diagrams 
	obtained from $\lambda$ by placing a $0$ in a
	box $b_i'\in \add(\mu)$.}
 \label{fig:DivisorFlows}
\end{center}
\end{figure}
Consider the rectangles network associated to $X_\lambda$, as in the second diagram in \cref{fig:DivisorFlows}. 
It is clear by inspection that there is a \textit{unique}
flow ending at $\mu$, in line with the fact that $P_{\mu} \neq 0$ on $X^{\circ}_{e,w_\lambda}$. 
(We refer to the collection of paths in this flow
as ``packed,'' since they are as close together as possible.)
However, if the southeast corner of 
	$\mu$ is a removable box in $\lambda$,
	and we remove that box, obtaining
	a partition $\lambda'$, then 
	there will no longer be a flow ending
	at $\mu$, so $P_{\mu}$ will vanish
	on $X_{\lambda'}$.

Now consider a positroid divisor $D'_i$, whose $\Le$-diagram has shape $\lambda$,
and contains a unique $0$, where that 
 $0$ lies in 
 an addable box $b_i'\in \add(\mu)$ for $\mu$ in $\lambda$.
The corresponding rectangles network is shown in the 
two rightmost diagrams in \cref{fig:DivisorFlows}; clearly 
there will no longer be a flow ending at $\mu$, so 
$P_{\mu}$ will vanish on $D'_i$. On the other hand,
any other Schubert divisor in $X_{\lambda}$
will have $P_{\mu} \neq 0$, because there will still be
a  ``packed flow'' ending at $\mu$, analogous to the one shown
in the second diagram of \cref{fig:DivisorFlows}.
And any other positroid divisor in $X_{\lambda}$
will have $P_{\mu} \neq 0$, because the packed
flow shown in the second diagram of the figure
will still be a valid flow in the corresponding 
rectangles network for the positroid divisor.
\end{proof}

\begin{remark}\label{r:specialdivisor}
The special positroid divisor $D'_{n-k}$, which corresponds
to a $\Le$-diagram of shape $\lambda$ whose unique $0$ 
is in the northwest-most box, agrees with
	$\{P_{\emptyset}=0\}$. 
\end{remark}

\begin{cor}\label{c:ac} The divisor $D^\lambda_{\ac}$ defined in \eqref{e:boundarydivisor} can be written in terms of irreducible divisors as
\[
\Dac^\lambda = D_1 + \dots + D_d+D'_1+\dotsc+ D'_{n-1},
\]
and $\Dac^\lambda$ is an anti-canonical divisor of $X_\lambda$.
\end{cor}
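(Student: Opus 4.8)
The plan is to prove the two assertions separately. \emph{First}, I would establish the identity $\Dac^\lambda = D_1 + \dots + D_d + D'_1 + \dotsc + D'_{n-1}$ as divisors. Recall from \eqref{e:boundarydivisor} that $\Dac^\lambda = \bigcup_{i=1}^n \{P_{\mu_i} = 0\}$. By \cref{p:PmuVSPositroids}, each divisor $\{P_{\mu_i} = 0\}$ decomposes (with multiplicity one) as a sum of certain $D_\ell$'s and $D'_j$'s, so $\Dac^\lambda$ is a positive combination of the positroid divisors listed in \cref{lem:codim1}. It remains to check that each $D_\ell$ and each $D'_j$ appears with coefficient exactly $1$ in the union. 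For the Schubert divisors $D_\ell$: by \cref{p:PmuVSPositroids}, $D_\ell$ appears in $(P_\mu)$ precisely when $\mu = \mu_{\rho_{2\ell-1}}$ is the frozen rectangle whose SE corner is the $\ell$-th removable box of $\lambda$, and it appears in no other $(P_{\mu_i})$; hence $D_\ell$ occurs exactly once. For the divisors $D'_j$: by \cref{p:PmuVSPositroids}, $D'_j$ appears in $(P_{\mu_i})$ iff $b'_j \in \operatorname{add}(\mu_i)$, i.e. iff $\mu_i = \frozen(b'_j)$ is the minimal frozen rectangle to which $b'_j$ is addable. Since $\frozen$ (see \cref{def:NW}) assigns to $b'_j$ a \emph{unique} minimal such frozen rectangle, and since $b'_j$ is addable to $\mu_i$ forces $\mu_i$ to be exactly $\frozen(b'_j)$ by minimality once we are summing over all frozen rectangles appearing as denominators, the divisor $D'_j$ occurs exactly once in $\Dac^\lambda$. (Here I should be slightly careful: $b'_j$ could be addable to several frozen rectangles $\mu_i$; but \cref{p:PmuVSPositroids} only records $D'_j$ in $(P_{\mu_i})$ when $b'_j \in \operatorname{add}(\mu_i)$ in the sense that $b'_j$ can be added to $\mu_i$ while staying in $\lambda$, and the statement of that proposition already has the full list — so I would instead argue that the total count of $D'_j$ over all $i \in [n]$ equals the number of frozen rectangles $\mu_i$ with $b'_j$ addable, and then verify combinatorially, tracking the shape of the rim, that this number is $1$. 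This bookkeeping is the one genuinely fiddly point.)

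\emph{Second}, I would prove that $\Dac^\lambda$ is an anti-canonical divisor of $X_\lambda$. Since $X_\lambda$ is normal (\cite{RamRam:Schubert,DeCLak:Schubert}), it suffices to verify that the restriction of $\Dac^\lambda$ to the smooth locus $U$ of $X_\lambda$ is an anti-canonical divisor of $U$. The smooth locus contains the open Schubert variety $\openSchub = X^\circ_{(e,w_\lambda)}$ (indeed all the codimension-one positroid strata are smooth), and the complement $X_\lambda \setminus \openSchub$ is exactly the union of the $D_\ell$ and $D'_j$, which by the first part is the support of $\Dac^\lambda$. So the content is to show $\Dac^\lambda$ is linearly equivalent to $-K_{X_\lambda}$ on $U$. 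I would do this by a comparison with the ambient Grassmannian $\X = Gr_{n-k}(\C^n)$, or more efficiently by computing class-group coefficients: using the facts collected at the start of \cref{s:GeometrySchubert} (the $D_\ell$ form a basis of $\Cl(X_\lambda) \cong H_{2|\lambda|-2}(X_\lambda)$, and $\Pic(X_\lambda) \cong \Z$ generated by $\mathcal{O}(\sum D_\ell)$ with $c_1 = \sigma^{\square}$), together with the homology class computation for positroid divisors from \cref{p:PositroidHomology} (proved in \cref{a:positroidhomology}), I would express $[\Dac^\lambda] = \sum_\ell [D_\ell] + \sum_j [D'_j]$ in the Schubert basis and compare it with the known anticanonical class. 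The anticanonical class of $X_\lambda$ restricted to $U$ is determined by adjunction: $X_\lambda$ is Gorenstein in codimension $\le 2$ (it is Cohen–Macaulay and normal), and on the smooth locus $-K_U$ is computed from the fact that $-K_{\X} = n\,\sigma^{\square}$ restricts and that the singular locus has codimension $\ge 2$. Concretely, I expect the cleanest route is: the divisor $\sum_{\ell=1}^d D_\ell = \{P_\lambda = 0\}$ represents $\sigma^{\square}$ (fact (4) at the start of the section), and one shows the remaining $\sum_{j=1}^{n-1} D'_j$ contributes the complementary amount so that the total is $n \cdot \sigma^{\square}$ in $\Pic$, which is exactly $c_1(-K)$ restricted from the Grassmannian; alternatively one invokes the general fact (e.g. from \cite{KLS} or the positroid literature) that the sum of all codimension-one positroid strata in an open positroid variety's closure is anticanonical, which is the natural ``boundary = anticanonical'' statement in this setting.

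\emph{Main obstacle.} The essential difficulty is the second part: pinning down that the sum of the $n-1+d$ positroid divisors is \emph{exactly} anticanonical (not merely supported on the right set, and not merely anticanonical up to a multiple). This requires either an explicit computation of $[\Dac^\lambda]$ in the Schubert basis via \cref{p:PositroidHomology} and matching it against the adjunction-computed anticanonical class on the smooth locus, or citing/adapting the corresponding result for projected Richardson varieties. I would structure the writeup to lean on \cref{p:PositroidHomology} for the homology classes of the $D'_j$, combine with the known classes of the Schubert divisors $D_\ell$, sum, and check the result equals $n\sigma^{\square}$ (equivalently, agrees with the restriction of the Grassmannian anticanonical class, using fact (5) that $\Pic(X_\lambda) \cong \Pic(Gr)$). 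The first part, by contrast, is a direct and essentially bookkeeping consequence of \cref{p:PmuVSPositroids} and the definition of the map $\frozen$.
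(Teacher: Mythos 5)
Your plan for the first part rests on a claim that is actually false, and the ``clean'' computation you propose for the second part is aimed at an identity that does not hold; the citation you mention only as a fallback is in fact the correct route (and the one the paper takes).

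\textbf{Part 1.} You set out to show that each $D'_j$ appears with coefficient exactly $1$ in ``the sum'' of the $(P_{\mu_i})$, and you try to argue this from the map $\frozen$. But $(P_{\mu_i})$ contains $D'_j$ whenever $b'_j \in \add(\mu_i)$, not only when $\mu_i = \frozen(b'_j)$; a single NW-border box $b'_j$ can be addable to several frozen rectangles. For instance, with $\lambda = (4,4,2)$ the box $b'_1 = (3,1)$ is addable to $\mu_2 = (4,4)$, to $\mu_3 = (3,3)$, and to $\mu_4 = (2,2)$, so $D'_1$ sits inside three of the divisors $\{P_{\mu_i}=0\}$. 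The ``total count'' you propose to compute is therefore not $1$ in general. The reason the first part is nevertheless trivial is that $\Dac^\lambda$ is \emph{defined} as a union $\bigcup_{i=1}^n \{P_{\mu_i}=0\}$, i.e.\ a reduced divisor, so multiplicities never enter. All one needs from \cref{p:PmuVSPositroids} is that (a) each $\{P_{\mu_i}=0\}$ is a union of the irreducible positroid divisors from \cref{lem:codim1}, and (b) every one of the $d+n-1$ positroid divisors is contained in some $\{P_{\mu_i}=0\}$; the equality of reduced divisors then follows immediately.

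\textbf{Part 2.} Your preferred ``cleanest route'' is to show that $[\Dac^\lambda]$ equals $n\sigma^\square$, obtained by restricting the Grassmannian's anticanonical class. This identity is false: for $\lambda = (4,4,2)$ one computes via \cref{p:PositroidHomology} that $[\Dac^\lambda] = 6[D_1] + 5[D_2]$ in $\Cl(X_\lambda) \cong H_{2|\lambda|-2}(X_\lambda,\Z)$ (this is the instance of $\sum n_\ell [D_\ell]$ from \cref{c:Gorenstein} with $n_1 = 6$, $n_2 = 5$), whereas $n\sigma^\square \cap [X_\lambda] = 7[D_1]+7[D_2]$. The anticanonical Weil divisor class of a singular Schubert variety is simply not the restriction of the ambient one; adjunction in the naive form you invoke requires smoothness of the subvariety, and trying to ``compute class-group coefficients and match against the restriction from the Grassmannian'' is matching against the wrong target. (Also, normality plus Cohen--Macaulayness does not imply ``Gorenstein in codimension $\le 2$''; you only get regularity in codimension $1$ from normality.) The alternative you mention in passing --- that the sum of all codimension-one positroid (projected Richardson) strata in the closure of an open positroid variety is anticanonical --- is exactly what the paper uses, citing \cite[Lemma~5.4]{KLS:ProjRich}. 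That is the correct and essentially necessary ingredient; the explicit class-group computation via \cref{p:PositroidHomology} appears in the paper only \emph{after} this corollary, in \cref{c:Gorenstein}, where it is used to read off when $\Dac^\lambda$ is Cartier, not to establish that it is anticanonical.
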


\begin{proof}  We have $\Dac^\lambda=\bigcup_{i=1}^n\{P_{\mu_i}=0\}$ by definition, see \eqref{e:boundarydivisor}. Each divisor $\{P_{\mu_i}=0\}$ is a union of irreducible positroid divisors as described explicitly in \cref{p:PmuVSPositroids}. Moreover, from this explicit description we see that each positroid divisor arises as irreducible divisor contained in some $\{P_{\mu_i}=0\}$. This implies the formula for $\Dac^\lambda$. The fact that $\Dac^\lambda$ is an anticanonical divisor now follows from \cref{lem:codim1}
and \cite[Lemma 5.4]{KLS:ProjRich}. 
\end{proof}

This sum of positroid divisors is also described for projected Richardson varieties more generally in \cite[Lemma~5.4]{KLS:ProjRich} and it gives a distinguished anti-canonical divisor, see also \cite{Brion:GeomFlag}.

We observe a close relationship between the form of the superpotential $W^\lambda$ and the irreducible components of the anticanonical divisor $\Dac^\lambda$ of $X_\lambda$. Namely, the following proposition follows directly from 
\cref{lem:codim1}
and the formula \eqref{eq:3rdpotential} for the superpotential.

\begin{proposition}\label{p:LeDivisors}
The summands of the superpotential $W^{\lambda}$  
from \eqref{eq:3rdpotential} are in natural bijection with the positroid divisors in $X_\lambda$.
More specifically, for $b\in \BB_{\lambda}^{\NW}$, the 
	term $\frac{p_{\frozen(b) \cup b}}{p_{\frozen(b)}}$ in the superpotential
	is naturally associated to the $\Le$-diagram obtained from $\lambda$ by 
	putting a $0$ in box $b$ (and putting a $+$ in every other box). 
	And the term 
	$\frac{p_{\muminus}}{p_{\mu}}$
	for 
	${\mu\in \Fr_0(\lambda)}$ is naturally associated to the $\Le$-diagram 
  filled with all $+$'s, whose shape is	obtained
	from $\lambda$ by removing the box which is the outer corner of $\mu$.
\end{proposition}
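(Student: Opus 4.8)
The plan is to combine the enumeration of positroid divisors in \cref{lem:codim1} with the form \eqref{eq:3rdpotential} of the superpotential, and observe that the indexing sets on the two sides literally agree. First I would recall from \cref{lem:codim1} that the positroid divisors contained in $X_\lambda$ split into two families: the $d$ Schubert divisors $D_1,\dots,D_d$, whose $\Le$-diagrams are the all-$+$ fillings of the Young diagrams obtained from $\lambda$ by deleting a single outer corner box; and the $n-1$ divisors $D'_1,\dots,D'_{n-1}$, whose $\Le$-diagrams have shape $\lambda$ with $+$'s in every box except a single $0$, forced to sit in a box of the leftmost column or the topmost row of $\lambda$. By \cref{def:BB} this last set of boxes is exactly $\BB^{\NW}(\lambda)$, of cardinality $n-1$, so the assignment sending $b\in\BB^{\NW}(\lambda)$ to the $\Le$-diagram of shape $\lambda$ with a $0$ in box $b$ is a bijection onto $\{D'_1,\dots,D'_{n-1}\}$; this is precisely the bijection $b'_i\mapsto D'_i$ of \cref{def:positroiddivisors}.

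Next I would compare this with the superpotential side. By \cref{p:3rdpotential} and \cref{d:Wis}, the formula \eqref{eq:3rdpotential} has exactly $(n-1)+d$ summands: the $n-1$ non-quantum terms $\frac{p_{\frozen(b)\sqcup b}}{p_{\frozen(b)}}$ indexed by $b\in\BB^{\NW}(\lambda)$, and the $d$ quantum terms $q(\mu)\frac{p_{\mu^-}}{p_\mu}$ indexed by $\mu\in\Fr_0(\lambda)$. The first family is in bijection with $\BB^{\NW}(\lambda)$ by construction, so composing with the bijection of the previous paragraph attaches the term $\frac{p_{\frozen(b)\sqcup b}}{p_{\frozen(b)}}$ to the $\Le$-diagram carrying a $0$ in box $b$, which is the first claim.

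It remains to match the quantum terms with the Schubert divisors. By \cref{d:frozendec} we have $\Fr_0(\lambda)=\{\mu_\rho\mid\rho\in\Rout(\lambda)\}$, and by \cref{d:frozens} the southeast corner box of $\mu_\rho$ is $b_\rho$, which for $\rho\in\Rout(\lambda)$ is exactly a removable box (outer corner) of $\lambda$; moreover $\Rout(\lambda)=\{\rho_1<\rho_3<\dots<\rho_{2d-1}\}$ lists the outer corners without repetition, see \cref{r:R(lambda)}. Hence $\mu_{\rho_{2\ell-1}}\mapsto$ (the all-$+$ $\Le$-diagram of shape $\lambda\setminus b_{\rho_{2\ell-1}}$) is a bijection from $\Fr_0(\lambda)$ onto $\{D_1,\dots,D_d\}$ in the notation of \cref{def:positroiddivisors}, sending the term $q(\mu)\frac{p_{\mu^-}}{p_\mu}$ with $\mu=\mu_{\rho_{2\ell-1}}$ to $D_\ell$; here $\mu^-$ in the numerator is $\mu$ with its rim removed, as in \cref{d:addable}. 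Assembling the two matchings gives the stated bijection.

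I do not expect a genuine obstacle here: the proposition was arranged precisely so that the combinatorial data indexing $W^\lambda$ — boxes of the northwest border of $\lambda$ together with the outer corners of $\lambda$ — already coincides with the data indexing the positroid divisors in \cref{lem:codim1}. The only point requiring a little care is checking that the two indexing conventions are used consistently, namely the numbering $b'_1,\dots,b'_{n-1}$ of $\BB^{\NW}(\lambda)$ ``from the bottom left, counting up and then to the right,'' and the numbering of outer corners from northeast to southwest; but this is exactly the convention fixed in \cref{def:positroiddivisors} and \cref{d:Wis}, so no conflict arises.
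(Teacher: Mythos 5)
Your proof is correct and takes the same route as the paper: the paper simply states that the proposition ``follows directly from \cref{lem:codim1} and the formula \eqref{eq:3rdpotential}'', which is precisely the matching of indexing sets (boxes of $\BB^{\NW}(\lambda)$ for the $D'_i$, outer corners for the $D_\ell$) that you carry out in detail. Your elaboration is a faithful unpacking of that one-line argument, and the indexing conventions you flag are indeed the only thing one must verify.
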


\begin{example}
	The $\Le$-diagrams for the eight
	codimension $1$ positroid
	varieties contained in $\dualSchub$ for $\lambda = (4,4,2)$ shown in 
	\cref{fig:codim} 
	 correspond
to the eight terms of the superpotential 
	\begin{equation}\label{ex:8superpotential}
	W^{\lambda}=
        \frac{p_{\ydiagram{1}}}{p_{\emptyset}}+
        \frac{ p_{\ydiagram{2,1,1}}}{p_{\ydiagram{1,1,1}}}+
        \frac{p_{\ydiagram{3,2}}}{\p_{\ydiagram{2,2}}}+
        \frac{p_{\ydiagram{4,3}}}{\p_{\ydiagram{3,3}}} +
	\frac{p_{\ydiagram{4,1}}}{\p_{\ydiagram{4}}} +
        \frac{p_{\ydiagram{2,2,1}}}{\p_{\ydiagram{2,2}}}+
	 \frac{q_1 p_{\ydiagram{3}}}{p_{\ydiagram{4,4}}} +
        \frac{q_2 p_{\ydiagram{1,1}}}{p_{\ydiagram{2,2,2}}}.
	\end{equation}
\end{example}
For another example, see \cref{sec:example}. The next goal of this section is to express the positroid divisor homology classes $[D'_i]$ in terms of the basis of Schubert classes $[D_1],\dotsc,[D_d]$.

\subsection{Homology classes of positroid divisors}\label{s:PositroidHomology}
Recall the notation from \cref{d:Wis}  by which the NW border boxes of $\lambda$ are denoted by $b'_1,\dotsc, b'_{n-1}$ counting clockwise from the bottom left-hand corner. As seen in Section~\ref{s:LeDivisors}, the $\Le$-diagram associated to $D_i'$ is the Young diagram $\lambda$ filled with a $0$ in the box $b'_i$ and a $+$ in every other box. The $\Le$-diagram associated to the Schubert divisor $D_\ell$ is the Young diagram $\lambda_\ell^-$ obtained by removing the outer corner box $b_{\rho_{2\ell-1}}$ from $\lambda$. 

The following proposition will be proved in Appendix~\ref{a:positroidhomology}.

\begin{prop}\label{p:PositroidHomology} 
For each NW boundary box $b'_i$ 
	we consider the set of indices for removable corner boxes 
\[
SE(b'_i):=\{\ell\mid \text{The box $b_{\rho_{2\ell-1}}$ is 
	weakly southeast of $b_i'$}\}. 
\]
Then the homology class  
of the positroid divisor $D_i'=X_{(s_i,\lambda)}$ is expressed in terms of the Schubert classes $[D_\ell]=[X_{\lambda^-_\ell}]$ by
\[
[D_i']=\sum_{\ell\in SE(b'_i)} [D_\ell].
\]
\end{prop}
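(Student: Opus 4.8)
The plan is to compute the homology class $[D_i']$ by intersecting against a basis of cohomology classes, using the positroid-divisor description $D_i' = X_{(s_i, w_\lambda)}$ and known intersection-theoretic formulas. Since the Schubert classes $[D_1],\dots,[D_d]$ form a $\Z$-basis of $H_{2|\lambda|-2}(X_\lambda,\Z)$, we may write $[D_i'] = \sum_\ell c_\ell^{(i)} [D_\ell]$ for unique coefficients $c_\ell^{(i)} \in \Z$, and it suffices to pin down each $c_\ell^{(i)}$. The natural pairing to use is with the Schubert classes $\sigma^{\nu}$ where $\nu$ ranges over codimension-$(|\lambda|-1)$ Schubert subvarieties, i.e. the curve classes; dually, one pairs $[D_i']$ against the Schubert classes corresponding to the complementary dimension. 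Concretely I would pair with the $d$ classes dual to the Schubert \emph{curves} $[X_{\rho_\ell}]$ (the Schubert varieties of dimension $1$ inside $X_\lambda$, one for each inner structure), exploiting that $\langle [D_\ell], \sigma^{\mathrm{curve}_m}\rangle = \delta_{\ell m}$ after an appropriate choice of dual curve basis. Then $c_\ell^{(i)} = \langle [D_i'], \sigma^{\mathrm{curve}_\ell}\rangle$, which is an honest intersection number that can be computed geometrically.

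First I would set up the curve classes explicitly: inside $X_\lambda$, the effective curves are (up to the relevant duality) indexed by the removable boxes, with the $\ell$-th curve $C_\ell$ a $\PP^1$ whose class is dual to $[D_\ell]$ in the perfect pairing on $X_\lambda$. This matches the statement that $d = \dim H_{2|\lambda|-2}$ and the fact noted in the excerpt that $d$ is the rank of the relevant homology. Next I would intersect $D_i' = X_{(s_i, w_\lambda)}$ with each $C_\ell$. The key observation is that $D_i'$ is cut out (as the complement of $X_{\lambda}^\circ$ inside a larger positroid variety, combined with the Pl\"ucker data of $\cref{p:PmuVSPositroids}$) in a way that lets us read the intersection number off the combinatorics: $C_\ell \cdot D_i' = 1$ precisely when the curve $C_\ell$ is not entirely contained in the open stratum complementary to $D_i'$, equivalently when the removable box $b_{\rho_{2\ell-1}}$ sits weakly southeast of $b_i'$, and $C_\ell \cdot D_i' = 0$ otherwise. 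This is exactly the set $SE(b_i')$, giving $c_\ell^{(i)} = 1$ for $\ell \in SE(b_i')$ and $0$ otherwise, which is the claimed formula.

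An alternative — and probably cleaner — route uses the divisor relations coming from the Pl\"ucker coordinates directly. By $\cref{p:PmuVSPositroids}$, for a frozen rectangle $\mu$ with removable SE corner $b_{\rho_{2\ell-1}}$ we have the rational equivalence $(P_\mu) = D_\ell + \sum_{b_i' \in \add(\mu)} D_i'$, and for $\mu$ with non-removable SE corner $(P_\mu) = \sum_{b_i' \in \add(\mu)} D_i'$. All the $P_\mu$ are restrictions of Grassmannian Pl\"ucker coordinates, hence all linearly equivalent (up to the $\PP^1$-degree, which is $1$ in each case since $\Pic(X_\lambda) \cong \Z$ with generator $\mathcal O(\sum D_i)$ by item (5) of \cref{s:GeometrySchubert}). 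Comparing these relations for the chain of frozen rectangles along the rim lets one solve recursively: starting from $\mu = \emptyset$ (where $\add(\emptyset)$ is just the top-left box, giving $(P_\emptyset) = D'_{n-k}$, as in \cref{r:specialdivisor}) and moving along the rim, each new relation introduces one new $D_i'$ and expresses it in terms of the $D_\ell$'s already determined. Telescoping these relations yields $[D_i'] = \sum_{\ell \in SE(b_i')} [D_\ell]$ after checking that $\add(\mu_j)$ together with the removable-corner bookkeeping matches the ``weakly southeast'' condition.

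The main obstacle I anticipate is the bookkeeping in either approach: matching the combinatorial condition ``$b_{\rho_{2\ell-1}}$ weakly southeast of $b_i'$'' with the geometric/Pl\"ucker data. In the intersection-number approach, one must carefully identify the dual curve basis and verify $C_\ell \cdot D_\ell = \delta_{\ell\ell'}$, which requires knowing how Schubert curves in $X_\lambda$ meet the Schubert divisors — manageable but needs the Chevalley formula or an explicit $\PP^1$. In the Pl\"ucker-relation approach, the obstacle is organizing the telescoping: one needs to confirm that as $\mu$ runs over $\Fr(\lambda)$ in rim order, the sets $\add(\mu)$ partition the NW-border boxes in the right way and that each relation is ``triangular'' with respect to the natural order on removable boxes, so that the linear system has the claimed unique solution. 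I expect the latter to be the cleaner write-up and would present it as the main proof, relegating the intersection-theoretic version to a remark or using it only as a sanity check. (Since the excerpt defers this to \cref{a:positroidhomology}, the actual appendix proof may use yet another tool — e.g. a direct computation with the affine paving or with $K$-theory classes of positroid varieties à la Knutson–Lam–Speyer — but the Pl\"ucker-relation telescoping is the most self-contained argument available from the results stated so far.)
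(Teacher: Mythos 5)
Your second route (via the Pl\"ucker-divisor relations) is correct and genuinely different from the paper's proof; your first route has a gap. I address both.

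In the first route there is a dimensional mismatch: the Schubert curve classes live in $H_2(X_\lambda)$, which has rank one (generated by $[X_\square]$), not rank $d$, and they pair with $H^2$ rather than with $H_{2|\lambda|-2}$. So there is no ``dual curve basis'' against which to pair $[D_i']$. What you would instead pair against are the degree-$(2|\lambda|-2)$ Schubert cohomology classes $\sigma^{\lambda^-_\ell}$, and evaluating $\langle \sigma^{\lambda^-_\ell}, [D_i']\rangle$ requires a Chevalley-type formula. That is essentially what the paper's own proof does, lifted to the flag variety: it writes $D_i'=\pi(\overline{\mathcal R_{s_i,w_\lambda}})$, identifies $[\overline{\mathcal R_{s_i,w_\lambda}}]$ with the Poincar\'e dual of $\sigma^{s_i}\cup\sigma^{w_0w_\lambda}$ in $H^*(GL_n/B_+)$, expands by the Pieri (Chevalley) formula there, and pushes forward by $\pi_*$, discarding the terms whose index is not a Grassmannian permutation.

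Your second route does work and is more self-contained given \cref{p:PmuVSPositroids}, but the ``telescoping along the rim, one new $D_i'$ at a time'' step should not be left as an assertion: several $\mu_j$ have $|\add(\mu_j)|=2$, and the order in which new $D_i'$'s appear is not simply the rim order. It is cleaner to argue as follows. Each $P_{\mu_j}$ is a degree-one section of $\mathcal O(\sum_\ell D_\ell)$, so the $n$ divisor equalities of \cref{p:PmuVSPositroids} give, in $\Cl(X_\lambda)\cong H_{2|\lambda|-2}(X_\lambda,\Z)$, the system $\langle \varphi(e_j),\mathbf D'\rangle = \sum_\ell [D_\ell] - [\text{Schubert term}_j]$ for $j\in[n]$, where $\varphi$ is the map from \cref{c:DrrVSPmu} and $\mathbf D'=([D_1'],\dots,[D_{n-1}'])$. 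The surjectivity of $\varphi$ proved in \cref{c:DrrVSPmu} makes this system determine $\mathbf D'$ uniquely, so one only needs to verify that $[D_i']=\sum_{\ell\in SE(b'_i)}[D_\ell]$ satisfies all $n$ equations. Comparing coefficients of $[D_\ell]$, this reduces to the combinatorial identity: for every frozen $\mu_j = r\times c$ and every removable box $(r',c')$, exactly one addable box in $\{(r+1,1),(1,c+1)\}\cap\lambda$ is weakly northwest of $(r',c')$ if $(r',c')\neq(r,c)$, and none is if $(r',c')=(r,c)$; this follows from the fact that two rim boxes can never lie strictly northwest/southeast of one another. I would write that up in place of the telescoping.
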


Note that the positroid divisor $D'_{n-k}$ associated to the upper left-hand corner box $b'_{n-k}$ has all outer corners {weakly} southeast of it, so that $SE(b'_{n-k})=\{1,\dots,d\}$ and $[D_{n-k}']=\sum_{\ell=1}^d [D_\ell]$.
Therefore in particular $D'_{n-k}$ is Cartier.

As an immediate corollary we can  characterise when the 
boundary divisor $\Dac^\lambda$ is Cartier. This recovers the well-known characterisation of which Schubert varieties are Gorenstein, 
see \cite{WooYong,BrownLak:Schubert,Perrin:Schubert, Svanes}.
\begin{cor}\label{c:Gorenstein}
The divisor $\Dac^\lambda=\sum_{\ell=1}^d D_\ell+\sum_{i=1}^{n-1} D_i'$ of $X_\lambda$ has homology class given by
\begin{equation}\label{e:DacFormula}
[\Dac^\lambda]=\sum_{\ell=1}^d n_\ell[D_\ell],
\end{equation}
where 
	\begin{equation}\label{eq:nell}
n_\ell= 1+\#\{\text {boxes $b_i'$  weakly northwest of the removable box $b_{\rho_{2\ell+1}}$}\}.  
	\end{equation}
The Schubert variety $X_\lambda$ is Gorenstein if and only if $n_1=\dotsc=n_d$, or equivalently, if and only if the removable boxes $b_{2\rho_\ell-1}$ all lie on the same anti-diagonal. 
\end{cor}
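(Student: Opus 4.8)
The plan is to compute $[\Dac^\lambda]$ explicitly and then convert the Cartier/Gorenstein question into the stated combinatorial anti-diagonal condition, using the known structure of $\Pic(X_\lambda)$ inside $\Cl(X_\lambda)$.

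First I would establish \eqref{e:DacFormula}. By \cref{c:ac} we have $\Dac^\lambda=\sum_{\ell=1}^{d}D_\ell+\sum_{i=1}^{n-1}D_i'$ as a sum of prime divisors, so in $H_{2|\lambda|-2}(X_\lambda)$ one gets $[\Dac^\lambda]=\sum_\ell[D_\ell]+\sum_i[D_i']$. Substituting $[D_i']=\sum_{\ell\in SE(b'_i)}[D_\ell]$ from \cref{p:PositroidHomology} and interchanging the order of summation, the coefficient of $[D_\ell]$ becomes $1+\#\{\,i\mid\ell\in SE(b'_i)\,\}$. By the definition of $SE$, the condition $\ell\in SE(b'_i)$ says exactly that the $\ell$-th removable box is weakly southeast of $b'_i$, i.e. that $b'_i$ is weakly northwest of the $\ell$-th removable box; hence this coefficient is $n_\ell$ as in \eqref{eq:nell}, proving \eqref{e:DacFormula}. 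A short count then gives a closed form: if the $\ell$-th removable box lies in row $r_\ell$ and column $c_\ell$, the boxes of $\BB^{\NW}(\lambda)$ weakly northwest of it are the $r_\ell$ boxes of the first column in rows $1,\dots,r_\ell$ together with the $c_\ell$ boxes of the first row in columns $1,\dots,c_\ell$, where $(1,1)$ is counted once; thus there are $r_\ell+c_\ell-1$ of them and $n_\ell=r_\ell+c_\ell$. In particular $n_1=\dots=n_d$ if and only if all removable boxes have the same value of $(\text{row})+(\text{column})$, i.e. lie on a common anti-diagonal.

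Next I would identify Gorenstein-ness with $\Dac^\lambda$ being Cartier. Since $X_\lambda$ is normal and Cohen--Macaulay \cite{Hochster:Grass,Laksov:Grass,Musili:Grass,RamRam:Schubert,DeCLak:Schubert}, its dualizing sheaf equals $\mathcal O_{X_\lambda}(K_{X_\lambda})$ for a canonical Weil divisor $K_{X_\lambda}$, and $X_\lambda$ is Gorenstein iff this sheaf is invertible, i.e. iff $\Dac^\lambda=-K_{X_\lambda}$ (anticanonical by \cref{c:ac}) is Cartier. Recall that $[D_1],\dots,[D_d]$ form a $\Z$-basis of $\Cl(X_\lambda)$ and that $\Pic(X_\lambda)\cong\Z$ is generated by the class of the Pl\"ucker divisor $\{P_\lambda=0\}=D_1+\dots+D_d$; hence the image of the natural injection $\Pic(X_\lambda)\hookrightarrow\Cl(X_\lambda)$ is the subgroup $\Z\cdot([D_1]+\dots+[D_d])$, so a Weil divisor $\sum_\ell c_\ell D_\ell$ is Cartier precisely when $c_1=\dots=c_d$. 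Applying this to $[\Dac^\lambda]=\sum_\ell n_\ell[D_\ell]$ from the first step yields that $X_\lambda$ is Gorenstein iff $n_1=\dots=n_d$, equivalently iff the removable boxes lie on a single anti-diagonal.

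The combinatorial bookkeeping (the two sums, the interchange of summation, and the count of NW-border boxes) is routine. The only step requiring genuine care is the ring-theoretic input just used: that $\Pic(X_\lambda)$ maps onto the ``diagonal'' subgroup $\Z\cdot\sum_\ell[D_\ell]$ of $\Cl(X_\lambda)$, and that for a normal Cohen--Macaulay variety ``Gorenstein'' is equivalent to the canonical Weil divisor being Cartier. Both are supplied by the facts on $\Pic$ and $\Cl$ of $X_\lambda$ recalled at the start of \cref{s:GeometrySchubert}, together with normality and Cohen--Macaulayness of $X_\lambda$, so no new work is needed there.
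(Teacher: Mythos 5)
Your argument is correct and follows the same route as the paper's own (short) proof: apply \cref{p:PositroidHomology} termwise to $\sum_\ell D_\ell+\sum_i D'_i$ and interchange sums to get \eqref{e:DacFormula}, then use that $\Dac^\lambda$ is anti-canonical (\cref{c:ac}) and that a Weil divisor $\sum_\ell c_\ell D_\ell$ is Cartier iff $c_1=\dots=c_d$ (because $\Pic(X_\lambda)\hookrightarrow\Cl(X_\lambda)\cong\Z^d$ has image $\Z\cdot\sum_\ell[D_\ell]$, facts (3)--(5) of \cref{s:GeometrySchubert}). You merely spell out two things the paper leaves implicit -- the summation interchange, and the explicit count $n_\ell=r_\ell+c_\ell$ justifying the anti-diagonal reformulation -- both of which are correct and make the final equivalence transparent.
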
 

Note that $n_\ell$ agrees with the degree given to the quantum parameter $q_\ell$ in \cref{d:LG1}.

\begin{proof} The formula \eqref{e:DacFormula} for $[\Dac^\lambda]$ follows immediately from \cref{p:PositroidHomology}. The  divisor $\Dac^\lambda$ is Cartier if and only if it is linearly equivalent to a multiple of $\sum_{\ell=1}^d D_\ell$, which we can detect from the homology class using that the Class group of $X_\lambda$  equals the homology $H_{2|\lambda|-2}(X_\lambda,\Z)$. 
By \cref{c:ac},
	$\Dac^\lambda$ is an anti-canonical divisor, so  
	we have that $X_\lambda$ is indeed Gorenstein if and only if $n_1=\dotsc=n_d$.
\end{proof}

More generally, we can characterise which divisors supported on the boundary of $X_\lambda$ are anti-canonical and which are Cartier.   
\begin{cor}\label{c:Drr} 
	Let $(\mathbf r,\mathbf r')\in \Z^{d}\times \Z^{n-1}$ and 
\[
D_{(\mathbf r,\mathbf r')}=\sum_{\ell=1}^{d} r_\ell D_\ell+\sum_{i=1}^{n-1} r'_i D'_i.
\]
For any removable box $b_{\rho_{2\ell-1}}$ in $\lambda$ consider  
$NW(b_{\rho_{2\ell-1}}):=\{i\mid \text {$b_i'$ is weakly northwest of the box $b_{\rho_{2\ell-1}}$}\}$. 
Let 
\begin{equation}\label{e:Rj}
R_\ell:=r_\ell+\sum_{i\in NW(b_{\rho_{2\ell-1}})}r'_i. 
\end{equation}
The divisor
	$D_{(\mathbf r,\mathbf r')}$ is an anti-canonical divisor if and only if $R_\ell=n_\ell$ for each $\ell=1,\dotsc, d$, where $n_{\ell}$
	is defined in \eqref{eq:nell}.
The divisor $D_{(\mathbf r,\mathbf r')}$ is Cartier if and only if $R_1=R_2=\dotsc=R_d$. In this case the divisor is linearly equivalent to $R\sum_{\ell=1}^d D_\ell$ where $R=R_\ell$, and $D_{(\mathbf r,\mathbf r')}$ is ample if and only if $R>0$.
\end{cor}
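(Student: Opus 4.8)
The plan is to reduce everything to \cref{p:PositroidHomology}, which expresses each $[D'_i]$ in the Schubert basis $[D_1],\dots,[D_d]$, together with fact (3) in \cref{s:GeometrySchubert} that $\Cl(X_\lambda)=H_{2|\lambda|-2}(X_\lambda,\Z)$ has basis $[D_1],\dots,[D_d]$, and fact (5) that a Weil divisor is Cartier iff it is linearly equivalent to a multiple of $\sum_{\ell=1}^d D_\ell$ (since $\Pic(X_\lambda)\cong\Z$, generated by $\mathcal O(\sum D_\ell)$). First I would compute the homology class of $D_{(\mathbf r,\mathbf r')}$ directly: using \cref{p:PositroidHomology},
\[
[D_{(\mathbf r,\mathbf r')}]=\sum_{\ell=1}^d r_\ell[D_\ell]+\sum_{i=1}^{n-1}r'_i\sum_{\ell\in SE(b'_i)}[D_\ell]
=\sum_{\ell=1}^d\Bigl(r_\ell+\sum_{i:\,\ell\in SE(b'_i)}r'_i\Bigr)[D_\ell].
\]
The condition $\ell\in SE(b'_i)$ says $b_{\rho_{2\ell-1}}$ is weakly southeast of $b'_i$, which is the same as $b'_i$ being weakly northwest of $b_{\rho_{2\ell-1}}$, i.e.\ $i\in NW(b_{\rho_{2\ell-1}})$. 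Hence the coefficient of $[D_\ell]$ is exactly $R_\ell$ as defined in \eqref{e:Rj}, so $[D_{(\mathbf r,\mathbf r')}]=\sum_{\ell=1}^d R_\ell[D_\ell]$.

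Given this, the anticanonical statement is immediate from \cref{c:Gorenstein}: $\Dac^\lambda$ is anticanonical with $[\Dac^\lambda]=\sum n_\ell[D_\ell]$, and since the Schubert classes form a basis of the class group, a Weil divisor is anticanonical iff its class equals $[\Dac^\lambda]$, i.e.\ iff $R_\ell=n_\ell$ for all $\ell$. For the Cartier criterion: by fact (5), $D_{(\mathbf r,\mathbf r')}$ is Cartier iff its class is an integer multiple $R\sum_{\ell=1}^d[D_\ell]$ of the Plücker class; comparing coefficients in the basis $\{[D_\ell]\}$, this holds iff $R_1=\dots=R_d$ (with common value $R$). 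In that case $D_{(\mathbf r,\mathbf r')}$ is linearly equivalent to $R\sum_{\ell=1}^d D_\ell$, and since $\sum_{\ell=1}^d D_\ell$ is the ample generator of $\Pic(X_\lambda)\cong\Z$ (fact (4)), the divisor is ample iff $R>0$.

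The only genuinely nontrivial input is \cref{p:PositroidHomology}, whose proof is deferred to \cref{a:positroidhomology}; the main obstacle in this corollary itself is purely bookkeeping — making sure the index set $SE(b'_i)$ from \cref{p:PositroidHomology} matches the set $\{\ell : i\in NW(b_{\rho_{2\ell-1}})\}$ obtained by swapping the roles of the two boxes, which is just the observation that ``$b$ weakly southeast of $b'$'' and ``$b'$ weakly northwest of $b$'' are the same relation. I would state this matching explicitly as a one-line remark before assembling the computation, and otherwise the proof is a short linear-algebra argument in the free abelian group $\bigoplus_\ell \Z[D_\ell]$.
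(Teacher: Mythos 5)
Your proof is correct and follows essentially the same route as the paper: the paper's proof is the one-line observation that \cref{p:PositroidHomology} gives $[D_{(\mathbf r,\mathbf r')}]=\sum_{\ell=1}^d R_\ell[D_\ell]$, after which the Cartier, ampleness, and anticanonical criteria all follow from the structure of $\Cl(X_\lambda)$ and $\Pic(X_\lambda)$ described earlier in \cref{s:GeometrySchubert}. Your only addition is to make explicit the index-set matching $\ell\in SE(b'_i)\iff i\in NW(b_{\rho_{2\ell-1}})$, which the paper treats as immediate.
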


\begin{proof} By the formula in \cref{p:PositroidHomology} we have $[D_{(\mathbf r,\mathbf r')}]=\sum_{\ell=1}^d R_\ell[D_\ell]$ where $R_\ell$ is as in \eqref{e:Rj}. The Corollary follows.
\end{proof}

 Note that the formula in \cref{c:Gorenstein} is the special case of the one in the proof of \cref{c:Drr} where all $r_\ell$ and $r'_i$ have been set equal to $1$.
 
\begin{remark}\label{r:representative} For any fixed choice of $\mathbf r'\in\Z^{n-1}$ there is a unique representative of the form $D_{(\mathbf r,\mathbf r')}$ in each homology class of $H_{2|\lambda|-2}(X_\lambda,\Z)$. Namely, in the class $\sum_\ell m_\ell[ D_\ell]$ this is the divisor $D_{(\mathbf r,\mathbf r')}$ with 
$\mathbf r=(r_\ell)_{\ell=1}^d$ given by $r_\ell=m_\ell-\sum_{i\in NW(b_{\rho_{2\ell+1}})}r'_i$. If $\mathbf r'=0$, this recovers the usual choice of divisor $D_{(\mathbf m,\mathbf 0)}=\sum m_\ell D_\ell$ representing $\sum_\ell m_\ell[ D_\ell]$. If $\mathbf r'=(1,\dotsc,1)$ we obtain the nice representation of the anti-canonical divisor as $D_{(\mathbf 1,\mathbf 1)}$, and a general divisor class $\sum_\ell m_\ell [D_\ell]$ is  then represented by $D_{((m_1-n_1+1,\dotsc, m_d-n_d+1),\mathbf 1)}$. 
\end{remark}

Let us now restrict our attention to Cartier boundary divisors. Consider the line bundle $\mathcal O(R)$ on $X_\lambda$. The divisors of global (meromorphic) sections of $\mathcal O(R)$ with support in the boundary are in bijection with $\Z^{n-1}$ via
\[
\mathbf r'=(r'_1,\dotsc, r'_{n-1})\mapsto D_{(\mathbf r,\mathbf r')},
\] 
with $\mathbf r=(r_1,\dotsc, r_d)$ given by 
\begin{equation}\label{e:Cartier-rviaR}
r_\ell=R-\sum_{i\in NW(b_{\rho_{2\ell-1}})}r'_i. 
\end{equation}
On the other hand, we can also construct Cartier boundary divisors  by using the frozen Pl\"ucker coordinates $P_{\mu_i}$ as in \cref{p:PmuVSPositroids}. We now extend this relationship to describe more general Cartier boundary divisors in terms of the divisors $(P_{\mu_i})$.

Recall our notations for the SW rim and corner boxes of $\lambda$ and the frozen rectangles, see \cref{d:frozens} and \cref{r:R(lambda)}. The following Corollary summarises the relationship between the frozen Pl\"ucker variables and the boundary divisors.  
\cref{c:DrrVSPmu} and 
	\cref{c:RationalPmu}
will be used in the proof of \cref{t:maingen}.

\begin{corollary}\label{c:DrrVSPmu}
Consider the linear map $\varphi:\Z^{n}\to\Z^{n-1}$ defined in terms of standard bases by 
\[
\varphi(e_j):=\sum_{i\in\operatorname{add}(\mu_j)} e_i
\]
The map $\varphi$ is a surjection with kernel spanned by $\sum_{s=1}^{2d-1} (-1)^s e_{\rho_{s}}$. 

If $\varphi(m_1,\dotsc,m_n)=\mathbf r'$, then 
\begin{equation}\label{e:PmuVSPositroids}
\sum_{j=1}^n m_j(P_{\mu_j})=D_{(\mathbf r,\mathbf r')}.
\end{equation}
with $\mathbf r$  given by $r_\ell=(\sum_{j=1}^{n}m_j)-\sum_{i\in NW(b_{\rho_{2\ell-1}})}r'_i$, for all $1\le \ell\le d$
\end{corollary}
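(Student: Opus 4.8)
The strategy is to establish the three claims about $\varphi$ and the divisor identity \eqref{e:PmuVSPositroids} in sequence, reducing everything to the explicit computation of $(P_{\mu_j})$ already carried out in \cref{p:PmuVSPositroids}. First I would unwind the definition: by \cref{p:PmuVSPositroids}, for each $j$ we have $(P_{\mu_j}) = \varepsilon_j D_{\ell(j)} + \sum_{b'_i \in \operatorname{add}(\mu_j)} D'_i$, where $\varepsilon_j = 1$ and $D_{\ell(j)}$ is the relevant Schubert divisor precisely when the SE corner of $\mu_j$ is the removable box $b_{\rho_{2\ell(j)-1}}$ (i.e.\ when $j \in \Rout(\lambda)$), and $\varepsilon_j = 0$ otherwise (including $j = n$, $\mu_n = \emptyset$). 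Summing with coefficients $m_j$, the coefficient of $D'_i$ in $\sum_j m_j (P_{\mu_j})$ is exactly $\sum_{j : b'_i \in \operatorname{add}(\mu_j)} m_j$, which by definition of $\varphi$ is the $i$-th coordinate $r'_i$ of $\varphi(m_1,\dots,m_n)$. This immediately gives the $D'$-part of \eqref{e:PmuVSPositroids}. The coefficient of $D_\ell$ is $\sum_{j \in \Rout(\lambda),\, \ell(j) = \ell} m_j = m_{\rho_{2\ell-1}}$; one must then check this equals the prescribed $r_\ell = (\sum_j m_j) - \sum_{i \in NW(b_{\rho_{2\ell-1}})} r'_i$. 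This is a counting identity: $\sum_j m_j - m_{\rho_{2\ell-1}}$ should equal $\sum_{i \in NW(b_{\rho_{2\ell-1}})} r'_i = \sum_{i \in NW(b_{\rho_{2\ell-1}})}\ \sum_{j : b'_i \in \operatorname{add}(\mu_j)} m_j$, so after exchanging the order of summation it suffices to show that for each $j \neq \rho_{2\ell-1}$ (and for $j = n$), exactly one NW-border box $b'_i$ with $i \in NW(b_{\rho_{2\ell-1}})$ is addable to $\mu_j$, while for $j = \rho_{2\ell-1}$ no such box is addable (since $\mu_{\rho_{2\ell-1}} \in \Fr_0(\lambda)$ has no addable boxes in $\lambda$ at all). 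This last combinatorial fact is the analogue, with respect to the single removable box $b_{\rho_{2\ell-1}}$, of the statement underlying \cref{p:PositroidHomology}, and I expect to prove it by the same kind of case analysis on the position of the SE corner $b_j$ of $\mu_j$ relative to the diagonal through $b_{\rho_{2\ell-1}}$, using \cref{d:frozendec} (the sets $\Fr_{1,E}, \Fr_{1,S}, \Fr_2$) to keep track of how many addable boxes $\mu_j$ has and where they lie.

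Next I would treat the surjectivity of $\varphi$. Since each $e_i$ ($1 \le i \le n-1$) in the target corresponds to a NW-border box $b'_i$, and since (by \cref{def:NW}) $\frozen(b'_i)$ is an element $\mu_j$ of $\Fr(\lambda)$ for which $b'_i$ is addable, the image of $\varphi$ contains $\varphi(e_j)$, which is a sum of standard basis vectors including $e_i$. Triangularity with respect to a suitable ordering of the NW-border boxes then yields surjectivity: order the $b'_i$ so that the $j$-th frozen rectangle $\mu_j$ introduces $e_i$ with $i$ maximal among indices in $\operatorname{add}(\mu_j)$ not yet hit, and check the resulting matrix is upper-unitriangular. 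Alternatively, one can read surjectivity directly from \cref{c:DrrVSPmu}'s intended use together with the already-established correspondence between $\mathbf r'$ and Cartier-representative data in \cref{c:Drr}. For the kernel: the element $\sum_{s=1}^{2d-1}(-1)^s e_{\rho_s}$ lies in $\ker\varphi$ because the alternating sum of the addable-box indicator vectors over the rim boxes $b_{\rho_1} \prec b_{\rho_2} \prec \cdots$ — i.e.\ over the alternating sequence of outer and inner corners — telescopes: each NW-border box $b'_i$ is addable to a consecutive run of frozen rectangles $\mu_{\rho_s}$ and the signs cancel in pairs. Concretely, for fixed $i$ the set $\{s : b'_i \in \operatorname{add}(\mu_{\rho_s})\}$ is an interval in $\{1,\dots,2d-1\}$, and an alternating sum over an interval, with the endpoints of the interval forced (by the alternation of outer/inner corners, \cref{r:R(lambda)}) to be compatible, vanishes. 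Since $\varphi$ maps $\Z^n$ onto $\Z^{n-1}$, the kernel is rank $1$, hence spanned by this vector once one checks it is primitive (its nonzero entries are all $\pm 1$, so it is).

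\textbf{Main obstacle.} The genuinely delicate part is the combinatorial bookkeeping in the first step — verifying that the $D_\ell$-coefficient in $\sum_j m_j(P_{\mu_j})$ matches the asserted $r_\ell$. This requires a clean argument that, summing over all frozen rectangles $\mu_j$, the total multiplicity with which a NW-border box $b'_i$ lying weakly northwest of $b_{\rho_{2\ell-1}}$ appears as an addable box is exactly $n$ minus the number of frozen rectangles whose SE corner is that removable box (namely $1$) minus nothing else — in other words an exact "conservation" statement. I would organize this by fixing the removable box $b = b_{\rho_{2\ell-1}}$ and partitioning the index set $[n]$ according to whether $b_j$ (the SE rim box of $\mu_j$, with $\mu_n = \emptyset$ handled separately as the $(1,1)$-corner case) lies strictly NW of $b$ along the rim, equals $b$, or lies strictly SE of $b$; in each regime one counts, using \cref{d:frozendec} and \cref{fig:4cases}, precisely how many addable boxes $\mu_j$ has that lie weakly NW of $b$. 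The bijection "addable boxes of $\mu_i$ $\leftrightarrow$ rim boxes of $\lambda$ directly south or east of $b_i$" recorded at the end of \cref{d:frozendec} is the right tool here, since it converts the addable-box count into a count of rim boxes, which is manifestly a telescoping/interval count. Once this bijective reformulation is in place the identity becomes transparent and the proof is short.
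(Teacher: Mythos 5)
Your approach to the divisor identity \eqref{e:PmuVSPositroids} --- matching coefficients of $D_\ell$ and $D'_i$ on both sides and reducing to a combinatorial lemma --- is a genuine alternative to the paper's argument, but one step of it rests on a false claim, and the paper actually avoids the lemma altogether. The error: you assert that $\mu_{\rho_{2\ell-1}}\in\Fr_0(\lambda)$ ``has no addable boxes in $\lambda$ at all.'' This is not true; the index $0$ in $\Fr_0$ from \cref{d:frozendec} counts the addable boxes of $\mu_i$ lying inside $\mu_{i-1}\cup\mu_{i+1}$, not inside $\lambda$. For $\lambda=(4,4,2)$, the $\Fr_0$-rectangle $\mu_2=2\times 4$ has the addable box $(3,1)$ inside $\lambda$, which is exactly why $(P_{\ydiagram{4,4}}) = D_1 + D'_1$ in \cref{p:PmuVSPositroids} rather than just $D_1$. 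The conclusion you want (no addable box of $\mu_{\rho_{2\ell-1}}$ is weakly NW of $b_{\rho_{2\ell-1}}$) is nevertheless correct for a more elementary reason: if $\mu_{\rho_{2\ell-1}}=a\times b$, its candidate addable positions are $(a+1,1)$ and $(1,b+1)$, and neither is weakly NW of $(a,b)$. Likewise for $j\neq\rho_{2\ell-1}$ the key observation is that no rim box is strictly NW of any other rim box, so exactly one of $a_j<a$, $b_j<b$ holds, and one must then verify that the selected addable position actually lies in $\lambda$. With these corrections your lemma, and hence your proof of \eqref{e:PmuVSPositroids}, goes through, but it is more work than needed.

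The paper's proof is shorter and uses geometry instead: since $\prod_j P_{\mu_j}^{m_j}$ is a meromorphic section of $\mathcal O(R)$ with $R=\sum_j m_j$, its divisor $\sum_j m_j(P_{\mu_j})$ is linearly equivalent to $R\,D_{(\mathbf 1,\mathbf 0)}$; \cref{p:PmuVSPositroids} identifies the $D'_i$-coefficients with $r'_i=\varphi(\mathbf m)_i$, and \eqref{e:Cartier-rviaR} (the Cartier-class constraint from \cref{c:Drr}, using $\Pic(X_\lambda)\cong\Z$) then forces the $D_\ell$-coefficients to equal $R-\sum_{i\in NW(b_{\rho_{2\ell-1}})}r'_i$. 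Linear equivalence does the combinatorics for you.

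For the surjectivity of $\varphi$, the paper constructs an explicit right inverse by a recursion along the rim: set $m_{\rho_1}=0$, assign $m_i$ for non-corner rim indices and $m_n$ directly in terms of $\mathbf r'$, then solve for the remaining corner indices in order using running sums. Your ``triangularity'' sketch is not yet a proof: the matrix of $\varphi$ is $(n-1)\times n$, so a column must be dropped, and your description does not make clear which column, nor which ordering makes the remaining square matrix unitriangular (columns can have two nonzero entries, so this needs an argument). Your fallback --- reading surjectivity from the intended use of \cref{c:DrrVSPmu} together with \cref{c:Drr} --- is circular, since \cref{c:DrrVSPmu} is the statement being proved. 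Your kernel argument is correct; the telescoping is most cleanly stated by observing that for each NW-border box $b'_i$ the set $\{s:b'_i\in\add(\mu_{\rho_s})\}$ has size exactly $0$ or $2$ with consecutive $s$'s, since each row and each column of $\lambda$ contains at most one outer corner and at most one inner corner among the $b_{\rho_s}$.
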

In the proof below we also give an explicit construction of a  right inverse to $\varphi$.
\begin{proof}
It is straightforward to see that $\sum_{s=1}^{2d-1} (-1)^s e_{\rho_{s}}$ lies in the kernel of $\varphi$. Moreover it is a primitive vector. Now, given any element $\mathbf r'\in\Z^{n-1}$ we can construct an element $\mathbf m\in\Z^n$ that maps to it explicitly as follows. If $i\notin \mathcal R(\lambda)$, meaning $i$ labels neither an inner or outer corner of the rim, then  $m_i=r'_s$ where $b'_s$ is the unique box that is addable only to $\mu_i$. For example, if $b_i$ is on a horizontal part of the rim, then $b'_s$ is on the N border. If $i=n$ then we set $m_n=r'_{n-k}$ corresponding to $b'_{n-k}$ being addable only to $\mu_n$. For the first outer corner $b_{\rho_1}$, we then choose to let $m_{\rho_{1}}=0$. We then determine the $m_i$ for the remaining $i\in R(\lambda)$ in order. If $i=\rho_{2j}$ corresponding to an inner corner $b_{\rho_{2j}}$, find the addable box to $\mu_{\rho_{2j}}$ that lies along the W border and call it $b'_s$. We then set 
\[
m_{\rho_{2j}}=r'_s-\sum_{\rho_{2j-1}\le i<\rho_{2j}} m_i.
\]
Here, as we are going in order, the $m_i$ in the sum have already been expressed in terms of $\mathbf r'$.
Similarly if $i=\rho_{2j+1}$ corresponding to an outer corner $b_{\rho_{2j+1}}$, we find the addable box to $\mu_{\rho_{2j+1}}$ that lies along the N border and call it $b'_s$. Then we set
\[
m_{\rho_{2j+1}}=r'_k-\sum_{\rho_{2j}\le p<\rho_{2j+1}}m_p.
\]
This recursion constructs an $\mathbf m=(m_i)_i$ such that $\varphi(\mathbf m)=\mathbf r'$. It follows that $\varphi$ is surjective. Therefore also its kernel must have rank $1$, and be generated by its primitive element $\sum_{s=1}^{2d-1} (-1)^s e_{\rho_{s}}$.

Finally, $\sum m_j(P_{\mu_j})$ is the divisor of the meromorphic section $\prod P_{\mu_j}^{m_j}$ of  $\mathcal O(R)$ where $R=\sum_{j=1}^n m_j$. The identity \eqref{e:PmuVSPositroids} follows from \cref{c:Drr} and \cref{p:PmuVSPositroids}, see also \eqref{e:Cartier-rviaR}.
\end{proof}

\begin{corollary}\label{c:RationalPmu}
Consider the linear map $\tilde\varphi$ defined by restriction of $\varphi$ to $M=\{\mathbf m\in \Z^{n}\mid \sum_{j=1}^n m_j=0\}$. 
The map $\tilde\varphi$ is an isomorphism. If $\tilde\varphi(\mathbf m)=\mathbf r'$ then the rational function $f=\prod_{j=1}^n P_{\mu_j}^{m_j}$ has divisor $(f)=D_{(\mathbf r,\mathbf r')}$ with $\mathbf r$ given by $r_\ell=-\sum_{i\in NW(b_{\rho_{2\ell-1}})}r'_i$.
\end{corollary}

\begin{proof} For any $\mathbf r'\in\Z^{n-1}$ there is an $\mathbf m\in\Z^n$ with $\varphi(\mathbf m)=\mathbf r'$, by \cref{c:DrrVSPmu}. Let $m:=\sum_{j=1}^n m_j$ and consider $\mathbf k:=\sum_{k=1}^{2d-1} (-1)^k e_{\rho_{k}}$, the generator of the kernel of $\varphi$. The coordinates of $\mathbf k$ sum to $-1$ since there is one more outer corner than there are inner corners. It follows that $\mathbf m + m \mathbf k$ lies in $M$ and is the unique preimage of $\mathbf r'$ in $M$. Therefore $\tilde\varphi$ is a bijection. The remainder is a restatement of \cref{c:DrrVSPmu} for the case where $\sum_{j=1}^{n} m_j=0$. 
\end{proof}

\section{The Newton-Okounkov body of a Schubert variety}\label{sec:NO}

In this section we define the {\it Newton-Okounkov  body} $\NO_{G}^{\lambda}(D)$
 associated to an ample divisor in $X_{\lambda}$, 
 along with a choice of transcendence basis $\TBG$ of $\C(X_{\lambda})$, 
see \cref{Xseed}.
 The theory of Newton-Okounkov bodies was  developed  in 
\cite{KavehKhovanskii,KK2, LazarsfeldMustata, Anderson},  building on 
\cite{Ok96, Okounkov:symplectic,Ok03}. 
 A key property of a Newton-Okounkov  body associated to a divisor $D$ is that its Euclidean volume encodes the volume of $D$, i.e.\ the asymptotics of $\dim(H^0(\X,\mathcal O(rD)))$ as $r\to\infty$.

Fix a  labeled $\mathcal{X}$-seed $\Sigma^{\mathcal X}_G$  for $X_{\lambda}$.
To define the Newton-Okounkov  body $\NO^{\lambda}_G(D)$  we first construct a valuation $\val_G$ on $\C(X_{\lambda})$ from the transcendence basis $\TBG$.

\begin{defn}[The valuation $\val_G$] \label{de:val}
Given a general 
	$\mathcal{X}$-seed  $\Sigma^{\mathcal X}_G$ for $X_{\lambda}$,
we fix a total order $<$ on the parameters $x_{\mu} \in \TBG$,
where $\mathcal{P}_G$ is the index set for the parameters.
	This order extends
 to a term order on monomials in the parameters $\TBG$
which is lexicographic with respect to $<$. For example if $x_\mu<x_\nu$ then $x_{\mu}^{a_1} x_{\nu}^{a_2}<x_{\mu}^{b_1} x_{\nu}^{b_2}
$ if either $a_1<b_1$, or if $a_1=b_1$ and $a_2<b_2$.
We use the multidegree of the lowest degree summand to define a valuation
\begin{equation} \label{eq:valuation}
\val_{G}:\C(X_{\lambda})\setminus\{0\}\to \Z^{{\mathcal P}_G}.
\end{equation}
Explicitly, let $f$ be  a polynomial in the  Pl\"ucker coordinates
for $X_{\lambda}$. We use  \cref{thm:Laurent}
to write $f$ uniquely as a Laurent polynomial in $\TBG$.
We then choose the  lexicographically minimal term $\prod_{\mu\in{\mathcal P}_G}x_\mu^{a_\mu}$ and define $\val_G(f)$ to be the associated exponent vector $(a_\mu)_\mu\in \Z^{{\mathcal P}_G}$.
In general for
$(f/g) \in \C(X_{\lambda}) \setminus\{0\}$ (here $f,g$ are polynomials
in the Pl\"ucker coordinates), the valuation is defined by
$\val_G(f/g) = \val_G(f) - \val_G(g)$.
Note however
        that we will only  be applying $\val_G$ to functions whose $\mathcal X$-cluster expansions are
        Laurent.
\end{defn}

\begin{defn}[The Newton-Okounkov body $\NO^{\lambda}_G(D)$]\label{def:NObody}
Let $D\subset X_{\lambda}$ be a divisor in the complement of 
$\openX_{\lambda}$, that is we have $D=D_{(\mathbf{r,r'})}=\sum_{\ell=1}^d r_\ell D_\ell
+\sum_{i=1}^{n-1} r'_i D'_i$. 
Denote by $L_{rD}$, the subspace of $\C(X_{\lambda})$ given by
\[
L_{rD}:=H^0(X_{\lambda},\mathcal O(rD)).
\]
By abuse of notation we write $\val_G(L)$ for $\val_G(L \setminus \{0\})$. We define the {\it Newton-Okounkov  body} associated to $\val_G$ and the divisor $D$ by
\begin{equation}\label{e:NOviaNOGr}
\NO^{\lambda}_G(D)=
\overline{\operatorname{ConvexHull}
        \left(\bigcup_{r=1}^{\infty} \frac{1}{r}
\val_G(L_{rD})\right)}.
\end{equation}
\end{defn}

The following result will be useful for the proof of \cref{thm:rectanglesproof}.
\begin{theorem}\cite[Corollary 3.2]{KavehKhovanskii}\label{t:KK}
The dimension 
$\dim \Delta_G^{\lambda}(D)$
of the Newton-Okounkov body 
equals the dimension $|\lambda|$ of $X_{\lambda}$,
and 
the volume
$\Vol(\Delta_G^{\lambda}(D))$ of the Newton-Okounkov body 
equals
$\frac{1}{|\lambda|!}$ times 
the degree of $X_{\lambda}$ in its Pl\"ucker embedding.
\end{theorem}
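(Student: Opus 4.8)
The statement to prove is Theorem~\ref{t:KK}, attributed to \cite[Corollary 3.2]{KavehKhovanskii}. Since this is explicitly cited as a known result, my ``proof'' plan is really a plan to \emph{apply} the Kaveh--Khovanskii machinery to our situation, checking the hypotheses carefully. The plan is the following. First I would recall the general setup of Newton--Okounkov bodies attached to a graded linear series: given the section ring $R(X_\lambda, \mathcal{O}(D)) = \bigoplus_{r \geq 0} H^0(X_\lambda, \mathcal{O}(rD))$ together with a valuation $\val_G$ on the function field $\C(X_\lambda)$ of rank equal to $\dim X_\lambda = |\lambda|$, the associated Newton--Okounkov body $\Delta_G^\lambda(D)$ has the property (this is the content of \cite[Cor.~3.2]{KavehKhovanskii}, or the Okounkov/Lazarsfeld--Mustata volume formula) that
\[
\dim \Delta_G^\lambda(D) = \kappa(D) \quad\text{and}\quad \Vol(\Delta_G^\lambda(D)) = \frac{\vol(D)}{|\lambda|!},
\]
where $\kappa(D)$ is the Iitaka dimension and $\vol(D) = \limsup_{r\to\infty} \frac{\dim H^0(X_\lambda, \mathcal{O}(rD))}{r^{|\lambda|}/|\lambda|!}$ is the volume of the divisor.

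The two things that must be checked to invoke this are: (i) that $\val_G$ is a genuine $\Z^{|\lambda|}$-valued valuation on $\C(X_\lambda)$ with one-dimensional leaves (i.e. the associated graded is a domain and the value semigroup generates $\Z^{|\lambda|}$ as a group), which follows from the fact that $\val_G$ is built from a cluster torus chart $\mathbb{T}_G \hookrightarrow X_\lambda^\circ \subset X_\lambda$ whose coordinate ring is a Laurent polynomial ring in $|\lambda| = \dim X_\lambda$ variables — here I would cite the Laurent phenomenon (Theorem~\ref{thm:Laurent}) which guarantees every Pl\"ucker coordinate, hence every element of $L_{rD}$, expands as a Laurent polynomial in the $x_\mu$, so $\val_G$ is well-defined on the relevant subspaces — and from the fact that a term order on monomials in $|\lambda|$ indeterminates yields a valuation of maximal rank with one-dimensional leaves; and (ii) that $D$ is big, so that $\kappa(D) = \dim X_\lambda$. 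For our case of interest in Theorem~\ref{t:mainIntro}, $D = D_1 + \dots + D_d$ is precisely the ample divisor associated to the Pl\"ucker embedding (item (4) of Section~\ref{s:GeometrySchubert}), so in particular $D$ is ample, hence big, and $\vol(D) = (D^{|\lambda|}) = \deg(X_\lambda)$ where the degree is taken in the Pl\"ucker embedding. This gives exactly $\dim \Delta_G^\lambda(D) = |\lambda|$ and $\Vol(\Delta_G^\lambda(D)) = \frac{1}{|\lambda|!}\deg X_\lambda$.

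So the concrete plan is: (a) state that $\val_G$ arising from a cluster torus chart together with a lexicographic term order is a valuation on $\C(X_\lambda)$ with value group $\Z^{\mathcal{P}_G} \cong \Z^{|\lambda|}$ and one-dimensional leaves, invoking Theorem~\ref{thm:Laurent} to ensure the cluster expansions of all relevant functions are Laurent so that $\val_G$ makes sense on $\bigcup_r L_{rD}$; (b) observe that the Schubert variety $X_\lambda$ is normal and projectively normal in its Pl\"ucker embedding (it is Cohen--Macaulay and normal, cited in the introduction via \cite{Hochster:Grass,Laksov:Grass,Musili:Grass,RamRam:Schubert,DeCLak:Schubert}), so that $R(X_\lambda, \mathcal{O}(D))$ really is the homogeneous coordinate ring and $\dim H^0(X_\lambda, \mathcal{O}(rD))$ is the Hilbert function, whose leading term is $\frac{\deg X_\lambda}{|\lambda|!} r^{|\lambda|}$; (c) apply \cite[Corollary 3.2]{KavehKhovanskii} (equivalently the Lazarsfeld--Mustata theorem) to conclude the dimension and volume statements. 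The main obstacle — though it is minor given that the key input is a cited theorem — is verifying hypothesis (i) carefully: one must confirm that the image of the cluster chart $\mathbb{T}_G$ is dense in $X_\lambda$ (true by Theorem~\ref{1network_param}, since $X_\lambda^\circ$ is dense in $X_\lambda$) and that the valuation has full rank $|\lambda|$ with finitely generated-enough value semigroup; the rank is immediate because $\mathbb{T}_G \cong (\C^*)^{|\lambda|}$ and the term order is lexicographic, and finite generation of the Okounkov semigroup is not actually needed for the volume/dimension formula, only the convex-geometric asymptotic estimate of \cite{KavehKhovanskii, LazarsfeldMustata}, which holds in general for big divisors. Hence no serious difficulty arises beyond bookkeeping.
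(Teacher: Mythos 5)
The paper states this result purely as a citation to \cite[Corollary 3.2]{KavehKhovanskii} and does not supply its own proof, so your plan — verify the hypotheses of the Kaveh--Khovanskii theorem (full-rank valuation with one-dimensional leaves from the cluster torus and lexicographic order, bigness/ampleness of $D$, projective normality identifying $L_r$ with the degree-$r$ part of the section ring) and then invoke the cited result — is exactly the intended reading and is correct. Your check that $\val_G$ has value group $\Z^{\mathcal P_G}\cong\Z^{|\lambda|}$ with one-dimensional leaves, and that $D=D_1+\dots+D_d$ is ample so $\vol(D)=\deg X_\lambda$, is the content the paper leaves implicit.
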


\begin{rem}[Preferred divisor $D$]\label{rem:special}
There are two interesting choices for $D$ in \cref{def:NObody}, namely $D=D'_{n-k}= \{P_{\emptyset}=0\}$, and  
	$D=D_1+\dotsc +D_{d}= \{P_{\lambda}=0\}$, which are linearly equivalent and correspond to the Pl\"ucker embedding. 
	\emph{Our preferred
	choice will be $D=D_{(\mathbf{1,0})}=D_1+\dotsc +D_{d}$.} This divisor equals to $\{P_\lambda=0\}$ and is the natural generalisation of the divisor used in \cite{RW}. Let us now fix $D=D_{(\mathbf{1,0})}$ and set
	\[
L_1 :=H^0(X_{\lambda}, \mathcal{O}(D)) = \left < \frac{P_{\mu}}{P_{\lambda}} \ \vert \ \mu \subseteq \lambda \right >.
\]

Recall that the ample line bundles on $X_\lambda$ all arise by restriction from ample line bundles on the Grassmannian. Combined with \cite[Theorem 3]{RamananRamanathan} it follows that any projective embedding of the Schubert variety $X_\lambda$ is projectively normal.
	In 
	the setting of the Pl\"ucker embedding we therefore have that
 $H^0(X_{\lambda},\mathcal O(rD))$ 
is the linear subspace of $\C(X_{\lambda})$ described as follows
\begin{equation}\label{e:projnormal}
L_r:=L_{r,\, D}=\left<\frac{M}
{(P_{\lambda})^r} \ \vert \
M\in\mathcal{M}_{r}%
\right>,
\end{equation}
where $\mathcal{M}_r$ is the set of all degree $r$ monomials in the
	Pl\"ucker coordinates of $X_{\lambda}$,
	see also \cite{Lakshmibai}. 
	
We will refer to $\NO^{\lambda}_G(D)$ simply as $\NO^{\lambda}_G$ when  the choice $D=D_1+\dotsc + D_d$ is made.
\end{rem}

\begin{remark}
For simplicity of notation we will usually write $\val_G(M)$ 
for $\val_G(M/P_\lambda^r)$ in the setting of \cref{rem:special}. Thus we may write $\val_G(P_\mu)$ instead of $\val_G(P_\mu/P_\lambda)$ and talk about the valuation of a Pl\"ucker coordinate.	
\end{remark}

We consider $\Delta^\lambda_G$ to be our fundamental Newton-Okounkov polytope for the Schubert variety $X_\lambda$ with choice of cluster $G$. For a general ample boundary divisor $D=D_{(\mathbf{r,r'})}$, the Newton-Okounkov polytope $\Delta_G^\lambda(D_{(\mathbf{r,r'})})$ is obtained from $\Delta^\lambda_G$ by dilation and translation. 

\begin{lemma}\label{l:DeltaShift} Let $R\in\Z_{>0}$, and suppose  $D=D_{(\mathbf{r,r'})}$ is linearly equivalent to $R D_{(\mathbf{1,0})}$.  Then $\Delta_G^\lambda(D_{(\mathbf{r,r'})})$ is a translate of the dilation $R\Delta_G^\lambda$ of $\Delta_G^\lambda$.
\end{lemma}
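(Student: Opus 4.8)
The plan is to compare the graded linear series defining the two Newton--Okounkov bodies directly. Since $D = D_{(\mathbf r,\mathbf r')}$ is linearly equivalent to $R\, D_{(\mathbf 1,\mathbf 0)} = R\,(D_1+\dots+D_d)$, by \cref{c:Drr} we have $R_1 = \dots = R_d = R$ where $R_\ell = r_\ell + \sum_{i\in NW(b_{\rho_{2\ell-1}})} r'_i$. Linear equivalence of $D$ and $R D_{(\mathbf 1,\mathbf 0)}$ means there is a rational function $f \in \C(X_\lambda)^\times$ with $(f) = D - R D_{(\mathbf 1,\mathbf 0)}$; by \cref{c:RationalPmu} (applied to $\mathbf r' $ and the corresponding element of $M$) we may take $f = \prod_{j=1}^n P_{\mu_j}^{m_j}$ for an explicit integer vector $\mathbf m$ with $\sum_j m_j = 0$, so that multiplication by $f^r$ gives an isomorphism of vector spaces
\[
L_{rD} = H^0(X_\lambda, \mathcal O(rD)) \;\xrightarrow{\ \cdot f^{r}\ }\; H^0(X_\lambda, \mathcal O(rR\, D_{(\mathbf 1,\mathbf 0)})) = L_{rR}.
\]
First I would record this isomorphism and note it is compatible with the section-ring multiplication, i.e.\ it intertwines $L_{rD}\cdot L_{sD}$ with $L_{rR}\cdot L_{sR}$.

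Next, the key point is that the valuation $\val_G$ is additive: $\val_G(gh) = \val_G(g) + \val_G(h)$ for $g,h \in \C(X_\lambda)^\times$. Hence for any $g \in L_{rD}\setminus\{0\}$ we get $\val_G(f^r g) = r\,\val_G(f) + \val_G(g)$, so
\[
\val_G(L_{rD}) = \val_G(L_{rR}) - r\,\val_G(f)
\]
as subsets of $\Z^{\mathcal P_G}$. Dividing by $r$, taking the union over $r$, taking convex hull and closure, and using that $\tfrac1r\cdot r\,\val_G(f) = \val_G(f)$ is a fixed vector independent of $r$, we obtain
\[
\NO_G^\lambda(D) = \overline{\operatorname{ConvexHull}\Big(\bigcup_r \tfrac1r \val_G(L_{rR})\Big)} - \val_G(f).
\]
It remains to identify $\overline{\operatorname{ConvexHull}\big(\bigcup_r \tfrac1r \val_G(L_{rR})\big)}$ with $R\,\Delta_G^\lambda$. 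This is the reindexing $\bigcup_r \tfrac1r \val_G(L_{rR}) = \bigcup_{r'} \tfrac{R}{r'}\val_G(L_{r'}) = R\big(\bigcup_{r'} \tfrac1{r'}\val_G(L_{r'})\big)$ — using that the sub-sequence $\{rR : r\ge 1\}$ is cofinal in $\{r' : r'\ge 1\}$ and that the graded pieces $L_{r'}$ for $R \nmid r'$ only enlarge the set we take convex hull of within the same cone, so passing to the cofinal subsequence does not change the closed convex hull (alternatively invoke projective normality from \cref{rem:special} so that $L_{rR} = (L_R)^{\otimes r}$-image and $L_R \supseteq$ products generate, making the Okounkov body of the Veronese subalgebra exactly $R$ times the original). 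Scaling commutes with convex hull and closure, giving $R\,\Delta_G^\lambda$.

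Putting the two steps together yields $\NO_G^\lambda(D_{(\mathbf r,\mathbf r')}) = R\,\Delta_G^\lambda - \val_G(f)$, i.e.\ a translate of the dilation $R\,\Delta_G^\lambda$, as claimed. The main obstacle I anticipate is the bookkeeping in the last step: one must be careful that $\val_G$ is genuinely additive on the relevant functions (the Pl\"ucker monomials and $f$ all have Laurent $\mathcal X$-cluster expansions by \cref{thm:Laurent}, so $\val_G$ is well-defined and multiplicative on them), and that passing to the Veronese-type subseries $\bigoplus_r L_{rR}$ really does scale the Okounkov body by exactly $R$ — this uses projective normality of $X_\lambda$ in its Pl\"ucker embedding, already invoked in \cref{rem:special}, to guarantee that $\bigoplus_r L_{rR}$ is generated in degree one as a subalgebra and hence has Okounkov body $R\,\Delta_G^\lambda$.
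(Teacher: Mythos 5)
Your proposal is correct and takes essentially the same route as the paper: choose a rational function $f$ with $(f)=D_{(\mathbf{r,r'})}-RD_{(\mathbf{1,0})}$, use multiplication by a power of $f$ to identify the graded pieces of the two linear series, apply additivity of $\val_G$ to see this shifts valuations by a fixed vector, and then identify $\Delta_G^\lambda(RD_{(\mathbf{1,0})})$ with $R\Delta_G^\lambda$. The paper is terser and leaves the last rescaling step implicit, which is precisely the point you flesh out (correctly) via cofinality/projective normality.
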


\begin{proof} This is a straightforward consequence of our choice of conventions. Since $D_{(\mathbf{r,r'})}$ is linearly equivalent to $R D_{(\mathbf{1,0})}$ we have a rational function $f$ on $X_\lambda$ with divisor $(f)=D_{(\mathbf{r,r'})}-RD_{(\mathbf{1,0})}$ and for any $m\in\Z_{>0}$ an isomorphism
\[
\begin{array}{ccc}
H^0(X_\lambda,\mathcal O(mRD_{(\mathbf{1,0})}))&\longrightarrow &H^0(X_\lambda,\mathcal O(mD_{(\mathbf{r,r'})})).\\
M&\mapsto & f^{-m} M
\end{array}
\]
The image of $H^0(X_\lambda,\mathcal O(mD_{(\mathbf{r,r'})}))\setminus\{0\}$ under $\frac{1}m\val_G$ is therefore the translate by $-\val_G(f)$ of the image of $H^0(X_\lambda,\mathcal O(mRD_{(\mathbf{1,0})}))\setminus\{0\}$, since  
\[\frac{1}m\val_G(f^{-m}M)=-\val_G(f)+\frac{1}{m}\val_G(M).
\]
This implies that 
$\Delta_G^\lambda(D_{(\mathbf{r,r'})})=\Delta_G(RD_{(\mathbf{1,0})})-\val_G(f)=R\Delta_G^\lambda - \val_G(f)$.
\end{proof}

Starting from the divisor $D=D_1+\dotsc+D_d$ 
we now introduce a set of lattice polytopes $\conv_G^{\lambda}(r)$ related to $\Delta^\lambda_G$.

\begin{defn}[The polytope $\conv_G^{\lambda}(r)$]
\label{d:NOrG}  For each 
$\mathcal{X}$-seed  $\Sigma^{\mathcal X}_G$ for $X_{\lambda}$
 and related valuation $\val_G$,
we define lattice polytopes $\conv_G^{\lambda}(r)$ 
in $\R^{\mathcal P_{G}}$ by
\[
	\conv_G^{\lambda}(r):=\operatorname{ConvexHull}\left(\val_G(L_r)\right),
\]
for $r\in\Z_{>0}$ and $L_r$ as in \eqref{e:projnormal}.
	When $r=1$, we also write $\conv_G^{\lambda}:=\conv_G^{\lambda}(1)$.
\end{defn}

The lattice polytope $\conv_G^{\lambda}$ (resp. $\conv_G^{\lambda}(r)$)
is what $\val_G$ associates to the 
divisor $D=D_1+\dotsc +D_d$ (resp. $rD$) directly, 
without taking account of asymptotic behaviour.
Since we have effectively fixed $D$ to be the divisor $D_1+\dotsc +D_d$
when constructing the polytopes $\Conv_G^{\lambda}(r)$, 
we don't indicate dependence on a divisor
in the notation $\Conv_G^{\lambda}(r)$.

\begin{remark}
Note that we used a total order $<$ on the parameters in order to
define $\val_G$, and different choices give slightly differing valuation maps. However
$\NO^{\lambda}_G$ and the polytopes ${\conv}_G^{\lambda}(r)$, 
will turn out not to depend on our choice of total order, and that choice will not enter into our proofs.
\end{remark}

\begin{lem}[Version of {\cite[Lemma from Section 2.2]{Ok96}}]
\label{l:okounkovlemma}
Consider $\C(X_{\lambda})$ with the valuation $\val_G$ from \cref{de:val}.  For  any finite-dimensional linear subspace $L$ of $\C(X_{\lambda})$,
 the cardinality of the image $\val_G(L)$ equals the dimension of $L$. In particular, the cardinality of the set $\val_G(L_r)$ equals
 the dimension of the vector space $L_r$ from \eqref{e:projnormal}.
\end{lem}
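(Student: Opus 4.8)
\textbf{Proof plan for \cref{l:okounkovlemma}.}

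The plan is to prove the general statement: for any valuation $v\colon K\setminus\{0\}\to (\Gamma,<)$ on a field $K$ containing $\C$ with values in a totally ordered abelian group, and any finite-dimensional $\C$-subspace $L\subseteq K$, one has $\#\, v(L\setminus\{0\}) = \dim_\C L$. Applying this to $K=\C(X_\lambda)$, $v=\val_G$, and $L=L_r$ then gives the desired conclusion, since $\val_G$ is genuinely a valuation (it satisfies $\val_G(fg)=\val_G(f)+\val_G(g)$ and $\val_G(f+g)\ge\min(\val_G(f),\val_G(g))$, with the nonarchimedean inequality strict only when the leading terms cancel).

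First I would observe that $v(L\setminus\{0\})$ is a finite set, because $\dim_\C L<\infty$ and distinct values cannot occur with the same multiplicity more than $\dim L$ times — more precisely, the next step makes finiteness automatic. The key step is the following: for each value $g\in v(L\setminus\{0\})$, set
\[
L_{\ge g} := \{f\in L\setminus\{0\} : v(f)\ge g\}\cup\{0\}, \qquad
L_{> g} := \{f\in L\setminus\{0\} : v(f)> g\}\cup\{0\}.
\]
These are $\C$-subspaces of $L$ (using the ultrametric inequality and $v(cf)=v(f)$ for $c\in\C^\times$), and I claim $\dim_\C\bigl(L_{\ge g}/L_{>g}\bigr)\le 1$. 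Indeed, if $f_1,f_2\in L_{\ge g}$ both have $v(f_i)=g$, pick $c\in\C^\times$ so that the leading coefficients match; then $v(f_1-cf_2)>g$, so $f_1\equiv cf_2\bmod L_{>g}$. Hence any two elements of $L_{\ge g}$ realizing the value $g$ are proportional modulo $L_{>g}$, giving the dimension bound, and in fact equality $=1$ since $g$ is actually attained.

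Then I would finish by a standard filtration argument: order the finitely many attained values (a priori one must first argue finiteness — but this drops out, since each "jump" of the filtration contributes exactly $1$ to $\dim L$, so there are at most $\dim L$ of them). Choosing a maximal chain $g_1 > g_2 > \cdots$ of attained values and intercalating, one gets a filtration of $L$ by the subspaces $L_{\ge g_i}$ whose successive quotients each have dimension exactly $1$, indexed precisely by the elements of $v(L\setminus\{0\})$; summing dimensions yields $\dim_\C L = \#\,v(L\setminus\{0\})$. I expect no real obstacle here — the only subtlety is being careful that finiteness of $v(L\setminus\{0\})$ is a \emph{consequence} of the one-dimensionality of the graded pieces rather than an input, and that the argument uses only that $\C$ is the ground field (so "matching leading coefficients" is possible) together with the valuation axioms. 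The specific facts that $\val_G$ is a valuation and that $L_r$ is finite-dimensional (the latter because $X_\lambda$ is projective, so $H^0(X_\lambda,\mathcal O(rD))$ is finite-dimensional) are exactly what is quoted from the surrounding setup.
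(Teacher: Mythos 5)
Your filtration argument is exactly the standard proof of the cited Okounkov lemma; the paper itself supplies no proof, only the reference to \cite{Ok96}, so writing out the details is a sensible and more self-contained choice. The construction of $L_{\ge g}$ and $L_{>g}$, the observation that these are $\C$-subspaces by the ultrametric inequality, and the counting of filtration jumps are all on target.

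However, the general statement you announce in your first paragraph is false as written, and the gap sits precisely in ``pick $c\in\C^\times$ so that the leading coefficients match.'' That step requires the \emph{residue field} of the valuation to be $\C$ (equivalently, the valuation must have one-dimensional leaves), which is strictly stronger than $K$ merely containing $\C$. Without that hypothesis the conclusion fails: let $K=\C(x,y)$, let $v$ be the order of vanishing along $\{x=0\}$ (so the residue field is $\C(y)$), and take $L=\C\cdot 1\oplus\C\cdot y\oplus\C\cdot x$. Then $\dim_\C L=3$ while $v(L\setminus\{0\})=\{0,1\}$ has only two elements, since $1$ and $y$ both have $v=0$ yet $v(1-cy)=0$ for every $c\in\C^\times$; here $\dim_\C\bigl(L_{\ge 0}/L_{>0}\bigr)=2$, so your ``$\le 1$'' bound is exactly what breaks. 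Your argument \emph{does} go through for $\val_G$, but not for the reason you give: it works because $\val_G$ is computed by expanding a function as an honest Laurent polynomial in the transcendence basis $\TBG$ and reading off the lex-minimal exponent, so each nonzero element has a leading coefficient lying in $\C^\times$ that can genuinely be scaled and cancelled --- this is precisely the residue-field-$\C$ property, built into the definition of $\val_G$. The repair is either to add the needed hypothesis (residue field $\C$, or one-dimensional leaves) to the general lemma and then verify that $\val_G$ satisfies it, or to skip the overgeneralization and run the filtration argument directly for $\val_G$ using the explicit Laurent-expansion description of leading coefficients.
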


\begin{example}\label{ex:NO}
We now take $r=1$ and $\lambda=(4,4,2)$,
and compute some vertices of the polytope ${\conv}^{\lambda}_G$ 
associated to the $\mathcal{X}$-cluster
chart $G=G^{\rect}_{\lambda}$
from  \cref{SchubertNetwork}, by 
computing the valuations
the nonzero  Pl\"ucker coordinates.  We get the lattice points
shown in Table~\ref{table:Dynkin}. 
In fact, the lattice points $\val_G(P_{ijk})$ are all distinct, and thus \cref{l:okounkovlemma} implies that we obtain from them the entire image $\val_G(L_1)$. As a consequence, ${\conv}^{\lambda}_G$ is the convex hull of these points. 
It will follow from \cref{thm:rectanglesproof}
that in this example,
$\conv^{\lambda}_G = \Delta^{\lambda}_G$.
\end{example}

\begin{center}
    \begin{table}[h]
	    \begin{tabular}{| c || p{.7cm} | p{.7cm} | p{.7cm} | p{.7cm} | p{.7cm} | p{.7cm} | p{.7cm} | p{.7cm} | p{.7cm} | p{.7cm} |}
            \hline
	     Pl\"ucker & $\ydiagram{1}$ & $\ydiagram{2}$ & $\ydiagram{3}$ & $\ydiagram{4}$ & $\ydiagram{1,1}$
		    & $\ydiagram{2,2}$ & $\ydiagram{3,3}$ & $\ydiagram{4,4}$ & $\ydiagram{1,1,1}$ & $\ydiagram{2,2,2}$ \\
                \hline
                \hline
		    $\val_G(P_{125})$ & $0$ & $0$ & $0$ & $0$ & $0$ & $0$ & $0$ & $0$ &  $0$ & $0$\\ \hline
		    $\val_G(P_{126})$ & $0$ & $0$ & $0$ & $0$ & $0$ & $0$ & $0$ & $0$ &  $0$ & $1$\\ \hline
		    $\val_G(P_{127})$ & $0$ & $0$ & $0$ & $0$ & $0$ & $0$ & $0$ & $0$ &  $1$ & $1$\\ \hline
		    $\val_G(P_{135})$ & $0$ & $0$ & $0$ & $0$ & $0$ & $0$ & $0$ & $1$  &  $0$ & $0$\\ \hline
		    $\cdots$ &  &  &  &  &  &  &  &  &   & \\ \hline
		    $\val_G(P_{467})$ & $0$ & $1$ & $1$ & $1$ & $1$ & $1$ & $2$ &$2$ &  $1$ & $2$\\ \hline
		    $\val_G(P_{567})$ & $1$ & $1$ & $1$ & $1$ & $1$ & $2$ & $2$ &$2$ &  $1$ & $2$\\ \hline
                \end{tabular}
\vspace{0.2cm}
 \caption{The valuations $\val_G(P_J)$
	    of some of the flow polynomials one obtains from 
the network chart 
	    from  \cref{SchubertNetwork}.}
                \label{table:Dynkin}
        \end{table}
\end{center}

For another example, see \cref{sec:example}.


\section{Polytopes via tropicalization and the superpotential polytope}\label{sec:trop}

In this section we briefly review how, given an 
$\mathcal{A}$-cluster variety $\mathbb X$, together with 
a universally positive Laurent polynomial 
	$h\in\C[\mathbb{X}]$ 
 and a choice of cluster $\mathcal{C}$, one 
can construct a polyhedron.  Moreover when one applies a mutation to the cluster $\mathcal{C}$,
this polyhedron transforms via tropicalized cluster mutation.  
We will then use this construction to associate a polytope to the superpotential for Schubert 
varieties.

\subsection{Polyhedra associated to positive Laurent polynomials}

\begin{definition}[Universally positive]\label{d:universallypositive}
We say that a Laurent polynomial is \emph{positive} if all of its coefficients are in $\R_{>0}$. 
	If $\mathbb{X}$ is an $\mathcal A$-cluster variety, we say that an element 
	$h\in\C[\mathbb{X}]$ 
	is  {\it universally positive}  (for the $\mathcal A$-cluster structure) if for every $\mathcal A$-cluster,
	 the expansion $\mathbf h^G$ of $h$ in that cluster is a positive Laurent polynomial. Similarly 
	$f\in\C[\mathbb{X}\times\C_{q_1}^* \times \dots \times \C_{q_d}^*]$ is called universally positive if for every cluster, its expansion $\mathbf f^G$ in that cluster adjoin $\{q_1,\dots,q_d\}$ 
	is given by a positive Laurent polynomial. 
\end{definition}

\begin{definition}[naive Tropicalisation]\label{def:Tropicalisation}
To any Laurent polynomial $\mathbf h$ in variables $X_1,\dotsc, X_N$ with coefficients in $\R_{>0}$ we associate a piecewise linear map $\Trop(\mathbf{h}):\R^N \to \R$
 called the {\it tropicalisation} of $\mathbf h$ as follows. We set $\Trop(X_i)(y_1,\dotsc, y_N)=y_i$. If $\mathbf{h_1}$ and $\mathbf{h_2}$     are two positive Laurent polynomials, and $a_1,a_2\in\R_{>0}$, then we impose the condition that
\begin{equation}\label{eq:min-etc}
\Trop(a_1\mathbf{h_1}+a_2\mathbf{h_2}) = 
\min(\Trop(\mathbf{h_1}),\Trop(\mathbf{h_2})),\text{ and }
\Trop(\mathbf{h_1}\mathbf{h_2}) = 
\Trop(\mathbf{h_1}) + \Trop(\mathbf{h_2}).
\end{equation}
This defines $\Trop(\mathbf h)$ for all positive Laurent polynomials $\mathbf h$, by induction.
\end{definition}

\begin{definition}\label{def:troppoly}
Let $\hh_1,\dots,\hh_m$ be positive Laurent polynomials in variables $X_1,\dots,X_N$, 
let $\hh=\hh_1 + \dots + \hh_m$, and choose $\rr=(r_1,\dots,r_m)\in \Z^m$.
We define an associated polyhedron 
	$\Q^\hh(\rr) \subset \R^N$ by 
	the following  inequalities in terms of variables $v = (v_1,\dots,v_N)$:
	$\Trop(\hh_i)(v) + r_i \geq 0$ for all $1 \leq i \leq m$.
\end{definition}

We are most interested in the case where 
	 $\mathbb{X}$ is an $\mathcal A$-cluster variety, and $h\in\C[\mathbb{X}]$ is a sum
	 of {\it universally positive} Laurent polynomials.

\begin{definition}\label{def:troppoly2}
 Let $\mathbb{X}$ be an $\mathcal A$-cluster variety of dimension $N$, 
 and $h\in \C[\mathbb{X}]$ an element which is given as a sum  $h_1+ \dots + h_m$,
 where each $h_i$ is a 
	 {\it universally positive} Laurent polynomial.  If $G$ indexes a cluster seed for $\mathbb{X}$,
	 we let $\hh^G$ and $\hh_1^G,\dots, \hh_m^G$ denote the Laurent polynomials
	 obtained by expressing $h$ and $h_1,\dots, h_m$ in the variables of that cluster.
	 Given positive integers $\rr = (r_1,\dots, r_m)$, 
	we then let $\Q^h_G(\rr) \subset \R^N$ be the polyhedron defined  in terms of variables $v = (v_1,\dots,v_N)$ by 
	the following  inequalities:
	$\Trop(\hh_i^G)(v) + r_i \geq 0$ for all $1 \leq i \leq m$.
        In other words, we have 
	\begin{equation}
		\Q^h_G(\rr) = \bigcap_i \{v\in \R^{N} \ \vert \ \Trop(\hh_i^G)(v) + r_i \geq 0\}.
        \end{equation}
\end{definition}

\begin{defn}[Tropicalized $\mathcal A$-cluster mutation]\label{d:PsiGG'}
Let $\mathbb{X}$ be an $\mathcal A$-cluster variety of dimension $N$.  
Suppose $\Sigma^{\mathcal A}_G$ and $\Sigma^{\mathcal A}_{G'}$ are general $\mathcal{A}$-cluster 
seeds for $\mathbb{X}$, with quivers $Q(G)$ and $Q(G')$,
which are related by a single mutation at a vertex $\nu_i$. 
Let the cluster variables for $\Sigma^{\mathcal A}_G$ be  indexed by
$\mathcal P_G=\{\nu_1,\dots, \nu_N\}$. 
  We define a map 
 $\Psi_{G,G'}:\R^{\mathcal P_G}\to \R^{\mathcal P_{G'}}$ by
	$(v_{\nu_1},v_{\nu_2}, \dots,v_{\nu_N}) \mapsto
(v_{\nu_1}, \dots,v_{\nu_{i-1}}, v_{\nu_i'},v_{\nu_{i+1}},\dotsc, v_{\nu_N})$, where
\begin{equation}\label{e:tropmut}
v_{\nu'_i} = \min(\sum_{\nu_j \to \nu_i} v_{\nu_j}, \sum_{\nu_i \to \nu_j} v_{\nu_j})\, - \, v_{\nu_i}, 
\end{equation}
and the sums are over arrows in the quiver $Q(G)$ pointing towards $\nu_i$ or away from $\nu_i$, respectively. We call $\Psi_{G,G'}$ a \emph {tropicalized $\mathcal A$-cluster mutation}.
\end{defn}

The following result from \cite{RW} was stated for the polytope associated to the superpotential for the Grassmannian.  However
 the statement and proof hold for general universally positive elements $h=h_1+ \dots + h_m$ in a cluster algebra.

\begin{proposition}[{\cite[Corollary 11.16]{RW}}]\label{prop:tropmutation}\
We use the notation of \cref{def:troppoly2}.
If the cluster seed $\Sigma^{\mathcal A}_{G'}$ is related to $\Sigma^{\mathcal A}_G$ by a single mutation 
at vertex $\nu$, then the  {tropicalized $\mathcal{A}$-cluster mutation} $\Psi_{G,G'}$ restricts to a bijection
\[
\Psi_{G,G'}: \Q^h_G(\rr)\to \Q^h_{G'}(\rr).
\]\qed
\end{proposition}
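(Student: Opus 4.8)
The statement to prove is \cref{prop:tropmutation}: if the $\mathcal{A}$-seeds $\Sigma^{\mathcal A}_G$ and $\Sigma^{\mathcal A}_{G'}$ differ by a single mutation at a vertex $\nu$, then the tropicalized mutation $\Psi_{G,G'}$ restricts to a bijection $\Q^h_G(\rr)\to\Q^h_{G'}(\rr)$. The plan is to reduce everything to the single mutation relation between the Laurent expansions $\hh_i^G$ and $\hh_i^{G'}$ of each universally positive summand $h_i$, and then to invoke the general compatibility of naive tropicalization with substitution. Since this is exactly \cite[Corollary 11.16]{RW} but stated there only for the Grassmannian superpotential, the work is to observe that the proof used no special feature of that case beyond universal positivity; so I would present it as a clean self-contained argument.

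\textbf{Step 1: $\Psi_{G,G'}$ is a piecewise-linear involution.} First I would record that $\Psi_{G,G'}$ and $\Psi_{G',G}$ are mutually inverse. The arrows of $Q(G')$ at $\nu$ are reversed relative to $Q(G)$, and the formula \eqref{e:tropmut} is symmetric under this reversal together with replacing $v_\nu$ by $v_{\nu'}$; the standard check (the tropical analogue of the fact that $\mathcal{A}$-mutation is an involution) gives $\Psi_{G',G}\circ\Psi_{G,G'}=\mathrm{id}$. Thus it suffices to show $\Psi_{G,G'}(\Q^h_G(\rr))\subseteq\Q^h_{G'}(\rr)$; the reverse inclusion follows by symmetry, and bijectivity is then automatic.

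\textbf{Step 2: tropicalization commutes with the mutation substitution.} Each $h_i$ is universally positive, so both $\hh_i^G$ and $\hh_i^{G'}$ are positive Laurent polynomials, and $\hh_i^{G'}$ is obtained from $\hh_i^G$ by the $\mathcal{A}$-cluster mutation substitution at $\nu$, namely $x_\nu \mapsto x_\nu^{-1}\bigl(\prod_{\nu_j\to\nu} x_{\nu_j} + \prod_{\nu\to\nu_j} x_{\nu_j}\bigr)$ and $x_{\nu_j}\mapsto x_{\nu_j}$ for $j\neq$ the mutated index. The key lemma is that naive tropicalization (\cref{def:Tropicalisation}) turns this substitution into precomposition with $\Psi_{G,G'}$: because $\Trop$ sends products to sums and positive sums to $\min$, one has $\Trop(\hh_i^{G})\bigl(\Psi_{G,G'}(v)\bigr)$ -- wait, more precisely $\Trop(\hh_i^{G'})(v) = \Trop(\hh_i^G)\bigl(v_{\nu_1},\dots,\widehat{v_\nu\mapsto v_{\nu'}},\dots\bigr)$ where $v_{\nu'}$ is exactly the $\min$-expression in \eqref{e:tropmut}. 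This is the routine but load-bearing computation: one checks it monomial-by-monomial using that the positive coefficients do not affect $\Trop$, so the substitution of a monomial in $x_\nu$ tropicalizes correctly. I expect this bookkeeping -- matching the exponent of $x_\nu$ in each monomial against the two-term $\min$ -- to be the main (and only real) obstacle, and it is entirely mechanical.

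\textbf{Step 3: conclude.} Given $v\in\Q^h_G(\rr)$, i.e. $\Trop(\hh_i^G)(v)+r_i\ge 0$ for all $i$, apply Step 2: for $w:=\Psi_{G,G'}(v)$ we get $\Trop(\hh_i^{G'})(w) = \Trop(\hh_i^G)(v) \ge -r_i$ for every $i$, since the components of $w$ indexed by $\nu_j$ with $j$ not the mutated index agree with those of $v$, and the $\nu'$-component of $w$ is the $\min$-expression that makes the substitution identity of Step 2 hold. Hence $w\in\Q^h_{G'}(\rr)$, giving the desired inclusion, and by Step 1 the map is a bijection $\Q^h_G(\rr)\to\Q^h_{G'}(\rr)$. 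I would remark that the argument is independent of $\rr$ and of which particular cluster variety $\mathbb{X}$ and universally positive $h=h_1+\dots+h_m$ are used, which is precisely why the $\mathrm{RW}$ proof transfers verbatim; for our application $h=W^\lambda$, with the summands $W_i'$ and $q_\ell W_\ell$ of \eqref{e:Wq3} (each universally positive by \cref{prop:univpos}) playing the role of the $h_i$.
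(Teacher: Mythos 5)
Your overall structure — involution, compatibility of $\Trop$ with the mutation substitution, conclude — is the right one, and it matches the argument behind \cite[Corollary 11.16]{RW} that the paper is invoking. But Step 2, as you describe it (``one checks it monomial-by-monomial''), does not literally work, and the place it fails is exactly where the content of the lemma lives. If a monomial of $\hh_i^G$ has a \emph{negative} exponent of $x_\nu$, say $x_\nu^{-k}m(x_\tau)$ with $k>0$, then substituting $x_\nu \mapsto x_\nu'^{-1}(A+B)$ produces $(x_\nu')^k (A+B)^{-k} m$, which is a rational function, not a Laurent monomial, and has no tropicalization in the sense of \cref{def:Tropicalisation}. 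The fact that the full sum $\hh_i^{G'}$ \emph{is} a positive Laurent polynomial rests on the (positive) Laurent phenomenon, i.e.\ on nontrivial simplification across terms; and cancellation/simplification is precisely what the naive $\Trop$ a priori need not respect. Your phrase ``the substitution of a monomial in $x_\nu$ tropicalizes correctly'' silently assumes what has to be proved.

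The fix is short and worth writing out. Let $p\geq 0$ be minimal so that $P:=x_\nu^{\,p}\,\hh_i^G$ has only nonnegative powers of $x_\nu$. Writing $A=\prod_{\nu_j\to\nu}x_{\nu_j}$, $B=\prod_{\nu\to\nu_j}x_{\nu_j}$, the change of seed gives the identity of Laurent polynomials
\[
(A+B)^{p}\;\hh_i^{G'}(x') \;=\; (x_\nu')^{p}\,P\!\left(\tfrac{A+B}{x_\nu'},\,x_\tau'\right).
\]
Both sides are built from positive Laurent polynomials by products and by substituting a positive Laurent polynomial for a variable that appears only with nonnegative exponents; in each such operation no cancellation can occur, so the support is the expected Minkowski sum/union and $\Trop$ is additive under products and turns such substitutions into precomposition with the tropicalized arguments. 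Equating tropicalizations of both sides at $v'=\Psi_{G,G'}(v)$, using $\Trop\!\left(\tfrac{A+B}{x_\nu'}\right)(v')=\min(\alpha,\beta)-v_{\nu}'=v_\nu$ with $\alpha=\sum_{\nu_j\to\nu}v_{\nu_j}$, $\beta=\sum_{\nu\to\nu_j}v_{\nu_j}$, and cancelling $p\min(\alpha,\beta)$ from both sides, you recover $\Trop(\hh_i^{G'})(v')=\Trop(\hh_i^G)(v)$. With that lemma in hand, your Steps 1 and 3 are fine as written, so the bijectivity of $\Psi_{G,G'}\colon \Q^h_G(\rr)\to \Q^h_{G'}(\rr)$ follows for any $\rr$ and any universally positive $h=h_1+\dots+h_m$, exactly as the paper asserts.
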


\subsection{The superpotential polytope}

In this section we
introduce one of the main polytopes of this paper: the superpotential polytope.

\begin{proposition}\label{prop:univpos}
The superpotential $W^{\lambda}$ from \cref{d:LG1} is a universally positive element
of the cluster algebra in the sense of \cref{def:rectseed}.
\end{proposition}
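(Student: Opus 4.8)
The plan is to prove that $W^{\lambda}$ is universally positive by reducing to a known result about the Grassmannian superpotential. Recall from \cref{thm:rectangles} that $\C[\widehat{\check X_\lambda^\circ}] = \mathcal{A}(G^\lambda_{\rect})$, and from \cref{lem:restrictedseed} that $G^\lambda_{\rect}$ is a restricted seed obtained from the rectangles seed $G^{\Box}_{\rect}$ for the full Grassmannian $Gr_k(\C^n)$, where $\Box$ denotes the full $(n-k)\times k$ rectangle. The strategy is: first express $W^\lambda$ as a restriction (in a suitable sense) of the Marsh--Rietsch superpotential $W = W^{\Box}$ for the Grassmannian, then invoke the already-known universal positivity of the summands of $W$ for $Gr_k(\C^n)$, and finally check that restriction to a restricted seed preserves positivity of Laurent expansions.

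\medskip

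\textbf{Step 1: relate $W^\lambda$ to the Grassmannian superpotential.} Using the canonical formula \eqref{e:Wq1} (equivalently the rim-indexed formula \eqref{e:Wq2} or \cref{prop:super2}), each summand $W'_i$ and each $q_\ell W_\ell$ is a ratio $p_{\nu}/p_{\mu_i}$ (or $(p_{\nu}+p_{\nu'})/p_{\mu_i}$) of Pl\"ucker coordinates of $\check X_\lambda$, all of which are restrictions of Pl\"ucker coordinates of $Gr_k(\C^n)$ to $\check X_\lambda$. In fact, by comparing \cref{prop:super2} with the corresponding expansion of the Grassmannian superpotential in its rectangles cluster (the ``head over tails'' quiver $Q_{\Box}$), each summand of $W^\lambda_{\rect}$ is literally the restriction to the subquiver $Q_\lambda \subset Q_{\Box}$ of a summand of $W^{\Box}_{\rect}$, with the Pl\"ucker coordinates indexed by rectangles not fitting inside $\lambda$ set to appropriate values (this is precisely the content of passing to a restricted seed, \cref{def:restricted}). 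So it suffices to show: (a) each summand of $W^{\Box}$ is universally positive in $\C[\widehat{Gr_k(\C^n)}]$ adjoin the $q$'s, and (b) universal positivity is inherited by restricted seeds.

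\medskip

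\textbf{Step 2: universal positivity of the summands of $W^{\Box}$.} The Grassmannian superpotential $W^{\Box} = W_{MR}$ is the Marsh--Rietsch superpotential \cite{MR-Adv}, and its individual summands are cluster monomials times frozen variables, hence are universally positive: indeed each summand has the form $q_\ell \cdot p_{\nu}/p_{\mu}$ or $p_\nu/p_\mu$ with $p_\mu$ a frozen variable, and by the Laurent phenomenon together with positivity of the cluster algebra $\C[\widehat{Gr_k(\C^n)}]$ (see \cite{RW}), the expansion of any Pl\"ucker coordinate in any cluster is a positive Laurent polynomial; dividing by a frozen variable keeps the expansion Laurent and positive. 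Alternatively one cites directly \cite[Proposition]{RW} where exactly this positivity statement for the Grassmannian superpotential summands is established. I would state this as a lemma and cite the relevant statement in \cite{RW}.

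\medskip

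\textbf{Step 3: positivity is inherited under restriction.} By \cref{l:freezingcommutesmutation}, passing to a restricted seed commutes with mutation, and every seed for $\check X_\lambda$ arises (\cref{lem:restrictedseed}) by first mutating $G^\lambda_{\rect}$, equivalently by restricting a seed for $Gr_k(\C^n)$ obtained by the same mutation sequence from $G^{\Box}_{\rect}$. If $h$ is a summand of $W^\lambda$ and $\tilde h$ the corresponding summand of $W^{\Box}$, then for any seed $G$ of $\check X_\lambda$ with lift $G'$ for the Grassmannian, the Laurent expansion $h^G$ is obtained from $\tilde h^{G'}$ by the algebra homomorphism $\C[\text{cluster variables of }G'] \to \C[\text{cluster variables of }G]$ sending each frozen variable outside $\lambda$ to the appropriate monomial (for the rectangles seed, the ``boundary'' rectangles) and each remaining variable to itself — this substitution sends positive Laurent polynomials to positive Laurent polynomials. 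Hence $h^G$ is positive. Since each summand of $W^\lambda$ (including the two-term numerators over $\Fr_2(\lambda)$, which we treat as a sum of two universally positive terms) is universally positive, $W^\lambda$ is a sum of universally positive Laurent polynomials, which is what \cref{def:universallypositive} requires.

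\medskip

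\textbf{Main obstacle.} The delicate point is Step 3: one must be careful that the substitution realizing ``restriction of seed'' really is a ring homomorphism compatible with the cluster structures, i.e. that the expansion of a restricted-seed cluster variable equals the image under this substitution of the expansion of the ambient cluster variable. This requires matching up \cref{lem:restrictedseed}'s statement that mutation sequences lift, with the concrete fact that Pl\"ucker coordinates of $\check X_\lambda$ are restrictions of those of $Gr_k(\C^n)$; in particular one needs that the frozen variables being specialized genuinely do not appear in the Laurent expansions of the relevant cluster variables after the specialization (equivalently, that one is restricting along a coordinate subtorus, not a more complicated subvariety). This is handled by \cite[Definition 4.2.6, Lemma 4.2.2, Lemma 4.2.5]{FWZ2} as quoted, but spelling out the compatibility cleanly is the part that needs genuine care rather than routine computation.
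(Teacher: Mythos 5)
There is a genuine gap, and in fact your Step 1 already contains an error that the rest of the argument cannot recover from.

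In Step 1 you claim that each summand of $W^{\lambda}_{\rect}$ is literally the restriction to the subquiver $Q_\lambda \subset Q_{\Box}$ of a summand of the Grassmannian superpotential $W^{\Box}_{\rect}$. This is false for the quantum terms. The starred vertices of $Q_\lambda$ are attached at the $d$ outer corners of $\lambda$, which are generally not the single outer corner of $\Box$. Concretely, for $\lambda=(4,4,2)$ in the $3\times 4$ bounding rectangle, $W^\lambda$ contains the term $q_1\, p_{(3)}/p_{(4,4)}$ coming from the arrow $v(2,4)\to q_1$; the only arrow out of $v(2,4)$ with the same source in $Q_{\Box}$ is $v(2,4)\to v(3,4)$, whose label is $p_{(4,4,4)}\,p_{(3)}/(p_{(3,3)}\,p_{(4,4)})$. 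The Pl\"ucker coordinate $p_{(4,4,4)}$ vanishes identically on $\check X_\lambda$, and $p_{(3,3)}$ is a genuine Schubert cluster variable, so there is no specialization of the ``rectangles outside $\lambda$'' that turns the Grassmannian term into the Schubert one. The superpotential $W^\lambda$ is simply not a restriction of $W^{\Box}$; the term structure is different.

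The deeper issue is in Step 3, and it is not merely a matter of ``spelling out compatibility cleanly.'' The positive Laurent phenomenon gives positivity for expansions of \emph{cluster variables}, not of arbitrary elements of the cluster algebra. In the Grassmannian every Pl\"ucker coordinate is a cluster variable, which is why Step 2 is unproblematic. But for a Schubert variety this fails: the Pl\"ucker coordinates $p_{{}_{\square}(i\times m)}$ and $p_{(h\times j)^{\square}}$ appearing in the numerators of $W^\lambda$ are not automatically cluster variables of $\mathcal{A}(G^\lambda_{\rect})$, and establishing that they are is precisely the non-trivial content one has to supply. (The paper flags this explicitly in a footnote at this point in the argument.) Your ``substitution sends positive to positive'' step silently presupposes that the Laurent expansion of $p_\nu$ in a Schubert seed $G$ agrees with its expansion in the lifted Grassmannian seed $G'$; but that agreement holds only once one knows that $p_\nu$ is reachable from $G^\lambda_{\rect}$ by a mutation sequence staying inside the restricted vertex set $I$, which is exactly the cluster-variable claim. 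So the reduction begs the question. The paper's proof instead works entirely inside $\mathcal{A}(G^\lambda_{\rect})$: it observes that the denominators are frozen, exhibits explicit mutation sequences in $G^\lambda_{\rect}$ (mutating successively at $p_{i\times 1}, p_{i\times 2},\dots$) producing each numerator Pl\"ucker coordinate via the three-term Pl\"ucker/exchange relations, and then invokes Laurent positivity directly for $\mathcal{A}(G^\lambda_{\rect})$ with no detour through the Grassmannian. Once you have found those mutation sequences you no longer need the Grassmannian at all, and without them the reduction does not close.
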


\begin{proof}
Clearly the denominators which appear in \cref{d:LG1} are all frozen variables
for the cluster algebra.  So it suffices to show that every Pl\"ucker coordinate
appearing in the numerator of each $W_i$ and $W'_i$ in \eqref{e:Wq1}
is a cluster variable.\footnote{In the case of the Grassmannian, every 
Pl\"ucker coordinate is a cluster variable, but this is not true in the 
case of Schubert varieties, so we do need to prove that the Pl\"ucker coordinates
appearing in \eqref{e:Wq1} are cluster variables!}
If we can do this,
then by the positivity of the Laurent phenomenon \cite{LeeSchiffler, GHKK}, 
each $W_i$ and $W'_i$ is an example of a universally positive element of $\C[\openSchub]$.

We use the initial seed 
	$\Sigma_{\lambda}^{\rect}$
	for the $\mathcal{A}$-cluster structure 
on 
	$\dualSchub^{\circ}$
	from \cref{thm:rectangles}.
Note that the expression for the superpotential in \cref{prop:super2}
is a Laurent polynomial in Pl\"ucker coordinates indexed by 
rectangles; so it is a Laurent polynomial in the cluster
variables from the seed of \cref{thm:rectangles}.

Now we use \cref{lem:Plucker}, which shows how to combine
the summands from \cref{prop:super2} to obtain the 
Laurent monomials from \eqref{e:Wq1} (or equivalently
\eqref{e:Wq2}) in the definition  of the 
superpotential.  We claim that each 
three-term Pl\"ucker relation \eqref{eq:Plucker1} is
in fact a cluster relation.
Indeed, if we start from the initial seed
	$\Sigma_{\lambda}^{\rect}$
	and mutate at  $p_{i \times 1}$, we get
	$$p_{i \times 1} p_{_{\square}(i \times 2)} = 
	p_{(i-1)\times 1} p_{(i+1)\times 2} + 
	p_{i \times 2} p_{(i+1)\times 1},$$
	which is equivalent to \eqref{eq:Plucker1} for $m=1$.  
	If we continue by mutating at 
	$p_{i \times 2}$, then $p_{i \times 3}$, etc then 
	the cluster relations we obtain will be precisely
	the relations from \eqref{eq:Plucker1}.  This shows
	by induction that each Pl\"ucker coordinate 
	$p_{_{\square}(i \times m)}$ is a cluster variable.
	A similar argument shows that 
	$p_{(\ell \times j)^{\square}}$ is a cluster variable.
\end{proof}

\begin{definition}\label{d:B-modelPolytope}
Let $\lambda$ be a Young diagram contained in an $(n-k)\times k$ rectangle,
with $d$ the number of removable boxes in $\lambda$.
The Schubert variety  
	$\dualSchub$ has dimension $N$, where 
$N$ is the number of boxes of $\lambda$, and each cluster
for 
	$\dualSchub^{\circ}$
	contains $N+1$ cluster variables,
including $p_{\emptyset}=1$ (recall \cref{emptyis1}).
Recall that the superpotential $W^{\lambda}$ for 
	$\dualSchub^{\circ}$
has $d+(n-1)$ summands, i.e. 
$W^{\lambda} = W_1 + \dots + W_d + W'_1 + \dots + W'_{n-1}$, and 
let $\rr=(r_1,\dots,r_d)\in \mathbb{N}^d$
and $\rr'=(r'_1,\dots,r'_{n-1})\in \mathbb{N}^{n-1}$.
Given a seed $\Sigma_G^{\mathcal{A}}$ for 
	$\dualSchub^{\circ}$
we use \cref{def:troppoly} to define the \emph{superpotential polytope}
\[
	\Q_G^\lambda(\rr,\rr'):= \Q_G^{W^{\lambda}}(\rr, \rr').
\]
	
Concretely, if we let the cluster variables (besides $p_{\emptyset}=1$)
for $\Sigma_G^{\mathcal{A}}$ be indexed by 
$\{\nu_1,\dots,\nu_N\}$, then 
$\Q_G^\lambda(\rr,\rr')$ is the polyhedron defined by 
the following  inequalities in terms of variables $v = (v_{\nu_1},\dots,v_{\nu_N})$:
	\begin{align}
		\Trop(\mathbf{W}_\ell^G)(v) + r_\ell & \geq 0 \text{ for all } 1 \leq \ell \leq d, \\
		\Trop({{\mathbf{W}_i}'}^G)(v) + r'_i & \geq 0 \text{ for all }1 \leq i \leq n-1. 
	\end{align}
	We also let
	\begin{equation} \label{eq:mainsuperpolytope}
		\Q_G^{\lambda}:=\Q_G^{\lambda}(\mathbf{1},\mathbf{0})
	\end{equation}
	denote the superpotential polytope in the case that 
	$r_1 = \dots = r_d=1$, and $r'_1 = \dots = r'_{n-1}=0$.
\end{definition}

\begin{example}\label{ex:21}
Let $\lambda=(2,1)$
and let $G = G_{\rect}$.
The superpotential is
\begin{equation}\label{super:21}
W_{\rect}^{\lambda} = q_1 \frac{p_{\emptyset}}{p_{\ydiagram{2}}}
+q_2 \frac{p_{\emptyset}}{p_{\ydiagram{1,1}}} + \frac{p_{\ydiagram{2}}}{p_{\ydiagram{1}}} + 
\frac{p_{\ydiagram{1,1}}}{p_{\ydiagram{1}}}+
\frac{p_{\ydiagram{1}}}{p_{\emptyset}},
\end{equation}
where $p_{\emptyset}=1$ (recall \cref{emptyis1}).
We obtain the superpotential polytope 
	$\Gamma_{\rect}^{\lambda} :=
	\Gamma_{G_{\rect}}^{\lambda}$
from \eqref{eq:mainsuperpolytope}
by 
tropicalizing.
That is, $\Gamma_{\rect}^{\lambda}$ is cut out by the inequalities
$$1 -y_{\ydiagram{2}} \geq 0, \hspace{.4cm}
1-y_{\ydiagram{1,1}} \geq 0, \hspace{.4cm}
y_{\ydiagram{2}}-y_{\ydiagram{1}} \geq 0, \hspace{.4cm}
y_{\ydiagram{1,1}}-y_{\ydiagram{1}} \geq 0, \hspace{.4cm}
y_{\ydiagram{1}} \geq 0.$$
\end{example}

Applying \cref{prop:tropmutation} to the superpotential polytope, we obtain the following result.
\begin{corollary} \label{c:GammaMutation}
If  $\Sigma^{\mathcal A}_{G'}$ is related to $\check\Sigma^{\mathcal A}_G$ by a cluster mutation 
	at vertex $\nu$, then we have that the {tropicalized $\mathcal{A}$-cluster mutation} $\Psi_{G,G'}$ restricts to a bijection
\[
\Psi_{G,G'}: 
	\Q_G^\lambda(\rr,\rr') \to 
	\Q_{G'}^\lambda(\rr,\rr'). 
\]
\end{corollary}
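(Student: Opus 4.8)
The plan is to deduce \cref{c:GammaMutation} directly from the general tropical mutation result, \cref{prop:tropmutation}, which was stated for an arbitrary universally positive element $h = h_1 + \dots + h_m$ of an $\mathcal A$-cluster algebra. The only thing that needs to be checked is that the superpotential $W^\lambda$ fits the hypotheses of that proposition, and then the superpotential polytope $\Gamma^\lambda_G(\rr,\rr')$ is \emph{by definition} (\cref{d:B-modelPolytope}) the polyhedron $\Q^{W^\lambda}_G(\rr,\rr')$ obtained from \cref{def:troppoly2}, so the corollary is an instance of \cref{prop:tropmutation}.

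Concretely, first I would recall that $W^\lambda = W_1 + \dots + W_d + W'_1 + \dots + W'_{n-1}$ as in \eqref{e:Wq3}, a sum of $m = d+(n-1)$ summands, each of which is a universally positive element of the $\mathcal A$-cluster structure on $\C[\check X^\circ_\lambda]$: this is precisely the content of \cref{prop:univpos}, whose proof shows that each numerator Pl\"ucker coordinate $p_{\square(i\times m)}$, $p_{(h\times j)^\square}$ appearing in a summand is a cluster variable, and each denominator is a frozen variable, so by the positivity of the Laurent phenomenon each $W_i$, $W'_j$ expands as a positive Laurent polynomial in every $\mathcal A$-cluster (adjoining the $q_\ell$). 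Thus the tuple $(W_1,\dots,W_d,W'_1,\dots,W'_{n-1})$ satisfies the hypotheses of \cref{def:troppoly2} and hence of \cref{prop:tropmutation}.

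Next, given that $\Sigma^{\mathcal A}_{G'}$ is obtained from $\Sigma^{\mathcal A}_G$ by a single mutation at a vertex $\nu$, I would simply invoke \cref{prop:tropmutation} with $h = W^\lambda$, $h_\ell = W_\ell$ for $1\le \ell\le d$ and $h_{d+i} = W'_i$ for $1\le i\le n-1$, and with the parameter vector $(r_1,\dots,r_d,r'_1,\dots,r'_{n-1}) = (\rr,\rr')$. The proposition then yields that the tropicalized $\mathcal A$-cluster mutation $\Psi_{G,G'}$ of \cref{d:PsiGG'} restricts to a bijection from $\Q^{W^\lambda}_G(\rr,\rr')$ to $\Q^{W^\lambda}_{G'}(\rr,\rr')$, which upon unwinding the notation $\Gamma^\lambda_G(\rr,\rr') = \Q^{W^\lambda}_G(\rr,\rr')$ is exactly the claimed bijection $\Psi_{G,G'}\colon \Gamma^\lambda_G(\rr,\rr') \to \Gamma^\lambda_{G'}(\rr,\rr')$.

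There is essentially no obstacle here — this is a formal corollary, the one subtlety being to make sure the $q$-parameters are handled correctly: the summands $q_\ell W_\ell$ involve the extra variables $q_1,\dots,q_d$, but since $\Q^h_G$ only involves tropicalizing the Laurent polynomials $\mathbf W^G_\ell, {\mathbf W'_i}^G$ in the \emph{cluster} variables (with the $r_\ell$ absorbing the contribution of $\Trop(q_\ell)$ when one works in the torus $\C^*_{q_1}\times\dots\times\C^*_{q_d}$), one should note that \cref{def:troppoly2} and hence \cref{prop:tropmutation} apply verbatim to this setting by treating $q_\ell W_\ell$ as a universally positive element of $\C[\mathbb X \times \C^*_{q_1}\times\dots\times\C^*_{q_d}]$ and then restricting to a fixed value of $\rr$. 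Thus the only line of the proof is: ``This is immediate from \cref{prop:tropmutation} applied to the universally positive element $W^\lambda = \sum_\ell q_\ell W_\ell + \sum_i W'_i$, using \cref{prop:univpos}.''
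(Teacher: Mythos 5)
Your proof is correct and follows the same route as the paper, which simply applies \cref{prop:tropmutation} to the superpotential and observes that the hypotheses hold by \cref{prop:univpos} and \cref{d:B-modelPolytope}. Your remarks on the $q$-parameters are slightly more elaborate than needed: by \cref{d:Wis} the summands tropicalized in \cref{d:B-modelPolytope} are $W_\ell$ and $W_i'$ themselves (without the $q_\ell$ factors), which are universally positive elements of $\C[\check X^\circ_\lambda]$, while the $q_\ell$ never enter the tropicalization; the $r_\ell$ and $r_i'$ are simply the free parameters of \cref{def:troppoly2} and need not be interpreted as tropicalizations of the $q$'s.
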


\subsection{Balanced tropical points}\label{s:balanced} 
The results of this subsection constitute a technical tool that 
will be used in \cref{s:GenD} (the reader may feel free to skip it on a first reading
of the paper).
In particular, we observe here 
that while the tropicalised mutation map $\Psi_{G,G'}$ from \cref{d:PsiGG'} is  piecewise-linear, there is a special subspace of $\R^{\mathcal P_G}$ where $\Psi_{G,G'}$ is linear in a strong way. 

\begin{defn}[{\cite[Definition~15.8]{RW}}]
Let $v\in \R^{\mathcal P_G}$ with $v=(v_{\nu_1},v_{\nu_2}, \dots,v_{\nu_N})$, then $v$ is called {\it balanced at $\nu_i$} if and only if for the coordinate $v_{\nu_i}$ we have
\[
\sum_{j;\nu_j \to \nu_i} v_{\nu_j}=\sum_{j;\nu_i \to \nu_j} v_{\nu_j},
\]
where we are using notation from \cref{d:PsiGG'}. We say that $v$ is \textit{balanced} 
if $v$ is balanced at $\nu_i$ for every~$i$. 
The property of being balanced is invariant under mutation, see \cite[Proposition~4.8]{Akhtar:PolygonalQuivers}.
\end{defn}  

\begin{lemma}\label{l:balanced}
Suppose $v,w\in\R^{\mathcal P_G}$ and $v$ is balanced. Then
\[
\Psi_{G,G'}(v+w)=\Psi_{G,G'}(v)+\Psi_{G,G'}(w).
\]
\end{lemma}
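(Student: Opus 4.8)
The statement to prove is \cref{l:balanced}: if $v$ is balanced (at every vertex) and $w$ is arbitrary, then $\Psi_{G,G'}(v+w)=\Psi_{G,G'}(v)+\Psi_{G,G'}(w)$ for a single tropicalized mutation $\Psi_{G,G'}$ at a vertex $\nu_i$. The plan is to reduce everything to the single coordinate $v_{\nu_i'}$ that is actually changed by the mutation, since $\Psi_{G,G'}$ acts as the identity on all other coordinates (so additivity there is trivial). Thus it suffices to show
\[
\bigl(\Psi_{G,G'}(v+w)\bigr)_{\nu_i'}=\bigl(\Psi_{G,G'}(v)\bigr)_{\nu_i'}+\bigl(\Psi_{G,G'}(w)\bigr)_{\nu_i'}.
\]

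First I would write out both sides using the formula \eqref{e:tropmut} from \cref{d:PsiGG'}. Abbreviate $A(x)=\sum_{\nu_j\to\nu_i}x_{\nu_j}$ and $B(x)=\sum_{\nu_i\to\nu_j}x_{\nu_j}$, so that $(\Psi_{G,G'}(x))_{\nu_i'}=\min\bigl(A(x),B(x)\bigr)-x_{\nu_i}$, and note that $A$ and $B$ are linear functionals on $\R^{\mathcal P_G}$. The hypothesis that $v$ is balanced at $\nu_i$ says precisely $A(v)=B(v)$; call this common value $c$. Then
\[
A(v+w)=c+A(w),\qquad B(v+w)=c+B(w),
\]
so $\min\bigl(A(v+w),B(v+w)\bigr)=c+\min\bigl(A(w),B(w)\bigr)$, using that adding a constant commutes with $\min$. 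Subtracting $(v+w)_{\nu_i}=v_{\nu_i}+w_{\nu_i}$ gives
\[
\bigl(\Psi_{G,G'}(v+w)\bigr)_{\nu_i'}=\bigl(c-v_{\nu_i}\bigr)+\bigl(\min(A(w),B(w))-w_{\nu_i}\bigr)=\bigl(\Psi_{G,G'}(v)\bigr)_{\nu_i'}+\bigl(\Psi_{G,G'}(w)\bigr)_{\nu_i'},
\]
where in the last step we used $A(v)=B(v)=c$ to identify $c-v_{\nu_i}=\min(A(v),B(v))-v_{\nu_i}=(\Psi_{G,G'}(v))_{\nu_i'}$. Combined with the trivial additivity on the unchanged coordinates, this gives the claim.

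There is essentially no obstacle here: the only point that requires any care is making sure the balanced hypothesis is used in the right spot — namely it is what lets the piecewise-linear $\min$ be pulled apart additively (in general $\min(a+a',b+b')\neq\min(a,b)+\min(a',b')$, but it does hold when one of the pairs is on the diagonal $a'=b'$). I would also remark that only balancedness at the single vertex $\nu_i$ is actually needed for a single mutation, though the lemma is stated for fully balanced $v$ since it will be iterated along a mutation sequence; one could note this as a remark if desired, but it is not necessary for the proof. I would close by observing (or citing \cite[Proposition~4.8]{Akhtar:PolygonalQuivers}, already invoked above) that balancedness of $v$ is preserved under mutation, so the identity propagates along any sequence of mutations.
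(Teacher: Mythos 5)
Your proof is correct and matches the paper's own argument essentially line for line: reduce to the $\nu_i'$ coordinate (the only one that changes under a single mutation), use the balance condition $A(v)=B(v)$ to pull the constant out of the $\min$, subtract the $\nu_i$-coordinate, and then propagate along a mutation sequence using the fact that balancedness is preserved under mutation (citing \cite[Proposition~4.8]{Akhtar:PolygonalQuivers}, as the paper does just before the lemma statement). Your observation that only balancedness at $\nu_i$ is needed for a single mutation, and that full balancedness is there to allow iteration, is accurate and makes explicit a point the paper leaves implicit.
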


\begin{proof} Suppose $v=(v_{\nu_1},v_{\nu_2}, \dots,v_{\nu_N})$ and $w=(w_{\nu_1},w_{\nu_2}, \dots,w_{\nu_N})$. Let us assume that $G'$ is obtained from $G$ by mutation at a single vertex $\nu_i$. Then we only need to check the $\nu'_i$ coordinate of $\Psi_{G,G'}(v+w)$ agrees with the $\nu'_i$ coordinate of $\Psi_{G,G'}(v)+\Psi_{G,G'}(w)$. We have
\[
\Psi_{G,G'}(v+w)_{\nu'_i}=\min\left(\sum_{\nu_j \to \nu_i} (v_{\nu_j}+w_{\nu_j}), \sum_{\nu_i \to \nu_j} (v_{\nu_j}+w_{\nu_j})\right)\, - \, v_{\nu_i} - w_{\nu_i}.
\]
Since $v$ is balanced at $\nu_i$, we can replace  $\sum_{\nu_i \to \nu_j} v_{\nu_j}$ by $\sum_{\nu_j \to \nu_i} v_{\nu_j}$ and rewrite the right-hand side to get 
\[
\min(\sum_{\nu_j \to \nu_i}w_{\nu_j}, \sum_{\nu_i \to \nu_j} w_{\nu_j})\, +(\sum_{\nu_i \to \nu_j} v_{\nu_j}) - \, v_{\nu_i} - w_{\nu_i}=\Psi_{G,G'}(w)\, +(\sum_{\nu_i \to \nu_j} v_{\nu_j}) - \, v_{\nu_i}=\Psi_{G,G'}(w)_{\nu'_i}+\Psi_{G,G'}(v)_{\nu'_i}.
\]
Any mutation $\Psi_{G,G'}$ is obtained by repeated application of such mutations at different vertices $\nu_i$. The lemma follows.  
\end{proof}

We also have the following $\mathcal X$-cluster interpretation of balanced elements. This is \cite[Proposition~15.9]{RW} applied to our setting.

\begin{lemma}\label{l:monomialXmutation}
Let $P^G=\prod_{\nu\in\mathcal P_G} x_{\nu}^{v_{\nu}}$ 
	be a monomial in the network parameters $\TB(G)=\{x_{\nu}\mid\nu\in\mathcal P_G\}$. Consider the $\mathcal X$-mutation of $P^G$ at a vertex $\nu$ and call it $P^{G'}$. 

The mutation $P^{G'}$ is again a monomial if and only if the exponent vector $v$ is balanced at $\nu$. Moreover, in that case it is the monomial in $\TB(G')=\{x'_{\eta}\mid\eta\in\mathcal P_{G'}\}$ with exponent vector $v'$ given by 
\begin{equation}\label{e:balancedMutation}
v'_\eta=
\begin{cases} 
(\sum_{\mu \to \nu} v_\mu) - v_\nu, & \eta=\nu,\\
v_\eta, & \eta \ne \nu.
\end{cases}
\end{equation}
Note that since $v$ was balanced, this is an instance of tropicalied $\mathcal{A}$-cluster mutation. \qed
\end{lemma}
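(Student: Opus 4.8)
The plan is to compute the $\mathcal X$-mutation of the monomial $P^G$ directly from the mutation rule for the network parameters and simply read off when the result is again a monomial. Write $G'$ for the seed obtained from $G$ by mutation at $\nu$, and $x'_\eta$, $\eta\in\mathcal P_{G'}$, for the new network parameters. First I would recall the $\mathcal X$-cluster mutation rule at $\nu$: it sends $x_\nu\mapsto (x'_\nu)^{-1}$, fixes every $x_\mu$ with $\mu$ not adjacent to $\nu$ in $Q(G)$, and, for $\mu$ adjacent to $\nu$, expresses the old parameter $x_\mu$ in terms of the new ones as $x'_\mu$ times a binomial factor of the form $(1+x'_\nu)^{a}$ or $(1+(x'_\nu)^{-1})^{a}$, where the integer $a$ and the choice of $x'_\nu$ versus $(x'_\nu)^{-1}$ are dictated by the orientation (into $\nu$ versus out of $\nu$) and the multiplicity of the arrows between $\mu$ and $\nu$, using that mutation is an involution that reverses all arrows at $\nu$.

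Next I would substitute these expressions into $P^G=\prod_{\mu}x_\mu^{v_\mu}$ and use the identity $(1+x^{-1})^{-c}=x^{c}(1+x)^{-c}$ to push all the binomial contributions into a single power of $(1+x'_\nu)$. This produces an expression of the shape
\[
P^{G'}=\Bigl(\prod_{\eta\ne\nu}(x'_\eta)^{v_\eta}\Bigr)\,(x'_\nu)^{\,c_\nu}\,(1+x'_\nu)^{\,e_\nu},
\]
where $c_\nu=-v_\nu+\sum_{\nu\to\mu}v_\mu$ and $e_\nu=\sum_{\mu\to\nu}v_\mu-\sum_{\nu\to\mu}v_\mu$, the sums being over the arrows of $Q(G)$ at $\nu$ counted with multiplicity (the precise signs depend on the mutation convention in force, but in any case $e_\nu$ is, up to sign, the difference of the two one-sided vertex sums at $\nu$, and $c_\nu$ is $-v_\nu$ plus one of them). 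From this, both assertions follow immediately: $P^{G'}$ is a Laurent monomial if and only if $e_\nu=0$, that is $\sum_{\mu\to\nu}v_\mu=\sum_{\nu\to\mu}v_\mu$, which is precisely the condition that $v$ be balanced at $\nu$; and when this holds the exponent vector of $P^{G'}$ is $v'_\eta=v_\eta$ for $\eta\ne\nu$ and $v'_\nu=c_\nu=-v_\nu+\sum_{\mu\to\nu}v_\mu$, using balancedness to rewrite $\sum_{\nu\to\mu}v_\mu$ as $\sum_{\mu\to\nu}v_\mu$. This is exactly \eqref{e:balancedMutation}. For the final sentence, observe that by \cref{d:PsiGG'} the $\nu$-coordinate of $\Psi_{G,G'}(v)$ is $\min\bigl(\sum_{\nu_j\to\nu}v_{\nu_j},\sum_{\nu\to\nu_j}v_{\nu_j}\bigr)-v_\nu$; when $v$ is balanced at $\nu$ the two sums inside the minimum coincide, so this equals $\sum_{\mu\to\nu}v_\mu-v_\nu=v'_\nu$, while the remaining coordinates are unchanged, so $v'=\Psi_{G,G'}(v)$.

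The computation is entirely routine once it is set up; the only point demanding care — the main obstacle, such as it is — is matching up correctly the binomial factors contributed by the in-arrows and out-arrows at $\nu$ (and keeping track of the fact that mutation reverses those arrows) before amalgamating them, so that the exponent of $(1+x'_\nu)$ comes out as the difference of the two one-sided vertex sums rather than something else. No iteration over several mutations is needed, since the statement concerns a single mutation at $\nu$. Alternatively one may simply invoke \cite[Proposition~15.9]{RW}, of which this lemma is the specialization to the $\mathcal X$-cluster structure on $X^\circ_\lambda$.
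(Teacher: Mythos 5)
Your computation is correct and matches the content of what the paper cites but does not prove. The paper's ``proof'' consists of the single remark that this is \cite[Proposition~15.9]{RW} specialized to the present setting, so you are supplying a self-contained direct argument where the paper defers to a reference. The substance of your argument agrees with the cited result: inverting the rule in \cref{Xseed} gives $x_\nu=(x'_\nu)^{-1}$, $x_\tau = x'_\tau\,(1+1/x'_\nu)^{-b_{\nu\tau}}$ for out-neighbors, and $x_\tau = x'_\tau\,(1+x'_\nu)^{b_{\tau\nu}}$ for in-neighbors; collecting powers via $(1+1/x'_\nu)^{-1}=x'_\nu(1+x'_\nu)^{-1}$ yields the binomial exponent $\sum_{\mu\to\nu}v_\mu-\sum_{\nu\to\mu}v_\mu$ and, when it vanishes, the monomial exponent $-v_\nu+\sum_{\nu\to\mu}v_\mu=-v_\nu+\sum_{\mu\to\nu}v_\mu$, exactly \eqref{e:balancedMutation}. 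Your closing comparison with $\Psi_{G,G'}$ via \cref{d:PsiGG'} is also correct: balancedness collapses the $\min$ and the remaining coordinates are untouched. The only thing I would tighten in the write-up is the hedging about sign conventions — in the paper's convention the exponent of $(1+x'_\nu)$ is exactly $\sum_{\mu\to\nu}v_\mu-\sum_{\nu\to\mu}v_\mu$, so there is no sign ambiguity to apologize for.
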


\begin{corollary}\label{c:balanced} Suppose 
$P\in\C[X_\lambda^\circ]$ is a regular function 
	which doesn't vanish
	on $X_{\lambda}^\circ$, and consider its valuation $\val_G(P)$ associated to an $\mathcal X$-cluster torus with coordinates $\TB(G)$. Then 
for any $v\in\R^{\mathcal P_G}$ we have
\[
\Psi_{G,G'}(v+\val_G(P))=\Psi_{G,G'}(v)+\Psi_{G,G'}(\val_G(P)).
\]
\end{corollary}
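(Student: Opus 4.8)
The plan is to derive \cref{c:balanced} as an immediate consequence of \cref{l:balanced} together with \cref{l:monomialXmutation}. The key observation is that the valuation vector $\val_G(P)$ of a function $P$ that is nowhere vanishing on $X_\lambda^\circ$ is automatically balanced at every vertex $\nu\in\mathcal P_G$, and once this is known, \cref{l:balanced} applies verbatim with $v$ replaced by $\val_G(P)$ (note the roles are swapped relative to the statement of \cref{l:balanced}, but that lemma is symmetric in the sense that it shows $\Psi_{G,G'}$ is additive whenever \emph{one} of the two summands is balanced).

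First I would argue that $\val_G(P)$ is balanced. Since $P$ is a regular function on the $\mathcal X$-cluster variety $X_\lambda^\circ$ which is invertible there, its $\mathcal X$-cluster expansion in the torus chart with coordinates $\TB(G)$ is a monomial (a global unit on a torus is a Laurent monomial times a scalar, and here the expansions are honest Laurent monomials by the setup of \cref{de:val} and the remark preceding it); write it as $P^G = c\prod_{\nu\in\mathcal P_G} x_\nu^{v_\nu}$ with $v = \val_G(P)$ up to the sign convention, and $c$ a nonzero scalar. Because $P$ is also regular and invertible after mutating to the seed $G'$, its expansion $P^{G'}$ is again a Laurent monomial in $\TB(G')$. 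Now \cref{l:monomialXmutation} says that the $\mathcal X$-mutation of a monomial $P^G$ at $\nu$ remains a monomial \emph{if and only if} $v$ is balanced at $\nu$. Applying this at each vertex $\nu$ (using that $P$ remains regular and nowhere-vanishing after a single mutation), we conclude $v = \val_G(P)$ is balanced at every $\nu$, hence balanced.

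Having established that $\val_G(P)$ is balanced, I would invoke \cref{l:balanced} with the balanced vector taken to be $\val_G(P)$ and the arbitrary vector taken to be $v$. That lemma gives
\[
\Psi_{G,G'}\bigl(\val_G(P)+v\bigr)=\Psi_{G,G'}\bigl(\val_G(P)\bigr)+\Psi_{G,G'}(v),
\]
which, since addition of vectors is commutative, is exactly the claimed identity $\Psi_{G,G'}(v+\val_G(P))=\Psi_{G,G'}(v)+\Psi_{G,G'}(\val_G(P))$. Strictly speaking \cref{l:balanced} is phrased for $\Psi_{G,G'}$ a single mutation; for a general seed $G'$ obtained by a sequence of mutations, I would note that balancedness is preserved under mutation (cited after \cref{d:PsiGG'}), and in particular the image of a balanced vector under a tropicalized mutation is again balanced, so the additivity propagates through the composition exactly as in the last line of the proof of \cref{l:balanced}.

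The main obstacle — really the only point requiring care — is the first step: justifying cleanly that a function $P$ which is regular and invertible on $X_\lambda^\circ$ has monomial $\mathcal X$-cluster expansions in every seed, so that \cref{l:monomialXmutation}'s monomial-preservation criterion can be leveraged to force balancedness. This is where one must be precise about what ``doesn't vanish on $X_\lambda^\circ$'' buys us (units on the cluster torus are monomials) and about the compatibility of the $\mathcal X$-cluster structure on $X_\lambda^\circ$ with the network charts of \cref{1network_param}. Everything after that is a formal two-line deduction from \cref{l:balanced}.
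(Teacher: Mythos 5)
Your proof is correct and takes essentially the same route as the paper's: both argue that a regular, nowhere-vanishing function on $X_\lambda^\circ$ restricts to a Laurent monomial on every $\mathcal X$-cluster torus, invoke \cref{l:monomialXmutation} to conclude that $\val_G(P)$ is balanced, and then apply \cref{l:balanced}. The additional care you take in spelling out why the expansions stay monomial after mutation and in noting the symmetric role of the balanced summand in \cref{l:balanced} matches the implicit reasoning in the paper's (shorter) argument.
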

\begin{proof}
Since $P$ is a regular function on $X_\lambda^\circ$ it must 
be a Laurent polynomial
in $\mathcal X$-cluster coordinates, and since it is nonvanishing, it must
be a single Laurent monomial. 
Therefore it is given by a monomial (with some scalar coefficient) in terms of the coordinates $\TB(G)$ of the cluster torus associated to $G$, and the same thing holds for any cluster torus obtained from this one by mutation. By \cref{l:monomialXmutation} it follows that $\val_G(P)$ is balanced. The statement of the corollary now follows from~\cref{l:balanced}.
\end{proof}

\section{The superpotential polytope for the rectangles seed and order polytopes}\label{sec:rectangles}

When $G = G_{\lambda}^{\rect}$ is the rectangles seed, the superpotential polytope
has a particularly explicit description.  In fact after performing a unimodular 
change of variables (so that we obtain the ``superpotential polytope in vertex coordinates'')
it becomes an \emph{order polytope}.

\subsection{The superpotential polytope for the rectangles seed}
When $G = G_{\lambda}^{\rect}$ is the rectangles seed,  we can use \cref{prop:super2}
to obtain the following inequality description of
$\Gamma^{\lambda}_G(\mathbf{r},\mathbf{r'}) = 
\Gamma^{\lambda}_{\rect}(\mathbf{r},\mathbf{r'})$.
Note that this description can also be easily read off the quiver from
\cref{fig:superpotential}.
\begin{lemma}\label{l:rectangles}
	Let  $\mathbf{r}=(r_1,\dots,r_d)$ and $\mathbf{r'}=(r'_1,\dots,r'_{n-1})$. Recall that $\lambda$ has $n-k$ rows and $k$ columns. 
	The superpotential polytope $\Gamma^{\lambda}_{\rect}(\mathbf{r},\mathbf{r'})$ 
is cut out by the following inequalities:
	\begin{align}
		v_{\mu} - v_{\muminus} & \leq r_\ell\ \text{ whenever }\mu 
		\text{ labels the $\ell$th outer corner of }\lambda  \label{eq:1gen}\\
		0 & \leq v_{1 \times 1}+r'_{n-k} \label{eq:2gen} \\
		v_{i\times j} - v_{(i-1)\times (j-1)} &\leq
		v_{(i+1) \times j}  - v_{i\times (j-1)}+r'_{n-k-i}
\ \text{ for }1 \leq i, 
		1\leq j, \text{ and } ((i+1) \times j) \subseteq \lambda \label{eq:3gen}\\
v_{i\times j}-v_{(i-1)\times (j-1)} &\leq
		v_{i\times (j+1)}  - v_{(i-1)\times j} + r'_{n-k+j}
\ \text{ for } 1\leq i,
		1 \leq j, \text{ and } (i \times (j+1)) \subseteq \lambda. \label{eq:4gen}
	\end{align}
\end{lemma}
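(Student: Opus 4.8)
\textbf{Proof plan for Lemma~\ref{l:rectangles}.}

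The plan is to tropicalize the explicit Laurent expansion $W^{\lambda}_{\rect} = \sum_{a \in A(Q_{\lambda})} p(a)$ from \cref{prop:super2} and read off the inequalities term by term. Recall from \cref{def:superquiver} that the vertices of $Q_{\lambda}$ are labelled by the Laurent monomials $p_{i \times j}/p_{(i-1)\times(j-1)}$ (with the convention $p_{0\times j} = p_{i \times 0} = p_{\emptyset} = 1$), plus the extra vertex $v_0$ labelled $1$ and the vertices $v_{\ell}$ labelled $q_{\ell}$; each arrow $a: v \to v'$ contributes the monomial $p(a)$ obtained by dividing the label of $v'$ by the label of $v$. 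Since each $W'_i$ and each $W_{\ell}$ is, by \cref{prop:super2} combined with \cref{lem:Plucker}, a sum of a subset of these arrow-monomials (the arrows in a fixed row or column, together with the special arrows $v_0 \to v(1,1)$ and $v(i_{\ell},\lambda_{i_{\ell}}) \to v_{\ell}$), the superpotential polytope $\Gamma^{\lambda}_{\rect}(\mathbf{r},\mathbf{r'})$ is by \cref{def:troppoly2} cut out by the inequalities $\Trop(\mathbf{W}^{\rect}_{\ell})(v) + r_{\ell} \geq 0$ and $\Trop((\mathbf{W}'_i)^{\rect})(v) + r'_i \geq 0$. Writing the tropical variable attached to the Pl\"ucker coordinate $p_{i \times j}$ as $v_{i \times j}$ (and $v_{\emptyset} = 0$), the tropicalization of the arrow monomial $p(a)$ for $a: v(i,j) \to v(i,j+1)$ is the linear form $\bigl(v_{i\times(j+1)} - v_{(i-1)\times j}\bigr) - \bigl(v_{i \times j} - v_{(i-1)\times(j-1)}\bigr)$, and similarly for vertical arrows, for the arrow into $q_{\ell}$, and for $v_0 \to v(1,1)$.

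First I would record the tropicalizations of the four relevant families of arrows:
\begin{itemize}
\item the arrow $v_0 \to v(1,1)$ gives $\Trop(p_{1\times 1}/1) = v_{1\times 1}$, so its term contributes $v_{1\times 1} + r'_{n-k} \geq 0$, which is \eqref{eq:2gen} (recall from \cref{def:superquiver} and the discussion after \cref{prop:super2} that this arrow's monomial is exactly $p_{\square}/p_{\emptyset}$, i.e.\ $W'_{n-k}$);
\item a horizontal arrow $v(i,j) \to v(i,j+1)$ with $(i\times(j+1))\subseteq\lambda$ gives the linear form $\bigl(v_{i\times(j+1)} - v_{(i-1)\times j}\bigr) - \bigl(v_{i\times j} - v_{(i-1)\times(j-1)}\bigr)$, so the corresponding inequality $\Trop + r'_{n-k+j} \geq 0$ rearranges to \eqref{eq:4gen};
\item a vertical arrow $v(i,j) \to v(i+1,j)$ with $((i+1)\times j)\subseteq\lambda$ gives $\bigl(v_{(i+1)\times j} - v_{i\times(j-1)}\bigr) - \bigl(v_{i\times j} - v_{(i-1)\times(j-1)}\bigr)$, yielding \eqref{eq:3gen} after adding $r'_{n-k-i}$ and rearranging;
\item the arrow $v(i_{\ell},\lambda_{i_{\ell}}) \to v_{\ell}$ gives $\Trop(q_{\ell} p_{\mu^-}/p_{\mu}) $ where $\mu$ is the $\ell$th outer-corner rectangle; since $q_{\ell}$ tropicalizes to $0$ in this construction (the quantum parameters are not among the cluster variables but enter the inequalities only through the $r_{\ell}$), this is $v_{\mu^-} - v_{\mu}$, and $\Trop + r_{\ell} \geq 0$ is \eqref{eq:1gen}.
\end{itemize}
The key point is that because each $W'_i$ (resp.\ $W_{\ell}$) is a \emph{single} arrow-monomial rather than a genuine sum of several of them, no $\min$ appears: the grouping of arrows into rows and columns in \cref{prop:super2} is exactly the statement (via \cref{lem:Plucker}) that summing those arrow monomials collapses to a single Laurent monomial $p_{\square(i\times m)}/p_{i\times m}$ or $p_{(h\times j)^{\square}}/p_{h \times j}$, whose tropicalization is linear. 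So the tropical inequality attached to $W'_i$ is literally the tropicalization of one arrow, and I would simply match each arrow of $Q_{\lambda}$ (\cref{fig:superpotential}) against one of \eqref{eq:1gen}--\eqref{eq:4gen}, checking the index bookkeeping $W'_{n-k-i} \leftrightarrow$ row $i$, $W'_{n-k+j} \leftrightarrow$ column $j$, $W'_{n-k} \leftrightarrow$ the box $(1,1)$, and $W_{\ell} \leftrightarrow$ the $\ell$th outer corner, which is recorded in the last paragraph of \cref{s:rectW}.

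The only genuinely delicate point — and the step I expect to need the most care — is the index translation between the ``northwest-border'' labelling $b'_1,\dots,b'_{n-1}$ of the summands $W'_i$ used in \cref{def:NW}/\cref{d:Wis} and the ``row/column'' description coming from \cref{prop:super2} and \cref{lem:Plucker}: one must verify that $W'_{n-k-i}$ (for $0 < i < n-k$) is indeed the term with denominator $p_{i\times m}$ where $m = \lambda_{i+1}$ is the width of the maximal height-$2$ rectangle in rows $i, i+1$, i.e.\ that the tropicalized constraint from $W'_{n-k-i}$ is exactly \eqref{eq:3gen} for that value of $i$ ranging over all valid $j$, and symmetrically $W'_{n-k+j}$ gives \eqref{eq:4gen}. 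This is already asserted in the sentence immediately following the proof of \cref{prop:super2}, so I would quote it. Given that identification, the proof is just the observation that tropicalizing a monomial is taking its exponent vector as a linear functional, together with rearranging $\Trop(\mathbf{W})(v) + r \geq 0$ into the stated form; there is no inductive or geometric content beyond \cref{prop:super2} and \cref{lem:Plucker}.
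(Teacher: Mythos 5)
Your itemized list of inequalities and the overall plan — tropicalize the rectangles-cluster expansion $W^{\lambda}_{\rect}=\sum_{a}p(a)$ from \cref{prop:super2}, group arrows by the $W'_i$ and $W_\ell$ they belong to, and match each arrow to one of \eqref{eq:1gen}--\eqref{eq:4gen} — is correct and is essentially the reasoning the paper implicitly invokes (the paper gives no written proof, pointing only to \cref{prop:super2} and \cref{fig:superpotential}). The arrow-by-arrow computations in your bullet list all check out, as does the index bookkeeping you flag at the end.

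However, the ``key point'' paragraph is wrong in a way worth fixing, even though it does not invalidate the conclusion. You claim that each $W'_i$ ``is a \emph{single} arrow-monomial rather than a genuine sum of several of them, no $\min$ appears'' and that the tropicalization of $p_{\square(i\times m)}/p_{i\times m}$ is linear. This is backwards: for $m>1$, the partition $\square(i\times m)$ is not a rectangle, so $p_{\square(i\times m)}$ is not a rectangles-cluster variable, and its expansion in the rectangles cluster is precisely the multi-term Laurent polynomial $\sum_{j=1}^m \frac{p_{(i+1)\times j}\,p_{(i-1)\times(j-1)}}{p_{i\times(j-1)}\,p_{i\times j}}$ from \cref{lem:Plucker}. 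Hence $\Trop(\mathbf{W}_{n-k-i}'^{G_\rect})$ really is a $\min$ of $m$ linear forms, and similarly for the column sums. The reason this causes no trouble — and the correct ``key point'' — is simply that a constraint of the form $\min_j L_j(v)+r\geq 0$ is equivalent to the conjunction of the $m$ linear inequalities $L_j(v)+r\geq 0$. This is what unfolds the single condition $\Trop(\mathbf{W}_i'^{G_\rect})(v)+r_i'\geq 0$ into the family \eqref{eq:3gen} (resp.\ \eqref{eq:4gen}) ranging over all valid $j$, one inequality per arrow, all sharing the same constant $r_i'$. Your own first paragraph already says each $W'_i$ ``is a sum of a subset of these arrow-monomials,'' so the ``key point'' paragraph contradicts it; replacing that paragraph with the observation above repairs the argument and makes the proof fully correct.
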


\subsection{The superpotential for the rectangles seed in vertex coordinates}\label{s:vertexcoordinates}

The Laurent polynomial superpotential $W_{\rect}^{\lambda}$
may be encoded in a labeled quiver 
following \cref{s:rectW}
(see e.g. \cref{fig:superpotential}),
 but with new variables that are associated directly to vertices.   
See the left of \cref{fig:superpotential-442} for the example where
$\lambda = (4,4,2)$.

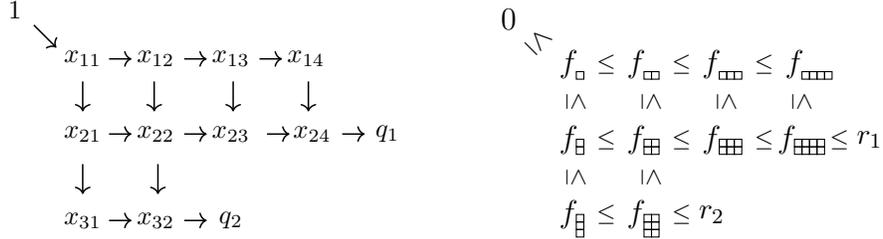
\begin{figure}[h]
\begin{center}
\begin{tikzpicture}
\node at (0.2,1.9) {$1$};
\draw[->, line width=0.2mm] (0.45, 1.7) -- (0.75, 1.4);

\node at (1.1,1.25) {$x_{11}$};
\draw[->, line width=0.2mm] (1.45,1.25) -- (1.75,1.25);

\node at (2.07,1.25) {$x_{12}$};
\draw[->, line width=0.2mm] (2.45,1.25) -- (2.75,1.25);

\node at (3.07,1.25) {$x_{13}$};
\draw[->, line width=0.2mm] (3.45,1.25) -- (3.75,1.25);

\node at (4.07,1.25) {$x_{14}$};

\draw[->, line width=0.2mm] (1.1,0.95) -- (1.1,0.55);
\draw[->, line width=0.2mm] (2.05,0.95) -- (2.05,0.55);
\draw[->, line width=0.2mm] (3.1,0.95) -- (3.1,0.55);
\draw[->, line width=0.2mm] (4.1,0.95) -- (4.1,0.55);

\node at (1.1,0.25) {$x_{21}$};
\draw[->, line width=0.2mm] (1.45,0.25) -- (1.75,0.25);

\node at (2.07,0.25) {$x_{22}$};
\draw[->, line width=0.2mm] (2.45,0.25) -- (2.75,0.25);

\node at (3.07,0.25) {$x_{23}$};
\draw[->, line width=0.2mm] (3.55,0.25) -- (3.85,0.25);

	\node at (4.15,0.25) {$x_{24}$};
\draw[->, line width=0.2mm] (4.55,0.25) -- (4.85,0.25);

\node at (5.15,0.25) {$q_1$};

\draw[->, line width=0.2mm] (1.1,-0.15) -- (1.1,-0.55);
\draw[->, line width=0.2mm] (2.1,-0.15) -- (2.1,-0.55);

\node at (1.1,-0.9) {$x_{31}$};
\draw[->, line width=0.2mm] (1.45,-0.9) -- (1.75,-0.9);

	\node at (2.07,-0.9) {$x_{32}$};
\draw[->, line width=0.2mm] (2.45,-0.9) -- (2.75,-0.9);

\node at (3.07,-0.9) {$q_2$};
\end{tikzpicture}\hspace{1cm}
\begin{tikzpicture}
\node at (0.2,1.9) {\Large 0};
\node at (0.6,1.6) {\large \begin{turn}{315} $\leq$ \end{turn}};

\node at (1,1.3) {\Large $f$};
\draw (1.1,1.1) rectangle (1.2, 1.2);
\node at (1.5, 1.3) {\small $\leq$};

\node at (1.9,1.3) {\Large $f$};
\draw (2,1.1) rectangle (2.1, 1.2);
\draw (2.1,1.1) rectangle (2.2, 1.2);
\node at (2.5, 1.3) {\small $\leq$};

\node at (2.9,1.3) {\Large $f$};
\draw (3,1.1) rectangle (3.1, 1.2);
\draw (3.1,1.1) rectangle (3.2, 1.2);
\draw (3.2,1.1) rectangle (3.3, 1.2);
\node at (3.6, 1.3) {\small $\leq$};

\node at (4,1.3) {\Large $f$};
\draw (4.1,1.1) rectangle (4.2, 1.2);
\draw (4.2,1.1) rectangle (4.3, 1.2);
\draw (4.3,1.1) rectangle (4.4, 1.2);
\draw (4.4,1.1) rectangle (4.5, 1.2);

\node at (1.1,0.8) {\small \begin{turn}{270} $\leq$ \end{turn}};
\node at (2.1,0.8) {\small \begin{turn}{270} $\leq$ \end{turn}};
\node at (3.1,0.8) {\small \begin{turn}{270} $\leq$ \end{turn}};
\node at (4.1,0.8) {\small \begin{turn}{270} $\leq$ \end{turn}};

\node at (1,0.3) {\Large $f$};
\draw (1.1,0.2) rectangle (1.2, 0.3);
\draw (1.1,0.1) rectangle (1.2, 0.2);
\node at (1.5, 0.3) {\small $\leq$};

\node at (1.9,0.3) {\Large $f$};
\draw (2,0.2) rectangle (2.1, 0.3);
\draw (2,0.1) rectangle (2.1, 0.2);
\draw (2.1,0.2) rectangle (2.2, 0.3);
\draw (2.1,0.1) rectangle (2.2, 0.2);
\node at (2.5, 0.3) {\small $\leq$};

\node at (2.9,0.3) {\Large $f$};
\draw (3,0.2) rectangle (3.1, 0.3);
\draw (3,0.1) rectangle (3.1, 0.2);
\draw (3.1,0.2) rectangle (3.2, 0.3);
\draw (3.1,0.1) rectangle (3.2, 0.2);
\draw (3.2,0.2) rectangle (3.3, 0.3);
\draw (3.2,0.1) rectangle (3.3, 0.2);
\node at (3.6, 0.3) {\small $\leq$};

\node at (3.9,0.3) {\Large $f$};
\draw (4,0.2) rectangle (4.1, 0.3);
\draw (4,0.1) rectangle (4.1, 0.2);
\draw (4.1,0.2) rectangle (4.2, 0.3);
\draw (4.1,0.1) rectangle (4.2, 0.2);
\draw (4.2,0.2) rectangle (4.3, 0.3);
\draw (4.2,0.1) rectangle (4.3, 0.2);
\draw (4.3,0.2) rectangle (4.4, 0.3);
\draw (4.3,0.1) rectangle (4.4, 0.2);
\node at (4.6, 0.3) {\small $\leq$};

\node at (5, 0.3) {\large $r_1$};

\node at (1.1,-0.2) {\small \begin{turn}{270} $\leq$ \end{turn}};
\node at (2.1,-0.2) {\small \begin{turn}{270} $\leq$ \end{turn}};

\node at (1,-0.7) {\Large $f$};
\draw (1.1,-0.8) rectangle (1.2,-0.7);
\draw (1.1,-0.9) rectangle (1.2,-0.8);
\draw (1.1,-1) rectangle (1.2,-0.9);
\node at (1.5, -0.7) {\small $\leq$};

\node at (1.9,-0.7) {\Large $f$};
\draw (2,-0.8) rectangle (2.1,-0.7);
\draw (2,-0.9) rectangle (2.1,-0.8);
\draw (2,-1) rectangle (2.1,-0.9);
\draw (2.1,-0.8) rectangle (2.2,-0.7);
\draw (2.1,-0.9) rectangle (2.2,-0.8);
\draw (2.1,-1) rectangle (2.2,-0.9);
\node at (2.5, -0.7) {\small $\leq$};

\node at (2.9, -0.7) {\large $r_2$};
\end{tikzpicture}
\caption{The quiver 
encoding the superpotential
and the associated poset $P(\lambda)$.}
\label{fig:superpotential-442}
\end{center}
\end{figure}

The associated Laurent polynomial superpotential written in
vertex coordinates is
\begin{equation}\label{eq:442}
	\rW_{\rect}^{(4,4,2)}=x_{11}+\frac{x_{21}}{x_{11}}+\frac{x_{31}}{x_{21}}+\frac{x_{22}}{x_{12}}+\frac{x_{32}}{x_{22}}+\frac{x_{23}}{x_{13}}+\frac{x_{24}}{x_{14}}+\frac{x_{12}}{x_{11}}+\frac{x_{22}}{x_{21}}+\frac{x_{32}}{x_{31}}+\frac{x_{23}}{x_{22}}+\frac{x_{13}}{x_{12}}+
	\frac{x_{14}}{x_{13}}+\frac{x_{24}}{x_{23}}+\frac{q_{1}}{x_{24}}+\frac{q_{2}}{x_{32}}.
\end{equation}

The following is a translation to vertex coordinates of the polytope $\Gamma_\rect^\lambda(\mathbf r,\mathbf r')$ from \cref{l:rectangles}.

\begin{definition}\label{def:super2}
Let $\rr=(r_1,\dots,r_d)\in \mathbb{N}^d$
and $\rr'=(r'_1,\dots,r'_{n-1})\in \mathbb{N}^{n-1}$.
Recall that $\lambda$ is contained in an $(n-k) \times k$ rectangle.
We define $\rGamma_{\rect}^\lambda(\rr,\rr')$, the \emph{superpotential polytope (in vertex coordinates)}
 by the following inequalities obtained using the tropicalisation of $\rW_{\rect}^{\lambda}$:
 \[
\begin{array}{cll}
	f_{i \times j}  & \leq r_{\ell} &\text{ whenever } x_{ij}
	\text{ is adjacent to }q_\ell,  \\
	0 & \leq f_{1 \times 1} +r'_{n-k}, & \\
	f_{i \times j} &\leq
	f_{(i+1) \times j} + r'_{n-k-i}
	\ &\text{ for } 1\leq i<n-k,
		1 \leq j\leq k, \text{ and } ((i+1) \times j) \subseteq \lambda, \\
	f_{i \times j} &\leq
	f_{i \times (j+1)} +r'_{n-k+j}
	\ &\text{ for }1 \leq i \leq n-k,
		1\leq j <k, \text{ and } (i \times (j+1)) \subseteq \lambda. 
        \end{array}      
\]
	We write $\overline{\Gamma}_{\rect}^{\lambda}$ for 
	$\rGamma_{\rect}^\lambda(\mathbf{1},\mathbf{0})$.
\end{definition}

If we set $r'_1 = \dots = r'_{n-1}=0$, 
then most of the inequalities 
in \cref{def:super2}  
correspond to the cover relations in the poset $P(\lambda)$
of rectangles contained in $\lambda$, shown 
at the right of 
\cref{fig:superpotential-442}.  (The inequalities involving the constants $0, r_1, r_2$ are additional 
inequalities of the superpotential polytope, and will be discussed in~\cref{s:order}.)

\begin{definition}\label{def:posetlambda}
Given a Young diagram $\lambda$, 
let $S_{\lambda}$ denote 
 the set of all 
boxes of $\lambda$.
We obtain a poset $P({\lambda})$ on $S_{\lambda}$ by 
identifying each box $b$ with 
$\Rect(b)$ (see \cref{def:rectseed}), and saying that 
$b \leq b'$ whenever $\Rect(b) \subseteq \Rect(b')$.  
The box $c_0$ in the NW corner of $\lambda$ is the (unique) minimal element and the outer corners $c_1\dotsc, c_d$
of $\lambda$
	are the maximal elements of this poset. Here $c_\ell=b_{\rho_{2\ell+1}}$ in the notation from \cref{s:superpotential}, and $c_0=b'_{n-k}$.  
For convenience, we will sometimes use  $\Rect(b)$ instead of $b$ when referring to the associated element of $P(\lambda)$, as for example in the 
	right of 
\cref{fig:superpotential-442}.
\end{definition}

\begin{example}\label{ex:442}
		When $r_1=\dots=r_d=1$ and $r'_1=\dots=r'_{n-1}=1$, then the superpotential polytope (in vertex coordinates)
	$\rGamma_{\rect}^{(4,4,2)}(\mathbf{1},\mathbf{1})$
is defined by the inequalities
$$1+f_{1 \times 1} \geq 0, \
1+f_{2 \times 1}-f_{1 \times 1} \geq 0, \
1+f_{3 \times 1}-f_{2 \times 1} \geq 0, \
1+f_{2 \times 2}-f_{1 \times 2} \geq 0,\ \dots  ,
1-f_{2 \times 4} \geq 0, \
1-f_{3 \times 2} \geq 0.$$

\end{example}

We now explain how our two versions 
	of the superpotential polytope 
	are related.

\begin{definition}
	We say that two integral polytopes 
  $\Ppoly_1 \subset \R^n$ and $\Ppoly_2 \subset \R^m$ are \emph{integrally
	equivalent}\footnote{Sometimes the terminology  
 \emph{isomorphic} or
	\emph{unimodularly equivalent} is used synonymously.}
	if there is an affine transformation
   $\phi:\R^n \to \R^m$ whose restriction to $\Ppoly_1$
    is a bijection $\phi:\Ppoly_1 \to \Ppoly_2$  that preserves the $\Z$-lattices,
    i.e. $\phi$ induces a bijection between
     $\Z^n \cap \aff(\Ppoly_1)$ and $\Z^m \cap \aff(\Ppoly_2)$, where
     $\aff(\cdot)$ denotes the affine span.  The map $\phi$
      is then called an \emph{integral equivalence} between the two polytopes.
\end{definition}
We note that integrally equivalent polytopes 
have the same Ehrhart polynomials
and  volume. 

\cref{p:unimodular} shows that 
our two versions 
$\Gamma_{\rect}^\lambda(\rr,\rr')$   and
$\rGamma_{\rect}^\lambda(\rr,\rr')$ of
the superpotential
polytope 
are integrally equivalent.
The proof follows immediately by comparing 
\cref{l:rectangles} to \cref{def:super2}.
\begin{proposition}\label{p:unimodular}
The map $F:\R^{\mathcal P_{G^{\rect}_{\lambda}}}\to \R^{\mathcal P_{G^{\rect}_{\lambda}}}$ defined by
\[
(v_{i\times j})\mapsto (f_{i \times j}) = ( v_{i \times j} - v_{(i-1)\times (j-1)})
\]
is a unimodular linear transformation, with inverse 
$v_{i \times j} = f_{i \times j} + f_{(i-1) \times (j-1)} + 
f_{(i-2) \times (j-2)} + \dots$.
 Moreover, 
$F(\Gamma_{\rect}^\lambda(\rr,\rr'))=
\rGamma_{\rect}^\lambda(\rr,\rr')$, and hence the  polytopes
$\Gamma_{\rect}^\lambda(\rr,\rr')$  and
$\rGamma_{\rect}^\lambda(\rr,\rr')$ are integrally equivalent.
\end{proposition}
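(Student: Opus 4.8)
The statement claims that the explicit linear map $F$ defined by $f_{i\times j} = v_{i\times j} - v_{(i-1)\times(j-1)}$ is a unimodular transformation that carries $\Gamma_{\rect}^\lambda(\rr,\rr')$ onto $\rGamma_{\rect}^\lambda(\rr,\rr')$. The plan is to verify these two assertions separately, and as the excerpt already indicates, ``the proof follows immediately by comparing \cref{l:rectangles} to \cref{def:super2}.''

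First I would check that $F$ is a well-defined unimodular linear transformation on $\R^{\mathcal P_{G^{\rect}_\lambda}}$, where we recall that the coordinates $v_\mu$ (resp.\ $f_\mu$) are indexed by the mutable-plus-frozen vertices of the rectangles quiver, i.e.\ by the rectangles $i\times j\subseteq\lambda$ (together with $\emptyset$, which we normalize away since $p_\emptyset=1$ forces $v_\emptyset=0$). The map sends the vector $(v_{i\times j})$ to $(v_{i\times j} - v_{(i-1)\times(j-1)})$, with the convention that $v_{i\times j}=0$ whenever $i=0$ or $j=0$. To see this is unimodular, I would exhibit the inverse explicitly: $v_{i\times j} = f_{i\times j} + f_{(i-1)\times(j-1)} + f_{(i-2)\times(j-2)} + \cdots$, the sum telescoping down the main diagonal of the rectangle $i\times j$ until a coordinate $i'=0$ or $j'=0$ is reached. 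One checks directly that $F$ composed with this map is the identity (the alternating/telescoping sum collapses), so $F$ is invertible over $\Z$; since both $F$ and its inverse are given by integer matrices, $F$ is a bijection of the lattice $\Z^{\mathcal P_{G^{\rect}_\lambda}}$, hence unimodular.

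Next I would match up the defining inequalities. The polytope $\Gamma_{\rect}^\lambda(\rr,\rr')$ is cut out by the four families \eqref{eq:1gen}--\eqref{eq:4gen} of \cref{l:rectangles}, written in the $v$-coordinates: these are $v_\mu - v_{\mu^-}\le r_\ell$ at outer corners, $0\le v_{1\times 1}+r'_{n-k}$, and the two families $v_{i\times j}-v_{(i-1)\times(j-1)} \le v_{(i+1)\times j} - v_{i\times(j-1)} + r'_{n-k-i}$ and $v_{i\times j}-v_{(i-1)\times(j-1)} \le v_{i\times(j+1)} - v_{(i-1)\times j} + r'_{n-k+j}$. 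Applying the substitution $f_{i\times j} = v_{i\times j}-v_{(i-1)\times(j-1)}$ to each: the left-hand side of \eqref{eq:3gen} becomes $f_{i\times j}$, while the right-hand side $v_{(i+1)\times j} - v_{i\times(j-1)}$ is exactly $f_{(i+1)\times j}$ (by definition of $F$, since $(i+1,j)$ and $(i,j-1)$ are the box $(i+1,j)$ and the box diagonally northwest of it); so \eqref{eq:3gen} transforms into $f_{i\times j}\le f_{(i+1)\times j}+r'_{n-k-i}$, the third family of \cref{def:super2}. Symmetrically \eqref{eq:4gen} becomes $f_{i\times j}\le f_{i\times(j+1)}+r'_{n-k+j}$. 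For \eqref{eq:1gen}: at the $\ell$th outer corner $\mu = i_\ell\times j_\ell$ (the box adjacent to $q_\ell$ in the quiver), one has $v_\mu - v_{\mu^-}$; here $\mu^- = (i_\ell-1)\times(j_\ell-1)$ since removing the rim of a rectangle whose SE box is an outer corner of $\lambda$ removes exactly the last row and column, so $v_\mu - v_{\mu^-} = f_{i_\ell\times j_\ell} = f_\mu$, and \eqref{eq:1gen} becomes $f_{i\times j}\le r_\ell$ whenever $x_{ij}$ is adjacent to $q_\ell$. Finally \eqref{eq:2gen} is $0\le v_{1\times 1}+r'_{n-k} = f_{1\times 1}+r'_{n-k}$ since $v_{0\times 0}=0$. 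Thus the inequality systems defining $\Gamma_{\rect}^\lambda(\rr,\rr')$ and $\rGamma_{\rect}^\lambda(\rr,\rr')$ correspond exactly under $F$, so $F(\Gamma_{\rect}^\lambda(\rr,\rr')) = \rGamma_{\rect}^\lambda(\rr,\rr')$. Integral equivalence of the two polytopes then follows since $F$ is unimodular.

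I do not anticipate a serious obstacle here: the entire content is the bookkeeping in the previous paragraph, namely carefully tracking which pairs of rectangles sit in the $F$-relation and confirming that the right-hand sides of \eqref{eq:3gen}, \eqref{eq:4gen} are genuinely single $f$-coordinates rather than more complicated combinations. The one point that requires a moment's care is the boundary behavior — when $i=1$ or $j=1$ the ``diagonally northwest'' box falls outside $\lambda$ and the corresponding $v$ is zero; I would state the convention $v_{i\times j}=0$ for $i=0$ or $j=0$ once at the outset so that all four families and the telescoping inverse formula are uniformly valid, after which the comparison is mechanical. This matches the assertion in the excerpt that ``the proof follows immediately by comparing \cref{l:rectangles} to \cref{def:super2}.''
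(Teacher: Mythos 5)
Your proposal is correct and takes essentially the same approach as the paper, which simply asserts that the result ``follows immediately by comparing \cref{l:rectangles} to \cref{def:super2}.'' You carry out exactly the term-by-term substitution that the paper leaves implicit: observing that $f_{(i+1)\times j} = v_{(i+1)\times j} - v_{i\times(j-1)}$ and $f_{i\times(j+1)} = v_{i\times(j+1)} - v_{(i-1)\times j}$ so that the right-hand sides of \eqref{eq:3gen}, \eqref{eq:4gen} collapse to single $f$-coordinates, that $(i\times j)^- = (i-1)\times(j-1)$ for a rectangle, and that the telescoping inverse makes $F$ unimodular.
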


\subsection{The connection with order polytopes}\label{s:order}
In this section we briefly review some background on order polytopes, then 
explain how our superpotential polytopes in vertex coordinates
are related to order polytopes.  These results will be a crucial tool in identifying
the superpotential polytope with the Newton-Okounkov body, 
see
 \cref{thm:rectanglesproof}.

\begin{definition}\label{def:order}\cite{order}
	Let $(P, \leq)$ be a (finite) poset, and let $a \lessdot b$ denote the cover relations.
	The \emph{order polytope} $\OO(P)$ 
	of $P$ is the subset of $\R^P$ consisting
	of points $\mathbf{f} = (f_a)_{a\in P}$ such that
	\begin{align}
		0 &\leq f_a \leq 1 \text{ for all } a\in P\label{e:OrderPolytope1}\\
		f_a & \leq f_b \text{ whenever } a<b \text{ in }P.\label{e:OrderPolytope2}
	\end{align}
\end{definition}

If we set $r_1=r_2=1$ in the right of 
\cref{fig:superpotential-442},
then the resulting inequalities define
 an order polytope.

An immediate observation from \cite[Section 1]{order} is a characterization
of facets of $\OO(P)$.
\begin{lemma}\label{lem:facets}
	There are three types of facets for an order polytope $\OO(P)$. Namely,
	\begin{enumerate}
	\item the set of $f\in \OO(P)$ satisfying $f_x=0$ for some fixed minimal $x\in P$,
\item the set of $f\in \OO(P)$ satisfying $f_x=1$ for some fixed maximal $x\in P$,
\item the set of $f\in \OO(P)$ satisfying $f_x=f_y$ for some fixed $x,y$ such that $x\lessdot y$.
	\end{enumerate}	
\end{lemma}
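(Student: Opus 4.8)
\textbf{Proof proposal for Lemma~\ref{lem:facets}.}

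The plan is to invoke the theory of order polytopes developed by Stanley in \cite{order}, where this characterization of facets is essentially established, and to give the short argument adapting it to our formulation. First I would recall the defining inequalities of $\OO(P)$ from \cref{def:order}: the inequalities $f_a \geq 0$ for $a \in P$, the inequalities $f_a \leq 1$ for $a \in P$, and the inequalities $f_a \leq f_b$ whenever $a \lessdot b$ is a cover relation. (Note that the inequalities $f_a \leq f_b$ for arbitrary $a < b$ follow by transitivity from the cover relations, so the cover relations suffice to cut out $\OO(P)$.) The strategy is to show that among this list of inequalities, the irredundant ones — i.e.\ those defining facets — are precisely: $f_x \geq 0$ for $x$ \emph{minimal}, $f_x \leq 1$ for $x$ \emph{maximal}, and $f_x \leq f_y$ for $x \lessdot y$.

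The key steps are as follows. First I would verify that each inequality in the reduced list does define a facet, by exhibiting, for each one, an affine-independent collection of $|P|$ points of $\OO(P)$ lying on the corresponding hyperplane; equivalently, one checks that the hyperplane section has dimension $|P|-1$. For the cover-relation facet $f_x = f_y$ with $x \lessdot y$, one uses the standard description of $\OO(P)$ in terms of order-preserving maps $P \to [0,1]$: the section is itself (integrally equivalent to) the order polytope of the poset $P/(x\sim y)$ obtained by identifying $x$ and $y$, which has dimension $|P|-1$ since $x \lessdot y$ means no element lies strictly between them. For $f_x = 0$ with $x$ minimal, the section is $\OO(P \setminus \{x\})$, again of dimension $|P|-1$; similarly for $f_x = 1$ with $x$ maximal. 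Second, I would show that the remaining inequalities are redundant: if $x$ is not minimal, pick $z \lessdot x$, and then $f_x \geq f_z \geq \cdots \geq 0$ shows $f_x \geq 0$ is implied by a cover relation and the minimality inequality lower down; dually for non-maximal $x$ and $f_x \leq 1$; and $f_a \leq f_b$ for non-cover $a < b$ is implied by the chain of cover relations from $a$ to $b$. Finally, since $\OO(P)$ is full-dimensional in $\R^P$, the facets are in bijection with the irredundant defining inequalities, giving exactly the three types claimed.

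I do not expect any serious obstacle here: this is a direct citation of \cite[Section~1]{order}, and the only work is bookkeeping to match Stanley's conventions (he may phrase facets in terms of the poset $\hat P$ with adjoined $\hat 0$ and $\hat 1$, in which case the minimal/maximal facets become cover relations $\hat 0 \lessdot x$ and $x \lessdot \hat 1$) to the conventions used in \cref{def:order}. The mildest point of care is ensuring that when $P$ has a unique minimal element $c_0$ — as happens for $P(\lambda)$ — the facet $f_{c_0} = 0$ is correctly counted as a single facet of type~(1), and that the dimension count for the cover-relation sections is valid even when $x$ or $y$ is extremal in $P$.
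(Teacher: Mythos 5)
Your proposal is correct and follows the same route as the paper, which simply cites this as an immediate observation from Section~1 of Stanley's paper \cite{order} without spelling out the details. Your elaboration of the irredundancy argument and the dimension counts is sound and matches what one would extract from that reference.
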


Recall that a \emph{filter} (or \emph{dual order ideal}) $J$ of $P$
is a subset 
of $P$ such that whenever $a\in J$ and $b \geq a$, then also $b\in J$.
Let $\chi_J:P \to \R$ denote the characteristic function of $J$; i.e.
$$\chi_J(a) = \begin{cases}  1, \text{ if }x\in J\\
	  0, \text{ if }x\notin J
\end{cases}$$

\begin{proposition}\label{p:StanleyVertices}\cite[Corollary 1.3 and Theorem 4.1]{order} 
The vertices of $\OO(P)$ are precisely the characteristic
functions
	$\chi_J$ of filters $J$ of $P$.  All lattice points of $\OO(P)$ are vertices.
\end{proposition}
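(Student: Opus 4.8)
\textbf{Proof proposal for \cref{p:StanleyVertices}.}

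The plan is to cite Stanley's foundational paper \cite{order} directly, since \cref{p:StanleyVertices} is precisely \cite[Corollary 1.3 and Theorem 4.1]{order}; however, a self-contained argument is short enough to sketch. First I would establish that every characteristic function $\chi_J$ of a filter $J \subseteq P$ is a vertex of $\OO(P)$. To see $\chi_J \in \OO(P)$, observe that the inequalities \eqref{e:OrderPolytope1} hold trivially since $\chi_J$ is $\{0,1\}$-valued; and \eqref{e:OrderPolytope2} holds because if $a < b$ and $\chi_J(a) = 1$, i.e. $a \in J$, then $b \in J$ by the filter (dual order ideal) property, so $\chi_J(b) = 1 \geq \chi_J(a)$. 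To see that $\chi_J$ is a vertex, I would exhibit a linear functional uniquely maximized there: take $\ell(\mathbf{f}) = \sum_{a \in J} f_a - \sum_{a \notin J} f_a$. Any $\mathbf{f} \in \OO(P)$ satisfies $\ell(\mathbf f) \le |J|$ with equality forcing $f_a = 1$ for $a \in J$ and $f_a = 0$ for $a \notin J$, i.e. $\mathbf f = \chi_J$; hence $\chi_J$ is the unique maximizer and thus a vertex.

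Next I would show the converse: every vertex of $\OO(P)$ is of the form $\chi_J$. Equivalently, I would prove the stronger statement that every lattice point $\mathbf m \in \OO(P) \cap \Z^P$ is a vertex of the form $\chi_J$, which simultaneously gives the last sentence of the proposition. Given such $\mathbf m$, the inequalities \eqref{e:OrderPolytope1} force each coordinate $m_a \in \{0,1\}$, so $\mathbf m = \chi_J$ for the set $J := \{a \in P : m_a = 1\}$; and \eqref{e:OrderPolytope2} applied to $a < b$ with $a \in J$ gives $m_b \geq m_a = 1$, so $b \in J$, i.e. $J$ is a filter. By the previous paragraph $\mathbf m = \chi_J$ is a vertex. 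Since every vertex of a polytope is in particular a lattice point when the polytope is integral — and $\OO(P)$ is integral, being cut out by the totally unimodular system \eqref{e:OrderPolytope1}--\eqref{e:OrderPolytope2}, or alternatively since we have just shown all its lattice points are vertices and conversely — the set of vertices coincides exactly with $\{\chi_J : J \text{ a filter of } P\}$, and all lattice points are vertices.

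The main obstacle, such as it is, is the claim that $\OO(P)$ has no non-integral vertices; the cleanest route is to note that the constraint matrix of the system \eqref{e:OrderPolytope1}--\eqref{e:OrderPolytope2} is an incidence-type matrix that is totally unimodular (the inequalities $f_a \le f_b$ correspond to a network/difference structure on the Hasse diagram, and the box constraints $0 \le f_a \le 1$ only add unit rows), so all vertices of the polyhedron it defines are integral. Alternatively, one avoids this entirely by the argument above: since \emph{every} lattice point of $\OO(P)$ was shown to be a vertex of the stated form, and every vertex is a convex-combination-extreme point lying in $\OO(P)$, one shows directly that a non-integral point cannot be extreme by averaging over the chain decomposition of $P$ induced by the distinct values of $\mathbf f$ — this is exactly Stanley's argument in \cite{order} and I would simply refer to it rather than reproduce the details.
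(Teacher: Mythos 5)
Your proposal is correct and matches the paper's treatment: the paper simply cites Stanley's \cite[Corollary 1.3 and Theorem 4.1]{order} with no further proof, and you cite the same source while also supplying a short, correct self-contained sketch (the linear-functional argument for vertex-hood of $\chi_J$, the $\{0,1\}$-forcing of lattice points, and total unimodularity or Stanley's chain argument for integrality of the vertices). Nothing to flag.
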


Let $e(P)$ denote the number of linear extensions of the poset $P$.

\begin{theorem}\cite[Corollary 4.2]{order}\label{thm:volume}
Let $P$ be a poset with $n$ elements.  Then the volume of the order
polytope $\OO(P)$ is $e(P)/n!$.
\end{theorem}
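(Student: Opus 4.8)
The statement to prove is Theorem 2.30 (\cite[Corollary 4.2]{order}): for a poset $P$ with $n$ elements, the volume of the order polytope $\OO(P)$ equals $e(P)/n!$, where $e(P)$ is the number of linear extensions of $P$. Since this is quoted from Stanley's paper, the proof I would give is the standard one: triangulate $\OO(P)$ into $e(P)$ simplices, each of which is a translate of a unimodular copy of the standard simplex, hence each has volume $1/n!$.

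\textbf{Plan of proof.} The plan is to exhibit an explicit triangulation of $\OO(P)$ indexed by the linear extensions of $P$. First I would fix a labeling $P = \{a_1, \dots, a_n\}$ compatible with the partial order (so $a_i < a_j$ in $P$ implies $i < j$), which lets us identify $\R^P$ with $\R^n$ via $\mathbf{f} \mapsto (f_{a_1}, \dots, f_{a_n})$. For each permutation $w \in S_n$ such that the linear order $a_{w(1)} \prec a_{w(2)} \prec \dots \prec a_{w(n)}$ is a linear extension of $P$, define the simplex
\[
\Delta_w := \{\mathbf{f} \in \OO(P) \mid 0 \leq f_{a_{w(1)}} \leq f_{a_{w(2)}} \leq \dots \leq f_{a_{w(n)}} \leq 1\}.
\]
I would then verify three things: (1) each $\Delta_w$ is contained in $\OO(P)$ — this is immediate, since the chain of inequalities $0 \le f_{a_{w(1)}} \le \dots \le f_{a_{w(n)}} \le 1$ forces $0 \le f_a \le 1$ for all $a$ and forces $f_a \le f_b$ whenever $a < b$ in $P$, because $a < b$ implies $a$ precedes $b$ in the linear extension; (2) the union $\bigcup_w \Delta_w$ is all of $\OO(P)$ — given any $\mathbf{f} \in \OO(P)$, sort its coordinates into weakly increasing order; the sorting permutation can be chosen to be a linear extension of $P$ precisely because $\mathbf{f}$ already satisfies $f_a \le f_b$ for $a < b$, so ties can always be broken consistently with $P$; (3) the simplices have pairwise disjoint interiors — two distinct linear extensions $w \neq w'$ impose incompatible strict orderings on generic coordinates, so $\Delta_w \cap \Delta_{w'}$ lies in a hyperplane $\{f_{a_i} = f_{a_j}\}$ and has measure zero.

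\textbf{Volume computation.} Having established the triangulation, I would compute the volume of a single simplex $\Delta_w$. After relabeling coordinates via the permutation $w$ (which is a unimodular change of coordinates, hence volume-preserving), $\Delta_w$ becomes $\{0 \le g_1 \le g_2 \le \dots \le g_n \le 1\}$, the classical order simplex. Its volume is $1/n!$, which one sees either by the change of variables $g_i = h_1 + \dots + h_i$ (Jacobian $1$) mapping it onto the standard simplex $\{h_i \ge 0, \sum h_i \le 1\}$, or directly by iterated integration. Summing over all linear extensions gives
\[
\Vol(\OO(P)) = \sum_{w} \Vol(\Delta_w) = \frac{e(P)}{n!}.
\]

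\textbf{Anticipated obstacle.} This is a classical result with a clean combinatorial proof, so there is no serious obstacle; the only point requiring genuine care is step (2)–(3), the claim that the simplices $\Delta_w$ genuinely tile $\OO(P)$ (cover it, with disjoint interiors) — in particular that the sorting permutation of any point of $\OO(P)$ can be taken to be a linear extension, and that distinct linear extensions contribute simplices meeting only in measure-zero sets. Both follow from a careful but elementary analysis of how ties among coordinates interact with the poset relations: the constraint $f_a \le f_b$ for $a <_P b$ is exactly what guarantees that the ``staircase'' regions indexed by linear extensions fit together to form the order polytope. Since the statement is being invoked as a known theorem from \cite{order}, in the paper itself it suffices to cite the reference, and the sketch above indicates the content of that citation.
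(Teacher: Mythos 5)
Your proof is correct and is essentially Stanley's own argument from the cited reference \cite[Corollary 4.2]{order}: the paper itself does not re-prove this statement but simply invokes it, and your triangulation of $\OO(P)$ into order simplices $\Delta_w$ indexed by linear extensions, each unimodularly equivalent to the standard simplex of volume $1/n!$, is precisely the content of that citation. The three verification steps you flag (containment, covering via a compatible sorting permutation, and measure-zero overlaps on hyperplanes $\{f_{a_i}=f_{a_j}\}$) are exactly the points Stanley checks, so nothing is missing.
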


We now focus on the case that our poset equals 
the poset $P(\lambda)$ from \cref{def:posetlambda}.
\begin{defn}\label{def:markedorder}
Choose nonnegative numbers
$\mathbf{r} = (r_1, \dots, r_d)$, and let 
$\OO^{\mathbf{r}}(\lambda)$
denote the subset of $\R^{P(\lambda)}$ 
defined by the inequalities ~
\eqref{e:OrderPolytope2}  
for the poset $P=P(\lambda)$ 
	together with 
\begin{equation}
0\le f_{c_0} \quad \text{ and }\quad  f_{c_\ell}\le  r_\ell \quad \text{ for  $\ell=1,\dotsc, d$.} 
\end{equation}
See 
		the right of \cref{fig:superpotential-442}
		for an example.
The polytope 
$\OO^{\mathbf{r}}(\lambda)$ 
is an example of a \emph{marked order polytope}
 \cite{marked}.
When each $r_i = 1$, 
$\OO^{\mathbf{r}}(\lambda)$ 
recovers the order polytope
of the poset $P({\lambda})$, which we denote by $\OO(\lambda)$. 
\end{defn}

\begin{remark}\label{rem:marked}
Clearly  $\OO^{\mathbf{r}}(\lambda)$ is full-dimensional if and only if 
each $r_i$ is positive.
\end{remark}

\begin{lemma}\label{lem:faces}
Let $\nu \subseteq \lambda$ be partitions.
Then $\OO(\nu)$ is a face of 
$\OO(\lambda)$.
\end{lemma}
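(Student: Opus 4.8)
The plan is to show that $\mathcal O(\nu)$ arises as the face of $\mathcal O(\lambda)$ obtained by setting certain coordinates equal to $0$, using \cref{p:StanleyVertices} and \cref{lem:facets}. First I would observe that since $\nu\subseteq\lambda$, every box of $\nu$ is a box of $\lambda$, and moreover for any box $b$ of $\nu$ the rectangle $\Rect_\nu(b)$ computed inside $\nu$ equals $\Rect_\lambda(b)$ computed inside $\lambda$ — indeed $\Rect(b)$ is the maximal rectangle with SE corner $b$, and since $b$ lies in the first row/column region constrained by $\nu\subseteq\lambda$, the rectangle is the same. Hence $P(\nu)$ is the induced subposet of $P(\lambda)$ on the set $S_\nu\subseteq S_\lambda$ of boxes of $\nu$, and this subposet is an order ideal (downward-closed): if $b'\le b$ in $P(\lambda)$ and $b\in S_\nu$, then $\Rect(b')\subseteq\Rect(b)\subseteq\nu$, so $b'\in S_\nu$.

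Next I would identify the face. Let $J_0 := S_\lambda\setminus S_\nu$, which is a filter (dual order ideal) of $P(\lambda)$ since its complement $S_\nu$ is an order ideal. Consider the face $F$ of $\mathcal O(\lambda)$ cut out by the equations $f_b = 0$ for all $b\in S_\lambda\setminus S_\nu$; equivalently, $F$ is the intersection of $\mathcal O(\lambda)$ with the facets of type (1) from \cref{lem:facets} corresponding to... well, actually not all boxes of $S_\lambda\setminus S_\nu$ are minimal, so I should be more careful: $F$ is the intersection with the supporting hyperplanes $\{f_b=0\}$. Since $0\le f_a$ for all $a$ and $f_a\le f_b$ whenever $a<b$, if $f_b=0$ for all $b\notin S_\nu$ then for any $a$ with $a<b$ for some $b\notin S_\nu$ we'd need... hmm, actually the constraint $f_a\le f_b$ goes the wrong way. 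Let me reconsider: the right move is to set $f_b = 1$ for the complement. Since $S_\lambda\setminus S_\nu$ is a filter, and $\mathcal O(\lambda)$ satisfies $f_a\le f_b$ for $a<b$, the face $F := \mathcal O(\lambda)\cap\{f_b=1 : b\in S_\lambda\setminus S_\nu\}$ is well-defined, and on it the inequalities $f_a\le f_b$ with $a\in S_\nu$, $b\notin S_\nu$ become $f_a\le 1$, which are already implied; the remaining active inequalities are exactly $0\le f_a$ and $f_a\le f_{a'}$ for $a,a'\in S_\nu$, i.e. the defining inequalities of $\mathcal O(\nu) = \mathcal O(P(\nu))$ in the coordinates $(f_a)_{a\in S_\nu}$. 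The projection $\R^{P(\lambda)}\to\R^{P(\nu)}$ forgetting the coordinates in $S_\lambda\setminus S_\nu$ restricts to an affine (indeed linear, lattice-preserving) isomorphism $F\xrightarrow{\sim}\mathcal O(\nu)$, with inverse given by filling in $1$'s on the complement.

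Finally I would check that $F$ is genuinely a \emph{face}: it is the intersection of $\mathcal O(\lambda)$ with the supporting hyperplanes $\{f_b=1\}$ for $b$ ranging over the maximal elements of the filter $S_\lambda\setminus S_\nu$ (the other equations $f_b=1$ for non-maximal $b$ in the filter then follow from the order inequalities together with $f_b\le 1$), so $F$ is a face by \cref{lem:facets}(2) combined with the order relations; alternatively one notes directly that $F = \mathcal O(\lambda)\cap H$ where $H=\bigcap_{b\ \mathrm{maximal\ in\ }S_\lambda\setminus S_\nu}\{f_b=1\}$ and each $\{f_b=1\}$ is a supporting hyperplane. The main obstacle, and the only genuinely substantive point, is verifying that $S_\nu$ is an order ideal of $P(\lambda)$ and that setting the complementary coordinates to $1$ really does recover precisely the $\mathcal O(\nu)$ inequalities with no extra constraints; everything else is bookkeeping about order polytopes from \cite{order}. (I would also remark that, up to the reflection $f_a\mapsto 1-f_{a^*}$ through the poset, one could equally realize $\mathcal O(\nu)$ by setting complementary coordinates to $0$ using an order ideal instead of a filter; I will use whichever direction makes the inequalities cleanest.)
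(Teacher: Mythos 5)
Your overall approach matches the paper's: both proofs realize $\OO(\nu)$ inside $\R^{P(\lambda)}$ via the embedding that sets $f_b = 1$ for every $b \in S_\lambda \setminus S_\nu$ (the filter complementary to $S_\nu$), and then identify the resulting set with a face of $\OO(\lambda)$. The paper does this one box at a time by induction, using \cref{lem:facets}(2) at each step since removing an outer corner of $\lambda$ removes a maximal element of $P(\lambda)$; you do it in a single step, which is perfectly valid, and your observations that $P(\nu)$ is the induced subposet of $P(\lambda)$ on $S_\nu$ and that $S_\nu$ is an order ideal (so its complement is a filter) are correct and are exactly the facts that make the one-shot argument go.

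However, the justification in your final paragraph that $F$ is a face has a genuine error. You claim that imposing $f_b = 1$ only for $b$ \emph{maximal} in the filter $J := S_\lambda\setminus S_\nu$, together with the order inequalities, forces $f_b = 1$ for the non-maximal $b \in J$. That implication goes the wrong way: if $b < c$ with $c$ maximal in $J$ and $f_c = 1$, the order inequality $f_b \le f_c$ gives only $f_b \le 1$, which is vacuous. (A concrete counterexample: for a two-element chain $P = \{a < b\}$ and $J = \{a,b\}$, the single maximal element is $b$, but $\{f_b = 1\}\cap \OO(P)$ is the whole edge $\{(f_a,1) : 0 \le f_a \le 1\}$, not the vertex $(1,1)$.) There are two clean fixes. (i) Observe that $f_b \le 1$ is one of Stanley's defining inequalities \eqref{e:OrderPolytope1} for \emph{every} $b \in P(\lambda)$, not just maximal ones, so $\{f_b = 1\}\cap \OO(\lambda)$ is a face of $\OO(\lambda)$ for all $b$, and $F$ is a finite intersection of faces, hence a face; equivalently, $F$ is where the valid inequality $\sum_{b\in J} f_b \le |J|$ is tight. (ii) If you do want to use only facets from \cref{lem:facets}, note that on $F$ you also have $f_x = f_y$ for every cover relation $x \lessdot y$ with $x,y \in J$ (type (3) facets); combined with the type (2) facets $\{f_c = 1\}$ for $c$ maximal in $P(\lambda)$ with $c \in J$, chasing a maximal chain from any $b \in J$ up to such a $c$ forces $f_b = 1$. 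Either repair closes the gap, after which your argument is complete and slightly more direct than the paper's induction.
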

\begin{proof}
Here we view $\OO(\nu)$ as lying in the vector space $\R^{P(\lambda)}$ via the inclusion $\R^{P(\nu)}\hookrightarrow \R^{P(\lambda)}$	 that sets the coordinates $f_b$ with $b\notin \nu$ to $1$. If $\nu$ is obtained from $\lambda$ by removing a single box $b$, then this box was a maximal element of $P(\lambda)$ and $\OO(\nu)$ is one of the facets of $\OO(\lambda)$ described in \cref{lem:facets}. The lemma now follows for general $\nu$ by recursion.   
\end{proof}

It is well known that the linear extensions of the poset $P(\lambda)$
are in bijection with the standard Young tableaux of shape $\lambda$.
Therefore we obtain the following result as an application of Theorem~\ref{thm:volume}.
\begin{corollary}\label{cor:volume}
If we let $|\lambda|$ denote
the number of boxes of $\lambda$, then the volume of the polytope $\OO(\lambda)$
is $\frac{1}{|\lambda|!}$ times the 
number of standard Young tableaux of 
shape $\lambda$.
\end{corollary}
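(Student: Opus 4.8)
The statement to prove is Corollary~\ref{cor:volume}: the volume of $\OO(\lambda)$ equals $\frac{1}{|\lambda|!}$ times the number of standard Young tableaux of shape $\lambda$.

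The plan is to deduce this directly from Theorem~\ref{thm:volume}, which states that for a poset $P$ on $n$ elements the volume of the order polytope $\OO(P)$ is $e(P)/n!$, where $e(P)$ is the number of linear extensions of $P$. Applying this with $P = P(\lambda)$, the poset of rectangles contained in $\lambda$ from \cref{def:posetlambda}, which has $|\lambda|$ elements (one for each box of $\lambda$), we get $\Vol(\OO(\lambda)) = e(P(\lambda))/|\lambda|!$. So the entire content of the corollary reduces to the claim, invoked in the text immediately before the statement, that the linear extensions of $P(\lambda)$ are in bijection with the standard Young tableaux of shape $\lambda$.

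First I would recall the definition of $P(\lambda)$: its elements are the boxes $b$ of $\lambda$, ordered by $b \le b'$ iff $\Rect(b) \subseteq \Rect(b')$. A linear extension of $P(\lambda)$ is a bijection $T: S_\lambda \to \{1,\dots,|\lambda|\}$ that is order-preserving, i.e.\ $b \le b'$ in $P(\lambda)$ implies $T(b) \le T(b')$. The key step is to check that the order relation $\Rect(b)\subseteq\Rect(b')$ on boxes of $\lambda$ coincides with the componentwise (coordinate) order on boxes: if $b$ sits in row $i$, column $j$ and $b'$ in row $i'$, column $j'$, then $\Rect(b) = (j,j,\dots,j)$ ($i$ rows) is contained in $\Rect(b') = (j',\dots,j')$ ($i'$ rows) precisely when $i \le i'$ and $j \le j'$. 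Hence $P(\lambda)$ is exactly the poset of boxes of $\lambda$ under the coordinatewise partial order, and an order-preserving bijection to $\{1,\dots,|\lambda|\}$ is, by definition, a standard Young tableau of shape $\lambda$ (rows increase left to right, columns increase top to bottom). This establishes the bijection, and combining it with Theorem~\ref{thm:volume} gives $\Vol(\OO(\lambda)) = \frac{f^\lambda}{|\lambda|!}$ where $f^\lambda$ is the number of standard Young tableaux of shape $\lambda$, as claimed.

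I expect no serious obstacle here: the only genuine point to verify is the identification of the order relation on $P(\lambda)$ with the coordinatewise box order, which is an elementary check about when one rectangle contains another, and the statement ``linear extensions of the box poset $=$ standard Young tableaux'' is classical (it is exactly the combinatorial reformulation of an SYT as a linear ordering of boxes compatible with going right and down). Thus the proof is essentially a one-line application of Theorem~\ref{thm:volume} once the bijection is spelled out, which matches the brief proof sketched in the excerpt.
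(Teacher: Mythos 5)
Your proof is correct and follows exactly the paper's approach: apply \cref{thm:volume} to $P = P(\lambda)$ and invoke the classical bijection between linear extensions of the box poset and standard Young tableaux of shape $\lambda$. Your explicit verification that $\Rect(b)\subseteq\Rect(b')$ is the coordinatewise order on boxes is a useful detail that the paper leaves implicit.
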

There are $252$ linear extensions 
of the poset 
in the right of 
\cref{fig:superpotential-442}, or equivalently, there
are $252$ standard Young tableaux of shape $(4,4,2)$.
Hence
the volume of the corresponding order polytope 
is $\frac{252}{10!}$.

\begin{proposition}\label{p:unimodular2} 
The superpotential 
	polytope $\rGamma^{\lambda}_{{\rect}}({\mathbf{r},\mathbf{0})}$
 from \cref{def:super2}	coincides with the marked order polytope 
$\OO^{\mathbf{r}}(\lambda)$.
	When 
	$r_1=\dots=r_d=1$, 
	 the superpotential polytope $\rGamma^{\lambda}_{{\rect}} = 
	 \rGamma^{\lambda}_{{\rect}}(\mathbf{1},\mathbf {0})$ agrees with
	 the order polytope $\OO(\lambda)$, and hence 
	the volumes of 
	 $\rGamma^{\lambda}_{{\rect}}$ and of
	 $\Gamma^{\lambda}_{{\rect}}$
	 equal $\frac{1}{|\lambda|!}$ times the number
	 of standard Young tableaux of shape~$\lambda$.
\end{proposition}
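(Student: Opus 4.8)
The strategy is to compare the inequality description of $\rGamma^{\lambda}_{\rect}(\mathbf{r},\mathbf{0})$ from \cref{def:super2} directly with the inequality description of the marked order polytope $\OO^{\mathbf{r}}(\lambda)$ from \cref{def:markedorder}, and show they are literally the same system of inequalities under the identification of the variables $f_{i\times j}$ with the coordinates $f_b$ indexed by boxes $b$ of $\lambda$ (equivalently, by the rectangle $\Rect(b)=i\times j$). First I would set $r'_1=\dots=r'_{n-1}=0$ in \cref{def:super2}; then the four families of inequalities there become: (i) $f_{i\times j}\le r_\ell$ whenever $x_{ij}$ is adjacent to $q_\ell$ in the quiver $Q_\lambda$, (ii) $0\le f_{1\times 1}$, (iii) $f_{i\times j}\le f_{(i+1)\times j}$ whenever $(i+1)\times j\subseteq\lambda$, and (iv) $f_{i\times j}\le f_{i\times(j+1)}$ whenever $i\times(j+1)\subseteq\lambda$.

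The key observation is that the cover relations of $P(\lambda)$ are exactly given by adding a row or a column to a rectangle (i.e.\ $i\times j\lessdot (i{+}1)\times j$ and $i\times j\lessdot i\times(j{+}1)$, whenever the larger rectangle still fits in $\lambda$), so inequalities (iii) and (iv) are precisely the relations \eqref{e:OrderPolytope2} for $P=P(\lambda)$. Next, by \cref{def:posetlambda} the unique minimal element of $P(\lambda)$ is the box $c_0$ in the NW corner, which corresponds to the $1\times 1$ rectangle; hence (ii) is exactly the constraint $0\le f_{c_0}$. Finally I would identify the vertex $x_{ij}$ adjacent to $q_\ell$ in $Q_\lambda$ (as in \cref{def:superquiver}): by construction the arrow into $v_\ell$ emanates from $v(i_\ell,\lambda_{i_\ell})$, the vertex in the box at the end of the row $i_\ell$ containing the $\ell$-th outer corner, i.e.\ the outer corner box $c_\ell=b_{\rho_{2\ell+1}}$ itself; so (i) becomes $f_{c_\ell}\le r_\ell$ for $\ell=1,\dots,d$. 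Comparing with \cref{def:markedorder}, where the only constraints beyond \eqref{e:OrderPolytope2} are $0\le f_{c_0}$ and $f_{c_\ell}\le r_\ell$, we conclude $\rGamma^{\lambda}_{\rect}(\mathbf{r},\mathbf{0})=\OO^{\mathbf{r}}(\lambda)$.

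For the specialization $r_1=\dots=r_d=1$, \cref{def:markedorder} says $\OO^{\mathbf{1}}(\lambda)$ is the order polytope $\OO(\lambda)$: indeed the constraints become $0\le f_{c_0}$, $f_{c_\ell}\le 1$ for the maximal elements, and $f_a\le f_b$ for $a<b$; since every element of $P(\lambda)$ lies between $c_0$ and some $c_\ell$, these imply $0\le f_a\le 1$ for all $a$, so the system coincides with \eqref{e:OrderPolytope1}--\eqref{e:OrderPolytope2}. Hence $\rGamma^{\lambda}_{\rect}=\OO(\lambda)$. The volume statement then follows by combining \cref{cor:volume} (the volume of $\OO(\lambda)$ is $\tfrac{1}{|\lambda|!}$ times the number of standard Young tableaux of shape $\lambda$, since linear extensions of $P(\lambda)$ biject with such tableaux) with \cref{p:unimodular}, which gives that $\Gamma^{\lambda}_{\rect}$ and $\rGamma^{\lambda}_{\rect}$ are integrally equivalent and hence have equal volume.

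\textbf{Main obstacle.} There is no serious analytic difficulty here; the entire argument is a bookkeeping comparison of two explicitly given inequality systems. The one point requiring genuine care is the identification in step (i): one must check against \cref{def:superquiver} and the row/column indexing conventions of \cref{def:super2} that the variable $x_{ij}$ sitting at the tail of the arrow into $q_\ell$ is indeed the outer-corner box $c_\ell$ of the poset $P(\lambda)$, and in particular that ``adjacent to $q_\ell$'' picks out exactly the maximal elements of $P(\lambda)$ with the correct matching of indices $\ell$. Once the conventions are aligned this is immediate, and the proof is essentially a one-paragraph verification, as the paper indicates by saying the result ``follows immediately by comparing \cref{l:rectangles} to \cref{def:super2}'' (here in the cleaner vertex coordinates).
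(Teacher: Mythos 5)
Your proof is correct and takes essentially the same approach as the paper: the paper's own proof consists of the observation that the first statement ``follows from the definitions'' (the line-by-line comparison of \cref{def:super2} with \cref{def:markedorder} that you spell out) together with the appeal to \cref{cor:volume} and \cref{p:unimodular} for the volume computation. You have simply written out in full the bookkeeping that the paper leaves implicit, including the one nontrivial alignment of conventions (that the tail of the arrow into $q_\ell$ in $Q_\lambda$ is the outer-corner box $c_\ell$), which is indeed the only place requiring care.
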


	\begin{proof}
The first statement follows
from the definitions.
The statement about the volume of $\Q^{\lambda}_{{\rect}}$ follows from 
 \cref{cor:volume}.
\end{proof}

We now generalize \cref{p:unimodular2}  and show that each superpotential polytope
$\rGamma_{\rect}^{\lambda}(\rr,\rr')$
 is a translation of a marked order polytope. 

\begin{definition}\label{def:al}
Let us associate to $\lambda$ an arrow-labeling of the quiver $Q_\lambda$ from \cref{def:superquiver}, in which the arrow pointing to $q_\ell$ is labelled $r_\ell$, the arrow with source $1$ is labelled $r'_{n-k}$, the vertical arrows in row $i$ from the bottom are all labelled $r'_i$, and the horizontal arrows in the  $j^{th}$ column are labelled $r'_{n-k+j}$. 
\end{definition}
 An example of the arrow-labeling from \cref{def:al} is shown at the left of 
\cref{f:Dtilde}.

\cref{prop:markedorder}  describes $\rGamma_{\rect}^{\lambda}(\rr,\rr')$ as a 
translated marked order polytope, with marking and translation vector encoded in this 
arrow-labeling, and follows immediately from 
	 \cref{def:markedorder} and 
	\cref{def:super2}.

\begin{figure}
	\[\adjustbox{scale=.8,center}{\begin{tikzcd}
	{\star_0} \\
	& \bullet & \bullet & \bullet & \bullet & \bullet & \bullet \\
	& \bullet & \bullet & \bullet & \bullet & \bullet & \bullet \\
	& \bullet & \bullet & \bullet & \bullet & \bullet & \bullet & {\star_1} \\
	& \bullet & \bullet & \bullet & \bullet & {\star_2} \\
	& \bullet & \bullet \\
	& \bullet & \bullet & {\star_{3}} \\
	& \bullet \\
	& \bullet & {\star_4}
	\arrow["{r_8'}", from=1-1, to=2-2]
	\arrow["{r_9'}", from=2-2, to=2-3]
	\arrow["{r'_7}", from=2-2, to=3-2]
	\arrow["{r'_{10}}", from=2-3, to=2-4]
	\arrow["{r'_{7}}", from=2-3, to=3-3]
	\arrow["{r'_{11}}", from=2-4, to=2-5]
	\arrow["{r'_{7}}", from=2-4, to=3-4]
	\arrow["{r'_{12}}", from=2-5, to=2-6]
	\arrow["{r'_{7}}", from=2-5, to=3-5]
	\arrow["{r'_{13}}", from=2-6, to=2-7]
	\arrow["{r'_{7}}", from=2-6, to=3-6]
	\arrow["{r'_{7}}", from=2-7, to=3-7]
	\arrow["{r'_9}", from=3-2, to=3-3]
	\arrow["{r_6'}", from=3-2, to=4-2]
	\arrow["{r'_{10}}", from=3-3, to=3-4]
	\arrow["{r'_{6}}", from=3-3, to=4-3]
	\arrow["{r'_{11}}", from=3-4, to=3-5]
	\arrow["{r'_{6}}", from=3-4, to=4-4]
	\arrow["{r'_{12}}", from=3-5, to=3-6]
	\arrow["{r'_{6}}", from=3-5, to=4-5]
	\arrow["{r'_{13}}", from=3-6, to=3-7]
	\arrow["{r'_{6}}", from=3-6, to=4-6]
	\arrow["{r'_{6}}", from=3-7, to=4-7]
	\arrow["{r'_9}", from=4-2, to=4-3]
	\arrow["{r_5'}", from=4-2, to=5-2]
	\arrow["{r'_{10}}", from=4-3, to=4-4]
	\arrow["{r'_{5}}", from=4-3, to=5-3]
	\arrow["{r'_{11}}", from=4-4, to=4-5]
	\arrow["{r'_{5}}", from=4-4, to=5-4]
	\arrow["{r'_{12}}", from=4-5, to=4-6]
	\arrow["{r'_{5}}", from=4-5, to=5-5]
	\arrow["{r'_{13}}", from=4-6, to=4-7]
	\arrow["{r_1}", from=4-7, to=4-8]
	\arrow["{r'_9}", from=5-2, to=5-3]
	\arrow["{r'_4}", from=5-2, to=6-2]
	\arrow["{r'_{10}}", from=5-3, to=5-4]
	\arrow["{r'_4}", from=5-3, to=6-3]
	\arrow["{r'_{11}}", from=5-4, to=5-5]
	\arrow["{r_2}", from=5-5, to=5-6]
	\arrow["{r'_9}", from=6-2, to=6-3]
	\arrow["{r_3'}", from=6-2, to=7-2]
	\arrow["{r_3'}", from=6-3, to=7-3]
	\arrow["{r'_9}", from=7-2, to=7-3]
	\arrow["{r_2'}", from=7-2, to=8-2]
	\arrow["{r_3}"{pos=0.6}, from=7-3, to=7-4]
	\arrow["{r_1'}", from=8-2, to=9-2]
	\arrow["{r_4}"', from=9-2, to=9-3]
	\end{tikzcd} \hspace{.1cm}
	\begin{tikzcd}
	\star_0 \\
	& \bullet & \bullet & \bullet & \bullet & \bullet & \bullet \\
	& \bullet & \bullet & \bullet & \bullet & \bullet & \bullet \\
	& \bullet & \bullet & \bullet & \bullet & \bullet & \bullet & {\star_1} \\
	& \bullet & \bullet & \bullet & \bullet \\
	& \bullet & \bullet \\
	& \bullet & \bullet && {\quad \star_{2,3}} \\
	& \bullet \\
	& \bullet & {\star_4}
	\arrow["{r_8'}", from=1-1, to=2-2]
	\arrow["{r_9'}", from=2-2, to=2-3]
	\arrow["{r'_7}", from=2-2, to=3-2]
	\arrow["{r'_{10}}", from=2-3, to=2-4]
	\arrow["{r'_{7}}", from=2-3, to=3-3]
	\arrow["{r'_{11}}", from=2-4, to=2-5]
	\arrow["{r'_{7}}", from=2-4, to=3-4]
	\arrow["{r'_{12}}", from=2-5, to=2-6]
	\arrow["{r'_{7}}", from=2-5, to=3-5]
	\arrow["{r'_{13}}", from=2-6, to=2-7]
	\arrow["{r'_{7}}", from=2-6, to=3-6]
	\arrow["{r'_{7}}", from=2-7, to=3-7]
	\arrow["{r'_9}", from=3-2, to=3-3]
	\arrow["{r_6'}", from=3-2, to=4-2]
	\arrow["{r'_{10}}", from=3-3, to=3-4]
	\arrow["{r'_{6}}", from=3-3, to=4-3]
	\arrow["{r'_{11}}", from=3-4, to=3-5]
	\arrow["{r'_{6}}", from=3-4, to=4-4]
	\arrow["{r'_{12}}", from=3-5, to=3-6]
	\arrow["{r'_{6}}", from=3-5, to=4-5]
	\arrow["{r'_{13}}", from=3-6, to=3-7]
	\arrow["{r'_{6}}", from=3-6, to=4-6]
	\arrow["{r'_{6}}", from=3-7, to=4-7]
	\arrow["{r'_9}", from=4-2, to=4-3]
	\arrow["{r_5'}", from=4-2, to=5-2]
	\arrow["{r'_{10}}", from=4-3, to=4-4]
	\arrow["{r'_{5}}", from=4-3, to=5-3]
	\arrow["{r'_{11}}", from=4-4, to=4-5]
	\arrow["{r'_{5}}", from=4-4, to=5-4]
	\arrow["{r'_{12}}", from=4-5, to=4-6]
	\arrow["{r'_{5}}", from=4-5, to=5-5]
	\arrow["{r'_{13}}", from=4-6, to=4-7]
	\arrow["{r_1}", from=4-7, to=4-8]
	\arrow["{r'_9}", from=5-2, to=5-3]
	\arrow["{r_4'}", from=5-2, to=6-2]
	\arrow["{r'_{10}}", from=5-3, to=5-4]
	\arrow["{r'_4}", from=5-3, to=6-3]
	\arrow["{r'_{11}}", from=5-4, to=5-5]
	\arrow["{r_2}", from=5-5, to=7-5]
	\arrow["{r'_9}", from=6-2, to=6-3]
	\arrow["{r_3'}", from=6-2, to=7-2]
	\arrow["{r_3'}", from=6-3, to=7-3]
	\arrow["{r'_9}", from=7-2, to=7-3]
	\arrow["{r_2'}", from=7-2, to=8-2]
	\arrow["{r_3}"{pos=0.6}, from=7-3, to=7-5]
	\arrow["{r_1'}", from=8-2, to=9-2]
	\arrow["{r_4}"', from=9-2, to=9-3]
\end{tikzcd}}\]
\caption {At left: an arrow-labeling 
that encodes the translation vector from \cref{prop:markedorder}.
At right: the same arrow-labeling on a related quiver.
In \cref{s:toric}, these arrow-labelings will be used to encode
the Weil divisor
$D_{(\mathbf r,\mathbf {r'})}$ in $X_\lambda$, 
and the Weil divisor $\widetilde D_{(\mathbf r,\mathbf{r'})}$ in $Y(\overline{\FF_\lambda})$.  
Here $\lambda=(6,6,6,4,2,2,1,1)$, $n=14$, $k=6$, and $d=4$.
\label{f:Dtilde}}
\end{figure}
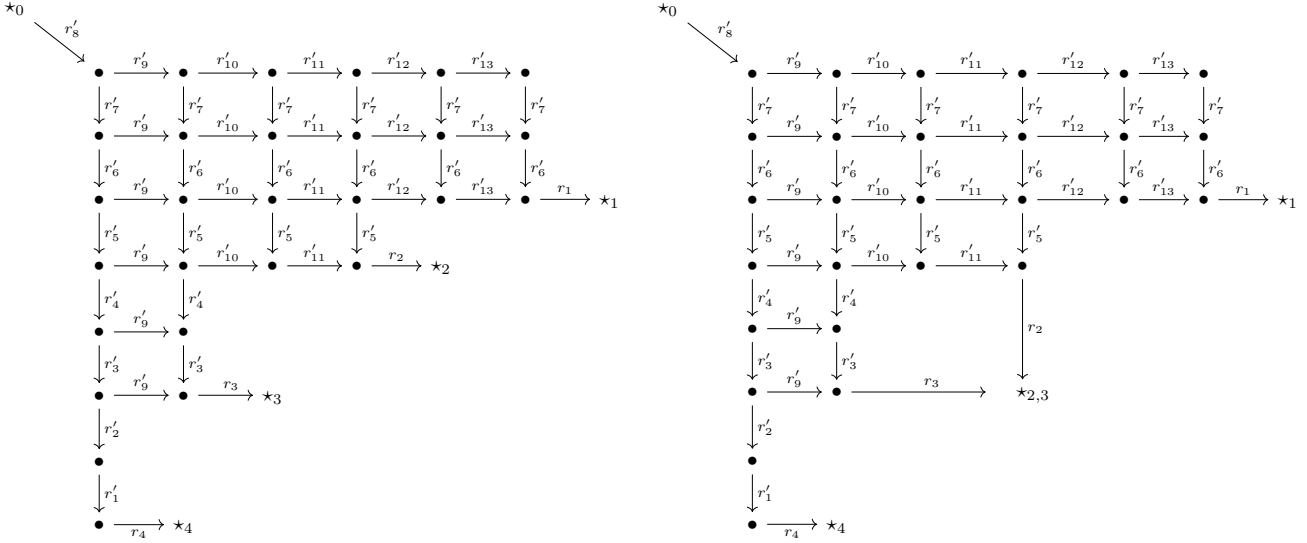

\begin{proposition}\label{prop:markedorder}
Let $\rr$ and $\rr'$ be as in \cref{def:super2}.
	Let $\nn(\rr,\rr'):=(\eta_1,\dots,\eta_d)$, where $\eta_i$ is the sum of the edge weights from 
	a/any path from $\star_0$ to $\star_i$ in the arrow-labeling of $Q_\lambda$ 
	from \cref{def:al}, see  
	\cref{f:Dtilde}.
	Recall that $S_{\lambda}$ denotes the set of all boxes of $\lambda$,
	and let $\uu(\rr,\rr')=\{u_b\}_{b\in S_{\lambda}} \in \Z^{S_{\lambda}}$,
	where $u_b$ is the sum of the edge weights from a/any path from $\star_0$ to $b$.
	Then $$\rGamma_{\rect}^{\lambda}(\rr,\rr') 
	= \OO^{\nn(\rr,\rr')}(\lambda)-
	\uu(\rr,\rr'),$$ that is, 
	the superpotential polytope $\rGamma_{\rect}^{\lambda}(\rr,\rr')$ is a translate
	of the marked order polytope
	$\OO^{\nn(\rr,\rr')}(\lambda)$.
\end{proposition}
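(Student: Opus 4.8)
The plan is to exhibit the translation explicitly and then match up the two inequality descriptions. Write $\uu(\rr,\rr') = (u_b)_{b \in S_{\lambda}}$ and $\nn(\rr,\rr') = (\eta_1,\dots,\eta_d)$ as in the statement, and consider the affine bijection $\tau \colon \R^{S_{\lambda}} \to \R^{S_{\lambda}}$ given by $\tau(g)_b = g_b - u_b$. Since $\tau$ is a translation, it suffices to show that $\tau$ carries the solution set of the inequalities defining $\OO^{\nn(\rr,\rr')}(\lambda)$ (\cref{def:markedorder}) onto the solution set of the inequalities defining $\rGamma_{\rect}^{\lambda}(\rr,\rr')$ (\cref{def:super2}); in fact we will see that these two finite inequality lists correspond term by term under the substitution $g_b = f_b + u_b$.

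First I would check that $u_b$ and $\eta_\ell$ are well defined, i.e.\ independent of the chosen directed path from $\star_0$. This uses the precise shape of the arrow-labeling of \cref{def:al}: in $Q_{\lambda}$ every horizontal arrow lying in column $j$ carries the label $r'_{n-k+j}$, and every vertical arrow joining rows $i$ and $i+1$ carries the label $r'_{n-k-i}$. Hence each unit square of boxes of $\lambda$ is a commutative square (its two directed boundary paths have equal total weight), and since the quiver $Q_{\lambda}$, on the boxes of $\lambda$ together with the single arrow $\star_0 \to v(1,1)$ and the arrows $v(c_\ell)\to q_\ell$, is a connected planar acyclic quiver whose underlying cycle space is spanned by these unit squares, any two directed paths with common endpoints have equal total weight. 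Recording one-step extensions of paths then gives the identities $u_{(i+1)\times j} = u_{i\times j} + r'_{n-k-i}$ whenever $((i+1)\times j)\subseteq\lambda$, $\ u_{i\times(j+1)} = u_{i\times j} + r'_{n-k+j}$ whenever $(i\times(j+1))\subseteq\lambda$, $\ u_{1\times1} = r'_{n-k}$, and $\eta_\ell = u_{c_\ell} + r_\ell$ for $\ell = 1,\dots,d$.

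Next I would recall from \cref{def:posetlambda} that, identifying a box $(i,j)$ of $\lambda$ with the rectangle $i\times j$, the poset $P(\lambda)$ has cover relations exactly $(i,j)\lessdot(i+1,j)$ and $(i,j)\lessdot(i,j+1)$ (whenever the larger box lies in $\lambda$; in particular there are no ``hook'' covers), unique minimal element $c_0 = (1,1)$, and maximal elements the outer corners $c_1,\dots,c_d$, with $c_\ell = (i_\ell,\lambda_{i_\ell})$ the box of $Q_{\lambda}$ adjacent to $q_\ell$. Substituting $g_b = f_b + u_b$ and using the identities above, the inequalities cutting out $\OO^{\nn(\rr,\rr')}(\lambda)$ become: from $g_{i\times j}\le g_{(i+1)\times j}$ we get $f_{i\times j}\le f_{(i+1)\times j} + r'_{n-k-i}$; from $g_{i\times j}\le g_{i\times(j+1)}$ we get $f_{i\times j}\le f_{i\times(j+1)} + r'_{n-k+j}$; from $0\le g_{c_0}$ we get $0\le f_{1\times1} + r'_{n-k}$; and from $g_{c_\ell}\le\eta_\ell$ we get $f_{c_\ell}\le r_\ell$ (the remaining, non-cover, order inequalities translate to inequalities that are redundant, being sums of the cover ones). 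This is exactly the list of inequalities defining $\rGamma_{\rect}^{\lambda}(\rr,\rr')$ in \cref{def:super2}, so $\tau$ restricts to a bijection between the two polytopes and $\rGamma_{\rect}^{\lambda}(\rr,\rr') = \OO^{\nn(\rr,\rr')}(\lambda) - \uu(\rr,\rr')$.

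There is essentially no hard step here: the only point requiring care is the well-definedness of the translation vector, i.e.\ the path-independence of the arrow-labeling, together with keeping the index conventions straight ($n-k-i$ versus $n-k+j$, which box is adjacent to which $q_\ell$, and that $P(\lambda)$ has only the two types of cover relations) so that the two inequality lists line up term by term. Everything else is the substitution $g = f + u$ and comparison with \cref{def:markedorder} and \cref{def:super2}.
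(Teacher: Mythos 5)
Your proof is correct and takes the same approach the paper intends: the paper simply asserts that the proposition ``follows immediately from \cref{def:markedorder} and \cref{def:super2},'' and your argument spells out exactly what that means, namely the substitution $g_b = f_b + u_b$ (well-defined because the constant-along-diagonals arrow-labeling is path-independent) followed by a term-by-term comparison of the two inequality lists. The index bookkeeping you carry out ($r'_{n-k-i}$ for vertical covers, $r'_{n-k+j}$ for horizontal covers, $u_{1\times 1}=r'_{n-k}$, $\eta_\ell = u_{c_\ell}+r_\ell$) checks out against \cref{def:al} and \cref{def:super2}.
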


Note that when $(\mathbf r,\mathbf r')=(\mathbf 1,\mathbf 1)$ we have $\mathbf n(\mathbf 1,\mathbf 1)=(n_1,\dots, n_d)$, where $n_\ell$ is as in \cref{d:LG1} and \cref{c:ac}.

\begin{definition}\label{def:IDP}
  A lattice polytope $P$ is said to
be \emph{integrally closed} 
or have
the \emph{integer decomposition property} (IDP)
if every lattice point in its $r$th dilation $rP$ 
is a sum of $r$ lattice points in $P$.
\end{definition}

\begin{corollary}\label{cor:IDP}
Let $\rr=(r_1,\dots,r_d)\in \mathbb{N}^d$
and $\rr'=(r'_1,\dots,r'_{n-1})\in \mathbb{N}^{n-1}$.
Then the superpotential polytope 
$\rGamma^{\lambda}_{{\rect}}(\mathbf{r},\mathbf{r'})$ 
has the integer decomposition property.
\end{corollary}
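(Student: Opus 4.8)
The plan is to reduce the claim to the well-known fact that order polytopes (and marked order polytopes, and their translates) have the integer decomposition property, using \cref{prop:markedorder} and \cref{prop:tropmutation}/\cref{c:GammaMutation}. First I would invoke \cref{prop:markedorder} to write $\rGamma_{\rect}^{\lambda}(\rr,\rr') = \OO^{\nn(\rr,\rr')}(\lambda) - \uu(\rr,\rr')$. Since translation by the integer vector $\uu(\rr,\rr')$ is a unimodular affine map preserving the lattice $\Z^{S_\lambda}$, it suffices to prove that the marked order polytope $\OO^{\nn(\rr,\rr')}(\lambda)$ has the IDP. Here $\nn(\rr,\rr')=(\eta_1,\dots,\eta_d)$ with each $\eta_i$ a sum of the nonnegative entries of $\rr,\rr'$ along a path, hence $\nn(\rr,\rr')\in\mathbb N^d$.

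Next I would record the key combinatorial input: marked order polytopes associated to a poset with the marking concentrated on a subset containing all minimal and all maximal elements have the integer decomposition property. In our situation $P(\lambda)$ has a unique minimal element $c_0$ and maximal elements $c_1,\dots,c_d$ (see \cref{def:posetlambda}), and $\OO^{\nn(\rr,\rr')}(\lambda)$ is cut out by $\eqref{e:OrderPolytope2}$ together with $0\le f_{c_0}$ and $f_{c_\ell}\le \eta_\ell$. The cleanest route is a direct argument: given a lattice point $\mathbf f=(f_b)_{b\in S_\lambda}$ in the $r$-th dilation $r\,\OO^{\nn(\rr,\rr')}(\lambda)$, so $0\le f_{c_0}$, $f_{c_\ell}\le r\eta_\ell$, and $f_a\le f_b$ whenever $a<b$ in $P(\lambda)$, one must exhibit a decomposition $\mathbf f=\mathbf g_1+\dots+\mathbf g_r$ with each $\mathbf g_i$ a lattice point of $\OO^{\nn(\rr,\rr')}(\lambda)$. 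I would construct the summands greedily ``layer by layer'': define $\mathbf g_i$ by $g_{i,b}:=\max\!\bigl(0,\min(f_b,\,?)\bigr)$ — more precisely, the standard telescoping used for order polytopes, where one writes $f_b = \sum_{i=1}^r [\![ f_b\ge i]\!]$ when all markings equal $r$, and adapts it by interleaving with the upper bounds $\eta_\ell$ so that the partial sums remain order-preserving and respect $g_{i,c_\ell}\le\eta_\ell$. This is the standard proof that order polytopes are IDP, generalized to the marked setting, and it goes through because the marking values $\eta_\ell$ are at the maximal elements only, so there is no interaction between an upper marking bound and an order relation pointing \emph{away} from a marked vertex.

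The step I expect to be the genuine obstacle is verifying that the layered decomposition respects \emph{both} the order relations \emph{and} the upper bounds $f_{c_\ell}\le \eta_\ell$ simultaneously; naive rounding can violate one or the other. The fix is to process the chain decomposition of $P(\lambda)$ from the maximal elements downward: fix the values at $c_1,\dots,c_d$ first (distributing $f_{c_\ell}\le r\eta_\ell$ into $r$ parts each $\le\eta_\ell$, possible since $f_{c_\ell}\le r\eta_\ell$), then propagate downward along cover relations, at each box $b$ setting the $i$-th coordinate to be the maximum over covers $b\lessdot b'$ already processed, clipped below by what is forced and above by the residual $f_b-\sum_{j\ne i}g_{j,b}$ — and one checks by induction on the poset height that the running totals are consistent. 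Alternatively, and perhaps more cleanly, I would cite the literature directly: marked order polytopes with markings on an antichain of extremal elements (equivalently, ``order polytopes with a ceiling'') are known to have the IDP, e.g.\ by the fact that they are the order polytope of the poset obtained by adjoining a bottom and suitable chains, intersected with a box, and order polytopes of posets are IDP by \cite{order}-type arguments; any of the references already present in the paper on order polytopes suffices. I would then remark that since $\rGamma_{\rect}^\lambda(\rr,\rr')$ is integrally equivalent to $\Gamma_{\rect}^\lambda(\rr,\rr')$ by \cref{p:unimodular}, the IDP also holds for $\Gamma^\lambda_{\rect}(\rr,\rr')$.
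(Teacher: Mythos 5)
Your proposal is correct and follows essentially the same route as the paper: invoke \cref{prop:markedorder} to realize $\rGamma_{\rect}^{\lambda}(\rr,\rr')$ as an integer translate of the marked order polytope $\OO^{\nn(\rr,\rr')}(\lambda)$, and conclude using the fact that IDP is preserved by integer translation. Where you spend most of your effort sketching a direct layered decomposition (and correctly flag the delicate interaction between the order inequalities and the ceiling bounds $f_{c_\ell}\le\eta_\ell$), the paper simply cites the known result that every marked order polytope has IDP — precisely the ``cite the literature directly'' fallback you suggest at the end — so that step requires no new argument.
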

\begin{proof}
\cite[Corollary 2.3]{FangFourier} says that every marked order polytope has the integer decomposition property.
This property is preserved under translation by an integer vector, 
so the result now follows from \cref{prop:markedorder}.
Alternatively, it follows from \cite[Theorem~2.4]{HPPS}, see \cite[Remark 5.6]{RWReflexive}, that superpotential polytopes
(as defined in \cite[Definition 5.2]{RWReflexive}) have the integer decomposition property.
\end{proof}

 We can obtain a description of the vertices of the order polytope 
 $\OO(\lambda) = 
\rGamma_{\rect}^{\lambda}$ 
 by using
\cref{p:StanleyVertices} and noting that 
the filters of $P(\lambda)$ are precisely the complements of Young diagrams $\mu\subseteq 
\lambda$.  This will allow us to describe the vertices of the superpotential
polytope $\Gamma_{\rect}^{\lambda}$ in 
	\cref{l:VerticesOfGamma}.
 
 \begin{corollary}\label{c:VerticesOfO}
For $\mu\subseteq\lambda$ let $\chi_{\mu^c}=(f_b)\in\R^{P(\lambda)}$ be given by 
\begin{equation*}
f_b=\begin{cases} 0 &
 \text{ if the box $b$ lies in $\mu$, }\\ 
 1 & \text{ if $b\notin\mu$.} 
 \end{cases}
\end{equation*}
The vectors $\chi_{\mu^c}$ are precisely the vertices of the polytope $\OO(\lambda)$.
\end{corollary}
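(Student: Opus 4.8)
The plan is to apply \cref{p:StanleyVertices}, which identifies the vertices of an order polytope $\OO(P)$ with the characteristic functions $\chi_J$ of filters $J$ of $P$, together with the observation that all lattice points of $\OO(P)$ are vertices. By \cref{p:unimodular2} we know that $\OO(\lambda)$ is precisely the order polytope of the poset $P(\lambda)$ from \cref{def:posetlambda}, so it suffices to describe the filters of $P(\lambda)$ explicitly.

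First I would recall that $P(\lambda)$ has underlying set $S_\lambda$, the set of boxes of $\lambda$, with $b \leq b'$ if and only if $\Rect(b) \subseteq \Rect(b')$. The key combinatorial claim is that the filters (dual order ideals) of $P(\lambda)$ are precisely the complements $\lambda \setminus \mu$ where $\mu \subseteq \lambda$ ranges over Young subdiagrams of $\lambda$. To prove this, I would show both inclusions. If $\mu \subseteq \lambda$ is a Young diagram and $b \in \lambda \setminus \mu$, suppose $b' \geq b$ in $P(\lambda)$, i.e.\ $\Rect(b) \subseteq \Rect(b')$; then the box $b'$ lies weakly southeast of $b$, so if $b'$ were in $\mu$ then (since $\mu$ is a Young diagram, hence closed under taking boxes weakly northwest) $b$ would also lie in $\mu$, a contradiction; hence $b' \in \lambda \setminus \mu$, so $\lambda \setminus \mu$ is a filter. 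Conversely, if $J$ is a filter of $P(\lambda)$, set $\mu := \lambda \setminus J = S_\lambda \setminus J$, which is an order ideal of $P(\lambda)$; I would check that $\mu$ is a Young diagram, i.e.\ that it is closed under moving weakly northwest. If $b \in \mu$ and $b''$ lies weakly northwest of $b$ within $\lambda$, then $\Rect(b'') \subseteq \Rect(b)$ (the maximal rectangle with SE corner $b''$ fits inside the one with SE corner $b$), so $b'' \leq b$ in $P(\lambda)$, and since order ideals are downward closed, $b'' \in \mu$. This establishes the bijection $\mu \leftrightarrow \lambda \setminus \mu$ between Young subdiagrams of $\lambda$ and filters of $P(\lambda)$.

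With the bijection in hand, the characteristic function $\chi_J$ of the filter $J = \lambda \setminus \mu$ is exactly the vector $\chi_{\mu^c} = (f_b)$ with $f_b = 0$ for $b \in \mu$ and $f_b = 1$ for $b \in \lambda \setminus \mu$, matching the statement. Then \cref{p:StanleyVertices} immediately gives that these vectors $\chi_{\mu^c}$, for $\mu \subseteq \lambda$, are precisely the vertices of $\OO(\lambda)$.

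I do not expect any serious obstacle here: the only content is the elementary verification that filters of the rectangle poset $P(\lambda)$ correspond to complements of Young subdiagrams, which is a routine check using the fact that $\Rect(b) \subseteq \Rect(b')$ is equivalent to $b'$ lying weakly southeast of $b$ and the maximal-rectangle construction being monotone. The mild subtlety to be careful about is the correct orientation (northwest versus southeast) in $P(\lambda)$, so that order ideals — not filters — correspond to Young diagrams; getting this backwards would interchange the roles of $0$ and $1$ in $\chi_{\mu^c}$. Once that is pinned down, the corollary follows directly from the cited results of Stanley.
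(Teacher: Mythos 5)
Your proof is correct and follows the same route as the paper, which simply invokes \cref{p:StanleyVertices} together with the observation that filters of $P(\lambda)$ are exactly complements of Young subdiagrams $\mu\subseteq\lambda$; you supply the elementary verification of that observation, which the paper leaves implicit. One small note: $\OO(\lambda)$ is the order polytope of $P(\lambda)$ by \cref{def:markedorder}, so \cref{p:unimodular2} is not needed to make that identification.
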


\begin{definition}\label{d:MaxDiagVectors}\cite[Definition 14.3]{RW} 
Given two partitions $\mu$ and $\nu$, we let 
$\nu \setminus \mu$ denote the corresponding 
	\emph{skew diagram}, i.e. the set of boxes remaining
	if we justify both $\nu$ and $\mu$ at the top-left of 
	a rectangle, then remove from $\nu$ any boxes that 
	are in $\mu$.  We let 
	 $\maxdiag(\nu\setminus \mu)$ denote the maximum
	 number of boxes of $\nu \setminus \mu$ that lie 
	 along any diagonal (with slope $-1$) of the rectangle.
\end{definition}

\begin{proposition}\label{l:VerticesOfGamma}
For any $\mu\subseteq\lambda$ let 
$v_{i\times j}=\maxdiag((i\times j)\setminus\mu)$
and define a vector 
$\mathbf v_\mu=(v_{i\times j})_{i,j}\in \R^{P(\lambda)}$.
Then the vectors $\mathbf v_\mu$ 
are all distinct and the set 
\[
\{\mathbf v_\mu\mid\mu\subseteq\lambda\}
\] 
is the set of vertices of 
	$\Q^{\lambda}_{{\rect}}$, 
	which coincides with the set of 
lattice points of 
	$\Q^{\lambda}_{{\rect}}$.
\end{proposition}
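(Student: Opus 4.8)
The goal is to show that for $G = G^{\rect}_{\lambda}$, the vectors $\mathbf v_\mu = (\maxdiag((i\times j)\setminus\mu))_{i,j}$, as $\mu$ ranges over partitions contained in $\lambda$, are (a) pairwise distinct, (b) exactly the vertices of $\overline{\Gamma}^{\lambda}_{\rect}$, and (c) exactly the lattice points of $\overline{\Gamma}^{\lambda}_{\rect}$. The plan is to leverage the identification from \cref{p:unimodular2}: $\overline{\Gamma}^{\lambda}_{\rect}$ coincides with the order polytope $\OO(\lambda)$ of the poset $P(\lambda)$ of rectangles in $\lambda$. By \cref{p:StanleyVertices} (via \cref{c:VerticesOfO}), the vertices of $\OO(\lambda)$ are precisely the characteristic vectors $\chi_{\mu^c}$ of filters of $P(\lambda)$, and these are exactly all the lattice points of $\OO(\lambda)$; moreover the filters of $P(\lambda)$ are precisely the complements of Young diagrams $\mu \subseteq \lambda$. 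So it suffices to prove two things: first, that $\mathbf v_\mu \in \overline{\Gamma}^\lambda_{\rect}$ for every $\mu \subseteq \lambda$; and second, that the map $\mu \mapsto \mathbf v_\mu$ is a bijection onto the vertex set, which — given that we already know there are exactly as many vertices $\chi_{\mu^c}$ as there are $\mu \subseteq \lambda$ — reduces to showing $\mathbf v_\mu$ is always a vertex (equivalently a lattice point, by \cref{p:StanleyVertices}) together with injectivity of $\mu \mapsto \mathbf v_\mu$.

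First I would verify that $\mathbf v_\mu$ satisfies the defining inequalities of $\overline{\Gamma}^\lambda_{\rect}$. Using \cref{def:super2} with $\mathbf r = \mathbf 1$, $\mathbf r' = \mathbf 0$, these inequalities (in vertex coordinates $v_{i\times j}$, after the unimodular change $F$ of \cref{p:unimodular}) amount to: $v_{i\times j} = 0$ when $(i\times j)$ is $\emptyset$ or outside $\lambda$ in the relevant degenerate sense; $v_{(i+1)\times j} - v_{i\times j} \in \{0,1\}$ and $v_{i\times(j+1)} - v_{i\times j} \in \{0,1\}$ for comparable rectangles; and $v_{\text{outer corner } c_\ell} \le 1$. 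I would translate the combinatorial identity $v_{i\times j} = \maxdiag((i\times j)\setminus \mu)$ into these statements directly: adding one row or one column to a rectangle can increase the maximal antidiagonal count of the complement by at most $1$ and never decreases it, because each antidiagonal of the larger rectangle meets the complement in a set containing (a shift of) the corresponding antidiagonal intersection of the smaller one plus at most one new box. The bound $\maxdiag((i\times j)\setminus\mu) \le 1$ when $(i\times j)$ is an outer corner of $\lambda$ holds since... actually this needs care: it should follow from the monotonicity together with the base values, so I would instead argue that for $\mu \subseteq \lambda$, $\maxdiag((i \times j)\setminus \mu) \le \maxdiag(\lambda \setminus \mu)$ and handle the corner inequalities via the poset structure. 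Cleanest is to observe that $\mathbf v_\mu = \sum_{J} \chi_J$ over a chain of filters, or more precisely to directly express $\mathbf v_\mu$ as an integer combination exhibiting membership; but the most transparent route is just to check the cover inequalities of $\OO(\lambda)$ one by one as above.

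Next, to see that $\mathbf v_\mu$ is a \emph{vertex} (not just a point) of $\OO(\lambda)$, I would either (i) show directly that $\mathbf v_\mu$ is a lattice point and invoke \cref{p:StanleyVertices} which says all lattice points of $\OO(P)$ are vertices — and lattice-ness is immediate since $\maxdiag$ is integer-valued; or (ii) exhibit $\mathbf v_\mu$ as $\chi_J$ for an explicit filter $J$. In fact I expect $\mathbf v_\mu$ is generally \emph{not} a $0/1$ vector (it can take value $2$, as Table~\ref{table:Dynkin} shows), so route (i) is the right one: $\mathbf v_\mu$ has integer coordinates and lies in $\OO(\lambda)$ by the previous paragraph, hence is a vertex by \cref{p:StanleyVertices}. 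Since $\OO(\lambda) = \overline\Gamma^\lambda_\rect$ has exactly as many vertices as there are filters of $P(\lambda)$, i.e. as there are $\mu \subseteq \lambda$, it remains only to check injectivity of $\mu \mapsto \mathbf v_\mu$; this is immediate because the single coordinate indexed by the $1\times 1$ box together with successive differences recovers, row by row and column by column, exactly which boxes lie in $\mu$ — concretely, $\mu$ can be reconstructed from $\mathbf v_\mu$ since $\maxdiag((i\times j)\setminus\mu) - \maxdiag((i-1)\times(j-1)\setminus \mu)$ detects whether box $(i,j)$ is in $\mu$. Combining: $\{\mathbf v_\mu : \mu \subseteq \lambda\}$ is a set of $|\{\mu \subseteq \lambda\}|$ distinct vertices of $\overline\Gamma^\lambda_\rect$, hence all of them, and all lattice points of $\overline\Gamma^\lambda_\rect$ are vertices, giving the full statement.

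\textbf{Main obstacle.} The genuinely non-formal step is the precise combinatorial verification that $v_{i\times j} = \maxdiag((i\times j)\setminus\mu)$ satisfies inequalities \eqref{eq:3gen}–\eqref{eq:4gen} of \cref{l:rectangles} (equivalently the vertex-coordinate inequalities of \cref{def:super2}) — i.e. that adding a row or column changes $\maxdiag$ of the complement by $0$ or $1$ in the controlled way the order polytope demands, and that the outer-corner bounds hold. This requires a clean lemma about how $\maxdiag(\nu \setminus \mu)$ behaves under adding a single row or column to the rectangle $\nu$, and matching it against cover relations in $P(\lambda)$; once that lemma is in hand, everything else is bookkeeping and an appeal to Stanley's results on order polytopes.
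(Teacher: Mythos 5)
Your plan leaves its central step---verifying that $\mathbf v_\mu$ satisfies the inequalities of \cref{l:rectangles}---as an unresolved ``main obstacle,'' so as written there is a gap. But the paper sidesteps the inequality check entirely: since $F$ from \cref{p:unimodular} is a unimodular equivalence $\Gamma^\lambda_\rect \to \rGamma^\lambda_\rect = \OO(\lambda)$, and \cref{c:VerticesOfO} (from \cref{p:StanleyVertices}) already identifies the vertex set of $\OO(\lambda)$ as $\{\chi_{\mu^c}\mid\mu\subseteq\lambda\}$ and shows these are all the lattice points, the entire proposition reduces to the single identity $F^{-1}(\chi_{\mu^c}) = \mathbf v_\mu$. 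Unwinding the formula $v_{i\times j} = f_{i\times j} + f_{(i-1)\times(j-1)} + \cdots$ for $F^{-1}$, the $(i\times j)$-coordinate of $F^{-1}(\chi_{\mu^c})$ is $\sum_{t\ge 0}\chi_{\mu^c}\bigl((i-t)\times(j-t)\bigr)$, the number of boxes on the diagonal of the rectangle $i\times j$ through its corner $(i,j)$ that lie outside $\mu$; and because $\mu$ is a Young diagram, whenever $(i\times j)\setminus\mu$ is non-empty it contains $(i,j)$ and this diagonal achieves $\maxdiag$, so the count equals $\maxdiag((i\times j)\setminus\mu)$, as required.

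Notice that this is the very observation you actually use in your injectivity step: the claim that $\maxdiag((i\times j)\setminus\mu) - \maxdiag((i-1)\times(j-1)\setminus\mu)$ detects whether $(i,j)\in\mu$ is exactly the statement $F(\mathbf v_\mu) = \chi_{\mu^c}$. So the lemma you flag as the obstacle for your step (i) is already proved by what you do in your later injectivity step; promoting it to the front collapses your inequality-check, counting, and injectivity into the paper's one-line argument. (A further wrinkle in your writeup: you quote the inequalities of \cref{def:super2}, which live in the $f$-coordinates of $\rGamma^\lambda_\rect$, while $\mathbf v_\mu$ lives in the $v$-coordinates of $\Gamma^\lambda_\rect$; you would need \cref{l:rectangles} there instead, or apply $F$ first.)
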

\begin{proof}
The first part of the proposition
follows from \cref{c:VerticesOfO} by applying the inverse of the transformation $F$ in Proposition~\ref{p:unimodular}. Namely, $F\inv(\chi_{\mu^c})=\mathbf v_{\mu}$. Note that whenever the skew shape $(i\times j)\setminus\mu$ is non-empty it contains the box $(i,j)$, and the associated diagonal precisely has length $\maxdiag((i\times j)\setminus\mu)$.  The fact that each lattice point of the polytope is a vertex follows from 
	\cref{p:StanleyVertices}.
\end{proof}

\section{The proof that the Newton-Okounkov body equals the superpotential polytope}\label{sec:polytopescoincide}

In this section we will prove \cref{t:mainIntro}, which says 
that the Newton-Okounkov body $\Delta^{\lambda}_G$
associated to the $\mathcal{X}$-cluster chart indexed by $G$
(see \cref{rem:special}) 
equals the superpotential polytope $\Gamma^{\lambda}_G$
associated to the $\mathcal{A}$-cluster chart indexed by $G$
(see \eqref{eq:mainsuperpolytope}).
We start by proving this result in the case of the 
rectangles chart.  We then explain how to prove the result 
in the case of a general
cluster chart.

\subsection{The proof that $\Delta^{\lambda}_G=\Gamma^{\lambda}_G$ for the rectangles cluster}\label{s:proof}

Fix a Schubert variety $X_{\lambda}$.
In this section we will prove the following result.

\begin{theorem}\label{thm:rectanglesproof}
When $G = G_{\lambda}^{\rect}$, we have
that the Newton-Okounkov body 
$\Delta^{\lambda}_{\rect}:=\Delta^{\lambda}_{G_{\lambda}^{\rect}}$ is 
equal to 
the superpotential polytope
$\Gamma^{\lambda}_{\rect}:=\Gamma^{\lambda}_{G_{\lambda}^{\rect}}$. 
\end{theorem}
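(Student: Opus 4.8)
The plan is to identify both polytopes with (a translate of) the order polytope $\OO(\lambda)$ and then check that the two identifications agree.

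First I would handle the superpotential side, which is essentially already done: by \cref{prop:super2} the Laurent expansion $W^{\lambda}_{\rect}$ is read off the quiver $Q_\lambda$, and by \cref{p:unimodular} together with \cref{p:unimodular2} the superpotential polytope $\Gamma^{\lambda}_{\rect}=\Gamma^{\lambda}_{\rect}(\mathbf 1,\mathbf 0)$ is integrally equivalent, via the unimodular map $F$ of \cref{p:unimodular}, to $\overline\Gamma^{\lambda}_{\rect}=\OO(\lambda)$. In particular \cref{l:VerticesOfGamma} tells us that the vertices of $\Gamma^{\lambda}_{\rect}$ are exactly the vectors $\mathbf v_\mu=(\maxdiag((i\times j)\setminus\mu))_{i,j}$ for $\mu\subseteq\lambda$, and these are also all the lattice points. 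So the whole content of \cref{thm:rectanglesproof} is the assertion
\begin{equation*}
\val_G(P_\mu)=\mathbf v_\mu\qquad\text{for every }\mu\subseteq\lambda,\qquad G=G^{\rect}_\lambda,
\end{equation*}
together with the statement that the convex hull of these points is the full Newton-Okounkov body, not just $\Conv^{\lambda}_G(1)$.

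So the heart of the proof is the valuation computation. Here I would use the network/flow description of $\mathbb T^{\rect}_\lambda$ from \cref{1network_param}: $P^{\rect}_J=\sum_F \wt(F)$ where $F$ ranges over flows in $G^{\rect}_\lambda$, and $\val_G(P_\mu)$ is the exponent vector of the lexicographically-minimal monomial in this sum (after dividing by $P_\lambda$, but $P_\lambda$ corresponds to the empty flow with weight $1$, contributing $0$). Thus $\val_G(P_\mu)_\eta$ is the minimum, over all flows $F$ computing $P_{\mu}$, of the number of face-labels equal to $\eta$ lying to the left of the paths of $F$. I would show this minimum equals $\maxdiag((i\times j)\setminus\mu)$ when $\eta=i\times j$. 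One clean way: restrict to the Schubert variety $X_{i\times j}$ (a sub-Grassmannian, where $P_\eta$ is the top Plücker coordinate) via \cref{lem:restrictedseed}, reducing to a statement about how a single face-label $\eta$ enters the flow polynomial $P_\mu$; then analyze the ``packed'' versus ``spread'' flows, the minimal one being obtained by pushing paths as far from the face $\eta$ as possible, and count that the unavoidable crossings are exactly the boxes of $\mu$ on the antidiagonal through $(i,j)$, i.e.\ the complementary count gives $\maxdiag$. This is the analogue of the Grassmannian computation in \cite{RW}, where $\val_G(P_\mu)$ for the rectangles seed was computed to be this max-diagonal statistic; the main new subtlety is that for a Schubert variety the relevant subgraph of the network is a genuine Schubert (not Grassmannian) network, so one must check the flow combinatorics still behave, which \cref{lem:restrictedseed} and the restricted-seed formalism of \cite{FWZ2} make routine.

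Finally I would upgrade $\Conv^{\lambda}_G(1)=\operatorname{ConvexHull}\{\val_G(P_\mu)\}$ to the full Newton-Okounkov body $\Delta^{\lambda}_{\rect}$. The inclusion $\Conv^{\lambda}_G(1)\subseteq\Delta^{\lambda}_{\rect}$ is immediate. For the reverse, the key inputs are: projective normality of $X_\lambda$ (\cref{rem:special}), which gives $L_r=\langle M/P_\lambda^r: M\in\mathcal M_r\rangle$ and hence $\val_G(L_r)\subseteq r\cdot(\text{integer points of }\Conv^{\lambda}_G(1))$ provided $\val_G$ is additive on products of Plücker coordinates restricted to $\mathbb T^{\rect}_\lambda$ — which it is, since on the torus these are monomial relations; and the integer decomposition property of $\Gamma^{\lambda}_{\rect}=\OO(\lambda)$ (\cref{cor:IDP}), which gives the matching lower bound $\#\val_G(L_r)=\dim L_r=\#(r\Gamma^{\lambda}_{\rect}\cap\Z^{\mathcal P_G})$ using \cref{l:okounkovlemma} and the standard-Young-tabledimension count (\cref{cor:volume}, \cref{t:KK}). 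Combining, $\frac1r\val_G(L_r)$ stabilizes to exactly the lattice points of $\Gamma^{\lambda}_{\rect}$, so $\Delta^{\lambda}_{\rect}=\Gamma^{\lambda}_{\rect}$.

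I expect the main obstacle to be the valuation computation $\val_G(P_\mu)=\mathbf v_\mu$ itself — specifically, verifying that the minimal flow contributing $\eta=i\times j$ to $P_\mu$ leaves precisely $\maxdiag((i\times j)\setminus\mu)$ copies of the face $\eta$ on its left, uniformly over all $\mu\subseteq\lambda$, and that the passage to the restricted Schubert network does not alter this count. Everything else (the order-polytope identification, IDP, projective normality, dimension counts) is either already in the excerpt or a direct citation.
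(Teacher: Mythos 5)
Your first step, computing $\val_G(P_\mu)=\mathbf v_\mu$ for the rectangles seed, matches the paper's approach: the paper's \cref{c:ConvEqualsGamma} establishes exactly this (by citing the Grassmannian analogue \cite[Proposition 14.4]{RW}), and the flow-based argument you sketch is in the same spirit as the paper's \cref{p:latticepoints-comp}. So the identification $\Conv^\lambda_G(1)=\Gamma^\lambda_{\rect}$ is fine.

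Where you diverge from the paper, and where there is a gap, is in the upgrade from $\Conv_G^{\lambda}(1)$ to $\Delta^{\lambda}_{\rect}$. You claim $\dim L_r=\#\bigl(r\Gamma^{\lambda}_{\rect}\cap\Z^{\mathcal P_G}\bigr)$ for all $r$, citing \cref{l:okounkovlemma}, \cref{cor:volume}, and \cref{t:KK}. But those results only control the \emph{leading term} of $\dim L_r$ as a polynomial in $r$ (i.e.\ the degree/volume), not the full Hilbert function. The equality of $\dim L_r$ with the Ehrhart count of $\OO(\lambda)$ at every $r$ is the Hibi--Gonciulea--Lakshmibai input, which is a strictly stronger fact than the volume equality and which, in the logic of this paper, is something you want to \emph{deduce} rather than assume. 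There is also a secondary imprecision in the inclusion $\val_G(L_r)\subseteq r\Gamma^\lambda_{\rect}\cap\Z^{\mathcal P_G}$: ``$\val_G$ additive on products'' handles the spanning monomials $M/P_\lambda^r$, but to conclude that $\val_G(f)$ for a general linear combination $f\in L_r$ lies in the same set you need the standard flag argument that a spanning set whose valuations are distinct forms a basis, so that the valuation of any element coincides with the valuation of one of the basis vectors.

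The paper's proof sidesteps all of this with a shorter volume comparison: $\Gamma^{\lambda}_{\rect}=\Conv_G^\lambda(1)\subseteq\Delta^{\lambda}_{\rect}$ by definition, and $\Vol(\Gamma^{\lambda}_{\rect})=\frac{\#\mathrm{SYT}(\lambda)}{|\lambda|!}=\Vol(\Delta^{\lambda}_{\rect})$ by \cref{p:unimodular2} and \cref{c:VolumeOfNOBody} (the latter via \cref{t:KK} and the Lascoux--Sch\"utzenberger degree formula). A polytope contained in a full-dimensional convex body of the same volume must equal it, so the two coincide. If you replace your Hilbert-function count by this volume argument, your proof becomes correct and essentially the paper's; the IDP statement (\cref{cor:IDP}) is genuinely needed elsewhere (e.g.\ for \cref{cor:bijval} and the Khovanskii-basis remark), but not for \cref{thm:rectanglesproof} itself.
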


Recall 
from \cref{d:NOrG}
the polytope 
$\conv_G^{\lambda}$, 
which is the convex hull of the points
$\val_G(P_\mu)$ for $\mu\subseteq\lambda$.

\begin{proposition}\label{c:ConvEqualsGamma}
Suppose 
$G=G^\rect_{\lambda}$. 
Then for $\mu \subseteq \lambda$, we have 
	$\val_G(P_\mu)=\mathbf v_\mu$, where
	$\mathbf v_{\mu}$ is defined in \cref{l:VerticesOfGamma}.
Moreover the polytopes $\conv_G^{\lambda}$ and $\Q^\lambda_{G}$ agree, and their 
lattice points are precisely the vertices 
	$\val_G(P_\mu)=\mathbf v_\mu$ for $\mu \subseteq \lambda$.
\end{proposition}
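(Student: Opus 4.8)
The plan is to prove the three assertions of \cref{c:ConvEqualsGamma} in the following order: first the valuation formula $\val_G(P_\mu)=\mathbf v_\mu$, then the equality $\conv_G^\lambda=\Gamma_G^\lambda$, and finally the identification of their common lattice-point set. The first assertion is the crux of the argument, and I would establish it by a direct analysis of the flow polynomials $P_\mu^\rect$ from \eqref{eq:Plucker} restricted to the rectangles network $G^\rect_\lambda$ of \cref{SchubertNetwork}. Concretely, for a rectangular $\mu$ the frozen flow polynomial is a monomial, while for general $\mu\subseteq\lambda$ one must identify the lexicographically minimal term of $P_\mu^\rect$ as a Laurent polynomial in the network parameters $x_{i\times j}$. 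I would fix the total order $<$ on parameters so that the minimal term is obtained by the ``packed'' flow (the flow whose paths are pushed as far to one side as possible — cf.\ the discussion around \cref{fig:DivisorFlows}), and then compute directly that the exponent of $x_{i\times j}$ in that minimal term equals $\maxdiag((i\times j)\setminus\mu)$. This is the analogue, in the Schubert setting, of \cite[Section 14]{RW}; the key combinatorial input is that the number of times a packed flow for $P_\mu$ wraps around the face labelled $(i\times j)$ is precisely the maximal number of boxes of $(i\times j)\setminus\mu$ on a single antidiagonal. I expect this step to be the main obstacle, since one must verify that no other term of $P_\mu^\rect$ is lexicographically smaller, which requires controlling all flows, not just the packed one, and then invoking \cref{lem:restrictedseed} to reduce from $X_\lambda$ to the relevant sub-network.

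Granting the valuation formula, the equality $\conv_G^\lambda=\Gamma_G^\lambda$ follows by comparing vertex sets. By \cref{l:VerticesOfGamma}, the set $\{\mathbf v_\mu\mid\mu\subseteq\lambda\}$ is exactly the vertex set of the superpotential polytope $\Gamma^\lambda_{\rect}$ (and also its lattice-point set, by \cref{p:StanleyVertices} transported through the unimodular map $F$ of \cref{p:unimodular}). On the other hand, $\conv_G^\lambda$ is by \cref{d:NOrG} the convex hull of $\val_G(L_1)$, and since $L_1=\langle P_\mu/P_\lambda\mid\mu\subseteq\lambda\rangle$ by \cref{rem:special}, the first assertion gives $\val_G(P_\mu)=\mathbf v_\mu$ for each $\mu$. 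Because the $\mathbf v_\mu$ are pairwise distinct (again \cref{l:VerticesOfGamma}), \cref{l:okounkovlemma} forces $\val_G(L_1)=\{\mathbf v_\mu\mid\mu\subseteq\lambda\}$ exactly — there are no extra valuations. Hence $\conv_G^\lambda=\convhull\{\mathbf v_\mu\}=\Gamma^\lambda_{\rect}$.

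The last assertion is then immediate: the lattice points of $\Gamma^\lambda_{\rect}$ are precisely its vertices $\mathbf v_\mu$ (by \cref{l:VerticesOfGamma}), and we have just identified these with the $\val_G(P_\mu)$. I would close by remarking that this proposition is exactly the input needed for \cref{thm:rectanglesproof}: the polytope $\conv_G^\lambda$ sits inside the Newton--Okounkov body $\Delta^\lambda_G$ by construction, and the matching of volumes (via \cref{t:KK} and \cref{cor:volume}, both equal to $\tfrac{1}{|\lambda|!}$ times the number of standard Young tableaux of shape $\lambda$) upgrades this containment to equality with $\Gamma^\lambda_{\rect}$.
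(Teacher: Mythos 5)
Your proposal is correct and follows essentially the same route as the paper: the paper also establishes $\val_G(P_\mu)=\mathbf{v}_\mu$ first (deferring the packed-flow / $\maxdiag$ computation to the ``completely analogous'' argument of \cite[Proposition 14.4]{RW}, which is exactly the flow-polynomial analysis you sketch), and then concludes via \cref{l:VerticesOfGamma}. Your explicit invocation of \cref{l:okounkovlemma} to show that $\val_G(L_1)$ contains nothing beyond the $\mathbf{v}_\mu$ is a small but welcome clarification that the paper leaves implicit; one minor remark is that you need not choose a special total order to isolate the packed flow, since flow polynomials are \emph{strongly} minimal (\cite[Corollary 12.4]{RW}), so the lex-minimal term is coordinatewise minimal for any compatible order.
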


\begin{proof} The proof of the first statement is 
completely analogous to the proof in the Grassmannian case,
namely the proof of  \cite[Proposition 14.4]{RW}. 
It then follows from \cref{l:VerticesOfGamma} that the polytopes agree
and that every lattice point is a vertex. 
\end{proof}

\begin{proposition}\label{c:VolumeOfNOBody}
The Newton-Okounkov body $\Delta^\lambda_{\rect}$ has volume equal to $\frac{1}{|\lambda|!}$ times the number
 of standard Young tableaux of shape $\lambda$.
\end{proposition}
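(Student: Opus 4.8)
The plan is to identify the Newton-Okounkov body $\Delta^\lambda_{\rect}$ with a polytope whose volume we already know, using the general theory of Newton-Okounkov bodies together with the explicit combinatorial results established earlier in this section. The key tool is \cref{t:KK}, which tells us that the volume of $\Delta^\lambda_G(D)$ equals $\frac{1}{|\lambda|!}$ times the degree of $X_\lambda$ in its Pl\"ucker embedding. So the proof reduces to two independent facts: first, that the degree of $X_\lambda$ in its Pl\"ucker embedding equals the number of standard Young tableaux of shape $\lambda$; and second, that this is the quantity computed as $|\lambda|!\cdot \Vol(\Delta^\lambda_{\rect})$.

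First I would recall the classical fact that $\deg X_\lambda = f^\lambda$, the number of standard Young tableaux of shape $\lambda$. This follows, for instance, from the fact that the degree is the leading coefficient (times $|\lambda|!$) of the Hilbert polynomial of the homogeneous coordinate ring, which for Schubert varieties is computed by a hook-length / Schubert-calculus formula; alternatively it is the coefficient extracted from $(\sigma^{\square})^{|\lambda|}\cdot [X_\lambda]$ in $H^*(Gr)$, which by the classical Pieri rule counts lattice paths, i.e.\ standard Young tableaux of shape $\lambda$. Either way this is standard and citable. Combined with \cref{t:KK} applied to $D = D_{(\mathbf 1,\mathbf 0)} = D_1+\dots+D_d$ (the Pl\"ucker divisor, which is our preferred choice in \cref{rem:special}), this immediately gives $\Vol(\Delta^\lambda_{\rect}) = \frac{f^\lambda}{|\lambda|!}$.

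Alternatively — and this is the more self-contained route I would actually write out — I would use \cref{c:ConvEqualsGamma}, which shows that $\conv^\lambda_G$ (for $G = G^\rect_\lambda$) has lattice points exactly $\{\val_G(P_\mu) \mid \mu\subseteq\lambda\}$ and agrees with the superpotential polytope $\Gamma^\lambda_{\rect}$, together with \cref{p:unimodular} and \cref{p:unimodular2}, which identify $\Gamma^\lambda_{\rect}$ (via a unimodular change of coordinates) with the order polytope $\OO(\lambda)$. Then \cref{cor:volume} gives $\Vol(\OO(\lambda)) = \frac{1}{|\lambda|!}$ times the number of standard Young tableaux of shape $\lambda$, and unimodular equivalence preserves volume. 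The only remaining point is to know that $\Delta^\lambda_{\rect}$ has the same volume as $\conv^\lambda_G = \Gamma^\lambda_{\rect}$; since $\conv^\lambda_G \subseteq \Delta^\lambda_{\rect}$ by construction and both are full-dimensional of dimension $|\lambda|$, it suffices to show the volumes coincide — which is exactly what \cref{t:KK} provides, since $\Vol(\Delta^\lambda_{\rect}) = \frac{\deg X_\lambda}{|\lambda|!} = \frac{f^\lambda}{|\lambda|!} = \Vol(\OO(\lambda)) = \Vol(\conv^\lambda_G)$.

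The main obstacle is essentially bookkeeping: one must be careful that the volume normalization in \cref{t:KK} (Euclidean volume in the lattice $\Z^{\mathcal P_G}$) matches the normalization used for order polytopes in \cref{thm:volume} and \cref{cor:volume}, and that the unimodular maps in \cref{p:unimodular} genuinely preserve the relevant lattices. Since \cref{p:unimodular} explicitly states $F$ is a unimodular linear transformation and \cref{cor:volume} is stated with the same $\frac{1}{|\lambda|!}$ normalization, this is unproblematic, and the proof is short: invoke \cref{t:KK}, the classical identity $\deg X_\lambda = f^\lambda$ (equivalently, combine \cref{c:ConvEqualsGamma}, \cref{p:unimodular}, \cref{p:unimodular2}, and \cref{cor:volume}), and conclude.
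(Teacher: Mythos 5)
Your first route is exactly the paper's proof: invoke \cref{t:KK} to express $\Vol(\Delta^\lambda_{\rect})$ as $\frac{1}{|\lambda|!}\deg X_\lambda$, then cite the classical fact that $\deg X_\lambda$ equals the number of standard Young tableaux of shape $\lambda$. One small caveat about your ``alternative'' route: it is not genuinely independent, since to conclude $\Vol(\Delta^\lambda_{\rect}) = \Vol(\conv^\lambda_G)$ you again invoke \cref{t:KK} together with $\deg X_\lambda = f^\lambda$, so you have not replaced the degree fact with order-polytope combinatorics (indeed the order-polytope computation is what the paper \emph{combines} with this proposition, in \cref{thm:rectanglesproof}, to get the polytope equality --- using it here would be circular in spirit).
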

\begin{proof}
By \cref{t:KK}, the volume of the Newton-Okounkov body $\Delta_{G}^\lambda$ is equal to $\frac{1}{|\lambda|!}$ times the degree of the Schubert variety $X_\lambda$. 
It is well-known 
that 
the degree of the Schubert variety $X_{\lambda}$ in its Pl\"ucker embedding is equal to the number of standard Young tableaux
	of shape $\lambda$ \cite{LS}.  The result follows.
\end{proof}

We are now in a position to prove \cref{thm:rectanglesproof}.
\begin{proof}[Proof of \cref{thm:rectanglesproof}]
Let $G=G^{\rect}_\lambda$. We have proved that
\[\Q^\lambda_\rect\,=\,\conv_G^{\lambda}\,\subseteq\, \Delta^\lambda_\rect,
\]
where the equality is due to \cref{c:ConvEqualsGamma} and the inclusion is just a result of the definition of the Newton-Okounkov convex body. On the other hand $\Q^{\lambda}_\rect$ and $\Delta^\lambda_\rect$ have the same volume, by comparing 
	\cref{p:unimodular2} 
	and \cref{c:VolumeOfNOBody}. 
Therefore $\Q^\lambda_\rect \subseteq \Delta^\lambda_\rect$ is an inclusion of a polytope in a convex set of the same volume. This implies that the inclusion must be an equality. It follows that $ \Q^\lambda_\rect= \Delta^\lambda_\rect$. 
\end{proof}

\begin{proposition}\label{cor:IDP2} 
For any  $\lambda$,
$\Delta^\lambda_\rect$ has the integer decomposition property.
If $\nu \subseteq \lambda$,
then $\Delta^{\nu}_{\rect}$ embeds as a face
into $\Delta^{\lambda}_{\rect}$.
\end{proposition}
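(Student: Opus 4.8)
The statement has two parts, and both follow from identifying $\Delta^\lambda_\rect$ with the order polytope $\OO(\lambda)$ (equivalently, with $\overline\Gamma^\lambda_\rect$), which is the content of \cref{thm:rectanglesproof} together with \cref{p:unimodular2} and \cref{p:unimodular}. For the first part (the integer decomposition property), I would simply invoke \cref{cor:IDP}: we have just shown $\Delta^\lambda_\rect = \Gamma^\lambda_\rect = \Gamma^\lambda_\rect(\mathbf 1,\mathbf 0)$, and \cref{cor:IDP} asserts that $\overline\Gamma^\lambda_\rect(\mathbf r,\mathbf r')$ has IDP for all nonnegative $\mathbf r, \mathbf r'$; since IDP is a unimodular-invariant property and $F$ from \cref{p:unimodular} is a unimodular equivalence, $\Gamma^\lambda_\rect$ has IDP as well. (Alternatively, cite \cref{cor:IDP2}'s underlying source directly: a marked order polytope has IDP by \cite{FangFourier}, and $\OO(\lambda)$ is such a polytope.)

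For the second part — that $\Delta^\nu_\rect$ embeds as a face of $\Delta^\lambda_\rect$ when $\nu\subseteq\lambda$ — the plan is to transport \cref{lem:faces} through the chain of identifications. By \cref{thm:rectanglesproof} and \cref{p:unimodular2} (combined with \cref{p:unimodular}), we have unimodular equivalences $\Delta^\lambda_\rect \cong \overline\Gamma^\lambda_\rect \cong \OO(\lambda)$ and likewise $\Delta^\nu_\rect \cong \OO(\nu)$. \cref{lem:faces} states that $\OO(\nu)$ is a face of $\OO(\lambda)$: concretely, $\OO(\nu)$ sits inside $\R^{P(\lambda)}$ via the map sending the coordinates $f_b$ for $b\in\lambda\setminus\nu$ to $1$, and it is the face cut out by the facet inequalities $f_b = 1$ for the maximal elements $b$ removed in passing from $\lambda$ to $\nu$, iterated. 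The one point requiring care is compatibility of the vertex coordinates: the transformation $F$ of \cref{p:unimodular} for $\lambda$ restricts correctly to the one for $\nu$, so that the face embedding $\OO(\nu)\hookrightarrow\OO(\lambda)$ pulls back to a face embedding $\overline\Gamma^\nu_\rect\hookrightarrow\overline\Gamma^\lambda_\rect$ and hence to $\Gamma^\nu_\rect\hookrightarrow\Gamma^\lambda_\rect$. Under the Newton–Okounkov identification this is exactly the desired embedding $\Delta^\nu_\rect\hookrightarrow\Delta^\lambda_\rect$ as a face.

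I expect the only mild obstacle to be bookkeeping: the poset $P(\lambda)$ and its order polytope live in $\R^{P(\lambda)}$, the superpotential polytopes live in the valuation/cluster coordinate space $\R^{\mathcal P_G}$, and one must check that the restricted-seed relationship of \cref{lem:restrictedseed} (i.e.\ $G^\nu_\rect$ is a restricted seed of $G^\lambda_\rect$) is exactly what makes the inclusion of coordinate spaces $\R^{\mathcal P_{G^\nu_\rect}}\hookrightarrow\R^{\mathcal P_{G^\lambda_\rect}}$ match the face structure on both the $\Delta$ side and the $\OO$ side. This is a diagram-chase, not a computation: one uses that the frozen variable $p_\nu$ (equivalently, the Schubert divisor associated to a removable box of $\lambda$) becomes "large" on the face, forcing the corresponding facet inequality to be tight, which geometrically realizes the Schubert subvariety $X_\nu\subseteq X_\lambda$ inside the toric degeneration. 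Everything else is an immediate appeal to results already in the excerpt.

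\begin{proof}
By \cref{thm:rectanglesproof} we have $\Delta^\lambda_\rect = \Gamma^\lambda_\rect$, and by \cref{p:unimodular} together with \cref{p:unimodular2} this polytope is unimodularly equivalent to the order polytope $\OO(\lambda)$. Since the integer decomposition property is invariant under unimodular equivalence, the first claim follows from \cref{cor:IDP} (taking $\mathbf r = \mathbf 1$, $\mathbf r' = \mathbf 0$), or directly from the fact that $\OO(\lambda)$, being a marked order polytope, has IDP by \cite[Corollary 2.3]{FangFourier}.

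For the second claim, apply the same chain of identifications to $\nu$: $\Delta^\nu_\rect \cong \OO(\nu)$. By \cref{lem:faces}, $\OO(\nu)$ is a face of $\OO(\lambda)$. It remains to observe that these identifications are compatible. By \cref{lem:restrictedseed}, $G^\nu_\rect$ is the restricted seed of $G^\lambda_\rect$ obtained by restricting to the boxes of $\nu$, so $\mathcal P_{G^\nu_\rect} = P(\nu)$ sits inside $\mathcal P_{G^\lambda_\rect} = P(\lambda)$ as a lower set under $\subseteq$ of rectangles; the unimodular map $F$ of \cref{p:unimodular} for $\lambda$ restricts to the analogous map for $\nu$ on these coordinates. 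Under this inclusion, the embedding $\OO(\nu)\hookrightarrow\OO(\lambda)$ of \cref{lem:faces} (setting $f_b = 1$ for $b\in\lambda\setminus\nu$ and cutting out the facets $f_b = 1$ for the successively removed maximal elements) pulls back via $F^{-1}$ to an embedding of $\overline\Gamma^\nu_\rect$ as a face of $\overline\Gamma^\lambda_\rect$, hence of $\Gamma^\nu_\rect$ as a face of $\Gamma^\lambda_\rect$. Composing with the equalities $\Delta^\nu_\rect = \Gamma^\nu_\rect$ and $\Delta^\lambda_\rect = \Gamma^\lambda_\rect$ from \cref{thm:rectanglesproof} gives the desired embedding of $\Delta^\nu_\rect$ as a face of $\Delta^\lambda_\rect$.
\end{proof}
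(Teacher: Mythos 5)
Your proposal is correct and follows essentially the same route as the paper's own proof: both identify $\Delta^\lambda_\rect$ with the order polytope $\OO(\lambda)$ via \cref{thm:rectanglesproof} and \cref{p:unimodular}, then deduce IDP from \cref{cor:IDP} and the face embedding from \cref{lem:faces}. The extra bookkeeping you supply about compatibility of the restricted-seed coordinates (\cref{lem:restrictedseed}) and the restriction of the unimodular map $F$ is a detail the paper leaves implicit, and it checks out.
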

\begin{proof}
To see that
$\Delta^{\nu}_{\rect}$ appears as a face
of $\Delta^{\lambda}_{\rect}$, we use
the fact that
$\OO(\nu)$ is a face of $\OO(\lambda)$ (by
\cref{lem:faces}), and that
 $\Delta^{\nu}_{\rect}$ and
 $\Delta^{\lambda}_{\rect}$ are integrally equivalent to
$\OO(\nu)$ and $\OO(\lambda)$
(by
\cref{p:unimodular} and
\cref{thm:rectanglesproof}).
The fact that $\Delta^\lambda_\rect$ has the integer decomposition property  now follows from \cref{cor:IDP}.
\end{proof}

\subsection{A detailed example: the Schubert variety $X_{(2,1)}$}\label{sec:example}

Consider the Schubert variety $X_{\lambda}$  where $\lambda=(2,1)$.
Its rectangles network is shown at the left of 
\cref{fig:positroidsubvariety}.  If we compute the valuations of 
Pl\"ucker coordinates, expressed in this network chart, we obtain the 
lattice points shown in 
\cref{table:Schub21}.  

By
\cref{c:ConvEqualsGamma}, the convex hull  
$\conv_{\rect}^{\lambda}$ 
of these lattice points
equals the superpotential polytope $\Gamma_{\rect}^{\lambda}$; we 
can check this directly by comparing with 
the superpotential polytope 
$\Gamma_{\rect}^{\lambda}$
from 
\cref{ex:21}.
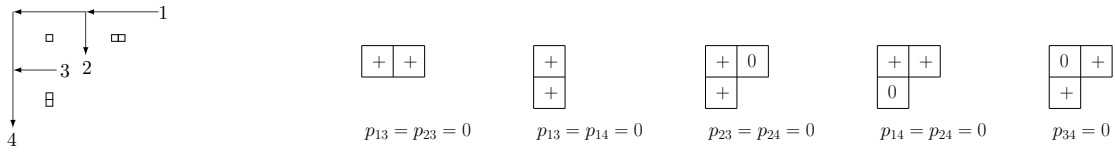
\begin{figure}[h]
\centering
\setlength{\unitlength}{1.3mm}
\begin{center}
	\resizebox{0.25\textwidth}{!}{\begin{picture}(42,30)
 \put(22,19){$1$}
 \put(11.5,11.5){$2$}
 \put(8.5,11){$3$}
 \put(1.2,1.5){$4$}
         \put(6.5,16){$\ydiagram{1}$}
         \put(15.5,16){$\ydiagram{2}$}
         \put(6.5,8){$\ydiagram{1,1}$}
         \put(22, 20){{\vector(-1,0){10}}}
         \put(12, 20){{\vector(-1,0){10}}}
         \put(8, 12){{\vector(-1,0){6}}}
         \put(2,20){{\vector(0,-1){16}}}
         \put(12,20){{\vector(0,-1){6}}}
\end{picture}}
		\hspace{.3cm}
       \resizebox{0.12\textwidth}{!}{\begin{picture}(42,30)
\put(5,32){\line(1,0){18}}
  \put(5,23){\line(1,0){18}}
  \put(5,23){\line(0,1){9}}
  \put(14,23){\line(0,1){9}}
  \put(23,23){\line(0,1){9}}
   \put(8,26){\Huge{$+$}}
   \put(17,26){\Huge{$+$}}
		\put(6,6){\Huge{$p_{13}=p_{23}=0$}}
        \end{picture}}
    \hspace{0.1cm}
		\resizebox{0.12\textwidth}{!}{\begin{picture}(42,30)
\put(5,32){\line(1,0){9}}
  \put(5,23){\line(1,0){9}}
  \put(5,14){\line(1,0){9}}
  \put(5,14){\line(0,1){18}}
  \put(14,14){\line(0,1){18}}
   \put(8,26){\Huge{$+$}}
         \put(8,17){\Huge{$+$}}
		\put(6,6){\Huge{$p_{13}=p_{14}=0$}}
        \end{picture}}
    \hspace{0.1cm}
        \resizebox{0.12\textwidth}{!}{\begin{picture}(42,30)
\put(5,32){\line(1,0){18}}
  \put(5,23){\line(1,0){18}}
  \put(5,14){\line(1,0){9}}
  \put(5,14){\line(0,1){18}}
  \put(14,14){\line(0,1){18}}
  \put(23,23){\line(0,1){9}}
   \put(8,26){\Huge{$+$}}
		\put(17,26){\Huge{$0$}}
         \put(8,17){\Huge{$+$}}
		\put(6,6){\Huge{$p_{23}=p_{24}=0$}}
        \end{picture}}
    \hspace{0.1cm}
        \resizebox{0.12\textwidth}{!}{\begin{picture}(42,30)
\put(5,32){\line(1,0){18}}
  \put(5,23){\line(1,0){18}}
  \put(5,14){\line(1,0){9}}
  \put(5,14){\line(0,1){18}}
  \put(14,14){\line(0,1){18}}
  \put(23,23){\line(0,1){9}}
   \put(8,26){\Huge{$+$}}
   \put(17,26){\Huge{$+$}}
         \put(8,17){\Huge{$0$}}
		\put(6,6){\Huge{$p_{14}=p_{24}=0$}}
        \end{picture}}
    \hspace{0.1cm}
        \resizebox{0.12\textwidth}{!}{\begin{picture}(42,30)
\put(5,32){\line(1,0){18}}
  \put(5,23){\line(1,0){18}}
  \put(5,14){\line(1,0){9}}
  \put(5,14){\line(0,1){18}}
  \put(14,14){\line(0,1){18}}
  \put(23,23){\line(0,1){9}}
   \put(8,26){\Huge{$0$}}
   \put(17,26){\Huge{$+$}}
         \put(8,17){\Huge{$+$}}
		\put(6,6){\Huge{$p_{34}=0$}}
        \end{picture}}
\end{center}
\caption{At left: the rectangles network
$G^{\rect}_{\lambda}$
 for the
Schubert variety associated to 
the partition 
$\lambda=(2,1)$.  To its right
are the $\Le$-diagrams for 
  the five codimension 1 positroids
     contained in $X_{(2,1)}$. The first two are Schubert divisors: they
      are the two irreducible components obtained when one 
     sets the frozen variable $p_{13}$ equal to $0$.}
\label{fig:positroidsubvariety}
\end{figure}

\begin{center}
	\begin{table}[h]
		\begin{tabular}{| c||p{.7cm}| p{.7cm} | p{.7cm}|}
			\hline
			Pl\"ucker & $\ydiagram{1}$ & $\ydiagram{2}$ & $\ydiagram{1,1}$ \\
			\hline
			\hline
			$\val(P_{13})$ & $0$ & $0$ & $0$\\ \hline
			$\val(P_{14})$ & $0$ & $0$ & $1$\\ \hline
			$\val(P_{23})$ & $0$ & $1$ & $0$\\ \hline
			$\val(P_{24})$ & $0$ & $1$ & $1$\\ \hline
			$\val(P_{34})$ & $1$ & $1$ & $1$\\ \hline
		\end{tabular}
		\vspace{0.2cm}
		\caption{The valuations of the 
		nonvanishing Pl\"ucker coordinates
		for $X_{(2,1)}$.}
		\label{table:Schub21}
	\end{table}
\end{center}

Note that the superpotential  
\begin{equation*}
W_{\rect}^{\lambda} = q_1 \frac{p_{\emptyset}}{p_{\ydiagram{2}}}
+q_2 \frac{p_{\emptyset}}{p_{\ydiagram{1,1}}} + \frac{p_{\ydiagram{2}}}{p_{\ydiagram{1}}} + 
\frac{p_{\ydiagram{1,1}}}{p_{\ydiagram{1}}}+
\frac{p_{\ydiagram{1}}}{p_{\emptyset}},
\end{equation*}
from 
\eqref{super:21} 
has five summands, which correspond to 
the five codimension $1$ positroid strata whose $\Le$-diagrams are shown in \cref{fig:positroidsubvariety}.

This Schubert variety $X_{(2,1)}$ is in fact a (Gorenstein Fano) toric variety, and $D^{(2,1)}_{\ac}$ is its toric boundary divisor. It has a small desingularisation to a toric variety of Picard rank $2$, 
but its own Picard rank is equal to $1$, suggesting the identification of $q_1$ and $q_2$ to a single quantum parameter $q$. There will be a Gorenstein toric Fano variety in the background also for general $X_\lambda$, whose  superpotential agrees with $W^\lambda_\rect$ up to possible identification of certain quantum parameters. In \cref{s:toric}
we will explain this connection and describe the superpotential of the associated toric variety using a  specific
\emph{starred quiver}
$\widetilde{Q}(\lambda)$
whose sink vertices correspond to  Picard group generators.

\subsection{The theta function basis and the 
proof that $\Delta^\lambda_G=\Gamma^\lambda_G$ for arbitrary seeds}\label{sec:arbitraryseeds}

In this section we start by showing
that there is a theta function basis for the coordinate 
ring of the cluster $\mathcal{X}$-variety $X_{\lambda}^{\circ}$.
We then use properties of the theta basis together with our results 
about the rectangles seed 
to prove in \cref{thm:main} that for any choice of cluster,
the Newton-Okounkov body coincides with the corresponding
superpotential polytope.

In order to show that the theta basis exists, we need to first 
prove some  results on optimized seeds.

\begin{definition}\cite[Definition 9.1 and Lemma 9.2]{GHKK}
For a cluster algebra coming from a quiver, a seed is 
\emph{optimized for a frozen variable} if and only
if in the quiver for this seed, all arrows between mutable vertices and 
the given frozen vertex point towards the
given frozen vertex.
\end{definition}

\begin{lemma}\label{lem:optimized}
Consider the cluster structure associated to a Schubert variety 
$\openSchub$.
 Every frozen variable has an optimized seed.
\end{lemma}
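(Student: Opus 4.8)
The plan is to reduce the statement for arbitrary Schubert varieties to the already-established case of the Grassmannian, using the fact (established earlier in the paper, see \cref{lem:restrictedseed}) that every seed $G$ for $\openSchub$ arises as a \emph{restricted seed} from a corresponding seed $G'$ for the Grassmannian $Gr_{k}(\C^n)$ containing $X_\lambda$. Recall from \cite{RW} (or from the general theory in \cite{GHKK}) that in the Grassmannian each frozen variable has an optimized seed; indeed the frozen variables of $Gr_{k,n}$ are the cyclically consecutive Pl\"ucker coordinates $p_{[i,i+k-1]}$, and for each such frozen there is a plabic graph (coming from a suitable reduced word or $\Le$-diagram) whose quiver is optimized for it. So first I would invoke the known result for the Grassmannian: fix a frozen variable $p_{\mu_j}$ for $\openSchub$, where $\mu_j \in \Fr(\lambda)$; by \cref{r:openSchub} the subsets $I_{\mu_j}$ are exactly the components of the reverse Grassmann necklace of the positroid associated to $\openSchub$, and in particular each $p_{\mu_j}$ is (the restriction to $X_\lambda$ of) a frozen Pl\"ucker coordinate for the ambient Grassmannian. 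Hence there is a seed $G'$ for $Gr_{k,n}$ which is optimized for $p_{\mu_j}$.

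The key step is then to check that the operation of passing to a restricted seed preserves the optimized property \emph{for the surviving frozen vertices}. Concretely: by \cref{def:restricted} and \cref{l:freezingcommutesmutation}, the seed $G = G'|_I$ for $\openSchub$ is obtained from $G'$ by restricting the quiver to a vertex subset $I$ (which contains all the frozens $I_{\mu_1},\dots,I_{\mu_n}$ of $\openSchub$), possibly freezing some extra mutable vertices, and deleting arrows between frozen vertices. When we restrict, the induced quiver on $I$ has \emph{at most} the arrows that were present in $G'$ among vertices of $I$ — we never create new arrows. So if in the quiver of $G'$ every arrow incident to the vertex $I_{\mu_j}$ that connects it to a mutable vertex points \emph{into} $I_{\mu_j}$, the same is true in the quiver of $G$: every mutable vertex of $G$ is in particular a mutable vertex of $G'$ (freezing only removes mutables), and any arrow between such a vertex and $I_{\mu_j}$ in $G$ was already an arrow in $G'$, hence points into $I_{\mu_j}$. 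Therefore $G$ is optimized for the frozen variable $p_{\mu_j}$. Since $G$ is a seed for $\openSchub$ and its frozen variables are exactly the $p_{\mu_i}$, this proves the claim.

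There is one subtlety I would address carefully: I must make sure that the seed $G'$ for the Grassmannian can be chosen so that, after restriction, it actually yields a \emph{valid} seed for $\openSchub$ — i.e. that the restriction operation is applicable (no arrows between unfrozen vertices of $I$ and vertices of $[m]\setminus I$ after the prescribed freezing) and that the restricted cluster algebra is the coordinate ring $\C[\widehat{\openSchub}]$. This is exactly the content of \cref{lem:restrictedseed} combined with \cref{thm:rectangles}: the rectangles seed $G^\lambda_\rect$ for $X_\lambda$ is the restriction of the rectangles seed for the Grassmannian, and any seed of $\openSchub$ is obtained by mutating $G^\lambda_\rect$. Since mutation commutes with restriction (\cref{l:freezingcommutesmutation}), it suffices to exhibit, for the fixed frozen $p_{\mu_j}$, a seed of the Grassmannian that is both optimized for $p_{\mu_j}$ \emph{and} a mutation of (the unrestricted lift of) $G^\lambda_\rect$; equivalently, start from any Grassmannian seed optimized for $p_{\mu_j}$ and restrict. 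The main obstacle, and the step I would write out most carefully, is precisely this bookkeeping — ensuring that the "freeze extra mutables then delete arrows" prescription of \cref{def:restricted} does not disturb the arrows incident to $I_{\mu_j}$ in a way that breaks the optimized condition, and that the resulting object is genuinely the seed claimed. Given the monotonicity of arrows under restriction noted above, this is routine but needs to be stated, so I would phrase the proof as: pick a Grassmannian seed optimized for $p_{\mu_j}$ (exists by \cite{RW}/\cite{GHKK}), restrict it to obtain a seed for $\openSchub$ (valid by \cref{lem:restrictedseed}), and observe that restriction can only delete arrows incident to $I_{\mu_j}$, never reverse or add them, so the optimized property persists.
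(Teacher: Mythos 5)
Your reduction to the Grassmannian case has a gap at the very first step, and it is not a gap that bookkeeping will repair. You assert that "each $p_{\mu_j}$ is (the restriction to $X_\lambda$ of) a frozen Pl\"ucker coordinate for the ambient Grassmannian." This is false in general. The frozen Pl\"ucker coordinates of $Gr_k(\C^n)$ are indexed by cyclically consecutive $k$-subsets $[i,i+k-1]$, equivalently by the maximal rectangles touching the rim of the bounding $(n-k)\times k$ rectangle, whereas the frozen variables $p_{\mu_i}$ of $\openSchub$ are indexed by maximal rectangles touching the rim of the possibly much smaller shape $\lambda$. These are different sets. For instance, when $\lambda=(4,4,2)$ in $Gr_4(\C^7)$, the rectangles $2\times 2$ and $2\times 3$ are frozen for $\openSchub$ but are mutable vertices in the rectangles seed for the Grassmannian; this is visible in \cref{fig:restrictedseed}, and it is precisely why \cref{def:restricted} allows the restriction operation to freeze previously-mutable vertices. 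Your reading of \cref{r:openSchub} conflates the reverse Grassmann necklace of the Schubert positroid with that of the trivial positroid: the former's components $I_{\mu_j}$ are generally not cyclically consecutive intervals.

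Consequently you cannot invoke the Grassmannian's optimized-seed result for $p_{\mu_j}$: when $p_{\mu_j}$ is mutable in the Grassmannian, "a Grassmannian seed optimized for $p_{\mu_j}$" is not a defined notion. What you would actually need is a Grassmannian seed in which all arrows between $p_{\mu_j}$ and the surviving mutable vertices point into $p_{\mu_j}$ \emph{after restriction}, and the Grassmannian result gives no handle on that. Your observation that restriction preserves arrow directions and only deletes arrows incident to a vertex is correct, but without a Grassmannian seed of the right kind to restrict, it proves nothing. The paper's own proof sidesteps the ambient Grassmannian entirely and is constructive: starting from the rectangles seed $\Sigma_\lambda^{\rect}$ of $\openSchub$ itself, it classifies the frozen vertices into four types (the empty partition, outer corners, inner corners, and the remaining frozens which are rightmost in a row or bottommost in a column) and exhibits, for each type, a short explicit mutation sequence that produces an optimized seed. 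If you want a reduction-style argument, you would first need to prove a strictly stronger statement about the Grassmannian cluster structure concerning arrows incident to certain mutable Pl\"ucker coordinates, which is essentially the result you are trying to establish.
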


\begin{figure}[ht]

\begin{tikzpicture}
\node at (0.6,8.11) {$v_{00}$};
\filldraw[fill=blue] (0.05,8) rectangle (0.3,8.25);
\draw[->, line width=0.2mm] (0.8,7.5) -- (0.4,7.9);

\node at (1.1,7.6) {$v_{11}$};
\draw (1,7.3) circle (0.1cm);
\draw[->, line width=0.2mm] (1.7,7.3) -- (1.2,7.3);
\draw[->, line width=0.2mm] (1,6.6) -- (1,7.1);
\draw[->, line width=0.2mm] (1.25,7.05) -- (1.65,6.65);

\node at (1.9,7.6) {$v_{12}$};
\draw (1.9,7.3) circle (0.1cm);
\draw[->, line width=0.2mm] (2.6,7.3) -- (2.1,7.3);
\draw[->, line width=0.2mm] (1.9,6.6) -- (1.9,7.1);
\draw[->, line width=0.2mm] (2.15,7.05) -- (2.5,6.7);

\node at (2.8,7.6) {$v_{13}$};
\draw (2.8,7.3) circle (0.1cm);
\draw[->, line width=0.2mm] (3.5,7.3) -- (3,7.3);
\draw[->, line width=0.2mm] (2.8,6.85) -- (2.8,7.1);
\draw[->, line width=0.2mm] (3.05,7.05) -- (3.45,6.65);

\node at (3.7,7.6) {$v_{14}$};
\draw (3.7,7.3) circle (0.1cm);
\draw[->, line width=0.2mm] (4.4,7.3) -- (3.9,7.3);
\draw[->, line width=0.2mm] (3.7,6.6) -- (3.7,7.1);
\draw[->, line width=0.2mm] (3.95,7.05) -- (4.35,6.65);

\node at (4.6,7.6) {$v_{15}$};
\draw (4.6,7.3) circle (0.1cm);
\draw[->, line width=0.2mm] (5.3,7.3) -- (4.8,7.3);
\draw[->, line width=0.2mm] (4.6,6.6) -- (4.6,7.1);
\draw[->, line width=0.2mm] (4.85,7.05) -- (5.25,6.65);

\node at (5.55,7.6) {$v_{16}$};
\filldraw[fill=blue] (5.4,7.18) rectangle (5.65,7.43);

\draw (1,6.4) circle (0.1cm);
\draw[->, line width=0.2mm] (1.7,6.4) -- (1.2,6.4);
\draw[->, line width=0.2mm] (1,5.7) -- (1,6.2);
\draw[->, line width=0.2mm] (1.25,6.15) -- (1.65,5.75);

\draw (1.9,6.4) circle (0.1cm);
\draw[->, line width=0.2mm] (2.6,6.4) -- (2.1,6.4);
\draw[->, line width=0.2mm] (1.9,5.7) -- (1.9,6.2);
\draw[->, line width=0.2mm] (2.15,6.15) -- (2.55,5.75);

\node at (2.83,6.7) {$v_{23}$};
\draw (2.8,6.4) circle (0.1cm);
\draw[->, line width=0.2mm] (3.5,6.4) -- (3,6.4);
\draw[->, line width=0.2mm] (2.8,5.7) -- (2.8,6.2);
\draw[->, line width=0.2mm] (3.05,6.15) -- (3.45,5.75);

\draw (3.7,6.4) circle (0.1cm);
\draw[->, line width=0.2mm] (4.4,6.4) -- (3.9,6.4);
\draw[->, line width=0.2mm] (3.7,5.7) -- (3.7,6.2);
\draw[->, line width=0.2mm] (3.95,6.15) -- (4.35,5.75);

\draw (4.6,6.4) circle (0.1cm);
\draw[->, line width=0.2mm] (5.3,6.4) -- (4.8,6.4);
\draw[->, line width=0.2mm] (4.6,5.7) -- (4.6,6.2);
\draw[->, line width=0.2mm] (4.85,6.15) -- (5.25,5.75);

\filldraw[fill=blue] (5.4,6.28) rectangle (5.65,6.53);

\draw (1,5.5) circle (0.1cm);
\draw[->, line width=0.2mm] (1.7,5.5) -- (1.2,5.5);
\draw[->, line width=0.2mm] (1,4.8) -- (1,5.3);
\draw[->, line width=0.2mm] (1.25,5.25) -- (1.65,4.85);

\draw (1.9,5.5) circle (0.1cm);
\draw[->, line width=0.2mm] (2.6,5.5) -- (2.1,5.5);
\draw[->, line width=0.2mm] (1.9,4.8) -- (1.9,5.3);
\draw[->, line width=0.2mm] (2.15,5.25) -- (2.55,4.85);

\draw (2.8,5.5) circle (0.1cm);
\draw[->, line width=0.2mm] (3.5,5.5) -- (3,5.5);
\draw[->, line width=0.2mm] (2.8,4.8) -- (2.8,5.3);
\draw[->, line width=0.2mm] (3.05,5.25) -- (3.45,4.85);

\filldraw[fill=blue] (3.58,5.38) rectangle (3.83,5.63);
\filldraw[fill=blue] (4.48,5.38) rectangle (4.73,5.63);
\filldraw[fill=blue] (5.4,5.38) rectangle (5.65,5.63);
\node at (4.04,5.23) {$v_{34}$};
\node at (5.98,5.48) {$v_{36}$};

\draw (1,4.6) circle (0.1cm);
\draw[->, line width=0.2mm] (1.7,4.6) -- (1.2,4.6);
\draw[->, line width=0.2mm] (1,3.9) -- (1,4.4);
\draw[->, line width=0.2mm] (1.25,4.35) -- (1.65,3.95);

\draw (1.9,4.6) circle (0.1cm);
\draw[->, line width=0.2mm] (2.6,4.6) -- (2.1,4.6);
\draw[->, line width=0.2mm] (1.9,3.9) -- (1.9,4.4);
\draw[->, line width=0.2mm] (2.15,4.35) -- (2.55,3.95);

\draw (2.8,4.6) circle (0.1cm);
\draw[->, line width=0.2mm] (3.5,4.6) -- (3,4.6);
\draw[->, line width=0.2mm] (2.8,3.9) -- (2.8,4.4);
\draw[->, line width=0.2mm] (3.05,4.35) -- (3.45,3.95);

\filldraw[fill=blue] (3.58,4.48) rectangle (3.83,4.73);

\draw (1,3.7) circle (0.1cm);
\draw[->, line width=0.2mm] (1.7,3.7) -- (1.2,3.7);
\draw[->, line width=0.2mm] (1,3) -- (1,3.5);
\draw[->, line width=0.2mm] (1.25,3.45) -- (1.65,3.05);

\draw (1.9,3.7) circle (0.1cm);
\draw[->, line width=0.2mm] (2.6,3.7) -- (2.1,3.7);
\draw[->, line width=0.2mm] (1.9,3) -- (1.9,3.5);
\draw[->, line width=0.2mm] (2.15,3.45) -- (2.55,3.05);

\node at (3.13,3.42) {$v_{53}$};
\filldraw[fill=blue] (2.68,3.58) rectangle (2.93,3.83);

\node at (4.14,3.68) {$v_{54}$};
\filldraw[fill=blue] (3.58,3.58) rectangle (3.83,3.83);

\filldraw[fill=blue] (0.88,2.68) rectangle (1.13,2.93);
\filldraw[fill=blue] (1.78,2.68) rectangle (2.03,2.93);
\filldraw[fill=blue] (2.68,2.68) rectangle (2.93,2.93);
\node at (3.23,2.78) {$v_{63}$};
\end{tikzpicture}
	\caption{
The labeled quiver $\Sigma_{\lambda}^{\rect}$ for $\openSchub$
where $\lambda = (6,6,4,4,4,3)$. 
\label{fig:opt}}
\end{figure}

\begin{proof}
For every frozen variable of $\Sigma_{\lambda}^{\rect}$,
we give a sequence of mutations that will produce a quiver
which is optimized for that frozen variable.  We label
vertices of the quiver $v_{r,c}$ where $r$ and $c$ denote
the row and column where the vertex is located, see \cref{fig:opt}.

	We divide up the frozen variables of $\Sigma_{\lambda}^{\rect}$
	into four groups:
	\begin{itemize}
	\item $v_{0,0}$;
	\item those corresponding to the outer corners of 
		$\lambda$, e.g.
	$v_{3,6}$, $v_{5,4}$, $v_{6,3}$ in \cref{fig:opt};
	\item those corresonding to the inner corners of $\lambda$,
		e.g. $v_{3,4}$, $v_{5,3}$ in \cref{fig:opt};
	\item all other frozen variables.
	\end{itemize}

	Note that the quiver $\Sigma_{\lambda}^{\rect}$ is already 
	optimized for the frozen variable $v_{0,0}$,
	 and for the frozen variables
	corresponding to the outer corners of $\lambda$.

	If $v_{i,j}$ is a frozen variable corresponding to an inner
	corner of $\lambda$, then we produce an optimized seed for 
	$v_{i,j}$ by mutating each mutable vertex in the diagonal
	of slope $-1$ containing $v_{i,j}$, from northwest to southeast.
	So for instance in \cref{fig:opt}, to find an optimized seed for 
	$v_{3,4}$, we mutate at $v_{1,2}$ then $v_{2,3}$.

Each remaining frozen variable is either the rightmost vertex in its
row (with at least one mutable vertex to the left), e.g.
$v_{1,6}$, $v_{2,6}$, $v_{4,4}$, 
or the bottommost vertex in its column (with at least one mutable
vertex above), e.g. $v_{3,5}$, $v_{6,2}$, $v_{6,1}$. 

To find a seed which is optimized for a frozen variable that is rightmost
in its row, we just mutate each mutable vertex in that row from left to right.
So in \cref{fig:opt}, to find an optimized seed for $v_{1,6}$,
we mutate $v_{1,1}, v_{1,2}, v_{1,3}, v_{1,4}$ then $v_{1,5}.$

To find a seed which is optimized for a frozen variable that is bottommost 
in its column, we just mutate each mutable vertex in that column 
from top to bottom.
So in \cref{fig:opt}, to find an optimized seed for $v_{6,2}$,
we mutate $v_{1,2}, v_{2,2}, v_{3,2}, v_{4,2}$ then $v_{5,2}.$

We leave it as an exercise for the reader to check that these simple mutation
sequences produce the requisite optimized seeds.
\end{proof}

\begin{theorem}\label{thm:theta}
There is a theta function basis $\mathcal{B}(X_{\lambda}^{\circ})$ for the coordinate ring
$\C[\widehat{X}_{\lambda}^{\circ}]$ of the affine
cone over the cluster $\mathcal{X}$-variety
$X_{\lambda}^{\circ}$, which restricts to a 
theta function basis
$\mathcal{B}(X_{\lambda})$ for the homogeneous
coordinate ring $\C[\widehat{X}_{\lambda}]$
of the Schubert variety.
And $\mathcal{B}(X_{\lambda})$ restricts to a basis
$\mathcal{B}_r$ of the degree $r$ component
of the homogeneous coordinate ring, for every positive
$r$.
\end{theorem}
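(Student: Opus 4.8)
The plan is to invoke the general machinery of \cite{GHKK}, using the two structural inputs we have just established: the existence of an optimized seed for every frozen variable (\cref{lem:optimized}), and the fact that $\C[\widehat{\check X}{}^\circ_\lambda]$ is the cluster algebra $\mathcal A(G^\lambda_\rect)$ (\cref{thm:rectangles}), which together with the description of the open Schubert variety as an open positroid variety (\cref{r:openSchub}, \cref{r:maximalpositroid}) exhibits $X_\lambda^\circ$ as a cluster $\mathcal X$-variety Langlands dual to it. First I would recall that \cite[Theorem~0.3]{GHKK} produces a theta function basis for (a formal version of) the upper cluster algebra, and that the key condition allowing one to pass from the formal to the genuine (non-completed) algebra --- and to conclude that the theta functions are indexed by the integer points of the relevant tropical space --- is the existence of an optimized seed for each frozen index, which gives the requisite ``largeness'' / full-rank hypothesis (\cite[Proposition~0.14, Theorem~0.15]{GHKK}; see also \cite[\S8--9]{GHKK}). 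Since $\C[\widehat{\check X}{}^\circ_\lambda]=\mathcal A(G^\lambda_\rect)$ coincides with its upper cluster algebra (this is part of \cite{SSW}, or follows from the optimized-seed condition via \cite{GHKK}), the theta basis $\mathcal B(X_\lambda^\circ)$ for $\C[\widehat{\check X}{}^\circ_\lambda]$ exists and consists of a distinguished $\C$-basis of universally positive elements, indexed by $\Z^{\mathcal P_G}$.

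Next I would address the restriction to the homogeneous coordinate ring. The grading on $\C[\widehat{\check X}{}^\circ_\lambda]$ by Pl\"ucker degree corresponds, under the cluster structure, to a grading in which each cluster variable $p_\mu$ has degree $|\mu|$ and each frozen variable is a positive-degree element; the homogeneous coordinate ring $\C[\widehat{X_\lambda}]$ sits inside $\C[\widehat{\check X}{}^\circ_\lambda]$ as the subalgebra generated by the Pl\"ucker coordinates $P_\mu$ for $\mu\subseteq\lambda$, localized away from nothing --- equivalently it is obtained by \emph{not} inverting the frozen variables. Because theta functions are compatible with gradings coming from the cluster structure (the theta functions are homogeneous for any grading under which the exchange relations are homogeneous; cf.\ \cite[\S3]{GHKK} and the discussion of graded cluster algebras), the subset $\mathcal B(X_\lambda)\subseteq\mathcal B(X_\lambda^\circ)$ consisting of those theta functions that are polynomial (rather than Laurent) in the frozen variables --- equivalently, those lying in $\C[\widehat{X_\lambda}]$ --- is itself a basis of $\C[\widehat{X_\lambda}]$. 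This uses that $\C[\widehat{X_\lambda}]$ is exactly the subalgebra of $\C[\widehat{\check X}{}^\circ_\lambda]$ spanned by a subset of theta functions; the containment $\supseteq$ is automatic from positivity/polynomiality of the relevant theta functions, and the containment $\subseteq$ follows because any Pl\"ucker coordinate $P_\mu$ is a universally positive regular function on $X_\lambda^\circ$, hence a nonnegative integer combination of theta functions each of which must already be polynomial in the frozens (a cancellation argument, using that theta functions are linearly independent and the $P_\mu$ are polynomial in the frozens). Finally, since the grading is by $\Z_{\ge 0}$ and each homogeneous piece $\C[\widehat{X_\lambda}]_r=H^0(X_\lambda,\mathcal O(rD))$ is finite-dimensional, the basis $\mathcal B(X_\lambda)$ breaks up as the disjoint union over $r$ of its degree-$r$ part $\mathcal B_r$, and each $\mathcal B_r$ is a basis of $\C[\widehat{X_\lambda}]_r$; projective normality of $X_\lambda$ in its Pl\"ucker embedding (quoted in \cref{rem:special}) guarantees that $\C[\widehat{X_\lambda}]_r=L_r$ as in \eqref{e:projnormal}, so no degree is missed.

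The main obstacle I anticipate is the second paragraph: carefully justifying that the homogeneous coordinate ring of the \emph{partial} compactification $X_\lambda$ (where we keep, but do not invert, the frozen variables) is precisely spanned by a subset of the theta basis of the open cluster variety, and that this subset is closed under the ring operations in the appropriate sense. In the Grassmannian case this kind of statement is handled in \cite{RW} (and in related work of Bossinger--Frias-Medina--Magee--N\'ajera Ch\'avez), but here one must check that the partial-compactification picture from \cite{GHKK} --- adding the toric divisors corresponding to the frozen variables --- genuinely recovers $X_\lambda$ and its Pl\"ucker-degree grading, rather than some other partial compactification. I would handle this by using the explicit frozen structure: the frozen variables are the Pl\"ucker coordinates indexed by $\Fr(\lambda)$ (\cref{d:frozens}), their common non-vanishing locus is exactly $X_\lambda^\circ$ (\cref{d:openX}), and the divisors $\{P_{\mu_i}=0\}$ are the positroid divisors making up $\Dac^\lambda$ (\cref{c:ac}, \cref{p:PmuVSPositroids}); this identifies $X_\lambda$ with the partial compactification of $X_\lambda^\circ$ along these divisors, so that the theta basis descends as claimed. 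Everything else is a routine assembly of \cite{GHKK}, \cref{lem:optimized}, \cref{thm:rectangles}, and projective normality.
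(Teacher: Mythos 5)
Your proposal misidentifies the role of the optimized-seed condition, and this creates a genuine gap in the first step. You write that the existence of an optimized seed for each frozen variable ``gives the requisite `largeness'/full-rank hypothesis'' needed to pass from the formal theta basis of \cite[Theorem~0.3]{GHKK} to a genuine basis of $\C[\widehat{X}_\lambda^\circ]$. That is not what optimized seeds do. The hypothesis needed for the full Fock--Goncharov conjecture on the open cluster variety (i.e.\ for theta functions to be honest regular functions and to form a basis of $\C[\widehat{X}_\lambda^\circ]$) is, in this paper, supplied by the existence of a \emph{green-to-red sequence} for open positroid varieties, due to Ford--Serhiyenko \cite[Theorem~1.2]{FordSer}, via \cite[Proposition~8.28 and Corollary~8.30]{GHKK}. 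Optimized seeds enter at a \emph{different} stage, namely \cite[Corollary~9.17]{GHKK}: they control whether the theta basis of the cluster variety restricts to a basis of the ring of regular functions on the \emph{partial compactification} obtained by not inverting the frozen variables. Without a green-to-red sequence (or some other way to establish ``Enough Global Monomials''), you have no theta basis for $\C[\widehat{X}_\lambda^\circ]$ to begin with, and the rest of the argument has nothing to restrict.

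Your second paragraph, which attempts a hands-on argument for why the theta basis restricts to $\C[\widehat{X}_\lambda]$, is an alternative to the paper's direct citation of \cite[Corollary~9.17]{GHKK}, and is in the right spirit (the frozen non-vanishing locus is $X_\lambda^\circ$, the divisors are positroid divisors, etc.). But as written it is circular in one place: you assert that each $P_\mu$ is a nonnegative integer combination of theta functions ``each of which must already be polynomial in the frozens,'' which is precisely the content of the restriction you are trying to prove, not something you can take for granted. This is exactly the gap that \cite[Corollary~9.17]{GHKK} (with the optimized-seed hypothesis from \cref{lem:optimized}) is designed to close. A further, smaller omission: the GHKK partial-compactification results are stated for $\mathcal A$-varieties, whereas here the theta basis lives on the $\mathcal X$-variety $X_\lambda^\circ$; the paper bridges this using the Muller--Speyer twist automorphism, which sends $\mathcal X$-tori to $\mathcal A$-tori. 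Your proposal does not address this point. The final step (decomposition into graded pieces $\mathcal B_r$) is fine and matches the paper, though the paper phrases it more simply as compatibility with the one-dimensional scaling torus action.
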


\begin{proof}
Gross-Hacking-Keel-Kontsevich \cite[Theorem 0.3]{GHKK} showed that canonical bases of global regular
``theta" functions exist for a formal version of cluster varieties, and in many cases
(when ``the full Fock-Goncharov conjecture holds"), these extend to bases for regular functions
on the actual cluster varieties.  
They pointed out 
in \cite[Proposition 8.28 and Cor 8.30]{GHKK} 
that the full Fock-Goncharov 
conjecture holds if
there is a \emph{maximal green sequence}
(or more generally a \emph{green-to-red sequence}).
In the case of open positroid varieties
(of which $X_{\lambda}^{\circ}$ is an example),
a green-to-red sequence was found in
 \cite[Theorem 1.2]{FordSer}.
Therefore 
we indeed have a theta function basis 
$\mathcal{B}(X_{\lambda}^\circ)$ for 
the coordinate ring $\C[\widehat{X}_{\lambda}^\circ]$ 
of the affine cone over the cluster $\mathcal{X}$-variety
$X_{\lambda}^{\circ}$.

We now use our result (\cref{lem:optimized})
that every frozen variable has an optimized seed to 
show that there is also a theta function basis
$\mathcal{B}(X_{\lambda})$
for the  coordinate ring
$\C[\widehat{X}_{\lambda}]$ of the affine cone over the Schubert variety.
We use results from \cite[Section 9]{GHKK}
which give conditions for when  \cite[Theorem 0.3]{GHKK} extends to partial compactifications of cluster
varieties coming from frozen variables.  In particular, 
\cite[Corollary 9.17]{GHKK} says that if every frozen variable 
has an optimized seed, then the theta basis for the cluster variety 
restricts to a theta basis for its partial compactification 
coming from the frozen variables.  While \cite[Section 9]{GHKK} 
works in the setting of cluster $\mathcal{A}$-varieties, 
the \emph{twist} automorphism \cite[Theorem 7.1]{MullerSpeyer} for open Schubert varieties
maps the cluster $\mathcal{X}$-tori to the cluster $\mathcal{A}$-tori,
so  we can apply 
\cite[Corollary 9.17]{GHKK} to our theta function basis for the 
cluster $\mathcal{X}$-variety $X_{\lambda}^{\circ}$.
In particular, it 
 restricts to a basis 
$\mathcal{B}(X_{\lambda})$
for $\C[\widehat{X}_{\lambda}]$.

Finally $\mathcal{B} (X_{\lambda})$
restricts to a basis of $L_r$ because it is compatible with the
one-dimensional
torus action (which is overall scaling in the
Pl\"ucker embedding).
\end{proof}

The reason that it is useful to have the theta basis is the following result.
\begin{theorem} \cite[Theorem 16.15]{RW} \label{thm:pointed}
Fix a cluster $\mathcal{X}$-variety and an arbitrary $\mathcal{X}$-chart.
Then every element of the theta function basis can be written as a pointed Laurent
polynomial in the variables of the $\mathcal{X}$-chart.  Moreover the exponents
of the leading terms are all distinct.
\end{theorem}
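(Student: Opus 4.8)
The plan is to deduce both assertions from the explicit broken--line description of theta functions in \cite{GHKK}, together with the injectivity of the $g$--vector map and the fact that the twist automorphism intertwines the two cluster structures by a monomial change of coordinates. First I would set up the reduction. By \cref{thm:theta} the theta basis $\mathcal B(X_\lambda^\circ)$ is obtained by applying \cite[Theorem 0.3]{GHKK} to the $\mathcal A$--cluster structure dual to the $\mathcal X$--cluster structure on $X_\lambda^\circ$ and then transporting along the twist automorphism of \cite[Theorem 7.1]{MullerSpeyer}, which carries each $\mathcal X$--cluster torus isomorphically onto the corresponding $\mathcal A$--cluster torus by a monomial map that commutes with seed mutation. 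Both properties in the statement --- ``is a pointed Laurent polynomial'' and ``the leading exponents are distinct'' --- are manifestly preserved under an invertible monomial change of coordinates, so it is enough to prove the statement for the theta basis on the $\mathcal A$--side, expanded in an arbitrary $\mathcal A$--cluster chart.

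Next I would invoke the broken--line formula. Fix a seed $\mathfrak s$ with chamber $\mathcal C_{\mathfrak s}$ in the consistent scattering diagram, and a generic basepoint $Q$ deep inside $\mathcal C_{\mathfrak s}$. For a tropical point $m$ (a $g$--vector), the theta function $\vartheta_m$ expands in the cluster $\mathfrak s$ as a finite sum $\sum_\gamma c(\gamma)\, z^{F(\gamma)}$ over broken lines $\gamma$ with asymptotic direction $m$ ending at $Q$. There is exactly one broken line with no bends; in the $\mathfrak s$--adapted coordinates it contributes the monomial $z^{m}$ with coefficient $1$. Every other broken line bends only across walls of the scattering diagram, so its final exponent $F(\gamma)$ equals $m$ plus a \emph{nonzero} $\mathbb Z_{\ge 0}$--combination of wall directions; by sign--coherence of $c$--vectors (equivalently, by the sign structure of the walls in $\mathcal C_{\mathfrak s}$ established in \cite{GHKK}) all these wall directions lie in one strictly convex cone $\sigma_{\mathfrak s}$ not containing $0$. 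Therefore there is a linear functional in the interior of $\sigma_{\mathfrak s}^\vee$ that is strictly minimized among all exponents of the expansion precisely at $m$, whose coefficient is $1$; this is exactly the statement that the expansion of $\vartheta_m$ in the chart $\mathfrak s$ is a pointed Laurent polynomial with leading exponent $m$.

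For the second assertion, the leading exponent of $\vartheta_m$ in the chart $\mathfrak s$ is by construction the value at $m$ of the $g$--vector map attached to $\mathfrak s$. That this map is injective is part of what makes the $\vartheta_m$ a basis in \cite{GHKK}, and follows from the convexity of the cluster complex together with sign--coherence; hence distinct theta basis elements have distinct leading exponents in every chart. Passing from $\C[\widehat X_\lambda^\circ]$ to $\C[\widehat X_\lambda]$ and then to its graded piece $L_r$ only selects a subset of the already pointed basis (using \cref{lem:optimized} and \cite[Corollary 9.17]{GHKK} as in the proof of \cref{thm:theta}), so pointedness and distinctness of leading exponents persist.

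The main obstacle is the middle paragraph: one must check that the exponents $F(\gamma)$ contributed by \emph{all} bent broken lines genuinely lie in a single strictly convex cone separated from the straight--line exponent $m$, uniformly in $\gamma$. This is where the full strength of \cite{GHKK} --- consistency of the scattering diagram, positivity of the wall functions, and sign--coherence of $c$--vectors --- is indispensable; without it one cannot produce the separating functional and hence cannot conclude that the expansion is pointed. The rest of the argument (the reduction via the twist, and injectivity of the $g$--vector map) is comparatively formal once this convexity input is in hand.
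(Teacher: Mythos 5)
Your overall strategy — reduce to the broken-line expansion in \cite{GHKK}, isolate the unbent broken line as the distinguished monomial with coefficient $1$, and use the structure of the scattering diagram to control the remaining exponents, then obtain distinctness from the injectivity of the tropical-point labeling — is the right framework and is what underlies \cite[Theorem 16.15]{RW}. However, there is a genuine gap at the crucial step.

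In \cref{def:minimal}, a Laurent polynomial is \emph{pointed} only if its distinguished monomial is \emph{strongly minimal}, i.e.\ coordinate-wise $\le$ every other exponent vector. Your argument derives that there is a linear functional in the interior of $\sigma_{\mathfrak s}^\vee$ that is uniquely minimized at $m$, and then asserts that ``this is exactly the statement that the expansion $\ldots$ is a pointed Laurent polynomial.'' That inference does not go through: having a separating functional is strictly weaker than coordinate-wise minimality. For instance $z_1^{-1} + z_1 z_2^{-1}$ has a separating functional for the first term but is not pointed. To close the gap you must show that the corrections $F(\gamma) - m$ coming from bent broken lines land in the \emph{positive orthant} in the coordinate system of the $\mathcal{X}$-chart $\mathfrak s$ (equivalently, that the wall exponents of the relevant cluster scattering diagram, read in the $\mathfrak s$-basis, all lie in $N^+_{\mathfrak s}$), not merely in some strictly convex cone. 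This is true, and follows from the positivity and $N^+$-parametrization of walls established in \cite{GHKK}, but it is a stronger statement than ``a strictly convex cone not containing $0$'' and needs to be invoked explicitly, since it is precisely what forces coordinate-wise dominance.

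A secondary remark: the reduction via the Muller--Speyer twist is not available for the general statement you are proving. \cref{thm:pointed} is about an arbitrary cluster $\mathcal{X}$-variety and an arbitrary $\mathcal{X}$-chart, and the twist of \cite{MullerSpeyer} is a feature of (open) positroid varieties, not of cluster $\mathcal{X}$-varieties in general. The cleaner route is to work directly with the $\mathcal{X}$-side scattering diagram/broken-line formalism of \cite{GHKK}, in which theta functions on $\mathcal{X}$ (parametrized by tropical points of $\mathcal{A}^\vee$) already admit pointed expansions in each $\mathcal{X}$-chamber; the twist is only needed in \cref{thm:theta} for the partial-compactification step, which is a different issue. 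Finally, your last paragraph about restricting to $L_r$ is a digression — the statement is about the cluster $\mathcal{X}$-variety itself and does not concern the partial compactification.
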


\begin{corollary}\label{cor:bijval}
Suppose that $G$ and $G'$ index two $\mathcal{X}$-seeds connected by a single mutation.
Then the tropicalized $\mathcal{A}$-cluster mutation
$\Psi_{G,G'}$ is a bijection
$$\Psi_{G,G'}: \val_G(L_r) \to \val_{G'}(L_r),$$
where $L_r$ is the linear subspace defined in 
\eqref{e:projnormal}.
\end{corollary}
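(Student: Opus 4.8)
The plan is to leverage the theta function basis together with the behaviour of the valuation under change of chart. First I would recall the setup: for each $\mathcal{X}$-seed $G$ we have the valuation $\val_G$ on $\C(X_\lambda)$, and by \cref{l:okounkovlemma} the restriction $\val_G|_{L_r}$ is injective with image $\val_G(L_r)$ of cardinality $\dim L_r$. Since $G$ and $G'$ are connected by a single $\mathcal{X}$-mutation, the tropicalized $\mathcal{A}$-cluster mutation $\Psi_{G,G'}$ of \cref{d:PsiGG'} is a piecewise-linear bijection $\R^{\mathcal P_G}\to\R^{\mathcal P_{G'}}$, and by \cref{prop:tropmutation} it carries the superpotential polytope $\Gamma^\lambda_G$ bijectively onto $\Gamma^\lambda_{G'}$. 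So the content of the corollary is that $\Psi_{G,G'}$ intertwines the two valuation maps on the finite set $\val_G(L_r)$; once we know $\Psi_{G,G'}(\val_G(f)) = \val_{G'}(f)$ for every $0\neq f\in L_r$, injectivity on both sides and the equal cardinalities $|\val_G(L_r)| = \dim L_r = |\val_{G'}(L_r)|$ immediately give that $\Psi_{G,G'}$ restricts to a bijection.

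The key step is therefore to prove the intertwining identity $\Psi_{G,G'}\circ\val_G = \val_{G'}$ on $L_r$. Here I would use the theta function basis: by \cref{thm:theta} the degree-$r$ component $L_r$ has a basis $\mathcal{B}_r$ consisting of (restrictions of) theta functions, and by \cref{thm:pointed} each theta function $\vartheta$, expanded in the variables of the $\mathcal{X}$-chart $G$, is a \emph{pointed} Laurent polynomial whose leading exponent is exactly $\val_G(\vartheta)$, and the leading exponents over $\mathcal{B}_r$ are all distinct. The crucial compatibility statement is that the leading exponent transforms by tropicalized mutation: $\val_{G'}(\vartheta) = \Psi_{G,G'}(\val_G(\vartheta))$. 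This is precisely the statement that the $g$-vector (or $\mathbf{c}$-vector, depending on convention) of a theta function transforms under mutation by the tropicalized $\mathcal{X}$- (equivalently $\mathcal{A}$-)mutation formula, which is part of the structure theory of \cite{GHKK} and is exactly the content cited in \cref{thm:pointed} / the surrounding discussion in \cite[Section 16]{RW}. With this established on basis elements, a general nonzero $f = \sum_{\vartheta\in\mathcal{B}_r} a_\vartheta \vartheta$ has $\val_G(f)$ equal to the minimum (in the term order) of the $\val_G(\vartheta)$ over $\vartheta$ with $a_\vartheta\neq 0$ — this uses that the leading exponents are distinct so no cancellation occurs — and likewise $\val_{G'}(f)$ is the minimum of the corresponding $\val_{G'}(\vartheta)$; since $\Psi_{G,G'}$ is a piecewise-linear bijection compatible with the term orders on the relevant finite set of vectors (they are the theta-exponents, which are totally ordered compatibly), it sends the $G$-minimum to the $G'$-minimum, giving $\val_{G'}(f) = \Psi_{G,G'}(\val_G(f))$.

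The main obstacle I anticipate is the matching of conventions and the justification that $\Psi_{G,G'}$ — which is the \emph{$\mathcal{A}$-cluster} tropical mutation — is the right map to compare $\val_G$ and $\val_{G'}$, given that $\val_G$ is defined via the \emph{$\mathcal{X}$-cluster} expansion. The resolution is the twist automorphism \cite[Theorem 7.1]{MullerSpeyer} (already invoked in the proof of \cref{thm:theta}), which identifies the $\mathcal{X}$-tori with the $\mathcal{A}$-tori for the open Schubert variety and intertwines $\mathcal{X}$- and $\mathcal{A}$-mutation tropically; combined with the fact that $g$-vectors of theta functions transform by tropicalized $\mathcal{A}$-mutation, this pins down $\Psi_{G,G'}$ as the correct intertwiner. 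A secondary technical point is to check that the term order used in \cref{de:val} does not interfere: since the theta exponents appearing in any fixed $f$ are pairwise distinct, the valuation is read off without cancellation regardless of the chosen total order, and $\Psi_{G,G'}$ being a bijection on the full exponent set suffices. I would close by remarking that the corollary, together with \cref{prop:tropmutation}, propagates the equality $\Delta^\lambda_\rect = \Gamma^\lambda_\rect$ of \cref{thm:rectanglesproof} to an arbitrary seed $G$ (reachable from $G^\rect_\lambda$ by a sequence of mutations), which is how it will be used in \cref{thm:main}.
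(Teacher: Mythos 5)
Your overall strategy — theta basis plus pointedness plus Okounkov's lemma plus the GHKK transformation law for theta exponents — is exactly the route the paper takes via \cite[Lemmas 16.16--16.17]{RW} and \cref{thm:pointed}. But your reduction at the start introduces a real gap: you set out to prove the pointwise intertwining
$\Psi_{G,G'}(\val_G(f)) = \val_{G'}(f)$ for \emph{every} $f\in L_r$, and the justification you offer for the last step (``$\Psi_{G,G'}$ is compatible with the term orders on the theta-exponents'') does not hold. The valuation $\val_G$ depends on an arbitrary choice of lexicographic order on $\TBG$, and this choice is unrelated to the piecewise-linear map $\Psi_{G,G'}$; for a sum $f=\sum a_\vartheta\vartheta$, the theta function achieving the $G$-minimum need not be the one achieving the $G'$-minimum, so the intertwining generally fails. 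This is also why the paper explicitly remarks (after \cref{d:NOrG}) that $\val_G$ itself depends on the order, even though $\Delta_G^\lambda$ does not.

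Fortunately your own ingredients already give the corollary without the pointwise intertwining, and this is what the paper does. By \cref{thm:theta} the theta functions $\mathcal{B}_r$ form a basis of $L_r$; by \cref{thm:pointed} each $\vartheta\in\mathcal{B}_r$ is pointed with a strongly minimal exponent, so $\val_G(\vartheta)$ is its strongly minimal exponent (independently of the chosen term order), and these are pairwise distinct; hence $\{\val_G(\vartheta):\vartheta\in\mathcal{B}_r\}$ has cardinality $\dim L_r$, which equals $|\val_G(L_r)|$ by \cref{l:okounkovlemma}, giving the set equality $\val_G(L_r)=\{\val_G(\vartheta)\}$. The same holds for $G'$. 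Now the GHKK transformation law gives $\Psi_{G,G'}(\val_G(\vartheta))=\val_{G'}(\vartheta)$ for each fixed $\vartheta$, so $\Psi_{G,G'}$ maps the first set bijectively onto the second. No statement about $\val_G(f)$ for arbitrary $f$ is needed or used. Two further points worth noting: the paper also invokes $\conv_\rect^\lambda=\Gamma_\rect^\lambda$ (\cref{c:ConvEqualsGamma}) and the integer decomposition property of $\Gamma_\rect^\lambda$ (\cref{cor:IDP}), which enter the cited lemmas of \cite{RW} in identifying the theta exponents with lattice points of the superpotential polytope; you should be prepared to say why your leaner argument can dispense with them or instead track them. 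And the twist automorphism of Muller--Speyer, which you gesture at, is indeed where the comparison between the $\mathcal{X}$-cluster valuation and $\mathcal{A}$-side tropical mutation is pinned down — that part of your discussion is in the right spirit but would need to be made precise to stand on its own.
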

\begin{proof}
This follows from  the proof of \cite[Lemma 16.16 and Lemma 16.17]{RW}, 
	using \cref{thm:pointed}, 
as well as the facts that
	$\conv_{\rect}^{\lambda}=\Q^\lambda_{\rect}$ (\cref{c:ConvEqualsGamma})
	and $\Gamma_{\rect}^{\lambda}$ has the integer decomposition property  (\cref{cor:IDP}).
\end{proof}

\begin{theorem}\label{thm:main}
Let $\Sigma_G^{\mathcal{X}}$ be an arbitrary
$\mathcal{X}$-cluster seed for the open Schubert variety $X^{\circ}_{\lambda}$.
Then
 the Newton-Okounkov body
$\Delta^{\lambda}_{G}$
is a rational polytope
with   lattice points 
$\{\val_G(P_{\mu}) \ \vert \  \mu \subseteq \lambda\}$, 
	and it coincides with 
the superpotential polytope
$\Gamma^{\lambda}_{G}.$
\end{theorem}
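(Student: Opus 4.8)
The plan is to bootstrap from the rectangles case (\cref{thm:rectanglesproof}), which already establishes $\Delta^{\lambda}_{\rect} = \Gamma^{\lambda}_{\rect}$, and propagate the equality to an arbitrary $\mathcal{X}$-seed $\Sigma^{\mathcal{X}}_G$ by using the connectivity of seeds under mutation. Concretely, since any $\mathcal{X}$-seed for $X^{\circ}_{\lambda}$ is connected to the rectangles seed by a finite sequence of mutations, it suffices to show that both sides of the claimed equality transform compatibly under a single $\mathcal{X}$-cluster mutation $G \rightsquigarrow G'$. On the superpotential side this is already in hand: \cref{c:GammaMutation} tells us that the tropicalized $\mathcal{A}$-cluster mutation $\Psi_{G,G'}$ restricts to a bijection $\Gamma^{\lambda}_G \to \Gamma^{\lambda}_{G'}$. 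So the crux is to prove the parallel statement on the Newton-Okounkov side, namely that $\Psi_{G,G'}$ also carries $\Delta^{\lambda}_G$ bijectively onto $\Delta^{\lambda}_{G'}$.

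\textbf{Key steps.} First I would invoke \cref{cor:bijval}: for each $r$, the map $\Psi_{G,G'}$ restricts to a bijection $\val_G(L_r) \to \val_{G'}(L_r)$. This is the heart of the matter, and it is itself built on \cref{thm:pointed} (pointedness of theta basis elements in any $\mathcal{X}$-chart, with distinct leading exponents), together with the facts that $\conv^{\lambda}_{\rect} = \Gamma^{\lambda}_{\rect}$ (\cref{c:ConvEqualsGamma}) and that $\Gamma^{\lambda}_{\rect}$ has the integer decomposition property (\cref{cor:IDP}, via \cref{cor:IDP2}). Second, I would combine these bijections over all $r$, divide by $r$, take convex hulls and closures: because $\Psi_{G,G'}$ is piecewise-linear and sends the lattice $\Z^{\mathcal{P}_G}$ to $\Z^{\mathcal{P}_{G'}}$, and because it is a bijection on each finite set $\frac{1}{r}\val_G(L_r)$, one concludes that it maps $\Delta^{\lambda}_G = \overline{\operatorname{ConvexHull}\left(\bigcup_r \frac{1}{r}\val_G(L_r)\right)}$ onto $\Delta^{\lambda}_{G'}$. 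Here one must be a little careful: $\Psi_{G,G'}$ is only piecewise-linear, so it need not commute with taking convex hulls in general; the point is that on the relevant region — governed by the balanced-element results of \cref{s:balanced}, in particular \cref{l:balanced} and \cref{c:balanced} — the map is additive enough that the image of the convex hull is the convex hull of the images. Third, having established that $\Psi_{G,G'}(\Delta^{\lambda}_G) = \Delta^{\lambda}_{G'}$ and $\Psi_{G,G'}(\Gamma^{\lambda}_G) = \Gamma^{\lambda}_{G'}$, an induction on the length of a mutation sequence connecting $G$ to $G_{\rect}$, with base case \cref{thm:rectanglesproof}, yields $\Delta^{\lambda}_G = \Gamma^{\lambda}_G$ for every $\mathcal{X}$-seed $G$. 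Finally, the statement about lattice points follows: under $\Psi_{G,G'}$ the lattice points $\val_{G_{\rect}}(P_\mu)$ (which by \cref{c:ConvEqualsGamma} are exactly the lattice points of $\Gamma^{\lambda}_{\rect}$, equivalently the $\mathbf v_\mu$) are carried to $\val_G(P_\mu)$ by \cref{c:balanced} (applied to the nonvanishing regular functions $P_\mu/P_\lambda$), and since $\Psi$'s restrict to lattice bijections, the lattice points of $\Delta^{\lambda}_G$ are precisely $\{\val_G(P_\mu) \mid \mu \subseteq \lambda\}$; rationality of $\Delta^{\lambda}_G$ is then automatic, being the image of the rational polytope $\Gamma^{\lambda}_G$ under a rational piecewise-linear map (alternatively, it is a translated marked order polytope in the rectangles chart and hence rational, and rationality is preserved under tropicalized cluster mutation).

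\textbf{Main obstacle.} The delicate point is passing from the bijections $\val_G(L_r) \leftrightarrow \val_{G'}(L_r)$ at each finite level $r$ to the statement $\Psi_{G,G'}(\Delta^{\lambda}_G) = \Delta^{\lambda}_{G'}$ on the limiting convex bodies. Because the tropicalized mutation $\Psi_{G,G'}$ is only piecewise-linear, one cannot naively say "it's linear, so it commutes with convex hull and limit." The resolution is exactly the content of the balanced-element subsection (\cref{s:balanced}): one must check that all the valuations $\val_G(P_\mu)$, and more generally all valuations $\val_G(M/P_\lambda^r)$ of monomials in Plücker coordinates, lie in the locus where $\Psi_{G,G'}$ is additive (\cref{c:balanced}), so that the min-appearing in \eqref{e:tropmut} is attained in a consistent way across the relevant polytope and $\Psi_{G,G'}$ acts by an honest affine-linear map on $\Delta^{\lambda}_G$. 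Granting that, the rest is bookkeeping. This is precisely the strategy carried out for the Grassmannian in \cite[Section 16]{RW}, and the Schubert-variety case goes through essentially verbatim once \cref{thm:theta}, \cref{thm:rectanglesproof}, and \cref{cor:bijval} are in place, which is why I expect no genuinely new difficulty beyond correctly marshalling these ingredients.
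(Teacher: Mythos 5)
Your proposal follows the same skeleton as the paper's proof: base case from \cref{thm:rectanglesproof}, propagation along mutation chains via \cref{c:GammaMutation} on the superpotential side and \cref{cor:bijval} on the Newton--Okounkov side, then induction. That much is right. But your diagnosis of how the ``main obstacle'' is resolved is not. You claim that the balanced-element results of \cref{s:balanced} make $\Psi_{G,G'}$ ``additive enough'' on $\Delta^\lambda_G$ that it commutes with taking convex hulls. That is not what \cref{l:balanced} says: it asserts $\Psi(v+w)=\Psi(v)+\Psi(w)$ only when the fixed summand $v$ is balanced (for instance $v=\val_G(f)$ with $f$ a nonvanishing regular function), with $w$ ranging freely. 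It does not make $\Psi_{G,G'}$ affine on $\Delta^\lambda_G$ --- and it cannot, since a nontrivial cluster mutation deforms the polytope nonlinearly. In the paper, those balanced-element facts are deployed later, in \cref{s:GenD}, precisely to handle the \emph{translation} by $\val_G(f)$ in the proof of \cref{t:maingen}; they play no role in \cref{thm:main}.

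What actually closes the gap is more elementary. First, $\Psi_{G,G'}$ is positively homogeneous of degree one, so \cref{cor:bijval} immediately gives $\Psi_{G,G'}\bigl(\tfrac1r\val_G(L_r)\bigr)=\tfrac1r\val_{G'}(L_r)$. Second, by the inductive hypothesis $\Delta^\lambda_G=\Gamma^\lambda_G$ and by \cref{c:GammaMutation}, all of these points land inside the closed convex polytope $\Psi_{G,G'}(\Gamma^\lambda_G)=\Gamma^\lambda_{G'}$, hence $\Delta^\lambda_{G'}\subseteq\Gamma^\lambda_{G'}$. Third, equality of the two convex bodies follows by comparing volumes: $\operatorname{vol}(\Delta^\lambda_{G'})$ is fixed by \cref{t:KK}, while $\operatorname{vol}(\Gamma^\lambda_{G'})=\operatorname{vol}(\Gamma^\lambda_{\rect})$ because each tropicalized mutation is a unimodular piecewise-linear bijection and hence volume-preserving, and the two volumes already agree in the rectangles chart. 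Similarly, your appeal to \cref{c:balanced} for the lattice-point statement does not go through as written: $P_\mu/P_\lambda$ is regular on $X^\circ_\lambda$ but vanishes along $\{P_\mu=0\}$, so it fails the nonvanishing hypothesis of \cref{c:balanced}. The lattice-point claim instead follows from \cref{cor:bijval} at $r=1$ (since $L_1$ is spanned by the Pl\"ucker coordinates), together with the determination $\val_{\rect}(P_\mu)=\mathbf v_\mu$ from \cref{c:ConvEqualsGamma} and the fact that $\Psi_{G,G'}$ restricts to a bijection of lattices.
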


\begin{proof}
We know from \cref{c:ConvEqualsGamma} that \cref{thm:main}
holds when $G = G_{\rect}$.
We also know from \cref{c:GammaMutation} that 
if $G$ and $G'$ are related by mutation at vertex $\nu$,
	then the tropicalized $\mathcal{A}$-cluster mutation
	$\Psi_{G,G'}$ restricts to a bijection
	\[
\Psi_{G,G'}:
        \Q_G^\lambda 
	\to 
        \Q_{G'}^\lambda.  
\]

Since the Newton-Okounkov body is given by 
\begin{equation*}
\Delta_G^{\lambda} =
\overline{\operatorname{ConvexHull}
\left(\bigcup_r \frac{1}{r}
\val_G(L_r)\right)}, 
\end{equation*}
\cref{cor:bijval} implies that 
	$\Psi_{G,G'}$ is a bijection 
	\[
\Psi_{G,G'}:
        \Delta_G^\lambda 
	\to 
        \Delta_{G'}^\lambda.  
\]
The fact that the two polytopes agree now follows.

Finally since the tropicalized $\mathcal{A}$-cluster mutation
maps lattice points to lattice points, it follows that the set of lattice points for both $\Delta_G^{\lambda}$ and $\Gamma_G^{\lambda}$ is 
$\{\val_G(P_{\mu}) \ \vert \  \mu \subseteq \lambda\}$.
\end{proof}

The following result is a consequence of
 \cite[Section 17]{RW} (which builds on work of \cite{Anderson})
 and our result from \cref{thm:main}
 that our Newton-Okounkov bodies $\Delta_G^{\lambda}$ are rational polytopes.
 Note that in order to use the results of \cite{Anderson} we need to work with 
 an ample divisor. Our preferred divisor $D=D_{(\mathbf{1},\mathbf{0})}=\{P_\lambda=0\}$ is an example. In contrast, $D_{(\mathbf{1},\mathbf{1})}$ is only ample if $X_\lambda$ is Gorenstein. 


\begin{cor}\cite[Corollary 17.11]{RW}\label{cor:degeneration}
Let $\Sigma_G^{\mathcal X}$ be an arbitrary $\mathcal X$-cluster seed
for $X_{\lambda}$, and consider the corresponding  Newton-Okounkov body
$\Delta_G^{\lambda}$.
Let $\RG$ denote the minimal positive integer such that
the dilated polytope
$\RG\NO_G$ has the \emph{integer decomposition property}.  (This exists
since $\Delta_G^{\lambda}$ is a rational polytope.)
Then we have a flat degeneration of $X_{\lambda}$
to the normal projective toric variety
associated to the polytope $\RG\Delta_G$ (i.e. to
        the Newton-Okounkov body associated to the rescaled divisor).
\end{cor}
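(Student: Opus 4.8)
The plan is to deduce this from the general machinery of \cite[Section 17]{RW}, which in turn rests on Anderson's criterion \cite{Anderson} for producing toric degenerations from Newton-Okounkov bodies. First I would fix the ample divisor $D = D_{(\mathbf{1},\mathbf{0})} = \{P_\lambda = 0\}$ from \cref{rem:special}, so that the section ring $\bigoplus_{r \geq 0} L_r$ is the homogeneous coordinate ring $\C[\widehat{X_\lambda}]$ of the Pl\"ucker embedding (using projective normality, see \eqref{e:projnormal}). The valuation $\val_G$ together with this grading defines a $\Z_{\geq 0} \times \Z^{\mathcal{P}_G}$-valued valuation on $\C[\widehat{X_\lambda}]$; write $S_G$ for its value semigroup. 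By \cref{l:okounkovlemma} the $r$-th graded piece of $\gr_{\val_G}\C[\widehat{X_\lambda}]$ has dimension $\dim L_r = \dim H^0(X_\lambda, \mathcal{O}(rD))$, so $\gr_{\val_G}\C[\widehat{X_\lambda}] \cong \C[S_G]$ as graded vector spaces.

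Next I would invoke \cref{thm:main}: the Newton-Okounkov body $\Delta_G^\lambda = \overline{\bigcup_r \tfrac{1}{r}\val_G(L_r)}$ is a \emph{rational} polytope, with level-one lattice points exactly $\{\val_G(P_\mu) \mid \mu \subseteq \lambda\}$. The obstruction to applying Anderson's theorem directly is that $S_G$ need not be finitely generated, equivalently $\Delta_G^\lambda$ need not have the integer decomposition property; this is precisely why one passes to the rescaled divisor $\RG D$. After rescaling, the relevant Newton-Okounkov body is $\RG\Delta_G^\lambda$, and I would argue, following \cite[Section 17]{RW}, that the value semigroup attached to $\RG D$ is the semigroup generated at level one by the lattice points of $\RG\Delta_G^\lambda$, which is saturated precisely because $\RG\Delta_G^\lambda$ has the integer decomposition property. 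Finite generation is then automatic, and Anderson's construction \cite{Anderson} yields a flat degeneration of $X_\lambda$, embedded via $\RG D$, to $\Proj$ of the semigroup algebra of $\RG\Delta_G^\lambda$, namely the normal projective toric variety $Y(\RG\Delta_G^\lambda)$.

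The step I expect to require the most care is identifying the value semigroup for $\RG D$ with the \emph{full} normal semigroup of $\RG\Delta_G^\lambda$, rather than merely some semigroup having that polytope as Newton-Okounkov body. For $G = G_\rect$ this is pinned down exactly by \cref{c:ConvEqualsGamma} (which gives $\conv_G^\lambda = \Delta_G^\lambda = \Gamma_G^\lambda$ already at level one) together with the integer decomposition property of $\Gamma_\rect^\lambda$ (\cref{cor:IDP2}); for a general seed $G$, the bijection $\Psi_{G,G'}$ on value sets from \cref{cor:bijval}, applied along any mutation path from $G_\rect$ to $G$, transports this description of the semigroup, so the conclusion propagates to every $\mathcal{X}$-seed. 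With these ingredients the statement is exactly \cite[Corollary 17.11]{RW}, whose argument applies once \cref{thm:main} supplies the rationality of $\Delta_G^\lambda$.
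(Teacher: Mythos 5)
Correct, and this is essentially the same approach as the paper, which gives no proof of its own beyond citing \cite[Corollary~17.11]{RW} and noting that \cref{thm:main} supplies the rationality of $\Delta_G^\lambda$ and that the chosen divisor $D = D_{(\mathbf{1},\mathbf{0})}$ is ample. Your expansion of the cited argument (Okounkov's lemma, Anderson's criterion, identification of the value semigroup with the lattice points of $\RG\Delta_G^\lambda$ via \cref{c:ConvEqualsGamma} at the rectangles seed and \cref{cor:bijval} for general $G$) is the right unpacking; the only imprecision is the phrase ``transports this description of the semigroup'' -- the tropical mutation $\Psi_{G,G'}$ is not a semigroup homomorphism, but what one actually needs, and what \cref{cor:bijval} together with the lattice-preservation of $\Psi_{G,G'}$ provides, is the equality $\val_G(L_r) = r\Delta_G^\lambda \cap \Z^{\mathcal{P}_G}$ for each $r$, which combined with the IDP of $\RG\Delta_G^\lambda$ yields saturation and degree-one generation of the value semigroup at level $\RG$.
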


In the special case that $G=G_{\rect}$, \cref{cor:degeneration}
plus \cref{p:unimodular}
recovers the following result
of Gonciulea and Lakshmibai.
\begin{cor}\cite[Theorem 7.34]{GL}
We have a flat degeneration of $X_{\lambda}$ to
the \emph{Hibi toric variety} associated to the order polytope
$\OO(\lambda)$ 
 of the poset $P(\lambda)$
of rectangles contained in $\lambda$.  
\end{cor}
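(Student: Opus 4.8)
The plan is to derive this corollary as a direct consequence of \cref{cor:degeneration} applied to the rectangles seed $G=G_{\rect}$, combined with the identification of the Newton-Okounkov body with an order polytope. First I would recall that by \cref{thm:rectanglesproof} we have $\Delta^\lambda_{\rect}=\Gamma^\lambda_{\rect}$, and by \cref{p:unimodular} together with \cref{p:unimodular2} this polytope is integrally equivalent to the order polytope $\OO(\lambda)$ of the poset $P(\lambda)$ of rectangles contained in $\lambda$. The key point that makes the statement clean is that order polytopes have the integer decomposition property: this follows from \cref{cor:IDP} (or from the fact, used in its proof, that marked order polytopes are IDP), so the integer $\RG$ appearing in \cref{cor:degeneration} can be taken to be $1$ when $G=G_{\rect}$.

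Next I would invoke \cref{cor:degeneration} with $G=G_{\rect}$: since $\RG\Delta^\lambda_{\rect}=\Delta^\lambda_{\rect}$ already has the integer decomposition property, we obtain a flat degeneration of $X_\lambda$ to the normal projective toric variety associated to the polytope $\Delta^\lambda_{\rect}$. By the integral equivalence from \cref{p:unimodular} and \cref{p:unimodular2}, this toric variety is isomorphic to the one associated to $\OO(\lambda)$, which by definition (see the discussion following \cref{def:posetlambda}, and \cite{GL}) is precisely the Hibi toric variety of the poset $P(\lambda)$. Here one uses that the normal toric variety of a lattice polytope is an invariant of its integral equivalence class. This yields exactly the assertion of the corollary.

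The main obstacle — which is really a matter of bookkeeping rather than a genuine difficulty — is making sure that all the normalizations line up: namely that the divisor $D=D_{(\mathbf 1,\mathbf 0)}=\{P_\lambda=0\}$ used to build $\Delta^\lambda_{\rect}$ is indeed ample (so that the results of \cite{Anderson} underlying \cref{cor:degeneration} apply, as noted in the remark preceding that corollary), and that the projectively normal embedding of $X_\lambda$ in its Plücker embedding (guaranteed by \cite{RamananRamanathan}, as used in \cref{rem:special}) matches the graded pieces $L_r$ feeding into the Newton-Okounkov construction. Once these compatibilities are in place, the degeneration statement is immediate, and the identification with the Hibi toric variety of $\OO(\lambda)$ is purely combinatorial. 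I would close by remarking that this recovers \cite[Theorem 7.34]{GL} in our setting, where $X_\lambda$ is viewed as a minuscule Schubert variety and $P(\lambda)$ as the associated minuscule poset.
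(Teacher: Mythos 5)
Your proposal follows exactly the route the paper takes: the paper derives this corollary from \cref{cor:degeneration} applied to $G=G_{\rect}$ together with \cref{p:unimodular} (and the IDP property from \cref{cor:IDP2} so that $\RG=1$), after identifying $\Delta^{\lambda}_{\rect}=\Gamma^{\lambda}_{\rect}$ with $\OO(\lambda)$. Your additional remarks on ampleness of $D_{(\mathbf 1,\mathbf 0)}$ and projective normality are the same compatibility checks the paper records in \cref{rem:special}, so there is nothing essentially new or different here.
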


We note that the above degeneration was also a key ingredient in  the work of Miura
\cite{Miura:minuscule}, who studied the mirror symmetry of smooth complete
intersection Calabi-Yau $3$-folds in minuscule Gorenstein Schubert varieties by 
degenerating the ambient Schubert varieties to Hibi toric varieties.

\begin{remark} \label{rem:Khovanskii}
We know  from  \cref{cor:IDP2}
that in the case of the rectangles cluster $G=G_{\lambda}^{\rect}$,
the Newton-Okounkov body
 $\Delta_{\rect}^{\lambda}$ has the integer decomposition property,
and its lattice points are precisely
the valuations of Pl\"ucker coordinates.
It now follows as in the proof of \cite[Corollary 17.10]{RW}
that the Pl\"ucker coordinates from the rectangles seed of 
$X_{\lambda}$ form a \emph{Khovanskii} or \emph{SAGBI basis} 
(as in \cite[Definition 17.1]{RW})
of the homogeneous coordinate ring of $X_{\lambda}$.  More formally,
let
	$R_{\lambda}:=\bigoplus_j t^j L_j$, where
	$L_j=H^0(X_{\lambda},\mathcal O(jD))$ 
	as in \eqref{e:projnormal}, and 
	$D=\{P_{\lambda}=0\}$ is our preferred ample divisor corresponding to the 
	Pl\"ucker embedding;
	note that $R_{\lambda}$ is isomorphic to the homogeneous 
	coordinate ring of $X_{\lambda}$. 
	Consider the extended valuation $\overline \val_G:R_{\lambda} \setminus\{0\}\to 
	\Z\times \Z^{\mathcal P_G}$ defined by
	\begin{eqnarray}\label{e:extendedval}
		\overline{\val_G}: R_{\lambda}\setminus\{0\}&\to &\Z\times \Z^{\mathcal P_G},\\
\sum t^j f^{(j)} &\mapsto & \left(j_0,\val_G (f^{(j_0)})\right),
\end{eqnarray}
where $j_0=\max\{j\mid f^{(j)}\ne 0\}$.
Then
the set  $\{tP_{\mu}/P_\lambda\mid \mu\subseteq \lambda\}$ is a Khovanskii basis for
	$(R_{\lambda}, \overline{\val}_G)$.%
\end{remark}

\section{The max-diagonal formula for lattice points}\label{s:MaxDiag}

In this section we will prove a  ``max diagonal'' formula for valuations
of Pl\"ucker coordinates in Schubert varieties, see \cref{p:latticepoints-comp}.
This result will be used in the proof of 
\cref{t:maingen}, and it 
generalizes 
our previous result  \cite[Theorem 15.1]{RW} in the  Grassmannian setting.
Our proof of 
\cref{p:latticepoints-comp}
uses the geometry of how Newton-Okounkov bodies for Schubert varieties
sit inside Newton-Okounkov bodies for the Grassmannian, and it uses
the flow polynomials for general plabic graphs from \eqref{eq:Plucker2}.

\begin{theorem}\label{p:latticepoints-comp}
Suppose $\kappa\subseteq\nu$ indexes a  Pl\"ucker coordinate $P_\kappa$ for $X_\nu$. For any reduced plabic graph $G$ for $X_\nu$ and any $\eta\in\mathcal P_G(\nu)$ we have the formula
\begin{equation}\label{e:GenMaxDiag}
\val_G(P_\kappa)_\eta=\maxdiag(\eta\setminus\kappa),
\end{equation}
where $\maxdiag$ is as in \cref{d:MaxDiagVectors}.
If $\nu\subset\lambda$, then there exists a seed $G'$ of  $X_\lambda$ extending $G$, such that we have an embedding $\iota:\Delta^\nu_G\to \Delta^\lambda_{G'}$ 
satisfying 
\begin{equation}\label{e:iotaonvalG}
\iota(\val_G(P_\kappa))=\val_{G'}(P_\kappa)
\end{equation}
for all Pl\"ucker coordinates $P_\kappa$ of $X_\nu$.
Moreover, the set of lattice points of the face $F_{G'}^\nu=\iota(\Delta^\nu_G)$ of $\Delta^\lambda_{G'}$ is precisely the set
$
\{\val_{G'}(P_\kappa)\mid \kappa\subseteq \nu\}.
$
\end{theorem}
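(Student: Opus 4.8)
\textbf{Proof plan for Theorem \ref{p:latticepoints-comp}.} The plan is to reduce everything to the case of the rectangles seed and the Grassmannian case, then propagate along mutations. First I would establish \eqref{e:GenMaxDiag} for the rectangles seed $G = G^\rect_\nu$: this is exactly the content of \cref{c:ConvEqualsGamma} together with \cref{l:VerticesOfGamma}, which identify $\val_G(P_\kappa) = \mathbf v_\kappa$ with $(\mathbf v_\kappa)_{i\times j} = \maxdiag((i\times j)\setminus\kappa)$. For a general reduced plabic graph $G$, I would use that $G$ is connected to $G^\rect_\nu$ by a sequence of mutations, and apply \cref{cor:bijval}: the tropicalized $\mathcal A$-cluster mutation $\Psi_{G,G'}$ sends $\val_G(P_\kappa)$ to $\val_{G'}(P_\kappa)$, and it also sends the max-diagonal statistic of one seed to that of the next. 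This last point is the technical heart of the general-plabic-graph case: one must check that the quantities $\maxdiag(\eta\setminus\kappa)$, as $\eta$ ranges over the face labels of $G$, transform under a mutation exactly via the tropical mutation formula \eqref{e:tropmut}. This can be verified combinatorially via the known behaviour of face labels under plabic graph moves, or (more cheaply) deduced from the fact that both $\val_G(P_\kappa)$ and the max-diagonal vector agree with each other for $G^\rect_\nu$ and both are carried by $\Psi_{G,G'}$, which pins down all of them by induction on the mutation distance.

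For the embedding statement, I would invoke \cref{lem:restrictedseed}: since $\nu \subseteq \lambda$, the rectangles seed $G^\nu_\rect$ is a restricted seed of $G^\lambda_\rect$, and more generally any seed $G$ for $X_\nu$ is a restricted seed of a seed $G'$ for $X_\lambda$ obtained by applying the ``same'' sequence of mutations. The inclusion of index sets $\mathcal P_G(\nu) \hookrightarrow \mathcal P_{G'}(\lambda)$ (extending by the frozen/restricted coordinates) induces the linear map $\iota: \R^{\mathcal P_G(\nu)} \to \R^{\mathcal P_{G'}(\lambda)}$. To see this is the right map and restricts to an embedding of polytopes, I would first check it in the rectangles case: there $\Delta^\nu_\rect \cong \OO(\nu)$ and $\Delta^\lambda_\rect \cong \OO(\lambda)$ via \cref{p:unimodular} and \cref{thm:rectanglesproof}, and \cref{lem:faces} says $\OO(\nu)$ is a face of $\OO(\lambda)$ --- this is precisely \cref{cor:IDP2}. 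The map $\iota$ is then the composite of the unimodular coordinate changes with the face inclusion, and one checks $\iota(\mathbf v_\kappa^\nu) = \mathbf v_\kappa^\lambda$ directly from the max-diagonal formula (the extra coordinates indexed by boxes of $\lambda \setminus \nu$ are filled in by $\maxdiag((i\times j)\setminus\kappa)$, which is consistent). For general $G$, I would transport this along the compatible mutation sequences: tropical mutation commutes with the restricted-seed inclusion (this is the content of \cref{l:freezingcommutesmutation} on the tropical side, or can be checked from \eqref{e:tropmut} directly), so the face structure and \eqref{e:iotaonvalG} are preserved.

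Finally, for the statement that the lattice points of the face $F^\nu_{G'} = \iota(\Delta^\nu_G)$ are exactly $\{\val_{G'}(P_\kappa) \mid \kappa \subseteq \nu\}$: by \cref{thm:main} the lattice points of $\Delta^\nu_G$ are exactly $\{\val_G(P_\kappa) \mid \kappa \subseteq \nu\}$, and $\iota$ is a lattice-point-preserving affine embedding onto a face (this uses that $\iota$ is unimodular onto its image, which in the rectangles case follows from \cref{p:unimodular} and \cref{lem:faces}, and in general is transported by tropical mutation, which preserves lattices). Combining with \eqref{e:iotaonvalG} gives the claim.

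\textbf{Main obstacle.} The step I expect to be the genuine obstacle is verifying that the max-diagonal statistic $\eta \mapsto \maxdiag(\eta\setminus\kappa)$ transforms correctly under a single $\mathcal A$-cluster (equivalently plabic-graph) mutation, i.e.\ that the combinatorial identity matches the piecewise-linear formula \eqref{e:tropmut}; for general reduced plabic graphs the face labels $\eta$ need not be rectangles, so one cannot simply quote \cref{l:VerticesOfGamma}. The cleanest route is probably not to prove this identity head-on, but to bootstrap: prove \eqref{e:GenMaxDiag} first for $G^\rect$, then observe that $\val_{G}(P_\kappa)$ is determined by $\val_{G^\rect}(P_\kappa)$ via the composite tropicalized mutation (by \cref{cor:bijval}), and separately show the max-diagonal vector is determined the same way --- the latter following from the analogous Grassmannian statement \cite[Theorem 15.1]{RW} restricted along the embedding into the Grassmannian Newton-Okounkov body, since every reduced plabic graph for $X_\nu$ extends to one for the ambient Grassmannian.
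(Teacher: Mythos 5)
Your proposal is correct, but it takes a genuinely different route from the proof the paper actually gives (while being close in spirit to the approach the paper only sketches in Remark~\ref{r:MaxDiagMutationProof} and then sets aside). The paper establishes \eqref{e:iotaonvalG} by a direct plabic--graph combinatorial argument: it constructs $G'$ and an acyclic perfect orientation $\O'$ from $G$ and $\O$ by extending two wires to create the new bounded region labelled $\mu_\rho$, shows that every flow in $\O$ extends uniquely to a flow in $\O'$, and then proves the crucial Claim that the \emph{minimal} flow in $\O'$ is precisely the extension of the minimal flow in $\O$. This yields both $\iota(\val_G(P_\kappa))=\val_{G'}(P_\kappa)$ and $\pi(\val_{G'}(P_\kappa))=\val_G(P_\kappa)$ directly, and then \eqref{e:GenMaxDiag} follows by recursively applying $\pi$ to descend from the Grassmannian case \cite[Theorem 15.1]{RW}. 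Your proposal instead proves \eqref{e:iotaonvalG} by mutation transport: verify it in the rectangles case directly from the $\maxdiag$ formula, then push it along the mutation sequence from $G_\rect^\nu$ to $G$ (and the mirrored sequence from $G_\rect^\lambda$ to $G'$), using that $\iota$ commutes with tropicalized mutation and that mutation sends valuations to valuations. You then deduce \eqref{e:GenMaxDiag} by taking $\lambda$ to be the ambient rectangle, so that \eqref{e:iotaonvalG} reduces the $X_\nu$ valuation to the Grassmannian one, and invoking \cite[Theorem 15.1]{RW}. The trade-off: your route avoids the flow-extension combinatorics entirely and leans on the mutation machinery already in place (which is arguably cleaner given the infrastructure of \cref{sec:polytopescoincide}), whereas the paper's flow argument is more elementary and self-contained, essentially re-deriving the mutation compatibility it needs on the fly. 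Two small corrections to tighten your write-up: (1) the statement you need is \cref{thm:flowmutate} (the pointwise compatibility $\val_{G'}(P_\kappa)=\Psi_{G,G'}(\val_G(P_\kappa))$ for plabic moves), not \cref{cor:bijval}, which only asserts a bijection of image sets; and (2) your rectangles base case rests on the identity $\iota^F(\chi^\nu_{\kappa^c})=\chi^\lambda_{\kappa^c}$ (in the $f$-coordinates the extra entries are all $1$ since $\kappa\subseteq\nu$), equivalently $\maxdiag(\mu_\rho\setminus\kappa)=\maxdiag(\mu_\rho^-\setminus\kappa)+1$ in the $v$-coordinates --- this does hold but deserves a sentence, as it is a genuine (if short) diagonal count and not automatic from \cref{l:VerticesOfGamma} alone.
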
 

\cref{p:latticepoints-comp} is a generalization of  
\cref{thm:maxdiag} below,
 which was our previous
result in the Grassmannian case.
\begin{theorem}\cite[Theorem 15.1]{RW}\label{thm:maxdiag}
Let $G$ be a reduced plabic graph for the Grassmannian, let $\kappa$ be a 
partition indexing a Pl\"ucker coordinate $P_\kappa$, and let $\eta \in \mathcal{P}_G$.  Then we have
\begin{equation}
\val_G(P_\kappa)_{\eta} = \maxdiag(\eta \setminus \kappa),
\end{equation}
where $\maxdiag$ is as in \cref{d:MaxDiagVectors}.
\end{theorem}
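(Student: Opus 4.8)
The strategy is to prove \eqref{e:GenMaxDiag} first for one distinguished reduced plabic graph — the ``rectangles'' graph $G^\rect$, whose network chart is given by the flow polynomials of \eqref{eq:Plucker} — and then propagate it to an arbitrary reduced plabic graph $G$ by a sequence of square moves, each of which induces an $\mathcal X$-cluster mutation of the network chart.

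\textbf{Step 1: the rectangles graph.} Here the faces of the network are labelled bijectively by the rectangles $\eta = i\times j$ contained in the bounding box, and each Pl\"ucker coordinate $P_\kappa$ is the flow polynomial $\sum_F \wt(F)$, a positive sum over vertex-disjoint flows. First I would show that among these flows there is a unique lexicographically minimal one, the ``packed'' flow $F_\kappa$ whose paths hug the southeast boundary of $\kappa$, and that its weight $\wt(F_\kappa)$ is a single Laurent monomial; hence $\val_{G^\rect}(P_\kappa)$ is the exponent vector of $\wt(F_\kappa)$, and $\val_{G^\rect}(P_\kappa)_\eta$ equals the number of faces labelled $\eta$ lying strictly to the left of $F_\kappa$. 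A direct inspection of $F_\kappa$ shows that a face labelled by the rectangle $i\times j$ contributes with multiplicity $\#\{t\ge 0 : (i-t,\,j-t)\notin\kappa\}$; this is exactly $\maxdiag((i\times j)\setminus\kappa)$ by an elementary lemma: the longest slope $-1$ diagonal of a skew shape $(i\times j)\setminus\kappa$ — with $\kappa$ a partition — always passes through the southeast corner $(i,j)$ of the rectangle. Equivalently, after the unimodular change of coordinates $F$ of \cref{p:unimodular}, one checks $F(\val_{G^\rect}(P_\kappa)) = \chi_{\kappa^c}$, which is the classical Gelfand--Tsetlin computation; it can alternatively be obtained by induction on $|\kappa|$ from the three-term Pl\"ucker relations of \cref{lem:Plucker}, each of which is a cluster exchange relation, so that the tropical identity holds on the two sides and pins down $\val_{G^\rect}(P_\kappa)$ from the frozen boundary data. (This is essentially the content of \cref{c:ConvEqualsGamma} and \cref{l:VerticesOfGamma}, specialized to the full Grassmannian.)

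\textbf{Step 2: propagation.} Any two reduced plabic graphs for the Grassmannian are connected by square moves (Postnikov), and a square move $G\to G'$ at a face $f$ with label $\eta_f$ induces the $\mathcal X$-cluster mutation of the network chart at $\eta_f$. It therefore suffices to check that both sides of \eqref{e:GenMaxDiag}, viewed as functions of $\eta\in\mathcal P_G$, transform identically under a single such move. For the left-hand side: $P_\kappa$ is a regular nonvanishing function on the open Grassmannian and lies in the theta function basis (equivalently, has a pointed Laurent expansion in every $\mathcal X$-chart, cf.\ \cref{thm:theta} and \cref{thm:pointed}), so $\val_G(P_\kappa)$ transforms by the tropicalized $\mathcal A$-cluster mutation $\Psi_{G,G'}$ of \cref{d:PsiGG'}, exactly as in the proof of \cref{cor:bijval}; thus $\val_{G'}(P_\kappa) = \Psi_{G,G'}(\val_G(P_\kappa))$. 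For the right-hand side: the square move changes only the label at the mutated face, from $\eta_f$ to a new partition $\eta_f'$ determined by the neighbouring labels, and one verifies the purely combinatorial identity $\maxdiag(\eta_f'\setminus\kappa) = \min\bigl(\sum_{\eta\to\eta_f}\maxdiag(\eta\setminus\kappa),\ \sum_{\eta_f\to\eta}\maxdiag(\eta\setminus\kappa)\bigr) - \maxdiag(\eta_f\setminus\kappa)$ — precisely the tropical exchange relation \eqref{e:tropmut} — while $\maxdiag(\eta\setminus\kappa)$ is unchanged at every other face. Combining Steps 1 and 2 yields the formula for all reduced plabic graphs.

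\textbf{Main obstacle.} The hardest part is the combinatorial identity in Step 2: showing that the statistic $\eta\mapsto\maxdiag(\eta\setminus\kappa)$ satisfies exactly the tropical exchange relation under every square move, for every central face $\eta_f$ and every partition $\kappa$, which requires a careful case analysis of how the longest-diagonal counts of the four (or more) skew shapes around a square face interact, together with the bookkeeping of how partition labels are reassigned under a square move relative to the quiver arrows at $\eta_f$. A secondary, more routine point is the justification that $\val_G(P_\kappa)$ genuinely transforms by $\Psi_{G,G'}$ on the nose; this rests on pointedness of the Laurent expansions of Pl\"ucker coordinates, which follows from positivity of the cluster structure and the existence of the theta basis.
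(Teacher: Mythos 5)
Your proposal follows the same overall two-step structure as the proof in \cite{RW} (the result is cited, not reproved, here; the \cite{RW} strategy is recalled in the proof sketch preceding the second proof of \cref{p:latticepoints-comp}): establish the formula for the rectangles chart, then propagate across square moves. Step~1 agrees with that approach. The divergence is in Step~2, on both sides of the move. For the transformation of $\val_G(P_\kappa)$ you invoke the theta basis and pointedness (\cref{thm:theta}, \cref{thm:pointed}); \cite{RW} instead proves \cref{thm:flowmutate} by a direct, elementary analysis of how flows are affected by moves (M1)--(M3), with no need for \cite{GHKK}-style input such as optimized seeds or green-to-red sequences. Your route works but is substantially heavier than necessary for this step.

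The more significant difference is on the right-hand side. You propose a direct, move-by-move verification of the tropical exchange identity for the statistic $\eta\mapsto\maxdiag(\eta\setminus\kappa)$, and you correctly flag this as the main obstacle. The proof in \cite{RW} sidesteps that case analysis entirely: it constructs an explicit tropical point of the cluster $\mathcal{A}$-variety whose tropicalized Pl\"ucker coordinates, evaluated in \emph{every} chart, are precisely the max-diagonal numbers. Since tropical Pl\"ucker coordinates of a single tropical point automatically transform by tropicalized $\mathcal{A}$-mutation, the exchange relation you want to prove by hand becomes a formal consequence. This is the key simplifying idea your plan lacks. Your route is not wrong in principle, but the delicate combinatorial identity at its heart is left as ``one verifies,'' so the proposal as written does not actually close the gap it identifies as the hard part; before crediting it as a complete argument one would need to carry out that case analysis (or, more sensibly, construct the tropical point and let the mutation compatibility come for free).
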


 We recall the following result from \cite{RW}.
\begin{theorem}\cite[Theorem 13.1]{RW}\label{thm:flowmutate}
Suppose that $G$ and $G'$ are reduced plabic graphs, which are related by a single
move, and let ${\kappa}$ be a partition indexing a Pl\"ucker coordinate that is nonzero on the 
open positroid variety $X_G^{\circ}$.  
If $G$ and $G'$ are related by one of the moves (M2) or (M3), then
$\val_G(P_{\kappa}) = \val_{G'}(P_{\kappa})$.
If $G$ and $G'$ are related by move (M1), then
	\begin{equation*}
		\val_{G'}(P_{\kappa}) = \Psi_{G,G'}(\val_G(P_{\kappa})),
	\end{equation*}
	for $\Psi_{G,G'}$ the tropicalized $\mathcal{A}$-cluster mutation from 
	\cref{d:PsiGG'}.
\end{theorem}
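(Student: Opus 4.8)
The plan is to handle the two cases of the single move separately, in each case reducing the statement to a property of the one flow polynomial $P_\kappa^G$. Recall first that for a reduced plabic graph $G$ the generalization of \cref{1network_param} expresses $P_\kappa$ in the network chart as the flow polynomial $P_\kappa^G=\sum_F\wt(F)$ (cf.\ \eqref{eq:Plucker}), a polynomial with positive coefficients in the face weights $\{x_\eta\}_{\eta\in\mathcal P_G}$; by \cref{de:val}, $\val_G(P_\kappa)$ is then exactly the exponent vector of the lexicographically minimal monomial of $P_\kappa^G$, and likewise $\val_{G'}(P_\kappa)$ for $G'$. For moves (M2) and (M3) (contracting/uncontracting a bivalent vertex, resp.\ flipping a trivalent vertex adjacent to a bivalent one), I would observe that these moves leave the faces of the plabic graph, their labels, and the boundary measurement map unchanged — this is intrinsic to Postnikov's construction, see \cite{Postnikov, TalaskaWilliams} — so that $P_\kappa^G=P_\kappa^{G'}$ as Laurent polynomials in a common set of variables, whence $\val_G(P_\kappa)=\val_{G'}(P_\kappa)$.

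For move (M1), $G'$ is obtained from $G$ by a square move at a square face $\nu$, which is precisely the $\mathcal X$-cluster mutation $\mu_\nu$; Postnikov's square-move rule exhibits each old face weight as the standard $\mathcal X$-mutation substitution in the new face weights (namely $x_\nu\mapsto x_\nu^{-1}$, the four faces adjacent to $\nu$ pick up factors $(1+x_\nu^{\pm1})^{\pm1}$, all other faces unchanged), and feeding this substitution into $P_\kappa^G$ produces $P_\kappa^{G'}$. Thus the target identity $\val_{G'}(P_\kappa)=\Psi_{G,G'}(\val_G(P_\kappa))$ becomes the following assertion about flow polynomials: under the square-move substitution, the lexicographically minimal monomial of $P_\kappa^G$ is carried to the lexicographically minimal monomial of $P_\kappa^{G'}$, and its exponent vector changes by the tropical $\mathcal A$-mutation of \cref{d:PsiGG'} — i.e.\ only the $\nu$-coordinate changes, to $\min\bigl(\sum_{j\to\nu}v_j,\ \sum_{\nu\to j}v_j\bigr)-v_\nu$, even though the naive tropicalization of the $\mathcal X$-substitution would also move the coordinates of the faces adjacent to $\nu$.

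I see two routes to this last statement. The flow-combinatorial route: identify the unique minimal-weight flow $F_0$ computing $\val_G(P_\kappa)$ (the ``packed'' flow, in the spirit of the argument in the proof of \cref{p:PmuVSPositroids}), describe explicitly how flows through the square at $\nu$ reroute under the square move, verify that $F_0$ goes to the minimal-weight flow for $\kappa$ in $G'$, and compute that its weight changes only in the $x_\nu$-exponent, exactly as $\Psi_{G,G'}$ prescribes. The cluster-theoretic route: apply the Muller--Speyer twist automorphism \cite{MullerSpeyer} to identify the $\mathcal X$-cluster torus of $G$ with the $\mathcal A$-cluster torus of $G$, under which (up to frozen factors) the leading exponent of $P_\kappa$ becomes a $g$-vector; since $g$-vectors transform under $\mathcal A$-mutation by precisely the piecewise-linear map of \cref{d:PsiGG'} (cf.\ \cite{GHKK}), transporting the computation back through the twist yields the claim.

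The hard part will be exactly this reconciliation: the valuation is read off in the $\mathcal X$-coordinates, whose naive tropicalized mutation moves the coordinates of every face adjacent to $\nu$, yet the theorem asserts that $\val_G(P_\kappa)$ transforms by the $\mathcal A$-tropical mutation $\Psi_{G,G'}$, which moves only one coordinate. This can only succeed because $P_\kappa$ is a flow polynomial (equivalently, after the twist, a function whose leading term is $\mathcal A$-cluster-monomial-like), and pinning it down requires either the explicit rerouting bookkeeping of the first route or the structural input ($g$-vectors $+$ twist) of the second; the remaining reductions are formal.
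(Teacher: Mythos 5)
Your two-case decomposition is right and your Route~1 is essentially the approach taken in \cite{RW}: the (M2)/(M3) case is immediate because those moves preserve the face structure and the boundary-measurement/flow polynomials, and the (M1) case hinges on the fact (recorded as \cite[Corollary~12.4]{RW}, quoted in this paper after \cref{def:minimal}) that flow polynomials are \emph{pointed} — strongly minimal, strongly maximal, with leading coefficient $1$. Two caveats are worth flagging. First, the phrase ``verify that $F_0$ goes to the minimal-weight flow for $\kappa$ in $G'$'' suggests an explicit bijection between flows in $G$ and flows in $G'$; this is not how the argument actually goes. Under the square-move change of variables, the weight of an individual flow in $G$ is sent not to the weight of a flow in $G'$ but to a Laurent monomial in the new variables multiplied by a factor $(1+x'_\nu)^{k}$ whose exponent $k$ depends on the flow; only the total sum $P_\kappa^G$ maps to the total sum $P_\kappa^{G'}$. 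The argument therefore proceeds by establishing pointedness of \emph{both} $P_\kappa^{G}$ and $P_\kappa^{G'}$ and then invoking the general fact that if two pointed positive Laurent polynomials are related by the $\mathcal X$-mutation substitution, their minimal exponent vectors are related by $\Psi_{G,G'}$. Second, that general fact — and hence your closing claim that ``the remaining reductions are formal'' — is not trivial: when $k<0$ the factor $(1+x'_\nu)^{k}$ is a power series rather than a Laurent polynomial, and the argument must explain why the (unique) minimal monomial of the resulting Laurent polynomial changes only in its $\nu$-coordinate, and by exactly $\min\bigl(\sum_{j\to\nu}v_j,\ \sum_{\nu\to j}v_j\bigr)-v_\nu$. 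Your Route~2, via the Muller--Speyer twist and $g$-vectors, is a legitimate alternative conceptual route, closer in spirit to the theta-function machinery cited elsewhere (cf. \cref{thm:pointed}), but it is not what \cite{RW} does for this theorem; carrying it out would require care about the frozen-variable factors introduced by the twist, as you correctly anticipate.
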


\begin{remark}\label{r:MaxDiagMutationProof}
One possible approach to proving the identity  \eqref{e:GenMaxDiag} 
in the case of a Schubert variety $X_\nu$  is to follow the proof of \cref{thm:maxdiag}
given in \cite{RW}, using \cref{thm:flowmutate} (which was proved
in \cite{RW} for the case of the full Grassmannian but whose proof
applies to the general case). We will sketch such a proof below, then give
an alternative proof.
\end{remark}

\begin{proof}[Proof sketch of 
	\cref{e:GenMaxDiag}]
We know by \cref{c:ConvEqualsGamma} that 
the formula holds for  the rectangles cluster.  
One can 
then provide an explicit construction of a tropical point of $\check X_\lambda$ whose (tropical) Pl\"ucker coordinates can all be evaluated and shown to be given by the max-diag formula. This implies that the  max-diag formula is compatible with  tropicalised $\mathcal A$-cluster mutation.  	
Then one can use \cref{thm:flowmutate}, which says that valuations of flow polynomials are compatible with the tropicalised $\mathcal A$-cluster mutation.
Thus the formula \eqref{e:GenMaxDiag} that we already proved for $G=G_{\rect}$ holds true for every Pl\"ucker cluster.
\end{proof}

We now give a different proof of \cref{p:latticepoints-comp} 
that makes use of the 
compatibility of the cluster structures of the different Schubert varieties under inclusion.

Recall from \cref{cor:IDP2} that $\Delta_{\rect}^{\lambda}$ has the integer decomposition property, and 
that for $\nu \subseteq \lambda$, $\Delta_{\rect}^{\nu}$ embeds as a face of $\Delta_{\rect}^{\lambda}$.
Having the integer decomposition property, and even being integral, is a special feature associated to the rectangles cluster that doesn't necessarily hold for general $G$, see~\cite[Section~9]{RW}. The property that $\Delta^\nu_{\rect}$ is naturally identified with a face of $\Delta^\lambda_{\rect}$ does, however, generalise beyond the rectangles seed, as we will now see.

\begin{notation} \label{def:extend}
Throughout this section we will let $\nu \subseteq \lambda$ be 
partitions, $G$  an arbitrary seed for $X_{\nu}$, and $G'$  the seed 
for $X_{\lambda}$ such that $G$ is a restricted seed obtained from $G'$ (see
\cref{lem:restrictedseed}).
\end{notation}

Let us denote the ambient spaces for $\Delta^\lambda_{G'}$ and $\Delta^\nu_{G}$ by $\R^{\mathcal P_{G'}(\lambda)}$ and $\R^{\mathcal P_{G}(\nu)}$, respectively. Note that we have a natural inclusion $\mathcal P_{G}(\nu)\subseteq\mathcal P_{G'}(\lambda)$ and associated to it a standard projection $\pi:\R^{\mathcal P_{G'}(\lambda)}\to\R^{\mathcal P_{G}(\nu)}$. 

\begin{prop}\label{p:FaceProp}
Let $\nu\subset\lambda$ be partitions and $G$ a seed for $X_\nu$. Then there is a seed $G'$ for $X_\lambda$ such that we have an  embedding $\iota:\Delta^\nu_G\hookrightarrow\Delta^\lambda_{G'}$ identifying $\Delta^\nu_G$ unimodularly with a face $F_{G'}^\nu$ of $\Delta^\lambda_{G'}$.
The inverse map $F_{G'}^\nu\to \Delta^\nu_{G}$ is obtained as a restriction of the coordinate projection $\pi:\R^{\mathcal P_{G'}(\lambda)}\to\R^{\mathcal P_{G}(\nu)}$.
\end{prop}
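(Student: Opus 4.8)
\textbf{Proof proposal for \cref{p:FaceProp}.}

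The plan is to reduce the general case to the rectangles-seed case, for which the statement is already known, by exploiting the fact that tropicalized $\mathcal{A}$-cluster mutation is compatible with restricted seeds and with passing to faces. First I would record the base case: by \cref{lem:restrictedseed} the rectangles seed $G^\nu_\rect$ is a restricted seed obtained from $G^\lambda_\rect$, and by \cref{p:unimodular} together with \cref{thm:rectanglesproof} the polytopes $\Delta^\nu_\rect$ and $\Delta^\lambda_\rect$ are unimodularly equivalent to the order polytopes $\OO(\nu)$ and $\OO(\lambda)$; by \cref{lem:faces}, $\OO(\nu)$ is a face of $\OO(\lambda)$, cut out by setting the coordinates $f_b=1$ for boxes $b\in\lambda\setminus\nu$. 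Translating back through the unimodular change of variables $F$ of \cref{p:unimodular}, this face is cut out by equations identifying certain coordinates indexed by $\mathcal{P}_{G^\lambda_\rect}(\lambda)\setminus\mathcal{P}_{G^\nu_\rect}(\nu)$, and the inverse map to $\Delta^\nu_\rect$ is the coordinate projection $\pi$ forgetting those coordinates. So for $G=G^\nu_\rect$ and $G'=G^\lambda_\rect$ the claim holds, with $\iota$ the obvious section of $\pi$.

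Next, given an arbitrary seed $G$ for $X_\nu$, by \cref{lem:restrictedseed} there is a mutation sequence $\mathbf{m}$ carrying $G^\nu_\rect$ to $G$, and applying the same sequence $\mathbf{m}$ to $G^\lambda_\rect$ produces the seed $G'$ for $X_\lambda$ of which $G$ is a restricted seed. I would then argue by induction on the length of $\mathbf{m}$. The inductive step concerns a single mutation: suppose $H$ is a seed for $X_\nu$, $H'$ the corresponding seed for $X_\lambda$, and the face statement holds for the pair $(H,H')$ with embedding $\iota_H$ and projection $\pi_H$; let $H\to \tilde H$ be a mutation at a vertex $\eta\in\mathcal{P}_H(\nu)$, and $\tilde H'$ the seed obtained by mutating $H'$ at the same vertex. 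Because $\eta$ lies in $\mathcal{P}_H(\nu)$, which by \cref{def:restricted} has no arrows to vertices in $\mathcal{P}_{H'}(\lambda)\setminus\mathcal{P}_H(\nu)$ except through frozen vertices, the tropicalized $\mathcal{A}$-cluster mutation $\Psi_{H',\tilde H'}$ of \cref{d:PsiGG'} acts on the coordinate $v_\eta$ using only coordinates indexed by $\mathcal{P}_H(\nu)$; hence $\Psi_{H',\tilde H'}$ commutes with the coordinate projection $\pi_H$ in the sense that $\pi_{\tilde H}\circ\Psi_{H',\tilde H'}=\Psi_{H,\tilde H}\circ\pi_H$ on the relevant locus, and likewise it maps the face $F^\nu_{H'}$ onto $F^\nu_{\tilde H'}:=\Psi_{H',\tilde H'}(F^\nu_{H'})$. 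Combining this with \cref{prop:tropmutation}, which says $\Psi_{H,\tilde H}:\Delta^\nu_H\to\Delta^\nu_{\tilde H}$ and $\Psi_{H',\tilde H'}:\Delta^\lambda_{H'}\to\Delta^\lambda_{\tilde H'}$ are bijections, I would set $\iota_{\tilde H}:=\Psi_{H',\tilde H'}\circ\iota_H\circ\Psi_{H,\tilde H}^{-1}$; then $\iota_{\tilde H}$ is a unimodular identification of $\Delta^\nu_{\tilde H}$ with the face $F^\nu_{\tilde H'}$ of $\Delta^\lambda_{\tilde H'}$, and its inverse is the restriction of $\pi_{\tilde H}$. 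One must also check that $F^\nu_{\tilde H'}$ is genuinely a \emph{face} (not merely a subpolytope): since $\Psi_{H',\tilde H'}$ is a piecewise-linear bijection of polytopes carrying the face $F^\nu_{H'}$ bijectively to $F^\nu_{\tilde H'}$, and faces of a polytope are the loci where a supporting functional is minimized, one verifies that $\Psi_{H',\tilde H'}$ sends supporting hyperplanes of $F^\nu_{H'}$ to supporting hyperplanes of its image; this is exactly the kind of bookkeeping already carried out in \cite{RW}, and I would cite it accordingly.

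The main obstacle, and the place that needs genuine care rather than bookkeeping, is verifying that $\Psi_{H',\tilde H'}$ really does restrict correctly on the face and commutes with $\pi$ — i.e. that mutating at a vertex of $\mathcal{P}_H(\nu)$ in the larger seed $H'$ does not ``see'' the extra coordinates. This rests on the structural fact from \cref{def:restricted} and \cref{l:freezingcommutesmutation} that restriction of seeds commutes with mutation, so that the arrows at $\eta$ inside $H'$ incident to mutable vertices are exactly the arrows at $\eta$ inside $H$, and the only new arrows at $\eta$ go to frozen vertices of $H'$; frozen directions do enter the $\min$ in \eqref{e:tropmut}, but they enter identically for $F^\nu_{H'}$ and its ambient polytope, so the restriction is still linear-in-the-right-way. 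I would make this precise by writing out \eqref{e:tropmut} at $\eta$ for $H'$ and checking term by term that it descends under $\pi_H$. Once this compatibility is nailed down, the induction closes and \cref{p:FaceProp} follows; the final sentence of the proposition, identifying $\iota^{-1}$ with the coordinate projection, is then immediate from the construction since at every stage the inverse of $\iota$ was the relevant coordinate projection.
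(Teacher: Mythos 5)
Your overall strategy — establish the statement for the rectangles seeds using the order-polytope picture, then propagate along the mutation sequence that simultaneously produces $G$ from $G^\nu_\rect$ and $G'$ from $G^\lambda_\rect$ — matches the paper's approach. The commutativity claim $\pi_{\tilde H}\circ\Psi_{H',\tilde H'}=\Psi_{H,\tilde H}\circ\pi_H$ is correct and for the right reason: since $\eta$ is mutable in $H=H'|_I$, \cref{def:restricted} forces all arrows at $\eta$ in $Q(H')$ to land inside $I=\mathcal P_H(\nu)$, so the $\min$ expression in \eqref{e:tropmut} is literally the same for $H$ and $H'$. But your argument for face-preservation has a genuine gap.

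The problematic sentence is the claim that a piecewise-linear bijection ``sends supporting hyperplanes of $F^\nu_{H'}$ to supporting hyperplanes of its image.'' This is false in general: the image of a hyperplane under a PL map is not an affine hyperplane, and a PL bijection of polytopes need not send faces to faces. What actually saves the argument is a specific feature that you noticed for the commutativity step but did not bring to bear on the face step: the defining equations of $F^\nu_{H'}$ are $v_{\Rect(b)}-v_{\Rect(b)^-}=1$ for $b\in\lambda\setminus\nu$, and in every such equation $\Rect(b)\notin\mathcal P_H(\nu)$ and $\Rect(b)^-$ is \emph{frozen} in $\mathcal P_H(\nu)$ (the box SE of the SE corner of $\Rect(b)^-$ is $b$, which lies outside $\nu$). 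Since the mutation sequence only touches vertices that are mutable for $X_\nu$, neither $v_{\Rect(b)}$ nor $v_{\Rect(b)^-}$ is ever altered, so each defining equation is preserved \emph{verbatim} as a linear form. Combined with the fact that the inequalities $v_{\Rect(b)}-v_{\Rect(b)^-}\le 1$ hold on $\Delta^\lambda_{\tilde H'}$ because they are precisely the tropicalized summands $q_\ell W_\ell$ of $W^\lambda$ (here one uses $\Delta^\lambda_{\tilde H'}=\Gamma^\lambda_{\tilde H'}$ from \cref{thm:main} and the fact that $p_{\mu_\rho},p_{\mu_\rho^-}$ remain cluster variables of $\tilde H'$), this shows $\Psi_{H',\tilde H'}(F^\nu_{H'})$ is genuinely the intersection of $\Delta^\lambda_{\tilde H'}$ with supporting hyperplanes, hence a face. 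The paper also streamlines the bookkeeping by first reducing to the case $\nu=\lambda\setminus b$ of a single removed box (so there is a single hyperplane to track, and the face is a facet whose codimension can be read off directly from dimension preservation), and then iterating; your version works with all of $\lambda\setminus\nu$ at once, which is fine once the preserved-linear-equation observation is in place, but it makes the face-codimension argument slightly less immediate.
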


\begin{proof}
It suffices to prove this proposition in the case that $\nu$ is obtained from $\lambda$ by the removal of a single box. The proposition then follows for general $\nu\subset \lambda$ by induction. So let us assume $\nu=\lambda\setminus b$ for $b=b_{\rho_{2\ell+1}}$ the $\ell$-th outer corner of $\lambda$. We 
freely use the identity $\Delta^\lambda_G=\Q^\lambda_G$ from 
	Theorem~\ref{thm:main} 
	to describe $\Delta^\lambda_G$ in terms of facet inequalities.
	
We begin by considering the summand, 
\begin{equation}\label{e:WjFaceProp}
q_\ell W_\ell
	=q_\ell\frac{p_{\mu_{\rho_{2\ell+1}}^-}}{p_{\mu_{\rho_{2\ell+1}}}}
	=q_\ell\frac{p_{\mu_{\rho}^-}}{p_{\mu_{\rho}}},
\end{equation}
of the superpotential $W^\lambda$ (cf. \cref{d:LG1}) corresponding to the removable box $b$,
	where we use $\rho$ to denote $\rho_{2\ell+1}$.
	Note that $\mu_{\rho}$ is the rectangle containing the outer corner $b$ of $\lambda$ in its SE corner and is the unique rectangle in $\lambda$ that does not lie in $\nu$. The rectangle $\mu_{\rho}^-$ is obtained by removing the rim from $\mu_{\rho}$. Note that $\mu_{\rho}^-$ corresponds to a frozen variable for the $\mathcal A$-cluster structure on $X_\nu$.  
By \cref{lem:restrictedseed}, there exists a 
cluster $G'$ of $X_\lambda$ such that $G$ is obtained from $G'$
by restriction.  Therefore $G'$ contains both $p_{\mu_{\rho}}$ and $p_{\mu_{\rho}^-}$, that is,
\[\mu_{\rho},\mu_{\rho}^-\in\mathcal P_{G'}(\lambda).\] 
Therefore the expression \eqref{e:WjFaceProp} is a $G$-cluster expansion for $q_j W_j$ and gives rise to an inequality, namely
\begin{equation}\label{e:inequalityWj}v_{\mu_{\rho}}- v_{\mu_{\rho}^-}\le 1,
\end{equation}
on the points $v$ of $\Delta^\lambda_G$. 

We have $\nu=\lambda\setminus b_{\rho}$ and $\mathcal P_G(\nu)=\mathcal P_{G'}(\lambda)\setminus\{\mu_{\rho}\}$. Let us map $\R^{\mathcal P_G(\nu)}$ into  $\R^{\mathcal P_{G'}(\lambda)}=\R^{\mathcal P_G(\nu)}\times \R$
via $v\mapsto ( v,v_{\mu_{\rho}^-}+1)$. We use this embedding to identify $\R^{\mathcal P_G(\nu)}$ unimodularly with the affine hyperplane in $\R^{\mathcal P_{G'}(\lambda)}$ defined by 
\begin{equation}\label{e:FacetHyperplane}
v_{\mu_{\rho}}- v_{\mu_{\rho}^-}= 1.
\end{equation}
If $G=G_\rect$ it is straightforward to see that this affine hyperplane cuts out a facet in $\Delta^\lambda_\rect$, 
	and this facet is precisely the one identified with $\Delta^\nu_\rect$ in \cref{cor:IDP2}. 
	
For the general $G$ case we first apply \cref{c:GammaMutation}, which says that 
a sequence of tropical $\mathcal A$-cluster mutations  takes $\Delta_\rect^\lambda$ bijectively to $\Delta_G^\lambda$ (where we are using also Theorem~\ref{thm:main}). Since none of these mutations affect the coordinates 
$v_{\mu_{\rho}}$ and $v_{\mu_{\rho}^-}$, by our assumptions on $G'$, we have that their composition restricts to give a piecewise linear bijection $\Psi$ between
\[
\Delta^\nu_\rect\ \hat{=}\ \Delta_\rect^\lambda \cap \{v_{\mu_{\rho}}- v_{\mu_{\rho}^-}=1\} \qquad \text{ and }\qquad
\Delta_{G'}^\lambda \cap \{v_{\mu_{\rho}}- v_{\mu_{\rho}^-}=1\}.
 \]
Since this bijection preserves dimension, it follows that $\Delta_{G'}^\lambda \cap \{v_{\mu_{\rho}}- v_{\mu_{\rho}^-}=1\}$ has codimension $1$ in $\Delta_{G'}^\lambda$. Since we know that $\Delta_{G'}^\lambda$ satisfies the inequality~\eqref{e:inequalityWj}, we can deduce that $\Delta_{G'}^\lambda \cap \{v_{\mu_{\rho}}- v_{\mu_{\rho}^-}=1\}$  is a facet of $\Delta_{G'}^\lambda$. Using the compatibility result from Lemma~\ref{l:freezingcommutesmutation} we see that the image of $\Delta^\nu_\rect$ under $\Psi$ is in fact also naturally identified with $\Delta^\nu_G$. Therefore 
\[
\Delta^\nu_G\ \hat=\ \Delta_{G'}^\lambda \cap \{v_{\mu_{\rho}}- v_{\mu_{\rho}^-}=1\},
\]
so that $\Delta^\nu_G$ is indeed identified with a facet $F_{G'}^\nu$ of $\Delta_{G'}^\lambda$. 

Finally, recall that we had identified the ambient space of $\Delta^\nu_G$ with an affine subspace of $\R^{\mathcal P_{G'}(\lambda)}$ via the embedding $\R^{\mathcal P_G(\nu)}\hookrightarrow \R^{\mathcal P_{G'}(\lambda)}$ given by $v\mapsto ( v,v_{\mu_\rho})$ where $v_{\mu_\rho}=v_{\mu_{\rho}^-}+1$. Thus we obtain the embedding that we call $\iota$, that sends $\Delta^\nu_G$ isomorphically to a face $F_{G'}^\nu$ of $\Delta^\lambda_{G'}$. It follows directly from the formula for $\iota$ that the inverse map, $F_{G'}^\nu\overset\sim\to \Delta^\nu_G$ is indeed the restriction of the projection map that forgets the coordinate~$v_{\mu_\rho}$. 
\end{proof}

We note that the above proof provides a concrete, recursive construction of the embedding of the polytope $\Delta^\nu_G$ into $\Delta^\lambda_{G'}$.

\begin{proof}[Proof of 
	\cref{p:latticepoints-comp}] To prove this theorem we must verify \eqref{e:iotaonvalG}.
Let us assume, as in the proof of \cref{p:FaceProp}, that $\nu=\lambda\setminus b$ and prove the statement recursively. We use the notations from that proof as needed. Note that $G$ is now given in terms of a plabic graph.

Let $J_\nu=(m_1,\dotsc, m_{n-k})$ denote the labels of the vertical 
steps in the path $\pathsw{\nu}$ associated to $\nu$ as in \cref{s:Young}.
Then $J_\nu$ is the index set for the lexicographically minimal nonvanishing Pl\"ucker
coordinate on $X_{\nu}$, and by \cref{acyclic},
	there is a unique acyclic perfect orientation $\O$ for $G$ with source set $J_{\nu}$.

Let $\kappa\subseteq\nu$ and 
consider the flow polynomial $P^G_\kappa((x_\eta)_{\eta\in\mathcal P_G(\nu)})$ 
expression for $P_\kappa$ (see \cref{def:flow} and \cref{thm:Talaska}) associated to 
the perfect orientation $\O$ of $G$.
	Recall that $\val_G(P_\kappa)_\eta$ is the degree of $x_\eta$ in the minimal degree term of  $P^G_\kappa((x_\eta)_{\eta\in\mathcal P_G(\nu)})$.

Since $\lambda$ is obtained by adding a box to $\nu$ we have that $J_\lambda$ is of the form
\[
J_\lambda=(m_1,\dotsc,m_{r-1},m_r-1,m_{r+1}\dotsc, m_{n-k}). 
\]    
The seed $G'$ for $X_{\lambda}$ is now also given by a plabic graph. 
Namely, we can construct $G'$ along with an acyclic perfect orientation $\O'$  
from $G$ and $\O$ by extending the  wires labelled $m_r-1$ and $m_r$ to 
create an extra bounded region as in \cref{f:networkextension}.
This is now our plabic graph $G'$,  together with an acyclic perfect orientation $\O'$ 
with sources at $J_\lambda$. 

Note that there is a rectangle $\mu_{\rho}^-$ in $\mathcal P_G(\nu)$ which 
labels the new bounded region in $G'$. 
Moreover, $\mathcal P_{G'}(\lambda)=\mathcal P_{G}(\nu)\cup \{\mu_\rho\}$, and
the new label $\mu_\rho$ labels the region southeast of the region labeled~$\mu^-_\rho$, 
as in the figure.
 
\begin{figure}[h]
\centering
   \includegraphics[height=1.8in]{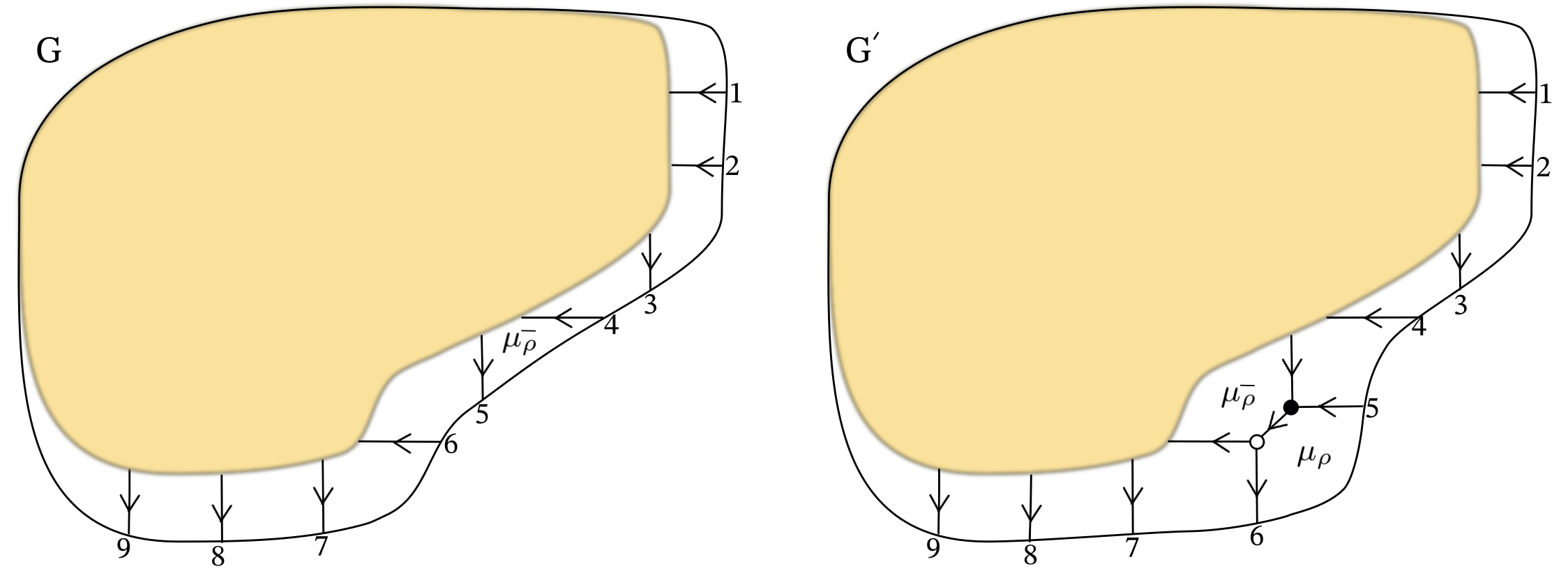}
     \caption{The construction of the perfect orientation 
	  of the plabic graph $G'$ for $X_\lambda$ from the perfect orientation
	   of $G$ for $X_\nu$. Here $m_r=6$, and 
	   $\nu=(5,5,4,3)$ and $\lambda=(5,5,4,4)$ so that 
	   $J_\nu=\{1,2,4,6\}$ and $J_\lambda=\{1,2,4,5\}$.}
\label{f:networkextension}
   \end{figure}

Consider a flow $\mathcal F$ in $\O$ 
from the sources $J_\nu$ to $J=J_\kappa$. It is straightforward to check that 
there is a unique way to extend $\mathcal F$ to 
a flow $E_{G,G'}(\mathcal F)$ 
 in $\O'$ from $J_\lambda$ to $J$. 
For example in  \cref{f:networkextension}, if $5,6\notin J$ then the extended flow has a path entering at $5$ and continuing to the left. If $5\notin J, 6\in J$, then the extended flow has a path entering at $5$ and turning straight away to exit at $6$. If $5\in J$ the new flow will have a stationary path at $5$. In this case either $6\in J$, in which case the extended flow has a path coming vertically down and exiting at  $6$, or $6\notin J$ in which case  the path coming vertically down turns right at the white vertex 
and continues in $\O'$. 

We note that the extension $E_{G,G'}(\mathcal F)$ 
always includes a path that separates the ${\mu^-_\rho}$ region 
from the ${\mu_\rho}$ region. Conversely, all flows in $\O'$ 
from $J_\lambda$ to $J$ with such a path arise in this way as extensions. 

Consider the weight $\wt(\mathcal F)$ 
of a flow $\mathcal F$ in $\O$,
\begin{equation}\label{e:basicmonomial}
	\wt(\mathcal F)	=\prod_{\eta\in\mathcal P_{G}(\nu)}x_\eta^{c_\eta}.
\end{equation}
For the extension $E_{G,G'}(\mathcal F)$ we then get the weight 
\begin{equation}\label{e:extendedmonomial}
	\wt(E_{G,G'}(\mathcal F))
	=\prod_{\eta\in\mathcal P_{G'}(\lambda)}x_\eta^{c_\eta}=\left(\prod_{\eta\in\mathcal P_{G}(\nu)}x_\eta^{c_\eta}\right)x_{\mu_\rho}^{(c_{\mu^-_\rho}+1)}.
\end{equation}
Here the exponent of the new variable $x_{\mu_\rho}$ is  $c_{\mu_\rho}=c_{\mu^-_\rho}+1$, because of the path in $E_{G,G'}(\mathcal F)$ separating $x_{\mu^-_\rho}$ from $x_{\mu_\rho}$. Note in particular that $c_{\mu^-_\rho}<c_{\mu_\rho}$ for $E_{G,G'}(\mathcal F)$.

Let us write $\mathcal F_{\min}$ for the minimal flow in $\O$, 
which is the one contributing the minimal order term to the flow polynomial $P^G_\kappa$.
\vskip .2cm
\noindent{\it Claim:~}
The extension $E_{G,G'}(\mathcal F_{\min})$ is the minimal flow in $\O'$
from $J_\lambda$ to $J$, 
contributing the  minimal order term to the flow polynomial $P^{G'}_\kappa$.
\vskip .2cm

We now prove this claim. Note that a flow in $\O'$
from $J_\lambda$ to $J$ need not be an extension of a flow in $\O$ 
from $J_\nu$ to $J$. The key point is to prove that the minimal one is such an extension. 
Let us denote the minimal flow in $\O'$ from $J_\lambda$ to $J$ by $\mathcal F'_{\min}$ and its weight by 
\[
\wt({\mathcal F'_{\min}})
=\prod_{\eta\in\mathcal P_{G'}(\lambda)}x_\eta^{d_\eta}.
\]
We compare this weight to the weight 
for $E_{G,G'}(\mathcal F)$ from \eqref{e:extendedmonomial}.
Note that $d_{\mu_\rho}$ depends only on $J_\lambda$ and $J$, as it counts the number of paths in the flow that enter before $m_r-1$ and exit after $m_r-1$. Therefore $d_{\mu_\rho}=c_{\mu_\rho}$. By the minimality of $\mathcal F'_{\min}$  we have that $d_{\mu_\rho^-}\ \le\ c_{\mu_{\rho}^-}$. It follows, using also \eqref{e:extendedmonomial}, that
\[
d_{\mu_\rho^-}\ \le\ c_{\mu_{\rho}^-}\ <\ c_{\mu_\rho}\ =\ d_{\mu_\rho}.
\] 
This implies that the flow $\mathcal F'_{\min}$ must contain a path separating ${\mu_\rho^-}$ from ${\mu_\rho}$. If $\mathcal F'_{\min}$ contains such a path then it is an extension of a flow from $J_\nu$ to $J$ in the $\O$ (that flow being the restriction of $\mathcal F'_{\min}$). The Claim now follows. Namely, since $\mathcal F'_{\min}$ is minimal and is an extension, it must be the extension of the minimal flow~$\mathcal F_{\min}$ in $\O$.
\vskip .2cm
We now have that 
$\wt({\mathcal F_{\min}})$ 
and $\wt({E_{G,G'}(\mathcal F_{\min})})$ 
are the leading terms of $P^G_\kappa$ and 
 $P^{G'}_\kappa$, respectively. Comparing the formulas~\eqref{e:basicmonomial} and \eqref{e:extendedmonomial} we see that,
\begin{equation}\label{e:iotanadpi}
\iota(\val_G(P_\kappa))= \val_{G'}(P_\kappa)\quad \text{ and }\quad \pi(\val_{G'}(P_\kappa))=\val_G(P_\kappa),
\end{equation}
where $\iota$ is as constructed in the proof of \cref{p:FaceProp}, and $\pi$ is the coordinate projection that forgets the $v_{\mu_\rho}$-coordinate. 

The theorem now follows. Namely, the Max-Diag formula follows from the full Grassmannian case proved in \cite{RW} by recursively applying the second part of \eqref{e:iotanadpi}. Equation~\eqref{e:iotaonvalG} follows recursively from the first part of~\eqref{e:iotanadpi}. Finally, combining Proposition~\ref{p:FaceProp} and Theorem~\ref{thm:main} with equation \eqref{e:iotaonvalG} gives the desired description of the lattice points of the face $F^{\nu}_{G'}$.
\end{proof}

\begin{remark}
With the notations as in the proof above we have the following relationship 
	between the flow polynomial for $P_{\kappa}$ in $\O'$ and 
	 the flow polynomial for $P_{\kappa}$ in $\O.$ 
	 Namely, set 
\[
x'_\eta=\begin{cases}
x_\eta & \text{for $\eta\in\mathcal P_G(\lambda)\setminus\{\mu_\rho^-\}$}\\
\frac{x_{\mu_\rho^-}}{x_{\mu_\rho}}&\text{for $\eta=\mu_\rho^-$.}
\end{cases}
\]
Then
\[
P^G_\kappa((x_\eta)_{\eta\in\mathcal P_G(\nu)})=
\lim_{x_{\mu_\rho}\to\infty}\left(\frac{1}{x_{\mu_\rho}}P^{G'}_\kappa((x'_\eta)_{\eta\in\mathcal P_G(\lambda)})\right).
\] 
Note that the change of coordinates is engineered so that $x'_{\mu_\rho}x'_{\mu_\rho^-}=x_{\mu_\rho^-}$. This formula follows from the fact that the monomials in $P^{G'}_\kappa((x'_\eta))$ that don't come from monomials in $P^G_\kappa$ are precisely those for which the exponents of $x'_{\mu_\rho}$ and $x'_{\mu_{\rho}^-}$ are equal. 
\end{remark}

\section{Generalisation of \cref{t:mainIntro} to arbitrary ample boundary divisors}\label{s:GenD}

We can now generalise \cref{thm:main} (which showed that the 
Newton-Okounkov body $\Delta_G^{\lambda}$ equals the superpotential polytope
$\Gamma_G^{\lambda}$) as follows. Namely, let us set $\Delta^{\lambda}_G(\mathbf r,\mathbf r'):=\Delta^{\lambda}_G(D_{(\mathbf r,\mathbf r')})$ and instead of fixing $D_{(\mathbf 1,\mathbf 0)}$, we allow arbitrary ample divisors $D_{(\mathbf r,\mathbf r')}$ with support contained in $D_{\ac}$. 
\begin{thm}\label{t:maingen}
 Consider $\mathbf{r}=(r_1,\dots,r_d)\in\Z^d$ and $\mathbf{r'}=(r'_1,\dots,r'_{n-1})\in \Z^{n-1}$ such that the associated divisor $D_{(\mathbf r,\mathbf r')}$ is ample. Then we have that 
$$\Delta^{\lambda}_G(\mathbf r,\mathbf r')= \Gamma^{\lambda}_G(\mathbf r,\mathbf r').$$
\end{thm}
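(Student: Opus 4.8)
The plan is to reduce the general statement to the already-proved case $(\mathbf r,\mathbf r')=(\mathbf 1,\mathbf 0)$ by exploiting two facts: first, that both sides of the equation transform in a controlled way as one varies the divisor within its linear equivalence class; and second, that dilating the divisor by a positive integer $R$ simply dilates both polytopes. Concretely, given an ample $D_{(\mathbf r,\mathbf r')}$, \cref{c:Drr} tells us it is linearly equivalent to $R\,D_{(\mathbf 1,\mathbf 0)}$ for $R=R_1=\dots=R_d>0$. On the Newton-Okounkov side, \cref{l:DeltaShift} already shows $\Delta_G^\lambda(\mathbf r,\mathbf r')$ is a translate of $R\,\Delta_G^\lambda$, with translation vector $-\val_G(f)$ for the rational function $f$ realizing the linear equivalence $(f)=D_{(\mathbf r,\mathbf r')}-R\,D_{(\mathbf 1,\mathbf 0)}$. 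By \cref{c:RationalPmu}, we may take $f=\prod_j P_{\mu_j}^{m_j}$ a monomial in the frozen Pl\"ucker coordinates, so that $\val_G(f)=\sum_j m_j\val_G(P_{\mu_j})$ is explicitly a $\Z$-linear combination of the frozen lattice points.

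The core step is then to prove the matching statement on the superpotential-polytope side: namely that $\Gamma_G^\lambda(\mathbf r,\mathbf r')$ is the same translate of $R\,\Gamma_G^\lambda$. First I would establish this for the rectangles seed $G=G_\rect$, where \cref{prop:markedorder} identifies $\rGamma_\rect^\lambda(\mathbf r,\mathbf r')$ with a translated marked order polytope $\OO^{\mathbf n(\mathbf r,\mathbf r')}(\lambda)-\mathbf u(\mathbf r,\mathbf r')$; here the ampleness condition $R_1=\dots=R_d$ forces the marking $\mathbf n(\mathbf r,\mathbf r')=(R,\dots,R)$, so the marked order polytope is exactly $R\cdot\OO(\lambda)$, and the translation vector $\mathbf u(\mathbf r,\mathbf r')$ (in vertex coordinates) matches, under the unimodular change of variables $F$ of \cref{p:unimodular}, the vector $-\val_{G_\rect}(f)$ — this is a direct comparison using \cref{c:ConvEqualsGamma} (which computes $\val_{G_\rect}(P_{\mu_j})=\mathbf v_{\mu_j}$) and the explicit arrow-labeling from \cref{def:al}. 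Then I would propagate to an arbitrary seed $G$ using \cref{c:GammaMutation}: a sequence of tropicalized $\mathcal A$-cluster mutations $\Psi_{G_\rect,G}$ carries $\Gamma_{G_\rect}^\lambda(\mathbf r,\mathbf r')$ bijectively to $\Gamma_G^\lambda(\mathbf r,\mathbf r')$, and likewise $R\,\Gamma_{G_\rect}^\lambda$ to $R\,\Gamma_G^\lambda$. The subtlety is that $\Psi$ is only piecewise-linear, so it does not obviously commute with translation. This is exactly where \cref{c:balanced} (together with \cref{l:balanced}) enters: since $f$ is a nonvanishing regular function on $X_\lambda^\circ$, its valuation $\val_G(f)$ is a balanced tropical point, and hence $\Psi_{G_\rect,G}(v+\val_{G_\rect}(f))=\Psi_{G_\rect,G}(v)+\Psi_{G_\rect,G}(\val_{G_\rect}(f))=\Psi_{G_\rect,G}(v)+\val_G(f)$ (using \cref{thm:flowmutate}/the fact that valuations of frozen Pl\"uckers transform by tropical mutation). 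Thus the translate structure is preserved under mutation, and the identity $\Gamma_G^\lambda(\mathbf r,\mathbf r')=R\,\Gamma_G^\lambda-\val_G(f)$ holds for all $G$.

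Combining the two sides: $\Delta_G^\lambda(\mathbf r,\mathbf r')=R\,\Delta_G^\lambda-\val_G(f)=R\,\Gamma_G^\lambda-\val_G(f)=\Gamma_G^\lambda(\mathbf r,\mathbf r')$, where the middle equality is \cref{thm:main} applied to the base divisor $D_{(\mathbf 1,\mathbf 0)}$ and the compatibility of dilation with both constructions. An alternative, perhaps cleaner route that avoids dilation entirely is to observe that both $\Delta_G^\lambda(\mathbf r,\mathbf r')$ and $\Gamma_G^\lambda(\mathbf r,\mathbf r')$ are cut out by the \emph{same} family of tropical inequalities $\Trop(\mathbf W_\ell^G)(v)+r_\ell\ge 0$, $\Trop({\mathbf W_i'}^G)(v)+r_i'\ge 0$ once one knows the facet description of $\Delta_G^\lambda(\mathbf r,\mathbf r')$; the content is then precisely the facet description, which one transports from $G_\rect$ (where \cref{l:rectangles} gives it directly) to general $G$ via \cref{c:GammaMutation} and \cref{prop:tropmutation}. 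I expect the main obstacle to be the bookkeeping in the rectangles case: matching the translation vector $\mathbf u(\mathbf r,\mathbf r')$ coming from the marked-order-polytope description against $-\val_{G_\rect}(\prod_j P_{\mu_j}^{m_j})$ requires carefully tracking the right inverse of the map $\varphi$ from \cref{c:DrrVSPmu} and checking it is consistent with the path-sum description of $\mathbf u$ in \cref{prop:markedorder}; once the rectangles case and the balancedness of $\val_G(f)$ are in hand, the mutation-propagation is formal.
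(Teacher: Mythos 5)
Your proposal is correct and follows the paper's proof in all essentials: reduce to the base case $D_{(\mathbf 1,\mathbf 0)}$ via translation by $-\val_G(f)$ for a frozen Pl\"ucker monomial $f$, verify the matching shift formula for $\Gamma_G^\lambda(\mathbf r,\mathbf r')$ in the rectangles seed, and propagate to arbitrary seeds using balancedness of $\val_G(f)$ (\cref{c:balanced}). The only difference is cosmetic — you route the rectangles-case bookkeeping through \cref{prop:markedorder} and the coordinate change $F$, whereas the paper packages the same verification as \cref{p:GammaShiftRect} (the polytope shift) together with \cref{l:recvaluationf} (the identity $\val_{G_\rect}(f)=v_\rect(\mathbf r')$).
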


This section is devoted to the proof of this theorem. But first we record a corollary. Namely the generalisation of \cref{thm:main} also implies a generalisation of \cref{cor:degeneration}.

\begin{cor}
	\label{cor:degenerationgen}
Let $\Sigma_G^{\mathcal X}$ be an arbitrary $\mathcal X$-cluster seed
	for $X_{\lambda}$, and suppose the divisor $D_{(\mathbf r,\mathbf r')}$ is ample.  Consider the Newton-Okounkov body
$\Delta_G^{\lambda}(D_{(\mathbf r,\mathbf r')})$.
We have that $\Delta_G^{\lambda}(D_{(\mathbf r,\mathbf r')})$ is a rational polytope. 
There exists a flat degeneration of $X_{\lambda}$
to the normal projective toric variety
associated to the  polytope $\Delta_G(\RG D_{(\mathbf r,\mathbf r')})$, for some  positive integer $\RG$, where we may take $\RG=1$ if $\Delta_G^{\lambda}(D_{(\mathbf r,\mathbf r')})$ has the integer decomposition property.
\end{cor}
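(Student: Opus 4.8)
The plan is to deduce \cref{cor:degenerationgen} from \cref{t:maingen} in essentially the same way that \cref{cor:degeneration} follows from \cref{thm:main}, using the general machinery of Newton-Okounkov bodies that yield toric degenerations when they are rational polytopes. First I would observe that by \cref{t:maingen} we have $\Delta^\lambda_G(\mathbf r,\mathbf r') = \Gamma^\lambda_G(\mathbf r,\mathbf r')$, and that $\Gamma^\lambda_G(\mathbf r,\mathbf r')$ is by construction a rational polyhedron (it is cut out by the finitely many tropical inequalities of \cref{def:troppoly2}, which have integer coefficients). To see it is a bounded polytope rather than an unbounded polyhedron, I would note that $\Delta^\lambda_G(\mathbf r,\mathbf r')$ is by definition contained in a finite-dimensional picture and, via \cref{l:DeltaShift}, is a translate of a dilation of $\Delta^\lambda_G = \Delta^\lambda_G(\mathbf 1,\mathbf 0)$; the latter is bounded because its volume is finite by \cref{t:KK} (equivalently, it is integrally equivalent to an order polytope in the rectangles seed by \cref{thm:rectanglesproof} and \cref{p:unimodular2}, and order polytopes are bounded, and mutation $\Psi_{G,G'}$ is a bijection of polytopes by \cref{c:GammaMutation}). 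Thus $\Delta^\lambda_G(\mathbf r,\mathbf r')$ is a rational polytope of full dimension $|\lambda|$ (full-dimensionality of $\Gamma^\lambda_{\rect}(\mathbf r,\mathbf r')$ when $D_{(\mathbf r,\mathbf r')}$ is ample follows from \cref{rem:marked} together with \cref{prop:markedorder}, since ampleness forces the relevant markings to be positive).

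Next I would invoke the ampleness hypothesis to put ourselves in the framework of Anderson \cite{Anderson}: since $D_{(\mathbf r,\mathbf r')}$ is ample, the section ring $\bigoplus_{m\ge 0} H^0(X_\lambda,\mathcal O(mD_{(\mathbf r,\mathbf r')}))$ is finitely generated, and $X_\lambda$ is projectively normal in the corresponding embedding by the same argument used in \cref{rem:special} (ample line bundles on $X_\lambda$ are restrictions of ample line bundles on the Grassmannian, combined with \cite[Theorem 3]{RamananRamanathan}). This is precisely the setup of \cite[Section 17]{RW} and the proof of \cite[Corollary 17.11]{RW}: given a rational polytopal Newton-Okounkov body arising from a finitely generated section ring with a valuation of full rank, one obtains a flat degeneration of $X_\lambda$ to the normal projective toric variety of the polytope $\RG \cdot \Delta_G^{\lambda}(D_{(\mathbf r,\mathbf r')})$, where $\RG$ is the least positive integer making the dilated polytope integrally closed; such $\RG$ exists for any rational polytope. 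When $\Delta_G^{\lambda}(D_{(\mathbf r,\mathbf r')})$ itself has the integer decomposition property one can take $\RG = 1$.

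The main point requiring care — and the one I expect to be the chief obstacle — is verifying that the hypotheses of Anderson's construction genuinely hold for the valuation $\val_G$ in this generality, i.e. that the Rees-type algebra $R_\lambda$ with the extended valuation $\overline\val_G$ of \cref{rem:Khovanskii} is finitely generated (or at least that the associated graded is a finitely generated semigroup algebra) for an arbitrary $\mathcal X$-cluster seed $G$ and an arbitrary ample boundary divisor, not just for $D = \{P_\lambda = 0\}$. For the preferred divisor this was handled in \cite[Section 17]{RW} via the Khovanskii/SAGBI basis of Plücker coordinates in the rectangles seed; for general $(\mathbf r,\mathbf r')$ and general $G$ one needs to know the image semigroup of $\overline\val_G$ on $R_\lambda$ is finitely generated, which in the rectangles case follows from \cref{cor:IDP} (the superpotential polytope has the integer decomposition property, so the semigroup is generated in degree one after dilating by $\RG$), and in general follows by transporting along the tropicalized mutations $\Psi_{G,G'}$, which are bijections on lattice points and hence respect finite generation of the relevant semigroups. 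Once finite generation is in hand, the degeneration statement is a formal consequence, exactly as in \cite[Corollary 17.11]{RW}, and the statement about $\RG = 1$ under the IDP hypothesis is then immediate.
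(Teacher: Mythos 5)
Your proof is essentially correct and follows the same route as the paper's: invoke \cref{t:maingen} to establish that $\Delta_G^\lambda(D_{(\mathbf r,\mathbf r')})$ is a rational polytope, then apply Anderson's machinery (via \cite[Corollary 17.11]{RW}) to the dilated polytope $\RG\Delta_G^\lambda(D_{(\mathbf r,\mathbf r')}) = \Delta_G^\lambda(\RG D_{(\mathbf r,\mathbf r')})$, where $\RG$ is chosen so that the dilation has the integer decomposition property. The paper's proof is terser than yours (three sentences), and you fill in some useful background on boundedness, full-dimensionality, and projective normality.

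One place where you make the argument harder than necessary is your third paragraph. You worry about whether the valuation semigroup is finitely generated for an arbitrary seed $G$ and arbitrary ample $D_{(\mathbf r,\mathbf r')}$, and you try to resolve this by invoking \cref{cor:IDP} in the rectangles case and then ``transporting along tropicalized mutations.'' This is neither needed nor quite correct: \cref{cor:IDP} is stated only for $\mathbf r, \mathbf r' \in \mathbb{N}^d \times \mathbb{N}^{n-1}$, whereas ampleness per \cref{c:Drr} allows negative entries, and the assertion that piecewise-linear mutation maps ``respect finite generation of the relevant semigroups'' does not hold in the form you state it. The cleaner point --- and the one the paper actually uses --- is that one does not need $\Delta_G^\lambda(D_{(\mathbf r,\mathbf r')})$ itself to be IDP; one simply chooses $\RG$ to be the minimal positive integer making $\RG\Delta_G^\lambda(D_{(\mathbf r,\mathbf r')})$ IDP (such $\RG$ always exists for a rational polytope), and then finite generation of the semigroup in Anderson's setup for the rescaled divisor $\RG D_{(\mathbf r,\mathbf r')}$ is automatic, exactly as in \cite[Section 17]{RW}. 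The clause ``we may take $\RG = 1$ if $\Delta_G^\lambda(D_{(\mathbf r,\mathbf r')})$ has IDP'' is then the trivial special case, not a separate claim requiring the detour you suggest.
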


\begin{proof}[Proof of the Corollary]
By Theorem~\ref{t:maingen} $\NO_G(D_{(\mathbf r,\mathbf r')})$ is a rational polytope. We may choose $\RG$ to be the minimal positive integer such that
the dilated polytope $\RG\NO_G(D_{(\mathbf r,\mathbf r')})$ has the \emph{integer decomposition property}. Note that $\RG\NO_G( D_{(\mathbf r,\mathbf r')})=\NO_G(\RG D_{(\mathbf r,\mathbf r')})$. Now \cite{Anderson} applies to $\NO_G(\RG D_{(\mathbf r,\mathbf r')})$ giving the required flat degeneration.
\end{proof}

\subsection{Varying $(\mathbf r,\mathbf r')$ in $\Gamma^\lambda_G(\mathbf r,\mathbf {r'})$ }\label{s:GammaShifts}

Let us consider an ample boundary divisor $D_{(\mathbf r,\mathbf r')}$ linearly equivalent to $RD_{(\mathbf 1,\mathbf 0)}$. Recall that $\mathbf r$ is completely determined by $\mathbf r'$ and $R$, via the formula
\begin{equation}\label{e:rr'Cartier}
r_j=R-\sum_{b'_{i}\in NW(b_{\rho_{2j-1}})}r'_i
\end{equation}
from \cref{c:Drr}, see also  \eqref{e:Cartier-rviaR}. 

For any rectangle $\nu \subseteq \lambda$ we introduce the 
constant-along-the-diagonals filling of the boxes of $\nu$ where the box $b'_i$ in the NW rim of $\lambda$, if it lies in $\nu$, is filled by $r'_i$. Let $r'_{(\nu)}$ be the sum of all the entries of the boxes of $\nu$. In other words, 
\begin{equation}\label{e:vrectcomponents}
	r'_{(\nu)}=\sum_{i=1}^{n-1}m_i(\nu) r'_i
\end{equation}
where $m_i(\nu)$ is the number of boxes in $\nu$ that lie in the diagonal containing $b'_i$.

We define a vector $v_{\rect}(\mathbf r')\in\R^{\mathcal P_{G_{\lambda}^{\rect}}}$ by
  \begin{equation}\label{e:vGr'}
	  v_\rect(\mathbf r'):=(r'_{(\nu)})_{\nu\in\mathcal P_{G_{\lambda}^{\rect}}}. 
  \end{equation}

\begin{proposition}\label{p:GammaShiftRect} Suppose that $R>0$ and $(\mathbf r,\mathbf r')$ satisfy \eqref{e:rr'Cartier}. We have that
\[
\Gamma^\lambda_{\rect}(\mathbf r,\mathbf r')=R\Gamma^\lambda_\rect(\mathbf 1,\mathbf 0)-v_\rect(\mathbf r').
\]
\end{proposition}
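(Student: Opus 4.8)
The plan is to compare the two polytopes via their explicit facet-inequality descriptions in vertex-type coordinates. First I would recall from \cref{l:rectangles} that $\Gamma^\lambda_{\rect}(\mathbf r,\mathbf r')$ is cut out by the inequalities \eqref{eq:1gen}--\eqref{eq:4gen}, which involve the constants $r_\ell$ and $r'_i$ linearly, and that $\Gamma^\lambda_{\rect}(\mathbf 1,\mathbf 0)$ is the special case $r_\ell = 1$, $r'_i = 0$. Dilating by $R$ replaces each right-hand side constant $c$ by $Rc$, so $R\,\Gamma^\lambda_{\rect}(\mathbf 1,\mathbf 0)$ is cut out by \eqref{eq:1gen}--\eqref{eq:4gen} with $r_\ell$ replaced by $R$ and $r'_i$ replaced by $0$. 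The claim is then that translating by $-v_\rect(\mathbf r')$ converts this system into the system with the original $(\mathbf r, \mathbf r')$, provided $(\mathbf r,\mathbf r')$ satisfies the Cartier relation \eqref{e:rr'Cartier}.

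The key computation is to substitute $v_\nu \mapsto v_\nu - r'_{(\nu)}$ into each of the four families of inequalities and check that the new constant terms come out correctly. For a cover relation of type \eqref{eq:3gen} or \eqref{eq:4gen} between rectangles, the difference $r'_{(\mu \sqcup b)} - r'_{(\mu)}$ equals exactly the label $r'_i$ attached to the box $b$ being added (by the definition \eqref{e:vrectcomponents} of $r'_{(\nu)}$ as a sum over diagonals weighted by the $m_i$), so shifting reproduces the $+r'_{n-k-i}$ and $+r'_{n-k+j}$ terms; here I should be careful about the coordinate shift by one box that is implicit in passing between the $v_{i\times j}$ and $f_{i\times j}$ conventions (compare \cref{p:unimodular}), and about the $r'$-indexing of rows versus columns. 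For the inequality \eqref{eq:2gen}, the shift of $v_{1\times 1}$ by $r'_{(1\times 1)} = r'_{n-k}$ produces precisely the $+r'_{n-k}$ term. The only remaining, and most delicate, step is the family \eqref{eq:1gen}: here the right-hand side must change from $R$ (after dilation) to $r_\ell$, and the shift contributes $-(r'_{(\mu_i)} - r'_{(\mu_i^-)})$ where $\mu_i$ is the $\ell$-th outer-corner rectangle. I would show that $r'_{(\mu_i)} - r'_{(\mu_i^-)} = \sum_{b'_i \in NW(b_{\rho_{2\ell-1}})} r'_i$, i.e. that the boxes of $\mu_i$ not in $\mu_i^-$ (the rim of $\mu_i$) contribute, diagonal by diagonal, exactly the sum over the northwest-border boxes of $\lambda$ that are weakly northwest of the $\ell$-th removable box; combined with \eqref{e:rr'Cartier} this turns $R - (\text{that sum})$ into $r_\ell$, as required. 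This diagonal bookkeeping — matching the diagonals of the rim of the frozen rectangle $\mu_i$ against the set $NW(b_{\rho_{2\ell-1}})$ — is the main obstacle, and I expect it to follow from carefully tracking which NW-border box of $\lambda$ lies on each diagonal of $\mu_i$, using \cref{c:Drr}. Once all four families match, the equality of the inequality systems gives $\Gamma^\lambda_{\rect}(\mathbf r,\mathbf r') = R\,\Gamma^\lambda_{\rect}(\mathbf 1,\mathbf 0) - v_\rect(\mathbf r')$, and the hypothesis $R>0$ ensures both sides are genuine full-dimensional polytopes (by \cref{rem:marked} and \cref{p:unimodular2}).

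An alternative, cleaner route I would consider as a cross-check: deduce the statement directly from \cref{l:DeltaShift} together with \cref{thm:main}. Indeed \cref{l:DeltaShift} gives $\Delta_{\rect}^\lambda(D_{(\mathbf r,\mathbf r')}) = R\Delta_{\rect}^\lambda - \val_G(f)$ for the rational function $f = \prod P_{\mu_j}^{m_j}$ realizing the linear equivalence, and $\val_{G_{\rect}}(f)$ can be computed from the max-diagonal formula (\cref{thm:maxdiag}/\cref{c:ConvEqualsGamma}) as $\sum_j m_j\, \mathbf v_{\mu_j}$, which one checks equals $v_\rect(\mathbf r')$ using $\varphi(\mathbf m) = \mathbf r'$ from \cref{c:DrrVSPmu}; since $\Delta_{\rect}^\lambda(D_{(\mathbf r,\mathbf r')}) = \Gamma^\lambda_{\rect}(\mathbf r,\mathbf r')$ by \cref{t:maingen} this yields the proposition. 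I would present the first, self-contained combinatorial proof as the main argument and mention the second as a remark, since \cref{t:maingen} is proved later and this proposition is used as an ingredient in that proof (so circularity must be avoided — the combinatorial proof is the safe one).
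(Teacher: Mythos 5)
Your main combinatorial argument is essentially the paper's own proof: the paper likewise proves this by comparing the inequality systems of \cref{l:rectangles} after the substitution $v_\nu \mapsto v_\nu - r'_{(\nu)}$, handling the outer-corner inequality via the identity $r'_{(\mu)}-r'_{(\mu^-)}=\sum_{i\in NW(b_{\rho_{2\ell-1}})}r'_i$ and the Cartier relation \eqref{e:rr'Cartier}, exactly as you outline. You are also right to flag the circularity issue with the alternative route through \cref{t:maingen}: the paper proves \cref{t:maingen} by combining \cref{p:GammaShiftRect} with \cref{l:DeltaShift} and \cref{l:recvaluationf}, so that shortcut cannot serve as the proof here. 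Two minor notes: the paper's proof works directly in the $v_{i\times j}$ coordinates of \cref{l:rectangles} rather than converting to the $f_{i\times j}$ coordinates, so the one-box shift you worried about does not actually arise; and the hypothesis $R>0$ is not needed for the equality of inequality systems itself, only for the resulting polytopes to be full-dimensional.
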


\begin{proof} 
Let $G=G_{\lambda}^{\rect}$ for the duration of this proof. 
 \cref{l:rectangles} gives us the following explicit description of $R\Gamma^\lambda_G(\mathbf 1,\mathbf 0)$.
\begin{align}
		v_{\mu} - v_{\muminus} & \le R\quad \text{ whenever }\mu 
		\text{ labels an outer corner of }\lambda  \label{eq:1}\\
		0 & \leq v_{1 \times 1} \label{eq:2} \\
		v_{(i-1)\times j} - v_{(i-2)\times (j-1)} &\leq
v_{i \times j}  - v_{(i-1)\times (j-1)}
\ \text{ for }2 \leq i, 
		1\leq j, \text{ and } (i \times j) \subseteq \lambda \label{eq:3}\\	
v_{i\times (j-1)}-v_{(i-1)\times (j-2)} &\leq
v_{i\times j}  - v_{(i-1)\times (j-1)}
\ \text{ for } 1\leq i,
		2 \leq j, \text{ and } (i \times j) \subseteq \lambda. \label{eq:4}
	\end{align}
	
	The polytope $R\Gamma^\lambda_G(\mathbf 1,\mathbf 0)-v_G(\mathbf r')$ can be described by shifting each coordinate $v_\nu$ of a point in $R\Gamma^\lambda_G(\mathbf 1,\mathbf 0)$ to $v'_\nu=v_\nu- r'_{(\nu)}$. 

	Consider first a frozen index $\mu\in\mathcal P_G$ associated to the outer corner box $b_{\rho_{2\ell-1}}$. Using the definition of the $r'_{(\nu)}$ and the inequality \eqref{eq:1} we get 
\[
v'_\mu-v'_{\muminus}=v_\mu-v_{\muminus}-r'_\mu+r'_{\muminus}=v_\mu-v_{\muminus}-\sum_{b'_{i}\in NW(b_{\rho_{2\ell-1}})}r'_i\le R -\sum_{b'_{i}\in NW(b_{\rho_{2\ell-1}})}r'_i,
\]
Therefore, 
 using \eqref{e:rr'Cartier}, the equivalent inequality to \eqref{eq:1} for the translated polytope becomes
\begin{equation} v'_{\mu} - v'_{\muminus} \le r_\ell, \label{eq:1'}
 \end{equation}
 which agrees with the inequality \eqref{eq:1gen} for $\Gamma^\lambda_G(\mathbf r,\mathbf{r'})$.
  
Since $v'_{1\times 1}=v_{1\times 1}-r'_{n-k}$ we get the inequality
\begin{equation}-r'_{n-k}\le  v'_{1\times 1}  \label{eq:2'}
 \end{equation}
 for $v'_{1\times 1}$, equivalent to \eqref{eq:2gen}.
 
We then have
\begin{equation*}
\begin{array}{ll}
v'_{i \times j}  - v'_{(i-1)\times (j-1)}&=  v_{i \times j}  - v_{(i-1)\times (j-1)} - r'_{i\times j}+r'_{(i-1)\times (j-1)}\\
&=v_{i \times j}  - v_{(i-1)\times (j-1)} -r'_{n-k-i+1}-r'_{n-k-i+2}+\dotsc- r'_{n-k+j-1}\\
v'_{(i-1) \times j}  - v'_{(i-2)\times (j-1)}&=  v_{(i-1) \times j}  - v_{(i-2)\times (j-1)}- r'_{(i-1)\times j}+r'_{(i-2)\times (j-1)}\\
&=v_{(i-1) \times j}  - v_{(i-2)\times (j-1)}- r'_{n-k-i+2}-r'_{n-k-i+3}-\dotsc - r'_{n-k+j-1}
	\end{array}
	\end{equation*}
	and therefore 
	\begin{multline*}
	\left(v'_{i \times j}  - v'_{(i-1)\times (j-1)}\right)-\left(v'_{(i-1) \times j}  - v'_{(i-2)\times (j-1)}\right)=\\
	\left(v_{i \times j}  - v_{(i-1)\times (j-1)}\right)-\left(v_{(i-1) \times j}  - v_{(i-2)\times (j-1)}\right)-r'_{n-k-i+1}.
	\end{multline*}
	Therefore the equivalent inequality to \eqref{eq:3} for the translated polytope is 
	\begin{equation}
	v'_{(i-1)\times j} - v'_{(i-2)\times (j-1)} \leq
v'_{i \times j}  - v'_{(i-1)\times (j-1)}+r'_{n-k-i+1}
\ \text{ for }2 \leq i, 
		1\leq j, \text{ and } (i \times j) \subseteq \lambda, \label{eq:3'}
		\end{equation}
		which agrees with the inequality~\eqref{eq:3gen} for $\Gamma^\lambda_G(\mathbf r,\mathbf r')$. 
		The completely analogous calculation shows the inequality \eqref{eq:4gen} is the shifted version of \eqref{eq:4}. Thus we have shown the statement of the lemma.
\end{proof}

\subsection{The proof of \cref{t:maingen}}\label{s:proof2}
The proof of \cref{t:maingen} requires the following lemma. 
\begin{lem}\label{l:recvaluationf} Consider an ample boundary divisor $D_{(\mathbf r,\mathbf r')}$ of degree $R$ in $X_\lambda$. Suppose $f$ is a rational function on $X_\lambda$ with $(f)=D_{(\mathbf r,\mathbf r')}-R D_{(\mathbf 1,\mathbf 0)}$. Then 
\[\val_{G^\lambda_\rect}(f)=v_\rect(\mathbf r').
\]
for $v_{\rect}$ as defined in \eqref{e:vGr'}.
\end{lem}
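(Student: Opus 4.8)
The statement asserts that for an ample boundary divisor $D_{(\mathbf r,\mathbf r')}$ of degree $R$, and $f$ a rational function with $(f)=D_{(\mathbf r,\mathbf r')}-RD_{(\mathbf 1,\mathbf 0)}$, the valuation $\val_{G^\lambda_\rect}(f)$ equals the explicit vector $v_\rect(\mathbf r')=(r'_{(\nu)})_{\nu\in\mathcal P_{G^\lambda_\rect}}$. The plan is to produce the function $f$ explicitly as a monomial in the frozen Pl\"ucker coordinates $P_{\mu_j}$ and $P_\lambda$, and then compute its valuation in the rectangles cluster directly using the max-diagonal formula from \cref{p:latticepoints-comp}.

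First I would reduce to an explicit description of $f$. By \cref{c:RationalPmu} (applied with $\mathbf r'$, noting that $\tilde\varphi$ is an isomorphism onto $M=\{\mathbf m\in\Z^n\mid\sum m_j=0\}$), there is a unique $\mathbf m\in\Z^n$ with $\sum_j m_j=0$ and $\tilde\varphi(\mathbf m)=\mathbf r'$, such that the rational function $g:=\prod_{j=1}^n P_{\mu_j}^{m_j}$ has divisor $(g)=D_{(\tilde{\mathbf r},\mathbf r')}$ where $\tilde r_\ell=-\sum_{i\in NW(b_{\rho_{2\ell-1}})}r'_i$. Using $(P_\lambda)=D_{(\mathbf 1,\mathbf 0)}$ (item (4) of \cref{s:GeometrySchubert}) and \eqref{e:rr'Cartier}, one checks that $r_\ell=R+\tilde r_\ell$, so that $f_0:=g\cdot P_\lambda^R=\left(\prod_j P_{\mu_j}^{m_j}\right)P_\lambda^R$ has divisor exactly $D_{(\mathbf r,\mathbf r')}-R D_{(\mathbf 1,\mathbf 0)}$ (after accounting for the convention that we really work with $P_\mu/P_\lambda$; since $\sum m_j=0$ the homogeneity works out). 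Any other rational function with the same divisor differs from $f_0$ by a nonzero scalar, and scalars have valuation $0$, so it suffices to compute $\val_{G^\lambda_\rect}(f_0)$; by additivity of $\val_G$ this is $\sum_j m_j\,\val_{G^\lambda_\rect}(P_{\mu_j})+R\,\val_{G^\lambda_\rect}(P_\lambda)$. Since $\val_{G^\lambda_\rect}(P_\lambda)$ is the valuation of the lexicographically-minimal Pl\"ucker coordinate restricted to $\mathbb T^{\rect}_\lambda$, it is the zero vector (this is implicit in \cref{ex:NO}/\cref{c:ConvEqualsGamma}), so $\val_{G^\lambda_\rect}(f_0)=\sum_j m_j\,\val_{G^\lambda_\rect}(P_{\mu_j})$.

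Next I would evaluate the right-hand side using the max-diagonal formula. By \cref{c:ConvEqualsGamma} (the rectangles-cluster case of \cref{p:latticepoints-comp}), for each frozen rectangle $\mu_j$ and each $\nu\in\mathcal P_{G^\lambda_\rect}$ we have $\val_{G^\lambda_\rect}(P_{\mu_j})_\nu=\maxdiag(\nu\setminus\mu_j)$. So the $\nu$-coordinate of $\val_{G^\lambda_\rect}(f_0)$ is $\sum_{j=1}^n m_j\maxdiag(\nu\setminus\mu_j)$, and I must show this equals $r'_{(\nu)}=\sum_{i=1}^{n-1}m_i(\nu)r'_i$, where $m_i(\nu)$ counts boxes of $\nu$ on the diagonal through $b'_i$. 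The key combinatorial identity is a \emph{telescoping along diagonals}: each $\mu_j$ is a rectangle, so for $\nu$ a rectangle, $\maxdiag(\nu\setminus\mu_j)$ equals the number of boxes of $\nu$ lying strictly southeast of the SE corner of $\mu_j$ counted along the appropriate anti-diagonal — concretely it is a sum of indicator functions over the diagonals of $\nu$ of ``does this diagonal meet $\nu\setminus\mu_j$''. Substituting the definition $\tilde\varphi(\mathbf m)=\mathbf r'$, i.e. $r'_i=\sum_{j:\,b'_i\in\operatorname{add}(\mu_j)}m_j$ (the formula for $\varphi$ from \cref{c:DrrVSPmu}), and swapping the order of summation, the sum $\sum_j m_j\maxdiag(\nu\setminus\mu_j)$ collapses into $\sum_i m_i(\nu)r'_i$. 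I expect this to work because the frozen rectangles $\mu_j$ are nested along the rim in a controlled way and $\operatorname{add}(\mu_j)$ is precisely the set of NW-border boxes one can append; the diagonal through $b'_i$ contributes to $\maxdiag(\nu\setminus\mu_j)$ exactly when $b'_i\in\operatorname{add}(\mu_j)$ and that diagonal meets $\nu$.

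\textbf{Main obstacle.} The delicate point is the combinatorial identity $\sum_{j=1}^n m_j\maxdiag(\nu\setminus\mu_j)=\sum_{i=1}^{n-1}m_i(\nu)r'_i$ for every rectangle $\nu\subseteq\lambda$. One has to track carefully, for a fixed rectangle $\nu$ and a fixed diagonal $\delta$ of $\nu$ (the one through $b'_i$), which frozen rectangles $\mu_j$ have the property that $\delta$ contributes a box to the max-diagonal count of $\nu\setminus\mu_j$; the claim is that the net coefficient $\sum\{m_j : \delta \text{ contributes to } \nu\setminus\mu_j\}$ telescopes to exactly $r'_i$ via the relation $r'_i=\sum_{j:b'_i\in\operatorname{add}(\mu_j)}m_j$. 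This requires a precise case analysis of the rim structure of $\lambda$ (outer corners, inner corners, vertical and horizontal segments as in \cref{d:frozendec}) and the geometry of how rectangles $\mu_j$ sit relative to $\nu$. I would organize this by first treating the case $G^\lambda_\rect$ where everything is a rectangle, reducing $\maxdiag$ of a skew shape (rectangle minus rectangle) to an explicit formula in the side-lengths, and then verifying the telescoping. The rest of the argument — producing $f$ via \cref{c:RationalPmu}, using additivity and scalar-invariance of $\val_G$, and invoking \cref{c:ConvEqualsGamma} — is routine.
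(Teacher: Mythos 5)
Your overall route is the same as the paper's: use \cref{c:RationalPmu} to write $f$ (up to scalar) as a monomial $\prod_j P_{\mu_j}^{m_j}$ in the frozen Pl\"ucker coordinates with $\varphi(\mathbf m)=\mathbf r'$ and $\sum_j m_j=0$, use additivity of $\val_G$ and the max-diagonal formula $\val_{G^\lambda_\rect}(P_{\mu_j})_\nu=\maxdiag(\nu\setminus\mu_j)$, and then verify that $\sum_j m_j\,\maxdiag(\nu\setminus\mu_j)=\sum_i m_i(\nu)\,r'_i$ for each rectangle $\nu$. The paper organizes the last step as a commuting triangle of linear maps $\mathcal V=v_\rect\circ\varphi:\Z^n\to\Z^{\mathcal P_G}$ and checks it on the basis $\{e_j\}$; your ``swap the order of summation'' is the same calculation in different clothing.

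Two remarks. First, your $f_0:=g\cdot P_\lambda^R$ detour is unnecessary and slightly off: since $\sum_j m_j=0$, the monomial $g=\prod_j P_{\mu_j}^{m_j}$ is already a degree-$0$ rational function, and comparing \cref{c:RationalPmu} with \eqref{e:rr'Cartier} shows $(g)$ is exactly $D_{(\mathbf r-R\mathbf 1,\mathbf r')}=D_{(\mathbf r,\mathbf r')}-RD_{(\mathbf 1,\mathbf 0)}$; no multiplication by $P_\lambda^R$ is needed, and multiplying by it takes you out of the rational function field. This is harmless because $\val_{G^\lambda_\rect}(P_\lambda)=\mathbf 0$, but it obscures the clean statement.

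Second, and more importantly, the ``main obstacle'' you flag is a genuine gap in your write-up: you do not actually prove the combinatorial identity, and the heuristic you give (``the diagonal through $b'_i$ contributes to $\maxdiag(\nu\setminus\mu_j)$...'') mischaracterizes $\maxdiag$ as a sum over diagonals, when it is a single maximum. The actual resolution, which makes the identity almost immediate, is the observation that a rectangle $\nu$ can be strictly taller than the rectangle $\mu_j$ or strictly wider, but not both; hence \emph{at most one} of the (at most two) addable boxes $b'_i\in\add(\mu_j)$ can lie inside $\nu$. Consequently $v_\rect(\varphi(e_j))_\nu=\sum_{i\in\add(\mu_j)}m_i(\nu)$ has at most one nonzero term, and a small check shows it equals $\maxdiag(\nu\setminus\mu_j)$ (the unique contributing diagonal length when $\nu\not\subseteq\mu_j$, and $0$ otherwise). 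Without this structural fact about frozen rectangles, the ``telescoping'' you describe has no reason to collapse cleanly, so supplying it is what turns your plan into a proof.
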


\begin{proof}
Let $G=G^\lambda_{\rect}$. 
Set $\tilde {\mathbf r}=\mathbf r-R\mathbf 1$, so that
$(f)=D_{(\mathbf r,\mathbf r')}-R D_{(\mathbf 1,\mathbf 0)}=D_{(\tilde{\mathbf r},\mathbf r')}$. Recall the map $\varphi:\Z^n\to\Z^{n-1}$ defined in \cref{c:DrrVSPmu}. We have 
\begin{equation}\label{e:DviaPmu}
D_{(\tilde{\mathbf r},\mathbf r')}=\sum_{j=1}^n m_j (P_{\mu_j}),
\end{equation}
where $\mathbf m=(m_1,\dotsc, m_n)\in \Z^n$ is determined by $\varphi(\mathbf m)=\mathbf r'$ and $\sum_{j=1}^n m_j=0$, see Corollary~\ref{c:RationalPmu}. As a consequence of \eqref{e:DviaPmu} we have
\begin{equation}\label{e:fviaPmu}
f=c\prod_{j=1}^nP_{\mu_j}^{m_j},
\end{equation}
for some nonzero constant $c$. We define the following linear map
\[
\begin{array}{cccl}
\mathcal V:&\Z^n &\to & \Z^{\mathcal P_{G}}\\
&(m_j)_{j=1}^n &\mapsto & \sum_{j=1}^n m_j\val_G(P_{\mu_j}).
\end{array}
\]
It follows from \eqref{e:fviaPmu} that $\val_{G}(f)=\mathcal V(\mathbf m)$. 

Now we consider the linear map $v_{\rect}:\Z^{n-1}\to \Z^{\mathcal P_G}$ defined componentwise via the formula \eqref{e:vrectcomponents}, 
\begin{equation}\label{e:vrectcomponents2}
v_{\rect}((r'_i)_i)_\nu=\sum_{i=1}^{n-1}m_i(\nu) r'_i,
\end{equation}
where $m_i(\nu)$ is the number of boxes in $\nu$ that lie in the diagonal containing $b'_i$. 

We claim that the following diagram commutes. 
\[\begin{tikzcd}
	{\mathbb Z^n} \\
	{\mathbb Z^{n-1}} & {\mathbb Z^{\mathcal P_{G}}}
	\arrow["\varphi"', from=1-1, to=2-1]
	\arrow["{v_{\operatorname {rec}}}"', from=2-1, to=2-2]
	\arrow["{\mathcal V}", from=1-1, to=2-2]
\end{tikzcd}.\]
All the maps are linear, and therefore it suffices to check commutativity on a basis of $\Z^n$. We have that $\mathcal V(e_j)=\val_{G^\lambda_{\rect}}(P_{\mu_j})$, which, using 
	\cref{p:latticepoints-comp},
	is given by
\[
\mathcal V(e_j)_\nu=\val_{G^\lambda_{\rect}}(P_{\mu_j})_\nu=\maxdiag(\nu\backslash \mu_j).
\]
On the other hand, let 
\[
\mathbf r'_{(j)}:=
\varphi(e_j)=\sum_{i\in\operatorname{add}(\mu_j)} e_i.
\] 
Consider any rectangle $\nu\in\mathcal P^G$. If $\mu_j$ has more than one addable box, then at most one of these addable boxes can lie in $\nu$, since $\nu$ can be taller than $\mu_j$ or wider than $\mu_j$ but not both. Using this observation and the definition of $v_{\rect}$ from~\eqref{e:vrectcomponents2}, we see that $v_{\rect}(\mathbf r'_{(j)})_\nu=\maxdiag(\nu\backslash \mu_j)$. So we have shown that $v_{\rect}(\varphi(e_j))=\mathcal V(e_j)$, and therefore the diagram commutes. 

Finally, we have 
\[
\val_G(f)=\mathcal V(\mathbf m)=v_{\rect}(\varphi(\mathbf m))=v_{\rect}(\mathbf r'),
\]
which concludes the proof.
\end{proof}

We note that the above Lemma is purely about the $\mathcal X$-variety $X_\lambda$ and its Cartier boundary divisors (made up of positroids and Schubert divisors). However, the formula was inspired by the calculation in 
\cref{p:GammaShiftRect} on the mirror side.

\begin{proof}[Proof of \cref{t:maingen}]

Let $f$ be the rational function on $X_\lambda$ from \cref{l:recvaluationf} with $(f)=D_{(\mathbf {r},\mathbf {r'})}-RD_{(\mathbf 1,\mathbf 0)}$.  Then
\begin{equation}\label{e:DeltaGenGShift}
\Delta^\lambda_G(D_{(\mathbf {r},\mathbf {r'})})=R \Delta^\lambda_G-\val_{G}(f),
\end{equation}   
	by the proof of \cref{l:DeltaShift}. We have that $f$ is a Laurent monomial in the $P_{\mu_j}$, and $\Psi_{G,G'}(\val_{G}(P_{\mu_j}))=\val_{G'}(P_{\mu_j})$, see 
\cref{thm:flowmutate}.
Moreover,  $f$ and the $P_{\mu_j}$ are regular functions which do not vanish
on $X_\lambda^\circ$, making translation by their valuations compatible with mutation, see~\cref{c:balanced}. We obtain that $\Psi_{G,G'}(\val_{G}(f))=\val_{G'}(f)$ for any choice of $G, G'$ and we obtain the useful identity
\[
\Psi_{G^\lambda_\rect,G}(R \Delta^\lambda_\rect-\val_{G^\lambda_\rect}(f))=R\Psi_{G^\lambda_\rect,G}(\Delta^\lambda_\rect)-\val_{G}(f).
\] 
Using Theorem~\ref{thm:main}, we may reformulate this to   
\begin{equation}\label{e:DeltaGenGShift2}
\Psi_{G^\lambda_\rect,G}(R \Delta^\lambda_\rect-\val_{G^\lambda_\rect}(f))
=R\Delta^\lambda_G-\val_{G}(f).
\end{equation}   

We now recall that $\Delta^\lambda_G=\Gamma^\lambda_G$ and  $\val_{G_\rect}(f)=v_\rect(\mathbf r')$, by \cref{thm:main} and \cref{l:recvaluationf}, respectively. Therefore, we can make replacements on the left-hand side of \eqref{e:DeltaGenGShift2} and we find that
\[
\Psi_{G^\lambda_\rect,G}(R \Gamma^\lambda_\rect-v_\rect(\mathbf r'))=R\Delta^\lambda_G-\val_{G}(f). 
\]
Thanks to \cref{p:GammaShiftRect} the left-hand side above is the mutation of a superpotential polytope, namely 
\[
	R\Gamma^\lambda_{\rect}-v_{\rect}(\mathbf r') = 
\Gamma^\lambda_{\rect}({\mathbf {r},\mathbf {r'}}).
\]
Finally, using \cref{c:GammaMutation} and the identity \eqref{e:DeltaGenGShift} we obtain
\[
\Gamma_G^\lambda(\mathbf r,\mathbf r')= \Psi_{G^\lambda_\rect,G}(\Gamma_\rect^\lambda(\mathbf r,\mathbf r'))=
\Psi_{G^\lambda_\rect,G}(R \Gamma^\lambda_\rect-v_\rect(\mathbf r'))=R\Delta^\lambda_G-\val_{G}(f)=\Delta^\lambda_G(D_{(\mathbf {r},\mathbf {r'})}).
\] 
\end{proof}

 Consider the positroid divisor $D_{(\mathbf 0,\mathbf {r'})}=D'_{n-k}$, where $\mathbf r'=\delta_{i,n-k}$, so  $r'_{n-k}=1$ and all other $r'_i=0$. The divisor $D_{n-k}'$ is a distinguished irreducible Cartier divisor in $X_\lambda$. We have the following special case of Theorem~\ref{t:maingen}. 

\begin{corollary} For the ample positroid divisor $D'_{n-k}=D_{(\mathbf 0,\mathbf\delta_{i,n-k})}$ we have the following description of the Newton-Okounkov convex body $\Delta_G^\lambda(D'_{n-k})$ as a superpotential polytope,
\[
\Delta_G^\lambda(D'_{n-k})=\Gamma^\lambda_G(\mathbf 0,\mathbf\delta_{i,n-k}).
\]
\end{corollary}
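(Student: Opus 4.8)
The plan is to deduce this corollary directly from \cref{t:maingen}: since that theorem identifies $\Delta^\lambda_G(\mathbf r,\mathbf r')$ with $\Gamma^\lambda_G(\mathbf r,\mathbf r')$ for \emph{every} ample boundary divisor $D_{(\mathbf r,\mathbf r')}$, the only thing to verify is that the divisor $D'_{n-k}=D_{(\mathbf 0,\,\delta_{i,n-k})}$ is in fact ample, so that \cref{t:maingen} applies with $(\mathbf r,\mathbf r')=(\mathbf 0,\,\delta_{i,n-k})$.

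For the ampleness I would invoke \cref{c:Drr}. With $\mathbf r=\mathbf 0$ and $r'_i=\delta_{i,n-k}$, the integers $R_\ell$ from \eqref{e:Rj} become $R_\ell=\sum_{i\in NW(b_{\rho_{2\ell-1}})}\delta_{i,n-k}$. Now $b'_{n-k}$ is the northwest-most box of $\lambda$ (the box in position $(1,1)$, cf.\ \cref{def:BB} and \cref{r:specialdivisor}), so it is weakly northwest of every removable box $b_{\rho_{2\ell-1}}$; hence $n-k\in NW(b_{\rho_{2\ell-1}})$ for all $\ell$, giving $R_\ell=1$ for each $\ell$. Thus $R_1=\dots=R_d=1$, so by \cref{c:Drr} the divisor $D_{(\mathbf 0,\,\delta_{i,n-k})}$ is Cartier, linearly equivalent to $\sum_{\ell=1}^d D_\ell=\{P_\lambda=0\}$, and ample since $R=1>0$. (Alternatively, this is immediate from \cref{r:specialdivisor} together with \cref{rem:special}: $D'_{n-k}=\{P_\emptyset=0\}$ is linearly equivalent to the hyperplane section of $X_\lambda$ in its Pl\"ucker embedding.) Having checked ampleness, I would then simply apply \cref{t:maingen} with $(\mathbf r,\mathbf r')=(\mathbf 0,\,\delta_{i,n-k})$ to conclude $\Delta_G^\lambda(D'_{n-k})=\Gamma^\lambda_G(\mathbf 0,\,\delta_{i,n-k})$.

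I do not anticipate a genuine obstacle here; essentially all of the content lies in \cref{t:maingen}, and the ampleness statement is a one-line consequence of \cref{c:Drr}. The only point that needs a little care is bookkeeping: making sure the index conventions for the vector $\delta_{i,n-k}$ and for the northwest-border boxes $b'_i$ match those used in \cref{c:Drr} and \cref{def:positroiddivisors}, so that $\mathbf r'=\delta_{i,n-k}$ really is the vector corresponding to the single irreducible positroid divisor $D'_{n-k}$ and not some other boundary divisor.
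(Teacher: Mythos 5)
Your proof is correct and follows the paper's approach exactly: the paper also presents this corollary as a direct specialization of \cref{t:maingen}, after noting that $D'_{n-k}$ is Cartier. Your verification via \cref{c:Drr} that $R_\ell=1$ for all $\ell$ (because $b'_{n-k}$ is the top-left box of $\lambda$ and hence weakly northwest of every removable box) correctly spells out the ampleness check that the paper leaves implicit.
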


\begin{remark}
Related to this corollary we propose the following alternative choice of a superpotential for $X_\lambda$,
\[
\sum_{j=0}^d W_j + \sum_{i=0}^{n-1} q^{\delta_{i,n-k}}W'_i,
\]
which has a single parameter $q$ in keeping with the rank of the Picard group of $X_\lambda$.
\end{remark}

\section{A Gorenstein Fano toric variety 
constructed from  $W_{\rect}^{\lambda}$}\label{s:toric}


One of the main properties of our superpotential $W^\lambda$ is that it encodes in one compact formula a multitude of toric degenerations of the Schubert variety $X_\lambda$ via
the superpotential polytopes, see 
\cref{t:maingen}
and \cref{cor:degenerationgen}. 
More specifically, 
we get one toric degeneration from each choice of cluster chart, and it is
encoded in the corresponding  Laurent expansion of $W^\lambda$. 
This extends to the Schubert setting a key property of the Grassmannian superpotential \cite{RW}, see also \cite{SW1}. In this section we focus on the Laurent expansion of the superpotential $W^\lambda$ in 
the rectangles cluster, which encodes the degeneration from \cite{GL} of the Schubert variety $X_{\lambda}$ to a `Gelfand-Tsetlin' toric variety. 
If $X_\lambda$ is Gorenstein, then so is its toric degeneration.  
We show that when $X_\lambda$ is \textit{not} Gorenstein, 
its toric degeneration nevertheless has a canonical small partial resolution to a  
Gorenstein toric Fano variety. Moreover it has a small toric desingularisation in all cases.

The idea of Laurent polynomial mirrors  relating to toric degenerations goes back to \cite{BC-FKvSGrass,BC-FKvS}, who constructed Laurent polynomial mirrors for partial flag manifolds generalising  \cite{EHX,Givental:fullflag} and related them to the toric degenerations from \cite{GL}. Subsequently, 
this kind of approach was taken in a variety of settings such as in  \cite{Galkin,ILP, Kalashnikov}, see also \cite[Conjecture 9]{KP:whyandhow} and \cite{CKPT}. Toric degenerations also play a role in the construction of superpotentials using Floer theory, see \cite[Theorem~1]{Nishinou}, and \cite[Theorem 4.4]
{BGM22}. Note that \cite[Theorem~1]{Nishinou} requires the existence of a small resolution of the central toric fiber.  

In the above references, smooth 
varieties are degenerated to Gorenstein Fano toric varieties for the
purpose of applying mirror symmetry. 
But degenerations of Gorenstein (singular) Schubert varieties have been used in \cite{Miura:minuscule} for studying quantum periods of smooth Calabi-Yau $3$-folds contained in them, giving another kind of application of a Laurent 
polynomial superpotential for a singular variety. This work uses the degeneration of \cite{GL}, coinciding with ours as in  \cref{def:toricvarieties2}, and thus further supports $W^\lambda$ being called the superpotential for $X_\lambda$ in the Gorenstein case.

Let us start by 
defining the toric varieties of interest.  
We introduce two toric varieties, both related to the superpotential 
expressed in terms of the rectangles cluster.

\begin{defn}\label{def:toricvarieties}
Let 
$\CC_\lambda$ denote the (inner) normal fan of the superpotential 
polytope $\Gamma_{\rect}^{\lambda} = \Delta^\lambda_\rect$ (for the rectangles cluster)
and let $Y(\CC_\lambda)$ denote the associated toric variety. 

Let $\NP(W^{\lambda}_{\rect})$ denote 
the Newton polytope of the Laurent polynomial  superpotential
$W_{\rect}^{\lambda}(q_i=1)$
after specializing
each $q_i=1$.
Let $\FF_\lambda$ denote the face fan 
of the Newton polytope 
$\NP(W^\lambda_{\rect})$ 
and let 
$Y(\FF_\lambda)$ be its associated toric variety.

Note that $Y(\CC_\lambda)$ comes with a projective embedding via the polytope $\Gamma_{\rect}^{\lambda} = \Delta^\lambda_\rect$ by default and we will usually consider $Y(\CC_\lambda)$ as projective toric variety via this embedding. We may also write $\mathbb P_{\Delta}$ for the projective variety associated to a polytope $\Delta$ if it is useful to include $\Delta$ in the notation. 
\end{defn}

We 
have a toric degeneration of our Schubert variety $X_\lambda$ to the projective toric variety $Y(\CC_\lambda)$, as a special case of \cref{cor:degeneration};
this recovers the `Gelfand-Tsetlin' toric degeneration of the Schubert variety $X_\lambda$ constructed by \cite{GL}, as already pointed out.

The section is now organised as follows. The first main result of this section is 
that the polytope $\NP(W^\lambda_{\rect})$ is reflexive and terminal, see \cref{c:reflexiveterminal},
and hence $Y(\FF_\lambda)$ is Gorenstein Fano with at most terminal singularities, 
see \cref{thm:Gorenstein}.
The second main result of this section, \cref{cor:small}, is that 
$Y(\FF_\lambda)$ is a small partial desingularization of the toric 
degeneration $Y(\CC_\lambda)$ of the Schubert variety 
$X_{\lambda}$. 
Our third main result is a description of the group $\operatorname{Cart}_T(Y(\FF_\lambda))$ of torus-invariant Cartier divisors, 
and of the Picard group $\Pic(Y(\FF_\lambda))$, see  
	\cref{t:toricCartier}, \cref{p:toricCartierAmple} and \cref{c:toricPicard}. We do this by constructing a new poset $\widetilde{P}(\lambda)$, 
extending the poset $P(\lambda)$ from \cref{def:posetlambda}, whose maximal elements we show determine a basis of $\Pic(Y(\FF_\lambda))$. We also describe the ample cone of $Y(\FF_\lambda)$.   
We discuss connections to marked order polytopes and flow polytopes and consider analogues $\widetilde D_{(\mathbf r,\mathbf r')}$ in $Y(\FF_\lambda)$ of the boundary divisors $D_{(\mathbf r,\mathbf r')}\subset X_\lambda$. 
See \cref{fig:degeneration} for a depiction of some of these relationships.

\subsection{The Newton polytope of the superpotential and the superpotential
polytope}\label{sec:reflexive}

Just as we did in \cref{s:vertexcoordinates}, it will be convenient for us to 
 work with the superpotential
$\rW_{\rect}^{\lambda}$ in vertex coordinates; we will then 
study the associated superpotential polytope and Newton polytope and 
the relations between them.
These two  polytopes lie in dual vector spaces.

\begin{defn}\label{def:Mlambda}
We write 
	$ \mathbf N^\lambda_{\R}$ 
	for the  vector space (isomorphic to $\R^{|\lambda|}$) 
containing the Newton polytope of $\rW_{\rect}^{\lambda}$, and 
  $\mathbf M^\lambda_{\R}$ 
	for the dual vector space
containing the superpotential polytope (see \cref{def:super2}).
	$ \mathbf N^\lambda_{\R}$ and 
  $\mathbf M^\lambda_{\R}$  have
	coordinates $e_{i \times j}$ and 
$ f_{i \times j}$ indexed by 
the rectangles $i \times j \subseteq \lambda$. 
Let $\mathbf N^\lambda_\Z\subset \mathbf N^\lambda_{\R}$ be the $\Z$-lattice where the coordinates $e_{i\times j}\in\Z$, and 
	let $\mathbf M^\lambda_\Z$ be the dual $\Z$-lattice. 
	We may identify $\mathbf M^\lambda_\Z$ with $\Z^{P(\lambda)}$. 
\end{defn}

We now give an analogue of \cref{def:toricvarieties} which uses vertex coordinates.
\begin{defn}\label{def:toricvarieties2} 
Let 
$\overline{\CC}_\lambda$ denote the (inner) normal fan 
 of the superpotential 
polytope $\rGamma_{\rect}^{\lambda}$ 
and let $Y(\overline{\CC}_\lambda)$ denote the associated toric variety. Let $\NP(\rW_{\rect}^{\lambda})$ denote
	the Newton polytope 
	of the Laurent polynomial 
	$\rW_{\rect}^{\lambda}(q_i=1)$, 
	after specializing
each $q_i=1$.
Let $\overline{\FF_\lambda}$ denote the face fan 
of 
$\NP(\overline{W}^\lambda_{\rect})$ 
and let 
$Y(\overline{\FF_\lambda})$ be its associated toric variety.
Both fans $\overline\CC_\lambda$ and $\overline{\FF_{\lambda}}$, lie in $\mathbf N_\R^\lambda$, and 
$\mathbf M^\lambda_\Z$ is the character lattice of the torus acting on $Y(\overline{\FF_\lambda})$ and  $Y(\overline{\CC_\lambda})$, compare \cref{def:Mlambda}.  
\end{defn}

\begin{example} \label{ex:442again}
  We continue \cref{ex:442}, which uses the 
	superpotential from \eqref{eq:442}.
The Newton polytope
is the convex hull of the points
$$\{e_{1 \times 1}, e_{2 \times 1}-e_{1 \times 1}, e_{3 \times 1}-e_{2 \times 1}, e_{2 \times 2}-e_{1 \times 2},\dots,  -e_{2 \times 4}, -e_{3 \times 2}\}
	\subset \mathbf N^{\lambda}_{\R}.$$
\end{example}

\begin{remark}\label{rem:variablechange}
As in \cref{p:unimodular}, we have a unimodular change of 
variables between 
the Newton polytope 
$\NP(\rW_{\rect}^{\lambda})$ in 
vertex coordinates and the usual
Newton polytope 
	$\NP(W_{\rect}^{\lambda})$.
Therefore to understand 
$\NP(W_{\rect}^{\lambda})$, 
	it suffices 
	to work with 
$\NP(\rW_{\rect}^{\lambda})$. 
This allows us to work more directly in terms of posets and quivers and apply results from \cite{RWReflexive} to the study of the toric variety $Y(\FF_\lambda)$.  
\end{remark}

\begin{definition}\label{def:starredquiver}
	We define a \emph{starred quiver} to be 
	a quiver $Q$ with vertices $\V=\nV \sqcup \V_{\star}$
(where $\nV=\{\vv_1,\dots,\vv_n\}$ for $n \geq 1$ and
$\V_{\star}=\{\star_1,\dots,\star_{\ell}\}$ for $\ell \geq 1$ are called the
        \emph{(normal) vertices} and
\emph{starred vertices}),
and arrows $\Arr(Q) \subseteq (\nV \times \nV) \sqcup (\nV \times \V_{\star}) \sqcup (\V_{\star} \times \nV)$. We will always assume the graph underlying $Q$ to be connected, and $Q$ to have at least one starred vertex. 
\end{definition}

Let $P$ be a finite, ranked poset, where we assume minimal elements of $P$ to all have the same rank, as in \cite{RWReflexive}, while maximal elements may be of varying ranks. 
We also assume that the Hasse diagram is connected. Let us write $\rk:P\to\Z$ for the rank function on $P$. We construct two starred quivers out of $P$. The first one is defined simply, see \cref{d:QPmax} below. The second one will be defined later in \cref{d:canonicalquiver}.

\begin{defn}\label{d:QPmax}
Suppose $P$ is a finite, ranked poset as above, with maximal elements denoted $m_1,\dotsc, m_d$ and minimal elements of rank $1$. Let $n_j$ denote the rank of $m_j$. We define an extension of $P$ denoted by $P_{\max}$ by adjoining one minimal element $\star_0$ (of rank $0$), and for every $m_j$ a new maximal element $\star_j$ covering $m_j$ (of rank $n_j+1$). Note that $P_{\max}$ is again a ranked poset. 

We now associate to $P$ a starred quiver $Q_{P_{\max}}$ with vertex sets $\V_\bullet=P$ and $\V_\star=\{\star_0,\star_1,\dotsc,\star_d\}$. 
Thus $\V=\V_\bullet\sqcup\V_\star$ agrees with the set of elements of  $P_{\max}$. For every covering relation in $P_{\max}$ we introduce an arrow pointing from the smaller to the larger element. In other words, $Q_{P_{\max}}$ is constructed out of the Hasse diagram of $P_{\max}$ by orienting the edges from smaller to larger, and designating the minimal and maximal elements of $P_{\max}$ as starred vertices.     
\end{defn}

\begin{notation}\label{notation:Q}
Given a partition $\lambda$,  let 
$Q_\lambda$ denote the starred quiver
	$Q_{P(\lambda)_{\max}}$.
\end{notation}

Note that $Q_\lambda$ agrees with the quiver constructed in \cref{def:superquiver}, where we declare the vertices labeled $1$ and $q_i$ as starred vertices. 
For example, the quiver at the left of 
\cref{fig:superpotential-442} corresponds to the starred quiver shown in  \cref{fig:starredquiver}.

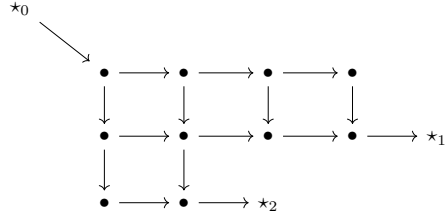
\begin{figure}[h] 
\begin{center}
\[\adjustbox{scale=.8,center}{\begin{tikzcd}
	{\star_0} \\
	& \bullet & \bullet & \bullet & \bullet \\
	& \bullet & \bullet & \bullet & \bullet & {\star_1} \\
	& \bullet & \bullet & {\star_2}
	\arrow[from=1-1, to=2-2]
	\arrow[from=2-2, to=2-3]
	\arrow[from=2-2, to=3-2]
	\arrow[from=2-3, to=2-4]
	\arrow[from=2-3, to=3-3]
	\arrow[from=2-4, to=2-5]
	\arrow[from=2-4, to=3-4]
	\arrow[from=2-5, to=3-5]
	\arrow[from=3-2, to=3-3]
	\arrow[from=3-2, to=4-2]
	\arrow[from=3-3, to=3-4]
	\arrow[from=3-3, to=4-3]
	\arrow[from=3-4, to=3-5]
	\arrow[from=3-5, to=3-6]
	\arrow[from=4-2, to=4-3]
	\arrow[from=4-3, to=4-4]
\end{tikzcd}}\]
\hspace{1cm}
	\caption{The starred quiver $Q_\lambda$ for $\lambda=(4,4,2)$
	\label{fig:starredquiver}}
\end{center}
\end{figure}

\begin{definition}[Root polytope]\label{def:Root} 
Let $Q$ be a starred quiver 
with arrows $\Arr(Q)$
and vertices
$\mathcal V_{\bullet}=\{\vv_1,\dots,\vv_n\}$ and $\mathcal V_{\star}=\{\star_1,\dots,\star_{\ell}\}$. 
We write $\mathbf N_\R$ (or $\mathbf N^Q_{\R}$) for the  vector space $\R^{\mathcal V_\bullet}$  that will contain the root polytope $\Root(Q)$.  We also write $\mathbf M_\R$ (or $\mathbf M^Q_{\R}$) for the dual vector space. For $Q=Q_\lambda$ coming from the poset $P(\lambda)$ we recover the vector spaces $\mathbf N^{\lambda}_\R$ and $\mathbf M^{\lambda}_\R$ from \cref{def:Mlambda}. 
	
We now identify $\mathbf N_\R=\R^{\mathcal V_\bullet}$ with $\R^n$.  Let $e_i$ denote the 
standard basis vector in $\R^n$ with a $1$ in position $i$ and $0$'s elsewhere. We associate a point $u_a\in \mathbf N_\R$ to each arrow $a$ as follows:
\begin{itemize}
        \item if $a: \vv_i \to \vv_j$, $u_a:=e_j-e_i$;
        \item if $a: \star_i \to \vv_j$, $u_a:= e_j$; and
        \item if $a: \vv_i \to \star_j$, $u_a:= -e_i$.
\end{itemize}
We then define the \emph{root polytope} $\Root(Q)\subset \mathbf N_\R$ to be the convex hull of the points $u_a$,
$$\Root(Q) := \conv\{u_a \ \vert \ a\in \Arr(Q)\}.
$$
\end{definition}
\begin{remark}\label{r:rootNewton}
The Newton polytope 
	$\NP(\rW_{\rect}^{\lambda})$ 
equals  the root polytope $\Root(Q_\lambda)$.
\end{remark}

\begin{definition}\label{def:reflexive}
Suppose that $\Poly\subset \R^n$ is a lattice polytope of full dimension $n$
which contains the origin $0$ in its interior. Then
the \emph{polar dual polytope} of $\Poly$ is
$$\Poly^*:= \{y \in (\R^n)^* \ \vert \ \langle y,x \rangle \geq -1
  \text{ for all }x\in \Poly\},$$
where $\langle y,x \rangle$ is the pairing between 
	$(\R^n)^*$ and $\R^n$.

	A full-dimensional lattice polytope $\Poly\subset \R^n$ with the origin in its interior is called \emph{reflexive} (or \emph{Gorenstein Fano}) if 
its polar dual is also a lattice polytope.
It is called \textit{terminal} if its vertices and~$0$ are the only  lattice points contained in~$\Poly$ (with $0$ in the interior). 
\end{definition}

\begin{definition}\label{d:strongconn}
We say that a starred quiver $Q$ is \emph{strongly connected} if after identifying all of the starred vertices, there is an oriented path from any vertex to any other vertex.
\end{definition}

We can now make use of \cite[Theorem A]{RWReflexive}, which says that the root polytope $\Root(Q)$ of any strongly connected  quiver or starred quiver $Q$  
is reflexive and terminal.\footnote{In the case of 
root polytopes associated to strongly connected 
starred quivers with no starred vertices -- 
also known as \emph{edge polytopes}
of strongly connected directed graphs -- this result
is stated in \cite[Proposition 1.4]{Higashitani}
(the latter reference does not provide a proof,
but says the proof is similar to that of 
  \cite[Proposition 3.2]{Matsui}).} Note that the starred quiver $Q_{P_{\max}}$ associated to a ranked poset $P$ is automatically strongly connected. 
\begin{prop}\label{c:reflexiveterminal}
 The polytopes  $\NP(W_{\rect}^{\lambda})$ and 
$\NP(\rW_{\rect}^{\lambda})=\Root(Q_\lambda)$  
  are reflexive and terminal.  
\end{prop}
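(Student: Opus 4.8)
The plan is to deduce \cref{c:reflexiveterminal} directly from the results set up immediately before it, reducing everything to the reflexivity/terminality of root polytopes of strongly connected starred quivers.

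First I would observe that by \cref{rem:variablechange} (the unimodular change of variables of \cref{p:unimodular}) it suffices to prove the claim for $\NP(\rW_{\rect}^{\lambda})$, since unimodular equivalence preserves the properties of being reflexive and terminal (and indeed preserves the entire face structure and lattice-point structure of a polytope). So the whole statement reduces to showing that $\NP(\rW_{\rect}^{\lambda})$ is reflexive and terminal.

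Next I would invoke \cref{r:rootNewton}, which identifies $\NP(\rW_{\rect}^{\lambda})$ with the root polytope $\Root(Q_\lambda)$ of the starred quiver $Q_\lambda = Q_{P(\lambda)_{\max}}$ from \cref{notation:Q}. The remaining point is that $Q_\lambda$ is strongly connected in the sense of \cref{d:strongconn}: after identifying all starred vertices $\star_0,\star_1,\dots,\star_d$ into a single vertex $\star$, one can travel from $\star$ down into the poset via the arrow $\star_0 \to v(1,1)$, move freely through the Hasse-diagram arrows of $P(\lambda)$ (which connect the unique minimal element to every box, all oriented upward), and return to $\star$ via the arrow $v(i_\ell,\lambda_{i_\ell}) \to \star_\ell$ out of any outer corner. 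Since every element of $P(\lambda)$ lies weakly above $c_0$ and weakly below some maximal element $c_\ell$, and the Hasse diagram is connected, this gives an oriented path between any two vertices through $\star$. More generally, as remarked just before \cref{c:reflexiveterminal}, the starred quiver $Q_{P_{\max}}$ attached to any ranked poset $P$ with a connected Hasse diagram and uniform minimal rank is automatically strongly connected — and $P(\lambda)$ is such a poset by \cref{def:posetlambda} (it has the unique minimal element $c_0$, hence all minimal elements of the same rank, and a connected Hasse diagram).

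Finally I would apply \cite[Theorem A]{RWReflexive}, which states precisely that $\Root(Q)$ is reflexive and terminal for every strongly connected starred quiver $Q$. Applied to $Q = Q_\lambda$, this gives that $\Root(Q_\lambda) = \NP(\rW_{\rect}^{\lambda})$ is reflexive and terminal, and then transporting back along the unimodular change of variables yields the same for $\NP(W_{\rect}^{\lambda})$, completing the proof. There is essentially no obstacle here: the content has all been front-loaded into \cref{r:rootNewton}, the strong-connectedness remark, and \cite[Theorem A]{RWReflexive}; the only thing to be careful about is spelling out why $Q_\lambda$ meets the hypotheses of that theorem, i.e. checking the connectedness and uniform-minimal-rank conditions on $P(\lambda)$, which are immediate from the description of $P(\lambda)$ as the poset of rectangles in $\lambda$ with unique minimum $c_0$.
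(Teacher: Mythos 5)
Your proposal is correct and follows essentially the same route as the paper: identify $\NP(\rW_{\rect}^{\lambda})$ with the root polytope $\Root(Q_\lambda)$, invoke \cite[Theorem A]{RWReflexive} for strongly connected starred quivers, and transport the conclusion to $\NP(W_{\rect}^{\lambda})$ via the unimodular change of coordinates. The only difference is that you spell out the (elementary) verification that $Q_\lambda$ is strongly connected, which the paper treats as automatic for starred quivers arising from ranked posets.
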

\begin{proof}
Since the quiver $Q_\lambda$ is strongly connected, the root polytope $\Root(Q_\lambda)$   
is reflexive and terminal by \cite[Theorem A]{RWReflexive}.  
Since, $\NP(\rW_{\rect}^{\lambda})=\Root(Q_\lambda)$ agrees with $\NP(W_{\rect}^{\lambda})$ up to a unimodular change of coordinates, $\NP(W_{\rect}^{\lambda})$ is also reflexive and terminal.   
\end{proof}

 Using \cref{c:reflexiveterminal}, we obtain the following.
\begin{corollary}\label{thm:Gorenstein}
The toric variety $Y(\FF_\lambda)$ associated to the face fan 
of $\NP(W_{\rect}^{\lambda})$ is Gorenstein Fano,
with at most terminal singularities.
\end{corollary}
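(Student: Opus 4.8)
\textbf{Proof plan for \cref{thm:Gorenstein}.}

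The plan is to deduce the statement about $Y(\FF_\lambda)$ directly from the reflexivity and terminality of the Newton polytope established in \cref{c:reflexiveterminal}, using the standard toric dictionary. First I would recall that by definition $\FF_\lambda$ is the face fan of the lattice polytope $\NP(W_{\rect}^{\lambda})$, and that $Y(\FF_\lambda)$ is the associated (complete) toric variety; this makes sense precisely because, by \cref{c:reflexiveterminal}, $\NP(W_{\rect}^{\lambda})$ is a full-dimensional lattice polytope containing the origin in its interior, so its face fan is a genuine complete fan. Completeness of the fan gives that $Y(\FF_\lambda)$ is a complete toric variety.

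Next I would invoke the classical correspondence (see e.g. Batyrev, or Cox--Little--Schenck) between reflexive polytopes and Gorenstein Fano toric varieties: if $\Poly$ is a reflexive lattice polytope with the origin in its interior, then the toric variety $Y$ of the face fan of $\Poly$ is Gorenstein and Fano, with $-K_Y$ the ample Cartier divisor whose polytope is the polar dual $\Poly^*$ (which is again a lattice polytope, exactly the reflexivity condition). Applying this with $\Poly = \NP(W_{\rect}^{\lambda})$ yields that $Y(\FF_\lambda)$ is Gorenstein Fano. For the statement about singularities, I would use the fact that a Fano toric variety coming from the face fan of $\Poly$ has at worst terminal singularities if and only if $\Poly$ is a terminal polytope, i.e. the only lattice points of $\Poly$ are its vertices together with the origin: each maximal cone of the face fan is spanned by the vertices of a facet of $\Poly$, and the terminality condition on $\Poly$ translates exactly into the toric terminality criterion that the only lattice points in each such cone at ``height $\leq 1$'' are the primitive ray generators and $0$. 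Since \cref{c:reflexiveterminal} gives that $\NP(W_{\rect}^{\lambda})$ is terminal, we conclude that $Y(\FF_\lambda)$ has at most terminal singularities.

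There is essentially no obstacle here beyond bookkeeping: all the geometric content has been pushed into \cref{c:reflexiveterminal} (which in turn rests on \cite[Theorem A]{RWReflexive}), and what remains is to cite the standard toric translations. The one point requiring a small amount of care is to make sure the lattice used to define ``reflexive'' and ``terminal'' in \cref{def:reflexive} is the same lattice $\mathbf N^\lambda_\Z$ with respect to which the face fan $\FF_\lambda$ is formed; this is immediate from \cref{def:Mlambda} and \cref{def:toricvarieties}, and the unimodular change of coordinates of \cref{rem:variablechange} shows it is harmless to pass between $\NP(W_{\rect}^{\lambda})$ and $\NP(\rW_{\rect}^{\lambda})=\Root(Q_\lambda)$. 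I would phrase the final proof in two or three sentences, citing \cref{c:reflexiveterminal} and the standard reference for the reflexive-polytope/Gorenstein-Fano dictionary and for the terminal-polytope criterion.
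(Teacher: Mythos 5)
Your proposal is correct and follows the same route as the paper: the paper simply observes that the corollary follows from \cref{c:reflexiveterminal} via the standard dictionary between reflexive/terminal polytopes and Gorenstein Fano toric varieties with at most terminal singularities, which is exactly what you spell out.
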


We now want to relate some of our superpotential polytopes to the root polytope $\Root(Q_\lambda)$.
The following statement is immediate from the definitions, cf \cref{def:super2} and \cref{rem:variablechange}.
\begin{lemma}\label{l:easyreflexive}
When $r_1=\dots=r_d=1$ and $r'_1=\dots=r'_{n-1}=1$, the resulting 
superpotential polytope (in vertex coordinates)
	$\rGamma_{\rect}^{\lambda}(\mathbf{1},\mathbf{1})$ 
	is polar dual to the root polytope $\Root(Q_\lambda)$.
It follows that
$\Gamma_{\rect}^{\lambda}(\mathbf{1},\mathbf{1})$  is reflexive and that the inequalities for 
$\Gamma_{\rect}^{\lambda}(\mathbf{1},\mathbf{1})$ listed in \cref{l:rectangles}
are precisely the facet inequalities.
\end{lemma}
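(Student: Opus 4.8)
The plan is to reduce Lemma~\ref{l:easyreflexive} to a direct comparison between the inequality description of $\rGamma_{\rect}^{\lambda}(\mathbf 1,\mathbf 1)$ (from Definition~\ref{def:super2} specialized to all $r_\ell=1$ and all $r'_i=1$) and the defining inequalities of the polar dual $\Root(Q_\lambda)^*$. First I would recall that by Remark~\ref{r:rootNewton} the Newton polytope $\NP(\rW_{\rect}^{\lambda})$ equals $\Root(Q_\lambda)$, which is the convex hull of the vectors $u_a$ over arrows $a$ of the starred quiver $Q_\lambda$: namely $u_a = e_j - e_i$ for an arrow $v_i\to v_j$ between normal vertices, $u_a = e_j$ for a starred-source arrow $\star\to v_j$, and $u_a = -e_i$ for a starred-sink arrow $v_i\to\star$ (Definition~\ref{def:Root}). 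By Proposition~\ref{c:reflexiveterminal}, $\Root(Q_\lambda)$ is reflexive and in particular contains $0$ in its interior and is full-dimensional, so the polar dual $\Root(Q_\lambda)^* = \{\, y \in \mathbf M_\R^\lambda : \langle y, u_a\rangle \ge -1 \text{ for all arrows } a\,\}$ is a well-defined lattice polytope.

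The core computation is then to write out these inequalities $\langle y, u_a\rangle \ge -1$ explicitly in the coordinates $f_{i\times j}$ dual to the $e_{i\times j}$, for each of the three arrow types, and check they match the inequalities in Definition~\ref{def:super2} with $\mathbf r = \mathbf 1$, $\mathbf r' = \mathbf 1$. Concretely: an arrow $v(i,j)\to v(i,j+1)$ gives $u_a = e_{i\times(j+1)} - e_{i\times j}$, so $\langle y, u_a\rangle\ge -1$ reads $f_{i\times(j+1)} - f_{i\times j} \ge -1$, i.e.\ $f_{i\times j} \le f_{i\times(j+1)} + 1$ — which is exactly the fourth family of inequalities in Definition~\ref{def:super2} with $r'_{n-k+j}=1$; similarly the vertical arrows $v(i,j)\to v(i+1,j)$ give $f_{i\times j}\le f_{(i+1)\times j}+1$, matching the third family with $r'_{n-k-i}=1$; the arrow $\star_0\to v(1,1)$ gives $\langle y, e_{1\times 1}\rangle\ge -1$, i.e.\ $0 \le f_{1\times 1}+1$, matching the second inequality with $r'_{n-k}=1$; and the arrow $v(i_\ell,\lambda_{i_\ell})\to \star_\ell$ gives $u_a = -e_{\mu}$ where $\mu$ is the $\ell$-th outer corner rectangle, so $\langle y, -e_\mu\rangle \ge -1$ reads $f_\mu \le 1$, matching the first family with $r_\ell = 1$. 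This shows $\rGamma_{\rect}^{\lambda}(\mathbf 1,\mathbf 1) = \Root(Q_\lambda)^*$ as polyhedra, and since $\Root(Q_\lambda)$ is reflexive its polar dual is an (integral, full-dimensional) reflexive polytope, giving that $\rGamma_{\rect}^{\lambda}(\mathbf 1,\mathbf 1)$ is reflexive; by Proposition~\ref{p:unimodular} the same holds for $\Gamma_{\rect}^{\lambda}(\mathbf 1,\mathbf 1)$ up to unimodular equivalence.

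For the last assertion — that the inequalities listed in Lemma~\ref{l:rectangles} for $\Gamma_{\rect}^{\lambda}(\mathbf 1,\mathbf 1)$ are \emph{precisely} the facet inequalities, with no redundancy — I would argue that each inequality $\langle y, u_a\rangle \ge -1$ of a polar dual $\Poly^*$ is a facet inequality exactly when $u_a$ is a vertex of $\Poly$ (a standard fact about polar duality of a polytope with $0$ in its interior: the facets of $\Poly^*$ are in bijection with the vertices of $\Poly$). So it suffices to observe that every point $u_a$, $a\in\Arr(Q_\lambda)$, is actually a vertex of $\Root(Q_\lambda)$. But this is part of the content of terminality in Proposition~\ref{c:reflexiveterminal} — a terminal reflexive polytope has only its vertices and $0$ as lattice points, and each $u_a$ is a nonzero lattice point of $\Root(Q_\lambda)$, hence a vertex; one should also check the $u_a$ are pairwise distinct, which is immediate from the three-case formula since distinct arrows of $Q_\lambda$ (no multiple edges occur in the Hasse-diagram construction of Definition~\ref{d:QPmax}) yield distinct vectors. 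Translating back through Proposition~\ref{p:unimodular} from vertex coordinates to the $v_{i\times j}$ coordinates of Lemma~\ref{l:rectangles} preserves the facet structure since $F$ is unimodular, completing the proof.

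\textbf{Main obstacle.} The only genuinely delicate point is the bookkeeping in matching indices: the inequalities in Definition~\ref{def:super2} are indexed by the $r'_i$ with a shift (the vertical arrows in row $i$ from the bottom carry $r'_{n-k-i}$, the horizontal arrows in column $j$ carry $r'_{n-k+j}$, the arrow out of $\star_0$ carries $r'_{n-k}$), so I need to be careful that the arrows of the starred quiver $Q_\lambda$ from Definition~\ref{def:superquiver} are translated into exactly these conventions — but since we are specializing all $r'_i = 1$ and all $r_\ell = 1$, this index-matching collapses to the trivial observation that every constant is $1$, so even this obstacle largely evaporates. The substantive inputs — reflexivity and terminality of $\Root(Q_\lambda)$, and $\NP(\rW_{\rect}^{\lambda}) = \Root(Q_\lambda)$ — are already established in the excerpt (Proposition~\ref{c:reflexiveterminal}, Remark~\ref{r:rootNewton}), so Lemma~\ref{l:easyreflexive} is genuinely ``immediate from the definitions'' once these are in hand.
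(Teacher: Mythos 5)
Your proof is correct and is precisely the unraveling the paper has in mind: the paper declares the lemma ``immediate from the definitions'' (citing Definition~\ref{def:super2} and Remark~\ref{rem:variablechange}), and your argument is exactly that — matching the four arrow types of $Q_\lambda$ with the four families of inequalities in Definition~\ref{def:super2} at $(\mathbf r,\mathbf r')=(\mathbf 1,\mathbf 1)$, then invoking reflexivity and terminality from Proposition~\ref{c:reflexiveterminal} and the unimodularity of $F$ from Proposition~\ref{p:unimodular}. Your additional observation that terminality guarantees each $u_a$ is a vertex (and hence each inequality is a genuine facet) is a clean way to nail down the ``precisely the facet inequalities'' clause that the paper leaves implicit.
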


\begin{prop}\label{prop:refine}
The face fan $\FF_\lambda$
	of the Newton polytope $\NP(W^{\lambda}_\rect)$
	refines the normal fan $\CC_{\lambda}$
	of 
	$\Gamma^{\lambda}_{\rect}$, 
	and both fans have the same set of rays.
\end{prop}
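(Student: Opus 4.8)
The plan is to prove the statement by comparing the facet structure of $\Gamma^\lambda_\rect(\mathbf 1,\mathbf 1)$ — which by Lemma \ref{l:easyreflexive} is polar dual to $\Root(Q_\lambda) = \NP(W^\lambda_\rect)$ — against that of our preferred superpotential polytope $\Gamma^\lambda_\rect = \Gamma^\lambda_\rect(\mathbf 1,\mathbf 0)$. First I would recall that the rays of the face fan $\FF_\lambda$ are exactly the primitive ray generators through the vertices of $\NP(W^\lambda_\rect)$, and that by polar duality these correspond bijectively to the facets of $\Gamma^\lambda_\rect(\mathbf 1,\mathbf 1)$; by Lemma \ref{l:easyreflexive} (together with Lemma \ref{l:rectangles}) these facets are cut out precisely by the inequalities \eqref{eq:1gen}–\eqref{eq:4gen} with all $r_i = r'_i = 1$. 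On the other hand, the rays of the normal fan $\CC_\lambda$ of $\Gamma^\lambda_\rect = \Gamma^\lambda_\rect(\mathbf 1,\mathbf 0)$ are the inner normals to the facets of $\Gamma^\lambda_\rect$, and via Proposition \ref{p:unimodular2} this polytope is (integrally equivalent to) the order polytope $\OO(\lambda)$, whose facets are enumerated by Lemma \ref{lem:facets}: one for each minimal element, one for each maximal element, and one for each cover relation of $P(\lambda)$.

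The key observation is that the facet inequalities of both $\Gamma^\lambda_\rect(\mathbf 1,\mathbf 0)$ and $\Gamma^\lambda_\rect(\mathbf 1,\mathbf 1)$ have the \emph{same} linear parts — they differ only in the constant terms $r_i, r'_i$ (this is visible directly in Lemma \ref{l:rectangles}, or on the superpotential side from the fact that tropicalizing a Laurent monomial $p(a)$ yields a linear functional independent of the quantum/frozen scaling). Hence the inner normal directions appearing as facet normals of $\Gamma^\lambda_\rect(\mathbf 1,\mathbf 0)$ are exactly those appearing for $\Gamma^\lambda_\rect(\mathbf 1,\mathbf 1)$ — there is one such direction per summand $W_\ell$ or $W'_i$ of $W^\lambda$, i.e.\ one per arrow-class of $Q_\lambda$ up to the row/column collapsing of Lemma \ref{lem:Plucker}. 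Since the rays of $\FF_\lambda$ are precisely the primitive generators of these same directions (via the polar-dual identification above), and the rays of $\CC_\lambda$ are precisely the primitive generators of the facet normals of $\Gamma^\lambda_\rect$, the two fans share the same set of rays. Here I must be slightly careful: for the order polytope $\OO(\lambda)$ every one of the inequalities \eqref{eq:1gen}–\eqref{eq:4gen} with the chosen constants is actually facet-defining (none is redundant), which follows because $\OO(\lambda)$ is full-dimensional with the facet description of Lemma \ref{lem:facets} and the correspondence between those facets and our inequalities is a bijection; I would spell this matching out explicitly using Definition \ref{def:super2} and Definition \ref{def:posetlambda}.

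It then remains to show $\FF_\lambda$ \emph{refines} $\CC_\lambda$, i.e.\ every cone of $\FF_\lambda$ is contained in a cone of $\CC_\lambda$. Since both fans are complete (both $\NP(W^\lambda_\rect)$ and $\Gamma^\lambda_\rect$ are full-dimensional) and have the same rays, it suffices to show that each maximal cone of $\FF_\lambda$ — the cone over a facet of $\NP(W^\lambda_\rect)$, equivalently the normal cone at a vertex of $\Gamma^\lambda_\rect(\mathbf 1,\mathbf 1)$ — lies inside a single normal cone of $\Gamma^\lambda_\rect$; equivalently, each vertex $v$ of $\Gamma^\lambda_\rect(\mathbf 1,\mathbf 1)$ determines, through the set of facets of $\Gamma^\lambda_\rect(\mathbf 1,\mathbf 1)$ active at $v$, a face of $\Gamma^\lambda_\rect$ (the common intersection of the correspondingly-indexed facets of $\Gamma^\lambda_\rect$), and this common intersection is nonempty. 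I would prove nonemptiness by invoking the vertex description from Corollary \ref{c:VerticesOfO} / Proposition \ref{l:VerticesOfGamma}: vertices of $\Gamma^\lambda_\rect$ are the $\mathbf v_\mu$ for $\mu\subseteq\lambda$, indexed by the filters (complements of partitions) of $P(\lambda)$, and the vertices of $\Gamma^\lambda_\rect(\mathbf 1,\mathbf 1) = \OO(\lambda)$ (up to the translation and marking of Proposition \ref{prop:markedorder}) are likewise indexed by filters; the active-facet correspondence then sends the filter for a vertex $v$ of $\OO(\lambda)$ to a (possibly lower-dimensional) face of $\OO(\lambda)$ containing $v$, which is automatically nonempty. \textbf{The main obstacle} I anticipate is precisely this last refinement step — carefully tracking which facet inequalities of $\Gamma^\lambda_\rect(\mathbf 1,\mathbf 1)$ are tight at a given vertex and verifying that the correspondingly indexed facets of $\Gamma^\lambda_\rect$ have a common point — because the two polytopes, while sharing normal directions, are genuinely different polytopes (one reflexive, one the order polytope) and the bijection between their vertices is not induced by a single affine map. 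A clean way around this would be to use Proposition \ref{prop:markedorder} to realise $\Gamma^\lambda_\rect(\mathbf r,\mathbf{r'})$ as a translated marked order polytope for a \emph{family} of $(\mathbf r,\mathbf{r'})$ interpolating between $(\mathbf 1,\mathbf 0)$ and $(\mathbf 1,\mathbf 1)$, and observe that the normal fan of a marked order polytope only coarsens as the marking degenerates; this gives the refinement $\FF_\lambda = \CC(\Gamma^\lambda_\rect(\mathbf 1,\mathbf 1)) \succeq \CC_\lambda = \CC(\Gamma^\lambda_\rect(\mathbf 1,\mathbf 0))$ directly, with the equality of rays falling out of the ``same linear parts'' observation above.
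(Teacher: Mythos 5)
The ``same rays'' half of your argument is sound and agrees in spirit with the paper's proof: identify $\FF_\lambda$ with the normal fan of $\Gamma^\lambda_\rect(\mathbf 1,\mathbf 1)$ via polar duality and reflexivity (Lemma \ref{l:easyreflexive}), note that this polytope and $\Gamma^\lambda_\rect(\mathbf 1,\mathbf 0)$ share the same linear parts in their facet inequalities, and use Lemma \ref{lem:facets} together with Lemma \ref{l:easyreflexive} to confirm both inequality systems are irredundant, so both fans have the same rays. You also correctly flag the refinement step as the crux. But that is precisely where your proposal has a genuine gap, and neither of your two suggested fixes closes it.

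Your first fix (at a vertex $v$ of $\Gamma^\lambda_\rect(\mathbf 1,\mathbf 1)$, look at the correspondingly-indexed facets of $\Gamma^\lambda_\rect(\mathbf 1,\mathbf 0)$ and assert that their intersection is nonempty ``because it is a face containing $v$'') is circular: $v$ is a point of $\Gamma^\lambda_\rect(\mathbf 1,\mathbf 1)$, not of $\Gamma^\lambda_\rect(\mathbf 1,\mathbf 0)$, so the face in question need not contain anything, and nonemptiness is exactly what has to be proved. Sharing facet normals puts no a priori constraint on which subsets of normals are simultaneously active somewhere. Your second fix (``the normal fan of a marked order polytope only coarsens as the marking degenerates'') is not an observation but a restatement of the theorem: it asserts that the reflexive marking $\mathbf n(\mathbf 1,\mathbf 1)$ realizes the finest normal fan among all markings, which is nontrivial and can genuinely fail for arbitrary families of polytopes with fixed facet normals. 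The paper proves none of this directly; it invokes \cite[Theorem D]{RWReflexive}, a separate combinatorial result stating that for any finite ranked poset $P$ the face fan of $\Root(Q_{P_{\max}})$ refines the normal fan of $\OO(P)$ with the same rays, and then specializes to $P=P(\lambda)$ via the identifications $\NP(\overline{W}^{\lambda}_{\rect})=\Root(Q_\lambda)$ and $\overline{\Gamma}^{\lambda}_{\rect}=\OO(P(\lambda))$ together with the unimodular change of coordinates. So the refinement direction requires a real argument which your proposal hand-waves rather than supplies.
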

\begin{proof}
\cite[Theorem D]{RWReflexive} says that given any finite ranked poset $P$,
	the face fan of the root polytope $\Root(Q_{P_{\max}})$
	of the starred quiver  $Q_{P_{\max}}$
associated to $P$ refines the (inner) normal fan of the 
	order polytope $\OO(P)$, and the rays of the two fans coincide.\footnote{Note that 
	the fan we refer to as $\overline{\FF}_{\lambda}$ here is denoted by
	$\FF_{\lambda}$ in \cite{RWReflexive}.}
	In our setting, 
$\NP(\rW_{\rect}^{\lambda})$ is exactly the root polytope
	$\Root(Q_\lambda)$ 
	associated to the starred quiver of the 
	poset $P(\lambda)$; and the superpotential polytope
	$\rGamma^{\lambda}_{\rect}$ (in vertex 
	coordinates) coincides with the order polytope
	$\OO(P(\lambda))$. 
Therefore the face fan $\overline{\FF_{\lambda}}$  of the Newton  polytope  $\NP(\rW_{\rect}^{\lambda})	=\Root(Q_\lambda)$  refines the normal fan $\overline{\CC_{\lambda}}$ of the superpotential polytope	$\rGamma^{\lambda}_{\rect}$ (cf \cref{def:toricvarieties2}) and both fans have the same set of rays. The result for $\FF_\lambda$ now follows by using the unimodular change of variables as in \cref{rem:variablechange}.
\end{proof}

We can now interpret \cref{prop:refine} geometrically and use \cite[Theorem E]{RWReflexive} to obtain a toric  desingularisation of $Y(\CC_\lambda)$.

\begin{corollary}\label{cor:small}
The Gorenstein toric  Fano variety $Y(\FF_\lambda)$ 
is a small partial desingularization of the toric variety $Y(\CC_\lambda)$. 
Moreover 
there exists a  
small toric desingularisation 
$$Y(\widehat{\FF}_{\lambda}) \to 
	Y(\FF_{\lambda}) \to Y(\CC_{\lambda}).$$
	of $Y(\CC_{\lambda})$ via $Y(\FF_{\lambda})$.
\end{corollary}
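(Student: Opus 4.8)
The plan is to deduce both statements from the corresponding results about quivers and posets in \cite{RWReflexive}, transported through the unimodular change of coordinates of \cref{rem:variablechange} and \cref{prop:refine}. First I would invoke \cref{prop:refine}: the face fan $\FF_\lambda$ of $\NP(W^\lambda_\rect)$ refines the normal fan $\CC_\lambda$ of $\Gamma^\lambda_\rect = \Delta^\lambda_\rect$, and the two fans share the same set of rays. A refinement of fans corresponds to a proper birational toric morphism $Y(\FF_\lambda)\to Y(\CC_\lambda)$; since the two fans have the same rays, no new torus-invariant divisors are introduced, so this morphism is \emph{small} (its exceptional locus has codimension at least $2$). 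Combined with \cref{thm:Gorenstein}, which says $Y(\FF_\lambda)$ is Gorenstein Fano with at most terminal singularities, this gives the first assertion: $Y(\FF_\lambda)$ is a small partial desingularization of $Y(\CC_\lambda)$.

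Next I would handle the further desingularization $Y(\widehat\FF_\lambda)\to Y(\FF_\lambda)$. Here I would appeal directly to \cite[Theorem E]{RWReflexive}, which (in the notation of that paper) produces, for the root polytope $\Root(Q_{P_{\max}})$ attached to a finite ranked poset $P$, a unimodular — hence smooth — refinement $\widehat\FF$ of the face fan $\FF$, with the refinement being the identity on the rays already present, so that $Y(\widehat\FF)\to Y(\FF)$ is again small (crepant, in fact, since $\FF$ is the face fan of a reflexive polytope and the refinement adds no rays). Via \cref{r:rootNewton} and \cref{notation:Q}, $\NP(\rW^\lambda_\rect) = \Root(Q_\lambda) = \Root(Q_{P(\lambda)_{\max}})$, so \cite[Theorem E]{RWReflexive} applies with $P = P(\lambda)$. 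Pulling this back through the unimodular change of variables of \cref{rem:variablechange} gives a fan $\widehat\FF_\lambda$ in $\mathbf N^\lambda_\R$ refining $\FF_\lambda$, all of whose maximal cones are smooth, with $Y(\widehat\FF_\lambda)\to Y(\FF_\lambda)$ small. Composing with the first morphism yields the chain
\[
Y(\widehat{\FF}_{\lambda}) \to Y(\FF_{\lambda}) \to Y(\CC_{\lambda}),
\]
and a composition of small morphisms is small, so $Y(\widehat\FF_\lambda)\to Y(\CC_\lambda)$ is the desired small toric desingularisation.

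The main obstacle — really the only nonroutine point — is making sure the "same set of rays" conclusions are used correctly to get \emph{smallness} rather than merely \emph{birationality}: one must check that passing from $\CC_\lambda$ to $\FF_\lambda$ and then to $\widehat\FF_\lambda$ never subdivides a ray, equivalently that every ray of the finer fan is already a ray of the coarser one. For the first step this is exactly the ray statement of \cref{prop:refine}; for the second it is the corresponding clause of \cite[Theorem E]{RWReflexive}. Granting those, smallness of each morphism follows from the standard toric dictionary (a toric morphism $Y(\Sigma')\to Y(\Sigma)$ induced by a refinement $\Sigma'$ of $\Sigma$ is small iff $\Sigma'$ and $\Sigma$ have the same $1$-skeleton), and smallness is preserved under composition. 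The only remaining care is bookkeeping: confirming that the unimodular identification of \cref{rem:variablechange} carries $\overline{\FF_\lambda}, \overline{\CC_\lambda}$ (and their smooth refinement) to $\FF_\lambda, \CC_\lambda$, which is immediate since a unimodular change of coordinates is an isomorphism of lattices and hence of toric varieties. I would therefore present the proof as: (1) cite \cref{prop:refine} and \cref{thm:Gorenstein} for the first statement; (2) cite \cite[Theorem E]{RWReflexive}, apply \cref{r:rootNewton}, and transport via \cref{rem:variablechange} for the smooth refinement; (3) observe that small morphisms compose.
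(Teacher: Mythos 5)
Your proof is correct and follows essentially the same route as the paper: \cref{prop:refine} (refinement with the same rays) gives the small partial resolution $Y(\FF_\lambda)\to Y(\CC_\lambda)$, \cite[Theorem E]{RWReflexive} supplies the small crepant resolution $Y(\widehat{\overline\FF}_\lambda)\to Y(\overline\FF_\lambda)$ which is transported through the unimodular coordinate change of \cref{rem:variablechange}, and composing small morphisms yields the desired desingularisation. The only cosmetic difference is that you also cite \cref{thm:Gorenstein} for the ``Gorenstein Fano'' adjective in the statement, which the paper takes as already established.
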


\begin{proof}
By \cref{prop:refine}, we have a partial desingularization 
$Y(\FF_{\lambda}) \to Y(\CC_{\lambda})$ because the first fan refines the second, and it is small because the two fans have the same rays.
Then 
\cite[Theorem E]{RWReflexive} says that we have a 
small crepant toric desingularization 
$Y(\widehat{\overline{\FF}}_{\lambda}) \to 
Y(\overline{\FF}_{\lambda})$ of $Y(\overline{\FF}_{\lambda})$. 
Via our unimodular change of variables, this 
gives a small crepant toric desingularization 
$Y(\widehat{{\FF}}_{\lambda}) \to 
Y({\FF}_{\lambda})$. The composition is a small toric desingularisation of $Y(\CC_\lambda)$.
\end{proof}

See \cref{fig:degeneration} for a summary of  the relationships between the various varieties
we have been discussing.

\begin{remark} We mention one further perspective on the polytope $\overline\Gamma^{\lambda}(\mathbf 1,\mathbf 1)$ arising from  \cref{l:easyreflexive}. Namely, we may construct a quiver $Q_{\lambda,\star}$ by identifying all of the starred vertices in $Q_\lambda$. Thus $Q_{\lambda,\star}$ is a strongly connected starred quiver with a single starred vertex. We may again view $Q_{\lambda,\star}$ as embedded in the plane and we note that  $\Root(Q_{\lambda,\star})=\Root(Q_{\lambda})$. By taking the planar dual of $Q_{\lambda,\star}$ we construct a planar acyclic quiver that we denote $Q^\vee_{\lambda,\star}$. From \cite[Theorem~3.4]{RWReflexive} we then obtain a direct description of $\overline\Gamma^{\lambda}(\mathbf 1,\mathbf 1)$ as a \textit{flow polytope} for $Q_{\lambda,\star}^\vee$, where the `weight' is chosen in a canonical way, see \cite[Definition~3.1]{RWReflexive}. Moreover, the variety $Y(\FF_\lambda)$ can thereby be realised as the Fano \textit{toric quiver moduli space} for the quiver $Q^\vee_{\lambda,\star}$ (with dimension vector $(1,1,\dotsc,1)$). See also \cite[Remark 5.11]{RWReflexive}. 

This approach was taken in the context of describing mirrors of Fano quiver varieties in \cite{Kalashnikov} and for Grassmannians themselves~\cite{CDK}. There the dual quiver is called the \emph{ladder quiver}, inspired by the ladder diagram from \cite{BC-FKvS},
		see also \cite{AltmannvanStraten}.
\end{remark}

\subsection{The Cartier divisors and Picard group of 
$Y(\FF_\lambda)$}\label{sec:Cartier}

In this section we determine the group of torus-invariant Cartier divisors and the Picard group of  
$Y(\FF_\lambda)$
using 
results from \cite{RWReflexive}.

\begin{defn} \label{d:equivalence}
Let $P$ be a finite, ranked poset, and let $P_{\max}$ be its `maximal' extension 
as in \cref{d:QPmax}, 
with new maximal elements $\star_1,\dotsc,\star_d$ and minimal element $\star_0$. 
Let $Q_{P_{\max}}=(\V=\V_\bullet\sqcup\V_\star,\Arr)$ be its associated starred quiver, with 
	$\V_\star=\{\star_0,\star_1,\dotsc,\star_d\}$. 
We define an equivalence relation $\sim$ on $\V_\star$ by letting $\star_i\sim \star_j$  if and only if there exists a $\Z$-labeling  $M:\Arr\to\Z$ with the following properties.
\begin{enumerate}
\item The labels $M(a)$ all lie in $\Z_{\ge -1}$. 
\item The sum of labels along any oriented path from $\star_0$ to 
	any $\star_\ell \in \V_\star$ is equal to $0$. We call a labeling satisfying this condition a \textit{$0$-sum arrow labeling}.
\item 
	The vertices $\star_i$ and $\star_j$ lie in the same connected component of 
		the graph on $\V$ obtained from $Q_{P_{\max}}$ by forgetting the orientation of the arrows and removing all of the edges with labels in $\Z_{\ge 0}$.
\end{enumerate}
 See the left of \cref{f:facetlabeling} for an example.  
\end{defn}

\begin{remark}\label{r:facetlabeling}  A labeling of arrows satisfying   (1) and (2) from \cref{d:equivalence} is called a \textit{face arrow-labeling} in \cite{RWReflexive}, because such arrow-labelings 
	correspond to faces of $\Root(Q)$. Namely, the face associated to such a labeling is the convex hull of the vertices of $\Root(Q)$ corresponding to arrows labeled by $-1$. Moreover, if the labeling is maximal in the sense that the set of arrows labeled $-1$ is maximal by inclusion among face labelings, then the labeling corresponds to a facet of $\Root(Q)$, and is called a \textit{facet arrow-labeling}. The `connected components' associated to a facet labeling as in  \cref{d:equivalence}.(3) are also called \emph{facet components}.
\end{remark}

We have the following lemma.
\begin{lemma}[{\cite[Lemma 5.8]{RWReflexive}}]\label{l:canonicalextension} Suppose $P$ is a finite, ranked poset with $P_{\max}=P\sqcup\{\star_0,\star_1,\dotsc,\star_d\}$, as above. If two elements $\star_i$ and $\star_j$ are equivalent under the equivalence relation from \cref{d:equivalence}, then they have the same rank. Therefore, we have a well-defined ranked poset structure on the quotient $\widetilde{P}:=P_{\max}/\sim$, and $P$ is a subposet of $\widetilde{P}$. We call this new poset $\widetilde{P}$ the  \emph{canonical extension} of $P$. 
\end{lemma}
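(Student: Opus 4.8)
The plan is to build, from a witnessing arrow‑labeling, a single integer ``height'' function on the vertices of the Hasse quiver $Q_{P_{\max}}$ which refines both the rank and the labeling, and which is constant along every arrow labeled $-1$. Equivalent starred vertices are then forced to have the same height, and since the contribution of the labeling to the height vanishes on all starred vertices, this is exactly the statement that equivalent starred vertices have the same rank.

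First I would fix $\star_i\sim\star_j$ together with a $\Z$-labeling $M\colon\Arr\to\Z$ witnessing the equivalence, so $M$ satisfies (1)--(3) of \cref{d:equivalence}. Since $Q_{P_{\max}}$ is the Hasse quiver of the ranked poset $P_{\max}$, every arrow $a\colon u\to v$ satisfies $\rk(v)=\rk(u)+1$. The first technical step is to check that the ``$M$-potential'' $\phi$, with $\phi(x)$ the sum of the labels $M(a)$ along any oriented path in $Q_{P_{\max}}$ from $\star_0$ to $x$, is well defined. Given two such paths, I would extend both by a common oriented tail from $x$ up to a maximal element $\star_\ell$ -- such a tail exists because $P_{\max}$ has the unique minimal element $\star_0$ and every element of $P_{\max}$ lies weakly below some $\star_\ell$ -- and then condition (2), applied to the two resulting paths (both ending at $\star_\ell$), forces the two sums to agree. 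Condition (2) also gives $\phi(\star_0)=0$ and $\phi(\star_\ell)=0$ for all $\ell$, and by construction $\phi(v)-\phi(u)=M(a)$ for every arrow $a\colon u\to v$.

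Next I would put $\psi(x):=\phi(x)+\rk(x)$, so that $\psi(v)-\psi(u)=M(a)+1\ge 0$ for every arrow $a\colon u\to v$ by condition (1), with equality exactly when $M(a)=-1$. Condition (3) gives an undirected path $\star_i=w_0,w_1,\dots,w_m=\star_j$ in which each $\{w_{t-1},w_t\}$ is an arrow labeled $-1$; applying the previous sentence to each edge yields $\psi(w_{t-1})=\psi(w_t)$ for all $t$, hence $\psi(\star_i)=\psi(\star_j)$, i.e.\ $\rk(\star_i)=\rk(\star_j)$ since $\phi$ vanishes on starred vertices. This proves the rank assertion. For the remaining claims I would note that $\star_0$ is never equivalent to any $\star_\ell$ with $\ell\ge1$ (ranks $0$ and $n_\ell+1$ differ), so every nontrivial $\sim$-class consists only of maximal elements of $P_{\max}$; consequently the relation $[x]\le[y]\iff x'\le y'$ in $P_{\max}$ for some representatives is a partial order on $\widetilde P=P_{\max}/{\sim}$ (antisymmetry being the only point to verify, via the fact that maximal elements have nothing strictly above), every cover in $\widetilde P$ is induced by a cover in $P_{\max}$, and therefore $\widetilde{\rk}([x]):=\rk(x)$ -- well defined by the rank assertion -- is a rank function on $\widetilde P$ all of whose minimal elements (there is only $[\star_0]$) share a rank. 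Finally $P\hookrightarrow P_{\max}\twoheadrightarrow\widetilde P$ is injective and an order embedding because $P\cap\V_\star=\emptyset$ and $P$ is an induced subposet of $P_{\max}$, exhibiting $P$ as a subposet of $\widetilde P$.

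The step I expect to be the main obstacle is the well-definedness of $\phi$: the subtlety is that condition (2) only constrains paths that \emph{end at a starred vertex}, so one must carefully reduce the case of an arbitrary target vertex $x$ to that situation, and separately treat the case where $x$ itself is maximal. The poset-theoretic bookkeeping in the last paragraph (antisymmetry of the quotient order, and the description of covers in $\widetilde P$) is elementary but also needs to be carried out case by case. Since the statement is \cite[Lemma 5.8]{RWReflexive}, one could alternatively just cite that reference; the argument above is the way I would make it self-contained.
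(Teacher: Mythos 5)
The paper does not prove this lemma; it only cites \cite[Lemma 5.8]{RWReflexive}, so there is no in-paper argument to compare against. Your self-contained proof is correct and the $\psi=\phi+\rk$ potential is exactly the right device. The well-definedness of $\phi$ does go through as you sketch: every element of $P_{\max}$ lies on a saturated chain from $\star_0$ up to some $\star_\ell$ (because $\star_0$ is the unique minimum and the $\star_\ell$ are the maxima), so two oriented paths $p_1,p_2$ from $\star_0$ to $x$ can be given a common tail $t$ ending at a starred vertex; condition (2) applied to $p_1\cdot t$ and $p_2\cdot t$ then forces $\mathrm{sum}(p_1)=\mathrm{sum}(p_2)$. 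From there, $\psi(v)-\psi(u)=M(a)+1\ge 0$ along any arrow $a\colon u\to v$ with equality iff $M(a)=-1$, and condition (3) supplies an undirected $(-1)$-labeled path from $\star_i$ to $\star_j$, along which $\psi$ is constant; since $\phi$ vanishes on $\V_\star$ by (2), this gives $\rk(\star_i)=\rk(\star_j)$.

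One small inaccuracy in your bookkeeping paragraph: for a quotient order, antisymmetry is \emph{not} the only axiom that needs checking -- transitivity can also fail. But the same observation you use for antisymmetry (nontrivial $\sim$-classes consist entirely of maximal elements of $P_{\max}$, since $\star_0$ has rank $0$ and each $\star_\ell$ with $\ell\ge 1$ has rank $\ge 2$) also handles transitivity: if $[x]\le[y]$ via $x'\le y'$ and $[y]\le[z]$ via $y''\le z''$ with $y'\ne y''$, then $y',y''$ are both maximal, so $y''\le z''$ forces $y''=z''$ and hence $[y]=[z]$. With that added, the argument is complete.
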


\begin{defn}\label{d:canonicalquiver} For a finite, ranked poset $P$ with its canonical extension $\widetilde{P}$, as defined in \cref{l:canonicalextension}, the starred quiver $Q_{\widetilde{P}}=(\widetilde{\V}=\widetilde{\V}_\bullet\sqcup\widetilde{\V}_\star,\Arr)$ associated to $\widetilde{P}$ is called the \textit{canonical quiver} for $P$. If $\{\star_{k}\mid k\in K\}$ is an equivalence class in $\V_\star$ we write $\star_K$ for the associated starred vertex in $Q_{\widetilde{P}}$.

In the case of $P=P(\lambda)$ we have 
	$Q_\lambda:=
	Q_{P(\lambda)_{\max}}$,
	and we write $Q_{\widetilde{\lambda}}$ for the canonical quiver $Q_{\widetilde{P}(\lambda)}$. 
\end{defn}
	For $\lambda=(6,6,6,4,2,2,1,1)$,  
	the quiver $Q_\lambda$
	and the quiver $Q_{\widetilde{\lambda}}$ are shown at the left and 
	right of 
\cref{f:facetlabeling}.

\begin{remark}\label{r:arrowsandroots}  The quiver $Q_{\widetilde{P}}$ has the same $\bullet$-vertices as $Q_{P_{\max}}$, namely $\widetilde{\V}_\bullet=P(\lambda)$. 
The difference between the two quivers $Q_{\widetilde{P}}$ and  $Q_{P_{\max}}$ is confined to the sink $\star$-vertices, where certain sink $\star$-vertices of  $Q_{P_{\max}}$ are identified in $Q_{\widetilde{P}}$. It follows that the arrow sets of $Q_{\widetilde{P}}$ and  $Q_{P_{\max}}$ are in 
	natural bijection. This fact also implies that $\Root(Q_{\widetilde{P}})=\Root(Q_{P_{\max}})$. 
\end{remark} 

\begin{remark}\label{r:Weil} Since $\Root(Q_{\lambda})$ is reflexive and terminal by \cref{c:reflexiveterminal}, its vertices and therefore the rays of its face fan $\overline{\FF_\lambda}$ are in 
natural bijection with the arrows of $Q_{\lambda}$. By \cref{r:arrowsandroots} these are also in bijection with the the arrows of $Q_{\widetilde{\lambda}}$. We now focus on $Q_{\widetilde \lambda}$, which is the more useful quiver for describing the toric \textit{Cartier} divisors. We also replace  $Y(\overline{\FF_\lambda})$ by $Y(\FF_\lambda)$ again, via \cref{rem:variablechange}.
\end{remark}
\begin{defn}\label{d:Dtilde}
Let us use the notation $\widetilde D_a$ for the irreducible toric Weil divisor in  $Y(\FF_\lambda)$ associated to an arrow $a$ in $Q_{\widetilde\lambda}$, see \cref{r:Weil}. We obtain a Weil divisor $\sum_{a\in \Arr(Q_{\widetilde{\lambda}})} c_a \widetilde D_a$ in $Y(\FF_\lambda)$ 
 for every arrow labeling $\mathbf c\in\Z^{\Arr(Q_{\widetilde \lambda})}$. 
Given $(\mathbf r,\mathbf {r'})\in\Z^{d}\times\Z^{n-1}$ recall the arrow labeling for $Q_\lambda$  introduced in \cref{def:al}. This arrow-labeling also gives us an arrow-labeling $\mathbf c(\mathbf r,\mathbf r')$ of $Q_{\widetilde\lambda}$, as illustrated in \cref{f:Dtilde}. We consider the associated toric Weil divisor 
\begin{equation}\label{e:toricWeil}
\widetilde D_{(\mathbf r,\mathbf r')}:=
\sum_{a\in \Arr(Q_{\widetilde{\lambda}})} c_a(\mathbf r,\mathbf r') \widetilde D_a,
\end{equation}
as an  analogue in 	$Y(\FF_\lambda)$ of the divisor $D_{(\mathbf r,\mathbf r')}$ in the Schubert variety $X_\lambda$.
\end{defn}

\begin{figure}
	\[\adjustbox{scale=.7,center}{\begin{tikzcd}
	\star_0 & {} \\
	& \bullet & \bullet & \bullet & \bullet & \bullet & \bullet \\
	& \bullet & \bullet & \bullet & \bullet & \bullet & \bullet \\
	& \bullet & \bullet & \bullet & \bullet & \bullet & \bullet & \star_1 \\
	& \bullet & \bullet & \bullet & \bullet & \star_2 \\
	& \bullet & \bullet \\
	& \bullet & \bullet & \star_3 \\
	& \bullet \\
	& \bullet & \star_4
	\arrow["7", from=1-1, to=2-2]
	\arrow["{-1}", from=2-2, to=2-3]
	\arrow["{-1}", from=2-2, to=3-2]
	\arrow["{-1}", from=2-3, to=2-4]
	\arrow["{-1}", from=2-3, to=3-3]
	\arrow["{-1}", from=2-4, to=2-5]
	\arrow["{-1}", from=2-4, to=3-4]
	\arrow["{-1}", from=2-5, to=2-6]
	\arrow["{-1}", from=2-5, to=3-5]
	\arrow["{-1}", from=2-6, to=2-7]
	\arrow["{-1}", from=2-6, to=3-6]
	\arrow["{-1}", from=2-7, to=3-7]
	\arrow["{-1}", from=3-2, to=3-3]
	\arrow["{-1}", from=3-2, to=4-2]
	\arrow["{-1}", from=3-3, to=3-4]
	\arrow["{-1}", from=3-3, to=4-3]
	\arrow["{-1}", from=3-4, to=3-5]
	\arrow["{-1}", from=3-4, to=4-4]
	\arrow["{-1}", from=3-5, to=3-6]
	\arrow["{-1}", from=3-5, to=4-5]
	\arrow["{-1}", from=3-6, to=3-7]
	\arrow["{-1}", from=3-6, to=4-6]
	\arrow["{-1}", from=3-7, to=4-7]
	\arrow["{-1}", from=4-2, to=4-3]
	\arrow["{-1}", from=4-2, to=5-2]
	\arrow["{-1}", from=4-3, to=4-4]
	\arrow["{-1}", from=4-3, to=5-3]
	\arrow["{-1}", from=4-4, to=4-5]
	\arrow["{-1}", from=4-4, to=5-4]
	\arrow["{-1}", from=4-5, to=4-6]
	\arrow["{-1}", from=4-5, to=5-5]
	\arrow["{-1}", from=4-6, to=4-7]
	\arrow["0", from=4-7, to=4-8]
	\arrow["{-1}", from=5-2, to=5-3]
	\arrow["{-1}", from=5-2, to=6-2]
	\arrow["{-1}", from=5-3, to=5-4]
	\arrow["{-1}", from=5-3, to=6-3]
	\arrow["{-1}", from=5-4, to=5-5]
	\arrow["{-1}", from=5-5, to=5-6]
	\arrow["{-1}", from=6-2, to=6-3]
	\arrow["{-1}", from=6-2, to=7-2]
	\arrow["{-1}", from=6-3, to=7-3]
	\arrow["{-1}", from=7-2, to=7-3]
	\arrow["{-1}", from=7-2, to=8-2]
	\arrow["{-1}", from=7-3, to=7-4]
	\arrow["{-1}", from=8-2, to=9-2]
	\arrow["0", from=9-2, to=9-3]
	\end{tikzcd} \hspace{.3cm}
	\begin{tikzcd}
	{\star_0} \\
	& \bullet & \bullet & \bullet & \bullet & \bullet & \bullet \\
	& \bullet & \bullet & \bullet & \bullet & \bullet & \bullet \\
	& \bullet & \bullet & \bullet & \bullet & \bullet & \bullet & {\star_1} \\
	& \bullet & \bullet & \bullet & \bullet \\
	& \bullet & \bullet \\
	& \bullet & \bullet && {\ \ \star_{2,3}} \\
	& \bullet \\
	& \bullet & {\star_4}
	\arrow[from=1-1, to=2-2]
	\arrow[from=2-2, to=2-3]
	\arrow[from=2-2, to=3-2]
	\arrow[from=2-3, to=2-4]
	\arrow[from=2-3, to=3-3]
	\arrow[from=2-4, to=2-5]
	\arrow[from=2-4, to=3-4]
	\arrow[from=2-5, to=2-6]
	\arrow[from=2-5, to=3-5]
	\arrow[from=2-6, to=2-7]
	\arrow[from=2-6, to=3-6]
	\arrow[from=2-7, to=3-7]
	\arrow[from=3-2, to=3-3]
	\arrow[from=3-2, to=4-2]
	\arrow[from=3-3, to=3-4]
	\arrow[from=3-3, to=4-3]
	\arrow[from=3-4, to=3-5]
	\arrow[from=3-4, to=4-4]
	\arrow[from=3-5, to=3-6]
	\arrow[from=3-5, to=4-5]
	\arrow[from=3-6, to=3-7]
	\arrow[from=3-6, to=4-6]
	\arrow[from=3-7, to=4-7]
	\arrow[from=4-2, to=4-3]
	\arrow[from=4-2, to=5-2]
	\arrow[from=4-3, to=4-4]
	\arrow[from=4-3, to=5-3]
	\arrow[from=4-4, to=4-5]
	\arrow[from=4-4, to=5-4]
	\arrow[from=4-5, to=4-6]
	\arrow[from=4-5, to=5-5]
	\arrow[from=4-6, to=4-7]
	\arrow[from=4-7, to=4-8]
	\arrow[from=5-2, to=5-3]
	\arrow[from=5-2, to=6-2]
	\arrow[from=5-3, to=5-4]
	\arrow[from=5-3, to=6-3]
	\arrow[from=5-4, to=5-5]
	\arrow[from=5-5, to=7-5]
	\arrow[from=6-2, to=6-3]
	\arrow[from=6-2, to=7-2]
	\arrow[from=6-3, to=7-3]
	\arrow[from=7-2, to=7-3]
	\arrow[from=7-2, to=8-2]
	\arrow[from=7-3, to=7-5]
	\arrow[from=8-2, to=9-2]
	\arrow[from=9-2, to=9-3]
	\end{tikzcd}}\]
\caption{At left: a facet arrow-labeling of $Q_\lambda$ showing the equivalence of $\star_2$ and $\star_3$ as per \cref{d:equivalence}.
At right: 
the canonical quiver $Q_{\widetilde{\lambda}}$.  Here $\lambda=(6,6,6,4,2,2,1,1)$. 
\label{f:facetlabeling}}
\end{figure}
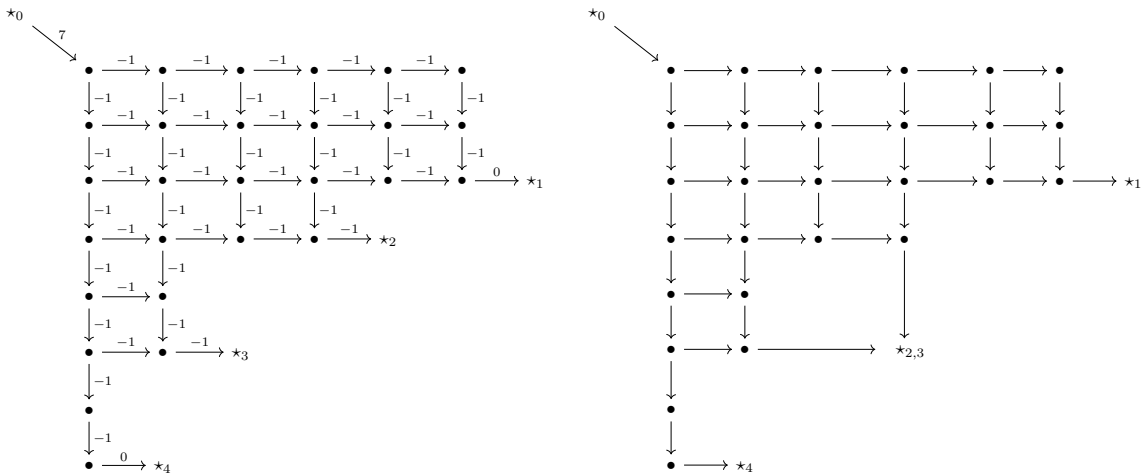

\begin{remark}\label{rem:510}
Note that in terms of the arrow labeling of $Q_{\lambda}$ from \cref{def:al}, the condition from \cref{c:Drr} for the divisor $D_{(\mathbf r,\mathbf r')}$ in the Schubert variety $X_\lambda$ to be Cartier can be reinterpreted as follows. Namely, $D_{(\mathbf r,\mathbf r')}$ is Cartier in $X_\lambda$ if and only if for all oriented paths from $\star_0$ to any $\star_k$, the sum of the arrow labels is the same, independently of $k$. Note that this sum is $R$ if $D_{(\mathbf r,\mathbf r')}$ has degree $R$. 
\end{remark}
The following result is an application of \cite[Theorem~5.18]{RWReflexive}.

\begin{thm}\label{t:toricCartier}
Let $\lambda$ be a partition and $Y(\FF_\lambda)$ the associated toric variety
from \cref{def:toricvarieties}.
Consider the canonical quiver $Q_{\widetilde{\lambda}}$ of $P(\lambda)$, 
from \cref{d:canonicalquiver},
and let $\mathbf c\in\Z^{\Arr(Q_{\widetilde{\lambda}})}$ be an arrow labeling for $Q_{\widetilde{\lambda}}$. Call $\mathbf c$ an \emph{independent-sum  arrow labeling} for $Q_{\widetilde\lambda}$ if  the sums 
\[
s_{\pi_K}(\mathbf c):=\sum_{a\in\pi_K} c_a,
\]
associated to oriented paths $\pi_K$ in $Q_{\widetilde\lambda}$ from $\star_0$ to $\star_K$, depend only on the endpoint $\star_K$ of the path.   The toric Weil divisor $\sum_{a\in \Arr(Q_{\widetilde{\lambda}})} c_a \widetilde D_a$ in $Y(\FF_\lambda)$ associated to  $\mathbf c$ is Cartier if and only if $\mathbf c$ is an independent-sum arrow labeling for $Q_{\widetilde{\lambda}}$.
	
	In particular, if the divisor $D_{(\mathbf r,\mathbf r')}$ is Cartier in $X_\lambda$, then $\widetilde D_{(\mathbf r,\mathbf r')}$  from \eqref{e:toricWeil} is Cartier 
	in 	$Y(\FF_{\lambda})$.
\end{thm}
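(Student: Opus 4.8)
The theorem has two parts: first, a characterization of torus-invariant Cartier divisors on $Y(\FF_\lambda)$ in terms of independent-sum arrow labelings of the canonical quiver $Q_{\widetilde\lambda}$; and second, the deduction that $\widetilde D_{(\mathbf r,\mathbf r')}$ is Cartier on $Y(\FF_\lambda)$ whenever $D_{(\mathbf r,\mathbf r')}$ is Cartier on $X_\lambda$. For the first part the plan is to invoke the general result \cite[Theorem~5.18]{RWReflexive}, which we may assume, applied to the finite ranked poset $P=P(\lambda)$. The work here is essentially a translation: one must identify the toric variety $Y(\overline{\FF}_\lambda)$ associated to the face fan of $\Root(Q_\lambda)$ with the one treated in \cite{RWReflexive}, match up the indexing of rays of the face fan with the arrows of $Q_{\widetilde\lambda}$ via \cref{r:Weil} and \cref{r:arrowsandroots}, and then transfer the result to $Y(\FF_\lambda)$ using the unimodular change of coordinates of \cref{rem:variablechange}. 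I would state explicitly at the start that, because $\Root(Q_{\widetilde\lambda})=\Root(Q_{P_{\max}})$ (\cref{r:arrowsandroots}), the two toric varieties coincide and the arrow sets are in natural bijection, so the statement of \cite[Theorem~5.18]{RWReflexive} applies verbatim once we note that $P(\lambda)$ is a finite ranked poset whose minimal element $c_0$ has a single rank.

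\textbf{Second part: Cartier on $X_\lambda$ implies Cartier on $Y(\FF_\lambda)$.} The key point is to compare two "path-sum" conditions. On the Schubert side, \cref{rem:510} reinterprets the Cartier condition of \cref{c:Drr} for $D_{(\mathbf r,\mathbf r')}$: using the arrow labeling of $Q_\lambda$ from \cref{def:al}, $D_{(\mathbf r,\mathbf r')}$ is Cartier in $X_\lambda$ exactly when the sum of arrow labels along any oriented path in $Q_\lambda$ from $\star_0$ to a sink $\star_k$ depends only on $k$ (and equals the degree $R$). On the toric side, by the first part, $\widetilde D_{(\mathbf r,\mathbf r')}=\sum_a c_a(\mathbf r,\mathbf r')\widetilde D_a$ is Cartier in $Y(\FF_\lambda)$ exactly when the labeling $\mathbf c(\mathbf r,\mathbf r')$ is independent-sum for $Q_{\widetilde\lambda}$, i.e. the path sums $s_{\pi_K}(\mathbf c)$ depend only on the endpoint $\star_K$. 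So I would argue: if $D_{(\mathbf r,\mathbf r')}$ is Cartier in $X_\lambda$, then by \cref{rem:510} all path sums in $Q_\lambda$ from $\star_0$ to any sink are equal (to $R$). Now $Q_{\widetilde\lambda}$ is obtained from $Q_\lambda$ by merging some sink vertices $\star_k$ into classes $\star_K$ (\cref{d:canonicalquiver}), while keeping the same arrows and the same labels $c_a(\mathbf r,\mathbf r')$ (by the construction in \cref{d:Dtilde}, see \cref{f:Dtilde}). Any oriented path in $Q_{\widetilde\lambda}$ from $\star_0$ to $\star_K$ corresponds to an oriented path in $Q_\lambda$ from $\star_0$ to one of the $\star_k$ with $k\in K$; since all those path sums equal $R$, so does $s_{\pi_K}(\mathbf c(\mathbf r,\mathbf r'))$, independently of $K$. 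Hence $\mathbf c(\mathbf r,\mathbf r')$ is independent-sum and $\widetilde D_{(\mathbf r,\mathbf r')}$ is Cartier.

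\textbf{Anticipated main obstacle.} The genuinely delicate point is bookkeeping around the arrow labelings: I must check that the labeling $\mathbf c(\mathbf r,\mathbf r')$ on $Q_{\widetilde\lambda}$ really is the transfer of the labeling of $Q_\lambda$ from \cref{def:al} under the arrow bijection of \cref{r:arrowsandroots}, and that the correspondence between oriented paths $\pi_K$ in $Q_{\widetilde\lambda}$ and oriented paths in $Q_\lambda$ respects this labeling exactly (no arrow is created, destroyed, or relabeled when sinks are merged — only the targets of the arrows into the merged sinks change, and those arrows keep their labels). This is precisely what \cref{r:arrowsandroots} guarantees ("the difference between the two quivers is confined to the sink $\star$-vertices"), so I would make that citation the crux of the argument and spell out the path correspondence carefully. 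A secondary, purely expository obstacle is making sure the reader sees that $P(\lambda)$ satisfies the hypotheses of \cite[Theorem~5.18]{RWReflexive} — finite, ranked, connected Hasse diagram, all minimal elements of equal rank — which is immediate since $P(\lambda)$ has the unique minimal element $c_0$ (\cref{def:posetlambda}). I would close by remarking that the converse (Cartier on $Y(\FF_\lambda)$ for $\widetilde D_{(\mathbf r,\mathbf r')}$ iff Cartier on $X_\lambda$ for $D_{(\mathbf r,\mathbf r')}$) in fact also holds by the same path-sum comparison, but the stated theorem only needs the one direction.
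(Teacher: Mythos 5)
Your proof is correct and follows essentially the same route as the paper: invoke \cite[Theorem~5.18]{RWReflexive} for the first part (via $\Root(Q_{\widetilde\lambda})=\Root(Q_\lambda)$ and the arrow bijection of \cref{r:arrowsandroots}, then transfer to $Y(\FF_\lambda)$ by the unimodular change of coordinates), and for the second part observe that the Schubert Cartier condition of \cref{rem:510} forces all path sums to be equal to a single $R$, which is stronger than the independent-sum condition required for $Q_{\widetilde\lambda}$.

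One substantive error, though, is your closing remark that the converse also holds. It does not: the toric Cartier condition on $Y(\FF_\lambda)$ only requires the path sum to depend solely on the endpoint $\star_K$, and different $\star_K$'s may have different path sums; whereas the Schubert Cartier condition requires a single common value $R$ for \emph{all} sinks. In general the former is strictly weaker. This is visible in \cref{ex:66642211} with $\lambda=(6,6,6,4,2,2,1,1)$: there the toric Cartier condition for $\widetilde D_{(\mathbf r,\mathbf r')}$ reduces to the single relation $r'_{10}+r'_{11}+r_2 = r'_4+r'_3+r_3$ (path independence at the merged vertex $\star_{2,3}$), and imposes no relation between path sums to $\star_1$, $\star_{2,3}$, and $\star_4$. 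So there are arrow labelings making $\widetilde D_{(\mathbf r,\mathbf r')}$ Cartier on $Y(\FF_\lambda)$ but $D_{(\mathbf r,\mathbf r')}$ non-Cartier on $X_\lambda$. (This is precisely why $Y(\FF_\lambda)$ has Picard rank equal to the number of equivalence classes, typically $>1$, while $X_\lambda$ always has Picard rank $1$.) The rest of your proof is unaffected since the theorem only asserts the forward implication.
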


\begin{proof}
The path independence condition in $Q_{\widetilde\lambda}$ is equivalent to the Cartier condition for $Y(\overline\FF_\lambda)$, as proved in \cite[Theorem~5.18]{RWReflexive}, which translates directly to $Y(\FF_\lambda)$. 
For the second part of the theorem, 
	recall from \cref{rem:510} that 
a boundary divisor $D_{(\mathbf r,\mathbf {r'})}$ is Cartier in the Schubert variety $X_\lambda$
if and only if,
when we use the associated arrow labeling of 
$Q_\lambda$ (or equivalently of $Q_{\widetilde\lambda}$),
for all paths from a starred vertex to a starred vertex, the sum of the arrow labels 
is the same. 
	This implies that
 $\widetilde D_{(\mathbf r,\mathbf {r'})}$ is also Cartier in $Y(\FF_\lambda)$ by the first part.  
\end{proof}

\begin{example}\label{ex:66642211}
Let $\lambda=(6,6,6,4,2,2,1,1)$, so that $n=14$, $k=6$, and $d=4$.
Then $X_\lambda$ is a Schubert variety in $Gr_8(\C^{14})$. There are $d=4$ many Schubert divisors in $X_\lambda$ and $n-1=13$ remaining positroid divisors. Therefore we have boundary Weil divisors $D_{(\mathbf r,\mathbf {r'})}$ in $X_\lambda$ indexed by $4+13$ parameters. We consider the toric divisor $\widetilde D_{(\mathbf r,\mathbf {r'})}$ of $Y(\FF_{\lambda})$ from \cref{d:Dtilde},
	with arrow labeling  shown in \cref{f:Dtilde}.

We can now read off information about both $D_{(\mathbf r,\mathbf {r'})}$ and $\widetilde D_{(\mathbf r,\mathbf {r'})}$ from this arrow-labeling. 
\begin {itemize}
\item The divisor $D_{(\mathbf r,\mathbf {r'})}$ in the Schubert variety is Cartier of degree $R$ if and only if the sum of arrow labels along each path from $\star_0$ to a sink vertex 
	$\star_K$ is equal to $R$. 
	In particular, $D_{(\mathbf r,\mathbf {r'})}$ is Cartier if and only if the sum of labels along a path from $\star_0$ to $\star_K$ is independent of $K$ and the path taken.
\item  The toric divisor $\widetilde D_{(\mathbf r,\mathbf {r'})}$ is Cartier in $Y(\FF_\lambda)$, in this example, 
	if and only 
	if 
each	path from $\star_0$ to $\star_{2,3}$ has the same sum of arrow labels.
Note that the path independence is automatic for paths ending at $\star_1$, and for paths ending in $\star_4$. Also for general $\lambda$, we only obtain relations on $(\mathbf r,\mathbf r')$ whenever there a starred vertex $\star_K$ in $Q_{\widetilde\lambda}$ corresponding to a non-trivial equivalence class of $P(\lambda)_{\max}$.  The Cartier condition for $\widetilde D_{(\mathbf r,\mathbf {r'})}$ is therefore precisely  $r'_{10}+r'_{11}+r_2=r'_4+r'_3+r_3$.
\end{itemize}

\end{example}

We can now give a more direct description of the Cartier divisors as well as the Picard group of $Y(\FF_\lambda)$ in terms of the poset $\widetilde P(\lambda)$. 
\begin{defn} \label{d:tagging}
Let $P$ be a ranked poset with a unique minimal element $\star_0$.
We call a $\Z$-valued function $s: P\to \Z$
with $s(\star_0)=0$ a \emph{normalized tagging} of $P$. 
\end{defn}

\begin{remark}\label{r:tagging} A normalised tagging $s$ of $\widetilde P(\lambda)$ determines an independent-sum arrow labeling $\mathbf c$ of $Q_{\widetilde \lambda}$, by setting $c_a:=s(\vv')-s(\vv)$ 
if $a$ is an arrow from $\vv$ to $\vv'$. Thus a normalised tagging for $\widetilde P(\lambda)$  determines a Cartier divisor for $Y(\FF_\lambda)$, see \cref{t:toricCartier}. Conversely, a toric Cartier divisor $\sum_{a\in\Arr(Q_{\widetilde\lambda})}c_a\widetilde D_a$ comes from a unique normalised tagging that is defined by setting $s(\vv):=\sum_{a\in\pi}c_a$, where $\pi$ is any oriented path from $\star_0$ to $\vv$. This bijection between normalised taggings and toric Cartier divisors is a particular example of \cite[Remark 5.19]{RWReflexive}. Note that the rank function on $\widetilde P(\lambda)$, viewed as a normalised tagging, corresponds to the toric boundary divisor $\sum_{a\in\Arr(Q_{\widetilde \lambda})} D_a$.   \end{remark}

We will now describe the Picard group of $Y(\FF_\lambda)$. Let us write $\widetilde{P}(\lambda)=P(\lambda)\sqcup\{\star_0\}\sqcup\{\star_{K_1},\dotsc,\star_{K_b}\}$, where $\{\star_{K_1},\dotsc,\star_{K_b}\}$ is the set of maximal elements in $\widetilde {P}(\lambda)$. We may identify the $\Z$-lattice of normalized taggings of $\widetilde P(\lambda)$ with $\Z^{P(\lambda)\sqcup\{\star_{K_1},\dotsc,\star_{K_b} \}}$, and with the group of toric Cartier divisors $\operatorname{CDiv}_T(Y(\FF_\lambda))$, see \cref{r:tagging}. Let  $\mathbf M^\lambda_\Z\cong\Z^{P(\lambda)}$ be the character lattice of the torus that acts on $Y(\overline{\FF_\lambda})$, see 
\cref{def:toricvarieties2}, which agrees with the character group for the torus acting on $Y(\FF_\lambda)$ after the coordinate change from~\cref{rem:variablechange}.

\begin{cor}\label{c:toricPicard}  The Picard rank of $Y(\FF_\lambda)$ is equal to the number of maximal elements of $\widetilde P(\lambda)$. Namely, we have the following commutative diagram  of  short exact sequences
\[
\begin{array}{ccccccccc}
0&\longrightarrow &\mathbf M_\Z^\lambda&\longrightarrow &\operatorname{CDiv}_T(Y(\FF_\lambda))
&\longrightarrow &
\Pic(Y(\FF_\lambda))&\longrightarrow &0\\
\|&&\ \downarrow\cong &&\downarrow\cong &&\downarrow &&\|\\
0&\longrightarrow &\Z^{P(\lambda)}&\longrightarrow &\Z^{P(\lambda)\sqcup\{\star_{K_1},\dotsc,\star_{K_b} \}}&\longrightarrow &
\Z^{\{\star_{K_1},\dotsc,\star_{K_b} \}}&\longrightarrow &\ 0,
\end{array}
\] 
whereby the third vertical map $\Pic(Y(\FF_{\lambda})) \to\Z^{\{\star_{K_1},\dotsc,\star_{K_b}\}}$ is an isomorphism.	
Explicitly, the composition $\operatorname{CDiv}_T(Y(\FF_\lambda))\to  \Z^{\{\star_{K_1},\dotsc,\star_{K_b} \}}$ takes the Cartier divisor $\sum_{a\in \Arr(Q_{\widetilde{\lambda}})} c_a \widetilde D_a$ in $Y(\FF_\lambda)$ to the vector $(s_{\pi_{K_i}}(\mathbf c))_{i=1}^b$,
 where $s_{\pi_K}(\mathbf c):=\sum_{a\in\pi_K} c_a$, as in \cref{t:toricCartier}.
\end{cor}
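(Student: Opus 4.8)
The plan is to deduce \cref{c:toricPicard} from the general toric-geometry description of $\operatorname{CDiv}_T$, $\operatorname{Pic}$, and the divisor class sequence for a complete simplicial (here, projective Gorenstein Fano) toric variety, combined with the combinatorial translation via the canonical quiver $Q_{\widetilde\lambda}$ that was set up in \cref{d:canonicalquiver}, \cref{t:toricCartier}, and \cref{r:tagging}. Recall that for any complete toric variety $Y(\Sigma)$ with character lattice $M$ one has a short exact sequence $0 \to M \to \operatorname{CDiv}_T(Y(\Sigma)) \to \operatorname{Pic}(Y(\Sigma)) \to 0$, where $M$ maps in via $m \mapsto \operatorname{div}(\chi^m)$; this uses that $Y(\Sigma)$ has no non-constant global invertible functions (so $\operatorname{div}(\chi^m)=0$ forces $m=0$) and that every Cartier divisor is linearly equivalent to a torus-invariant one. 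We apply this with $Y(\Sigma) = Y(\FF_\lambda)$, whose character lattice is $\mathbf M^\lambda_\Z \cong \Z^{P(\lambda)}$ after the unimodular change of variables of \cref{rem:variablechange}, so the top row of the diagram is exactly this sequence.

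First I would identify $\operatorname{CDiv}_T(Y(\FF_\lambda))$ with $\Z^{P(\lambda) \sqcup \{\star_{K_1},\dots,\star_{K_b}\}}$. By \cref{t:toricCartier} a torus-invariant Weil divisor $\sum_a c_a \widetilde D_a$ is Cartier precisely when $\mathbf c$ is an independent-sum arrow labeling, and by \cref{r:tagging} such labelings are in bijection with normalized taggings $s$ of $\widetilde P(\lambda)$, via $c_a = s(\vv') - s(\vv)$ for $a: \vv \to \vv'$, with inverse $s(\vv) = \sum_{a \in \pi} c_a$ for any oriented path $\pi$ from $\star_0$ to $\vv$ (well-definedness of $s$ on the sink vertices $\star_{K_i}$ is exactly the independent-sum condition; well-definedness on the $\bullet$-vertices and on $\star_0$ itself is automatic since $Q_{\widetilde\lambda}$ restricted to those vertices has no path-dependence, the underlying Hasse diagram of $\widetilde P(\lambda)$ being such that any two directed $\star_0$-to-$\vv$ paths cobound). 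A normalized tagging is determined by its values on the non-minimal elements $P(\lambda) \sqcup \{\star_{K_1},\dots,\star_{K_b}\}$ (with $s(\star_0)=0$), which gives the asserted $\Z$-module identification; this map is visibly a group isomorphism. Then I would check the two squares commute: the left square says $m \in \mathbf M^\lambda_\Z$ maps to the tagging $\vv \mapsto \langle m, u_\vv\rangle$ obtained by evaluating $m$ along paths, which matches the standard formula $\operatorname{div}(\chi^m) = \sum_\rho \langle m, u_\rho \rangle D_\rho$ once one recalls (from \cref{r:arrowsandroots}, \cref{r:Weil}) that the rays of $\FF_\lambda$ are the vectors $u_a$ attached to arrows $a$ of $Q_{\widetilde\lambda}$, and $\langle m, u_a\rangle = \langle m, e_{\vv'} - e_{\vv}\rangle$ telescopes to the difference of path-sums; the right square then commutes by diagram chase, forcing the bottom row to be exact and $\Z^{P(\lambda)} \hookrightarrow \Z^{P(\lambda)\sqcup\{\star_{K_i}\}}$ to be the obvious coordinate inclusion with cokernel $\Z^{\{\star_{K_1},\dots,\star_{K_b}\}}$.

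Finally, the explicit description of the map $\operatorname{CDiv}_T(Y(\FF_\lambda)) \to \Z^{\{\star_{K_1},\dots,\star_{K_b}\}}$ sending $\sum_a c_a \widetilde D_a$ to $(s_{\pi_{K_i}}(\mathbf c))_{i=1}^b$ is just the composition of the tagging identification with projection onto the maximal-element coordinates, and the statement that the induced vertical map $\operatorname{Pic}(Y(\FF_\lambda)) \to \Z^{\{\star_{K_1},\dots,\star_{K_b}\}}$ is an isomorphism (hence the Picard rank equals the number of maximal elements of $\widetilde P(\lambda)$) is then immediate from the five lemma applied to the diagram. I expect the main obstacle to be the bookkeeping in verifying that the bijection of \cref{r:tagging} between toric Cartier divisors and normalized taggings is compatible on the nose with the standard exact sequence — in particular pinning down the sign and the role of the chosen path from $\star_0$, and confirming that the generator statement of \cref{c:toricPicard} (that the maximal elements of $\widetilde P(\lambda)$ determine a basis of $\operatorname{Pic}$) is not circular but follows from freeness of all modules in the bottom row; this is essentially a careful unwinding of \cite[Theorem~5.18 and Remark~5.19]{RWReflexive} in the present notation, together with the standard structure theory of toric Picard groups, and should present no genuine difficulty beyond care with conventions.
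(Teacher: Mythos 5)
Your proposal is correct. The paper's proof of \cref{c:toricPicard} is a one-line citation of \cite[Theorem~5.18 and Remark~5.19]{RWReflexive}, so what you have written out --- the standard short exact sequence $0\to M\to\operatorname{CDiv}_T(X_\Sigma)\to\Pic(X_\Sigma)\to 0$ for a complete toric variety, the identification of $\operatorname{CDiv}_T(Y(\FF_\lambda))$ with normalized taggings of $\widetilde P(\lambda)$ via \cref{r:tagging}, the check that the left square commutes using $\operatorname{div}(\chi^m)=\sum_a\langle m,u_a\rangle\widetilde D_a$ and the telescoping of $\langle m,u_a\rangle$ along paths, and the five-lemma-style conclusion --- is exactly the content being outsourced. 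This is the right route and there is no genuine alternative to compare to.

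One step you assert with only a heuristic justification is that an independent-sum arrow labeling $\mathbf c$ (which by \cref{t:toricCartier} has path-independent sums only to the \emph{starred} sinks $\star_K$) automatically has well-defined path-sums $s(\vv)=\sum_{a\in\pi}c_a$ at the $\bullet$-vertices as well; your phrase that ``any two directed $\star_0$-to-$\vv$ paths cobound'' does not actually give a reason, and for a general arrow labeling on a grid-shaped quiver this \emph{fails}. The clean argument is an extension argument: every $\vv\in P(\lambda)$ lies below some maximal element $\star_K$ of $\widetilde P(\lambda)$, so fix one oriented path $\pi'$ from $\vv$ to $\star_K$; then for any two paths $\pi_1,\pi_2$ from $\star_0$ to $\vv$, the concatenations $\pi_1\cup\pi'$ and $\pi_2\cup\pi'$ both run from $\star_0$ to $\star_K$ and hence have equal sums, so $\sum_{a\in\pi_1}c_a=\sum_{a\in\pi_2}c_a$. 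With this supplied, your identification of $\operatorname{CDiv}_T(Y(\FF_\lambda))$ with $\Z^{P(\lambda)\sqcup\{\star_{K_1},\dots,\star_{K_b}\}}$, and hence the whole diagram, is rigorous.
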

\begin{proof}
This description of the Picard group is a consequence of \cite[Theorem~5.18]{RWReflexive} and \cite[Remark~5.19]{RWReflexive}, which apply to arbitrary ranked posets $P$, applied to the case $P=P(\lambda)$. 
\end{proof}
\begin{remark}
Recall that the Picard group of the Schubert variety $X_\lambda$ has rank $1$. The same holds true for its toric degeneration $Y(\mathcal N_\lambda)$.
Meanwhile, by \cref{c:toricPicard} the rank 
	of the Picard group
	of the partial desingularization
	$Y(\FF_\lambda)$ of $Y(\mathcal N_\lambda)$ is equal to the number of maximal elements in the canonical extension $\widetilde{P}(\lambda)$ of $P(\lambda)$. 
	In particular, 
	$1 \leq \operatorname{rank}(\Pic(Y(\FF_\lambda)))\leq d$, where 
	$d$ is the number of removable boxes in $\lambda$. Interpreting $1$ and $d$ as Betti numbers of $X_\lambda$, we have $b_2(X_\lambda)\le 
	\operatorname{rank}(\Pic(Y(\FF_\lambda)))
	\le b_{2|\lambda|-2}(X_\lambda)$. 

Note that both extreme cases in the inequality above can be realized.
Recall that $-K_{X_\lambda}=[D_{(\mathbf 1,\mathbf 1)}]=\sum_{\ell=1}^{d} n_\ell [D_\ell]$,
where $n_\ell$ is described in \eqref{eq:nell}, and
equals the rank 
of $\star_\ell$
 in $P(\lambda)_{\max}$.  
 If $X_\lambda$ is Gorenstein 
	(as is the case of the Schubert variety $X_{(2,1)}$ considered in \cref{sec:example}) then the $n_\ell$ all coincide. In this case, $P(\lambda)$ is graded and $Y(\mathcal F_\lambda)=Y(\mathcal N_\lambda)$, see \cite[Theorem~4.15]{RWReflexive}. In particular $Y({\FF_\lambda})$ has Picard rank~$1$. 

Suppose on the other hand that $X_\lambda$ is maximally far away from being Gorenstein 
in the sense that the $n_\ell$ are pairwise distinct.  Then no two starred vertices of $Q_{\lambda}$ 
	are equivalent, see \cref{l:canonicalextension}. Therefore  $\widetilde{P}(\lambda)=P(\lambda)_{\max}$ and we have that $\Pic(Y(\mathcal F_\lambda))\cong \Z^d$. 

Our  \cref{ex:66642211} with $\lambda=(6,6,6,4,2,2,1,1)$ 
lies in between these two extreme cases. We have that $d=4$ and $\Pic(Y(\mathcal F_\lambda))\cong \Z^3$. 

Finally, we note that the full resolution $Y(\widehat\FF_{\lambda})$ of $Y(\CC_\lambda)$ always has Picard rank $d$, compare~\cite[Proposition~5.25]{RWReflexive}.
\end{remark}

We now give a description of the nef cone and the ample cone of $Y(\FF_\lambda)$. 

\begin{defn}\label{d:Gammatilde}
	Suppose $s$ is a normalised tagging of $\widetilde P(\lambda)$. Let us denote by $\widetilde\Gamma(s) \subseteq \mathbf M^{\lambda}_\R$ the polytope  given by inequalities 
\begin{equation}\label{e:momentineqY}
\langle y,u_a\rangle + s(h(a))-s(t(a))\ge 0, \qquad a\in \Arr(Q_{\widetilde P(\lambda)})
\end{equation} 
	on $y\in\mathbf M^\lambda_\R$, where $h(a)$ and $t(a)$ are head and tail of the arrow $a$, respectively, and $u_a$ is as in \cref{def:Root} 
	This is the polytope in vertex coordinates associated to the toric  divisor $\widetilde D(s)$ as in \cite[Section 4.3]{CLS}. If $s$ is the normalised tagging associated to the arrow labeling 
	from \cref{def:al}, then $\widetilde\Gamma(s)$ equals the superpotential
	polytope $\overline{\Gamma}^\lambda_\rect(\mathbf r,\mathbf r')$.  
\end{defn}
\begin{defn}\label{d:nef} 
Consider the poset $\widetilde P(\lambda)$ and the quiver $Q_{\widetilde \lambda}$ associated to its Hasse diagram. 
	We define a new poset structure on the set $\widetilde P(\lambda)_\star:=\{\star_{K_1},\dotsc, \star_{K_b}\}$ of maximal elements of $\widetilde P(\lambda)$ as follows. Namely, we call $\star_K, \star_{K'}$  \textit{comparable} if there exists a $0$-sum arrow labeling of $Q_{\widetilde\lambda}$ with labels in $\Z_{\ge -1}$ and an unoriented simple path between $\star_K$ and $\star_{K'}$ such that precisely one of the arrows in the path has a nonnegative label. The partial order is generated by setting $\star_K<\star_{K'}$ if $\star_K$ and $\star_{K'}$ are comparable and $\rk(\star_K)< \rk(\star_{K'})$.

\end{defn}
\begin{remark}\label{r:PosArrow}
Note that if $\star_K$ and $\star_{K'}$ are comparable and $\rk(\star_K)<\rk(\star_{K'})$, then the unique arrow with a nonnegative label in the path between $\star_K$ and $\star_{K'}$ from the definition above must be oriented towards $\star_{K'}$. This follows immediately from the $0$-sum condition on the labeling and the fact that all the other arrows are labeled $-1$. 
\end{remark}
\begin{rem}\label{r:StarPoset}
	The comparability  condition on $\star_K,\star_{K'}$ used in \cref{d:nef}  could also be described as saying that  for some facet labeling, the starred vertices $\star_K$ and $\star_{K'}$ lie in facet components that are `adjoining' (connected by a single arrow), 
	compare \cref{r:facetlabeling}. We observe that the poset $\widetilde P(\lambda)_\star$ has a unique minimal element. Namely, there is a facet arrow-labeling of $Q_{\lambda}$ for which the arrows labeled $-1$ are precisely the arrows between normal vertices and those pointing to a minimal-rank starred vertex $\star_k$. An example of such a labeling is shown in \cref{f:facetlabeling} on the left. This facet arrow-labeling connects all of the minimal-rank  sink vertices $\star_k$ of $Q_\lambda$, so that they are identified to a single vertex in $Q_{\widetilde \lambda}$, as shown in \cref{f:facetlabeling} on the right. Moreover via this labeling we see that the other starred vertices are all comparable to this minimal-rank starred vertex, in the partial order of $\widetilde P(\lambda)_\star$. This makes it the unique minimal element of the poset $\widetilde P(\lambda)_\star$. 
\end{rem}  
  
\begin{prop}\label{p:toricCartierAmple} 
Let $s:\widetilde P(\lambda)\to \Z$ be a normalised tagging and $\widetilde D(s)=\sum_a c_a\widetilde D_a$ the corresponding toric Cartier divisor of $Y(\FF_\lambda)$, as in \cref{r:tagging}. 
\begin{enumerate}
\item The polytope $\widetilde \Gamma(s)$ associated to $\widetilde D(s)$ is full-dimensional  if and only if $s(\star_K)>0$ 
	for all $\star_K\in\widetilde P(\lambda)_\star$. 
\item $\widetilde D(s)$ is \emph{nef} if and only if  $s(\star_K)\ge 0$ 
	for all $\star_K\in\widetilde P(\lambda)_\star$,
		and $s(\star_K)\le s(\star_{K'})$ whenever $\star_K\le \star_{K'}$ 
		in the partial order on
		$\widetilde P(\lambda)_\star$ from
\cref{d:nef}. 
\item $\widetilde D(s)$ is \emph{ample} if and only if $s(\star_K)> 0$ 
	for all $\star_K\in\widetilde P(\lambda)_\star$,
	and $s(\star_K)< s(\star_K')$ whenever $\star_K <\star_{K'}$ in 
		$\widetilde P(\lambda)_\star$.  
\end{enumerate} 
If $\widetilde D(s)$ is ample then it is also very ample. 
\end{prop}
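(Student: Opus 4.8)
\textbf{Proof plan for \cref{p:toricCartierAmple}.}

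The plan is to reduce everything to the toric dictionary for the fan $\FF_\lambda = \FF_{P(\lambda)}$ (equivalently, via \cref{rem:variablechange}, to the fan $\overline{\FF_\lambda}$) and then invoke the combinatorial criterion for nefness/ampleness of a torus-invariant Cartier divisor in terms of its support function. Recall from \cref{r:tagging} and \cref{t:toricCartier} that a normalised tagging $s$ of $\widetilde P(\lambda)$ corresponds bijectively to a toric Cartier divisor $\widetilde D(s) = \sum_a c_a \widetilde D_a$ with $c_a = s(h(a)) - s(t(a))$, and that the polytope $\widetilde\Gamma(s)$ of \cref{d:Gammatilde} is exactly the polytope of sections cut out by \eqref{e:momentineqY}. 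The three parts of the proposition are then statements about (1) full-dimensionality of $\widetilde\Gamma(s)$, (2) the support function being convex (nef), and (3) the support function being strictly convex (ample); the very ampleness claim at the end will follow from terminality plus reflexivity of $\Root(Q_\lambda)$.

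First I would prove (1). Since $\FF_\lambda$ is the face fan of the reflexive polytope $\Root(Q_\lambda)$ (\cref{r:rootNewton}, \cref{c:reflexiveterminal}), it is complete, so $\widetilde\Gamma(s)$ is full-dimensional if and only if $\widetilde D(s)$ is big, which for a Cartier divisor on a complete toric variety is equivalent to the polytope $\widetilde\Gamma(s)$ having nonempty interior. I would compare $\widetilde\Gamma(s)$ directly with the superpotential polytope: when $s = \rk$ is the rank function, $\widetilde\Gamma(\rk)$ is (a translate of) $\overline\Gamma^\lambda_\rect(\mathbf 1,\mathbf 1)$, which is polar dual to $\Root(Q_\lambda)$ by \cref{l:easyreflexive}, hence full-dimensional. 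The general scaling/translation behaviour of $\widetilde\Gamma(s)$ in $s$ shows that $\widetilde\Gamma(s)$ collapses in a coordinate direction exactly when some $s(\star_K) \le 0$; more precisely, the $\star_K$-facet of $\widetilde\Gamma(\rk)$ (corresponding to the facet component containing $\star_K$, via \cref{r:facetlabeling}) is supported at ``distance'' governed by $s(\star_K)$, and the polytope is full-dimensional iff every such facet is on the positive side, i.e. iff $s(\star_K)>0$ for all maximal $\star_K$. This is essentially the marked-order-polytope computation of \cref{rem:marked} and \cref{rem:510}, transported to $Y(\FF_\lambda)$.

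For (2) and (3) I would use the standard fact (see e.g. \cite[Theorem 6.1.7 and Theorem 6.4.9]{CLS}) that, for a complete toric variety with fan $\FF_\lambda$ and a Cartier divisor with polytope $\widetilde\Gamma(s)$, the divisor is nef iff $\widetilde\Gamma(s)$ has the ``same normal fan shape'' as $\FF_\lambda$ coarsens to — equivalently, iff for every maximal cone the corresponding vertex of $\widetilde\Gamma(s)$ is ``recovered'' — and ample iff in addition distinct maximal cones give distinct vertices, i.e. the support function is strictly convex. The maximal cones of $\FF_\lambda$ are the cones over facets of $\Root(Q_\lambda)$, which by \cref{r:facetlabeling} are indexed by facet arrow-labelings of $Q_{\widetilde\lambda}$, and the associated vertex of $\widetilde\Gamma(s)$ is read off from the facet components and the values $s(\star_K)$. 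The key combinatorial translation is: walking across a single arrow between two adjoining facet components (which is exactly the comparability relation of \cref{d:nef}, cf. \cref{r:PosArrow}) changes the candidate vertex/support value by $\pm(s(\star_{K'}) - s(\star_K))$, with the sign dictated by the orientation of that arrow toward the higher-rank starred vertex. Hence the support function is convex across that wall iff $s(\star_K) \le s(\star_{K'})$, and strictly convex iff $s(\star_K) < s(\star_{K'})$, for $\star_K < \star_{K'}$ in $\widetilde P(\lambda)_\star$; together with the positivity $s(\star_K)\ge 0$ (resp. $>0$) coming from the walls adjoining the unique minimal element of $\widetilde P(\lambda)_\star$ (\cref{r:StarPoset}), this gives exactly the stated criteria. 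Finally, very ampleness of an ample Cartier divisor: since $Y(\FF_\lambda)$ has at most terminal (in particular $\mathbb Q$-factorial in codimension up to the relevant level, and certainly simplicial-free of bad singularities) Gorenstein singularities, and $\widetilde\Gamma(s)$ is a lattice polytope whose normal fan is $\FF_\lambda$, I would invoke the integer-decomposition/very-ampleness results for lattice polytopes arising from marked order polytopes — \cref{cor:IDP} together with the realization (via the unimodular change of variables) that $\widetilde\Gamma(s)$ is a translated marked order polytope when $s$ comes from an arrow labeling, hence IDP, hence the corresponding ample divisor is very ample on the projectively normal embedding.

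\textbf{Main obstacle.} The routine part is assembling the toric dictionary; the genuinely delicate step is establishing the precise sign bookkeeping in (2)–(3): namely that convexity of the piecewise-linear support function of $\widetilde D(s)$ across the wall between two adjoining facet components is \emph{exactly} controlled by the single inequality $s(\star_K)\le s(\star_{K'})$ for the comparability relation, with no spurious extra inequalities from longer paths or from non-comparable pairs. I expect to need the full strength of \cite[Theorem 5.18 and Remark 5.19]{RWReflexive} on facet components of $\Root(Q_{\widetilde\lambda})$, plus a careful argument that the $0$-sum arrow-labeling witnessing comparability in \cref{d:nef} is precisely the data of the two adjacent maximal cones of $\FF_\lambda$ whose shared wall one is crossing — and that transitivity of the generated partial order on $\widetilde P(\lambda)_\star$ does not over-impose conditions (it cannot, since any finite collection of wall inequalities of a strictly convex function is automatically consistent). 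Once that identification is pinned down, parts (1)–(3) follow uniformly, and the very ampleness tailpiece is then immediate from the IDP property.
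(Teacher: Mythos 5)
Your plan is sound and closely tracks the paper's proof: normalize $s$ to vanish on normal vertices, identify $\widetilde\Gamma(s)$ with a marked order polytope to get (1), use the facet components of $\Root(Q_{\widetilde\lambda})$ from \cite{RWReflexive} for the combinatorial input to (2)--(3), and finish very ampleness by IDP. The one genuine divergence is the toric criterion you invoke in (2)--(3): you propose checking convexity of the piecewise-linear support function wall-by-wall, whereas the paper checks the equivalent basepoint-free criterion of \cite[Theorem 6.3.12, Proposition 6.1.1]{CLS} by verifying directly that each Cartier datum point $m_\sigma$ (read off from the facet labeling $M_\sigma$: the $\vv$-coordinate of $m_\sigma$ equals $s_K$ when $\vv$ lies in the facet component of $\star_K$) satisfies \emph{all} the inequalities \eqref{e:momentineqY} defining $\widetilde\Gamma(s)$, with the only nontrivial case being arrows $a$ with $M_\sigma(a)\ge 0$, which are exactly the arrows joining facet components and where \cref{r:PosArrow} supplies the sign. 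The paper's route buys a clean proof without ever having to understand which maximal cones of $\FF_\lambda$ are adjacent across a wall — precisely the step you flag as the ``main obstacle'' and which you say you would still need a careful argument to pin down (that the 0-sum labeling witnessing comparability \emph{is} the shared-wall data). So your plan is not wrong, but it carries an extra burden the paper's version sidesteps; if you swap your wall-convexity check for the ``$m_\sigma\in\widetilde\Gamma(s)$'' check, your computation of how a single arrow crossing facet components contributes $\pm(s(\star_{K'})-s(\star_K))$ is exactly the right calculation, and \cref{r:PosArrow} settles the sign. The remaining ingredients (full-dimensionality via marked order polytopes and \cref{rem:marked}, the pivotal role of \cref{r:StarPoset} for the positivity constraints, and IDP via \cite{FangFourier}/\cite[Prop.\ 2.2.18]{CLS} for very ampleness) match the paper.
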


\begin{proof} Setting $s(\vv)=0$ for normal vertices $\vv\in P(\lambda)$ (while leaving the $s(\star_K)$ as they are) replaces $\widetilde D(s)$ by a linearly equivalent divisor by \cref{c:toricPicard}, and 
amounts to a shift of the polytope. Therefore we may assume $s(\vv)=0$ for $\vv\in P(\lambda)$. Let us write $s_K$ for $s(\star_K)$. 
	Now the $s_K$ define a marking $\mathbf s$ of $\widetilde P(\lambda)$, and $\widetilde\Gamma(s)$ can be interpreted as the marked order polytope $\mathbb O^{\mathbf s}(\widetilde P(\lambda))$, so that (1) follows, see   \cref{rem:marked}.

To prove (2) and (3)	we need to understand the Cartier datum of $\widetilde D(s)=\sum_{a}c_a(s)\widetilde D_a$. Recall that by our choices above we have that $c_a(s)=s_K$ if the head $h(a)=\star_K$, and $c_a(s)=0$ otherwise.  Suppose $\sigma$ is a maximal cone in $\overline{\FF_\lambda}$ and let us denote by $M_\sigma$ its associated facet arrow-labeling, see \cref{r:facetlabeling}. The vertices of $Q_{\widetilde\lambda}$ are decomposed into a disjoint union of 
	\textit{facet components} as in the end of \cref{r:facetlabeling}, and we have precisely one facet component  for each starred vertex \cite[Lemma~5.14]{RWReflexive}. Let $m_\sigma\in M^\lambda_{\R}$ be defined 
	by setting  $m_{\sigma,\vv}=s_K$  
	if $\vv\in P(\lambda)$ lies in the facet component of $\star_K$. Then we have that  
	\begin{equation}\label{eq:innerproduct}
\langle m_\sigma,u_a\rangle= \begin{cases}
0 & \text{if $M_\sigma(a)=-1$ and $h(a)\in P(\lambda)$,}\\
-s_K & \text{if $M_\sigma(a)=-1$ and $h(a)=\star_K$.}
\end{cases} 
	\end{equation}
Thus for every primitive ray generator $u_a$ in $\sigma$ we have $\langle m_\sigma,u_a\rangle=-c_a(s)$, so that $(m_\sigma)_\sigma$ is the Cartier datum for $\widetilde D(s)$. 
We now recall that a Cartier divisor in a complete toric variety is nef if and only if it is basepoint free \cite[Theorem 6.3.12]{CLS}, and this is equivalent to the condition that  $m_\sigma\in\widetilde\Gamma(s)$ for all maximal cones $\sigma$, see \cite[Proposition 6.1.1]{CLS}. Recall also that the ample cone is the interior of the nef cone, therefore (3) will follow from (2).
	We proceed to prove (2) using the above  characterisation of the nef property. 

	Let us assume that the condition from (2) holds for $s$ and show that $\widetilde D(s)$ is then nef. Pick some maximal cone $\sigma$ and consider  $m_\sigma$ as above. If $a$ is an arrow with  $M_\sigma(a)=-1$, then by \eqref{eq:innerproduct}, the inequality  \eqref{e:momentineqY} holds for $y=m_\sigma$ as an equality (keeping in mind $s(\vv)=0$ and $s(\star_K)=s_K$). Otherwise, if $M_\sigma(a)\ge 0$, then $h(a)$ and $t(a)$ lie in different facet components for $M_\sigma$. Suppose $t(a)$ lies in the facet component of $\star_K$ and $h(a)$ in the facet component of $\star_{K'}$.  We have $m_{\sigma,t(a)}=s_K$ 
	and $m_{\sigma,h(a)}=s_{K'}$ by construction of $m_\sigma$. Here $\star_K$ may equal to $\star_0$ in which case we set $s_0=0$. Altogether, we see that
\[
\langle m_\sigma,u_a\rangle=\begin{cases}-s_K&\text{if $h(a)=\star_{K'}$,}\\
s_{K'}-s_K&\text{otherwise,}\end{cases}\qquad\text{ and }\qquad
	s(h(a))-s(t(a))=\begin{cases}s_{K'}&\text{if $h(a)=\star_{K'}$,}\\
		0&\text{otherwise. }
\end{cases} 
\]
	Now recall that we have $\rk(\star_K)<\rk(\star_{K'})$ by \cref{r:PosArrow}. Thus  the assumption in (2) implies  $s_K\le s_{K'}$. This implies that $m_\sigma$ satisfies the inequality \eqref{e:momentineqY} for this arrow $a$, since $
\langle m_\sigma,u_a\rangle+s(h(a))-s(t(a))=s_{K'}-s_K$, by the above. Now we have proved that $m_\sigma$ lies in $\widetilde \Gamma(s)$ for all maximal cones $\sigma$. It follows that $\widetilde D(s)$ is nef. The proof that $\widetilde D(s)$ being nef implies the condition in (2) is obtained by the analogous arguments in reverse.  

Finally, recall that $\mathbb O^{\mathbf s}(\widetilde P(\lambda))$ has the integer decomposition property (IDP) by \cite[Corollary 2.3]{FangFourier}, compare \cref{cor:IDP}. This implies that the polytope $\widetilde\Gamma(s)$, and thus the divisor $\widetilde D(s)$ (if ample), is  very ample, see~\cite[Proposition~2.2.18]{CLS}. 
\end{proof}

\begin{remark}
Recall \cref{c:Drr} along with associated notation. By this corollary, the boundary divisor $D_{(\mathbf r,\mathbf r')}$ in the Schubert variety $X_\lambda$ is ample if and only if the quantities 
\[
R_\ell:=\sum r_\ell+\sum_{i\in NW(b_{\rho_{2\ell-1}})}r'_i 
\]
are independent of $\ell=1,\dotsc, d$, and given by a positive integer $R$. We now consider the analagous toric divisor $\widetilde D_{(\mathbf r,\mathbf {r'})}$ in $Y(\FF_{\lambda})$ from \cref{d:Dtilde}.  \cref{p:toricCartierAmple} implies that whenever $D_{(\mathbf r,\mathbf r')}$ is ample, then  $\widetilde D_{(\mathbf r,\mathbf {r'})}$ is big and nef; namely $\widetilde D_{(\mathbf r,\mathbf {r'})}=\widetilde D(s)$ for a normalised tagging $s$ with $s(\star_K)=R$ for all~$\star_K\in \widetilde P(\lambda)_\star$, so that both (1) and (2) from \cref{p:toricCartierAmple} apply. 
Note that the polytope $\widetilde \Gamma(s)$ associated  to $\widetilde D(s)$ equals to $\overline\Gamma^\lambda_\rect(\mathbf r,\mathbf r')$. All together it follows that, when $D_{(\mathbf r,\mathbf r')}$ is ample, we obtain via $\widetilde D_{(\mathbf r,\mathbf {r'})}$ a proper birational morphism to the projective toric variety $\mathbb P_{\Gamma^\lambda_\rect(\mathbf r,\mathbf r')}$ associated to
$\Gamma^\lambda_\rect(\mathbf r,\mathbf r')$,  
\begin{equation}\label{e:GenDesing}
Y(\FF_{\lambda})\to \mathbb P_{\Gamma^\lambda_\rect(\mathbf r,\mathbf r')}.
\end{equation}
 Since $\Gamma^\lambda_{\rect}(\mathbf r,\mathbf r')$ is in fact a translation of a dilation of $\Gamma^\lambda_{\rect}(\mathbf 1,\mathbf 0)$, as we saw in \cref{p:GammaShiftRect}, we have that $\mathbb P_{\Gamma^\lambda_\rect(\mathbf r,\mathbf r')}$ is isomorphic to  $Y(\CC_\lambda)$ and \eqref{e:GenDesing} generalises the partial desingularisation $Y(\FF_\lambda)\to Y(\CC_\lambda)$ from \cref{cor:small}. The morphism \eqref{e:GenDesing} is an isomorphism (and  $\widetilde D_{(\mathbf r,\mathbf {r'})}$ is ample) if and only if  $X_\lambda$ was Gorenstein. 

Note  that $\Gamma^\lambda_{\rect}(\mathbf r,\mathbf r')$ agrees with the Newton-Okounkov convex body $\Delta_\rect^\lambda(\mathbf r,\mathbf r')$ of $X_\lambda$ associated to $D_{(\mathbf r,\mathbf r')}$, see \cref{t:maingen}. Moreover, $\Gamma^\lambda_{\rect}(\mathbf r,\mathbf r')$ has the integer decomposition property by  \cref{cor:IDP}. Therefore we have a toric degeneration of $X_\lambda$ to $\mathbb P_{\Gamma^\lambda_\rect(\mathbf r,\mathbf r')}$ given by \cref{cor:degenerationgen}.
\end{remark}

\section{The superpotential for skew shaped positroid varieties}\label{sec:skew}

In this section we define \emph{skew shaped positroid varieties} in 
a Grassmannian,
and we define a superpotential associated to each one.
We then outline how the proofs of our main results for Schubert varieties
extend to this setting.  We note that some of our results also 
extend to the setting of \emph{positroid varieties}, but these will 
be studied separately.

\subsection{Skew shaped positroid varieties}

In this section we define skew shaped positroid varieties.

\begin{notation}
Let $\nu \subseteq \lambda$ be partitions such that 
the $(n-k) \times k$ rectangular Young diagram
is the minimal rectangle containing $\lambda$.
Let $d$ denote the number of removable boxes in $\lambda$.
\end{notation}

The following generalizes \cref{d:frozens}.
\begin{definition}[Frozen shapes for $\lambda/\nu$]\label{d:skewfrozens}
Consider our skew partition $\lambda/\nu$ 
with its bounding $(n-k)\times k$ rectangle.
As in \cref{d:frozens},
we denote the $i$-th
 box in the rim of $\lambda$ by $b_i$ (numbered from northeast to southwest), 
and 
 write $\Rect(b)$ for the maximal rectangle 
whose lower right hand corner is the box $b$.
We also define 
	\begin{equation}\label{eq:shape}
		\sh(b) = \Rect(b) \cup \nu \hspace{.2cm} \text{ and } \hspace{.2cm}
		\sh(b)^-:=\Rect(b)^- \cup \nu,
\end{equation}
where (as in \cref{d:addable}) $\Rect(b)^-$ is the rectangle
obtained from $\Rect(b)$ by removing the rim.

Note that  if $\nu = \emptyset$,
$\sh(b) = \Rect(b)$.
Let \[\mu_i:=\sh(b_i) \text{ and } \mu_n:=\nu.\]
 We let $\Fr(\lambda/\nu) = 
\{\mu_1,\dots, \mu_{n-1}, \mu_n\} \subseteq \mathcal{P_{\lambda/\nu}}$, and call
 the elements of $\Fr(\lambda/\nu)$ the \emph{frozen shapes} 
for $\lambda/\nu$. 
We treat the indices modulo $n$ so that $\mu_{n+1}=\mu_1$.
\end{definition}

\begin{definition}\label{d:SkewSchubertCell}
The \emph{open skew shaped positroid variety} 
$X^{\circ}_{\lambda/\nu}$ is defined to be 
\begin{equation*}
X_{\lambda/\nu}^{\circ}:= 
	\{x\in Gr_{n-k}(\C^n) \ | \ P_{\mu}(x)=0 \text{ unless }
	           \nu \subseteq \mu \subseteq \lambda, \text{ and } P_{\mu_i}(x)\ne 0 \ \forall i\in[n] \}.
\end{equation*}
\end{definition}
We similarly have an (open) skew shaped positroid variety
in the Langlands dual Grassmannian, denoted
 $\check{X}_{\lambda/\nu}$.
The dimension of these varieties is $|\lambda/\nu|$, the 
number of boxes in the skew Young diagram $\lambda/\nu$.

\subsection{The definition of the superpotential for skew shaped positroid varieties}

Let $\BB^{\SE}_{\out}(\lambda/\nu)$ denote the set of southeast ``outer'' corners of the 
skew-shape $\lambda/\nu$, that is, the set of boxes $b$ of 
$\lambda/\nu$ such that $b$ has no boxes below it or to its 
right.
And we let $\BB^{\NW}_{\out}(\lambda/\nu)$ denote the northwest ``outer'' corners of the 
skew-shape $\lambda/\nu$, that is, the  boxes $b'$ of 
$\lambda/\nu$ such that $b'$ has no boxes above it or to its left.

\begin{definition}
Given a skew shape $\lambda/\nu$ in a bounding rectangle of size $(n-k) \times k$,
we
number its  rows from $1$ to $n-k$ from top to bottom,
and the columns from $1$ to $k$ from left to right.
For $1\leq i \leq n-k-1$, find the maximal-width rectangle $R_i$ of height $2$
that is contained in rows $i, i+1$ of $\lambda/\nu$ (if one exists).
If $R_i$ exists, let $d_i$ and $c_i$ denote its
northeast and southwest corner,
respectively.
Similarly, for $1\leq j \leq k-1$, find the maximal-height rectangle $R^j$
of width $2$ contained in columns $j, j+1$ of $\lambda/\nu$ (if one exists).
If $R^j$ exists, 
let $d^j$ and $c^j$ denote its southwest and northeast corner, respectively.
\end{definition}

The following definition of superpotential $W^{\lambda/\nu}$
for a skew shaped positroid variety generalizes our formula from \cref{p:superpotential}
in the Schubert variety case.
We use the notation from \eqref{eq:shape}.

\begin{definition}[Canonical formula for the superpotential]\label{d:skewLG1}
Let $\lambda/\nu$ be a skew shape.  We define
\begin{equation}\label{e:anotherSchub}
W^{\lambda/\nu} 
        = 
	\sum_{b=b_{\rho_{2\ell-1}} \in \BB^{\SE}_{\out}(\lambda/\nu)}  
        q_\ell \frac{p_{\sh(b)^-}}{p_{\sh(b)}}+
	\sum_{b'\in \BB^{\NW}_{\out}(\lambda/\nu)} \frac{p_{\sh(b')}}{p_{\nu}} 
+        \sum_{i=1}^{n-k-1} 
	\frac{p_{\sh(d_i) \cup \sh(c_i)}}{p_{\sh(d_i)}}
        + \sum_{j=1}^{k-1} 
	\frac{p_{\sh(d^j) \cup \sh(c^j)}}{p_{\sh(d^j)}},
\end{equation}
	where if for some $i$ (respectively, $j$) the rectangle 
	$R_i$ (respectively, $R^j$) does not exist, then
	the corresponding term 
	$\frac{p_{\sh(d_i) \cup \sh(c_i)}}{p_{\sh(d_i)}}$
	(respectively, 
	$\frac{p_{\sh(d^j) \cup \sh(c^j)}}{p_{\sh(d^j)}}$)
	above is understood to be $0$.
\end{definition}

\begin{figure}[h]
\centering
\setlength{\unitlength}{1.3mm}
\begin{center}
\resizebox{0.12\textwidth}{!}{\begin{picture}(50,35)
  \put(32,32){\line(1,0){18}}
  \put(14,23){\line(1,0){36}}
  \put(5,14){\line(1,0){27}}
  \put(5,5){\line(1,0){27}}
  \put(5,-4){\line(1,0){27}}
  \put(5,-4){\line(0,1){18}}
  \put(14,-4){\line(0,1){27}}
  \put(23,-4){\line(0,1){27}}
  \put(32,-4){\line(0,1){36}}
  \put(41,23){\line(0,1){9}}
  \put(50,23){\line(0,1){9}}
        \put(8,35){\Huge{$1$}}
        \put(17,35){\Huge{$2$}}
        \put(26,35){\Huge{$3$}}
        \put(35,35){\Huge{$4$}}
        \put(44,35){\Huge{$5$}}
        \put(0,25){\Huge{$1$}}
        \put(0,15){\Huge{$2$}}
        \put(0,5){\Huge{$3$}}
        \put(0,-4){\Huge{$4$}}
         \put(44,25){\Huge{*}}
         \put(26,-4){\Huge{*}}
        \end{picture}}
        \hspace{0.1cm}
\resizebox{0.12\textwidth}{!}{\begin{picture}(50,35)
  \put(32,32){\line(1,0){18}}
  \put(14,23){\line(1,0){36}}
  \put(5,14){\line(1,0){27}}
  \put(5,5){\line(1,0){27}}
  \put(5,-4){\line(1,0){27}}
  \put(5,-4){\line(0,1){18}}
  \put(14,-4){\line(0,1){27}}
  \put(23,-4){\line(0,1){27}}
  \put(32,-4){\line(0,1){36}}
  \put(41,23){\line(0,1){9}}
  \put(50,23){\line(0,1){9}}
        \put(8,35){\Huge{$1$}}
        \put(17,35){\Huge{$2$}}
        \put(26,35){\Huge{$3$}}
        \put(35,35){\Huge{$4$}}
        \put(44,35){\Huge{$5$}}
        \put(0,25){\Huge{$1$}}
        \put(0,15){\Huge{$2$}}
        \put(0,5){\Huge{$3$}}
        \put(0,-4){\Huge{$4$}}
         \put(35,25){\Huge{*}}
         \put(17,16){\Huge{*}}
         \put(8,7){\Huge{*}}
        \end{picture}}
        \hspace{0.1cm}
\resizebox{0.12\textwidth}{!}{\begin{picture}(50,35)
  \put(32,32){\line(1,0){18}}
  \put(14,23){\line(1,0){36}}
  \put(5,14){\line(1,0){27}}
  \put(5,5){\line(1,0){27}}
  \put(5,-4){\line(1,0){27}}
  \put(5,-4){\line(0,1){18}}
  \put(14,-4){\line(0,1){27}}
  \put(23,-4){\line(0,1){27}}
  \put(32,-4){\line(0,1){36}}
  \put(41,23){\line(0,1){9}}
  \put(50,23){\line(0,1){9}}
        \put(8,35){\Huge{$1$}}
        \put(17,35){\Huge{$2$}}
        \put(26,35){\Huge{$3$}}
        \put(35,35){\Huge{$4$}}
        \put(44,35){\Huge{$5$}}
        \put(0,25){\Huge{$1$}}
        \put(0,15){\Huge{$2$}}
        \put(0,5){\Huge{$3$}}
        \put(0,-4){\Huge{$4$}}
         \put(26,16){\Huge{$d_2$}}
         \put(17,7){\Huge{$c_2$}}
        \end{picture}}
        \hspace{0.1cm}
\resizebox{0.12\textwidth}{!}{\begin{picture}(50,35)
  \put(32,32){\line(1,0){18}}
  \put(14,23){\line(1,0){36}}
  \put(5,14){\line(1,0){27}}
  \put(5,5){\line(1,0){27}}
  \put(5,-4){\line(1,0){27}}
  \put(5,-4){\line(0,1){18}}
  \put(14,-4){\line(0,1){27}}
  \put(23,-4){\line(0,1){27}}
  \put(32,-4){\line(0,1){36}}
  \put(41,23){\line(0,1){9}}
  \put(50,23){\line(0,1){9}}
        \put(8,35){\Huge{$1$}}
        \put(17,35){\Huge{$2$}}
        \put(26,35){\Huge{$3$}}
        \put(35,35){\Huge{$4$}}
        \put(44,35){\Huge{$5$}}
        \put(0,25){\Huge{$1$}}
        \put(0,15){\Huge{$2$}}
        \put(0,5){\Huge{$3$}}
        \put(0,-4){\Huge{$4$}}
         \put(26,7){\Huge{$d_3$}}
         \put(8,-2){\Huge{$c_3$}}
        \end{picture}}
        \hspace{0.1cm}
\resizebox{0.12\textwidth}{!}{\begin{picture}(50,35)
  \put(32,32){\line(1,0){18}}
  \put(14,23){\line(1,0){36}}
  \put(5,14){\line(1,0){27}}
  \put(5,5){\line(1,0){27}}
  \put(5,-4){\line(1,0){27}}
  \put(5,-4){\line(0,1){18}}
  \put(14,-4){\line(0,1){27}}
  \put(23,-4){\line(0,1){27}}
  \put(32,-4){\line(0,1){36}}
  \put(41,23){\line(0,1){9}}
  \put(50,23){\line(0,1){9}}
        \put(8,35){\Huge{$1$}}
        \put(17,35){\Huge{$2$}}
        \put(26,35){\Huge{$3$}}
        \put(35,35){\Huge{$4$}}
        \put(44,35){\Huge{$5$}}
        \put(0,25){\Huge{$1$}}
        \put(0,15){\Huge{$2$}}
        \put(0,5){\Huge{$3$}}
        \put(0,-4){\Huge{$4$}}
         \put(17,7){\Huge{$c^1$}}
         \put(8,-2){\Huge{$d^1$}}
        \end{picture}}
        \hspace{0.1cm}
\resizebox{0.12\textwidth}{!}{\begin{picture}(50,35)
  \put(32,32){\line(1,0){18}}
  \put(14,23){\line(1,0){36}}
  \put(5,14){\line(1,0){27}}
  \put(5,5){\line(1,0){27}}
  \put(5,-4){\line(1,0){27}}
  \put(5,-4){\line(0,1){18}}
  \put(14,-4){\line(0,1){27}}
  \put(23,-4){\line(0,1){27}}
  \put(32,-4){\line(0,1){36}}
  \put(41,23){\line(0,1){9}}
  \put(50,23){\line(0,1){9}}
        \put(8,35){\Huge{$1$}}
        \put(17,35){\Huge{$2$}}
        \put(26,35){\Huge{$3$}}
        \put(35,35){\Huge{$4$}}
        \put(44,35){\Huge{$5$}}
        \put(0,25){\Huge{$1$}}
        \put(0,15){\Huge{$2$}}
        \put(0,5){\Huge{$3$}}
        \put(0,-4){\Huge{$4$}}
         \put(26,16){\Huge{$c^2$}}
         \put(17,-2){\Huge{$d^2$}}
        \end{picture}}
        \hspace{0.1cm}
\resizebox{0.12\textwidth}{!}{\begin{picture}(50,35)
  \put(32,32){\line(1,0){18}}
  \put(14,23){\line(1,0){36}}
  \put(5,14){\line(1,0){27}}
  \put(5,5){\line(1,0){27}}
  \put(5,-4){\line(1,0){27}}
  \put(5,-4){\line(0,1){18}}
  \put(14,-4){\line(0,1){27}}
  \put(23,-4){\line(0,1){27}}
  \put(32,-4){\line(0,1){36}}
  \put(41,23){\line(0,1){9}}
  \put(50,23){\line(0,1){9}}
        \put(8,35){\Huge{$1$}}
        \put(17,35){\Huge{$2$}}
        \put(26,35){\Huge{$3$}}
        \put(35,35){\Huge{$4$}}
        \put(44,35){\Huge{$5$}}
        \put(0,25){\Huge{$1$}}
        \put(0,15){\Huge{$2$}}
        \put(0,5){\Huge{$3$}}
        \put(0,-4){\Huge{$4$}}
         \put(44,25){\Huge{$c^4$}}
         \put(35,25){\Huge{$d^4$}}
        \end{picture}}
        \hspace{0.1cm}
\end{center}
        \hspace{0.1cm}
	\caption{When $\lambda=(5,3,3,3)$ and $\nu = (3,1)$,
	we compute the superpotential from the diagram above.
	Note that the $\star$'s in the leftmost figure indicate the 
	southeast corners, and the $\star$'s in the adjacent 
	figure indicate the northwest corners.}
\label{fig:SkewExample}
\end{figure}
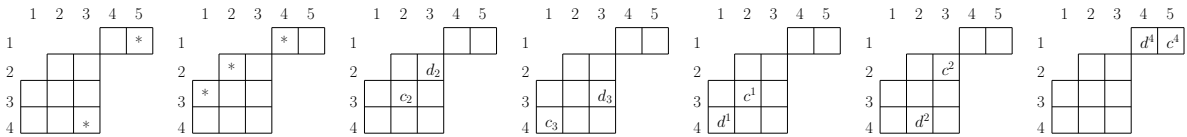

\begin{example}\label{ex:SkewExample}
When $\lambda=(5,3,3,3)$ and $\nu = (3,1)$,
we use the diagrams in \cref{fig:SkewExample} to compute
the superpotential, 
obtaining
$$W^{\lambda/\nu} = 
q_1 \frac{p_{\ydiagram{3,1}}}{p_{\ydiagram{5,1}}} +
q_2 \frac{p_{\ydiagram{3,2,2}}}{p_{\ydiagram{3,3,3,3}}} +
 \frac{p_{\ydiagram{4,1}}}{p_{\ydiagram{3,1}}} +
 \frac{p_{\ydiagram{3,2}}}{p_{\ydiagram{3,1}}} +
 \frac{p_{\ydiagram{3,1,1}}}{p_{\ydiagram{3,1}}} +
 \frac{p_{\ydiagram{3,3,2}}}{p_{\ydiagram{3,3}}} +
 \frac{p_{\ydiagram{3,3,3,1}}}{p_{\ydiagram{3,3,3}}} +
 \frac{p_{\ydiagram{3,2,2,1}}}{p_{\ydiagram{3,1,1,1}}} +
 \frac{p_{\ydiagram{3,3,2,2}}}{p_{\ydiagram{3,2,2,2}}} +
 \frac{p_{\ydiagram{5,1}}}{p_{\ydiagram{4,1}}}.
$$
\end{example}

In the setting of Schubert varieties $X_\lambda$  we had a formula for the canonical superpotential in which the non-quantum-parameter terms were indexed by the $n-1$ boxes along the NW rim of $\lambda$, see \cref{p:3rdpotential}. The number of summands of the canonical superpotential overall was thereby seen to be $n-1+d$ (with $d$ the number of boxes in $\BB^{\SE}_{\out}(\lambda)$). In particular, we had one term for each of the $d$ Schubert divisors, and one for each of the remaining positroid divisors.

We  now extend this description of the canonical superpotential  to 
the skew case.  Let $\BB^{\NW}(\lambda/\nu)$ be the set of boxes in $\lambda/\nu$ along the $\NW$ boundary. We also call this set of boxes the \textit{inner rim} of $\lambda\setminus \nu$. Similarly consider $\BB^{\SE}(\lambda/\nu)$ the set of boxes in $\lambda\setminus\nu$ along the $\SE$ boundary, and call this set the \textit{outer rim}. 

We divide the inner rim into different types of boxes, 
\begin{equation*}
\begin{array}{ccc}
\BB^{\NW}(\lambda/\nu)&=&\BB^{\NW}_{\out}(\lambda/\nu)\sqcup \BB^{\NW}_{\mathrm{hor}}(\lambda/\nu)\sqcup \BB^{\NW}_{\mathrm{vert}}(\lambda/\nu)\sqcup\BB^{\NW}_{\mathrm{in}}(\lambda/\nu),
\end{array}
\end{equation*}
where the segments are defined as follows: 
\begin{itemize}
\item `$\out$' refers to boxes in the inner rim   with no box in $\lambda\setminus\nu$ above or to the left of it, 
\item `$\mathrm{hor}$' refers to boxes along a horizontal segment, meaning with a box to the left and no box above,
\item  `$\mathrm{vert}$' refers to boxes with a box above and no box to the left (along a vertical segment),
\item `$\mathrm{in}$' refers to boxes with both a box above and to the left of it inside $\lambda\setminus\nu$.
\end{itemize}
We also let 
\[\BB^{\NW}_{\neg \mathrm{in}}(\lambda/\nu):=\BB^{\NW}_{\out}(\lambda/\nu)\sqcup \BB^{\NW}_{\mathrm{hor}}(\lambda/\nu)\sqcup \BB^{\NW}_{\mathrm{vert}}(\lambda/\nu),
\text{ so that }
\]
\[
\BB^{\NW}(\lambda/\nu)=\BB^{\NW}_{\neg \mathrm{in}}(\lambda/\nu)\sqcup\BB^{\NW}_{\mathrm{in}}(\lambda/\nu).
\]
Note that in the Schubert case, where $\nu=\emptyset$, there are no boxes of type `$\mathrm{in}$', and $\BB^{\NW}_{\neg\mathrm{in}}(\lambda)=\BB^{\NW}(\lambda)$. 

For any box $c$ of the inner rim  in $\BB^{\NW}_{\neg \mathrm{in}}(\lambda\setminus\nu)$, there exists a unique minimal frozen shape $\mu_i$ for which $c$ is an addable box, compare \cref{d:skewfrozens}. We denote this shape by $\mu(c)$. 

Suppose $c\in \BB^{\NW}_{\mathrm{in}}(\lambda/\nu)$. Then  there is a unique box $d_{NE}(c)$ that lies one row above $c$, and for which the rectangle with outer corners $c$ and $d$ is a maximal two-row rectangle in $\lambda\setminus\nu$. Furthermore, there is a unique box $d_{SW}(c)$ that lies one column to the left of $c$, and for which the rectangle with outer corners $c$ and $d$ is a maximal two-column rectangle in $\lambda\setminus\nu$.     

\begin{lemma}\label{l:canonicalskew} Using the notation introduced above we can rewrite the canonical superpotential associated to the skew shaped positroid 
	variety $X_{\lambda/\nu}$ as follows. 
\begin{equation*}
W^{\lambda/\nu} 
        = 
	\sum_{b=b_{\rho_{2\ell-1}} \in \BB^{\SE}_{\out}(\lambda/\nu)}  
        q_\ell \frac{p_{\sh(b)^-}}{p_{\sh(b)}}+
	\sum_{c\in \BB^{\NW}_{\neg\mathrm{in}}(\lambda/\nu)} \frac{p_{\mu(c)\sqcup c}}{p_{\mu(c)}} +
	\sum_{c\in \BB^{\NW}_{\mathrm{in}}(\lambda/\nu)} \left(\frac{p_{\sh(d_{NE}(c))\cup \sh(c)}}{p_{\sh(d_{NE}(c))}}+\frac{p_{\sh(d_{SW}(c))\cup \sh(c)}}{p_{\sh(d_{SW}(c))}}\right).
\end{equation*}

In particular the number of summands of the canonical superpotential is given by the formula
\[
 d+ |\BB^{\NW}(\lambda/\nu)| + |\BB^{\NW}_{\mathrm{in}}(\lambda/\nu)|,
\]
where $d=| \BB^{\SE}_{\out}(\lambda/\nu)|$ is the number of quantum parameters. 
\end{lemma}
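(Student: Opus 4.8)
\textbf{Proof plan for \cref{l:canonicalskew}.}

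The plan is to show that the right-hand side of the proposed formula is just a regrouping of the terms in the ``canonical formula'' \eqref{e:anotherSchub} from \cref{d:skewLG1}, exactly in parallel with how \cref{p:3rdpotential} regrouped \eqref{e:Wq1} in the Schubert case. First I would establish the dictionary between the combinatorial data entering the two formulas. In \eqref{e:anotherSchub} the non-quantum summands are: one term $p_{\sh(b')}/p_{\nu}$ for each $b'\in\BB^{\NW}_{\out}(\lambda/\nu)$; one term $\frac{p_{\sh(d_i)\cup\sh(c_i)}}{p_{\sh(d_i)}}$ for each maximal height-$2$ rectangle $R_i$ spanning rows $i,i+1$; and one term $\frac{p_{\sh(d^j)\cup\sh(c^j)}}{p_{\sh(d^j)}}$ for each maximal width-$2$ rectangle $R^j$ spanning columns $j,j+1$, with the convention that the term is $0$ when $R_i$ (resp.\ $R^j$) does not exist. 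The target formula instead indexes non-quantum terms by boxes $c$ of the inner rim $\BB^{\NW}(\lambda/\nu)$, with boxes of type $\mathrm{in}$ contributing two terms. So I need a bijection (up to the two-terms-for-one convention) between, on one side, the outer-corner boxes together with the height-$2$ and width-$2$ maximal rectangles, and on the other side, the boxes of the inner rim with $\mathrm{in}$-boxes counted twice.

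The key step is the matching itself, organized by the type of inner-rim box $c$. If $c\in\BB^{\NW}_{\out}(\lambda/\nu)$, then the minimal frozen shape $\mu(c)$ for which $c$ is addable is $\nu$ itself, so $\mu(c)=\nu$ and $\mu(c)\sqcup c=\sh(c)=\Rect(c)\cup\nu$; thus the term $p_{\mu(c)\sqcup c}/p_{\mu(c)}$ coincides with the term $p_{\sh(c)}/p_{\nu}$ attached to $c\in\BB^{\NW}_{\out}(\lambda/\nu)$ in \eqref{e:anotherSchub}. If $c\in\BB^{\NW}_{\mathrm{hor}}(\lambda/\nu)$ (a box on a horizontal segment of the inner rim), then $c$ lies in the top row of $\lambda/\nu$ in its column, and the maximal height-$2$ rectangle $R_i$ (with $c$ in its lower-right position) exists and is exactly the one whose corners are $d_i$ and $c_i = c$ after intersecting with the relevant rows; here one checks $\mu(c) = \sh(d_i)$ and $\mu(c)\sqcup c = \sh(d_i)\cup\sh(c_i)$. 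Symmetrically, boxes $c\in\BB^{\NW}_{\mathrm{vert}}(\lambda/\nu)$ match the width-$2$ rectangles $R^j$. Finally if $c\in\BB^{\NW}_{\mathrm{in}}(\lambda/\nu)$, it has both a box above it and a box to its left in $\lambda/\nu$; then \emph{both} a height-$2$ rectangle (pairing $c$ with $d_{NE}(c)$) and a width-$2$ rectangle (pairing $c$ with $d_{SW}(c)$) are ``maximal at $c$'', and these produce the two terms $\frac{p_{\sh(d_{NE}(c))\cup\sh(c)}}{p_{\sh(d_{NE}(c))}}$ and $\frac{p_{\sh(d_{SW}(c))\cup\sh(c)}}{p_{\sh(d_{SW}(c))}}$. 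Conversely, every nonzero summand $\frac{p_{\sh(d_i)\cup\sh(c_i)}}{p_{\sh(d_i)}}$ (resp.\ $\frac{p_{\sh(d^j)\cup\sh(c^j)}}{p_{\sh(d^j)}}$) arises this way from its southwest (resp.\ northeast) corner, which is a uniquely determined inner-rim box of type $\mathrm{hor}$, $\mathrm{vert}$, or $\mathrm{in}$. This exhausts all terms on both sides, and the quantum terms agree verbatim, so the two expressions for $W^{\lambda/\nu}$ coincide.

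The counting statement is then immediate: the quantum terms contribute $d=|\BB^{\SE}_{\out}(\lambda/\nu)|$, and the non-quantum terms contribute one for each $c\in\BB^{\NW}_{\neg\mathrm{in}}(\lambda/\nu)$ and two for each $c\in\BB^{\NW}_{\mathrm{in}}(\lambda/\nu)$, for a total of $|\BB^{\NW}_{\neg\mathrm{in}}(\lambda/\nu)| + 2|\BB^{\NW}_{\mathrm{in}}(\lambda/\nu)| = |\BB^{\NW}(\lambda/\nu)| + |\BB^{\NW}_{\mathrm{in}}(\lambda/\nu)|$. I expect the main obstacle to be the bookkeeping around the ``convention'' clauses: one must verify carefully that a rectangle $R_i$ (or $R^j$) fails to exist precisely when the associated would-be corner box does not lie in $\lambda/\nu$, so that no spurious or missing terms appear when $\nu$ is large and $\lambda/\nu$ is disconnected or has narrow arms. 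A clean way to handle this is to phrase the bijection purely in terms of inner-rim boxes and the local picture (presence/absence of a box directly north, and directly west) rather than in terms of the rectangles $R_i, R^j$, and then observe that the cases in \cref{d:skewLG1} are simply the image of this classification; this reduces the verification to a finite local case-check at each inner-rim box, identical in spirit to the five-case argument in the proof of \cref{p:3rdpotential}.
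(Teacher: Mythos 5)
The overall plan—regrouping the terms of \eqref{e:anotherSchub} by the boxes $c$ of the inner rim, handling the $\out$, $\mathrm{hor}$, $\mathrm{vert}$, and $\mathrm{in}$ cases separately, and observing that the quantum and $\BB^{\NW}_{\out}$ contributions are verbatim—is exactly the approach the paper takes, and your handling of the $\out$ and $\mathrm{in}$ cases, as well as the final count, is correct.

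However, your matching of $\mathrm{hor}$ and $\mathrm{vert}$ boxes to the $R_i$ and $R^j$ rectangles is reversed, and the case you spell out would fail as stated. A box $c=(r,s)\in\BB^{\NW}_{\mathrm{hor}}(\lambda/\nu)$ has a box to its west but no box to its north, so $(r-1,s)\notin\lambda/\nu$ and there is \emph{no} height-$2$ rectangle $R_{r-1}$ in rows $r-1,r$ reaching column $s$; in particular $c$ cannot be the southwest corner $c_i$ of any $R_i$ (that corner has a box directly above it, namely inside the rectangle), contradicting your claim that ``$c_i=c$.'' (Your parenthetical ``$c$ in its lower-right position'' is also internally inconsistent with ``$c_i=c$,'' since $c_i$ is defined as the SW corner.) What is actually true is that $c$ is the northeast corner $c^{s-1}$ of the width-$2$ rectangle $R^{s-1}$ in columns $s-1,s$, with $d^{s-1}$ its southwest corner, so $\mu(c)=\sh(d^{s-1})$ and $\mu(c)\sqcup c=\sh(d^{s-1})\cup\sh(c^{s-1})$. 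Dually, a box $c\in\BB^{\NW}_{\mathrm{vert}}(\lambda/\nu)$ is the SW corner $c_{r-1}$ of the height-$2$ rectangle $R_{r-1}$, not the NE corner of any $R^j$. Since the target sum over $\BB^{\NW}_{\neg\mathrm{in}}$ does not distinguish $\mathrm{hor}$ from $\mathrm{vert}$, the final formula survives this swap, but the verification step ``one checks $\mu(c)=\sh(d_i)$'' for a $\mathrm{hor}$ box references a rectangle that does not exist and so would not go through; you need to swap the roles of $R_i$ and $R^j$ in those two cases.
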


\begin{proof}
The first summand of the above formula is identical to the one from \eqref{e:anotherSchub}. If $c$ is in $\BB^{\NW}_{\out}(\lambda/\nu)$ then $c$ is an addable box for $\nu$, so we have $\mu(c)=\nu$, and we recover the second summand of  \eqref{e:anotherSchub}. Note that every rectangle as in \cref{d:skewLG1} has a special box labeled $c$ that lies in the inner rim of $\lambda\setminus\mu$. If $c\in \BB^{\NW}_{\mathrm{hor}}(\lambda/\nu)$ then $c$ belongs to a unique maximal two-column rectangle (as upper right hand corner), but is not occurring as corner in any maximal two-row rectangle. If we label the lower left-hand corner by $d$ then $\mu(c)=\sh(d)$, and $\mu(c)\cup c=\sh(d)\cup\sh(c)$. Similarly if $c\in \BB^{\NW}_{\mathrm{vert}}(\lambda/\nu)$ then $c$ belongs only to a maximal two-row rectangle (as lower left-hand corner), and if we label the upper right-hand corner by $d$ then $\mu(c)=\sh(d)$, and $\mu(c)\cup c=\sh(d)\cup\sh(c)$. Thus the summands in \eqref{e:anotherSchub} associated to  rectangles whose boxes $c$ are of horizontal and vertical type precisely give us the remaining summands of the second sum in the new formula. 

We are left needing to consider the boxes $c$ from the inner rim which are of type `$\mathrm {in}$'. Each one of these occurs both in a maximal two-row rectangle 
and a maximal two-column rectangle. These two occurrences lead to the first and second terms in 
\[
\frac{p_{\sh(d_{NE}(c))\cup \sh(c)}}{p_{\sh(d_{NE}(c))}}+\frac{p_{\sh(d_{SW}(c))\cup \sh(c)}}{p_{\sh(d_{SW}(c))}},
\]
respectively. Therefore we obtain exactly the same terms as in \eqref{e:anotherSchub} and the two formulas coincide.

The number of summands now clearly equals to 
$ d+ |\BB^{\NW}_{\neg \mathrm{in}}(\lambda/\nu)| + 2|\BB^{\NW}_{\mathrm{in}}(\lambda/\nu)|$ and the formula follows by combining one of the $|\BB^{\NW}_{\mathrm{in}}(\lambda/\nu)|$ summands with $ |\BB^{\NW}_{\neg \mathrm{in}}(\lambda/\nu)| $.
 \end{proof}
 
 \begin{cor} We have a bijection between the terms of $W^{\lambda\setminus\nu}$ and the positroid divisors 
	 in $X_{\lambda\setminus\nu}$. 
 \end{cor}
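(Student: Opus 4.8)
The statement asserts a bijection between the summands of the canonical superpotential $W^{\lambda/\nu}$ and the positroid divisors in the open skew shaped positroid variety $X_{\lambda/\nu}$. The approach is to combine the combinatorial description of the summands in \cref{l:canonicalskew} with a classification of the codimension one positroid strata of $X_{\lambda/\nu}$, exactly mirroring what \cref{lem:codim1} and \cref{p:LeDivisors} do in the Schubert case. First I would establish the analogue of \cref{lem:codim1}: the positroid divisors contained in $X_{\lambda/\nu}$ come in two families. One family is indexed by the boxes of $\BB^{\SE}_{\out}(\lambda/\nu)$ (removing an outer corner of $\lambda$ gives a smaller skew shape $\lambda'/\nu$, whose positroid variety is a divisor, the skew analogue of a Schubert divisor); the other family is indexed by the boxes of $\BB^{\NW}(\lambda/\nu)$ together with $\BB^{\NW}_{\mathrm{in}}(\lambda/\nu)$ counted with multiplicity two, coming from enlarging $\nu$ by one box in a way that stays inside $\lambda$, or more precisely from the $\Le$-diagram (or Grassmann-necklace) description of the codimension one strata. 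The key point is that the number of these divisors is precisely $d + |\BB^{\NW}(\lambda/\nu)| + |\BB^{\NW}_{\mathrm{in}}(\lambda/\nu)|$, which matches the summand count computed at the end of \cref{l:canonicalskew}.

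The natural order of steps is: (1) recall the positroid stratification of $X_{\lambda/\nu}$ via projected open Richardson varieties, identifying $X_{\lambda/\nu}$ as an open positroid variety and noting that its complement inside $\overline{X_{\lambda/\nu}}$ is the union of positroid divisors; (2) use the $\Le$-diagram / plabic graph model (as in \cref{s:LeDivisors}) to enumerate the codimension one positroid subvarieties — the $\Le$-diagram of $X_{\lambda/\nu}$ is the all-$+$ filling of the skew shape $\lambda/\nu$ (viewed appropriately as a diagram inside the bounding rectangle), and codimension one subvarieties correspond either to removing an outer corner of $\lambda$ from the shape, or to placing a single $0$ in a box of the inner rim; (3) match each such divisor with a summand of $W^{\lambda/\nu}$ using the explicit formula in \cref{l:canonicalskew}: the quantum terms $q_\ell \frac{p_{\sh(b)^-}}{p_{\sh(b)}}$ for $b\in\BB^{\SE}_{\out}(\lambda/\nu)$ correspond to the first family, and the terms with denominator $p_{\mu(c)}$ (respectively the two terms for $c$ of type $\mathrm{in}$) correspond to the $\Le$-diagram with a $0$ in the box $c$; (4) conclude that this matching is a bijection by the coincidence of cardinalities and the injectivity evident from the distinct denominators/shapes.

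The main obstacle will be step (2): carefully justifying the classification of codimension one positroid strata of the \emph{skew} positroid variety and, in particular, explaining why a box $c$ of type $\mathrm{in}$ contributes \emph{two} positroid divisors while the first-family and the other inner-rim boxes contribute one each. In the Schubert case this is handled by \cref{lem:codim1} together with \cite{RietschClosure}, but for skew shapes one must set up the $\Le$-diagram (or alternatively the Grassmann-necklace / decorated permutation) of $X_{\lambda/\nu}$ explicitly and track which positroid strata of $\overline{X_{\lambda/\nu}}$ lie in codimension one. I expect this to follow the same template as the Schubert argument — using that positroid strata biject with $\Le$-diagrams and that closure relations are governed by \cite{RietschClosure} — but the geometry of the doubled contribution for inner corners requires care: such a box $c$ lies simultaneously on a maximal two-row and a maximal two-column rectangle, and each of these configurations yields a distinct way of degenerating (i.e. a distinct positroid divisor obtained by zeroing out a Pl\"ucker coordinate), which is exactly why \cref{l:canonicalskew} produces two terms there. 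Once this classification is in place, the bijection with superpotential summands is immediate from \cref{l:canonicalskew}, just as \cref{p:LeDivisors} follows from \cref{lem:codim1} in the Schubert setting.
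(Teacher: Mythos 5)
Your plan follows the same general route as the paper — exhibit an explicit $\Le$-diagram for each summand of $W^{\lambda/\nu}$, then invoke \cite{RietschClosure} to see that these exhaust the positroid divisors — but step~(2) as you formulate it has a concrete flaw for the inner-type boxes. You propose that the non-Schubert positroid divisors come from ``placing a single $0$ in a box of the inner rim.'' That is correct for $c\in \BB^{\NW}_{\neg\mathrm{in}}(\lambda/\nu)$: the $\Le$-diagram of shape $\lambda$ with $0$'s exactly in $\nu\cup c$ satisfies the $\Le$-condition (there is no $+$ simultaneously above and to the left of $c$), and this is the divisor the paper attaches to the summand $p_{\mu(c)\sqcup c}/p_{\mu(c)}$. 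But for a box $c\in\BB^{\NW}_{\mathrm{in}}(\lambda/\nu)$, putting a single $0$ in $c$ does \emph{not} give a $\Le$-diagram: by definition of ``in,'' $c$ has a $+$ immediately to its left and a $+$ immediately above, so the $\Le$-condition is violated. Hence the classification you propose in step~(2) fails precisely where the doubling occurs.

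The paper's proof handles this by giving, for each inner-type $c$, two genuinely different $\Le$-diagrams, neither of which is a single-box change: one where the $0$'s in $\lambda\setminus\nu$ are $c$ together with \emph{all boxes to the left of $c$ in its row} (and the $+$'s in $\nu$ are the box $\NW$ of $c$ and the boxes to its left in that row), and a mirror version in which the column through $c$ is used instead. These strings of changed entries are exactly what restores the $\Le$-property, and each yields a distinct codimension-one positroid stratum, matching the two summands $p_{\sh(d_{NE}(c))\cup\sh(c)}/p_{\sh(d_{NE}(c))}$ and $p_{\sh(d_{SW}(c))\cup\sh(c)}/p_{\sh(d_{SW}(c))}$ of \cref{l:canonicalskew}. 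Your intuition that the two maximal two-row and two-column rectangles through $c$ ``yield a distinct way of degenerating'' is pointing in the right direction, but the mechanism you give (``zeroing out a Pl\"ucker coordinate'') does not distinguish the cases, and without the explicit $\Le$-diagrams the bijection is not established. Once those diagrams are written down, the rest of your outline (count the divisors via \cite{RietschClosure}, match cardinalities, check injectivity via the distinct denominators) does go through as you describe.
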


\begin{proof}
The summand $q_\ell \frac{p_{\sh(b)^-}}{p_{\sh(b)}}$ corresponds to the `Schubert'-divisor $X_{\lambda'\setminus\nu}$ where $\lambda'=\lambda\setminus b$. 

The summand $\frac{p_{\mu(c)\sqcup c}}{p_{\mu(c)}}$  corresponds to the positroid divisor whose $\Le$-diagram has shape $\lambda$, with $0$'s precisely in the boxes $\nu\cup c$. This positroid divisor is a skew shaped positroid variety precisely if $c$ is in $\BB_{\out}(\lambda\setminus\nu)$.  

The summand
$\frac{p_{\sh(d_{NE}(c))\cup \sh(c)}}{p_{\sh(d_{NE}(c))}}$
corresponds to the positroid divisor whose $\Le$-diagram has shape $\lambda$; 
the boxes in $\lambda\setminus\nu$ contain a $+$, with the exception of $c$ and all boxes to the left of $c$ in the same row; and the boxes in $\nu$ contain a $0$, with the exception of the box $\NW$ of $c$ and all boxes to its left in the same row.   

The summand 
$\frac{p_{\sh(d_{SW}(c))\cup \sh(c)}}{p_{\sh(d_{SW}(c))}}$
corresponds to the positroid divisor whose $\Le$-diagram has
shape $\lambda$; the boxes in $\lambda\setminus\nu$ contain a $+$, with the exception of $c$ and all boxes above $c$ in the same column; and the boxes in $\nu$ contain a $0$, with the exception of the box $\NW$ of $c$ and all boxes above it in the same row.  

	One can show using e.g. \cite[Proposition 7.2]{RietschClosure} (see also 
\cite[Section 5 and Appendix A]{shelling}) 
that the above divisors are in fact all the positroid divisors
 in $X_{\lambda\setminus\nu}$. 
\end{proof}

\subsection{The superpotential in terms  of the rectangles cluster}\label{s:skewrectW}

In this section we give a Laurent polynomial expression for our 
skew shaped positroid variety  superpotential 
in terms of the rectangles cluster, 
generalizing the formula
from  \cref{s:rectW}.
Again, this formula can be expressed
in terms of a diagram, shown in \cref{fig:superskew}.
The general formula is given in
\cref{prop:skewsuper2}.

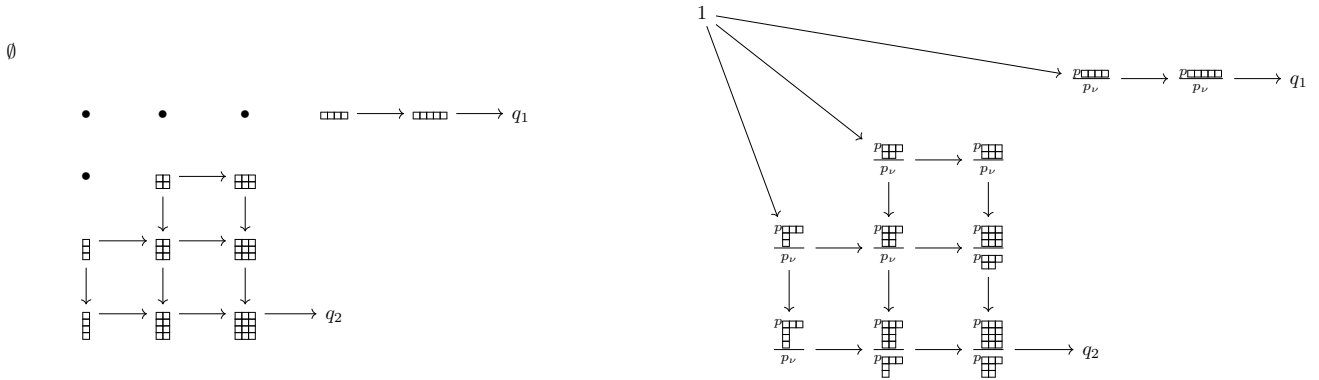
\begin{figure}[h]
	\adjustbox{scale=0.75,center}{%
	\begin{tikzcd}
	\emptyset &&&&&\\
	& \bullet &  \bullet &  \bullet & {\ydiagram{4}} & {\ydiagram{5}} & {q_1} \\
	&  \bullet & {\ydiagram{2,2}} & {\ydiagram{3,3}} &&&& \\
	& {\ydiagram{1,1,1}} & {\ydiagram{2,2,2}} & {\ydiagram{3,3,3}} &&&\\
	& {\ydiagram{1,1,1,1}} & {\ydiagram{2,2,2,2}} & {\ydiagram{3,3,3,3}} & {q_2} &&
	\arrow[from=2-5, to=2-6]
	\arrow[from=2-6, to=2-7]
	\arrow[from=3-3, to=3-4]
	\arrow[from=3-4, to=4-4]
	\arrow[from=3-3, to=4-3]
	\arrow[from=4-2, to=4-3]
	\arrow[from=4-3, to=4-4]
	\arrow[from=4-4, to=5-4]
	\arrow[from=4-3, to=5-3]
	\arrow[from=4-2, to=5-2]
	\arrow[from=5-2, to=5-3]
	\arrow[from=5-3, to=5-4]
	\arrow[from=5-4, to=5-5]
	\end{tikzcd}
	\hspace{-.1cm}
\begin{tikzcd}
	&&1 &&&& & & \\
	 &&&&&& {\frac{p_{\ydiagram{4}}}{p_\nu}} & {\frac{p_{\ydiagram{5}}}{p_\nu}} & {q_1} \\
	 &&&& {\frac{p_{\ydiagram{3,2}}}{p_\nu}} & {\frac{p_{\ydiagram{3,3}}}{p_\nu}} \\
	&&& {\frac{p_{\ydiagram{3,1,1}}}{p_\nu}} & {\frac{p_{\ydiagram{3,2,2}}}{p_\nu}} & {\frac{p_{\ydiagram{3,3,3}}}{p_{\ydiagram{3,2}}}} \\
	&&& {\frac{p_{\ydiagram{3,1,1,1}}}{p_\nu}} & {\frac{p_{\ydiagram{3,2,2,2}}}{p_{\ydiagram{3,1,1}}}} & {\frac{p_{\ydiagram{3,3,3,3}}}{p_{\ydiagram{3,2,2}}}} & {q_2}
	\arrow[from=1-3, to=3-5]
	\arrow[from=1-3, to=2-7]
	\arrow[from=1-3, to=4-4]
	\arrow[from=2-8, to=2-9]
	\arrow[from=2-7, to=2-8]
	\arrow[from=3-5, to=3-6]
	\arrow[from=3-6, to=4-6]
	\arrow[from=3-5, to=4-5]
	\arrow[from=4-6, to=5-6]
	\arrow[from=4-5, to=5-5]
	\arrow[from=4-5, to=4-6]
	\arrow[from=5-5, to=5-6]
	\arrow[from=5-6, to=5-7]
	\arrow[from=5-4, to=5-5]
    \arrow[from=4-4, to=4-5]
	\arrow[from=4-4, to=5-4]
\end{tikzcd}
}
\caption{
	Let $\lambda=(5,3,3,3)$ and $\nu = (3,1)$. The diagram
	at the left shows the rectangles $i\times j$ associated
	to the boxes $(i,j)$ of $\lambda/\nu$.  
	The diagram at the right
	is the quiver $Q_{\lambda/\nu}$ from \cref{def:superquiver2}.}
\label{fig:superskew}
\end{figure}
As before, let $p_{i \times j}$ denote the Pl\"ucker coordinate indexed by the Young diagram
which is an $i \times j$ rectangle.  If $i=0$ or $j=0$ then we
set $p_{i \times j} = p_{\nu} = 1$.

\begin{definition}\label{def:superquiver2}
Let $\lambda/\nu$ be a skew shape.
We label the rows of $\lambda$ from top to bottom, and the columns from left to right.
We refer to the box in row $i$ and column $j$ as $(i,j)$.
Let $i_1<\dots< i_d$ denote the rows containing the outer (southeast) 
corners of $\lambda/\nu$.
We define a labeled quiver $Q_{\lambda/\nu}$, with one vertex 
$v(i,j)$ for every box $(i,j)$ of $\lambda/\nu$,
plus $d+1$ extra vertices
$\{v_0,v_1,\dots,v_d\}$.
The labels and arrows of the quiver are defined as follows.
\begin{itemize}
	\item If $b=(i,j)$ is a box of $\lambda/\nu$, 
                we label  $v(i,j)$  by
		 $\frac{p_{\sh(b)}}{p_{\sh(b)^-}}$.
        \item We label $v_0$ by $p_{\nu}$,
		and we label $v_1,\dots,v_d$ by $q_1,\dots,q_d$.
         \item If $(i,j)$ and $(i,j+1)$ are boxes of $\lambda$,
                 we add an arrow $v(i,j)\to v(i,j+1)$.
         \item If $(i,j)$ and $(i+1,j)$ are boxes of $\lambda$,
                 we add an arrow $v(i,j)\to v(i+1,j)$.
	 \item For every northwest corner $(i,j)$ of $\lambda/\nu$, 
		 we add an arrow 
                  $v_0 \to v(i,j)$.
	  \item For each outer (southeast) 
		  corner in row $i_\ell$, we add 
                  an arrow $v(i_{\ell}, \lambda_{i_{\ell}}) \to v_{\ell}$.
\end{itemize}
        Let $A(Q_{\lambda})$ denote the set of arrows of $Q_{\lambda}$, and for each
        arrow $a:v\to v'$ in $A(Q_{\lambda})$, let $p(a)$ denote the Laurent monomial
        in Pl\"ucker coordinates obtained
        by dividing the label of $v'$ by the label of $v$.

\end{definition}
See  \cref{fig:superskew} for an example of the quiver $Q_{\lambda/\nu}$ associated
to $\lambda = (5,3,3,3)$ and $\nu = (3,1)$.  

\begin{definition}[Rectangles seed]
Given a skew shape $\lambda/\nu$, let $\Rect(\lambda/\nu)$ be the set of all Young 
diagrams of the form $(i \times j) \cup \nu$,
where $(i,j)$ is a box of $\lambda/\nu$.
Let $\mathbb T^{\lambda/\nu}_{\rect}$ 
be the subset of $\check{X}_{\lambda/\nu}$ where
	$p_{\mu} \neq 0$ for all $\mu \in \Rect(\lambda/\nu)$.
\end{definition}

\begin{proposition}[Expansion of the superpotential in the rectangles cluster]\label{prop:skewsuper2}
     Let $\lambda/\nu$ be a skew shape.
       When we restrict $W^{\lambda/\nu}$ to
       $\mathbb T^{\lambda/\nu}_{\rect}$
       (which is a cluster torus for the $\mathcal{A}$-cluster
       structure for the open skew shaped positroid variety, see
       \cref{sec:Acluster})
	we obtain $$W^{\lambda/\nu}_{\rect} = \sum_{a\in A(Q_{\lambda/\nu})} p(a).$$
\end{proposition}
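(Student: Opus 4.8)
The plan is to reduce \cref{prop:skewsuper2} to \cref{prop:super2} (its Schubert-variety analogue) together with the combinatorial bookkeeping already carried out in \cref{l:canonicalskew}. The key point is that the Laurent monomials $p(a)$ attached to arrows of $Q_{\lambda/\nu}$ are of exactly the same shape as in the Schubert case, except that every rectangle $i\times j$ has been replaced by $(i\times j)\cup\nu = \sh(b)$, and that the three-term Pl\"ucker relations used to collect rows and columns of the quiver still hold because they only involve Pl\"ucker coordinates indexed by shapes of the form $\sh(b)$, $\sh(d_i)\cup\sh(c_i)$, etc., all of which contain $\nu$ and lie in $\mathcal P_{\lambda/\nu}$.

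\medskip

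\textbf{Step 1: the skew analogue of \cref{lem:Plucker}.} First I would establish the two identities
\begin{align*}
	\sum_{j=j_0}^m \frac{p_{((i+1)\times j)\cup\nu}\ p_{((i-1)\times(j-1))\cup\nu}}{p_{(i\times(j-1))\cup\nu}\ p_{(i\times j)\cup\nu}}
	&= \frac{p_{\sh(c)\cup\sh(d)}}{p_{\sh(d)}},\\
	\sum_{i=i_0}^h \frac{p_{(i\times(j+1))\cup\nu}\ p_{((i-1)\times(j-1))\cup\nu}}{p_{(i\times j)\cup\nu}\ p_{((i-1)\times j)\cup\nu}}
	&= \frac{p_{\sh(c)\cup\sh(d)}}{p_{\sh(d)}},
\end{align*}
where the index ranges are dictated by the maximal height-$2$ (resp. width-$2$) rectangle of $\lambda/\nu$ in the relevant pair of rows (resp. columns), and $c,d$ are its southwest/northeast corners. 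The proof is the same induction as for \cref{lem:Plucker}: the base case is trivial, and the inductive step is a single three-term Pl\"ucker relation among $p_{\sh(a)}$, $p_{\sh(b)}$, $p_{\sh(a')}$, $p_{\sh(b')}$ for appropriate boxes $a,b,a',b'$ of $\lambda/\nu$. The only subtlety relative to the Schubert case is that the telescoping starts at the first box of $\lambda/\nu$ in that row/column rather than at the edge of the bounding rectangle; I would check that the ``initial'' term produced by the telescoping is precisely $\frac{p_{\sh(d)\cup\sh(c)}}{p_{\sh(d)}}$ when the box $c$ is of type $\mathrm{hor}$, $\mathrm{vert}$, or $\mathrm{in}$ in the sense of \cref{l:canonicalskew}, and is $\frac{p_{\mu(c)\sqcup c}}{p_{\mu(c)}}$ when $c$ lies in $\BB^{\NW}_{\out}(\lambda/\nu)$ (so $\mu(c)=\nu$).

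\medskip

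\textbf{Step 2: collecting the quiver arrows.} As in the proof of \cref{prop:super2}, I would group the arrows of $Q_{\lambda/\nu}$: the horizontal arrows within a fixed row $i$, the vertical arrows within a fixed column $j$, the arrows $v_0\to v(i,j)$ out of the source, and the arrows $v(i_\ell,\lambda_{i_\ell})\to v_\ell$ into the $q_\ell$. Summing $p(a)$ over the horizontal (resp. vertical) arrows in a fixed line and applying Step 1 produces exactly one term of the form $\frac{p_{\sh(d_{NE}(c))\cup\sh(c)}}{p_{\sh(d_{NE}(c))}}$ or $\frac{p_{\sh(d_{SW}(c))\cup\sh(c)}}{p_{\sh(d_{SW}(c))}}$ (or, for a box $c$ with no box above/to the left, directly $\frac{p_{\mu(c)\sqcup c}}{p_{\mu(c)}}$ once one also uses the arrow out of $v_0$). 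The arrows into $v_\ell$ contribute $q_\ell\,\frac{p_{\sh(b)^-}}{p_{\sh(b)}}$ for the southeast outer corner $b=b_{\rho_{2\ell-1}}$, matching the first sum of \cref{d:skewLG1}/\cref{l:canonicalskew}; here I must be careful that the label of $v(i,j)$ is $\frac{p_{\sh(b)}}{p_{\sh(b)^-}}$, so dividing the label of $v_\ell=q_\ell$ by it gives precisely $q_\ell\frac{p_{\sh(b)^-}}{p_{\sh(b)}}$. Assembling these contributions, and invoking \cref{l:canonicalskew} to recognize the resulting sum as $W^{\lambda/\nu}$, completes the identification $\sum_{a} p(a) = W^{\lambda/\nu}_{\rect}$ on $\mathbb T^{\lambda/\nu}_{\rect}$.

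\medskip

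\textbf{Main obstacle.} The calculation itself is routine given \cref{l:canonicalskew}; the real care is needed in the bookkeeping of \emph{degenerate} cases: rows or columns of $\lambda/\nu$ of length $1$ (where the corresponding maximal height-$2$ or width-$2$ rectangle $R_i$ or $R^j$ does not exist, and the associated superpotential term is $0$ by the convention in \cref{d:skewLG1}), and boxes $c$ of the inner rim lying simultaneously in the top-left of both a horizontal and a vertical run (the type $\mathrm{in}$ boxes), which receive two quiver contributions that must be matched to the two-term bracketed summand in \cref{l:canonicalskew}. I expect that handling these boundary/corner cases uniformly — in particular verifying that an arrow $v_0\to v(i,j)$ is present exactly when $(i,j)\in\BB^{\NW}_{\out}(\lambda/\nu)$, and that no double-counting occurs along a line that both starts at a $v_0$-arrow and telescopes — is where most of the verification effort lies, and it is the step I would write out most carefully. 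Everything else is a direct transcription of the Schubert-case argument with $i\times j$ systematically replaced by $(i\times j)\cup\nu$.
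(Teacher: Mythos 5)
Your proposal follows the same route as the paper: sum the arrow contributions $p(a)$ over each row and each column of $Q_{\lambda/\nu}$ and telescope via three-term Pl\"ucker (i.e.\ cluster exchange) relations, exactly as in the proof of \cref{prop:super2}. The paper's own proof is little more than a pointer to that earlier argument together with the color-coded worked example in \cref{ex:skewsuper}; your Steps 1 and 2 simply make explicit the skew analogue of \cref{lem:Plucker} (note that the base case of the telescoping works because the leftmost column $j_0$ of $R_i$ satisfies $j_0 = \nu_i + 1$, forcing $((i-1)\times(j_0-1))\cup\nu=(i\times(j_0-1))\cup\nu=\nu$) and the degenerate-case bookkeeping that the paper leaves implicit.
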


\begin{example}\label{ex:skewsuper}
 When $\lambda = (5,3,3,3)$ and $\nu = (3,1)$,
we obtain
\begin{align*}\label{super}
        W^{\lambda/\nu} =
	&q_1 \frac{p_{\ydiagram{3,1}}}{p_{\ydiagram{5,1}}} + 
	q_2 \frac{ p_{\ydiagram{3,2,2}}}{p_{\ydiagram{3,3,3,3}}} + 
	{	\frac{p_{\ydiagram{4,1}}}{p_{\ydiagram{3,1}}} +} 
	{ \frac{p_{\ydiagram{3,2}}}{p_{\ydiagram{3,1}}} + }
	\frac{p_{\ydiagram{3,1,1}}}{p_{\ydiagram{3,1}}} + 
	{\color{orange} \frac{p_{\ydiagram{3,3,3} } p_{\ydiagram{3,1}}}{p_{\ydiagram{3,2}} p_{\ydiagram{3,2,2}}}} + 
	{\color{orange} \frac{p_{\ydiagram{3,2,2}}}{p_{\ydiagram{3,2}}}} + 
	{\color{green} \frac{p_{\ydiagram{3,2,2}}}{p_{\ydiagram{3,1,1}}} + }
	{\color{green} \frac{p_{\ydiagram{3,2,2,2}} p_{\ydiagram{3,1}}}{p_{\ydiagram{3,1,1}} p_{\ydiagram{3,1,1,1}}}} +  \\
	& {\color{gray} \frac{p_{\ydiagram{5,1}}}{p_{\ydiagram{4,1}}}} + 
	{\color{purple} \frac{p_{\ydiagram{3,3}}}{p_{\ydiagram{3,2}}} + }
	{\color{purple} \frac{p_{\ydiagram{3,3,3,3}} p_{\ydiagram{3,1,1}}}{p_{\ydiagram{3,2,2}} p_{\ydiagram{3,2,2,2}}}} + 
	{\color{purple} \frac{p_{\ydiagram{3,3,3}}p_{\ydiagram{3,1}}}{p_{\ydiagram{3,2}}p_{\ydiagram{3,2,2}}}} + 
	{\color{red}	\frac{p_{\ydiagram{3,1,1,1}}}{p_{\ydiagram{3,1,1}}}} + 
	{\color{red} \frac{p_{\ydiagram{3,2,2,2}} p_{\ydiagram{3,1}}}{p_{\ydiagram{3,1,1}}p_{\ydiagram{3,2,2}}}} + 
	{\color{red}	\frac{p_{\ydiagram{3,3,3,3}}p_{\ydiagram{3,2}}}{p_{\ydiagram{3,2,2}} p_{\ydiagram{3,3,3}}}}.
\end{align*}
\end{example}

\begin{proof}
The proof is a slight generalization of the proof of 
\cref{prop:super2}.  As before, we sum the contributions
of all arrows in a given row, and all arrows in a given column,
and use  Pl\"ucker relations.  
For example, the contributions corresponding to a given row or column
are color coded in \cref{ex:skewsuper}.   One can  verify that when one
sums these contributions, one recovers the formula 
from \cref{ex:SkewExample}.
\end{proof}

\subsection{How results and proofs from previous sections generalize}

We believe that most of our results 
 generalize to the setting of skew shaped positroid varieties.
We now sketch how some  arguments can be extended.

First note that by the results of \cref{sec:appendix}, 
each skew shaped positroid variety
has both an $\mathcal{A}$ and $\mathcal{X}$ cluster algebra structure,
with many (but not all) clusters indexed by corresponding reduced plabic graphs.  There is a \emph{rectangles
cluster} $G_{\lambda/\mu}^{\rect}$
for $X_{\lambda/\mu}^{\circ}$, see \cref{rem:rectangles}.

\begin{theorem}\label{thm:mainskew}
Let $\Sigma_G^{\mathcal{X}}$ be an arbitrary $\mathcal{X}$-cluster
seed for the open skew shaped positroid variety $X_{\lambda/\mu}^{\circ}$.
Then the Newton-Okounkov body $\Delta_G^{\lambda/\mu}$ is a rational
polytope with lattice points $\{\val_G(P_{\nu}) \ \vert \ \mu \subseteq \nu \subseteq \lambda\}$, and it coincides with the superpotential polytope
$\Gamma_{G}^{\lambda/\mu}$.
\end{theorem}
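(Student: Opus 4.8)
The plan is to follow exactly the structure used for the Schubert case in \cref{sec:polytopescoincide}, replacing each ingredient by its skew analogue.

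\textbf{Step 1: The rectangles seed.} First I would prove the theorem for $G = G^{\rect}_{\lambda/\nu}$. The key point is that \cref{prop:skewsuper2} expresses $W^{\lambda/\nu}$ as a Laurent polynomial in the Pl\"ucker coordinates indexed by $\Rect(\lambda/\nu)$, encoded by the quiver $Q_{\lambda/\nu}$. After the unimodular change of variables $v_{b} \mapsto f_{b} = v_{\sh(b)} - v_{\sh(b)^-}$ (the obvious analogue of \cref{p:unimodular}, where now we subtract the value on the box one step to the northwest \emph{within} $\lambda/\nu$, using $v_{\nu}=0$), the tropicalised superpotential polytope should become a \emph{marked order polytope} of the poset $P(\lambda/\nu)$ of boxes of $\lambda/\nu$ ordered by $b \leq b'$ iff $\sh(b) \subseteq \sh(b')$. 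This poset now has several minimal elements (one for each box in $\BB^{\NW}_{\out}(\lambda/\nu)$) and several maximal elements (one for each box in $\BB^{\SE}_{\out}(\lambda/\nu)$); marked order polytopes allow exactly this. I would then invoke the theory of \cite{order}, \cite{marked}, \cite{FangFourier} to get: (a) a description of the vertices via the max-diagonal formula $v_{b} = \maxdiag(\sh(b)\setminus\kappa)$ for $\kappa$ with $\nu \subseteq \kappa \subseteq \lambda$ (the skew analogue of \cref{l:VerticesOfGamma}, \cref{c:ConvEqualsGamma}); (b) the integer decomposition property; and (c) the volume as $\frac{1}{|\lambda/\nu|!}$ times the number of standard Young tableaux of skew shape $\lambda/\nu$. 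On the Newton--Okounkov side, one needs that $X^{\circ}_{\lambda/\nu}$ is projectively normal in the Pl\"ucker embedding and that \cref{t:KK} applies, so that $\Vol(\Delta^{\lambda/\nu}_{\rect})$ equals $\frac{1}{|\lambda/\nu|!}$ times the degree of $X_{\lambda/\nu}$, which is the number of skew standard Young tableaux (by the argument of \cite{LS} / the theory of positroid varieties). The equality $\Gamma^{\lambda/\nu}_{\rect} = \conv^{\lambda/\nu}_{\rect} \subseteq \Delta^{\lambda/\nu}_{\rect}$ of equal volumes then forces equality, exactly as in the proof of \cref{thm:rectanglesproof}.

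\textbf{Step 2: Arbitrary seeds.} For a general $\mathcal{X}$-cluster seed, I would reproduce the argument of \cref{thm:main}: establish that a theta function basis $\mathcal{B}(X^{\circ}_{\lambda/\nu})$ exists (via the existence of a green-to-red sequence for open positroid varieties, \cite{FordSer}, together with an \emph{optimized seed} for every frozen variable — the skew analogue of \cref{lem:optimized}, proved by the same explicit mutation sequences, mutating along rows, columns, and antidiagonals of the rectangles quiver $Q_{\lambda/\nu}$), and that it restricts to a basis compatible with the grading. Then \cref{thm:pointed} gives that every theta function is a pointed Laurent polynomial in any $\mathcal{X}$-chart with distinct leading exponents, hence (as in \cref{cor:bijval}) the tropicalised $\mathcal{A}$-cluster mutation $\Psi_{G,G'}$ restricts to a bijection $\val_G(L_r) \to \val_{G'}(L_r)$. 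Combining with \cref{prop:tropmutation}/\cref{c:GammaMutation} (which apply verbatim to any universally positive element, and $W^{\lambda/\nu}$ is universally positive by the skew analogue of \cref{prop:univpos}, proved via the same Pl\"ucker-relations-are-cluster-mutations argument) gives $\Psi_{G,G'}(\Delta^{\lambda/\nu}_G) = \Delta^{\lambda/\nu}_{G'}$ and $\Psi_{G,G'}(\Gamma^{\lambda/\nu}_G) = \Gamma^{\lambda/\nu}_{G'}$, so the equality propagates from the rectangles seed to all seeds, and lattice points are tracked throughout.

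\textbf{Main obstacle.} I expect the main difficulty to be Step 1's identification of the tropicalised superpotential polytope with a marked order polytope of $P(\lambda/\nu)$ \emph{precisely}: one must check carefully that the ``head over tails'' sums of the row- and column-arrows of $Q_{\lambda/\nu}$ telescope to the canonical-formula terms of \eqref{e:anotherSchub} (or \cref{l:canonicalskew}), including the boxes of type $\mathrm{in}$ that have no Schubert-case analogue and contribute \emph{two} terms each — this is where the skew geometry genuinely differs from \cref{lem:Plucker}. A secondary subtlety is verifying projective normality of skew shaped positroid varieties in the Pl\"ucker embedding (needed for \eqref{e:projnormal}); this should follow from known results on positroid varieties, e.g. the fact that they are normal and Cohen--Macaulay with rational singularities \cite{KLS}, but it requires a citation or short argument. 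Everything else is a routine translation of the Schubert-case proofs, since all the cluster-theoretic inputs (\cref{prop:tropmutation}, \cref{thm:theta}, \cref{thm:pointed}, \cref{cor:bijval}) were already stated at the level of generality of open positroid varieties.
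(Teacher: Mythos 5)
Your proposal is correct and follows essentially the same two-step strategy as the paper's proof sketch of this theorem: first establish the result for the rectangles seed $G^{\rect}_{\lambda/\mu}$ via the order-polytope identification and a volume/degree comparison (the paper cites \cite[Section 4]{StanleySchubert} for the degree of $X_{\lambda/\mu}$ equaling the number of skew standard Young tableaux), then propagate to arbitrary seeds via optimized seeds, the theta function basis, and tropicalized $\mathcal{A}$-mutation, exactly as in \cref{sec:arbitraryseeds}. The two subtleties you flag — the telescoping of arrow contributions in $Q_{\lambda/\nu}$ across boxes of type $\mathrm{in}$ (addressed via \cref{prop:skewsuper2} and \cref{l:canonicalskew}), and projective normality of $X_{\lambda/\mu}$ needed to identify $L_r$ with $H^0(X_{\lambda/\mu},\mathcal{O}(rD))$ — are indeed passed over lightly in the paper's brief sketch and would require explicit verification, so your instinct about where the additional care is needed is sound.
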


\begin{proof}
To prove \cref{thm:mainskew}, we follow the arguments of \cref{sec:polytopescoincide}.  We start by proving that 
\cref{thm:mainskew} holds in the case of the rectangles cluster
$G = G_{\lambda/\mu}^{\rect}$.
In this case, we can show that 
-- up to a unimodular change of coordinates -- 
the superpotential polytope 
$\Gamma_G^{\lambda/\mu}$
agrees with the order polytope associated to the 
poset $P(\lambda/\mu)$, where $P(\lambda/\mu)$
is the natural generalization of the poset $P(\lambda)$
given by \cref{def:posetlambda}, with one element associated
to each box of the skew diagram $\lambda/\mu$.
Therefore the volume of 
$\Gamma_G^{\lambda/\mu}$ is 
$\frac{e(P(\lambda/\mu))}{|\lambda/\mu|!}$,
where $e(P(\lambda/\mu))$ is the number of linear extensions
of $P(\lambda/\mu)$ and $|\lambda/\mu|$ is the number of boxes
of $\lambda/\mu$.  Note that 
 $e(P(\lambda/\mu))$ is the number of standard Young tableaux
 of shape $\lambda/\mu$.

On the other hand, we can follow the proof of 
\cref{c:ConvEqualsGamma} and 
show that 
if 
$\conv_G^{\lambda/\mu}$ denotes
 the convex hull of the
valuations of the Pl\"ucker coordinates
$\val_G(P_\nu)$ for $\mu\subseteq \nu \subseteq \lambda$, then
the polytopes 
$\conv_G^{\lambda/\mu}$ 
and $\Q^{\lambda/\mu}_{G}$ agree.
We now recall the fact that 
the degree of the skew shaped positroid variety
$X_{\lambda/\mu}$ in its Pl\"ucker 
embedding equals the number of standard tableaux
	of shape $\lambda/\mu$ \cite[Section 4]{StanleySchubert}.
It now follows that 
$\conv_G^{\lambda/\mu}$  -- which so far
we only knew to be contained in the 
 Newton-Okounkov body $\Delta_G^{\lambda/\mu}$  -- 
must be equal to 
  $\Delta_G^{\lambda/\mu}$. 
This completes the proof of 
\cref{thm:mainskew} in the case of the rectangles cluster
$G = G_{\lambda/\mu}^{\rect}$.

To extend the proof to the case of general clusters $G$,
we follow the arguments of \cref{sec:arbitraryseeds}.
We need to verify that each frozen vertex has an optimized 
seed, and then we can apply the arguments of 
\cref{thm:theta} to show that 
there is a theta function basis for 
 $\mathcal{B}(X_{\lambda/\mu}^{\circ})$ for the coordinate ring
$\CC[\widehat{X}_{\lambda/\mu}^{\circ}]$.
This allows us to generalize the proofs of 
	\cref{cor:bijval} and \cref{thm:main}.
\end{proof}

\appendix

\section{Combinatorics of positroid cells 
and positroid varieties}\label{sec:appendix}


\subsection{The totally nonnegative Grassmannian
and positroid cells}\label{sec:positroid}

Let $Gr_{m,n} = Gr_{m,n}(\mathbb{F}):=Gr_m(\mathbb{F}^n)$
denote the Grassmannian of $m$-planes in $\mathbb{F}^n$, with 
Pl\"ucker coordinates
denoted by $p_{I}$ for $I \in {[n] \choose m}$.

\begin{definition}\cite{Lusztig3, Postnikov}
	We say that $V\in Gr_{m,n}$ is \emph{totally nonnegative}
	if each Pl\"ucker coordinates $p_I(V)\geq 0$ for 
	all $I\in {[n] \choose m}$.
	Similarly, $V$ is \emph{totally positive} if each Pl\"ucker
	coordinate is strictly positive for all $I$.
	We let 
	$Gr_{m,n}^{\geq 0}$
	and 
	$Gr_{m,n}^{>0}$ denote the set of totally nonnegative
	and totally positive elements of $Gr_{m,n}$, respectively.
	$Gr_{m,n}^{\geq 0}$ is called the \emph{totally nonnegative 
	Grassmannian.}
\end{definition}

If we partition 
$Gr_{m,n}^{\geq 0}$ 
into strata based on which Pl\"ucker 
coordinates are strictly positive and which are $0$, 
we get a cell decomposition of 
$Gr_{m,n}^{\geq 0}$  into \emph{positroid cells}
\cite{Postnikov}. Postnikov classified these using  various 
combinatorial objects, among them, 
\emph{equivalence classes of reduced plabic graphs}, and 
\emph{$\Le$-diagrams}, see \cite{Postnikov} and 
\cite{FWZ4} for more background.

\subsection{Plabic graphs}\label{sec:plabic}

In this section we give background on plabic graphs and their moves.

\begin{definition}
A {\it plabic (or planar bicolored) graph\/}
is an undirected graph $G$ drawn inside a disk
(considered modulo homotopy)
with $n$ {\it boundary vertices\/} on the boundary of the disk,
labeled $1,\dots,n$ in clockwise order, as well as some
colored {\it internal vertices\/}.
These internal vertices
are strictly inside the disk and are
colored in black and white. 
An internal vertex of degree one adjacent to a boundary vertex is a \emph{lollipop}.
We will always assume that no vertices of the same color are adjacent, and that 
each boundary vertex $i$ is adjacent to a single internal vertex.
\end{definition}

See \cref{ex:plabic}
for an example of a plabic graph.
\begin{figure}[h]
\centering
\includegraphics[height=1.5in]{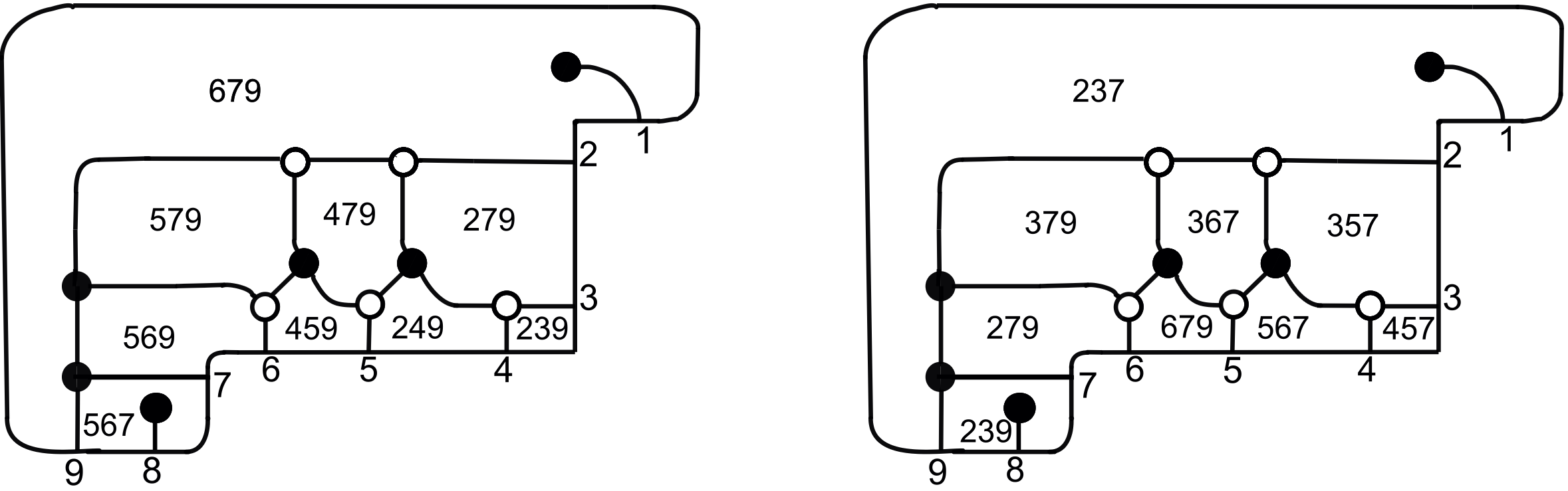}
\caption{A plabic graph $G$ with its source and target face labels.
The trip permutation is 
$\pi_G=(\underline{1}, 5, 4, 6, 9, 2, 3, \underline{8}, 7)$, 
the reverse Grassmann necklace is 
$\protect\overleftarrow{\mathcal{I}}(G)
= (679, 279, 239, 249, 459, 569, 567, 567, 679),$
and 
	the Grassmann necklace is 
$\mathcal{I}(G) = (237, 237, 357, 457, 567, 679, 279, 239, 239)$.
	\label{ex:plabic}}
\end{figure}

There is a natural set of local transformations (moves) of plabic graphs, which we now describe.
We will also assume that $G$ is \emph{leafless}, 
i.e.\ if $G$ has an 
internal vertex of degree $1$, then that vertex must be adjacent to a boundary
vertex.

(M1) SQUARE MOVE (Urban renewal).  If a plabic graph has a square formed by
four trivalent vertices whose colors alternate,
then we can switch the
colors of these four vertices.

(M2) CONTRACTING/EXPANDING A VERTEX.
Two adjacent internal vertices of the same color can be merged
or unmerged.

(M3) MIDDLE VERTEX INSERTION/REMOVAL.
We can always remove/add degree $2$ vertices.

See \cref{M1} for depictions of these three moves.

\begin{figure}[h]
\centering
\includegraphics[height=.5in]{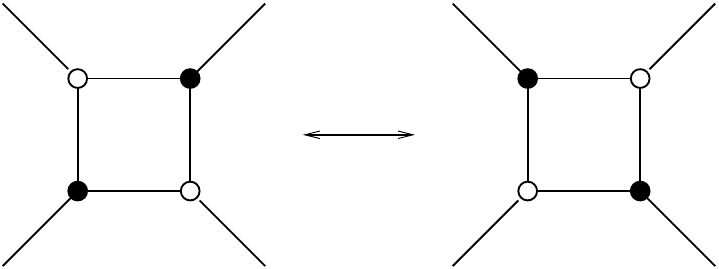}
\hspace{.5in}
\raisebox{6pt}{\includegraphics[height=.4in]{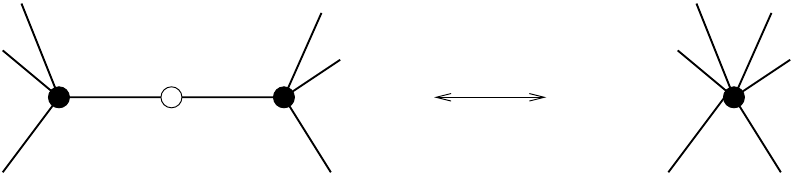}}
\hspace{.5in}
\raisebox{16pt}{\includegraphics[height=.07in]{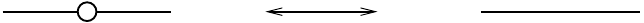}}
\caption{%
	Local moves (M1), (M2), and (M3) on plabic graphs.}
\label{M1}
\end{figure}

\begin{definition}
Two plabic graphs are called \emph{move-equivalent} if they can be obtained
from each other by moves (M1)-(M3).  The \emph{move-equivalence class}
of a given plabic graph $G$ is the set of all plabic graphs which are move-equivalent
to $G$.
A leafless plabic graph without isolated components
is called \emph{reduced} if there is no graph in its move-equivalence
	class in which there is a \emph{bubble}, 
	that is, two adjacent vertices $u$ and $v$ which 
	are connected by more than one edge.
\end{definition}

\begin{definition} A \emph{decorated permutation}  on $[n]$
	is a bijection $\pi: [n]\to [n]$ whose fixed points are each colored either
	black (loop) or white (coloop).  We denote 
	a black fixed point $i$ by $\pi(i)=\underline{i}$ and a white
	fixed point $i$ by $\pi(i)=\overline{i}$.
	An \emph{anti-excedance} of the decorated permutation $\pi$
	is an element $i\in [n]$ such that either
	$\pi^{-1}(i)>i$ or $\pi(i)=\overline{i}$. 
\end{definition}

\begin{definition}\label[definition]{def:rules}
Given a reduced plabic graph $G$,
a  \emph{trip} $T$ is a directed path which starts at
some boundary vertex
$i$, and follows the ``rules of the road": it turns (maximally) right at a
black vertex,  and (maximally) left at a white vertex.
 Note that $T$ will also
end at a boundary vertex $j$; we then refer to this trip as
$T_{i \to j}$. Setting $\pi(i)=j$ for each such trip,
 we associate a (decorated) \emph{trip permutation}
$\pi_G=(\pi(1),\dots,\pi(n))$ to each reduced plabic graph $G$, where a fixed point $\pi(i)=i$ is colored white (black) if there is a white (black) lollipop at boundary vertex $i$.
We say that $G$ has \emph{type $\pi_G$}.
\end{definition}

The plabic graph $G$ in 
\cref{ex:plabic} has trip permutation 
$\pi_G=(\underline{1}, 5, 4, 6, 9, 2, 3, \underline{8}, 7)$.

\begin{remark}\label[remark]{rem:moves}
Note that the trip permutation of a plabic graph is preserved 
by the local moves (M1)-(M3). For reduced plabic graphs the converse holds, namely 
it follows from \cite[Theorem 13.4]{Postnikov}, see also 
	\cite[Theore 7.4.25]{FWZ4},
that any two reduced plabic graphs with the same trip permutation are 
move-equivalent. 
\end{remark}

Now we use the notion of trips to label each face of $G$
by a Pl\"ucker coordinate.
Towards this end, note that every trip
will partition the faces of a plabic graph into
two parts: those on the left of the trip, and those on the right
of a trip.

\begin{definition}\label[definition]{def:faces}
Let $G$ be a reduced plabic graph with $n$ boundary vertices.
For each one-way trip $T_{i\to j}$ with $i \neq j$, we place the label $i$
(respectively, $j$)
 in every face which is to the left of $T_{i\to j}$. If $i=j$ (that is, $i$ is adjacent to a lollipop), we place the label $i$ 
in all faces if the lollipop is white and in no faces if the lollipop is black.
We then obtain a labeling $\mathcal{F}_{\source}(G)$
(respectively, $\mathcal{F}_{\target}(G)$)
of faces of $G$ by subsets of $[b]$ which
we call the \emph{source} (respectively, \emph{target})
\emph{labeling} of $G$.  We identify each $m$-element subset of $[n]$
with the corresponding Pl\"ucker coordinate.

We will often identify the {\bf source labels} of $G$
with the vertical steps of corresponding Young diagrams fitting in an $m \times (n-m)$
rectangle (as in \cref{s:Young}), 
	see \cref{fig:plabic}.
\end{definition}

\subsection{Le-diagrams, Grassmann necklaces, positroid cells, 
and open positroid
varieties}

In this section we define $\Le$-diagrams and 
Grassmann necklaces, as well as the associated positroid cells
and varieties. 
We start by defining $\Le$-diagrams.  Each  $\Le$-diagram gives rise
to an associated reduced plabic graph, which is 
a distinguished representative of its move-equivalence class.

\begin{definition}[{\cite{Postnikov}}]
Fix a partition  $\lambda$ together with
an $m \times (n-m)$ rectangle which contains $\lambda$.
A {\it $\Le$-diagram}
(or Le-diagram) $D$ of shape $\lambda$ 
is a filling by $0$'s and $+$'s of the boxes of the
Young diagram of ${\lambda}$
in such a way that the
{\it $\Le$-property} is satisfied:
there is no $0$ which has a $+$ above it in the same column and a $+$ to its
left in the same row.
See Figure \ref{fig:Le} for
 examples of  $\Le$-diagrams.
\end{definition}
\begin{figure}[ht]
\setlength{\unitlength}{1.3mm}
\begin{center}
\resizebox{0.15\textwidth}{!}{\begin{picture}(42,35)
\put(5,32){\line(1,0){54}}
  \put(5,23){\line(1,0){45}}
  \put(5,14){\line(1,0){45}}
  \put(5,5){\line(1,0){54}}
  \put(5,5){\line(0,1){27}}
  \put(14,5){\line(0,1){27}}
  \put(23,5){\line(0,1){27}}
  \put(59,5){\line(0,1){27}}
  \put(23,14){\line(0,1){18}}
  \put(32,14){\line(0,1){18}}
  \put(41,14){\line(0,1){18}}
  \put(41,23){\line(0,1){9}}
  \put(50,14){\line(0,1){18}}
   \put(8,26){\Huge{$+$}}
   \put(17,26){\Huge{$0$}}
         \put(26,26){\Huge{$+$}}
         \put(35,26){\Huge{$+$}}
         \put(44,26){\Huge{$0$}}
         \put(8,17){\Huge{$+$}}
         \put(17,17){\Huge{$0$}}
         \put(26,17){\Huge{$+$}}
         \put(35,17){\Huge{$+$}}
         \put(44,17){\Huge{$+$}}
         \put(8,8){\Huge{$+$}}
         \put(17,8){\Huge{$0$}}
        \end{picture}}
\hspace{2cm} \resizebox{0.15\textwidth}{!}{\begin{picture}(42,35)
\put(5,32){\line(1,0){36}}
  \put(5,23){\line(1,0){36}}
  \put(5,14){\line(1,0){36}}
  \put(5,5){\line(1,0){36}}
  \put(5,5){\line(0,1){27}}
  \put(14,5){\line(0,1){27}}
  \put(23,5){\line(0,1){27}}
  \put(41,5){\line(0,1){27}}
  \put(23,14){\line(0,1){18}}
  \put(32,14){\line(0,1){18}}
  \put(41,14){\line(0,1){18}}
  \put(41,23){\line(0,1){9}}
   \put(8,26){\Huge{$+$}}
   \put(17,26){\Huge{$+$}}
         \put(26,26){\Huge{$+$}}
         \put(35,26){\Huge{$+$}}
         \put(8,17){\Huge{$+$}}
         \put(17,17){\Huge{$+$}}
         \put(26,17){\Huge{$+$}}
         \put(35,17){\Huge{$+$}}
         \put(8,8){\Huge{$+$}}
         \put(17,8){\Huge{$+$}}
        \end{picture}}
\hspace{1cm} \resizebox{0.15\textwidth}{!}{\begin{picture}(42,35)
\put(5,32){\line(1,0){36}}
  \put(5,23){\line(1,0){36}}
  \put(5,14){\line(1,0){36}}
  \put(5,5){\line(1,0){36}}
  \put(5,5){\line(0,1){27}}
  \put(14,5){\line(0,1){27}}
  \put(23,5){\line(0,1){27}}
  \put(41,5){\line(0,1){27}}
  \put(23,14){\line(0,1){18}}
  \put(32,14){\line(0,1){18}}
  \put(41,14){\line(0,1){18}}
  \put(41,23){\line(0,1){9}}
   \put(8,26){\Huge{$0$}}
   \put(17,26){\Huge{$0$}}
         \put(26,26){\Huge{$+$}}
         \put(35,26){\Huge{$+$}}
         \put(8,17){\Huge{$0$}}
         \put(17,17){\Huge{$+$}}
         \put(26,17){\Huge{$+$}}
         \put(35,17){\Huge{$+$}}
         \put(8,8){\Huge{$+$}}
         \put(17,8){\Huge{$+$}}
        \end{picture}}
\end{center}
\caption{A Le-diagram of shape
	$\lambda=(5,5,2)$ 
	contained in a $3 \times 6$ rectangle, 
	and two Le-diagrams of shape $\lambda=(4,4,2)$ contained in a 
	$3 \times 4$ rectangle.}
 \label{fig:Le}
\end{figure}
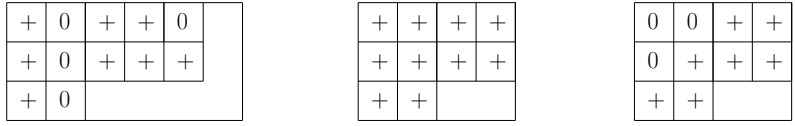

\begin{definition}\label{def:Network}
Let $D$ be a $\Le$-diagram.  Delete the $0$'s and replace
each $+$ with a vertex.  From each vertex we construct a hook
which
goes east and south, to the border of the Young diagram.  
We also place boundary vertices labeled by $1,2,\dots,n$
along the 
edges on the southeast border of the Young diagram.
The resulting
diagram is called the ``hook diagram'' $H(D)$.  
We obtain a network $N(D)$ from $H(D)$ by orienting
	all edges west and south, see \cref{hook}.

We can also get a plabic graph $G(D)$ from the 
hook diagram $H(D)$, by making the local substitutions 
shown at the right of  \cref{hook}.  The plabic graph $G(D)$
associated to $H(D)$ is shown in 
\cref{ex:plabic}.
\end{definition}

\begin{figure}[h]
\centering
\setlength{\unitlength}{1.3mm}
	\hspace{-2cm}
\resizebox{0.15\textwidth}{!}{\begin{picture}(42,35)
\put(5,32){\line(1,0){54}}
  \put(9,27){\line(1,0){41}}
  \put(9,18){\line(1,0){41}}
  \put(23,14){\line(1,0){27}}
  \put(9,9){\line(1,0){14}}
  \put(5,5){\line(1,0){18}}
  \put(5,5){\line(0,1){27}}
  \put(9,5){\line(0,1){23}}
  \put(23,5){\line(0,1){9}}
  \put(27,14){\line(0,1){13}}
  \put(36,14){\line(0,1){13}}
  \put(45,14){\line(0,1){4}}
  \put(50,14){\line(0,1){18}}
	\put(55, 32){\circle*{1}}
	\put(55,30){\makebox(0,0){$\mathbf{1}$}}
	\put(50,27){\circle*{1}}
	\put(52,27){\makebox(0,0){$\mathbf{2}$}}
	\put(50,18){\circle*{1}}
	\put(52,18){\makebox(0,0){$\mathbf{3}$}}
	\put(45,14){\circle*{1}}
	\put(45,12){\makebox(0,0){$\mathbf{4}$}}
	\put(36,14){\circle*{1}}
	\put(36,12){\makebox(0,0){$\mathbf{5}$}}
	\put(27,14){\circle*{1}}
	\put(27,12){\makebox(0,0){$\mathbf{6}$}}
	\put(23,9){\circle*{1}}
	\put(25,9){\makebox(0,0){$\mathbf{7}$}}
	\put(18,5){\circle*{1}}
	\put(18,3){\makebox(0,0){$\mathbf{8}$}}
	\put(9,5){\circle*{1}}
	\put(9,3){\makebox(0,0){$\mathbf{9}$}}
	\put(9,27){\circle*{1}}
	 \put(27,27){\circle*{1}}
	 \put(36,27){\circle*{1}}
	 \put(9,18){\circle*{1}}
	 \put(27,18){\circle*{1}}
	 \put(36,18){\circle*{1}}
	 \put(45,18){\circle*{1}}
	 \put(9,9){\circle*{1}}
\end{picture}}\hspace{1.5cm}
\resizebox{0.15\textwidth}{!}{\begin{picture}(42,35)
\put(5,32){\line(1,0){54}}
  \put(9,27){\line(1,0){41}}
  \put(9,18){\line(1,0){41}}
  \put(23,14){\line(1,0){27}}
  \put(9,9){\line(1,0){14}}
  \put(5,5){\line(1,0){18}}
  \put(5,5){\line(0,1){27}}
  \put(9,5){\line(0,1){23}}
  \put(23,5){\line(0,1){9}}
  \put(27,14){\line(0,1){13}}
  \put(36,14){\line(0,1){13}}
  \put(45,14){\line(0,1){4}}
  \put(50,14){\line(0,1){18}}
	\put(55, 32){\circle*{1}}
	\put(55,30){\makebox(0,0){$\mathbf{1}$}}
	\put(50,27){\circle*{1}}
	\put(52,27){\makebox(0,0){$\mathbf{2}$}}
	\put(43,27){\makebox(0,0){$<$}}
	\put(31,27){\makebox(0,0){$<$}}
	\put(18,27){\makebox(0,0){$<$}}
	\put(40,18){\makebox(0,0){$<$}}
	\put(31,18){\makebox(0,0){$<$}}
	\put(18,18){\makebox(0,0){$<$}}
	\put(18,9){\makebox(0,0){$<$}}
	\put(36,22){\makebox(0,0){$\vee$}}
	\put(27,22){\makebox(0,0){$\vee$}}
	\put(9,22){\makebox(0,0){$\vee$}}
	\put(45,16){\makebox(0,0){$\vee$}}
	\put(36,16){\makebox(0,0){$\vee$}}
	\put(27,16){\makebox(0,0){$\vee$}}
	\put(9,13){\makebox(0,0){$\vee$}}
	\put(9,7){\makebox(0,0){$\vee$}}
	\put(50,18){\circle*{1}}
	\put(52,18){\makebox(0,0){$\mathbf{3}$}}
	\put(45,14){\circle*{1}}
	\put(45,12){\makebox(0,0){$\mathbf{4}$}}
	\put(36,14){\circle*{1}}
	\put(36,12){\makebox(0,0){$\mathbf{5}$}}
	\put(27,14){\circle*{1}}
	\put(27,12){\makebox(0,0){$\mathbf{6}$}}
	\put(23,9){\circle*{1}}
	\put(25,9){\makebox(0,0){$\mathbf{7}$}}
	\put(18,5){\circle*{1}}
	\put(18,3){\makebox(0,0){$\mathbf{8}$}}
	\put(9,5){\circle*{1}}
	\put(9,3){\makebox(0,0){$\mathbf{9}$}}
	\put(9,27){\circle*{1}}
	 \put(27,27){\circle*{1}}
	 \put(36,27){\circle*{1}}
	 \put(9,18){\circle*{1}}
	 \put(27,18){\circle*{1}}
	 \put(36,18){\circle*{1}}
	 \put(45,18){\circle*{1}}
	 \put(9,9){\circle*{1}}
\end{picture}}\hspace{1.5cm}
\includegraphics[height=.8in]{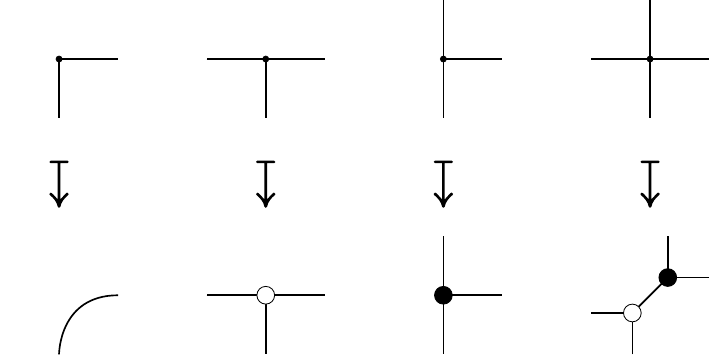}
	\caption{The hook diagram $H(D)$ and  network $N(D)$
	associated to the Le-diagram $D$ at the left of  \cref{fig:Le},
	plus the local substitutions for getting the plabic 
	graph $G(D)$ from  $H(D)$.}
\label{hook}
\end{figure}

\begin{definition}\label{def:necklace}
Let $m\leq n$ be positive integers.
A \emph{Grassmann necklace} of type $(m,n)$
is a sequence
	$\mathcal{I}= (I_1,I_2,\dots,I_n)$ 
	of subsets $I_{\ell}\in {[n]\choose m}$, with subscripts
	considered modulo $n$, 
such that for any $i\in [n]$,
\begin{itemize}
	\item if $i\in I_i$ then $I_{i+1}=(I_i \setminus \{i\}) 
		\cup \{j\}$ for some $j\in [n]$,
	\item if $i\notin I_i$ then $I_{i+1}=I_i$.
\end{itemize}
And a \emph{reverse Grassmann necklace} of type $(m,n)$
is a sequence
	$\overleftarrow{\mathcal{I}}= (\overleftarrow{I_1},\overleftarrow{I_2},\dots,\overleftarrow{I_n})$ 
	of subsets $\overleftarrow{I_{\ell}}\in {[n]\choose m}$, 
such that for any $i\in [n]$,
\begin{itemize}
	\item if $i\in \overleftarrow{I_i}$ then $\overleftarrow{I_{i-1}}=
		(\overleftarrow{I_i} \setminus \{i\}) 
		\cup \{j\}$ for some $j\in [n]$,
	\item if $i\notin \overleftarrow{I_i}$ then $\overleftarrow{I_{i-1}}=\overleftarrow{I_i}$.
\end{itemize}
\end{definition}

\begin{definition}\label{neckfromG}
We can read off a Grassmann necklace $\mathcal{I}(G)$ 
(respectively, reverse Grassmann necklace)
from each reduced plabic graph $G$
by using the target (resp., source)
face labels and letting $I_i$ be the label of the 
boundary face of $G$ which is incident to the boundary vertices $i-1$
and $i$ (resp., $i$ and $i+1$).
\end{definition}
See \cref{ex:plabic} for an example.

Our next goal is to explain how to read off a positroid cell and variety
from a reduced plabic graph.
\begin{definition}
The $i$-order $<_i$ on the set $[n]$ is the total order
$$i<_i i+1 <_i \dots <_i n <_i 1 <_i \dots <_i i-2 <_i i-1.$$
The \emph{$<_i$-Gale order} on ${[n] \choose m}$
	is the partial order $\leq_i$
defined as follows: 
for any two subsets $S=\{s_1 <_i \dots <_i s_m\}$
and $T = \{t_1 <_i \dots <_i t_m\}$ of $[n]$, we have
$S \leq_i T$ if and only if $s_j \leq_i t_j$
for all $j\in [m]$.
\end{definition}

Given any full rank $m \times n$ matrix $A$, for each $1\leq \ell \leq n$,
 let $\overleftarrow{I_{\ell}}$ be the lexicographically maximal subset with respect to 
$<_{\ell+1}$ such that 
the Pl\"ucker coordinate $p_{I_{\ell}}(A)$
is nonzero.  
The associated sequence
$\overleftarrow{\mathcal{I}}(A):=(\overleftarrow{I_1},\dots,\overleftarrow{I_n})$ is always a reverse Grassmann
necklace  \cite[Lemma 16.3]{Postnikov}.

The following result is a dual version of a result of \cite{Postnikov,oh}\footnote{The original
statement used Grassmann necklaces instead of reverse Grassmann necklaces.}
\begin{theorem}[Positroid cell from Grassmann necklace]\cite{Postnikov, oh}
	Let $\overleftarrow{\mathcal{I}}= (\overleftarrow{I_1},\overleftarrow{I_2},\dots,\overleftarrow{I_n})$ 
be a reverse Grassmann necklace of type $(m,n)$.
Then the collection
	$$\mathcal{B}(\overleftarrow{\mathcal{I}}):=
	\Bigl\{ B\in {[n] \choose m} \vert B \leq_{j+1} I_j
\text{ for all }j\in [n]\Bigr\}$$
is the collection of nonvanishing Pl\"ucker coordinates
	of a positroid cell $S_{\overleftarrow{\mathcal{I}}}$ of $Gr_{m,n}$.
Conversely, every positroid cell arises this way, so we have
	$$Gr_{m,n}^{\geq 0} = \bigsqcup_{\overleftarrow{\mathcal{I}}} S_{\overleftarrow{\mathcal{I}}},$$
	where the union is over all reverse Grassmann necklaces of type
	$(m,n)$.
\end{theorem}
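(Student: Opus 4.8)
The plan is to deduce this ``reverse'' form of the positroid stratification from Postnikov's original statement by transporting it through the order-reversing symmetry of the Grassmannian, while separately supplying the two combinatorial facts that the construction needs: that the reverse-necklace assignment is constant on positroid cells, and that every reverse Grassmann necklace is realized by a totally nonnegative point.

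First I would recall the setup. The totally nonnegative Grassmannian $Gr_{m,n}^{\geq 0}$ is partitioned into \emph{positroid cells}, where $A$ and $A'$ lie in the same cell exactly when $\{I \in \binom{[n]}{m} : p_I(A) \neq 0\} = \{I : p_I(A') \neq 0\}$; this common set of bases is a matroid, the \emph{positroid} of the cell. Since a cell is determined entirely by the vanishing pattern of Pl\"ucker coordinates, the assignment $A \mapsto \overleftarrow{\mathcal{I}}(A)$ is constant on each cell, and by (the dual of) \cite[Lemma 16.3]{Postnikov} it takes values in the reverse Grassmann necklaces of type $(m,n)$ of \cref{def:necklace}. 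The heart of the theorem is then the matroid identity: for $A \in Gr_{m,n}^{\geq 0}$, the positroid of $A$ equals $\mathcal{B}(\overleftarrow{\mathcal{I}}(A))$. One inclusion is formal: if $p_B(A) \neq 0$ then $B$ is a basis of the positroid of $A$, which is a matroid, so by Gale's lexicographic lemma its $<_{\ell+1}$-lex-maximal basis $\overleftarrow{I_\ell}$ is simultaneously its $\leq_{\ell+1}$-Gale-maximal basis, whence $B \leq_{\ell+1} \overleftarrow{I_\ell}$ for every $\ell$. The opposite inclusion — that $B \leq_{j+1} \overleftarrow{I_j}$ for all $j$ forces $p_B(A) \neq 0$ — is the substantive statement and is where total nonnegativity enters; it is exactly the content of Postnikov's theorem \cite{Postnikov, oh} in the version phrased with ordinary Grassmann necklaces, and I would obtain the reverse version from it by applying the involution $w_0 \colon i \mapsto n+1-i$ of $[n]$.

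Next I would prove surjectivity onto the positroid cells: every reverse Grassmann necklace $\overleftarrow{\mathcal{I}}$ of type $(m,n)$ equals $\overleftarrow{\mathcal{I}}(A)$ for some $A \in Gr_{m,n}^{\geq 0}$. For this I use the appendix machinery: reverse Grassmann necklaces of type $(m,n)$ are in bijection with decorated permutations, hence with $\Le$-diagrams $D$ in an $m \times (n-m)$ rectangle; each $D$ produces a point $A(D) \in Gr_{m,n}^{\geq 0}$ via the network $N(D)$ of \cref{def:Network} and the flow parametrization of positroid cells \cite{Postnikov, Talaska}, and the nonvanishing Pl\"ucker coordinates of $A(D)$ are precisely the source face labels of the plabic graph $G(D)$; by \cref{neckfromG} this yields $\overleftarrow{\mathcal{I}}(A(D)) = \overleftarrow{\mathcal{I}}$. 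Together with the matroid identity, this shows that $\overleftarrow{\mathcal{I}} \mapsto \mathcal{B}(\overleftarrow{\mathcal{I}})$ is a positroid for every reverse Grassmann necklace, and that $\overleftarrow{\mathcal{I}} \mapsto S_{\overleftarrow{\mathcal{I}}}$ is a bijection from reverse Grassmann necklaces of type $(m,n)$ onto the set of positroid cells (injectivity being clear because the necklace is recovered from its positroid as the sequence of $<_{\ell+1}$-lex-maximal bases). Since the positroid cells partition $Gr_{m,n}^{\geq 0}$, the displayed disjoint union decomposition follows.

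The main obstacle, and the step I expect to require genuine care rather than new ideas, is making the $w_0$-transport airtight. The map $w_0 \colon i \mapsto n+1-i$ induces an automorphism of $Gr_{m,n}$ which, after the standard sign twist $p_I \mapsto \pm p_{w_0(I)}$, restricts to a self-homeomorphism of $Gr_{m,n}^{\geq 0}$; it reverses each $i$-order into an $(n+2-i)$-order and so interchanges lex-minimality with lex-maximality, and it interchanges forward Grassmann necklaces with reverse Grassmann necklaces and the forward Gale conditions with the conditions $B \leq_{j+1} \overleftarrow{I_j}$. The fiddly parts are the sign bookkeeping in the $w_0$-action on Pl\"ucker coordinates, the exact index shifts relating $<_i$, $<_{i+1}$ and their $w_0$-images, and the behaviour of the fixed points and their black/white decorations under $w_0$. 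Once this dictionary is nailed down, the ``reverse'' statement of the theorem is a formal consequence of the ``forward'' one of \cite{Postnikov, oh}, with no further Pl\"ucker-relation manipulation required.
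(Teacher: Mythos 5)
The paper does not actually supply a proof of this statement: it labels it a ``dual version'' of a theorem of Postnikov and Oh, cites those works, and moves on (see the footnote; and \cref{def:posvariety} later cites \cite[Proposition 2.8]{MullerSpeyer} for the equivalence of the forward and reverse necklace formulations). Your proposal reconstructs exactly the argument the paper is implicitly gesturing at, so it is in the same spirit. Breaking the job into (i) Gale's exchange lemma giving the formal inclusion, (ii) the forward Postnikov--Oh theorem giving the substantive inclusion, (iii) the $w_0$-transport converting the forward statement into the reverse one, and (iv) surjectivity via $\Le$-diagrams, plabic networks, and \cref{neckfromG} is a clean and correct organization, and the $w_0$-dictionary you sketch (reversal of cyclic orders, swap of lex-min/lex-max, index shift $\ell = n+1-i$, sign twist on Pl\"uckers) is the right bookkeeping.

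One imprecision worth flagging: you assert that ``the nonvanishing Pl\"ucker coordinates of $A(D)$ are precisely the source face labels of the plabic graph $G(D)$.'' That is not true --- the nonvanishing Pl\"ucker coordinates of $A(D)$ are the positroid bases, i.e.\ all $J$ admitting a flow from $I_\O$ to $J$, which in general is a strictly larger collection than the face labels (the face labels form a cluster, hence are only an algebraically independent subset). What you actually need, and what \cref{neckfromG} gives you, is that the source labels of the \emph{boundary} faces of $G(D)$ are the reverse Grassmann necklace, so that $\overleftarrow{\mathcal{I}}(A(D)) = \overleftarrow{\mathcal{I}}$ is recovered. This repair is local and does not affect the validity of the surjectivity argument. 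A second route you could have taken for surjectivity, somewhat closer to Postnikov's own development, is to observe that the correspondence decorated permutations $\leftrightarrow$ Grassmann necklaces $\leftrightarrow$ reverse Grassmann necklaces is already bijective on the combinatorial side (e.g.\ via \cite[Lemma 16.3]{Postnikov} and its dual), and then apply the forward surjectivity once; but your $\Le$-diagram construction is equally valid and ties in nicely with the rest of the appendix.
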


We can use the reverse Grassmann necklace to also define
an associated \emph{open positroid variety}.
Let $\Mat^{\circ}(m,n)$ denote the set of full rank $m \times n$ matrices.
\begin{definition}[Positroid variety from reverse Grassmann necklace]\label{def:posvariety}
	Given a reverse Grassmann necklace $\overleftarrow{\mathcal{I}}=(\overleftarrow{I_1},\dots,\overleftarrow{I_n})$
of type $(m,n)$,
let  
	$$\Mat^{\circ}(\overleftarrow{\mathcal{I}}):=\{A \in \Mat^{\circ}(m,n) \ \vert \ 
	\overleftarrow{\mathcal{I}}(A) = \overleftarrow{\mathcal{I}}.\}$$
The \emph{open positroid variety}  
	$X^{\circ}_{\overleftarrow{\mathcal{I}}}$
is 
	$\Gl_m \setminus \Mat^{\circ}(\overleftarrow{\mathcal{I}})$, i.e. the subvariety
of $Gr_{m,n}$ whose elements can be represented
	as row spans of elements of $\Mat^{\circ}(\overleftarrow{\mathcal{I}})$ \cite{KLS}.
	(We also define the closed positroid variety 
	$X_{\overleftarrow{\mathcal{I}}}$ to be the closure of 
	$X^{\circ}_{\overleftarrow{\mathcal{I}}}$.)
	We have $$Gr_{m,n} = \bigsqcup_{\overleftarrow{\mathcal{I}}} X^{\circ}_{\overleftarrow{\mathcal{I}}}.$$
\end{definition}
\begin{remark}
It is more common to define open positroid varieties  by using 
Grassmann necklaces instead of reverse Grassmann necklaces, but the definitions 
	are equivalent
	\cite[Proposition 2.8]{MullerSpeyer}.
\end{remark}

Since we can read off a (reverse) Grassmann necklace from 
a plabic graph (\cref{neckfromG}), this gives a natural way to 
associate a positroid cell and positroid variety to a plabic graph $G$
or to a $\Le$-diagram $D$.
We will sometimes refer to this cell and variety as 
$S_G$ or $S_D$ and 
$X_G^{\circ}$ or 
$X_D^{\circ}$.

\begin{definition}\label{def:openSchubert}
Let $\lambda$ be a partition.
If $D$ is a $\Le$-diagram of shape $\lambda$ whose boxes contain only $+$'s,
as in the middle diagram in 
 \cref{fig:Le}, we refer to the corresponding positroid variety
as an \emph{open Schubert variety} $X_{\lambda}^{\circ}$.
Let $\lambda/\mu$ be a skew shape.  
If $D$ is a $\Le$-diagram of shape $\lambda$ such that the boxes
in $\lambda/\mu$ are filled with $+$'s and the other boxes are filled 
with $0$'s, as in the right diagram in \cref{fig:Le},
we refer to the corresponding positroid variety
as an \emph{open skew shaped positroid variety} $X_{\lambda/\mu}^{\circ}$.
\end{definition}
\begin{remark}\label{rem:defopenSchubert}
It is not hard to verify that the definition of open Schubert variety
given in \cref{def:openSchubert} agrees with the one 
from \cref{d:openX}.  Indeed, if one computes the 
plabic graph $G(D)$ associated to the $\Le$-diagram $D$ of 
shape $\lambda$ whose boxes contain only $+$'s, and uses the 
source face labels, then these face labels correspond to the 
rectangles contained in $\lambda$, and the components 
	of the reverse Grassmann necklace are exactly the frozen rectangles
	$\mu_1,\dots,\mu_n$ used in \cref{d:openX}.
\end{remark}

\subsection{Quivers from plabic graphs}

We next describe quivers and quiver mutation, 
and how they relate to moves on plabic graphs.
Quiver mutation was first defined by Fomin and Zelevinsky \cite{ca1}
in order to define cluster algebras.  

\begin{definition}[Quiver]\label{quiver}
A \emph{quiver} $Q$ is a directed graph; we will assume that $Q$ has no 
loops or $2$-cycles.
If there are $i$ arrows from vertex $\sigma$ to $\tau$, then 
we will set $b_{\sigma \tau} = i$ and $b_{\tau \sigma} = -i$.
Each vertex is designated either  \emph{mutable} or \emph{frozen}.
The skew-symmetric matrix $B = (b_{\sigma \tau})$ is called the \emph{exchange matrix} of $Q$.
\end{definition}

\begin{definition}[Quiver Mutation]
Let $\sigma$ be a mutable vertex of quiver $Q$.  The quiver mutation 
$\Mut_\sigma$ transforms $Q$ into a new quiver $Q' = \Mut_\sigma(Q)$ via a sequence of three steps:
\begin{enumerate}
\item For each oriented two path $\mu \to \sigma \to \nu$, add a new arrow $\mu \to \nu$
(unless $\mu$ and $\nu$ are both frozen, in which case do nothing).
\item Reverse the direction of all arrows incident to the vertex $\sigma$.
\item Repeatedly remove oriented $2$-cycles until unable to do so.
\end{enumerate}
	If $B$ is the exchange matrix of $Q$, then we let $\Mut_{\sigma}(B)$ denote the 
	exchange matrix of $\Mut_{\sigma}(Q)$.

\end{definition}

We say that two quivers $Q$ and $Q'$ are \emph{mutation equivalent} if $Q$
can be transformed into a quiver isomorphic to $Q'$ by a sequence of mutations.

\begin{definition}
Let $G$ be a reduced plabic graph.  We associate a quiver $Q(G)$ as follows.  The vertices of 
$Q(G)$ are labeled by the faces of $G$.  We say that a vertex of $Q(G)$ is \emph{frozen}
if the 
corresponding face is incident to the boundary of the disk, and is \emph{mutable} otherwise.
For each edge $e$ in $G$ which separates two faces, at least one of which is mutable, 
we introduce an arrow connecting the faces;
 this arrow is oriented so that it ``sees the white endpoint of $e$ to the left and the 
black endpoint to the right'' as it crosses over $e$.  We then remove oriented $2$-cycles
from the resulting quiver to get $Q(G)$. 
\end{definition}

\begin{remark}
Let $D$ be as in  \cref{rem:defopenSchubert}.	Then the quiver $Q(G(D))$ recovers the seed in 
\cref{fig:combconstruct}.

\end{remark}

The following lemma is straightforward, and is implicit in \cite{Scott}.

\begin{lemma}\label{lem:mutG}
If $G$ and $G'$ are related via a square move at a face,
then $Q(G)$ and $Q(G')$ are related via mutation at the corresponding vertex.
\end{lemma}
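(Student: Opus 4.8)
The final statement to prove is \cref{lem:mutG}: if two reduced plabic graphs $G$ and $G'$ are related by a square move (M1) at a face $f$, then the quivers $Q(G)$ and $Q(G')$ are related by quiver mutation at the vertex corresponding to $f$.

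The plan is to verify this by a direct local analysis. First I would set up the picture around the square face $f$: in $G$, the face $f$ is bounded by four trivalent internal vertices of alternating colors, say white-black-white-black going around. The quiver vertex $v_f$ corresponding to $f$ has four neighbors in $Q(G)$, one across each of the four edges of the square, and the orientations of these four arrows are determined by the rule ``white endpoint on the left, black endpoint on the right.'' Because the colors alternate around the square, these four arrows alternate in orientation: two point toward $v_f$ and two point away from $v_f$, arranged so that consecutive edges around the square give opposite orientations. I would draw this configuration carefully and record the induced arrows among the four surrounding faces (call them $a, b, c, d$ in cyclic order): each pair of adjacent surrounding faces shares a corner of the square, hence shares an edge of $G$ emanating from one of the four square-vertices, and this contributes arrows among $a,b,c,d$ as well.

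Next I would perform the square move on $G$ to obtain $G'$, in which the four vertices of the square have swapped colors (black-white-black-white). Now the arrows incident to $v_f$ all reverse orientation — this matches step (2) of quiver mutation. Then I would check step (1): mutation at $v_f$ creates a new arrow $\mu \to \nu$ for each oriented two-path $\mu \to v_f \to \nu$ in $Q(G)$; there are exactly two such two-paths (one source and one sink among the four neighbors give two in-out pairs — actually $2\times 2 = 4$ two-paths since two arrows point in and two point out). I would verify that these four new arrows among $a,b,c,d$, together with the arrows that already existed among $a,b,c,d$ in $Q(G)$ (coming from the edges of $G$ at the corners of the square), combine — after cancelling the oriented $2$-cycles in step (3) — to exactly reproduce the arrows among $a,b,c,d$ that one reads off directly from $G'$ at the corners of the new square. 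This is the computational heart of the argument: the color swap at the corner vertices flips the orientation of each corner arrow, and one checks that ``new arrows from the two-paths'' minus ``cancellation'' equals ``flip of old corner arrows.''

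Finally I would note that all other faces and arrows of $Q(G)$ are untouched: faces not adjacent to $f$ are unchanged by the square move, edges of $G$ not incident to a corner vertex of the square are unchanged, and the white/black local picture at non-square vertices is unaffected, so the corresponding arrows of the quiver are identical in $Q(G)$ and $Q(G')$. The main obstacle — really the only subtlety — is the bookkeeping in step (1)/(3): one must be careful about the multiplicities and orientations of the pre-existing corner arrows versus the newly created arrows, and confirm that the $2$-cycle cancellation comes out exactly right rather than leaving residual arrows or over-cancelling. Since this is a purely local, finite check (the relevant part of $G$ is a square with its eight outgoing edges, and the relevant part of $Q$ has five vertices), I would organize it as a single labelled figure with a short case-free verification, which is why the paper calls the lemma ``straightforward.''
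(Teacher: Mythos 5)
The paper does not give a proof of this lemma (it calls it ``straightforward'' and attributes it implicitly to Scott), so there is no argument of the paper's own to compare against; your plan of a direct local verification around the square face is exactly the intended content of that word ``straightforward,'' and the overall structure is right: the four arrows at $v_f$ alternate in-out-in-out because the square vertices alternate colour, the colour swap reverses all four of them (matching step (2) of mutation), and the computation is a local finite check.

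There is, however, a concrete error in your description of what happens at the four corners, and it is precisely at the spot you flag as ``the computational heart.'' You write that ``the colour swap at the corner vertices flips the orientation of each corner arrow,'' and then expect mutation steps (1) and (3) to produce exactly these flipped arrows. In fact the corner arrows do not flip --- they \emph{vanish}. Before the move, each corner vertex $v_i$ is joined by a third edge to some vertex $u_i$ outside the square; bipartiteness forces $u_i$ to have colour opposite to $v_i$. After the colour swap, $v_i$ and $u_i$ have the \emph{same} colour, so that edge either contributes no arrow to $Q(G')$ or is absorbed by an (M2) contraction, and either way the corner arrow between the two adjacent neighbour faces disappears. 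On the mutation side this is reproduced exactly: with the in-out-in-out pattern at $v_f$ there are $2\times 2 = 4$ oriented two-paths, and a short check with the ``white left, black right'' rule shows the four old corner arrows point \emph{opposite} to the four new arrows created in step (1), so step (3) cancels all eight of them, leaving zero --- not the flipped arrows. Your check ``new arrows minus cancellation equals flip of old corner arrows'' would therefore come out as $0 = (\text{four arrows})$ and appear to fail. The lemma is of course still true; the fix is to verify that $Q(G')$ likewise has no corner arrows, rather than flipped ones. Since you do say the goal is to reproduce ``the arrows one reads off directly from $G'$,'' carrying out that reading-off carefully would reveal and repair the discrepancy, but as written the plan builds in a false expectation at the decisive step.
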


\subsection{Network charts from plabic graphs}
\label{sec:poschart}
In this section  we will discuss perfect orientations of plabic 
graphs as well as network charts \cite{Postnikov, Talaska}, which allow us to give parameterizations
of positroid cells.

\begin{definition}\label{rem:normalization}
A {\it perfect orientation\/} $\O$ of a plabic graph $G$ is a
choice of orientation of each  edge such that each
black internal vertex $u$ is incident to exactly one edge
directed away from $u$; and each white internal vertex $v$ is incident
to exactly one edge directed towards $v$.
A plabic graph is called {\it perfectly orientable\/} if it admits a perfect orientation.
The {\it source set\/} $I_\O \subset [n]$ of a perfect orientation $\O$ is the set of all $i$ which
are sources of $\O$ (considered as a directed graph). Similarly, if $j \in \overline{I}_{\O} := [n] - I_{\O}$, then $j$ is a sink of $\O$.
\end{definition}
See \cref{perforientation} for an example.

\begin{figure}[h]
\centering
\includegraphics[height=1.3in]{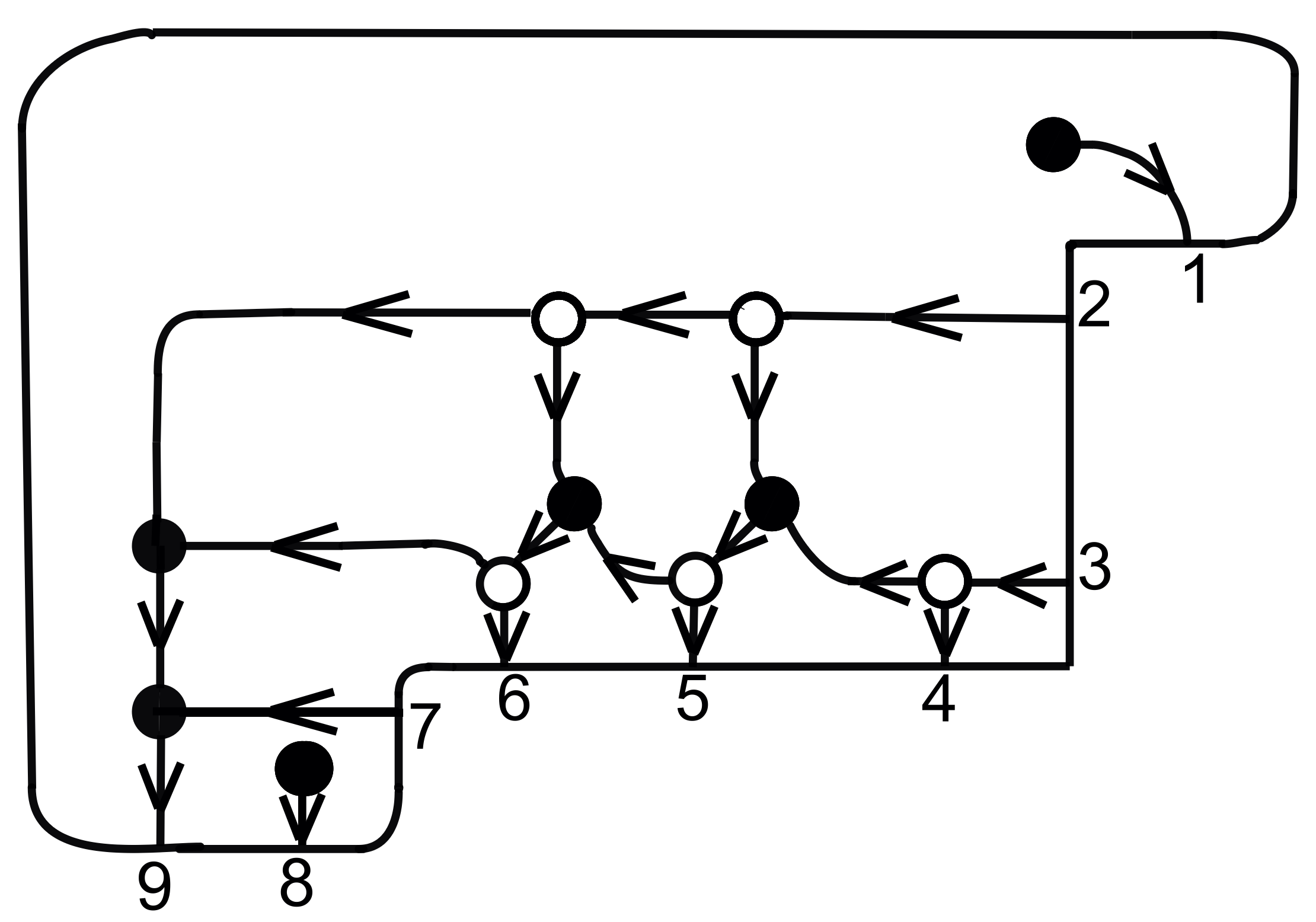}
\caption{An acyclic  perfect orientation $\O$ of 
	the plabic graph from \cref{ex:plabic}.}
\label{perforientation}
\end{figure}

The following lemma appeared in \cite{PSWv1}.\footnote{The
published version of \cite{PSWv1}, namely \cite{PSW}, did not include the lemma, because it turned out
to be unnecessary.}

\begin{lem}[{\cite[Lemma 3.2 and its proof]{PSWv1}}]\label{acyclic} 
Let $G$ be a reduced plabic graph with corresponding 
Grassmann necklace $\mathcal{I}(G)=(I_1,\dots,I_n)$
as in \cref{neckfromG}.  
For each $1\leq j \leq n$, there is an 
 acyclic perfect orientation $\O$
with source set $I_j$.
\end{lem}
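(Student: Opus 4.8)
\textbf{Plan of proof for \cref{acyclic}.}

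The plan is to establish, for each fixed $j \in [n]$, the existence of an acyclic perfect orientation of $G$ whose source set is the component $I_j$ of the (target-face) Grassmann necklace $\mathcal I(G)$. First I would recall the standard fact (from \cite{Postnikov}) that a reduced plabic graph $G$ of type $\pi_G$ admits perfect orientations, and that each perfect orientation $\O$ has source set $I$ where $|I| = m$ (the number of anti-excedances of $\pi_G$), together with the fact that the possible source sets of perfect orientations of $G$ are precisely the bases of the positroid $\mathcal B$ attached to $G$. In particular, since each necklace element $I_j$ is a basis of $\mathcal B$ (indeed it is the $<_j$-Gale-minimal basis), there exists \emph{some} perfect orientation $\O_j$ with source set $I_j$; the real content of the lemma is that $\O_j$ can be chosen \emph{acyclic}.

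The key step is to produce the acyclic orientation directly from the $\Le$-diagram representative. By \cref{rem:moves}, $G$ is move-equivalent to the plabic graph $G(D)$ coming from a $\Le$-diagram $D$; however, the source set $I_j$ depends on the representative, so I would instead argue as follows. Given $j$, cyclically relabel the boundary vertices so that $j$ becomes the last index, i.e.\ pass to the $j$-order $<_j$; under this relabeling there is a $\Le$-diagram $D^{(j)}$ whose associated plabic graph $G(D^{(j)})$ is move-equivalent to $G$ and whose Grassmann necklace, read off via target face labels as in \cref{neckfromG}, has $I_j$ as the distinguished ``first'' component. The network $N(D^{(j)})$ of \cref{def:Network} orients all edges west and south; this orientation is manifestly acyclic (it strictly decreases a monotone function of position, e.g.\ $x+y$ on the hook diagram), and its source set is exactly the set of vertical steps on the southeast border below the hooks, which one checks equals $I_j$. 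Transporting this orientation back along the sequence of moves (M1)--(M3) connecting $G(D^{(j)})$ to $G$ yields a perfect orientation of $G$; here I would invoke the fact that moves (M2) and (M3) obviously preserve acyclicity and the source set, and that a square move (M1) can only fail to preserve acyclicity if the square's four boundary edges already form part of a directed cycle, which cannot happen for the specific orientations arising from the $\Le$-network since one can track a global height function through each move.

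The main obstacle I anticipate is the bookkeeping in the last step: showing that acyclicity genuinely survives all three moves and that the source set is preserved as one passes from $G(D^{(j)})$ to $G$. An alternative, perhaps cleaner route that avoids tracking moves is to argue intrinsically on $G$: fix $j$, take any perfect orientation $\O$ with source set $I_j$ (which exists since $I_j\in\mathcal B$), and if $\O$ contains a directed cycle $C$, reverse all edges along $C$; reversing a directed cycle in a perfect orientation of a plabic graph again yields a perfect orientation with the \emph{same} source set (each internal vertex still has the correct in/out degree, and boundary vertices are untouched), and it strictly decreases the number of edges pointing ``inward'' with respect to a suitable potential, so iterating terminates at an acyclic perfect orientation with source set still $I_j$. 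This termination/monotonicity argument is the delicate point and is where I would focus the write-up, using the fact that $G$ is reduced (hence has no closed zig-zag regions that could obstruct the process) to guarantee the procedure halts.
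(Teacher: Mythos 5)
This lemma is not proved in the paper: it is cited directly from \cite[Lemma 3.2 and its proof]{PSWv1}, so there is no in-paper proof to compare against. Judged on its own terms, your proposal correctly identifies two plausible routes but leaves the crux of each unresolved, and you say as much.

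For the first route (build the acyclic west/south orientation on $N(D^{(j)})$ and transport it through moves), the real obstruction is precisely the square move (M1). Although (M2)/(M3) trivially preserve acyclicity, a square move flips the colors of the four trivalent vertices, which changes the in/out-degree constraints at those vertices and forces some of the four internal edges of the square to be reoriented; the new orientation of those edges is generally not unique, and it is not automatic that some valid reorientation preserves global acyclicity. Your proposed justification (``track a global height function'') is exactly the missing content: you would need to define a height function on faces, prove that it is well-defined precisely when the orientation is acyclic, and show that the square move admits a reorientation compatible with a height function. None of that is in the write-up. There is also a bookkeeping point you partly flag: the source set of the west/south orientation of $N(D)$ as normalized in \cref{def:Network} is the $<_1$-Gale-\emph{maximal} basis (e.g.\ $\{1,2,5\}$ for $\lambda=(4,4,2)$, indexing $P_\lambda$), whereas the forward Grassmann necklace component $I_1$ of \cref{neckfromG} is the $<_1$-Gale-\emph{minimal} basis; you would have to reconcile which necklace (forward or reverse) and which index $j$ actually matches which orientation before anything can be transported.

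For the second route (reverse directed cycles until none remain), the observation that reversing a directed cycle preserves the perfect-orientation condition and the boundary source set is correct, and the terminal orientation, if the process terminates, is acyclic by definition. But termination is the entire content of the argument and you explicitly punt on it: ``a suitable potential'' is not defined, and a naive candidate such as counting face--edge incidences of a fixed handedness does not change under a cycle reversal (both adjacent faces of each reversed edge swap sides, so the count is invariant). To make this work one needs something like a signed height function on faces together with a choice to always reverse cycles of a fixed rotational sense, and then argue the total height is bounded and strictly monotone; alternatively one can invoke the lattice structure on perfect orientations with fixed source set, but developing that is essentially re-proving the result. As written, both routes stop exactly at the step that carries the mathematical weight.
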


Recall from 
\cref{def:faces}
that we can label each face of $G$ by the source face label,
or equivalently, by the corresponding Young diagram contained in an
$m \times (n-m)$ rectangle.
In what follows, we will always choose our perfect orientation to be acyclic.

\begin{definition}\label{def:flow}
A \emph{flow} $F$ from $I_{\O}$ to a set $J$ of boundary vertices
with 
$|J|=|I_{\O}|$ 
is a collection of paths
in $\O$, all pairwise vertex-disjoint, 
such that the sources of these paths are $I_{\O} - (I_{\O} \cap J)$
and the destinations are $J - (I_{\O} \cap J)$.
Note that each path %
$w$ in $\O$ partitions the faces of $G$ into those 
which are on the left and those which are on the right of the 
walk. %
We define the \emph{weight} $\wt(w)$ of 
each such path %
to be the product of 
parameters $x_{\mu}$, where $\mu$ ranges over all face labels 
to the left of the path.  And we define the 
\emph{weight} $\wt(F)$ of a flow $F$ to be the product of the weights of 
all paths in the flow.

Fix a perfect orientation $\O$ of a reduced plabic graph $G$.
Given $J \in {[n] \choose n-k}$, 
we define the {\it flow polynomial}
\begin{equation}\label{eq:Plucker2}
P_J^G = \sum_F \wt(F),
\end{equation}
where $F$ ranges over all flows from $I_{\O}$ to $J$.
\end{definition}

\begin{definition}\label{def:PG}
Let $\tilde{\mathcal{P}}_G$
denote the set of all Young diagrams labeling the faces of $G$.
Let $\minimal$ denote the minimal partition in $\tilde{\mathcal{P}}_G$.
From now on we will always
choose our perfect orientation to be acyclic  with source set $I_{1}$.
We prefer this choice because
then the variable
$x_{\minimal}$ never appears in the expressions for flow polynomials.
Let ${\mathcal P}_G:=
\tilde{\mathcal P}_G\setminus \{\minimal\}$.
Let \begin{equation}\label{e:TB}
{\TB}(G) := \{x_{\mu}\ |\ \mu\in 
	{\mathcal P}_G \}
\end{equation}
be a set of  
parameters which are indexed by the Young diagrams $\mu\in {\mathcal P}_G$.
\end{definition}

\begin{remark}\label{rem:flows}
We can also define flows in the network $N(D)$
(associated to a $\Le$-diagram $D$) as collections
of vertex-disjoint paths, as was done in \cref{sec:Xcluster},
see \cref{SchubertNetwork}.
If we consider the plabic graph $G(D)$
associated to $D$ and direct all edges west, south, or southwest,
then the  flows in $N(D)$ are equivalent
to the flows in this orientation $\O$ of $G(D)$.
If we label the faces of the plabic graph $G(D)$
by source labels, then map the source labels to partitions,
we obtain the labeling by rectangles shown in \cref{SchubertNetwork}.
The flow polynomials coming from the network 
associated to a $\Le$-diagram are denoted by $P_J^{\rect}$.
\end{remark}

We now describe the network chart for 
the open positroid variety 
$X^{\circ}_G$ associated to a reduced
plabic graph $G$.
The statement in \cref{thm:Talaska} concerning the positroid cell
below comes from \cite{Talaska} and \cite[Theorem 12.7]{Postnikov}, while the extension to 
$X^{\circ}_G$ 
 comes from 
 \cite{MullerSpeyer}.

\begin{theorem}[{\cite[Theorem 12.7]{Postnikov}}]\label{thm:Talaska}
Let $G$ be a reduced plabic graph, and choose
an acyclic perfect orientation $\O$. 
Then the map $\Phi_G$ sending 
$(x_{\mu})_{\mu \in \mathcal{P}_G}
\in (\C^*)^{\mathcal{P}_G}$ to the 
collection of flow polynomials 
$\{P_J^G\}_{J\in {[n]\choose m}} \in \mathbb{P}^{{[n] \choose m}-1}$
is an injective map onto a dense open subset of $X_G^{\circ}$ in its
Pl\"ucker embedding.
The restriction of $\Phi_G$ to 
$(\R_{>0})^{\mathcal{P}_G}$ 
gives a parameterization of the positroid cell $S_G$.
	We call the map $\Phi_G$ a \emph{network chart} for $X_{G}^{\circ}$.
\end{theorem}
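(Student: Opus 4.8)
\textbf{Proof proposal for \cref{thm:Talaska}.}

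The statement has two parts: the claim about the positroid cell $S_G$ and its totally positive parametrization, and the claim about the open positroid variety $X_G^{\circ}$ and the injectivity/density of the complex network chart. The plan is to reduce each part to a known result and then explain how to pass between them. First I would recall that the flow polynomials $P_J^G$ defined in \eqref{eq:Plucker2} are, up to simultaneous rescaling, the Pl\"ucker coordinates of the matrix built from the weighted network $(\O, (x_\mu))$ via Lindstr\"om--Gessel--Viennot: the maximal minor with column set $J$ of the boundary-measurement matrix equals $\pm P_J^G$. This is the content of Talaska's formula \cite{Talaska} together with Postnikov's boundary measurement construction \cite[Section 12]{Postnikov}; in particular the fact that all minors are simultaneously given by subtraction-free rational (indeed polynomial, after clearing denominators) expressions in the $x_\mu$ is exactly \cite[Theorem 12.7]{Postnikov}. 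So the image of $\Phi_G$ lands in $X_G^{\circ}$ because the vanishing pattern of the $P_J^G$ is governed by the Grassmann necklace read off from $G$ (\cref{neckfromG}), which is exactly the defining data of the positroid cell/variety $X_G^\circ$ (\cref{def:posvariety}).

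For the totally positive part: when all $x_\mu \in \R_{>0}$, every flow weight $\wt(F)$ is a positive real, so each $P_J^G \geq 0$, and $P_J^G>0$ precisely when at least one flow from $I_\O$ to $J$ exists, which happens exactly for the $J$ in the matroid of the cell $S_G$. Hence $\Phi_G((\R_{>0})^{\mathcal P_G}) \subseteq S_G$. That this restricted map is a homeomorphism onto $S_G$ is Postnikov's parametrization theorem \cite[Theorem 12.7]{Postnikov}: the cell $S_G$ has dimension equal to the number of faces of $G$ minus one (which is $|\mathcal P_G|$, since we have normalized by dropping $x_{\minimal}$), and $\Phi_G|_{(\R_{>0})^{\mathcal P_G}}$ is a bijection because one can recover the edge weights (modulo gauge) from the boundary measurements. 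I would cite this rather than reprove it.

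For the complex statement --- that $\Phi_G$ is injective on all of $(\C^*)^{\mathcal P_G}$ and has dense open image in $X_G^\circ$ --- the key input is \cite{MullerSpeyer} (and \cite{TalaskaWilliams}), which established that the network chart, a priori only known to parametrize the nonnegative part, in fact extends to give a well-defined injective map from the complex algebraic torus $(\C^*)^{\mathcal P_G}$ onto a dense open torus inside the open positroid variety $X_G^\circ$. Concretely: both source and target have the same dimension $|\mathcal P_G| = \dim X_G^\circ$; the map is a morphism of varieties given by the flow polynomials; injectivity follows from the fact that the edge weights can be reconstructed from the Pl\"ucker coordinates as Laurent monomials (the inverse is the \emph{twist} map of \cite{MullerSpeyer}, whose compatibility with the $\mathcal X$-cluster structure is used elsewhere in the paper, e.g.\ in the proof of \cref{thm:theta}); and density of the image follows since a dominant morphism of irreducible varieties of equal dimension that is injective on a dense subset has dense image. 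I would assemble these facts and cite \cite{Postnikov, Talaska, TalaskaWilliams, MullerSpeyer} as the sources.

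The main obstacle --- or rather, the step requiring the most care in exposition rather than genuine difficulty --- is making precise the passage from "parametrization of the nonnegative cell" (Postnikov) to "injective chart on the complex torus onto a dense subset of $X_G^\circ$" (Muller--Speyer), since these involve slightly different normalizations (choice of acyclic perfect orientation with source set $I_1$, as in \cref{def:PG}, so that $x_{\minimal}$ never appears) and one must check that the gauge-fixing used to get a genuine bijection is consistent across the real and complex settings. Since all of this is exactly what is proved in the cited references, the "proof" here is essentially a matter of correctly quoting \cite[Theorem 12.7]{Postnikov} and the results of \cite{Talaska, TalaskaWilliams, MullerSpeyer}; I would not reprove the boundary-measurement formalism.
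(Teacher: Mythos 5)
Your proposal is correct and takes essentially the same approach as the paper: \cref{thm:Talaska} is stated there with attribution only, crediting Postnikov and Talaska for the totally nonnegative statement and Muller--Speyer (and, in the parallel earlier statement \cref{1network_param}, Talaska--Williams) for the complex extension, with no new proof supplied. The only small imprecision in your sketch is calling the twist map of Muller--Speyer ``the inverse'' of the network chart $\Phi_G$; more precisely, the twist composed with the Pl\"ucker cluster chart inverts $\Phi_G$, which is exactly the mechanism you invoke, so the structure of the argument stands.
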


\begin{definition}[Network torus $\mathbb T_G$]  \label{d:networktorus2}
Define the open dense torus $\mathbb T_G $ in $\openX$ to be the image  of the network chart $\Phi_G$, namely
$\mathbb T_G
:=\Phi_G((\C^*)^{\mathcal{P}_G})$.
We call $\mathbb T_G$ the {\it network torus} in $\openX$ associated to $G$. 
\end{definition}

\begin{example}\label{Ex:Transc}
Since the image of $\Phi_G$ lands in 
$X_G^{\circ}$ (see \cite[Section 1.1]{MullerSpeyer}), %
we can view the parameters $\TBG$ as rational functions on $X_G$ which restrict to coordinates on the open torus $\mathbb T_G$. Therefore we can think of $\TBG$ as a transcendence basis of $\C(X_G)$.
\end{example}

\begin{definition}[Strongly minimal 
 and pointed]\label{def:minimal}
We say that a Laurent monomial 
$\prod_\mu x_\mu^{a_\mu}$ appearing
in a Laurent polynomial $P$ is \emph{strongly minimal}
 in $P$  if 
for every other Laurent monomial $\prod_\mu x_\mu^{b_\mu}$ occurring in $P$,
we have $a_\mu\le b_{\mu}$ for all $\mu$.  
(We can similarly define \emph{strongly maximal} by replacing $\le$
by $\geq$.)
If $P$ has a strongly minimal Laurent
monomial with coefficient $1$, then we say  $P$ is \emph{pointed}.
\end{definition}

\begin{remark}
Flow polynomials $P_J^G$ from plabic graphs 
are always strongly minimal, strongly maximal, and pointed
	\cite[Corollary 12.4]{RW}.
\end{remark}

In general, given a reduced plabic graph $G$, there are many other
plabic graphs in its move-equivalence class, and each one gives rise
to a network chart for the cell $S_G$ and positroid variety $X_G^{\circ}$.
However, this is just a subset of the (generalized) 
network charts we can obtain
by applying \emph{cluster $\mathcal{X}$-mutation}.

\begin{definition} \label{Xseed}
Let $Q$ be a quiver with vertices $V$,  associated exchange matrix $B$
(see \cref{quiver}), and with a 
parameter $x_{\tau}$ associated to each vertex $\tau \in V$.
If $\sigma$ is a mutable vertex of $Q$,
then we define a new set of 
parameters  $\MutVar_{\sigma}^{\mathcal{X}}(\{x_{\tau}\}) := \{x'_{\tau}\}$  where
\begin{equation}\label{e:XclusterMut2}
x'_{\tau} = \begin{cases}
       \ \frac{1}{x_{\sigma}} & %
      \text{if }\tau = \sigma, \\
   \ x_{\tau}(1+x_{\sigma})^{b_{\sigma \tau}}  
  &\text{if there are }b_{\sigma \tau}
\text{ arrows from }\sigma \text{ to } \tau \text{ in }Q, \\
  \ \frac{x_{\tau}}{(1+x_{\sigma}^{-1})^{b_{\tau \sigma}}} &%
 \text{if there are }b_{\tau \sigma}
\text{ arrows from } \tau \text{ to } \sigma \text{ in }Q,\\
  \ x_{\tau} & %
 \text{ otherwise.}
\end{cases}
\end{equation}
We say that 
$(\Mut_{\sigma}(Q), \{x'_{\tau}\})$ is obtained from 
 $(Q, \{x_{\tau}\})$ by \emph{$\mathcal{X}$-seed mutation}
 in direction $\sigma$, and we refer to the ordered pairs 
$(\Mut_{\sigma}(Q), \{x'_{\tau}\})$  and
 $(Q, \{x_{\tau}\})$ as \emph{labeled $\mathcal{X}$-seeds}.
	Note that if we apply the $\mathcal{X}$-seed mutation in direction $\sigma$ to 
$(\Mut_{\sigma}(Q), \{x'_{\tau}\})$, we obtain  
 $(Q, \{x_{\tau}\})$ again.
	
	We say that two labeled $\mathcal{X}$-seeds are 
	\emph{$\mathcal{X}$-mutation equivalent} if one can be obtained
	from the other by a sequence of $\mathcal{X}$-seed mutations.
\end{definition}

Each reduced plabic graph $G$ gives rise to a 
labeled $\mathcal{X}$-seed 
$\Sigma_G^{\mathcal{X}}:=(Q(G), 
{\TB}(G))$. Moreover, the flow polynomial expressions for 
Pl\"ucker coordinates are 
compatible with  $\mathcal{X}$-mutation 
\cite[Lemma 6.15]{RW}: 
whenever two 
plabic graphs are connected by moves, 
the corresponding
$\mathcal{X}$-seeds are $\mathcal{X}$-mutation equivalent.
We can get a larger class of $\mathcal{X}$-seeds by 
mutating at any sequence of mutable vertices of $Q(G)$, obtaining
(generalized) network charts.  We continue to index our 
$\mathcal{X}$-seeds by $G$, even when 
they do not come from a plabic graph.

The following result is from
\cite[Proposition 7.6]{RW} (whose proof holds verbatim for
arbitrary positroids). 
\begin{proposition}\label{thm:Laurent}
Any Pl\"ucker coordinate $P_{\nu}$, when expressed in terms of
a general $\mathcal{X}$-cluster $G$,
is a Laurent polynomial in $\TBG$.
\end{proposition}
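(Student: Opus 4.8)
\textbf{Plan of proof for Proposition~\ref{thm:Laurent}.}

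The statement to prove is that any Pl\"ucker coordinate $P_\nu$ of the open positroid variety $X^\circ_G$, when expressed in terms of a general $\mathcal X$-cluster indexed by $G$, is a Laurent polynomial in the parameters $\TB(G)$. The plan is to reduce this to the corresponding statement for the $\mathcal A$-cluster structure, using the twist automorphism to pass between the two worlds, and then to invoke the Laurent phenomenon.

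First I would observe that the statement is true for a \emph{plabic-graph} $\mathcal X$-seed: by \cref{thm:Talaska}, each Pl\"ucker coordinate $P_\nu$, restricted to the network torus $\mathbb T_G$, is given by the flow polynomial $P^G_\nu((x_\eta)_{\eta\in\mathcal P_G})$ of \eqref{eq:Plucker2}, which is by construction an honest polynomial (in particular a Laurent polynomial) in $\TB(G)$. So the issue is purely about propagating this under $\mathcal X$-seed mutation, and about handling the generalized $\mathcal X$-seeds that do not come from plabic graphs. Second, I would set up the comparison with the $\mathcal A$-cluster structure: by \cref{thm:rectangles} (more generally by the work on positroid varieties cited in \cref{sec:A}), the coordinate ring $\C[\widehat{X^\circ_G}]$ carries an $\mathcal A$-cluster algebra structure in which the frozen variables are the frozen Pl\"ucker coordinates and each cluster variable of the rectangles/plabic seed is a Pl\"ucker coordinate, or at any rate lies in the cluster algebra. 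The \emph{twist} automorphism of \cite[Theorem~7.1]{MullerSpeyer} (which is the same tool used in the proof of \cref{thm:theta}) carries the $\mathcal X$-cluster tori $\mathbb T_G$ isomorphically onto the $\mathcal A$-cluster tori, and intertwines $\mathcal X$-seed mutation with $\mathcal A$-seed mutation. Under this identification, expressing $P_\nu$ in the coordinates of the $\mathcal X$-seed $G$ is the same (up to the twist, which is a monomial change of coordinates on each torus and hence sends Laurent polynomials to Laurent polynomials) as expressing the twisted function $\tau(P_\nu)$ in the coordinates of the corresponding $\mathcal A$-seed.

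Third, I would finish using the Laurent phenomenon for cluster algebras \cite{ca1} (together with its positivity refinement \cite{LeeSchiffler, GHKK}, though positivity is not needed here): since $\tau(P_\nu)$ is a regular function on the cluster variety $X^\circ_G$ — indeed it is, up to the monomial twist factor, a Pl\"ucker coordinate, which is a global regular function — it lies in the upper cluster algebra, and hence its expansion in any $\mathcal A$-seed is a Laurent polynomial in that seed's cluster variables. (Alternatively one can argue directly: $\tau(P_\nu)$ is a Laurent polynomial in the rectangles $\mathcal A$-seed by the plabic-graph case above, and the Laurent phenomenon says Laurent-ness is preserved under $\mathcal A$-mutation, so it holds in every $\mathcal A$-seed, in particular in every one obtained by an arbitrary mutation sequence, which includes the generalized seeds not coming from plabic graphs.) Transporting back through the twist gives that $P_\nu$ is a Laurent polynomial in $\TB(G)$ for the arbitrary $\mathcal X$-seed $G$.

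The main obstacle I anticipate is \emph{bookkeeping about which} $\mathcal X$-\emph{seeds are allowed} and making the twist intertwining precise: the generalized $\mathcal X$-seeds are obtained by mutating at arbitrary sequences of mutable vertices of $Q(G)$, not merely by plabic moves, so one must be careful that the twist isomorphism commutes with \emph{all} such $\mathcal X$-mutations (not just those realized by square moves), and that the ``monomial change of coordinates'' comparing $P_\nu$ with $\tau(P_\nu)$ on a given torus really is a single Laurent monomial in the $\TB(G)$-variables with invertible coefficient. Once this compatibility is pinned down — and it follows from the cited results \cite[Theorem~7.1]{MullerSpeyer} and \cite[Lemma~6.15]{RW} on compatibility of flow polynomials with $\mathcal X$-mutation — the rest is a direct appeal to \cite{ca1}. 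I would also note that this is exactly the setting and argument of \cite[Proposition~7.6]{RW} in the Grassmannian case, whose proof the paper states ``holds verbatim for arbitrary positroids,'' so the write-up can be kept short by citing that proof and only remarking on the (purely cosmetic) changes needed for the positroid case.
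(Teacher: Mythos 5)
Your reconstruction is correct and is, as you yourself observe at the end, essentially the same as what the paper does: the paper's entire ``proof'' is the sentence preceding the proposition, which cites \cite[Proposition~7.6]{RW} and remarks that the argument carries over verbatim to positroids. Your sketch correctly identifies the two ingredients of that argument: the Muller--Speyer twist \cite[Theorem~7.1]{MullerSpeyer} transporting $\mathcal{X}$-tori to $\mathcal{A}$-tori by a monomial change of coordinates, and the Laurent property of the $\mathcal{A}$-cluster structure (together with $\mathcal{A}=\mathcal{U}=\C[\widehat{X_G^\circ}]$ for positroids). So this is the same approach, not a genuinely different one.

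Two small points worth tightening before this becomes a write-up. First, the parenthetical ``alternative'' formulation misstates the Laurent phenomenon: being a Laurent polynomial in one seed is \emph{not} in general preserved under $\mathcal{A}$-mutation, so ``Laurent in the rectangles seed plus the Laurent phenomenon'' is not a valid standalone argument. What saves you is that the twisted Plücker coordinate is a regular function on all of $X_G^\circ$, hence lies in the coordinate ring, which \emph{equals} the cluster algebra and the upper cluster algebra for positroids; elements of $\mathcal{U}$ are Laurent in every seed by definition. Stick to that version. Second, the phrase ``up to the monomial twist factor, a Plücker coordinate'' overreaches: the pullback of a single Plücker coordinate under the twist is typically not a monomial multiple of a Plücker coordinate. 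This claim is also unnecessary; regularity on $X_G^\circ$ is all you need, and you should also pin down the direction (you want $P_\nu\circ\tau^{-1}$ expressed in the $\mathcal{A}$-seed, then pulled back to the $\mathcal{X}$-seed along the monomial $\tau$).
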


\subsection{Cluster charts
from plabic graphs}\label{sec:cluster}

The following result was proved for open Schubert varieties in \cite{SSW}
and skew-shaped positroid varieties in \cite{GKSS}, see also 
\cite{SSW} for the case of skew Schubert varieties and \cite{GalashinLam} for the 
extension to positroids.
For background on cluster algebras, see \cite{ca1, FWZ1, FWZ2}.

\begin{theorem}\label{thm:cluster}
Let $G$ be a reduced plabic graph and
consider the open positroid variety
 $X_{G}^\circ$.
Construct the dual quiver of $G$ and
label its vertices by the Pl\"ucker coordinates
given by the source labeling of $G$; the frozen
vertices are those corresponding to the boundary regions of $G$.
This gives rise to a labeled seed
$\Sigma_G^{\mathcal{A}}$ and a
cluster algebra
$\mathcal{A}
        (\Sigma^{\mathcal{A}}_G)$, which coincides with
 the coordinate ring
        $\CC[\widehat{X}_{G}^\circ]$
of the (affine cone over) $X_{G}^\circ$.
\end{theorem}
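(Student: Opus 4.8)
The final statement to be proved is Theorem~\ref{thm:cluster}, asserting that for any reduced plabic graph $G$, the seed $\Sigma_G^{\mathcal A}$ built from the dual quiver with Pl\"ucker-coordinate cluster variables generates a cluster algebra that coincides with the homogeneous coordinate ring $\C[\widehat{X}_G^\circ]$ of the (affine cone over the) open positroid variety. Since this is quoted verbatim as a theorem from the literature, the proof proposal is really a proposal for how to \emph{assemble} a proof from the cited sources rather than to reprove everything from scratch.

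The plan is as follows. First I would reduce to a single move-equivalence class: by \cref{rem:moves} (i.e.\ Postnikov's result that any two reduced plabic graphs with the same trip permutation are connected by moves (M1)--(M3)), and by \cref{lem:mutG}/\cref{lem:mutationG}-type statements that square moves correspond to quiver mutations while (M2)--(M3) do not change the quiver up to isomorphism, it suffices to establish the conclusion for one distinguished $G$ in the class --- for instance the plabic graph $G(D)$ coming from the $\Le$-diagram $D$, whose dual quiver is the explicit one appearing in \cref{fig:combconstruct}. Second, I would invoke the already-published identifications: for open Schubert varieties the statement is \cite{SSW} (our \cref{thm:rectangles}), for skew-shaped positroid varieties it is \cite{GKSS}, and for arbitrary open positroid varieties it is \cite{GalashinLam}. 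Thus the bulk of the proof is a citation, and the only work internal to this paper is to check that our conventions (source labeling for faces, the orientation rule for the dual quiver, the choice of frozen vertices as boundary faces) match those of the cited references, and that the network-chart coordinates $\TB(G)$ from \cref{thm:Talaska} are compatible with the $\mathcal A$-cluster tori via the twist automorphism \cite{MullerSpeyer} (as already used in the proof of \cref{thm:theta}).

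Concretely, the key steps in order would be: (1) fix notation, recalling that $\widehat{X}_G^\circ$ denotes the affine cone in the Pl\"ucker embedding and that $\C[\widehat{X}_G^\circ]$ is the multi-homogeneous coordinate ring; (2) observe that the Pl\"ucker coordinates given by the source labeling of $G$ are regular functions on $\widehat{X}_G^\circ$ that are nonvanishing on the network torus $\mathbb T_G$ (by \cref{thm:Talaska} and \cref{def:PG}), so they form a valid candidate cluster; (3) cite \cite{GalashinLam} (respectively \cite{SSW}, \cite{GKSS} in the special cases relevant to us) for the equality $\mathcal A(\Sigma_G^{\mathcal A}) = \C[\widehat{X}_G^\circ]$ with this seed; (4) use \cref{rem:moves} together with \cref{lem:mutG} to propagate the statement to all reduced plabic graphs $G'$ in the move-equivalence class, noting that passing between move-equivalent $G,G'$ corresponds to mutating $\Sigma_G^{\mathcal A}$, hence does not change the cluster algebra; (5) for restricted/skew variants, cite \cref{lem:restrictedseed} and \cref{l:freezingcommutesmutation} to reduce a general positroid to the Grassmannian case by restriction. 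The one genuine subtlety --- and the step I expect to be the main obstacle --- is the matching of conventions: the various references (\cite{Scott}, \cite{SSW}, \cite{GKSS}, \cite{GalashinLam}) use slightly different labelings (source vs.\ target, Grassmann necklace vs.\ reverse Grassmann necklace, left vs.\ right orientation rules for the dual quiver), and one must carefully verify that the seed $\Sigma_G^{\mathcal A}$ as defined \emph{here} (source labeling, the specific orientation rule "sees the white endpoint to the left") is the one for which the cited equality holds, or is related to it by a trivial relabeling/reflection. Once that bookkeeping is done, no further computation is needed; the theorem follows from the cited results.

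I would also remark, for completeness, that one should briefly justify why the equality of rings (not merely of fraction fields) holds: this is where the \emph{starfish lemma} / the fact that the $\mathcal A$-cluster algebra is finitely generated and the coordinate ring is normal (using Cohen--Macaulayness and normality of positroid varieties from \cite{KLS}, and projective normality from \cite{RamananRamanathan}) enter, exactly as in the argument structure of \cite{SSW}; but since all of this is already in the cited literature, in the paper I would simply state the theorem with the three citations and the convention-matching remark, and leave a one-line pointer to \cref{rem:moves} and \cref{lem:mutG} for the move-equivalence propagation.
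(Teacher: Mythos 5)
Your proposal matches the paper's treatment: the paper gives no proof of \cref{thm:cluster}, presenting it purely as a citation to \cite{SSW} (Schubert case), \cite{GKSS} (skew case), and \cite{GalashinLam} (general positroids), which is exactly the core of your plan. Your additional scaffolding (move-equivalence reduction via \cref{rem:moves} and \cref{lem:mutG}, convention-matching, normality/Cohen--Macaulayness) is sound but goes beyond what the paper records; note only that your reference to a nonexistent \texttt{lem:mutationG} would not compile and should be deleted.
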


\begin{figure}
\includegraphics[height=1.3in]
{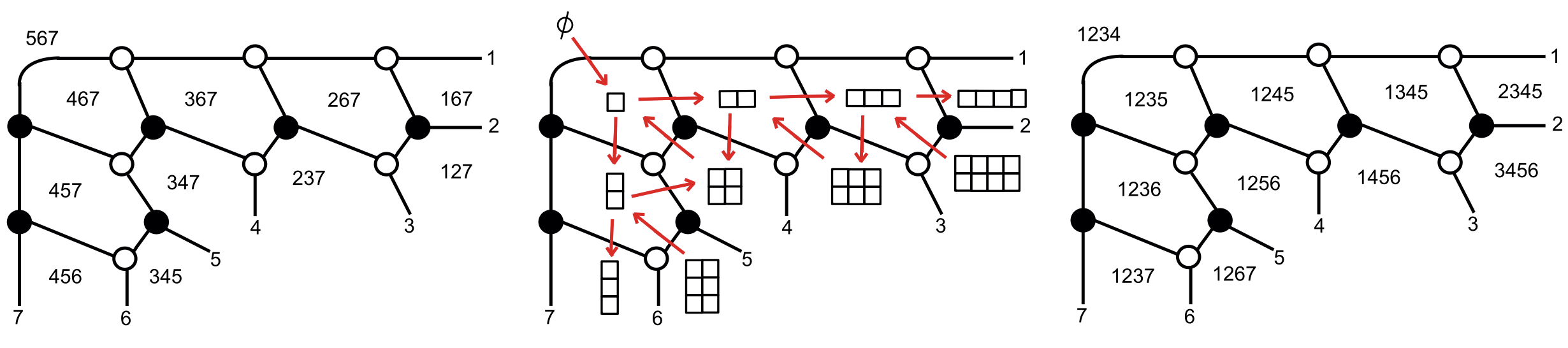}
	\caption{The plabic graph $G=G(D)$ associated to the 
	Le-diagram $D$ which is a Young diagram of shape 
	$\lambda=(4,4,2)$ filled with all $+$'s.
	The left picture has faces labeled using the source face labels; 
	the middle picture has faces labeled by partitions
	(whose vertical steps correspond to the 
	face labels at the left); and the right picture has faces
	labeled by the subset corresponding to the horizontal labels
	of the partitions.  The middle picture also 
	includes the dual quiver $Q(G)$ and recovers the example
	from the left of \cref{fig:restrictedseed}.}
\label{fig:plabic}
\end{figure}

\begin{remark}\label{rem:rectangles}
If $D$ is a $\Le$-diagram, we refer to the cluster obtained from 
$G(D)$ as the \emph{rectangles cluster}.  This is because
when  $D$ is a Young diagram filled with all $+$'s (i.e. when
$X_{G(D)}^{\circ}$ is an open Schubert variety), 
the faces of $G(D)$ are all labeled by rectangular partitions.
See \cref{fig:plabic} for an example.
	In this case we also refer to $G(D)$ as $G_D^{\rect}$.
\end{remark}

\cref{thm:cluster}
implies that the set
$\{p_{\mu} \ \vert \ \mu \in \tilde{\mathcal{P}}_G\}$
of
Pl\"ucker coordinates indexed by the faces of $G$
is a \emph{cluster} for the {cluster algebra}
associated to the
homogeneous coordinate ring of $X_G^{\circ}$.
In particular, these Pl\"ucker coordinates are 
called \emph{cluster variables} and are
algebraically independent; moreover, \emph{any}
Pl\"ucker coordinate for $\checkX$ can be written uniquely as a
positive Laurent polynomial in the variables from 
$\{p_{\mu} \ \vert \ \mu \in \tilde{\mathcal{P}}_G\}$.

Recall from \cref{def:PG} that 
 $\minimal$ denotes the minimal partition in $\tilde{\mathcal{P}}_G$,
and let $\mathcal{P}_G = \tilde{\mathcal{P}}_G \setminus \{\minimal\}.$ Let
\[
	\PCG:=\left\{\frac{p_{\mu}}{p_{\minimal}}\ |\ \mu\in\tilde{P}_G \right\}\subset \C(X_G^{\circ}).
\]
If we choose  the normalization of
Pl\"ucker coordinates on $X_G^{\circ}$ such 
that $p_{\minimal} = 1$,
we get a map
\begin{equation}
\label{eq:clusterchart}
	\Phi_G^{\mathcal{A}}: (\C^*)^{{\mathcal P}_G}\to
	X_G^{\circ}
\end{equation}
which we call a \emph{cluster chart} for $X_G^{\circ}$, 
which satisfies $p_{\nu}(\Phi^\A_G((t_\mu)_{\mu}))=t_\nu$ for $\nu\in{\mathcal P}_G$.
When it is clear that we are setting $p_{\minimal}=1$ we may write
\begin{equation}\label{e:PCG}
\PCG:=\left\{p_{\mu}\ |\ \mu\in{\mathcal P}_G\right\}.
\end{equation}

\begin{definition}[Cluster torus $\mathbb T^\A_G$]  \label{rem2:convention}
Define the open dense torus $\mathbb T^\A_G $ in $\opencheckX$ as the image  of the cluster chart $\Phi^\A_G$,
\[
\mathbb T^\A_G
	:=\Phi_G^\A((\C^*)^{\mathcal{P}_G})=\{x\in X_G^{\circ} \mid \p_{\mu}(x)\ne 0 \text { for all $\mu\in \mathcal P_G$}\}.
\]
We call $\mathbb T^\A_G$ the {\it cluster torus} in $\opencheckX$ associated to $G$.
\end{definition}

We next describe cluster $\mathcal{A}$-mutation, and how it relates to the clusters associated to
plabic graphs $G$.

\begin{definition}\label{Aseed}
Let $Q$ be a quiver with vertices $V$ and associated exchange matrix $B$.
We associate a \emph{cluster variable} $a_{\tau}$ to each vertex $\tau \in V$.
If $\sigma$ is a mutable vertex of $Q$,
then we define a new set of
variables  $\MutVar_{\sigma}^{\mathcal{A}}(\{a_{\tau}\}) := \{a'_{\tau}\}$  where
$a'_{\tau} = a_{\tau}$ if $\tau \neq \sigma$, and otherwise, $a'_{\sigma}$ is determined
by the equation
\begin{equation}\label{e:AclusterMut}
a_{\sigma} a'_{\sigma} =
\prod_{b_{\tau \sigma} > 0} a_{\tau}^{b_{\tau \sigma}} +
\prod_{b_{\tau \sigma} < 0} a_{\tau}^{-b_{\tau \sigma}}.
\end{equation}
We say that
$(\Mut_{\sigma}(Q), \{a'_{\tau}\})$ is obtained from
 $(Q, \{a_{\tau}\})$ by \emph{$\mathcal{A}$-seed mutation}
 in direction $\sigma$, and we refer to the ordered pairs
$(\tau_{\sigma}(Q), \{a'_{\tau}\})$  and
 $(Q, \{a_{\tau}\})$ as \emph{labeled $\mathcal{A}$-seeds}.
	We say that two labeled $\mathcal{A}$-seeds are
	$\mathcal{A}$-mutation equivalent if one can be obtained
	from the other by a sequence of $\mathcal{A}$-seed mutations.
\end{definition}

Using the terminology of \cref{Aseed}, each reduced plabic graph $G$ gives rise to a
labeled $\mathcal{A}$-seed.  Moreover, it is easy to verify that the
square move on a plabic graph corresponds to 
an $\mathcal{A}$-mutation.
However, there are many $\mathcal{A}$-seeds that do not come from plabic
graphs.  We will continue to index $\mathcal{A}$-seeds, cluster charts,
and cluster tori by $G$ even when they do not come from plabic graphs.

\section{Homology classes of positroid divisors}\label{a:positroidhomology}

In this appendix we provide the proof of \cref{p:PositroidHomology}. 
We first review
some standard results about the homology of flag varieties. 
Consider 
the full flag variety  $GL_{n}/B_+$ (over $\C$) and the projection map 
\[
\pi:GL_{n}/B_+\to  GL_n/P_{n-k}=\mathbb X,
\]
and recall from \cref{s:LeDivisors}
that $W$ denotes the symmetric group $S_{n}$, 
whose elements we can identify with permutation matrices. 
\begin{enumerate}
	\item The \textit{Bruhat cells} in the full flag variety, $\Omega_w=B_+w B_+/B_+$ define an algebraic cell decomposition $GL_n/B_+=\bigsqcup_{w\in W}\Omega_w$ called the \emph{Bruhat decomposition}. The dimension of a cell $\Omega_w$ is given by $\ell(w)$. The closure relations are given by the \emph{Bruhat order}, $\Omega_v\subseteq\overline{\Omega_w}$ if and only if $v\le w$.  
	\item The flag variety also has an \emph{opposite Bruhat decomposition} given by the \textit{opposite Bruhat cells} $\Omega^v=B_-v B_+/B_+$.  The opposite Bruhat cell $\Omega^v$ has codimension $\ell(v)$. We have that $w_0\Omega^{v}=\Omega_{w_0v}$, where $w_0$ is the longest element in $W$. The closure relation for the opposite Bruhat cells is  given by $\Omega^v\subseteq\overline{\Omega^w}$ if and only if $v\ge w$.  
  \item The fundamental classes of the closures, the Schubert classes $\overline{\Omega}_w$,  form a basis of the homology $H_*(GL_n/B_+)=\sum_{w\in W}\Z[\overline {\Omega}_w]$.
\item If $w\in W^{P_{n-k}}$, then $w$ is of the form $w_\mu$ for some Young diagram $\mu$ that fits into a $(n-k)\times k$ rectangle, see \cref{r:Young2}. The map on homology 
\[
\pi_*: H_*(GL_{n}/B_+,\Z)\to  H_*(\mathbb X,\Z)
\]
sends the $[\overline{\Omega}_{w_\mu}]$ to a basis of $H_*(\mathbb X,\Z)$ indexed by $\mathcal P_{k,n}$, and it sends $[\overline{\Omega}_{w}]$ to $0$ for all other $w\in W$.
\item The homology class $[\overline{\Omega^{w}}]$ is  identified by Poincar\'e duality with a cohomology class that we call $\sigma^{w}\in H^{2\ell(w)}(GL_n/B_+)$. These classes form the Schubert basis of $H^*(GL_n/B_+)$.
\item The cohomology ring of $H^*(GL_n/B_+)$ is generated by the degree $2$ Schubert classes $\sigma^{s_i}$ associated to the simple reflections $s_i=(i, i+1)$. Let us also write $t_{mr}$ for the transposition $(m,r)$. We have the Pieri formula by which
\begin{equation}\label{e:Pieri}
\sigma^{s_i}\cup \sigma^w=\sum_{\substack{ m\le i<r\\ \ell(wt_{mr})=\ell(w)+1 }}\sigma^{wt_{mr}}.
\end{equation}
\end{enumerate}

We relate our conventions concerning flag varieties with those concerning Grassmannians.  Recall the bijection $\mathcal P_{k,n}\to{{[n]}\choose{n-k}}$ from Section~\ref{s:Young}, and the bijection $\mathcal P_{k,n}\to W_{P_{n-k}}$ from Remark~\ref{r:Young2}.

\begin{lem}\label{l:ProjSchub}
Let $\mu\in P_{k,n}$. The Schubert variety $X_\mu\subset\mathbb X$ is the projection of the closure of the opposite Bruhat cell $\Omega^{w_0w_\mu}$, 
\[
\pi(\overline{\Omega^{w_0w_\mu}}
)=X_\mu.
\]
The opposite Bruhat decomposition has the property that $\pi_*([\overline{\Omega^{w_0w_\mu}}])=[X_\mu]$, and $\pi_*([\Omega^{w}])=0$ if $w$ is not of the form $w_0w_\mu$.  
\end{lem}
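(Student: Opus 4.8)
The plan is to prove Lemma~\ref{l:ProjSchub} by relating the two stratifications of the Grassmannian --- the Schubert decomposition coming from $\pi$ applied to opposite Bruhat cells --- and then reading off the homology statement from general facts about proper pushforward. First I would recall the well-known fact (going back to \cite{RietschThesis}, compare \cref{d:projRich}) that $\pi$ restricts to an isomorphism on each opposite Bruhat cell $\Omega^{v}$ when $v\in W^{P_{n-k}}_{\max}$, i.e. when $v$ is the \emph{maximal}-length coset representative in its coset $vW_{P_{n-k}}$; equivalently $v=w_0 w_\mu$ for some Young diagram $\mu$, since $w_0$ sends minimal-length coset representatives of $W_{P_{n-k}}\backslash W$ to maximal-length coset representatives. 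For such $v$ the image $\pi(\Omega^{v})$ is a locally closed subvariety of $\mathbb X$ of the same dimension $\binom n2 - \ell(v) = |\mu|$.

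The key identification is that $\pi(\overline{\Omega^{w_0 w_\mu}})=X_\mu$. I would argue this as follows: $\pi(\Omega^{w_0w_\mu})$ is an irreducible locally closed subvariety of dimension $|\mu|$, and one checks directly (e.g. by comparing with the Pl\"ucker-coordinate description of the Schubert cell $\Omega_\mu$ in \cref{d:SchubertCell}, or via the fact that $\overline{\Omega^{w_0w_\mu}}=\bigsqcup_{w\ge w_0 w_\mu}\Omega^{w}$ together with the description of which cosets survive the projection) that $\pi(\Omega^{w_0w_\mu})=\Omega_\mu$, the Schubert cell. Taking closures and using that $\pi$ is proper (hence closed), $\pi(\overline{\Omega^{w_0w_\mu}})=\overline{\pi(\Omega^{w_0w_\mu})}=\overline{\Omega_\mu}=X_\mu$.

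For the homology statement, I would invoke the standard principle for proper pushforward of fundamental classes: if $f:Y\to Z$ is proper and $V\subseteq Y$ is an irreducible closed subvariety, then $f_*[V]=\deg(V/f(V))\,[f(V)]$ if $\dim f(V)=\dim V$, and $f_*[V]=0$ if $\dim f(V)<\dim V$. Applied to $f=\pi$ and $V=\overline{\Omega^{w_0 w_\mu}}$: since $\pi|_{\Omega^{w_0w_\mu}}$ is an isomorphism onto $\Omega_\mu$, the map $\overline{\Omega^{w_0w_\mu}}\to X_\mu$ is birational, so the degree is $1$ and $\pi_*[\overline{\Omega^{w_0w_\mu}}]=[X_\mu]$. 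For any other $w\in W$ not of the form $w_0w_\mu$, the coset $wW_{P_{n-k}}$ has maximal representative $w_0 w_\mu$ for some $\mu$ with $\ell(w_0w_\mu)>\ell(w)$ --- wait, more carefully: write $w = w_0 v$; then $v$ lies in some coset $vW_{P_{n-k}}$, and since $v$ is not the minimal representative, $\pi$ collapses dimensions on $\Omega^{w}=\Omega^{w_0 v}$, so $\dim\pi(\Omega^{w})<\dim\Omega^{w}$, hence $\pi_*[\overline{\Omega^{w}}]=0$.

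The main obstacle I anticipate is the bookkeeping around minimal versus maximal coset representatives and the action of $w_0$, since the lemma is phrased in terms of $w_0 w_\mu$ while the cleanest general statements about $\pi$ being an isomorphism are phrased for minimal-length representatives of $W/W_{P_{n-k}}$ (or maximal-length representatives of $W_{P_{n-k}}\backslash W$). I would need to verify carefully that $w_0 w_\mu$ is indeed the maximal-length representative in its right coset $w_0 w_\mu W_{P_{n-k}}$, using $\ell(w_0 u)=\binom n2-\ell(u)$ and the fact that $w_\mu\in W^{P_{n-k}}$ is the \emph{minimal}-length representative of $w_\mu W_{P_{n-k}}$; this gives $\ell(w_0 w_\mu w) = \binom n2 - \ell(w_\mu w) = \binom n2 - \ell(w_\mu)-\ell(w) \le \binom n2 - \ell(w_\mu) = \ell(w_0 w_\mu)$ for all $w\in W_{P_{n-k}}$, as needed. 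Everything else is a routine application of standard Schubert calculus and intersection-theoretic facts already used freely in the paper.
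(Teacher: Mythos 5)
Your proposal is correct but arrives at the lemma by a noticeably different route than the paper. The paper's proof is entirely concrete: it writes $w_\mu$ and $w_0 w_\mu$ in one-line notation, observes that the matrix $A_\mu$ with columns $e_{w_0 w_\mu(1)},\dots,e_{w_0 w_\mu(n-k)}$ has $P_\mu$ as its unique nonvanishing Pl\"ucker coordinate and so lies in the Schubert cell $\Omega_\mu$, and then uses $B_-$-equivariance of $\pi$ to conclude $\pi(\Omega^{w_0 w_\mu})=\Omega_\mu$ as a matching of $B_-$-orbits; the homology statements are left implicit. You instead cite the Rietsch-type result that $\pi$ is an isomorphism on $\Omega^v$ when $v$ is maximal-length in its coset, verify (correctly, via $\ell(w_0 w_\mu w)=\binom{n}{2}-\ell(w_\mu)-\ell(w)$ for $w\in W_{P_{n-k}}$) that $w_0 w_\mu$ has this property, and derive the pushforward claims from the standard degree-one/dimension-drop principle for proper pushforward of fundamental classes. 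Your packaging makes the homological step explicit and routine, which is a genuine improvement in exposition; the trade-off is that the one step you wave at, ``one checks directly'' that the image cell is $\Omega_\mu$ rather than some other Schubert cell of the same dimension $|\mu|$, is precisely where the paper's explicit $A_\mu$ computation does real work, and your alternative suggestion (closure decompositions and ``which cosets survive'') does not by itself single out the correct cell. So your argument would still need an $A_\mu$-type verification to be complete. One small slip in the surrounding prose: left multiplication by $w_0$ preserves and length-reverses within the \emph{right} cosets $vW_{P_{n-k}}$, not $W_{P_{n-k}}\backslash W$ as you first write; you get the side right two sentences later when you pass to $w_0 w_\mu W_{P_{n-k}}$, but the earlier framing has the cosets mismatched.
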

\begin{remark}\label{r:cohproj} Note that by (2) above with $v=w_0w$ we have $\Omega^{w_0w}=w_0\Omega_{w}$ and therefore $[\overline{\Omega^{w_0w}}]=[\overline{\Omega}_{w}]$, since translation by an element of the connected group $GL_n(\C)$ will not affect the homology class. The homological statement of the lemma can therefore also be written as $\pi_*([\overline{\Omega}_{w_{\mu}}])=[X_\mu]$. 
\end{remark}

While \cref{l:ProjSchub} is well-known, we include a proof for completeness and because it will
be useful for our subsequent proof of 
 \cref{p:PositroidHomology}.
\begin{proof}[Proof of \cref{l:ProjSchub}]
Consider the SE border of $\mu$ as a path from the SW corner of the rectangle \textit{up} to the NE corner, and number the steps with $\{1,\dotsc, n\}$. Let $m_1<m_2<\dotsc < m_{n-k}$ be the labels of the vertical steps and $m'_1<\dotsc < m_k'$ the labels of the horizontal steps. The permutation $w_\mu$ is known to be the permutation 
	with  a single descent at $n-k$ given by
\begin{equation}
w_\mu=\begin{pmatrix} 
1 &2     &\dotsc & n-k &n-k+1&\dotsc & n\\
m_1& m_2 &\dotsc & m_{n-k}& m'_1&\dotsc &m'_k
\end{pmatrix}.
\end{equation}
Then
\begin{equation}
w_0w_\mu=\begin{pmatrix} 
1 &2     &\dotsc & n-k &n-k+1&\dotsc & n\\
n-m_1+1& n-m_2+1 &\dotsc & n-m_{n-k}+1& n-m'_1+1&\dotsc &n-m'_k+1
\end{pmatrix}.
\end{equation}

In contrast, our conventions for Pl\"ucker coordinates and the definition of $X_\mu$  involved a labelling of steps starting from the NE corner and increasing \textit{down} to the SW corner. See Section~\ref{s:Young}. It follows that the subset of $\{1,\dotsc, n\}$ associated with $\mu$ in that section coincides with the set $\{w_0w_\mu(1),\dotsc, w_0w_\mu(n-k)\}$. 

Consider the action of $B_-$ and of the maximal torus $T$ of $GL_n$ on the flag variety~$GL_n/B_+$ and on the Grassmannian~$\mathbb X$. These actions are compatible in that $\pi$ is an equivariant map. Recall that $\Omega^{w_0w_\mu}$ is by definition the $B_-$-orbit of the torus-fixed point $w_0w_\mu B_+$. 

On the other hand, consider the $n\times (n-k)$ matrix $A=A_\mu$ with columns given by standard basis vectors 
$e_{w_0w_\mu(1)},\dotsc,e_{w_0w_\mu(n-k)}$. From the discussion above it follows that $P_{\nu}(A)\ne 0$ if and only if $\nu=\mu$. We have that $A_\mu$ lies in $\Omega_\mu$, compare \cref{d:SchubertCell}. Moreover $A_\mu$ is a torus-fixed point and the Grassmannian Schubert cell $\Omega_\mu$ is precisely the $B_-$-orbit of  $A_\mu$.

The matrix $A_\mu$ agrees with the first $n-k$ columns of the permutation matrix $w_0w_\mu$. It follows that $\pi(w_0w_\mu)=A_\mu$ and  therefore also $\pi(\Omega^{w_0w_\mu })=\Omega_\mu$ and  $\pi(\overline{\Omega^{w_0w_\mu}})=X_\mu$, proving the lemma.
\end{proof}

Recall the notation for the removable boxes of $\lambda$ as $b_{\rho_{2\ell+1}}$ for $\ell=1,\dotsc d$ from \cref{r:R(lambda)}, and the notation for the NW border boxes of $b'_1,\dotsc, b'_{n-1}$ from \cref{d:Wis}.

 \begin{theorem}[\cref{p:PositroidHomology}] 
The homology class  
of the positroid divisor $D'_i=X_{(s_i,\lambda)}$ is expressed in terms of the Schubert classes $[D_\ell]=[X_{\lambda^-_\ell}]$ by
\[
	[X_{(s_i,\lambda)}]=\sum_{\ell\in SE(b'_i)} [D_\ell], \hspace{.5cm} 
	 \text{ where }
	 \hspace{.5cm}
SE(b'_i):=\{\ell\mid \text{The box $b_{\rho_{2\ell+1}}$ is $SE$ of $b_i'$}\}. 
\]
 \end{theorem}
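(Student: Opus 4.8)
The statement concerns the positroid divisor $D'_i = X_{(s_i, w_\lambda)}$, whose $\Le$-diagram is $\lambda$ with a single $0$ in the northwest-border box $b'_i$. I would compute its homology class by realizing it as a projected opposite Bruhat (equivalently, Richardson) variety and then using the Pieri formula \eqref{e:Pieri} in $H^*(GL_n/B_+)$, pushing forward via $\pi_*$ as in \cref{l:ProjSchub} and \cref{r:cohproj}. Concretely, $D'_i = \pi(w_0 \mathcal{R}_{s_i, w_\lambda})$, and since $\pi_*$ kills everything not of the form $[\overline{\Omega}_{w_\mu}]$ and sends $[\overline{\Omega}_{w_\mu}] \mapsto [X_\mu]$, it suffices to understand the class $[\overline{\mathcal{R}_{s_i, w_\lambda}}] \in H_*(GL_n/B_+)$ and extract its coefficients on the classes $[\overline{\Omega}_{w_{\lambda_\ell^-}}]$, where $\lambda_\ell^- = \lambda \setminus b_{\rho_{2\ell-1}}$.

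\textbf{Key steps.} First, I would identify the homology class of the closed Richardson variety $\overline{\mathcal{R}_{s_i, w_\lambda}}$ (codimension one inside $\overline{\Omega_{w_\lambda}}$): by standard facts about Richardson varieties in the flag variety, $[\overline{\mathcal{R}_{v,w}}]$ is the image of $\sigma^v$ under cap product with $[\overline{\Omega_w}]$, and in particular $[\overline{\mathcal{R}_{s_i, w_\lambda}}] = \sigma^{s_i} \cap [\overline{\Omega}_{w_\lambda}]$ (up to the translation by $w_0$ which does not affect homology classes). Dually, in cohomology, restricting attention to the relevant Schubert classes, the coefficient of $\sigma^{w_\mu}$ in $\sigma^{s_i}\cup \sigma^{w_\lambda}$ records exactly the multiplicity we want — but since $\pi_*$ only sees the Grassmannian-permutation indices $w_\mu$ with $\mu$ a partition, I only need the terms $w_\mu$ in the Pieri expansion \eqref{e:Pieri} with $\ell(w_\mu) = \ell(w_\lambda) - 1$ and $w_\mu$ obtained from $w_\lambda$ by a transposition $t_{mr}$ with $m \le i < r$. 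Second, I would translate the Pieri condition into the combinatorics of Young diagrams: the elements $w_\mu \in W^{P_{n-k}}$ with $\ell(w_\mu) = \ell(w_\lambda)-1$ obtained from $w_\lambda$ by a transposition are exactly the $w_{\lambda_\ell^-}$ for $\ell = 1, \dots, d$ (removing a removable box of $\lambda$). Using the explicit one-line notation of $w_\lambda$ recalled in the proof of \cref{l:ProjSchub} — where the vertical steps of the SE border of $\lambda$ give the first $n-k$ values — removing the $\ell$-th outer corner $b_{\rho_{2\ell-1}}$ corresponds to a transposition $t_{m_\ell, r_\ell}$ whose support positions $m_\ell, r_\ell$ I would compute directly. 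Third, I would check that the Pieri constraint $m_\ell \le i < r_\ell$ (with $i$ indexing the box $b'_i$ of the NW border) holds precisely when $b_{\rho_{2\ell-1}}$ lies weakly southeast of $b'_i$, i.e. when $\ell \in SE(b'_i)$ — this is the heart of the matching and is where the NW-border numbering convention (\cref{def:BB}) and the SE-rim numbering need to be lined up carefully. Finally, assembling: $[D'_i] = \pi_*[\overline{\mathcal{R}_{s_i, w_\lambda}}] = \sum_{\ell \in SE(b'_i)} [X_{\lambda_\ell^-}] = \sum_{\ell \in SE(b'_i)} [D_\ell]$.

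\textbf{Main obstacle.} The bookkeeping step — translating the Pieri transposition condition $m \le i < r$ into the geometric statement "$b_{\rho_{2\ell-1}}$ is weakly southeast of $b'_i$" — is where the real work lies. One must carefully track how the two different boundary-box numbering conventions (the NW border boxes $b'_1, \dots, b'_{n-1}$ counted from the bottom-left upward, and the SE rim boxes/corners counted from NE to SW) interact with the one-line notation of $w_\lambda$ and the positions $m, r$ moved by the transposition that removes an outer corner. A clean way to handle this, which I would pursue, is to work entirely with the lattice-path description of $\lambda$: removing the outer corner $b_{\rho_{2\ell-1}}$ swaps an adjacent (south step, west step) pair in the path $\pathsw{\lambda}$, so $m_\ell$ and $r_\ell$ are the labels of a consecutive down–over pair on the SE border, while the condition that $b'_i$ (a NW-border box in row or column terms) lies weakly northwest of that corner is a direct inequality between these path labels. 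Double-checking this against the worked example $\lambda = (4,4,2)$ — where one can read off $[D'_i]$ for each $i$ from \cref{fig:codim} and compare with $SE(b'_i)$ — would be a useful sanity check before writing the general argument. An alternative, more self-contained route avoiding flag-variety cohomology would be to intersect $D'_i$ directly with the opposite Schubert cells meeting $X_\lambda$ in complementary dimension and count points, but I expect the Pieri approach to be shorter and cleaner.
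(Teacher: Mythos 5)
Your proposal is correct and follows essentially the same route as the paper's proof in Appendix B: realize $D'_i$ as the projection of the Richardson variety $\overline{\mathcal{R}_{s_i,w_\lambda}}$, identify its class via Poincar\'e duality with the cup product $\sigma^{s_i}\cup\sigma^{w_0w_\lambda}$, expand by the Pieri formula \eqref{e:Pieri}, push forward by $\pi_*$, and then translate the condition $m\le i<r$ into the combinatorial statement about $b'_i$ lying northwest of the removed corner using the one-line notation of $w_\lambda$. You have also correctly located the one nontrivial bookkeeping step (the paper's ``Claim'' at the end of the proof), which is handled there exactly by the lattice-path argument you sketch.
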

 \begin{proof}
The positroid divisor $X_{(s_i,w_\lambda)}$ is the projected image of the Richardson variety $\overline {\mathcal R_{s_i,w_\lambda}}$ under $\pi:GL_n/B_+\to\mathbb X$. Moreover we have $\overline {\mathcal R_{s_i,w_\lambda}}=\overline{\Omega^{s_i}}\cap\overline{\Omega}_{w_\lambda}$. The homology class $[\overline{\Omega^{s_i}}]$ is Poincar\'e dual to $\sigma^{s_i}$. Furthermore, by \cref{r:cohproj}, we have that $[\overline{\Omega}_{w_\lambda}]=[\overline{\Omega^{w_0w_\lambda}}]$, so this is the Poincar\'e dual class to $\sigma^{w_0w_\lambda}$. It follows that the homology class $[\overline {\mathcal R_{s_i,w_\lambda}}]$ is the Poincar\'e dual class to the cup product $\sigma^{s_i}\cup \sigma^{w_0w_\lambda}$. The Pieri formula~\eqref{e:Pieri} with $w=w_0w_\lambda$ translates to  
\begin{equation}
\sigma^{s_i}\cup \sigma^{w_0w_\lambda}=\sum_{ \substack {m\le i<r\\ \ell(w_\lambda t_{mr})=\ell(w_\lambda)-1} }\sigma^{w_0w_\lambda t_{mr}}.
\end{equation}
It follows that in homology 
\begin{equation}\label{e:HomPieri}
[\overline {\mathcal R_{s_i,w_\lambda}}]=\sum_{ \substack{ m\le i<r\\ \ell(w_\lambda t_{mr})=\ell(w_\lambda)-1} }[\overline{\Omega^{w_0 w_\lambda t_{mr}}}].
\end{equation}
We have that $\pi_*([\overline{\Omega^{w_0w_\mu}}])=[X_\mu]$ and all other $\pi_*([\overline{\Omega^{w}}])=0$, see \cref{l:ProjSchub}. Applying $\pi_*$ to \eqref{e:HomPieri} we therefore get the identity
\begin{equation}\label{e:HomPieriGrass}
[X_{(s_i,w_\lambda)}]=\sum_{\substack {m\le i<r \\ w_\lambda t_{mr}=w_\mu \text{ with } |\mu|=|\lambda|-1} }[X_\mu]
\end{equation}
in $H_*(\mathbb X,\Z)$ and also in $H_*(X_\lambda,\Z)$, since this is a submodule. The $\mu$ appearing in the sum must be of the form $\lambda^-_\ell$, since $|\mu|=|\lambda|-1$. Therefore the summands are indeed of the form $[X_{\lambda^-_\ell}]=[D_\ell]$. 
It remains to check the following claim. 
\vskip .2cm
\noindent {\it Claim:} The permutation $w_{\lambda^{-}_\ell}$ is of the form $w_\lambda t_{mr}$ for some $m\le i<r$ if and only if $\ell\in SE(b_i')$. 
\vskip .2cm
\noindent {\it Proof of the Claim:}
Let us write $\mu$ for $\lambda_\ell^-$. Note that the statement $\ell\in SE(b'_i)$ is equivalent to saying that the box $b'_i$ is NW of the removed box $b_\mu:=b_{\rho_{2\ell+1}}$. In the NW rim there is a unique box $b'_m$ to the west of $b_\mu$, and a unique box $b'_{r-1}$ to the north of $b_\mu$. Since the boxes along the rim are counted starting from the bottom clockwise we have $1<m\le n-k$ and $n-k<r\le n$.      

Suppose the permutation $w_\lambda$ is given by
\begin{equation}
w_\lambda=\begin{pmatrix} 
1 &2     &\dotsc & n-k &n-k+1&\dotsc & n\\
c_1& c_2 &\dotsc & c_{n-k}& c_{n-k+1}&\dotsc &c_n
\end{pmatrix}.
\end{equation}
Then, as in the proof of \cref{l:ProjSchub}, $c_1,\dotsc, c_{n-k}$ are the vertical steps of the SE border of $\lambda$ counted from the bottom, while  $c_{n-k+1},\dotsc, c_n$ are the horizontal steps that were left out. Removing the box $b_\mu$ from $\lambda$ amounts to swapping $c_m$ and $c_r$. Therefore $w_\mu=w_\lambda t_{mr}$. Now the condition that $m\le i<r$ becomes the condition that $b'_i$ is NW of the removed box $b_\mu$. This completes the proof of the claim and the theorem.  
 \end{proof}

\bibliographystyle{alpha}
\bibliography{bibliography}

\end{document}